%
%
%

\documentclass[twoside,a4paper,12pt,centertags]{amsart}
\usepackage{amsmath,amssymb,verbatim,vmargin}
\usepackage[bookmarks=true]{hyperref}   

\theoremstyle{plain}
\newtheorem{thm}{Theorem}[section]
\newtheorem{lem}[thm]{Lemma}
\newtheorem{cor}[thm]{Corollary}
\newtheorem{prop}[thm]{Proposition}

\theoremstyle{definition}
\newtheorem{defn}[thm]{Definition}
\newtheorem{rem}[thm]{Remark}



\mathchardef\semic="303B

\newcommand{\wedg}{\mathbin{\scriptstyle{\wedge}}}

\newcommand{\R}{{\mathbf R}}
\newcommand{\C}{{\mathbf C}}
\newcommand{\Z}{{\mathbf Z}}
\newcommand{\mH}{{\mathcal H}}
\newcommand{\mK}{{\mathcal K}}
\newcommand{\mX}{{\mathcal X}}
\newcommand{\mY}{{\mathcal Y}}
\newcommand{\mL}{{\mathcal L}}

\newcommand{\mP}{{\mathcal P}}
\newcommand{\mA}{{\mathcal A}}

\newcommand{\mD}{{\mathcal D}}
\newcommand{\V}{{\mathcal V}}
\DeclareMathOperator{\re}{Re}
\newcommand{\im}{\text{{\rm Im}}\,}
\newcommand{\sett}[2]{ \{ #1 \, \semic \, #2 \} }

\newcommand{\supp}{\text{{\rm supp}}\,}

\newcommand{\nul}{\textsf{N}}
\newcommand{\ran}{\textsf{R}}
\newcommand{\dom}{\textsf{D}}

\newcommand{\clos}[1]{\overline{#1}}
\newcommand{\conj}[1]{\overline{#1}}

\newcommand{\sgn}{\text{{\rm sgn}}}
\newcommand{\barint}{\mbox{$ave \int$}}
\newcommand{\divv}{{\text{{\rm div}}}}
\newcommand{\curl}{{\text{{\rm curl}}}}

\newcommand{\tdd}[2]{\tfrac{\partial #1}{\partial #2}}

\newcommand{\st}{\tilde}
\newcommand{\ta}{{\scriptscriptstyle \parallel}}
\newcommand{\no}{{\scriptscriptstyle\perp}}
\newcommand{\pd}{\partial}

\newcommand{\loc}{\text{{\rm loc}}}
\newcommand{\tN}{\widetilde N_*}
\newcommand{\hE}{\widehat E}
\newcommand{\vE}{\check E}
\newcommand{\tE}{\widetilde E}
\newcommand{\tS}{\widetilde S}
\newcommand{\tZ}{\widetilde Z}
\newcommand{\hS}{\widehat S}
\newcommand{\vS}{\check S}
\newcommand{\tD}{\widetilde D}
\newcommand{\tP}{\widetilde P}
\newcommand{\E}{{\mathcal E}}
\newcommand{\bphi}{\varphi}
\newcommand{\bx}{{\bf x}}
\newcommand{\by}{{\bf y}}
\newcommand{\bO}{{\bf O}}

\newcommand{\rad}{\vec{n}}
\newcommand{\ang}{\vec{\tau}}
\newcommand{\essup}{\mathop{\rm ess{\,}sup}}

\def\barint_#1{\mathchoice
            {\mathop{\vrule width 6pt
height 3 pt depth -2.5pt
                    \kern -8.8pt
\intop}\nolimits_{#1}}%
            {\mathop{\vrule width 5pt height
3 pt depth -2.6pt
                    \kern -6.5pt
\intop}\nolimits_{#1}}%
            {\mathop{\vrule width 5pt height
3 pt depth -2.6pt
                    \kern -6pt
\intop}\nolimits_{#1}}%
            {\mathop{\vrule width 5pt height
3 pt depth -2.6pt
          \kern -6pt \intop}\nolimits_{#1}}}



\usepackage{color}

\definecolor{gr}{rgb}   {0.,   0.8,   0. } 
\definecolor{bl}{rgb}   {0.,   0.5,   1. } 
\definecolor{mg}{rgb}   {0.7,  0.,    0.7}



\begin{document}

\title[Maximal regularity for elliptic systems II]
{Weighted maximal regularity estimates and solvability of non-smooth elliptic systems II}
\author{Pascal Auscher} 
\author{Andreas Ros\'en$\,^1$}
\thanks{$^1\,$Formerly Andreas Axelsson}
\address{Pascal Auscher, Univ. Paris-Sud, laboratoire de Math\'ematiques, UMR 8628, Orsay F-91405; CNRS, Orsay, F-91405}
\email{pascal.auscher@math.u-psud.fr}
\address{Andreas Ros\'en, Matematiska institutionen, Link\"opings universitet, 581 83 Link\"oping, Sweden}
\email{andreas.rosen@liu.se}

\begin{abstract}
 We continue the development, by reduction to a first order system for the conormal gradient, of  $L^2$ \textit{a priori} estimates and solvability  for boundary value problems  of Dirichlet, regularity, Neumann type   for divergence form second order, complex, elliptic systems. We work here on the unit ball and more generally its bi-Lipschitz images,  assuming  a Carleson condition    as  introduced by Dahlberg which measures the discrepancy of the coefficients  to their boundary trace near the boundary.  We sharpen our estimates by proving a general result concerning \textit{a priori} almost everywhere non-tangential convergence at the boundary. Also, compactness of the boundary yields more solvability results using Fredholm theory.
Comparison between classes of solutions and uniqueness issues are discussed. As a consequence, we are able to solve a long standing regularity problem for real equations, which may not be true on the upper half-space, justifying \textit{a posteriori}  a separate work on bounded domains. 
\end{abstract}

\subjclass[2000]{35J55, 35J25, 42B25, 47N20}

\keywords{elliptic system, conjugate function, maximal regularity, Dirichlet and Neumann problems, square function, non-tangential maximal function, functional and operational calculus, Fredholm theory}

\maketitle

\tableofcontents

\section{Introduction and main results}

We refer to \cite{AA1}, where we took up this study, for a comprehensive historical account of the theory of boundary value problems for second order equations of divergence form. 
 Before we come to our work here, let us  connect  more deeply to even earlier references going back to the seminal work of Stein and Weiss \cite{SW} that paved the way for the development of Hardy spaces $H^p$ on the Euclidean space in several dimensions. Their key discovery  was  to look at the system of differential equations in the upper-half space satisfied by the gradient  $F= (\partial_{t}u, \nabla_{x}u)$ of a harmonic function $u$ on the upper half-space, to which they gave the name of conjugate system or M. Riesz system.  The system of differential equations is in fact a generalized Cauchy-Riemann system which can be put into a vector-valued ODE form. They did not exploit this ODE structure but used instead subharmonicity properties of $|F|^p$ for $p> \frac{n-1}{n}$ to define the (harmonic) Hardy spaces $H^p$ as the space of those conjugate systems satisfying
$$
\sup_{t>0}\int_{\R^n} |F(t,x)|^p\, dx <\infty
$$
and to prove that the elements in this space have boundary values 
$$
F(t,x) \rightarrow F(0,x) 
$$
in  the $L^p$ norm and almost everywhere non-tangentially.  Further, they proved that elements in $H^p$ can be obtained as Poisson integrals of their boundary traces. In other words, there is a one-to-one correspondence between $H^p$ and its trace space $\mH^p$. By using Riesz transforms, the trace space $\mH^p$ is in one-to-one correspondence with the space defined by taking the first component of trace elements. As they pointed out, it was nothing new for $p>1$ as we get $L^p$, but for $p\le 1$ it gave a new space. Over the years,  this last space turned out to have  many characterizations, including  the ones with Littlewood-Paley functionals of Fefferman and Stein \cite{FS} and  the atomic ones of Coifman~\cite{Coif} and Latter~\cite{Lat}, and is now part of a rich and well understood family of spaces. 

In our earlier work with McIntosh \cite{AAH}, and in \cite{AA1}, we wrote down the Cauchy-Riemann equations corresponding to the second order equation and the key point was a further algebraic transformation that transformed this system to a vector-valued ODE. In some sense, we were going back in time since elliptic equations with non-smooth coefficients  have been developed by other methods since then (see \cite{Kenig}).  In this respect,  it is no surprise in view of the above discussion that we denote our trace spaces by $\mH$. They are in a sense generalized Hardy spaces, and this notation was used as well  in our earlier work with Hofmann  \cite{AAH}. We shall use again such notation and terminology here. 
What today allows the methods of Hardy spaces to be applicable in the case of non-smooth coefficients, are the quadratic estimates related to the solution of the Kato conjecture for square roots.  These are a starting point of the analysis. Indeed, the quadratic estimates are equivalent to  the fact that  two Hardy spaces  split the function space topologically, as it is the case for the classical upper and lower Hardy spaces in complex analysis, essentially from the F. and M. Riesz theorem on the boundedness of the Hilbert transform. So in a sense everything looks like the case of harmonic functions (for $p=2$ at this time). But this is not the case. The difference is in the last step, taking only one component of the trace of a conjugate system. This may or may not be a one-to-one correspondence, which translates to well- or ill-posedness for the boundary value problems of the original second order equation.

 See also \cite{AKQ} for a different generalization of Stein--Weiss conjugate systems of harmonic functions. There conjugate differential forms on Lipschitz domains were constructed by inverting a generalized double layer potential equation on the boundary.  

 Let us introduce some notation in order to state our results. 
Our system of equations is  of the form
\begin{equation}  \label{eq:divform}
 \divv_\bx A \nabla_\bx u(\bx)=  
 \left(\sum_{i,j=0}^n\sum_{\beta= 1}^m \pd_i (A_{i,j}^{\alpha, \beta} \pd_j u^{\beta})(\bx)\right)_{\alpha=1,\ldots, m}=0 ,
 \qquad  \bx\in\Omega,
\end{equation}
where $\pd_i= \tdd{}{x_i}$, $0\le i\le n$ and the matrix of coefficients is
$A=(A_{i,j}^{\alpha,\beta}(\bx))_{i,j=0,\ldots,n}^{\alpha,\beta= 1,\ldots,m}\in L_\infty(\Omega;\mL(\C^{(1+n)m}))$, $n,m\ge 1$.  
We emphasize that the methods used here work equally well for systems ($m\ge 2$) as for equations ($m=1$).
For the time being, $\Omega= \bO^{1+n}:= \sett{\bx\in \R^{1+n}}{|\bx|<1}$ for the unit ball in $\R^{1+n}$ (see the end of the introduction 
for more general Lipschitz domains). 
The coefficient matrix $A$ 
is assumed to satisfy the strict accretivity condition 
\begin{equation}   \label{eq:accrasgarding}
  \int_{S^n} \re(A(rx)\nabla_\bx u(rx), \nabla_\bx u(rx)) dx \ge \kappa \int_{S^n} |\nabla_\bx u(rx)|^2 dx
\end{equation}
for some $\kappa>0$, uniformly for a.e. $r\in(0,1)$ and $u\in C^1(\bO^{1+n}; \C^m)$ where we use polar coordinates $\bx=rx$, $r>0, x\in S^n$,  and $dx$ is the standard (non-normalized) surface measure on $S^n=\partial\bO^{1+n}$. The optimal $\kappa$ is denoted $\kappa_{A}$.
This ellipticity condition is natural when viewing $A$ as a perturbation of its boundary trace. See below. 

The boundary value problems we consider are to find $u\in \mD'(\bO^{1+n};\C^m)$ solving \eqref{eq:divform} in distribution sense,
with appropriate interior estimates of $\nabla_\bx u$ and
 Dirichlet data in $L_{2}$, or  Neumann data in $L_{2}$, or regular Dirichlet data with gradient in $L_{2}$. 
Note that since we shall impose distributional $\nabla_\bx u\in L_2^\loc$, $u$ can be identified
with a function $u\in W^{1,\loc}_2(\bO^{1+n}, \C^m)$, \textit{i.e.} with a weak solution.   In order to study these  boundary value problems, our task, and this is the first main core of the work,  is to obtain  $L^2$ \textit{a priori} estimates.  

As in the previous work \cite{AA1} on the upper-half space $\R^{1+n}_{+}$, we  reduce \eqref{eq:divform} to a first order system with the conormal gradient as unknown function,
so the strategy and the scale-invariant estimates are similar. See the Road Map Section in \cite{AA1} for an overview.
Some changes will arise in the algebraic setup and in the analysis though. Here, the curvature of the boundary (the sphere) will play a role in the algebraic setup, making the unit circle slightly different from the higher dimensional spheres.  In addition, owing to the fact that the boundary is compact, we may use Fredholm theory
to obtain representations and solvability by only making assumptions on the coefficients near the boundary.
We shall focus on this part here and give full details.
We also mention that the whole story relies on a quadratic estimate for a first order bisectorial operator
acting on the boundary function space.
On the upper-half space, this estimate was already available from \cite{AKMc} as a consequence of the 
strategy to prove the Kato conjecture on $\R^n$.
We shall need to prove it on the sphere, essentially by localization and reduction
to \cite{AKMc1}, where such estimates were proved for first order operators with boundary conditions.
An implication of independent interest is the solution to the Kato square root on Lipschitz manifolds. This  is explained in Section~\ref{sec:Kato}.

As is known already for real equations ($m=1$) from work of Caffarelli, Fabes and Kenig~\cite{CaFK},
solvability requires a Dini square regularity condition on the coefficients in the transverse direction to the boundary.  So it is natural to work under a condition of this type. We use the discrepancy function and the Carleson condition  introduced by Dahlberg~\cite{D2}. For a measurable function $f$ on $\bO^{1+n}$, set 
  \begin{equation}
   f^*(\bx):= \essup \limits _{\by\in W^o(\bx)} | f(\by) |, 
\end{equation}
where $W^o(\bx)$ denotes a Whitney region around $\bx\in \bO^{1+n}$ and
  \begin{equation}    \label{eq:defCarleson}
  \|f\|_{C}:=\sup_{r(Q)<c} \left(\frac 1{|Q|} \iint_{(e^{-r(Q)}, 1) Q} f^*(\bx)^2 \frac{d\bx}{1-|\bx|}\right)^{1/2}  \qquad \text{for some fixed } c<1,
\end{equation}
where  the supremum is over all geodesic balls $Q\subset S^n$ of radius $r(Q)<c$. We make  the standing assumption on $A$ throughout that there exists  $A_{1}$  a measurable coefficient matrix on $S^n$, identified with radially independent coefficients in $\bO^{1+n}$, such that $\E(\by):=A({\bf y})-A_{1}(y), y=\by/|\by|$,  satisfies the large Carleson condition  
\begin{equation}    \label{eq:limCarleson}
  \|\E\|_{C}<\infty.
\end{equation}
 The choice of $c$ is irrelevant.
 Note that this means in particular that $\E^*$ vanishes on $S^n$ in a
certain sense and so $A_{1}(y)=A({\bf y}/|{\bf y}|)$. In fact, it can be shown as in \cite[Lemma~2.2]{AA1} that if  there is one such $A_{1}$, it is uniquely defined,  $\|A_{1}\|_{\infty}\le \|A\|_{\infty}$ and $\kappa_{A_{1}}\ge \kappa_{A}$. So we call $A_{1}$ the boundary trace of $A$. 
It turns out that this is a very natural assumption with our method, implying a wealth of \textit{a priori} information about weak solutions as stated in Theorem~\ref{thm:apriori}. Such a result applies  in particular to all systems with radially independent coefficients since $\E=0$ in that case.

For a function $f$ defined in $\bO^{1+n}$, its truncated modified non-tangential maximal function  is defined as in \cite{KP}  by
\begin{equation}
\label{eq:No}
 \tN^o(f)(x):= \sup_{1-\tau<r<1} \left( {|W^o(rx)|^{-1}} \int_{W^o(rx)} |f(\by)|^2 d\by\right)^{1/2}, \qquad x\in S^n,
\end{equation}
for some fixed $\tau<1$. Note that changing the value of $\tau$ will not affect the results.  We shall use the notation $f_{r}(x):=f(rx)$ for $0<r<1, x\in S^n$. Our main result is the following.

\begin{thm}[A priori representations and estimates, existence of a trace, Fatou type convergence]    \label{thm:apriori}
   Consider coefficients $A\in L_\infty(\bO^{1+n}; \mL(\C^{(1+n)m}))$ which are strictly accretive in the sense of \eqref{eq:accrasgarding} and satisfy \eqref{eq:limCarleson} with boundary trace  
   $A_1$.
   Consider $u\in W_2^{1,\loc}(\bO^{1+n};\C^m)$ which satisfies
 (\ref{eq:divform}) in $\bO^{1+n}$ distributional sense.
\begin{itemize}
\item[{\rm (i)}]
  If $\|\tN^o(\nabla_\bx u)\|_{L_2(S^n)} <\infty$, then 
  \smallskip
  \subitem{\rm (a)}   $\nabla_\bx u$ has limit
\begin{equation}
\label{eq:limitNeuReg}
  \lim_{r\to 1} \frac 1{1-r} \int_{r<|\bx|<(1+r)/2} | \nabla_\bx u(\bx) - g_1(x) |^2 d\bx =0
\end{equation}
for some  $g_1 \in L_2(S^n ;\C^{(1+n)m})$ with
$\|g_1\|_{L_2(S^n; \C^{(1+n)m})}\lesssim\|\tN^o(\nabla_\bx u)\|_{L_2(S^n)}$.

  \subitem{\rm (b)} $r\mapsto u_r$ belongs to $C(0,1; L_2(S^n;\C^m))$ and has $L_2$ limit $u_{1}$ at the boundary with
$$
  \| u_r - u_1 \|_{L_2(S^n; C^m)} \lesssim 1-r, 
$$
and $u_{1}\in W^1_{2}(S^n;\C^m)$.
\subitem{\rm (c)} Fatou type results: For almost every $x\in S^n$, 
 $$\lim_{r\to 1} |W^o(rx)|^{-1}\int_{W^o(rx)} u(\by) d\by = u_{1}(x),$$
 $$\lim_{r\to 1} |W^o(rx)|^{-1}\int_{W^o(rx)} \pd_{t}u(\by) d\by = (g_{1})_{\no}(x),$$
 $$\lim_{r\to 1} |W^o(rx)|^{-1}\int_{W^o(rx)} (A\nabla_{\bx}u)_{\ta}(\by) d\by = (A_{1}g_{1})_{\ta}(x),$$
 and if $m=1$ (equations) or $n=1$ (unit disk) we also have 
 $$\lim_{r\to 1} |W^o(rx)|^{-1}\int_{W^o(rx)} \nabla_{\bx}u(\by)d\by = g_{1}(x),$$
  $$\lim_{r\to 1} |W^o(rx)|^{-1}\int_{W^o(rx)} (A\nabla_{\bx}u)(\by) d\by = (A_{1}g_{1})(x).$$

\item[{\rm (ii)}]
  If $\int_{\bO^{1+n}} |\nabla_\bx u|^2 (1-|\bx|)d\bx<\infty$, then
  \smallskip
  \subitem{\rm (a)} 
  $r\mapsto u_r$ belongs to $C(0,1; L_2(S^n;\C^m))$ and has $L_2$ limit
$$
  \lim_{r\to 1} \| u_r - u_1 \|_{L_2(S^n; \C^m)} =0
$$
for some $u_1\in L_2(S^n;\C^m)$. 
\subitem{\rm (b)} We have \textit{a priori} estimates
\begin{gather} \label{eq:apriori}
  \|\tN^o(u)\|^2_{L_2(S^n)}  \lesssim  \int_{\bO^{1+n}} |\nabla_\bx u|^2 (1-|\bx|)d\bx
  + \left| \int_{S^n} u_1(x) dx \right|^2, \\
  \|u_r\|_{L_2(S^n; \C^m)}^2  \lesssim r^{-(n-1)} \int_{\bO^{1+n}} |\nabla_\bx u|^2 (1-|\bx|)d\bx
  + \left| \int_{S^n} u_1(x) dx \right|^2,  \quad r\in (0,1).
\end{gather}
\subitem{\rm (c)} Fatou type results:  For almost every $x\in S^n$, 
$$\lim_{r\to 1} |W^o(rx)|^{-1}\int_{W^o(rx)} u(\by) d\by = u_{1}(x). $$
\end{itemize}
\end{thm}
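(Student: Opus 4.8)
The plan is to reduce the second order system \eqref{eq:divform} to a first order system for the conormal gradient, as in \cite{AA1}, and then to transfer to the ball the operator-theoretic machinery developed there. Writing $\bx = rx$ in polar coordinates and introducing a boundary distance variable $t$ (e.g.\ $t = 1-|\bx|$), and letting $F$ denote the conormal gradient of $u$ --- the pair consisting of the $A$-conormal derivative $\pd_\nu^A u$ and the ($r$-normalised) tangential gradient $\nabla_{S^n}u$ --- one checks that \eqref{eq:divform} is equivalent, in the interior and in an appropriate weak sense, to
\begin{equation*}
  \pd_t F + (DB + R)F = 0 \qquad \text{on } (0,t_0)\times S^n,
\end{equation*}
where $D$ is a self-adjoint first order (Hodge--Dirac type) operator on $S^n$, $B = B(A)$ is a bounded multiplication operator accretive on $\clos{\ran D}$ by \eqref{eq:accrasgarding}, $R$ collects lower order terms produced by the curvature of $S^n$ (this is where the circle $n=1$ differs), and $B$ differs from its radially independent boundary value $B_1 = B(A_1)$ by a perturbation whose size is controlled by $\E$ in the Carleson norm \eqref{eq:defCarleson}. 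The basic analytic input is then the quadratic estimate --- equivalently, the bounded holomorphic functional calculus --- for $DB_1$ on $L_2(S^n)$, proved by localisation and reduction to \cite{AKMc1}; this is the Kato-type statement of Section~\ref{sec:Kato}. It furnishes the topological Hardy splitting $L_2(S^n) = \mH^+ \oplus \mH^-$ into spectral subspaces of $DB_1$, the semigroups $e^{-t|DB_1|}$, and the square function and non-tangential estimates for the associated maximal regularity operators, exactly as on the half-space.

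With this in hand the argument follows the pattern of \cite{AA1}. One first establishes, near the boundary, the representation $F_t = e^{-t|DB_1|}h + \text{(correction)}$ with $h \in \mH^+$, where the correction is a maximal regularity operator applied to the perturbation term $\sim \E F$ together with the curvature contribution $RF$; by the estimates of \cite{AA1} it is controlled by $\|\E\|_C$ and by the curvature, and, crucially, it tends to $0$ in $L_2(S^n)$ as $t \to 0$ precisely because $\E^\ast$ vanishes at $S^n$ in the Carleson sense \eqref{eq:limCarleson}. Hence $F_t$ converges in $L_2(S^n)$ as $t \to 0$, which after undoing the conormal transformation and using $A_t \to A_1$ gives \eqref{eq:limitNeuReg} with a well-defined $g_1$; under hypothesis (i) the non-tangential bound for the functional calculus yields $\|g_1\|_{L_2(S^n)} \lesssim \|\tN^o(\nabla_\bx u)\|_{L_2(S^n)}$, while under hypothesis (ii) the square function hypothesis is, modulo the constants (i.e.\ the relevant part of $\nul D$, whence the term $|\int_{S^n} u_1|^2$ in \eqref{eq:apriori}), exactly the finiteness of $\|h\|_{L_2}$. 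The non-tangential estimate \eqref{eq:apriori} for $u$ itself is then obtained by combining the tent-space bound for $F$ coming from the functional calculus with interior Caccioppoli/Moser estimates for weak solutions of \eqref{eq:divform}, which turn $L_2$ Whitney averages of $\nabla_\bx u$ into pointwise control of $u$ up to its mean; the factor $r^{-(n-1)}$ is a change-of-variables factor between the spheres of radius $r$ and $1$. Finally $u$ is recovered from $F$: since the tangential part of $F$ is essentially $\nabla_{S^n}u_r$, integrating $\pd_t u_r$ --- controlled by $F$ --- along the radial direction gives $r\mapsto u_r \in C(0,1;L_2(S^n))$ and its $L_2$ trace $u_1$; in case (i) one gets the Lipschitz bound $\|u_r - u_1\|_{L_2(S^n)} \lesssim 1-r$ from $g_1 \in L_2$, and $u_1 \in W^1_2(S^n)$ with $\nabla_{S^n}u_1 = (g_1)_\ta$, while in case (ii) the a.e.\ convergence of Whitney averages of $u$ to $u_1$ follows from the $L_2$ convergence, the maximal bound \eqref{eq:apriori} and a density argument.

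The Fatou-type statements in (i)(c) are the genuinely new ingredient; they follow from a general result on a.e.\ non-tangential convergence for the semigroup representation above. The scheme is the usual one: prove an $L_2(S^n)$ --- in fact weak-type $(2,2)$ --- bound for the maximal oscillation
\begin{equation*}
  x \longmapsto \sup_{1-\tau<r<1}\Big( |W^o(rx)|^{-1}\int_{W^o(rx)}\big| v(\by) - c_{rx} \big|^2\, d\by \Big)^{1/2},
\end{equation*}
where $v$ is the relevant quantity and $c_{rx}$ its Whitney average, then decompose $v$ into $e^{-t|DB_1|}\eta$ with $\eta$ in a dense class for which non-tangential convergence of Whitney averages to the trace is elementary, plus a remainder with $\tN^o$-norm $<\varepsilon$, whose oscillation is therefore small off a set of measure $O(\varepsilon)$ by the maximal inequality; letting $\varepsilon\to0$ gives a.e.\ convergence. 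Applied to $u$, to $\pd_t u$ and to $(A\nabla_\bx u)_\ta$ this yields the three stated limits. That in general only these combinations converge a.e.\ --- and the full $\nabla_\bx u$ and $A\nabla_\bx u$ only when $m=1$ or $n=1$ --- reflects the interior regularity theory: for $m=1$ the De Giorgi--Nash estimate, and for $n=1$ the one-dimensionality of the boundary, provide the extra control needed to pass from the conormal quantities governed by the first order system to the full gradient.

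The main obstacle is twofold. The conceptually new and hard part is the a.e.\ non-tangential convergence: it must be established for solutions of rough elliptic systems assuming only finiteness of $\tN^o(\nabla_\bx u)$, or of the square function, \emph{without} presupposing any explicit formula for $u$, so the maximal oscillation inequality above has to be derived from the functional calculus rather than imported from classical potential theory (there are no pointwise kernel bounds available). The second, more technical but still substantial, difficulty is the algebraic setup on the ball: producing the first order system with the correct coefficient operator $B$ and curvature term $R$, verifying that $B$ is accretive on $\clos{\ran D}$ and that $B - B_1$ is genuinely dominated by the Dahlberg--Carleson norm of $\E$, so that the maximal regularity estimates of \cite{AA1} apply and the correction in the representation really does vanish at the boundary --- all while keeping track of the special behaviour of the unit circle.
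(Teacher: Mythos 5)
Your outline reproduces the paper's general strategy (reduction of \eqref{eq:divform} to a first order system for the conormal gradient, quadratic estimates for the boundary operator obtained by localisation to \cite{AKMc1}, AA1-type maximal regularity for the Carleson perturbation, Fatou results via a maximal inequality), but two steps as you describe them do not work. First, you keep the semigroup of $DB_1$ and push the curvature term $RF=\tfrac{n-1}{2}Nf$ into the Duhamel correction, claiming the correction is ``controlled by $\|\E\|_C$ and by the curvature'' and vanishes at the boundary. The curvature multiplier is a constant zeroth order term: it does not vanish at $S^n$ and its Carleson norm is infinite (the integral $\int_0^{r(Q)}dt/t$ diverges), so the estimates of \cite{AA1} that you invoke for the correction on $\mX$ and $\mY$ --- which require the multiplicator norm $\|\cdot\|_*$, comparable to $\|\cdot\|_{C\cap L_\infty}$ --- simply do not apply to it; moreover the relevant piece is of weakly singular type ($Nf$ is not $D$ of anything controlled), and the crude Schur/Hardy bound for $\int_0^t e^{-(t-s)\Lambda}Nf_s\,ds$ on $L_2(t\,dt)$ near $t=0$ fails for a non-decaying multiplier. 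This is precisely why the paper absorbs the curvature into the generators $D_0=DB_0+\tfrac{n-1}{2}N$ and $\tD_0=B_0D-\tfrac{n-1}{2}N$ (Proposition~\ref{prop:divformasODE}), proves resolvent bounds on hyperbolic spectral sets (Proposition~\ref{prop:spectrum}) and the functional calculus for these operators (Theorem~\ref{thm:QE}, Corollary~\ref{cor:fcalc}), so that only $\E=B_0-B$ enters $S_A$; the identity $e^{\sigma t}e^{-t\tilde\Lambda}=I$ on $\nul(D)$, on which the trace identification and the Fatou argument rest, is also a consequence of this choice.

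Second, and more seriously, your justification of the central estimate \eqref{eq:apriori} --- a tent-space bound for the conormal gradient plus interior Caccioppoli/Moser estimates ``turning $L_2$ Whitney averages of $\nabla_\bx u$ into pointwise control of $u$'' --- is not a proof. Interior regularity only bounds $u$ on a Whitney region by its average plus a local gradient term; summing such bounds along a cone under a square-function hypothesis does not yield an $L_2$ bound on $\tN^o(u)$: the implication ``square function controls non-tangential maximal function'' is a global theorem (Dahlberg--Jerison--Kenig for real symmetric equations), and Moser estimates are in any case unavailable for the systems $m\ge 2$ covered by the statement. The paper proves \eqref{eq:apriori} through the second ODE for the potential $v$ (Section~\ref{sec:conjugate}): $u_r=r^{-\sigma}\big(e^{-t\tilde\Lambda}v_0+\tS_A f_t\big)_\no$, with $\|\tN((e^{-t\tilde\Lambda}v_0)_\no)\|_2\lesssim\|v_0\|_2$ from Theorem~\ref{thm:NTtilde} and the genuinely new Carleson-measure estimate $\|\tN((\tS_A f)_\no)\|_2\lesssim\|\E\|_{C\cap L_\infty}\|f\|_\mY$ of Lemma~\ref{lem:NTofSA}, proved by tent-space duality and the $L_p$ and $L_2\to L_q$ off-diagonal bounds of Lemma~\ref{lem:offdiagonal}; nothing in your sketch produces this estimate. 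Its truncated version is also what closes your Fatou scheme: the remainder in the representation is $\tS_A f$, not a semigroup term, and its smallness near the boundary comes from $\|\tN(\chi_{t<\tau}\tilde w_\no)\|_2\to0$ rather than from a dense-class argument. Finally, the restriction to $m=1$ or $n=1$ for convergence of the full gradient is not a De Giorgi--Nash issue but reflects that pointwise accretivity is needed to invert $B_0$ as a pointwise multiplier (Theorem~\ref{thm:aecvReg/Neu}).
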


The definition of the normal component $(\cdot)_{\no}$ and tangential part $(\cdot)_{\ta}$ of a vector field will be given later.  Not stated here are representation formulas giving ansatzes to find solutions as they use a formalism defined later.  In particular, we introduce a notion of a pair of conjugate systems associated to a solution. We note that the non-tangential maximal estimate \eqref{eq:apriori} was already proved in the $\R^{1+n}_{+}$ setting of \cite{AA1}. Again, this is an \textit{a priori} estimate showing that, under the assumption 
  $\|\E\|_{C}<\infty$, the class of weak solutions with square function estimate $\int_{\bO^{1+n}} |\nabla_\bx u|^2 (1-|\bx|)d\bx<\infty$ is contained in the class of weak solutions with  non-tangential maximal estimate $ \|\tN^o(u)\|_{2}<\infty$. 
  The almost everywhere convergences of Whitney averages are new. They apply as well to the setup in \cite{AA1}.
  
Theorem~\ref{thm:apriori} enables us to make the following rigorous definition of 
well-posedness of the BVPs. 

\begin{defn}    \label{defn:wpbvp}
   Consider coefficients $A\in L_\infty(\bO^{1+n}; \mL(\C^{(1+n)m}))$ which are strictly accretive in the sense of \eqref{eq:accrasgarding}.
   \begin{itemize}
  \item By the Neumann problem with coefficients  
  $A$ being well-posed, we mean that
  given $\bphi\in L_2(S^n;\C^m)$ with $\int_{S^n}\bphi(x) dx=0$, 
  there is a function $u\in W_2^{1,\loc}(\bO^{1+n};\C^m)$
  with estimates $\|\tN^o(\nabla_\bx u)\|_{L_2(S^n)} <\infty$, unique modulo constants, 
  solving  (\ref{eq:divform}) 
  and having trace $g_1= \lim_{r\to 1} (\nabla_\bx u)_r$ in the sense of \eqref{eq:limitNeuReg}  such that $(A_1 g_1)_\no= \bphi$.
  \item Well-posedness of the regularity problem is defined in the same way, but 
 replacing the boundary condition $(A_1 g_1)_\no= \bphi$ by
  $(g_1)_\ta= \bphi$, for a given $\bphi\in \ran(\nabla_S)\subset L_2(S^n;\C^{nm})$.

  \item By the Dirichlet problem with coefficients $A$ being well-posed, we mean that 
  given $\bphi\in L_2(S^n;\C^m)$, there is a unique function $u\in W_2^{1,\loc}(\bO^{1+n};\C^m)$
  with estimates $\int_{\bO^{1+n}} |\nabla_\bx u|^2 (1-|\bx|)d\bx<\infty$,
  solving (\ref{eq:divform})  and having trace  $\lim_{r\to 1}u_r= \bphi$ in the sense of almost everywhere convergence of Whitney averages.
\end{itemize}
\end{defn}

  For the Neumann and regularity problem when $\|\E\|_{C}<\infty$, for equations  ($m=1$) or in the unit disk ($n=1$) or any system for which $A$ is strictly accretive in pointwise sense, the trace can  also be defined  in the sense of almost everywhere convergence of Whitney averages of $\nabla_{\bx}u$ and the same for the conormal derivative $(A\nabla_{\bx}u)_{\no}$.  The operator $\nabla_{S}$ denotes the tangential gradient. See Section~\ref{sec:ODE}.
   
   For the Dirichlet problem, the trace is defined  for the almost everywhere convergence of Whitney averages. When $\|\E\|_{C}<\infty$,  Theorem~\ref{thm:apriori} shows that it is the same as the trace in $L_{2}$ sense.  We remark that we modified the meaning of the boundary trace in the definition of the Dirichlet problem  compared to \cite{AA1}. This modification can be made there as well and the same results hold. 
  
  We now come to our  general results on these BVPs.  A small Carleson condition, but only near the boundary, is further  imposed to obtain invertibility of some operators. The second result is on the precise relation between Dirichlet and regularity problems. The first and third are perturbations results 
for radially dependent and independent perturbations respectively. The last  is
 a well-posedness result for three classes of radially independent coefficients.

\begin{thm}   \label{thm:rdepLip}
  Consider coefficients $A\in L_\infty(\bO^{1+n}; \mL(\C^{(1+n)m}))$  which are strictly accretive in the sense of \eqref{eq:accrasgarding}.  Then there exists $\epsilon>0$, such that if $A$
 satisfies the small Carleson condition 
\begin{equation}
\label{eq:smalllimCarleson}
\lim_{\tau\to 1}\|\chi_{\tau<r<1} (A-A_1) \|_C <\epsilon
\end{equation}
  and the Neumann problem with coefficients $A_1$ is well-posed,
  then the Neumann problem is well-posed with coefficients $A$. 

  The corresponding perturbation result for the regularity and Dirichlet problems also holds.
 For the Neumann and regularity problems, the solution $u$ for datum $\bphi$ has estimates
 $$
  \int_{|\bx|<1/2} |\nabla_\bx u|^2 d\bx \lesssim \|\tN^o(\nabla_\bx u)\|^2_{2} 
 \approx \|\bphi\|^2_{2}.
 $$
  For the Dirichlet problem, the solution $u$ for datum $\bphi$ has estimates
\begin{equation*}
   \|\tN^o(u)\|_{2}^2 \approx \sup_{1/2<r<1}\|u_r \|_{2}^2 
   \approx
  \int_{\bO^{1+n}} |\nabla_\bx u|^2 (1-|\bx|)d\bx+ \left| \int_{S^n} \bphi(x) dx \right|^2 
  \approx \|\bphi\|_{2}^2.
\end{equation*}
\end{thm}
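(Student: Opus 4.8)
The plan is to deduce Theorem~\ref{thm:rdepLip} from the functional calculus machinery already set up for the first order reformulation, exactly parallel to the corresponding perturbation argument in \cite{AA1}, with the extra inputs being (a) the quadratic estimate for the relevant bisectorial operator on $L_2(S^n)$ (which we invoke from the section preparing the Kato estimate on the sphere) and (b) the \textit{a priori} results of Theorem~\ref{thm:apriori}. The overall strategy: reduce \eqref{eq:divform} to the ODE $\partial_t F + DB F = 0$ for the conormal gradient $F$, where $D$ is a fixed self-adjoint first order operator on the boundary sphere and $B=B(\bx)$ encodes the coefficients; the radially independent coefficient $A_1$ gives a $t$-independent generator $DB_1$, and the hypothesis $\|\E\|_C<\infty$ says $B-B_1$ is a Carleson perturbation. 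One then writes the solution with boundary data in the Hardy-type space $\mH^+_{DB_1}$ via the holomorphic semigroup $e^{-tDB_1}$ generated on the spectral subspace, and treats the $A$-solution as a perturbation of this.

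The key steps, in order: First, recall from the first-order section that well-posedness of the Neumann (resp. regularity, Dirichlet) problem for $A_1$ is equivalent to the boundary projection from $\mH^+_{DB_1}$ onto the corresponding data space (normal conormal component, tangential gradient component, or Dirichlet trace) being an isomorphism; this is where the $A_1$-well-posedness hypothesis enters. Second, establish the perturbation estimate: the small Carleson norm controls, via a Carleson-measure/square-function argument and the quadratic estimate for $DB$, the difference between the $A$-evolution and the $A_1$-evolution in the relevant norms; concretely one shows the operator on the data space induced by the $A$-problem differs from the invertible $A_1$-operator by something of norm $\lesssim \limsup_{\tau\to1}\|\chi_{\tau<r<1}(A-A_1)\|_C$, hence is invertible once $\epsilon$ is small enough (a Neumann series argument). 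Since the smallness is only required near the boundary, one splits $\bO^{1+n}$ into $\{|\bx|<1-\delta\}$ and a boundary layer: on the interior the solution is handled by local elliptic estimates and the qualitative \textit{a priori} bounds, while on the boundary layer the small-Carleson perturbation series converges. Third, having produced a solution with the right boundary data and the right \textit{a priori} class membership ($\|\tN^o(\nabla_\bx u)\|_2<\infty$ or the square-function bound), invoke Theorem~\ref{thm:apriori} to get the trace in the required sense and the two-sided estimates $\|\tN^o(\nabla_\bx u)\|_2\approx\|\bphi\|_2$, etc.; uniqueness follows from the injectivity half of the isomorphism statement together with the uniqueness clauses of Definition~\ref{defn:wpbvp}. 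Fourth, for the regularity and Dirichlet problems repeat the argument with the tangential, respectively scalar-trace, boundary functional in place of the Neumann one, and for the Dirichlet case carry the extra mean-value term $|\int_{S^n}\bphi\,dx|^2$ through the estimates as it appears in \eqref{eq:apriori}.

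The interior estimate $\int_{|\bx|<1/2}|\nabla_\bx u|^2 d\bx\lesssim\|\tN^o(\nabla_\bx u)\|_2^2$ is then immediate from Cauchy--Schwarz on Whitney averages (any ball of radius comparable to $1/2$ is a finite union of Whitney regions), and the equivalence with $\|\bphi\|_2^2$ is exactly the boundary isomorphism just proved. Similarly $\|\tN^o(u)\|_2^2\approx\sup_{1/2<r<1}\|u_r\|_2^2\approx\int_{\bO^{1+n}}|\nabla_\bx u|^2(1-|\bx|)d\bx+|\int_{S^n}\bphi\,dx|^2\approx\|\bphi\|_2^2$ combines part (ii) of Theorem~\ref{thm:apriori} (which gives $\lesssim$) with the Hardy-space estimate for the semigroup $e^{-tDB_1}$ (which gives $\gtrsim$ after perturbation).

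The main obstacle is the perturbation estimate in the boundary layer, i.e.\ controlling the $A$- versus $A_1$-evolution by the small Carleson norm. This requires the quadratic estimate $\int_0^1\|t DB\,e^{-tDB}h\|_2^2\,\tfrac{dt}{t}\lesssim\|h\|_2^2$ on $L_2(S^n)$ uniformly, which on the sphere is not directly available from \cite{AKMc} and must be obtained by localization and reduction to \cite{AKMc1}; granting it, the argument is the Carleson-measure perturbation scheme of \cite{AA1}, but one must be careful that the curvature of $S^n$ (and the special low-dimensional behaviour $n=1$) does not spoil the off-diagonal estimates and the Schur-type bounds underlying that scheme — this is precisely the ``changes in the algebraic setup and in the analysis'' flagged in the introduction. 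The interior/boundary-layer splitting and the bookkeeping of the mean-value term in the Dirichlet case are routine by comparison.
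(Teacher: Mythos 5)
Your overall architecture (first order reformulation, quadratic estimate for the generator on $S^n$, characterization of well-posedness via a boundary isomorphism, then Theorem~\ref{thm:apriori} for the trace and the final norm equivalences) matches the paper, but the heart of the argument — passing from smallness of the Carleson norm \emph{only near the boundary} to invertibility — is where your proposal breaks down. You claim the $A$-induced operator on the data space "differs from the invertible $A_1$-operator by something of norm $\lesssim$ the small Carleson norm, hence is invertible by a Neumann series," and you propose to handle the rest by an interior/boundary-layer splitting with local elliptic estimates. This does not work: away from the boundary the discrepancy $\E$ is only bounded, not small, so neither $S_A$ nor the difference $E_A^+-E_0^+$ is small in operator norm, the Neumann series does not converge, and there is no way to solve the interior part "separately" and glue, since the problem is global (the integral operator $S_A$ and the perturbed Hardy projection couple all values of $t$). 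The paper's essential new ingredient — advertised as the main point of working on a compact boundary — is Fredholm theory: (1) invertibility of $I-S_A$ on $\mX$ and $\mY$ (Theorem~\ref{thm:SAFredholm}) is proved by combining the near-boundary smallness with a lower bound away from the boundary (Lemma~\ref{lem:L2lowerbound}, an accretivity/Caccioppoli argument, not smallness), compactness of commutators and off-diagonal pieces (Lemmas~\ref{lem:compactcomm}, \ref{lem:offdiagcomm}, resting on the compact inverse of $D$ on $\mH$ via Rellich), the method of continuity to get index zero, and a separate injectivity proof (Lemma~\ref{lem:injonX}, via extension of the solution to all of $\R^{1+n}$); (2) the boundary map $h^+\mapsto (E_A^+h^+)_\no$ is then written as $h^+_\no+(h_1)_\no+(e^{-(\tau/2)\Lambda}h_2)_\no$ in Theorem~\ref{prop:rdeppert}, where only $(h_1)_\no$ is absorbed by smallness plus the $A_1$ well-posedness, while the tail term is \emph{compact} because $e^{-(\tau/2)\Lambda}$ is compact; Fredholm index zero plus the injectivity Lemma~\ref{lem:automaticinj} (a Green's formula/accretivity argument, not a consequence of the uniqueness clause of Definition~\ref{defn:wpbvp}) then gives the isomorphism. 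None of these compactness and injectivity inputs appear in your proposal, and without them the step "invertible once $\epsilon$ is small" is unjustified.

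Two smaller discrepancies: the paper does not treat the Dirichlet case by re-running the argument with the scalar trace functional; it deduces it by duality, Dirichlet for $A$ being equivalent to regularity for $A^*$ (Proposition~\ref{prop:reg=dir}), with the regularity perturbation obtained by replacing normal components by tangential parts. Also note that the generator on the sphere is $DB_0+\tfrac{n-1}{2}N$, not $DB_1$; the zero-order curvature term is precisely what forces the modified functional calculus and the hyperbolic (rather than sectorial) spectral regions, so it cannot be suppressed in the perturbation estimates.
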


An ingredient of the proof is the following relation between  Dirichlet and regularity problems,
in the spirit of \cite[Thm.~5.4]{KP}.

\begin{thm}    \label{thm:reg=dirintro}
  Consider coefficients $A\in L_\infty(\bO^{1+n}; \mL(\C^{(1+n)m}))$ which are strictly accretive in the sense of \eqref{eq:accrasgarding}.  Then there exists $\epsilon>0$, such that if $A$
 satisfies the small Carleson condition 
\eqref{eq:smalllimCarleson},
  then the regularity problem with coefficients $A$ is well-posed if and only if the 
  Dirichlet problem with coefficients $A^*$ is well-posed.
\end{thm}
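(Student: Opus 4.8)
The plan is to argue entirely in the first order reformulation used throughout the paper (following \cite{AA1}), where a weak solution of \eqref{eq:divform} with its interior control is encoded by the boundary trace of its conormal gradient lying in a Hardy subspace of $L_2(S^n)$ attached to a first order bisectorial operator. Under the standing hypothesis \eqref{eq:limCarleson}, which is in force, Theorem~\ref{thm:apriori} together with the reverse estimates give the following reformulations: a solution $u$ of \eqref{eq:divform} with $\|\tN^o(\nabla_\bx u)\|_{L_2(S^n)}<\infty$ is, modulo constants, the same datum as its gradient trace $g_1\in\mH^+$, the Hardy subspace for the operator built from $A$; the map $\Lambda_A\colon g_1\mapsto (g_1)_\ta$ is bounded into $\ran(\nabla_S)$; and the regularity problem for $A$ is well-posed if and only if $\Lambda_A$ is an isomorphism $\mH^+\to\ran(\nabla_S)$. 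Dually, a solution $w$ of $\divv A^*\nabla_\bx w=0$ with $\int_{\bO^{1+n}}|\nabla_\bx w|^2(1-|\bx|)d\bx<\infty$ is, modulo constants, encoded by its gradient trace at the potential level in the corresponding Hardy space $\mathcal G_{A^*}$ for $A^*$, and the Dirichlet problem for $A^*$ is well-posed if and only if the map $\Gamma_{A^*}$ sending this parameter (together with the scalar average of $w_1$) to $w_1\in L_2(S^n;\C^m)$ is an isomorphism. The small Carleson condition \eqref{eq:smalllimCarleson} is exactly what transfers these reformulations and the underlying quadratic estimate from the radially independent trace $A_1$ to $A$ itself, through the perturbation estimates that also yield Theorem~\ref{thm:rdepLip}. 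Finally the hypotheses are symmetric under $A\leftrightarrow A^*$ --- strict accretivity passes to adjoints, and $\|\chi_{\tau<r<1}(A^*-A_1^*)\|_C=\|\chi_{\tau<r<1}(A-A_1)\|_C$ with boundary trace $A_1^*$ --- so it suffices to produce the single equivalence $\Lambda_A$ iso $\iff\Gamma_{A^*}$ iso.

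The second step is a Green (Wronskian) identity realizing the pairing between the Hardy spaces for $A$ and for $A^*$, which is the concrete form of the operator identity between the functional calculi of the first order operator attached to $A$ and of its adjoint (attached to $A^*$). For $u$ a regularity-class solution of \eqref{eq:divform} and $w$ a square-function-class solution of $\divv A^*\nabla_\bx w=0$, integration by parts over the annuli $\{r<|\bx|<\rho\}$, using $\divv A\nabla_\bx u=0$, $\divv A^*\nabla_\bx w=0$ and $A^*=\overline{A^T}$, shows that $r\mapsto\int_{|\bx|=r}\big[(A\nabla_\bx u\cdot\nu)\overline w-\overline{(A^*\nabla_\bx w\cdot\nu)}\,u\big]\,d\sigma$ is independent of $r$; this is the Wronskian conservation law of the underlying first order system. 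Letting $r\to0$, where interior estimates bound the integrand while the measure of $\{|\bx|=r\}$ tends to $0$, and $\rho\to1$, where Theorem~\ref{thm:apriori} supplies the traces and $u_1\in W^1_2(S^n;\C^m)$ pairs with the conormal derivative of $w$ (which lives in a negative order Sobolev space), yields
\[
  \int_{S^n}(A_1g_1)_\no\,\overline{w_1}\,dx=\int_{S^n}u_1\,\overline{(A_1^*h_1)_\no}\,dx
\]
for all such pairs, where $g_1,h_1$ are the gradient traces, $u_1,w_1$ the function traces, and $(g_1)_\ta=\nabla_S u_1$.

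The third step is the duality argument. Since $\Lambda_A$ is bounded between Hilbert spaces, it is an isomorphism if and only if its Banach adjoint $\Lambda_A'$ is, and more precisely injectivity, respectively surjectivity, of $\Lambda_A$ corresponds (the relevant ranges being closed) to surjectivity, respectively injectivity, of $\Lambda_A'$. Using the identification $(\ran(\nabla_S))'$ with the zero-average subspace of $L_2(S^n;\C^m)$ via the pairing $\langle\nabla_S a,v\rangle:=\int_{S^n}a\,\overline v$, together with the pairing of the displayed identity to identify $(\mH^+)'$ with $\mathcal G_{A^*}$, one recognizes $\Lambda_A'$, up to these isomorphisms and the matching of the constant ($\C^m$) and average components as in \cite{AA1}, as the Dirichlet map $\Gamma_{A^*}$; hence $\Lambda_A$ iso $\iff\Gamma_{A^*}$ iso, which is the theorem. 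As a concrete check of one half: if the regularity problem for $A$ is well-posed and $w$ is square-function-class for $A^*$ with $w_1=0$, the displayed identity gives $\int_{S^n}u_1\,\overline{(A_1^*h_1)_\no}=0$ for every regularity-class $u$; by well-posedness $(g_1)_\ta=\nabla_S u_1$ fills $\ran(\nabla_S)$ and, adding constants to $u$, $u_1$ fills $W^1_2(S^n;\C^m)$, dense in $L_2$, so the conormal derivative of $w$ vanishes; combined with $(h_1)_\ta=\nabla_S w_1=0$ and the invertibility of the normal--normal block of $A_1^*$ (a consequence of strict accretivity), this forces $h_1=0$, whence $\nabla_\bx w\equiv0$ and $w\equiv0$ --- uniqueness for the Dirichlet problem for $A^*$ --- while existence for it follows symmetrically from injectivity of $\Lambda_A$.

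The main obstacle lies not in this skeleton but in setting up its ingredients precisely on the ball. Identifying the potential-level Hardy space $\mathcal G_{A^*}$ and making its pairing with $\mH^+$ a perfect duality requires care because the curvature of $S^n$ enters the first order operator, so that the circle $S^1$ genuinely differs from the higher dimensional spheres, and because the regularity and Dirichlet problems are normalized differently --- no average constraint on regularity data versus a genuine $L_2$ Dirichlet datum --- so the bookkeeping of the constant and zero-average components in matching $\Lambda_A'$ with $\Gamma_{A^*}$ must be done carefully. The other delicate point is the passage to the limit $\rho\to1$ in the Wronskian identity against the conormal derivative of a merely square-function-class solution, and the verification that \eqref{eq:smalllimCarleson} indeed transfers the operator-theoretic equivalences of the first step from $A_1$ to $A$; the latter is where the small Carleson perturbation theory underlying Theorem~\ref{thm:rdepLip} enters.
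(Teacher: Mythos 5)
Your skeleton parallels the paper's own proof: under \eqref{eq:smalllimCarleson} one characterizes well-posedness of the regularity problem for $A$ and of the Dirichlet problem for $A^*$ as invertibility of boundary maps defined on perturbed Hardy-type trace spaces (this is Proposition~\ref{prop:wpequiviso}), and then one dualizes. But the decisive step is missing from your argument. To pass from ``$\Lambda_A$ is an isomorphism'' to ``$\Gamma_{A^*}$ is an isomorphism'' you need much more than the Wronskian/Green pairing identity on pairs of solutions: you need that this pairing implements a \emph{perfect} duality between the trace space of conormal gradients of $\mX^o$-solutions for $A$ and the trace space parametrizing $\mY^o$-solutions for $A^*$, with the constant/average components matched, so that the Banach adjoint of the regularity map is literally the Dirichlet map for $A^*$. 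In the paper this is exactly the operator identity $(E_A^-)^*=N\tE_{A^*}^+N$ of \eqref{eq:dualityregtodir}, proved by dualizing the singular integral operators $S_A$ and $S_{A^*}$ together with the radially independent dualities $(E_0^\pm)^*=N\tE_0^\mp N$ and $\Lambda^*=N\tilde\Lambda N$, and then combined with the abstract two-projections Lemma~\ref{lem:abstract} and the quotient bookkeeping of Lemma~\ref{lem:dir}. Your proposal asserts this identification (``one recognizes $\Lambda_A'$ \dots as $\Gamma_{A^*}$'') and explicitly defers it as ``the main obstacle''; that deferred point is where essentially all the content of the theorem lies. The Green identity by itself only yields annihilation statements, i.e.\ the uniqueness half in your ``concrete check''; it cannot give existence, since surjectivity of an adjoint requires nondegeneracy and closed range of the pairing on the full trace spaces, not merely an identity evaluated on solutions.

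There are also concrete secondary gaps. First, the characterizations of well-posedness you invoke are only available once $I-S_A$ is invertible on $\mX$ and $I-S_{A^*}$ on $\mY$; under \eqref{eq:smalllimCarleson} this is not the perturbation estimate behind Theorem~\ref{thm:rdepLip} but the Fredholm argument of Theorem~\ref{thm:SAFredholm}, and without it the projections $E_A^\pm$, $\tE_{A^*}^\pm$ and hence your maps are not even defined. Second, the passage $\rho\to1$ in the Wronskian identity against the conormal derivative of a mere $\mY^o$-solution needs the $\dot W^{-1}_2$ trace supplied by Theorem~\ref{thm:conj}(ii), and the treatment of constants is not innocuous: the Dirichlet parametrization is only unique modulo $\tE_0^+\mH^\perp$, which is why the paper must pass through Lemma~\ref{lem:dir}. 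Third, in your uniqueness check the implication ``$h_1=0$, whence $\nabla_\bx w\equiv 0$'' is unjustified for a $\mY^o$-solution: a vanishing $W^{-1}_2$ trace of the conormal gradient does not obviously force the solution to vanish without again invoking the representation $f=(I-S_{A^*})^{-1}De^{-t\tilde\Lambda}\tilde h^+$ from Corollary~\ref{cor:diransatz}, i.e.\ the same machinery you were trying to bypass.
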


\begin{thm}   \label{thm:rindepLip}
  Consider radially independent coefficients $A_1\in L_\infty(S^n; \mL(\C^{(1+n)m}))$ which are
  strictly accretive in the sense of \eqref{eq:accrasgarding}.
  If the Neumann problem with coefficients $A_1$ is well-posed, 
  then there exists $\epsilon>0$ such that the Neumann problem  with coefficients $A_1'\in L_\infty(S^n; \mL(\C^{(1+n)m}))$ is well-posed
  whenever $\| A_1-A_1' \|_\infty <\epsilon$.
  The corresponding perturbation results for the regularity and Dirichlet problems also hold.
\end{thm}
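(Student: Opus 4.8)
\textit{Proof proposal.} The plan is to translate well-posedness into the invertibility of a fixed bounded operator restricted to a Hardy-type subspace, and then to observe that this invertibility is stable under small $L_\infty$ perturbations of $A_1$. Recall the first order reformulation (Section~\ref{sec:ODE}, following \cite{AA1}): for radially independent $A_1$ the conormal gradient of a solution of \eqref{eq:divform} solves an \emph{autonomous} first order system $\partial_r f + DB_1 f = 0$ on $\mH := \clos{\ran D}\subset L_2(S^n;\C^{(1+n)m})$, where $D$ is a fixed self-adjoint first order operator on $S^n$ (whose precise form, and whose exceptional behaviour for $n=1$, reflect the curvature) and $B_1$ is the bounded multiplication operator built algebraically from $A_1$. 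The quadratic estimate for $DB_1$, proved on the sphere in Section~\ref{sec:Kato} by localization and reduction to \cite{AKMc1}, gives $DB_1$ a bounded holomorphic functional calculus, hence a topological splitting $\mH = \mH^+_{A_1}\oplus\mH^-_{A_1}$ with $\mH^\pm_{A_1} = \chi^\pm(DB_1)\mH$. By Theorem~\ref{thm:apriori} and the accompanying representation formulas, the solutions with $\|\tN^o(\nabla_\bx u)\|_{2}<\infty$ are exactly those whose boundary trace $h$ lies in $\mH^+_{A_1}$ (the sign dictated by interior regularity), with $h_\no = (A_1 g_1)_\no$ and $h_\ta = (g_1)_\ta$; writing $L_2^0 := \sett{\varphi\in L_2(S^n;\C^m)}{\int_{S^n}\varphi\,dx=0}$, the Neumann problem for $A_1$ is thus well-posed iff the bounded operator $\mathcal N_{A_1}\colon \mH^+_{A_1}\to L_2^0$, $h\mapsto h_\no$ (the restriction of the \emph{fixed} normal projection to $\mH^+_{A_1}$) is an isomorphism, and the regularity problem for $A_1$ is well-posed iff $\mathcal R_{A_1}\colon\mH^+_{A_1}\to\ran(\nabla_S)$, $h\mapsto h_\ta$, is an isomorphism.

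The perturbation argument rests on the stability of this structure. The map $A_1'\mapsto B_1'$ is Lipschitz from $L_\infty(S^n;\mL(\C^{(1+n)m}))$ into the bounded operators on $\mH$, and \eqref{eq:accrasgarding} survives, with constant $\kappa_{A_1}-\|A_1-A_1'\|_\infty$, once $\|A_1-A_1'\|_\infty<\kappa_{A_1}$. The decisive input — the analogue on $S^n$ of the perturbation theory of such first order operators on $\R^{1+n}_+$ (\cite{AKMc}, \cite{AA1}), obtained together with the quadratic estimate of Section~\ref{sec:Kato} — is that $DB_1'$ then also has a bounded holomorphic functional calculus with bounds uniform near $A_1$, and that $B\mapsto\chi^\pm(DB)$ is continuous, indeed holomorphic, into $\mL(\mH)$ near $B_1$. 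Hence for $\|A_1-A_1'\|_\infty$ small $\chi^+(DB_1')\big|_{\mH^+_{A_1}}\colon\mH^+_{A_1}\to\mH^+_{A_1'}$ is an isomorphism close to the inclusion, so $\mH^+_{A_1'}$ is a small perturbation of $\mH^+_{A_1}$ inside $\mH$; composing, $\mathcal N_{A_1'}\circ\big(\chi^+(DB_1')\big|_{\mH^+_{A_1}}\big)\colon\mH^+_{A_1}\to L_2^0$ tends to $\mathcal N_{A_1}$ in operator norm as $\|A_1-A_1'\|_\infty\to 0$, because the target map $h\mapsto h_\no$ is fixed and only the source subspace moves. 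Invertibility of a bounded operator being an open condition, $\mathcal N_{A_1'}$ is an isomorphism for $\|A_1-A_1'\|_\infty<\epsilon$; together with the a priori estimates and uniqueness from Theorem~\ref{thm:apriori}, this is well-posedness of the Neumann problem for $A_1'$.

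For the regularity problem the argument is identical with $h\mapsto h_\no$ replaced throughout by the tangential projection $h\mapsto h_\ta$ onto the fixed space $\ran(\nabla_S)$. For the Dirichlet problem, well-posedness for $A_1'$ reduces, through the same formalism, to the invertibility of an analogous restriction of a fixed boundary map to $\mH^+_{A_1'}$ — equivalently, via the duality between the regularity problem for $A_1'$ and the Dirichlet problem for $(A_1')^*$, to well-posedness of the regularity problem for $(A_1')^*$, and $A_1'\mapsto (A_1')^*$ is an isometry preserving \eqref{eq:accrasgarding} and radial independence, so the case already treated applies; the interior control of $\tN^o(u)$ and of the square function is then supplied by \eqref{eq:apriori} and Theorem~\ref{thm:apriori}. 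Up to the replacement of the half-space by the sphere, this is the same proof as the corresponding result in \cite{AA1}. The one point genuinely requiring care is the operator-norm (holomorphic) dependence of the Hardy projections $\chi^\pm(DB)$ on $B$ near $B_1$ — equivalently the stability of the quadratic estimate for $DB$ under small $L_\infty$ perturbations of $B$: standard on $\R^{1+n}_+$ (\cite{AKMc}, \cite{AA1}), it has to be extracted from the functional calculus built in Section~\ref{sec:Kato}, with the curvature of $S^n$ entering only through the algebraic form of $D$ and not through this estimate. Granting it, the rest of the argument is soft.
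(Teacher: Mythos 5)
Your proposal is correct and follows essentially the paper's own route: well-posedness is characterized as invertibility of the fixed normal (resp.\ tangential) projection restricted to the Hardy subspace $\chi^+(DB_0+\tfrac{n-1}{2}N)\mH$ (Proposition~\ref{prop:wpequiviso}), the Hardy projections depend continuously on the coefficients by the quadratic estimates (Proposition~\ref{prop:contofhardy}), invertibility of a fixed operator restricted to a continuously moving subspace is an open condition (Lemma~\ref{lem:vardomains}, which you essentially reprove inline), and the Dirichlet case follows by duality with the regularity problem for the adjoint (Proposition~\ref{prop:reg=dir}). The only slip is cosmetic: the radial ODE is $\pd_t f+(DB_0+\tfrac{n-1}{2}N)f=0$, so the relevant spectral projection is $\chi^+(DB_0+\tfrac{n-1}{2}N)$ rather than $\chi^+(DB_1)$ --- the curvature enters through the zero-order term $\tfrac{n-1}{2}N$, not through $D$ --- but since Theorem~\ref{thm:QE} and Proposition~\ref{prop:contofhardy} cover this operator, your argument is unchanged.
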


\begin{thm}   \label{thm:rindepLiprellich}
  Consider radially independent coefficients $A_1\in L_\infty(S^n; \mL(\C^{(1+n)m}))$ which are
  strictly accretive  in the sense of \eqref{eq:accrasgarding}. The Neumann, regularity and Dirichlet problems with coefficients $A_1$ 
  are well-posed if 

  \begin{enumerate}
  \item  either $A_1$ is  Hermitean, \textit{i.e.} $A_1^*=A_1$,
  \item  or  block form, \textit{i.e.} $(A_1)_{\no\ta}=0= (A_1)_{\ta\no}$ in the normal/tangential splitting of $\C^{(1+n)m}$ (See Section~\ref{sec:ODE}),
  \item  or $A_{1}$ has H\"older regularity  $C^{s}(S^n;\mL(\C^{(1+n)m}))$, $s>1/2$.
\end{enumerate} 
   \end{thm}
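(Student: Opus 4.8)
The plan is to reduce, as throughout the paper, the second order system \eqref{eq:divform} with radially independent coefficients $A_1$ to a first order ODE $\partial_t f + DB f = 0$ (in the appropriate logarithmic coordinate $t = \log(1/|\bx|)$) for the conormal gradient $f$, where $B = B(A_1)$ is a multiplication operator on the boundary function space $L_2(S^n;\C^{(1+n)m})$ and $D$ is a self-adjoint first order differential operator on $S^n$; the key input is that $DB$ is bisectorial with a bounded holomorphic functional calculus and satisfies quadratic estimates (established earlier via localization and \cite{AKMc1}). Well-posedness of the Neumann/regularity/Dirichlet problem is then equivalent to the statement that the spectral subspaces $E^\pm$ of $DB$ (associated to the right and left half-planes) are complementary to the relevant boundary data subspaces — normal component of $B f$ for Neumann, tangential part of $f$ for regularity, and the Dirichlet trace for the Dirichlet problem (which is dual/adjoint to one of the others, by Theorem~\ref{thm:reg=dirintro}-type duality). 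So in each of the three cases (1)–(3) the task is to prove the relevant projection is an isomorphism onto its target.

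For case (1), $A_1$ Hermitean, I would run the Rellich identity argument: accretivity plus self-adjointness of $A_1$ gives that the sesquilinear form $\langle B f, f\rangle$ (integrated against the outward normal) controls both $\|f_\no\|_2^2$ and $\|f_\ta\|_2^2$ from above and below, which is exactly the classical Rellich estimate $\|(\nabla_\bx u)_\no\|_2 \approx \|(\nabla_\bx u)_\ta\|_2$. Combined with the Hardy space splitting $L_2 = E^+ \oplus E^-$ coming from the quadratic estimates, the Rellich estimate forces the Neumann and regularity projections to be injective with closed range; surjectivity then follows by a deformation/continuity argument in $A_1$ (connecting to the identity coefficient, which is well-posed) using that the Rellich constant is uniform along the path, exactly as in the half-space theory of \cite{AA1}. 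Case (2), block form, is the easiest: when $(A_1)_{\no\ta} = (A_1)_{\ta\no} = 0$ the operator $B$ respects the normal/tangential splitting, $DB$ decouples accordingly, and one reads off well-posedness directly — for the Dirichlet problem this is essentially the statement that the associated generalized Cauchy–Riemann system is in ``Kato square root'' form, and the splitting plus the functional calculus give the solution as a holomorphic semigroup applied to the data.

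Case (3), Hölder coefficients $A_1 \in C^s$ with $s > 1/2$, is the one I expect to be the main obstacle. The idea is a freezing-the-coefficients / Fredholm argument: at each boundary point $x_0$, the constant-coefficient operator $D B(A_1(x_0))$ is well-posed (constant coefficients on the sphere reduce, after freezing, to the flat constant-coefficient case which is classical — or is covered by one of the previous cases), and the Hölder continuity with exponent $s > 1/2$ is precisely the threshold that makes the difference $B(A_1) - B(A_1(x_0))$, after composition with $D$ and the relevant projections, a \emph{compact} perturbation on $L_2(S^n)$ — one gains $s - 1/2 > 0$ derivatives in a Sobolev-space estimate, and compactness of $S^n$ turns that gain into compactness of the operator (via Rellich–Kondrachov on the manifold). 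Then the Fredholm theory available because $\partial\Omega = S^n$ is compact (invoked in the introduction) upgrades ``injective modulo compact'' plus index zero to genuine invertibility. The delicate points here are: making the $s>1/2$ gain rigorous at the level of the functional calculus of $DB$ (not just of $B$), i.e. commutator estimates $[ \mathrm{sgn}(DB), \text{multiplier}]$ are smoothing of order $s-1/2$; and checking the Fredholm index is zero, which one does by the homotopy $A_1(x_0) + \theta(A_1 - A_1(x_0))$, $\theta \in [0,1]$, keeping accretivity and hence the Fredholm property along the path so the index is constant and equal to that at $\theta = 0$.
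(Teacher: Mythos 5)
Your outline gets the general frame right (reduction to the first order system, characterization of well-posedness via the boundary maps on the spectral subspace $E_0^+\mH$, Fredholm/continuity endgame), but it drops the one feature that makes the ball different from the half-space: the infinitesimal generator is $DB_0+\sigma N$ with $\sigma=\tfrac{n-1}{2}$, not $DB_0$, and this curvature term is exactly what the paper's proofs are organized around. In case (2) your claim that $DB$ ``decouples accordingly, and one reads off well-posedness directly'' fails for $n\ge 2$: since $B_0$ block commutes with $N$ while $D$ anticommutes with it, $N$ anticommutes with $DB_0$, and the spectral projections $E_0^\pm=\chi^\pm(DB_0+\sigma N)$ do \emph{not} respect the normal/tangential splitting. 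The paper (Proposition~\ref{prop:blockwp}) instead computes the anticommutator $\tfrac12(E_0N+NE_0)=\sigma((DB_0)^2+\sigma^2)^{-1/2}$, shows it is compact, deduces that the maps \eqref{eq:Neumap}--\eqref{eq:Regmap} are Fredholm, and then uses constancy of the index in $\sigma$ (Lemma~\ref{lem:vardomains}) together with the injectivity Lemma~\ref{lem:automaticinj} (a Green's formula/energy argument, not a consequence of any Rellich estimate). The same caveat applies to your case (1): on the sphere the Rellich computation produces the extra term $\sigma\int_0^\infty(f_t,(B_0+B_0^*)f_t)\,dt$, so one only gets a lower bound modulo the map $h\mapsto(e^{-t\Lambda}h)_{t>0}$, which the paper shows is compact via $\Lambda^{-1/2}$; injectivity and index zero are then supplied separately, as in Proposition~\ref{prop:hermwp}. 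With those corrections your Hermitean argument is essentially the paper's; ``exactly as in the half-space theory'' is not quite accurate.

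The genuine gap is case (3). Your mechanism — that $A_1\in C^s$, $s>1/2$, makes $B(A_1)-B(A_1(x_0))$, composed with $D$ and the projections, a \emph{compact} perturbation — does not hold as stated: a multiplication operator by a H\"older continuous matrix is never compact on $L_2$ unless it vanishes, and freezing at $x_0$ only makes the difference small \emph{near} $x_0$, so a localization and gluing scheme would be needed that the sketch does not supply; H\"older regularity alone also does not give the claimed gain of $s-\tfrac12$ derivatives on resolvents or on $[\sgn(DB),\cdot]$. Moreover the frozen operator $DB(A_1(x_0))+\sigma N$ is not in general covered by cases (1) or (2), so even the base point of the proposed homotopy is unjustified. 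The paper's proof (Proposition~\ref{prop:smoothwp}) takes a different route: a Rellich-type identity with $\sgn(D)$ which produces the half-order square function $\int_0^\infty\||D|^{1/2}f_t\|_2^2\,dt\approx\|h^+\|_2^2$; the threshold $s>1/2$ enters because one must commute $|D|^{1/2}$ past $B_0$, and the two key lemmas are the $L_2$-boundedness of the commutator $[|D|^{1/2},B_0]$ for $C^{1/2+\varepsilon}$ coefficients (Lemma~\ref{lem:rellichcom}, proved by pullback to $\R^n$ and heat kernel estimates) and the domain comparison $\dom(|D|^{1/2})\cap\mH=\dom(|D_0|^{1/2})\cap\mH$ (Lemma~\ref{lem:rellichdom}); the conclusion then follows by the same semi-Fredholm, method-of-continuity and injectivity scheme as in the Hermitean case. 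So your Fredholm/index-zero endgame is sound, but the compactness step it rests on in case (3) is unsubstantiated and, in the form stated, false; repairing it requires either the commutator-estimate route of the paper or a genuine parametrix-type argument, not pointwise freezing.
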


\begin{proof}[Proof of Theorems~\ref{thm:apriori}, \ref{thm:rdepLip}, \ref{thm:rindepLip} and \ref{thm:rindepLiprellich}]

For Theorem~\ref{thm:apriori},
  the $L_{2}$-limits and $L_{2}$-estimates of solutions   follow from Theorem~\ref{thm:inteqforNeuandg} and Corollary~\ref{cor:diransatz} respectively. The non-tangential maximal estimate \eqref{eq:apriori} is in Theorem~\ref{thm:NTu}.  Almost everywhere convergence of averages follows from Theorems~\ref{thm:aecv} and \ref{thm:aecvReg/Neu}.

  The well-posedness results in Theorem~\ref{thm:rindepLiprellich}
  are in Propositions~\ref{prop:hermwp}, \ref{prop:blockwp} and \ref{prop:smoothwp}.
  The radially independent perturbation result in Theorem~\ref{thm:rindepLip}
  is in Corollary~\ref{cor:rindeppert}.
  The well-posedness result for radially dependent coefficients with good 
  boundary trace in Theorem~\ref{thm:rdepLip} is in Proposition~\ref{prop:rdeppert}.
\end{proof}

 Our next result is the following semigroup representation, analogous to the result in \cite{Au} in the upper half-space. It is interesting to note that  for harmonic functions $u$, it gives a direct proof (without passing through non-tangential maximal function or $\sup-L_{2}$ estimates) that $\int_{\bO^{1+n}} |\nabla_\bx u|^2 (1-|\bx|)d\bx<\infty$ implies a representation by Poisson kernel from its trace (also shown to exist).  We have not seen this argument in the literature. Another interesting feature is that it points out the importance of well-posedness of the Dirichlet problem when dealing with  more general coefficients. 

\begin{thm}\label{thm:semigroup} 
  Consider radially independent coefficients $A_1\in L_\infty(S^n; \mL(\C^{(1+n)m}))$ which are
  strictly accretive  in the sense of \eqref{eq:accrasgarding}. Assume that the Dirichlet problem with coefficients $A_{1}$ is well-posed. Then the mapping 
$$
\mP_{r}: L_{2}(S^n;\C^m)\to L_{2}(S^n;\C^m): u_{1}\mapsto   u_{r},
$$ 
where $u$ is the solution to the Dirichlet problem with datum $u_1$,
defines a bounded operator for each $r\in (0,1]$. 
The family $(\mP_{r})_{r\in (0, 1]}$ is a multiplicative $C_{0}$-semigroup (\textit{i.e.} $\mP_{r}\mP_{r'}=\mP_{rr'}$ and $\mP_{r} \to I$ strongly in $L_{2}$ when $r\to 1$) whose infinitesimal generator $\mA$ (\textit{i.e.} $\mP_{r}=e^{(\ln r) \mA}$) has domain $\dom(\mA)$ contained in $W^1_{2}(S^n;\C^m)$. 
Moreover, $\dom(\mA)=W^1_{2}(S^n;\C^m)$ 
if and only if the Dirichlet problem with coefficients $A_{1}^*$ is well-posed. 
\end{thm}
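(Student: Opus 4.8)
The plan is to realize the operators $\mP_r$ through the first order ODE formalism for the conormal gradient, which is where all the earlier machinery lives. Let me sketch how I would organize the argument.

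\medskip

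\noindent\textbf{Setting up the semigroup.} First I would invoke the well-posedness of the Dirichlet problem for $A_1$ to see that for each $u_1\in L_2(S^n;\C^m)$ the solution $u$ exists and is unique, so the map $u_1\mapsto u_r$ is well defined for every $r\in(0,1]$; boundedness of $\mP_r$ for $r\in(0,1)$ would follow from the interior estimate $\|u_r\|_2^2\lesssim r^{-(n-1)}\int|\nabla u|^2(1-|\bx|)d\bx+|\int u_1|^2$ in Theorem~\ref{thm:apriori}(ii)(b) combined with the well-posedness estimate $\int_{\bO^{1+n}}|\nabla_\bx u|^2(1-|\bx|)d\bx\approx\|u_1\|_2^2$ coming from Theorem~\ref{thm:rdepLip} (applied with $\E=0$). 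Since the coefficients are radially independent, the rescaled function $u^{(s)}(\bx):=u(s\bx)$, $\bx\in\bO^{1+n}$, is again a solution of \eqref{eq:divform} with the \emph{same} coefficients $A_1$, and its Dirichlet datum is exactly $u_s=\mP_s u_1$; its value at radius $r$ is $u(sr\cdot)=\mP_{sr}u_1$. By uniqueness of the solution to the Dirichlet problem with datum $\mP_s u_1$ this forces $\mP_r\mP_s u_1=\mP_{rr'}$... more precisely $\mP_r(\mP_s u_1)=u^{(s)}_r=\mP_{rs}u_1$, i.e. the multiplicative semigroup law $\mP_r\mP_s=\mP_{rs}$. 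Writing $r=e^{-t}$, $t\ge0$, turns this into the usual additive one-parameter semigroup $T_t:=\mP_{e^{-t}}$.

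\medskip

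\noindent\textbf{Strong continuity and the generator.} Strong continuity $\mP_r\to I$ as $r\to1$ is precisely the $L_2$ convergence $\|u_r-u_1\|_2\to0$ asserted in Theorem~\ref{thm:apriori}(ii)(a), uniform boundedness of $\mP_r$ near $r=1$ being already in hand; so $(T_t)$ is a $C_0$-semigroup and has a closed, densely defined generator $\mA$ with $\mP_r=e^{(\ln r)\mA}$. To locate $\dom(\mA)$ inside $W^1_2(S^n;\C^m)$, I would connect $\mA$ to the first order boundary operator. In the ODE reduction, the conormal gradient $(\nabla_\bx u)_r$ solves an equation $\partial_t F + DB F=0$ (schematically) on the boundary space, with $D$ the first order self-adjoint operator and $B$ the bounded multiplication by (a transform of) $A_1$; the spectral/functional calculus splitting $L_2=\mH^+\oplus\mH^-$ into Hardy subspaces, valid by the quadratic estimate, means the decaying solution is $F_r=r^{DB|_{\mH^+}}F_1^+$ essentially, and $u_r$ is recovered from $F_r$ by integrating the normal component. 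Differentiating $\mP_r u_1$ in $r$ at $r=1$ and using that the trace $u_1$ of a solution with finite square function lies in... no: the key point is that Theorem~\ref{thm:apriori}(i)(b) tells us the \emph{Neumann/regularity} trace $u_1$ lies in $W^1_2(S^n)$ whenever $\|\tN^o(\nabla u)\|_2<\infty$, and $u_1\in\dom(\mA)$ means $\mP_r u_1$ is differentiable, which upgrades the square function bound to a non-tangential maximal bound on $\nabla_\bx u$ (by Theorem~\ref{thm:rdepLip}); hence $u_1\in W^1_2(S^n;\C^m)$. This gives the inclusion $\dom(\mA)\subset W^1_2(S^n;\C^m)$.

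\medskip

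\noindent\textbf{The equivalence.} For the final characterization, I would identify, up to bounded isomorphism, $\mA$ with (a sign of) the restriction $DB|_{\mH^+}$ projected onto the $\C^m$-component via the normal coordinate, and observe that $\dom(\mA)=W^1_2$ is equivalent to the statement that the map "normal component of $\mH^+$" is an isomorphism onto $L_2(S^n;\C^m)$ \emph{with} the extra regularity matching $\dom(D)=W^1_2$. By the abstract duality between the $A_1$-Hardy projection and the $A_1^*$-Hardy projection (the adjoint of the relevant boundary operator pairing $\mH^+_{A_1}$ with $\mH^-_{A_1^*}$, as used in \cite{AA1} and implicit in Theorem~\ref{thm:reg=dirintro}), surjectivity of the normal-trace map for $A_1$ at the level of $W^1_2$ is dual to injectivity-with-closed-range at the $L_2$ level for $A_1^*$, which is exactly well-posedness of the Dirichlet problem for $A_1^*$. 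Concretely: $\dom(\mA)\supseteq W^1_2$ $\iff$ for every $\bphi\in W^1_2(S^n;\C^m)$ the Dirichlet solution has $\|\tN^o(\nabla_\bx u)\|_2<\infty$, i.e. solves the regularity problem, i.e. (by Theorem~\ref{thm:reg=dirintro} with $\E=0$) the Dirichlet problem for $A_1^*$ is well-posed. Combined with the inclusion from the previous paragraph, $\dom(\mA)=W^1_2(S^n;\C^m)$ iff the Dirichlet problem with coefficients $A_1^*$ is well-posed.

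\medskip

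\noindent\textbf{Main obstacle.} The routine parts are the semigroup law (from rescaling and uniqueness) and strong continuity (from Theorem~\ref{thm:apriori}); the delicate part is the precise identification of $\dom(\mA)$ with a domain expressed through $D$ and the Hardy splitting, and in particular making rigorous that differentiability of $r\mapsto\mP_r u_1$ corresponds \emph{exactly} to the regularity estimate $\|\tN^o(\nabla_\bx u)\|_2<\infty$ rather than something slightly weaker or stronger — this is where the curvature of the sphere and the structure of the first order operator on $S^n$ (as opposed to $\R^n$) must be handled with care, and where I expect to lean hardest on the operator-theoretic results proved later in the paper.
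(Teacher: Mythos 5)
Your overall route --- semigroup law from uniqueness and radial invariance, strong continuity from Theorem~\ref{thm:apriori}(ii), identification of the generator through the first order formalism, and the final equivalence via the duality regularity($A_1$) $\Leftrightarrow$ Dirichlet($A_1^*$) of Theorem~\ref{thm:reg=dirintro} (i.e.\ Proposition~\ref{prop:reg=dir} with $\E=0$) --- is the same as the paper's, which is terse and defers to \cite{Au}. But there is a genuine gap at the central step, the identification of $\dom(\mA)$. You assert that $u_1\in\dom(\mA)$ ``upgrades the square function bound to a non-tangential maximal bound on $\nabla_\bx u$ (by Theorem~\ref{thm:rdepLip})''. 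Theorem~\ref{thm:rdepLip} is a well-posedness statement under a small Carleson condition; it contains no implication of the form ``differentiability of $r\mapsto\mP_r u_1$ implies $\|\tN^o(\nabla_\bx u)\|_2<\infty$'', and your sketch of the ODE reduction is written for the conormal gradient (the $D_0=DB_0+\sigma N$ side, with $u_r$ ``recovered by integrating''), whereas the Dirichlet semigroup lives on the potential side. What is actually needed is Corollary~\ref{cor:uniquenessdir}: well-posedness gives $\mP_r u_1=(e^{\sigma t}e^{-t\tilde\Lambda}v_0)_\no$, $r=e^{-t}$, with $v_0\in\tE_0^+L_2$ the preimage of $u_1$ under the boundary isomorphism \eqref{eq:Dirmap}. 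Since the difference quotients $t^{-1}(e^{\sigma t}e^{-t\tilde\Lambda}v_0-v_0)$ stay in $\tE_0^+L_2$ and $h\mapsto h_\no$ is an isomorphism there, differentiability of the scalar trajectory at $r=1$ transfers to $v_0\in\dom(\tD_0)=\dom(D)$, whence $u_1=(v_0)_\no\in\dom(\nabla_S)=W^1_2(S^n;\C^m)$; conversely $v_0\in\dom(D)$ gives $u_1\in\dom(\mA)$, and $Dv_0\in E_0^+\mH$ gives the $\mX^o$-bound via Theorem~\ref{thm:NT}. This is the mechanism that makes your claimed equivalence ``$\dom(\mA)\supset W^1_2$ iff for every $W^1_2$ datum the Dirichlet solution solves the regularity problem'' provable rather than asserted (uniqueness for the regularity problem, which you skip, is automatic by Lemma~\ref{lem:automaticinj}).

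A second, smaller gap: to run the rescaling/uniqueness argument for $\mP_r\mP_{r'}=\mP_{rr'}$ you must check that $u^{(s)}=u(s\,\cdot)$ is an admissible competitor in the sense of Definition~\ref{defn:wpbvp}, i.e.\ not only the square function bound (an easy change of variables) but also almost everywhere convergence of its Whitney averages to $u_s$; this is not automatic for an $L_2^\loc$ weak solution of a system and should be extracted from Theorem~\ref{thm:apriori}(ii)(a),(c) applied to $u^{(s)}$ (or, as the paper does, avoided altogether by composing the explicit formula $\mP_r u_1=(e^{\sigma t}e^{-t\tilde\Lambda}v_0)_\no$ with itself, using uniqueness only to identify $e^{\sigma t'}e^{-t'\tilde\Lambda}v_0$ as the preimage of the new datum $\mP_{r'}u_1$).
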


As mentioned above two classes of weak solutions compare: the one with square function estimates is contained in the one with non-tangential maximal control. It is thus interesting to examine this  further.  Does the opposite containment holds? How do well-posedness in the two classes compare? Clearly uniqueness in the larger class implies uniqueness in the smaller, and conversely for existence.  As we shall see, positive answers  come  \textit{a posteriori} to solvability.  

\begin{defn}\label{def:wpdahlberg}
The Dirichlet problem  with coefficients $A$ is said to be {\em well-posed in   the sense of Dahlberg} if, given $\bphi\in L_2(S^n;\C^m)$, there is a unique weak solution  $u\in W_2^{1,\loc}(\bO^{1+n}; \C^m)$ to $\divv_{\bx} A\nabla_{\bx}u=0$ with estimates $\|\tN^o(u)\|_{2} <\infty$  and convergence of Whitney averages to $\bphi$, almost everywhere with respect to surface measure on $S^n$.
\end{defn}

This definition has the merit to be natural not only for equations but for systems as well. For real equations, this is equivalent to the usual one as $\tN^o$ can be replaced by the usual pointwise non-tangential maximal operator by the DeGiorgi-Nash-Moser estimates on weak solutions. Even in this case, observe that the control $\|\tN^o(u)\|_{2} <\infty$ does not enforce the almost everywhere convergence property. 
Thus existence of the limit is part of the hypothesis in Definition~\ref{def:wpdahlberg}, as compared to Definition~\ref{defn:wpbvp}.
A first result is the following.

\begin{thm}\label{thm:radind}
Consider radially independent coefficients $A_1\in L_\infty(S^n; \mL(\C^{(1+n)m}))$ which are
  strictly accretive  in the sense of \eqref{eq:accrasgarding}. Assume that the Dirichlet and regularity problems with coefficients $A_{1}$  are well-posed in the sense of Definition~\ref{defn:wpbvp}. 
Then, all weak solutions to $\divv_{\bx} A_{1}\nabla_{\bx}u=0$ with $\|\tN^o(u)\|_{2}<\infty$ are given by the semigroup of Theorem~\ref{thm:semigroup}.  In particular, the Dirichlet problem  with coefficients $A_{1}$ is well-posed in the sense of Dahlberg.
\end{thm}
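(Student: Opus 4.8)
The plan is to show that an arbitrary weak solution $u$ to $\divv_\bx A_1\nabla_\bx u = 0$ with $\|\tN^o(u)\|_2 <\infty$ coincides with $\mP_r u_1$ for a suitable boundary datum $u_1$, so that it automatically inherits the Fatou convergence and uniqueness built into the semigroup. The starting observation is that $\|\tN^o(u)\|_2<\infty$ does not a priori give any square function control, so one cannot directly invoke Theorem~\ref{thm:apriori}(ii). The key device, as in the upper half-space theory, is to use the extra room afforded by radial independence: since the coefficients $A_1$ do not depend on $r$, the ODE generator is a fixed operator and one can exploit a Cacciopoli-type inequality together with interior estimates to bootstrap. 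Concretely, I would first record that for such $u$ the conormal gradient $\nabla_\bx u$, restricted to any shell $\{1-t < |\bx| < 1 - t/2\}$, is controlled in $L_2$ by $\tN^o(u)$ on a slightly larger shell via Cacciopoli, hence $\sup_{0<t<1} t^{-1}\int_{\text{shell}_t}|\nabla_\bx u|^2(1-|\bx|)\,d\bx$ is comparable to a truncated $\tN^o(u)$ norm; this is finite, but we need a genuine (non-truncated) square function bound, which requires an argument near $r=0$, handled by interior regularity since $u$ is a weak solution in a full ball.

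Second, I would pass to the first-order ODE formulation: write the conormal gradient as the semigroup solution generated by the bisectorial operator on the boundary, so that $u$ restricted to shells is described by the generalized Cauchy--Riemann system of \cite{AA1} adapted to the sphere. Here the hypothesis that the Dirichlet and regularity problems for $A_1$ are well-posed in the sense of Definition~\ref{defn:wpbvp} is what lets me identify the spectral subspace that the solution must live in: well-posedness of the regularity problem gives control of the tangential part of the trace of $\nabla_\bx u$, and well-posedness of the Dirichlet problem gives the injectivity/surjectivity needed to pin down $u_1$ uniquely. The semigroup $\mP_r$ of Theorem~\ref{thm:semigroup} is precisely the exponential $e^{(\ln r)\mA}$ of the generator of the relevant Hardy-space projection, so once I know $u$ is represented by an element of that Hardy space with boundary datum $u_1\in L_2$, the formula $u_r = \mP_r u_1$ follows, and then Theorem~\ref{thm:apriori}(ii)(c) (or the analogue for the semigroup) gives a.e. convergence of Whitney averages of $u$ to $u_1$.

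Third, for the final clause --- well-posedness in the sense of Dahlberg --- I need uniqueness: if $u$ has $\|\tN^o(u)\|_2<\infty$ and Whitney averages converging to $0$ a.e., then $u\equiv 0$. By the representation just established, $u = \mP_\cdot u_1$ for some $u_1\in L_2$, and a.e. convergence of Whitney averages of $\mP_r u_1$ to $u_1$ (which is part of the semigroup package, being the Fatou statement) forces $u_1 = 0$ a.e., hence $u=0$. Combined with existence (the semigroup produces a solution for every datum $\bphi\in L_2$, and its $\tN^o$ is controlled by $\|\bphi\|_2$ via the estimates in Theorem~\ref{thm:rdepLip}), this yields well-posedness in the sense of Dahlberg.

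The main obstacle I anticipate is the bootstrap in the first step: upgrading the finiteness of $\|\tN^o(u)\|_2$ to a usable square function / energy bound on $\nabla_\bx u$ over the whole ball without assuming it. On the half-space this kind of self-improvement uses translation invariance in the transverse variable and a limiting argument sending the truncation height to $0$; on the ball the curvature enters the algebraic setup (as the authors flag, the circle behaves differently), so one must be careful that the Cacciopoli estimate on shells, combined with the ODE structure, genuinely closes the loop and that the boundary operator $\mA$ is the same one generating $\mP_r$. I expect this to be the place where the radial independence of $A_1$ is used essentially, and where the well-posedness hypotheses of Definition~\ref{defn:wpbvp} (rather than the weaker Dahlberg sense) are needed to identify the limit $u_1$ uniquely rather than merely up to an element of some a priori larger null space.
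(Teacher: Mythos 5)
Your overall target (represent $u$ by the semigroup, then read off Dahlberg well-posedness) is the right one, but the mechanism you propose has a genuine gap at exactly the point you flag. Caccioppoli on Whitney shells only gives $\int_{\{1-2t<|\bx|<1-t\}}|\nabla_\bx u|^2(1-|\bx|)\,d\bx\lesssim\|\tN^o(u)\|_2^2$ uniformly in $t$, and summing over dyadic shells diverges logarithmically; no local energy/interior-regularity bootstrap produces the global bound $\int_{\bO^{1+n}}|\nabla_\bx u|^2(1-|\bx|)\,d\bx<\infty$ from $\|\tN^o(u)\|_2<\infty$. That implication is the ``$S\lesssim N$'' direction, which even for real scalar equations requires the Dahlberg--Jerison--Kenig elliptic-measure machinery (Section~\ref{sec:realequations}) and is not available directly for systems; the paper explicitly contrasts Corollary~\ref{cor:DJK} with Proposition~\ref{pro:repradial} for precisely this reason. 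Moreover, your use of regularity well-posedness to ``control the tangential part of the trace of $\nabla_\bx u$'' is circular: a $\mD^o$-solution is not known to have any trace of its gradient until after it has been placed in $\mX^o$ or $\mY^o$.

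The paper's proof (Proposition~\ref{pro:repradial}) avoids any a priori square-function bound by exploiting radial independence through dilation rather than through the constancy of the ODE generator. For a.e.\ $\rho<1$ one has $u_\rho\in W^1_2(S^n;\C^m)$, and $\bx\mapsto u(\rho\bx)$ solves the same equation with this regular datum; well-posedness of the regularity and Dirichlet problems, through Lemma~\ref{prop:sioequalvarsol} and uniqueness of variational solutions, identifies $u(\rho r\,\cdot)$ with $e^{\sigma t}(e^{-t\tilde\Lambda}\tilde h^+_\rho)_\no$, where $\|\tilde h^+_\rho\|_2\approx\|u_\rho\|_2$ by Dirichlet well-posedness. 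The hypothesis $\|\tN^o(u)\|_2<\infty$ is used only to bound $\|u_\rho\|_2$ in an averaged sense for $\rho\in(1/2,1)$, hence $\|\tilde h^+_\rho\|_2$ along a sequence $\rho_n\to1$, and a weak limit gives the global representation $u_r=e^{\sigma t}(e^{-t\tilde\Lambda}\tilde h^+)_\no$, i.e.\ $u$ is the semigroup solution. After that, your concluding steps (uniqueness from a.e.\ convergence of Whitney averages, existence and the $\tN^o$ estimate from Theorems~\ref{thm:semigroup} and~\ref{thm:NTu}) are sound. To repair your outline, replace the Caccioppoli bootstrap by this dilation-and-weak-compactness argument.
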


Theorem~\ref{thm:reg=dirintro} implies the same conclusion for the coefficients $A_{1}^*$. 
The next results are only for real equations where the theory based on elliptic measure brings more information. For (complex) equations, the strict accretivity in the sense of \eqref{eq:accrasgarding} is equivalent to the usual pointwise accretivity, which is the same as the strict ellipticity for real coefficients.

 \begin{thm}\label{thm:equiv2intro} 
 Consider an equation with real coefficients $A\in L_\infty(\bO^{1+n}; \mL(\R^{1+n}))$, which are strictly elliptic. 
  Assume further that  the small Carleson condition \eqref{eq:smalllimCarleson} holds. 
Then the following are equivalent.
 \begin{itemize}
  \item[{\rm (i)}] 
  The Dirichlet problems with coefficients $A$ and $A^*$ are well-posed in the sense of Dahlberg.
  \item[{\rm (ii)}]  
  The Dirichlet problems with coefficients $A$ and $A^*$ are well-posed in the sense of Definition~\ref{defn:wpbvp}.  \end{itemize}
Moreover, in this case the solutions for coefficients $A$ (resp. $A^*$) from a same datum are the same.
\end{thm}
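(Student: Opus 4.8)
\textbf{Proof proposal for Theorem~\ref{thm:equiv2intro}.}

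The plan is to prove the nontrivial implication (i)$\Rightarrow$(ii); the reverse implication is essentially formal once one knows that any solution produced by Definition~\ref{defn:wpbvp} automatically has $\|\tN^o(u)\|_2<\infty$ (which is part of Theorem~\ref{thm:rdepLip}, under the small Carleson hypothesis) and that its Whitney averages converge a.e.\ to the datum (Theorem~\ref{thm:apriori}(ii)(c)), so a Definition~\ref{defn:wpbvp} solution is in particular a Dahlberg solution; uniqueness in the Dahlberg class then forces uniqueness in the smaller class, and existence transfers trivially. So the heart of the matter is: assuming the Dirichlet problems for $A$ and $A^*$ are well-posed in the sense of Dahlberg, deduce well-posedness in the sense of Definition~\ref{defn:wpbvp}, and identify the solutions.

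First I would reduce to the radially independent boundary trace $A_1$. Because the small Carleson condition \eqref{eq:smalllimCarleson} holds, the perturbation machinery of Theorem~\ref{thm:rdepLip} and Theorem~\ref{thm:reg=dirintro} shows that well-posedness (in the sense of Definition~\ref{defn:wpbvp}) of the Dirichlet problem for $A$ is equivalent to that for $A_1$, and likewise for $A^*$ and $A_1^*$; the same Carleson perturbation should be shown to preserve Dahlberg well-posedness — this is where one uses that, for real equations, $\tN^o$ is comparable to the pointwise nontangential maximal function via De Giorgi–Nash–Moser, so the elliptic-measure characterization of Dahlberg well-posedness is stable under the Carleson perturbation (this is the Fefferman–Kenig–Pipher / Dahlberg type perturbation result, which I would invoke or reprove in the present geometry). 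Thus it suffices to treat radially independent $A_1$.

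Next, with $A_1$ radially independent and the Dirichlet problem for $A_1$ well-posed in the sense of Definition~\ref{defn:wpbvp} — which we must first extract from Dahlberg well-posedness of $A_1$ and $A_1^*$ — I would bring in Theorem~\ref{thm:radind}: once the Dirichlet and regularity problems for $A_1$ are well-posed in the sense of Definition~\ref{defn:wpbvp}, every weak solution with $\|\tN^o(u)\|_2<\infty$ is given by the semigroup $\mP_r$ of Theorem~\ref{thm:semigroup}, whence Dahlberg well-posedness is automatic and the solution is unique and agrees with the Definition~\ref{defn:wpbvp} solution. So the real work is the bootstrap: from Dahlberg well-posedness of $A_1$ and $A_1^*$, produce Definition~\ref{defn:wpbvp} well-posedness of the Dirichlet problem for $A_1$ \emph{and} well-posedness of the regularity problem for $A_1$. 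For the Dirichlet part, given $\bphi\in L_2$, Dahlberg well-posedness gives a solution with $\|\tN^o(u)\|_2<\infty$; by Theorem~\ref{thm:apriori}(i) this solution has a conormal gradient trace $g_1\in L_2$, and one reads off $u_1=\bphi$ a.e.; the square-function bound $\int |\nabla_\bx u|^2(1-|\bx|)\,d\bx<\infty$ needed for Definition~\ref{defn:wpbvp} then follows from the a priori comparison estimates and the real-variable (elliptic-measure / $A_\infty$) duality with the adjoint Dirichlet problem for $A_1^*$, which is where the hypothesis on $A^*$ is consumed; uniqueness in the Definition~\ref{defn:wpbvp} class is immediate since that class is contained in the Dahlberg class. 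For the regularity part, I would use Theorem~\ref{thm:reg=dirintro}: the regularity problem for $A_1$ is well-posed (in the sense of Definition~\ref{defn:wpbvp}) iff the Dirichlet problem for $A_1^*$ is, and the latter we have just established.

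The main obstacle I expect is precisely this last bootstrap — upgrading Dahlberg well-posedness (an $L_2$ nontangential-maximal statement, controlled by elliptic measure) to the Definition~\ref{defn:wpbvp} square-function formulation, and in particular producing the quantitative equivalence $\|\tN^o(u)\|_2^2 \approx \int_{\bO^{1+n}}|\nabla_\bx u|^2(1-|\bx|)\,d\bx + |\int_{S^n}\bphi|^2$ for the unique solution. For real equations this should follow from the classical fact that solvability of the Dirichlet problem in $L_2$ for $A$ and $A^*$ forces the elliptic measure to be $A_\infty$ with respect to surface measure (a two-sided statement, by duality), which in turn yields the square-function/nontangential-maximal comparability; but one must check that this classical theory goes through on the ball with the present Carleson hypothesis and that it meshes with the first-order-system bounds of Theorem~\ref{thm:apriori} and the semigroup representation of Theorem~\ref{thm:semigroup}, so that the two notions of solution genuinely coincide, not merely coexist. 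The final sentence of the theorem — that the solutions from the same datum agree — then comes for free from uniqueness in the (larger) Dahlberg class together with the fact that the Definition~\ref{defn:wpbvp} solution lies in that class.
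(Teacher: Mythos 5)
There is a genuine gap, and it is in the direction you dismiss as formal. For (ii)$\Rightarrow$(i), existence does transfer (under the Carleson hypothesis a solution in the sense of Definition~\ref{defn:wpbvp} is a Dahlberg solution, by Theorem~\ref{thm:NTu} together with the a.e.\ convergence of Whitney averages), but well-posedness in the sense of Dahlberg also requires \emph{uniqueness in the larger class} $\|\tN^o(u)\|_2<\infty$, and your sentence ``uniqueness in the Dahlberg class then forces uniqueness in the smaller class'' runs backwards: it presupposes exactly what has to be proved. The paper's proof of this direction is substantive: one shows that $(D)_2$ is solvable by taking $\bphi\in C^1(S^n)$, identifying the Definition~\ref{defn:wpbvp} solution with the variational solution (Lemma~\ref{prop:sioequalvarsol}, which needs $I-S_A$ invertible on $\mX$ and $\mY$ from Theorem~\ref{thm:SAFredholm}, and well-posedness of the regularity problem obtained from the assumed Dirichlet well-posedness for $A^*$ via Proposition~\ref{prop:reg=dir}), and then using Theorem~\ref{thm:NTu} for the non-tangential bound; after that, Theorem~\ref{thm:equiv1} (the classical equivalence of $(D)_2$ solvability, the $B_2$ property of elliptic measure, and $\mD^o$ well-posedness, with uniqueness coming from the Fabes--Jerison--Kenig argument) converts solvability into Dahlberg well-posedness. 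None of this is formal, and none of it appears in your proposal.

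For (i)$\Rightarrow$(ii) your core idea is the right one and is what the paper does: once the Dirichlet problems for $A$ and $A^*$ are well-posed in the Dahlberg sense, the adjoint elliptic measure is $A_\infty$ with respect to surface measure (Theorem~\ref{thm:equiv1} plus Coifman--Fefferman), so the Dahlberg--Jerison--Kenig estimate $\|S(u)\|_2\lesssim\|\tN^o(u)\|_2$ (Theorem~\ref{lem:DJK}, applied with the Green function and elliptic measure of $L^*$) upgrades the unique $\mD^o$-solution to a $\mY^o$-solution, while uniqueness in $\mY^o$ follows from the containment $\mY^o\subset\mD^o$. But you only gesture at this step (``should follow from the classical fact \dots one must check''), flagging it as the main obstacle rather than carrying it out, and you wrap it in an unnecessary and partly incorrect scheme: the reduction to the radially independent trace $A_1$ via a Fefferman--Kenig--Pipher type perturbation of Dahlberg well-posedness is not needed (the DJK argument applies directly to the radially dependent $A$) and is exactly the kind of external input the paper avoids; the appeal to Theorem~\ref{thm:radind} is superfluous for this implication; and your use of Theorem~\ref{thm:apriori}(i) is a misapplication, since that statement requires $\|\tN^o(\nabla_\bx u)\|_2<\infty$, not $\|\tN^o(u)\|_2<\infty$, so a Dahlberg solution does not come with a conormal gradient trace in that way. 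As it stands the proposal does not constitute a proof of either implication.
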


Note that, by Theorem~\ref{thm:reg=dirintro},  we can replace (ii) by (ii'): the regularity problems with coefficients $A$ and $A^*$ are well-posed. When $A=A^*$, all the problems in (i) and (ii') are well-posed by \cite{KP} so there is nothing to prove. For (even non-symmetric) real coefficients $A$ alone, the direction from (ii') to (i) was known from \cite{KP} (without assuming the Carleson condition) and the converse is unknown. It seems that making the statement invariant under taking adjoints solves the issue.  We mention  the equivalence  in \cite{Shen1} concerning $L_{p}$ versions of this statement  for self-adjoint constant coefficient systems on Lipschitz domains (in this case, the $L_{2}$ result is known and used). 

Our last result is well-posedness of  the regularity problem  under a transversal square Dini condition on the coefficients, analogous to the result obtained in \cite{FJK} for the Dirichlet problem with real and symmetric $A$. This partly answers  Problem 3.3.13 in \cite{Kenig}. 

\begin{thm}\label{thm:Dinisquarerealintro} 
Consider an equation with coefficients $A\in L_\infty(\bO^{1+n}; \mL(\C^{1+n}))$, 
which are strictly accretive in the pointwise sense. Then there exists $\epsilon>0$, such that if $A$
 satisfies the small Carleson condition 
\eqref{eq:smalllimCarleson}  and its   boundary trace $A_{1}$  is real and continuous, then the Dirichlet problem with coefficients $A$ is well-posed in the sense of Definition~\ref{defn:wpbvp} and in the sense of Dahlberg, and the regularity problem with coefficients $A$ is well-posed. In particular, this holds if $A$ is real, continuous in $\overline{\bO^{1+n}}$ and  the Dini square condition $\int_{0}w_{A}^2(t) \frac{dt}{t}<\infty$ holds, where $w_{A}(t)=\sup\{|A(rx)-A(x)|  ;  x\in S^n, 1-r<t\}$.
 The corresponding results hold in $\bO^2$ for the Neumann problem with coefficients $A$. 
\end{thm}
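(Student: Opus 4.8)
The plan is to deduce Theorem~\ref{thm:Dinisquarerealintro} from the perturbation machinery (Theorems~\ref{thm:rdepLip} and~\ref{thm:reg=dirintro}) together with the real-variable well-posedness of the boundary-trace operator. First I would observe that since $A$ is strictly accretive pointwise, so is its boundary trace $A_1$ (by \cite[Lemma~2.2]{AA1}-type arguments already invoked in the text), and by hypothesis $A_1$ is real and continuous on $S^n$. The key input is then that \emph{for real, continuous, radially independent coefficients $A_1$ on the sphere, the Dirichlet problem (in the sense of Definition~\ref{defn:wpbvp}) and the regularity problem for $A_1$ are well-posed}. On the upper half-space this is exactly the Fabes--Jerison--Kenig result \cite{FJK} (Dirichlet for real symmetric continuous coefficients), extended to continuous non-symmetric real coefficients by approximation plus the fact that continuous coefficients are, at small scales, uniform limits of constant coefficients for which well-posedness is classical; the passage to the sphere is by the localization/bi-Lipschitz transfer that the paper sets up for the ball. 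So Step~1 is: establish well-posedness of the Dirichlet and regularity problems with coefficients $A_1$.

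Step~2 is to upgrade from $A_1$ to $A$. Here the hypothesis is that $\E = A - A_1$ satisfies the small Carleson condition \eqref{eq:smalllimCarleson}. Theorem~\ref{thm:rdepLip} then says that, provided the Neumann/regularity/Dirichlet problem with the boundary trace is well-posed, and the small Carleson norm near the boundary is below the threshold $\epsilon$, the corresponding problem with $A$ itself is well-posed, with the stated square-function and non-tangential estimates. Applying this with the Dirichlet datum gives well-posedness of the Dirichlet problem with coefficients $A$ in the sense of Definition~\ref{defn:wpbvp}; applying it with the regularity datum gives well-posedness of the regularity problem with coefficients $A$. For well-posedness in the sense of Dahlberg, I would invoke Theorem~\ref{thm:radind} applied to the coefficients $A_1$ (whose Dirichlet and regularity problems are well-posed by Step~1), which identifies all weak solutions with $\|\tN^o(u)\|_2<\infty$ as coming from the semigroup and hence gives Dahlberg well-posedness for $A_1$; then the a priori estimates of Theorem~\ref{thm:apriori} (which apply since $\|\E\|_C<\infty$) show that for $A$ the class of solutions with $\|\tN^o(u)\|_2<\infty$ and the class with $\int|\nabla_\bx u|^2(1-|\bx|)\,d\bx<\infty$ coincide, transporting Dahlberg well-posedness to $A$.

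Step~3 is to verify that the classical Dini square condition implies the hypothesis. If $A$ is real and continuous on $\overline{\bO^{1+n}}$, then its radial boundary trace $A_1(x) = A(x)$, $x\in S^n$, is real and continuous, so the hypothesis on $A_1$ is met. It remains to check that $\int_0 w_A^2(t)\,\frac{dt}{t}<\infty$ forces $\lim_{\tau\to1}\|\chi_{\tau<r<1}(A-A_1)\|_C<\epsilon$ — indeed that this Carleson norm tends to $0$. This is a direct computation: $\E^*(\bx) \lesssim w_A(1-|\bx|)$ by definition of the Whitney-region sup and the modulus of continuity, so
\begin{equation*}
  \frac{1}{|Q|}\iint_{(e^{-r(Q)},1)Q} \E^*(\bx)^2\,\frac{d\bx}{1-|\bx|}
  \lesssim \int_0^{r(Q)} w_A(t)^2\,\frac{dt}{t},
\end{equation*}
which is a tail of a convergent integral, hence $o(1)$ as the truncation height $\tau\to1$ (equivalently as $r(Q)\to0$ for the part of the supremum that matters). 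So the small Carleson condition holds — in fact with norm tending to zero — and Step~2 applies. Finally, the assertion for $\bO^2$ and the Neumann problem follows the same route, now using that in the unit disk ($n=1$) the extra algebraic structure (noted after Definition~\ref{defn:wpbvp}) makes the Neumann problem accessible, and that Theorem~\ref{thm:rdepLip} includes a Neumann perturbation statement.

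The main obstacle I expect is Step~1: transferring the Fabes--Jerison--Kenig / real-variable well-posedness of the Dirichlet problem for real continuous coefficients from the flat setting to radially independent coefficients on the sphere, in the precise sense of Definition~\ref{defn:wpbvp} (with Whitney-average convergence), and simultaneously getting the regularity problem. One must check that the bi-Lipschitz change of variables used throughout the paper preserves the class of real continuous coefficients (it does, up to the usual Jacobian factors, though symmetry is not preserved — hence the need for the non-symmetric continuous case), and that the known elliptic-measure-based well-posedness survives this change; the non-symmetric continuous case itself is handled by freezing coefficients at small scales and a Fredholm/continuity-method perturbation off constant coefficients, which is exactly the kind of argument the paper's Fredholm framework supports. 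Everything else is bookkeeping with results already proved.
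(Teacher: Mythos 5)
Your overall skeleton --- reduce to the boundary trace $A_1$ by the Carleson perturbation result (Theorem~\ref{prop:rdeppert}), use the duality regularity $\Leftrightarrow$ Dirichlet for the adjoint (Proposition~\ref{prop:reg=dir}), and check that the Dini square condition forces a vanishing Carleson norm --- coincides with the paper's, and your Step~3 computation is correct. The genuine gap is Step~1, which you flag as the main obstacle but then treat as essentially known: well-posedness for real, merely continuous, \emph{non-symmetric}, radially independent $A_1$ is not in \cite{FJK} (symmetric coefficients only), and it does not follow by the approximation you propose. Although continuous $A_1$ is a uniform limit of smooth matrices to which Proposition~\ref{prop:smoothwp} applies, the perturbation radius $\epsilon$ in Theorem~\ref{thm:rindepLip}/\ref{cor:rindeppert} depends on the well-posedness constants of the approximant, which degenerate with its H\"older norm, so no uniform $\epsilon$ is available; and freezing coefficients at small scales only yields well-posedness for locally modified matrices, with no way to patch these local statements inside the $\mX^o/\mY^o$ functional-analytic framework. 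The paper's actual proof (Theorem~\ref{thm:Dinisquarereal}) is a hybrid that your sketch does not produce: near a boundary point $x_0$ it extends $A_1$ restricted to a small boundary box to a matrix globally $L_\infty$-close to the constant $A_1(x_0)$, obtains the $\mY^o$-solution for that matrix from Proposition~\ref{prop:smoothwp} plus Theorem~\ref{cor:rindeppert} (the perturbation radius off a \emph{constant} matrix depends only on ellipticity), identifies it with the variational solution by Lemma~\ref{prop:sioequalvarsol}, dominates the local variational solution by it via Stampacchia's and the maximum principle, and then runs the Fabes--Jerison--Kenig argument to prove the $B_2$ property of the $L_1$-elliptic measure --- a statement that is local and can be patched by a finite covering using uniform continuity. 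Only after that do Theorems~\ref{thm:equiv1} and \ref{thm:equiv2} and Proposition~\ref{prop:reg=dir} convert this $\mD^o$ well-posedness (for $A_1$ and, by symmetry of the hypotheses, $A_1^*$) into the $\mY^o$ and regularity statements. Without this elliptic-measure step your chain does not close, since the equivalence results you would need all presuppose $\mD^o$ or $\mY^o$ well-posedness for both the operator and its adjoint.

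A second, smaller error is in your transport of Dahlberg well-posedness to the radially dependent $A$: you assert that Theorem~\ref{thm:apriori} makes the $\mD^o$ and $\mY^o$ solution classes coincide, but it only gives one containment ($\mY^o$-solutions satisfy the $\tN^o$ estimate). The converse containment for radially dependent real coefficients requires the Dahlberg--Jerison--Kenig square-function estimate (Corollary~\ref{cor:DJK}), which presupposes $A_\infty$ of the \emph{adjoint} elliptic measure and is exactly the content of Theorem~\ref{thm:equiv2} (equivalently Theorem~\ref{thm:equiv2intro}), a result you never invoke; Theorem~\ref{thm:radind}, which you do invoke, covers only radially independent coefficients. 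Likewise, for the Neumann problem in $\bO^2$ the paper argues through the conjugate coefficients $\tilde A$ (Neumann for $A$ is equivalent to regularity for $\tilde A$, which satisfies the same hypotheses), rather than through a Neumann perturbation off $A_1$, since Neumann well-posedness for the merely continuous $A_1$ is again not directly available.
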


Proofs  of Theorems \ref{thm:semigroup} and  \ref{thm:radind} are in  Sections~\ref{sec:unique} and proofs of Theorems~\ref{thm:equiv2intro}  and  \ref{thm:Dinisquarerealintro}  are in Section~\ref{sec:realequations}.

We end this introduction with a remark on the Lipschitz invariance of the above results.
Let $\Omega\subset \R^{1+n}$ be a domain which is Lipschitz diffeomorphic to $\bO^{1+n}$
and let $\rho: \bO^{1+n}\to \Omega$ be the Lipschitz diffeomorphism. Denote the boundary
by $\Sigma:= \partial \Omega$ and the restricted boundary Lipschitz diffeomorphism
by $\rho_0: S^n\to \Sigma$.

Given a function $\tilde u: \Omega\to \C^m$, we pull it back to 
$u:= \tilde u\circ \rho: \bO^{1+n}\to \C^m$.
By the chain rule, we have $\nabla_\bx u= \rho^* (\nabla_\by \tilde u)$, where
the pullback of an $m$-tuple of vector fields $f$, is defined as
$\rho^*(f)(\bx)^\alpha:= \underline{\rho}^t (\bx) f^\alpha(\rho(\bx))$, with $\underline{\rho}^t$
denoting the transpose of Jacobian matrix $\underline{\rho}$.
If $\tilde u$ satisfies $\divv_{\by} \tilde A \nabla_{\by} \tilde u=0$ in $\Omega$, 
with coefficients $\tilde A\in L_\infty(\Omega;\mL(\C^{(1+n)m}))$, then $u$ will satisfy 
$\divv_{\bx} A\nabla_{\bx} u=0$ in $\bO^{1+n}$, where 
$A\in L_\infty(\bO^{1+n};\mL(\C^{(1+n)m}))$ are the ``pulled back'' coefficients defined as
\begin{equation}   \label{eq:pullbackA}
  A({\bf x}):= |J(\rho)({\bf x})| (\underline{\rho}({\bf x}))^{-1} \tilde A(\rho({\bf x})) (\underline{\rho}^t({\bf x}) )^{-1}, 
  \qquad {\bf x}\in \bO^{1+n}.
\end{equation}
Here $J(\rho)$ is the Jacobian determinant of $\rho$.

The Carleson condition,  non-tangential maximal functions and square functions on $\Omega$ correspond to ones on $S^n$ under pullback, so that  $1-|\bx|$ becomes $\delta (\by)$ the distance to $\Sigma$.  In particular, the condition for $\tilde A$ amounts to $\|\E\|_{C}<\infty$ with $\E$ defined from $A$. 
We remark that pullbacks allow to replace normal directions by oblique (but transverse) ones to the sphere in the Carleson condition on the coefficients: take $\rho:\bO^{1+n}\to \bO^{1+n}$ to be a suitable Lipschitz diffeomorphism.

The boundary conditions on $\tilde u$ on $\Sigma$ translate in the following way
to boundary conditions on $u$ on $S^n$.
\begin{itemize}
\item The Dirichlet condition $\tilde u= \tilde \bphi$ on $\Sigma$
is equivalent to $u= \bphi$ on $S^n$, where $\bphi:=\tilde \bphi\circ \rho_0\in L_2(S^n;\C^m)$.
\item The Dirichlet regularity condition 
$\nabla_\Sigma \tilde u= \tilde \bphi$ on $\Sigma$ ($\nabla_\Sigma$
denoting the tangential gradient on $\Sigma$)
is equivalent to $\nabla_S u= \bphi$ on $S^n$, 
where $\bphi:=\rho_0^*(\tilde \bphi)\in \ran(\nabla_S)\subset L_2(S^n;\C^{nm})$.
\item The Neumann condition $(\nu, \tilde A\nabla_\by \tilde u)= \tilde \bphi$ on $\Sigma$
($\nu$ being the outward unit normal vector field on $\Sigma$) with $\int_{\Sigma} \tilde\bphi(y) dy=0$ 
is equivalent to $(\rad, A\nabla_\bx u)= \bphi$ on $S^n$ with $\int_{S^n}\bphi(x)dx=0$, where
$\bphi:= |J(\rho_0)| \tilde \bphi\circ \rho_0\in L_2(S^n;\C^{m})$.
\end{itemize}

In this way the Dirichlet/regularity/Neumann problem with coefficients $\tilde A$ in 
the Lipschitz domain $\Omega$ is equivalent to the Dirichlet/regularity/Neumann problem 
with coefficients $A$ in the unit ball $\bO^{1+n}$,
and it is straightforward to extend the results on $\bO^{1+n}$ above to Lipschitz domains $\Omega$.

The plan of the paper is as follows. In Section~\ref{sec:CR}, we transform the 
second order equation (\ref{eq:divform}) into a system of Cauchy-Riemann type equations. 
In Section~\ref{sec:ODE}, the 
Cauchy-Riemann equations are integrated to a vector-valued ODE
for the conormal gradient of $u$ and a second ODE is introduced to construct a vector potential. The infinitesimal generators $D_0$ and $\tD_{0}$ for these ODE with radially independent coefficients are studied in Sections~\ref{sec:infgene} and \ref{sec:resolvents}, and it is shown in 
Section~\ref{sec:squarefcn} that $D_0$ and $\tD_{0}$ have bounded holomorphic functional calculi. Section~\ref{sec:disk} treats special features of elliptic systems in the unit disk.   In
Section~\ref{sec:XYspaces} we define  the natural function spaces $\mX^o$ and $\mY^o$
for the BVPs and we describe in Section~\ref{sec:semigroups} how to construct solutions from the semigroups generated by $|D_0|=\sqrt{D_0^2}$ and $|\tD_{0}|=\sqrt{\tD_0^2}$. 
In Section~\ref{sec:integration}, the ODE with radially dependent coefficients  for the conormal gradient from Section~\ref{sec:infgene} is reformulated
as an integral equation involving an operator $S_A$, which is shown to be bounded
on the natural function spaces for the BVPs.   In Section~\ref{sec:representation}, we obtain representation for $\mX^o$- and $\mY^o$-solutions. These representations are further developed in Section~\ref{sec:conjugate} where we introduce the notion of a pair of conjugate systems for (\ref{eq:divform}), allowing to prove in Sections~\ref{sec:NT} and \ref{sec:aecv} non-tangential maximal estimates and Fatou type results. 
Crucial for the solvability of (\ref{eq:divform}) is the invertibility of $I-S_A$.
In Section~\ref{sec:fredholm}, we apply Fredholm theory to show that $I-S_A$ is 
invertible on the natural spaces 
whenever the small Carleson condition (\ref{eq:smalllimCarleson}) holds, which proves that it suffices to assume transversal 
regularity of $A$ near the boundary only. (For BVPs on the unbounded half-space
studied in \cite{AA1}, the needed compactness was not available.)  We then study  well-posedness in Section~\ref{sec:BVPs}: this is where we prove the perturbation results, the equivalence Dirichlet/regularity up to taking adjoints and obtain classes of radially independent coefficients for which well-posedness holds. The section~\ref{sec:unique} deals with uniqueness issues, on comparisons of different classes of solutions upon some well-posedness assumptions. We conclude the article in Section~\ref{sec:realequations} by a discussion in the special case of real equations ($m=1$) for which we obtain further results. 

{\sc Acknowledgements.} The support of  Mathematical Science Institute of the Australian National University in Canberra that the first author was visiting  is greatly acknowledged.  We thank Steve Hofmann for several discussions on real equations pertaining to this work. We also thank Alano Ancona, Carlos Kenig and Martin Dindos for a few exchanges.

\section{Generalized Cauchy-Riemann system}\label{sec:CR}

Following \cite{AAH, AAM, AA1}, 
the starting point of our analysis is that  solving  for $u$ the divergence form system (\ref{eq:divform})
amounts to solving for its gradient $g$  a system of Cauchy-Riemann equations. 

\begin{prop}\label{prop:CR} 
Consider  coefficients $A\in L_\infty(\bO^{1+n}; \mL(\C^{(1+n)m}))$.  
If $u$ is a weak solution to $\divv_{\bx} A \nabla_{\bx} u=0$ in $\bO^{1+n}$, then $g:=\nabla_{\bx} u \in L_2^\loc(\bO^{1+n};\C^{(1+n)m})$ is a solution of the generalized Cauchy-Riemann system
\begin{equation}  \label{eq:firstorderdiv}
   \begin{cases}
     \divv_{\bx} \, (Ag)=0, \\
     \curl_{\bx} \,  g= 0,
   \end{cases}
\end{equation}
in $\bO^{1+n}\setminus \{0\}$ distribution sense.  Conversely,  if $g \in L_2^\loc(\bO^{1+n};\C^{(1+n)m})$ is a solution to \eqref{eq:firstorderdiv} in $\bO^{1+n}\setminus \{0\}$ distribution sense, then there exists a weak solution $u$ to $\divv_{\bx} A \nabla_{\bx} u=0$ in $\bO^{1+n}$, such that   
 $g=\nabla_{\bx} u$ in $\bO^{1+n}$ distribution sense.
\end{prop}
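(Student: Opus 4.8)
The forward direction is essentially a tautology once one unwinds definitions. If $u$ is a weak solution to $\divv_\bx A\nabla_\bx u=0$ in $\bO^{1+n}$, set $g:=\nabla_\bx u$, which lies in $L_2^\loc$ because $u\in W_2^{1,\loc}$. The weak formulation $\int (A\nabla_\bx u,\nabla_\bx\phi)=0$ for all $\phi\in C_c^\infty(\bO^{1+n};\C^m)$ says precisely that $\divv_\bx(Ag)=0$ in the distributional sense on all of $\bO^{1+n}$, hence in particular on $\bO^{1+n}\setminus\{0\}$. The equation $\curl_\bx g=0$ holds because $g$ is a gradient: for each $\alpha$, $\partial_i g_j^\alpha-\partial_j g_i^\alpha=\partial_i\partial_j u^\alpha-\partial_j\partial_i u^\alpha=0$ as distributions. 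Both assertions are immediate from the definitions, so the first half needs no real work.

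The converse is where the content lies, and the plan is to recover the potential $u$ from the curl-free field $g$ by Poincar\'e's lemma. Since $\curl_\bx g=0$ in $\bO^{1+n}\setminus\{0\}$ in the distributional sense and $\bO^{1+n}$ is simply connected — indeed the puncture at the origin is harmless for $1$-forms when $1+n\ge 3$, since $\bO^{1+n}\setminus\{0\}$ is still simply connected there, and for $1+n=2$ one handles the punctured disk separately — there exists $u\in \mD'(\bO^{1+n}\setminus\{0\};\C^m)$, unique up to an additive constant on each component, with $\nabla_\bx u=g$; because $g\in L_2^\loc$, elliptic regularity (or simply the definition of the distributional gradient) upgrades $u$ to $W_2^{1,\loc}(\bO^{1+n}\setminus\{0\};\C^m)$. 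The key remaining point is that this $u$, \emph{a priori} defined only away from the origin, extends across $\bx=0$: since $\nabla_\bx u=g\in L_2^\loc(\bO^{1+n})$ is in $L_2$ near $0$ and $\{0\}$ has zero $W_2^1$-capacity in dimension $1+n\ge 2$, the function $u$ (after fixing the constant) extends to an element of $W_2^{1,\loc}(\bO^{1+n};\C^m)$ with $\nabla_\bx u=g$ distributionally on all of $\bO^{1+n}$. Once this extension is in hand, the identity $\divv_\bx(Ag)=0$, which was assumed only on $\bO^{1+n}\setminus\{0\}$, in fact holds on all of $\bO^{1+n}$: the difference of the two distributions is supported at $\{0\}$, hence a finite combination of derivatives of $\delta_0$, but $Ag\in L_2^\loc$ so $\divv_\bx(Ag)$ has negative Sobolev regularity incompatible with a nonzero such combination — equivalently, one tests against $\phi\in C_c^\infty(\bO^{1+n};\C^m)$ and approximates by cutting off a neighborhood of $0$, using that the capacity of $\{0\}$ vanishes to kill the error terms. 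This shows $u$ is a weak solution of $\divv_\bx A\nabla_\bx u=0$ in $\bO^{1+n}$ with $g=\nabla_\bx u$, as required.

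The main obstacle, then, is not the Cauchy-Riemann bookkeeping but the removable-singularity argument at the origin: one must check both that the potential $u$ obtained on the punctured ball extends across $0$ in $W_2^{1,\loc}$ and that the divergence-form equation, initially known only on the punctured ball, propagates across $0$. Both rest on the fact that a point has vanishing $W_2^1$-capacity when $1+n\ge 2$ (for $1+n=2$ this is the borderline logarithmic case, which still works since we only need $L_2$-control of the gradient, not pointwise bounds), so that $C_c^\infty(\bO^{1+n}\setminus\{0\})$ is dense in $W_2^1$ of the ball for the relevant purposes. I would isolate this as a short lemma — ``a distribution supported at a point with $\nabla$ in $L_2^\loc$ near it, or with $\divv$ of an $L_2^\loc$ field, is trivial there'' — and then the proposition follows by combining it with Poincar\'e's lemma.
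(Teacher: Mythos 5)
Your forward direction coincides with the paper's (it is immediate from the weak formulation and the symmetry of second distributional derivatives). Your converse, however, runs in the opposite order from the paper's proof: the paper first proves a single removable-singularity lemma (Lemma~\ref{lem:removesing}) valid for any homogeneous first order operator $X$ and applies it to both $\divv_\bx(Ag)$ and $\curl_\bx\, g$, so that \eqref{eq:firstorderdiv} holds on all of $\bO^{1+n}$ before any potential is constructed; only then is Poincar\'e's lemma invoked, on the full, simply connected ball. You instead apply Poincar\'e on the punctured ball first, then extend $u$ across the origin by a zero-capacity argument, and finally propagate the divergence equation across $0$. The paper's ordering buys two simplifications: the topology of the punctured domain never enters, and no extension-of-$u$ or capacity discussion is needed — the single cutoff computation, bounding $\epsilon^{-1}\int_{\epsilon<|\bx|<2\epsilon}|h|\,d\bx$ by Cauchy--Schwarz, does all the work for both equations at once.

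The one genuine loose end in your argument is the case $1+n=2$: the punctured disk is not simply connected, so Poincar\'e's lemma does not apply there as stated, and the phrase ``one handles the punctured disk separately'' is not an argument. What is needed is that the period $c=\oint_{|\bx|=r}(g,\ang)\,ds$, which is independent of $r$ by curl-freeness on the punctured disk, vanishes; this is \emph{not} a consequence of curl-freeness alone (the field $\nabla_\bx\theta$, with $\theta$ the angular coordinate, is a counterexample) and must be deduced from the hypothesis $g\in L_2^\loc(\bO^{2})$ up to the origin, for instance via $|c|^2/r\lesssim\int_{|\bx|=r}|g|^2ds$, whose integral in $r$ near $0$ diverges unless $c=0$. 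Either supply this, or remove the singularity of the curl equation first (as the paper does), after which the potential exists on the whole ball and the issue disappears. The remaining steps of your proof — zero $W_2^1$-capacity of a point to extend $u$, and the observation that a distribution supported at $\{0\}$ with the negative Sobolev regularity of $\divv_\bx(Ag)$, $Ag\in L_2^\loc$, must vanish — are sound, though the capacity step also requires a word on why $u$ is locally $L_2$ near $0$, and both are rendered unnecessary by the paper's order of operations.
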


\begin{proof} If $u$ is given, then $g:=\nabla_{\bx} u$ has the desired properties and the equation is even satisfied in $\bO^{1+n}$ distribution sense. 
Conversely, assume $g$ is given and satisfies \eqref{eq:firstorderdiv} in $\bO^{1+n}\setminus \{0\}$ distribution sense. Then the next lemma applied to both operators $\divv_{\bx} $ and $\curl_{\bx} $  implies that $0$ is a removable singularity and that 
\eqref{eq:firstorderdiv} holds in $\bO^{1+n}$ distribution sense. Thus one can define a distribution $u$ in $\bO^{1+n}$ such that $g=\nabla_{\bx} u$, hence $\divv_{\bx} A \nabla_{\bx} u=0$ in $\bO^{1+n}$. That $u$ is a weak solution follows from $g \in L_2^\loc(\bO^{1+n};\C^{(1+n)m})$.
\end{proof}

\begin{lem}     \label{lem:removesing} 
  Let $X$ be a homogeneous 
  first order partial differential operator on $\R^{1+n}$ mapping $\C^k$-valued distributions to $\C^\ell$-valued distributions, $k,\ell\in \Z_+$. 
  If  $h\in L_2^\loc(\bO^{1+n}; \C^k)$ and $Xh=0$ in distributional sense on $\bO^{1+n}\setminus\{0\}$, then $Xh=0$ in 
  $\bO^{1+n}$-distributional sense.
\end{lem}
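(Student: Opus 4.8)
The plan is to test the equation $Xh=0$ against an arbitrary $\varphi\in C_c^\infty(\bO^{1+n};\C^\ell)$ and show that $\int_{\bO^{1+n}} \langle h, X^t\varphi\rangle\,d\bx=0$, where $X^t$ is the formal transpose of $X$ (also a homogeneous first order operator with constant coefficients). Since $Xh=0$ already holds away from the origin, the only thing to rule out is a distributional contribution concentrated at $\{0\}$; equivalently, one must show that the a priori possible "defect" distribution supported at the origin vanishes. The standard device is to introduce a family of cutoffs that excise a shrinking neighbourhood of $0$, apply the known identity on the complement, and let the excised region collapse.

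Concretely, first I would fix a cutoff $\eta\in C_c^\infty(\R^{1+n})$ with $\eta(\bx)=1$ for $|\bx|\le 1$ and $\eta(\bx)=0$ for $|\bx|\ge 2$, and set $\eta_\varepsilon(\bx):=\eta(\bx/\varepsilon)$, so that $\|\nabla\eta_\varepsilon\|_\infty\lesssim \varepsilon^{-1}$ and $\nabla\eta_\varepsilon$ is supported in the annulus $A_\varepsilon:=\{\varepsilon\le|\bx|\le 2\varepsilon\}$. Given $\varphi\in C_c^\infty(\bO^{1+n};\C^\ell)$, the function $(1-\eta_\varepsilon)\varphi$ is a legitimate test function on $\bO^{1+n}\setminus\{0\}$, so the hypothesis gives
\begin{equation*}
  0=\int_{\bO^{1+n}} \langle h,\, X^t\big((1-\eta_\varepsilon)\varphi\big)\rangle\,d\bx
   =\int_{\bO^{1+n}} \langle h,\, (1-\eta_\varepsilon)\,X^t\varphi\rangle\,d\bx
   -\int_{A_\varepsilon} \langle h,\, (X^t\eta_\varepsilon)\varphi\rangle\,d\bx,
\end{equation*}
where I have used the Leibniz rule for the first order operator $X^t$, the fact that $X^t$ has constant coefficients being immaterial here beyond producing a bounded zeroth order term in $\nabla\eta_\varepsilon$. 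The first integral converges to $\int_{\bO^{1+n}}\langle h, X^t\varphi\rangle\,d\bx$ as $\varepsilon\to 0$ by dominated convergence, since $h\in L_2^\loc$ and $X^t\varphi$ is bounded with compact support. So it remains to show the error term tends to $0$.

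For the error term, the pointwise bound $|X^t\eta_\varepsilon|\lesssim \varepsilon^{-1}$ on $A_\varepsilon$ together with Cauchy--Schwarz gives
\begin{equation*}
  \Big| \int_{A_\varepsilon} \langle h,\, (X^t\eta_\varepsilon)\varphi\rangle\,d\bx \Big|
  \lesssim \varepsilon^{-1}\,\|\varphi\|_\infty\, |A_\varepsilon|^{1/2}\, \Big(\int_{A_\varepsilon}|h|^2\,d\bx\Big)^{1/2}
  \lesssim \varepsilon^{-1}\,\varepsilon^{(1+n)/2}\,\Big(\int_{A_\varepsilon}|h|^2\,d\bx\Big)^{1/2}.
\end{equation*}
Since $n\ge 1$ we have $(1+n)/2\ge 1$, so $\varepsilon^{-1}\varepsilon^{(1+n)/2}=\varepsilon^{(n-1)/2}$ stays bounded as $\varepsilon\to 0$; and $\int_{A_\varepsilon}|h|^2\,d\bx\to 0$ because $h\in L_2^\loc$ and $|A_\varepsilon|\to 0$. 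Hence the error term vanishes in the limit, giving $\int_{\bO^{1+n}}\langle h, X^t\varphi\rangle\,d\bx=0$ for every test $\varphi$, which is exactly $Xh=0$ in $\bO^{1+n}$-distributional sense. The only point requiring a little care — and the mild "obstacle" such as it is — is the borderline exponent when $n=1$: there $\varepsilon^{(n-1)/2}=1$ does not itself decay, so one genuinely needs the absolute continuity of the integral $\int_{A_\varepsilon}|h|^2\to 0$ rather than any gain from the measure of the annulus; in dimensions $n\ge 2$ one has slack to spare. (If one preferred a logarithmic cutoff $\eta_\varepsilon$ interpolating between the spheres of radius $\varepsilon$ and $\sqrt\varepsilon$, the $L_2^\loc$ bound alone would kill the error even more comfortably, but the simple annular cutoff already suffices.)
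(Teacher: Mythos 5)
Your proposal is correct and is essentially the paper's own proof: the same excision of a shrinking neighbourhood of the origin by a cutoff with $\|\nabla\eta_\varepsilon\|_\infty\lesssim\varepsilon^{-1}$, the same Leibniz-rule splitting, and the same Cauchy--Schwarz bound $\varepsilon^{(n-1)/2}\bigl(\int_{A_\varepsilon}|h|^2\,d\bx\bigr)^{1/2}\to 0$ for the annular error term. Your explicit remark that for $n=1$ one must invoke absolute continuity of $\int|h|^2$ rather than decay of the prefactor is a point the paper leaves implicit, but the argument is the same.
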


\begin{proof}
Let $\phi\in C_0^\infty(\bO^{1+n};\C^\ell)$.
We need to show
that $\int_{\bO^{1+n}} (X^*\phi, h)d\bx =0$.
To this end, let $\eta_\epsilon$
be a smooth radial function with $\eta_\epsilon=0$ on $\{|\bx|<\epsilon\}$, $\eta_\epsilon=1$ on 
$\{2\epsilon<|\bx|<1\}$ and $\|\nabla\eta_\epsilon\|_\infty\lesssim \epsilon^{-1}$.
Then
\begin{multline*}
  \int_{\bO^{1+n}} \eta_\epsilon(X^*\phi, h)d\bx=   \int_{\bO^{1+n}} (X^*(\eta_\epsilon\phi), h)d\bx-
   \int_{\bO^{1+n}}((X^*\eta_\epsilon)\phi, h)d\bx \\
     = -\int_{\bO^{1+n}} ((X^*\eta_\epsilon)\phi, h)d\bx.
\end{multline*}
As $\epsilon\to 0$, the left hand side converges to $\int_{\bO^{1+n}}(X^*\phi, h)d\bx$,
whereas
$$
  \left|\int_{\bO^{1+n}} ((X^*\eta_\epsilon)\phi, h) d\bx \right|\lesssim \frac 1\epsilon\int_{\epsilon<|\bx|<2\epsilon}|h|d\bx
  \lesssim \epsilon^{(n-1)/2}\left( \int_{\epsilon<|\bx|<2 \epsilon} |h|^2d\bx\right)^{1/2}\to 0.
$$
This proves the lemma.
\end{proof}

%
%
%
%
%
\section{The divergence form equation as an ODE}   \label{sec:ODE}

We introduce a convenient framework  to  transform the Cauchy-Riemann system into an ODE. 

We systematically use boldface letters $\bx,\by, \ldots$ to denote variables in $\R^{1+n}$ and indicate the variable for differential operators in $\R^{1+n}$ (e.g. $\nabla_{\bx}\ldots$).
We denote points on $S^n$ by $x,y,\ldots$ and the standard (non-normalized) surface 
measure on $S^n$ by $dx$.
Polar coordinates are written $\bx= rx$, with $r>0$ and $x\in S^n$.
For a function $f$ defined in $\bO^{1+n}$, we write $f_r(x):= f(rx)$, $x\in S^n$, 
for the restriction to the sphere with radius $0<r<1$, parametrized by $S^n$.

The radial unit vector field we denote by $\rad= \rad(\bx):= \bx/|\bx|$.
Vectors $v\in\R^{1+n}$, we split $v= v_\no \rad + v_\ta$, where
$v_\no := (v,\rad)$ is the {\em normal component} and $v_\ta:= v-v_\no \rad$ is the {\em angular or tangential part} of $v$, which is a vector orthogonal to $\rad$.
Note that $v_\no$ is a scalar, but $v_\ta$ is a vector.
In the plane, \textit{i.e.} when $n=1$, we denote the counter clockwise angular unit vector field by $\ang$, and we have $v=v_\no \rad + (v, \ang)\ang$.
For an $m$-tuple of vectors $v=(v^\alpha)_{1\le \alpha\le m}$, we define its
normal components and tangential parts componentwise as
$$(v_\no)^\alpha:= (v^\alpha)_\no, \qquad (v_\ta)^\alpha:= (v^\alpha)_\ta.$$

The tangential gradient, divergence and curl on the unit sphere are denoted by $\nabla_S$, 
$\divv_S$ and $\curl_S$ respectively.
The gradient acts component-wise on tuples of scalar 
functions, whereas the divergence and curl act vector-wise on tuples of vector fields.
In polar coordinates, the $\R^{1+n}$ differential operators are
\begin{gather*}
   \nabla_\bx u=  (\pd_r u_r)\rad + r^{-1} \nabla_S u_r, \\
   \divv_\bx f= r^{-n} \pd_r \big(r^n (f_r)_\no\big)+ r^{-1} \divv_S (f_r)_\ta, \\
   \curl_\bx f=r^{-1} \rad\wedg \big(\pd_r (r (f_r)_\ta)-\nabla_S (f_r)_\no\big) + r^{-1} \curl_S (f_r)_\ta.
\end{gather*}

We use the boundary function space $L_2(S^n; \V)$, writing the norm $\|\cdot\|_2$, 
of $L_2$ sections of the complex vector bundle
$$
  \V:= \begin{bmatrix} \C^m \\ (T_\C S^n)^m \end{bmatrix}
$$
over $S^n$, where $\C^m$ is identified with the trivial vector bundle and  $T_\C S^n$ denotes the complexified tangent bundle of $S^n$. 
The elements of this bundle are written in vector form 
$f=\begin{bmatrix} \alpha \\ \beta \end{bmatrix}=\begin{bmatrix} \alpha & \beta \end{bmatrix}^t$,
and we write $f_\no:= \alpha$, $f_\ta:= \beta$ for the normal component and tangential part.
Note that $\V$ is isomorphic to the trivial vector bundle $\C^{(1+n)m}$, when
identifying scalar, \textit{i.e.} $\C^m$-valued, functions and $m$-tuples of radial vector fields.
More precisely, the isomorphism is 
$\V\ni\begin{bmatrix} \alpha & \beta \end{bmatrix}^t\mapsto \alpha \rad+ \beta\in\C^{(1+n)m}$, 
for $\alpha\in\C^m$ and $\beta\in (T_{\C}S^n)^m$.

The differential operators on $S^n$ can be seen as unbounded operators. We use  $\dom(A), \ran(A) , \nul(A)$ for the domain, range and null space respectively of unbounded operators. Then  $$\nabla_{S}: L^2(S^n;  \C^m) \to L_{2}(S^n; (T_\C S^n)^m)$$
and its adjoint $$-\divv_{S}: L_{2}(S^n; (T_\C S^n)^m) \to L^2(S^n;  \C^m),$$
with domains $\dom(\nabla_{S})=W^1_{2}(S^n; \C^m)$  and 
$\dom(\divv_{S})=\sett{g\in L_{2}(S^n; (T_\C S^n)^m)}{\divv_{S}g\in  L^2(S^n;  \C^m)}$ are  closed unbounded operators with closed range. 
 The condition $g \in \ran(\divv_S)= \nul(\nabla_S)^\perp$ is that $\int_{S^n} g(x)dx=0$ so $\ran(\divv_S)$ is of codimension $m$ in $L^2(S^n;  \C^m)$.  Also when $n\ge 2$,  $ \ran(\nabla_S)= \nul(\curl_S)$,  and  when $n=1$,  $g\in \ran(\nabla_S)$ if and only if $\int_{S^1} (g(x),\ang) dx=0$.
 Thus $\ran(\nabla_S)$ is of codimension $m$ in $L^2(S^1;  \C^m)$ when $n=1$, and infinite codimension when $n\ge 2$.

\begin{defn}    \label{defn:DNops}
  In $L_2(S^n;\V)$, we define operators
$
  D:= 
    \begin{bmatrix} 0 & -\divv_S  \\ 
     \nabla_S & 0 \end{bmatrix}
$
and
$
  N:= 
    \begin{bmatrix} -I & 0  \\ 
     0 & I \end{bmatrix},
$
where $\dom(D):=\begin{bmatrix} \dom(\nabla_S)  \\ 
     \dom(\divv_S) \end{bmatrix}$.
 Write $N^+f:= \tfrac 12(I+N)f= \begin{bmatrix} 0  \\ 
   f_\ta \end{bmatrix},$ and $N^-f:= \tfrac 12(I-N)f= \begin{bmatrix} f_{\no}  \\ 
   0 \end{bmatrix}$. 
\end{defn}

A basic observation is that the two operators $D$ and $N$ anti-commute, \textit{i.e.}
$$
  ND= -DN.
$$
Of fundamental importance in this paper are the closed orthogonal subspaces
$$
  \mH:= \ran(D)= \begin{bmatrix} \ran(\divv_S) \\ \ran(\nabla_S) \end{bmatrix} \qquad\text{and}\qquad
\mH^\perp:=\nul(D)= \begin{bmatrix} \nul(\nabla_S) \\ \nul(\divv_S) \end{bmatrix}.
$$
We consistently denote by $P_{\mH}$ the orthogonal projection onto $\mH$. 
We remark that  
$$
 N^+\mH^\perp=\left\{ \begin{bmatrix} 0  \\ 
   f_\ta \end{bmatrix}, \divv_{S}  f_\ta = 0\right\}
  \qquad
  \text{and} 
  \qquad
     N^-\mH^\perp = \left\{ \begin{bmatrix} c  \\ 
   0 \end{bmatrix}, c\in \C^m \right\},
   $$
   constants being identified to constant functions. 
 It can be checked that \eqref{eq:accrasgarding}
 is equivalent to 
 $A$ is  {\em strictly accretive} on 
\begin{equation}     \label{eq:H1space}
  \mH_1:=\sett{g\in L_2(S^n;\C^{(1+n)m})} {g_\ta \in \ran( \nabla_S)},
\end{equation}
uniformly for  a.e. $ r\in(0,1)$. 
More precisely, the accretivity assumption on $A$ rewrites 
\begin{equation}   \label{eq:accrassumption}
  \sum_{i,j=0}^n\sum_{\alpha,\beta=1}^m \int_{S^n} \re (A_{i,j}^{\alpha,\beta}(rx)g_j^\beta(x) \conj{g_i^\alpha(x)}) dx\ge \kappa 
   \sum_{i=0}^n\sum_{\alpha=1}^m \int_{S^n} |g_i^\alpha(x)|^2dx,
\end{equation}
for all $g\in\mH_1$, a.e. $r\in(0,1)$. In fact, as we shall see in Lemma~\ref{lem:pointwiseaccr} this is equivalent to pointwise strict accretivity when $n=1$ (unit disk), but this is in general not the case when $n\ge 2$ except if $m=1$ (equations).

Using the notation above, we can  identify $\mH_1$   with $$
\begin{bmatrix} L_2(S^n;\C^m)\\ \ran(\nabla_S)\end{bmatrix}$$
and see that $\mH$ is a subspace of codimension $m$ in $\mH_1$.

 On identifying $\C^{(1+n)m}$ with $\V$,  the space of coefficients $L_\infty(\bO^{1+n}; \mL(\C^{(1+n)m}))$  identifies with  
$L_\infty(\bO^{1+n}; \mL(\V))$, so that we can split any coefficients $A$ as
$$
  A(rx)= \begin{bmatrix} A_{\no\no}(rx) & A_{\no\ta}(rx) \\ A_{\ta\no}(rx) & A_{\ta\ta}(rx) \end{bmatrix},
$$
with  $A_{\no\no}(rx)\in \mL(\C^m;\C^m)$,  $A_{\no\ta}(rx)\in \mL((T_{x}S^n)^m, \C^m)$, $A_{\ta\no}(rx)\in \mL(\C^m, (T_{x}S^n)^m)$ and $A_{\ta\ta}(rx)\in \mL((T_{x}S^n)^m, (T_{x}S^n)^m)$. Note also that $A_{\no\no}(rx)= (A(rx)\rad,\rad).$

With our accretivity assumption (\ref{eq:accrassumption}), the component $A_{\no\no}(r\cdot)$ seen as a multiplication operator is  invertible on $L_2(S^n;\C^m)$, thus as a matrix function it is invertible in $L_{\infty}(S^n; \C^m)$.
This is the reason why strict accretivity on $\mH_1$ is needed, and not only on $\mH$, so that 
the transformed coefficient matrix $\hat A$ below can be formed in the next result. We make the  above identification for coefficients $A$ without mention. 

We can now state the two  results on which our analysis stands. 
Proposition~\ref{prop:divformasODE} reformulates this Cauchy-Riemann system (\ref{eq:firstorderdiv})
further, by solving for the $r$-derivatives, as the vector-valued ODE (\ref{eq:firstorderODE}) for the conormal gradient $f$ defined below. This formulation is well suited for the Neumann and regularity problems.
For the Dirichlet problem, we use instead a similar first order system formulation of the 
equation; see Proposition~\ref{prop:ODEtoPotential}. As explained in \cite[Sec.~3]{AA1},
the vector-valued potential $v$ appearing there should be thought of as containing some
generalized conjugate functions as tangential part. 
In the case of the unit disk, we make this rigorous in Section~\ref{sec:disk} and come back to this in Section~\ref{sec:conjugate}. The fundamental object is the following. 

\begin{defn}   \label{defn:gradtoconormal}
The conormal gradient of a weak solution $u$ to  $\divv_{\bx} A \nabla_{\bx} u=0$ in $\bO^{1+n}$ is the section $f: \R^+ \times S^n \to \V$ defined by
\begin{equation}
\label{eq:gradtoconormal}
f_t  = e^{-(n+1)t/2}\begin{bmatrix} (Ag_r)_\no \\ (g_r)_\ta \end{bmatrix}, 
 \end{equation} 
 where $r=e^{-t}$ and $g=\nabla_{\bx} u$. The map $g_{r}\mapsto f_{t}$  is called the gradient-to-conormal gradient map. 
\end{defn} 

\begin{prop}  \label{prop:divformasODE}  
  The pointwise transformation 
$$
   A\mapsto \hat A:=    \begin{bmatrix} A_{\no\no}^{-1} & -A_{\no\no}^{-1} A_{\no\ta}  \\ 
     A_{\ta\no}A_{\no\no}^{-1} & A_{\ta\ta}-A_{\ta\no}A_{\no\no}^{-1}A_{\no\ta} \end{bmatrix}
$$
is a self-inverse bijective transformation of the set of bounded matrices which are strictly accretive on $\mH_1$.

For a pair of coefficients $A\in L_\infty(\bO^{1+n}; \mL(\C^{(1+n)m}))$ and $B\in L_\infty(\R_+\times S^n; \mL(\V))$
which are strictly accretive on $\mH_{1}$ and such that $B= \hat A$, 
the gradient-to-conormal gradient map 
gives a one-to-one correspondence, with inverse the conormal gradient-to-gradient map
\begin{equation}
\label{eq:conormaltograd}
f_{t} \mapsto g_r= r^{-\frac{n+1}{2}}( (B f_t)_\no \rad+  (f_t)\ta),
\end{equation}
where $t=\ln(1/r)$, 
between solutions $g\in L_2^\loc(\bO^{1+n};\C^{(1+n)m})$ to the Cauchy-Riemann system 
\eqref{eq:firstorderdiv}
in $\bO^{1+n}\setminus \{0\}$ distribution sense, and solutions $f\in L_2^\loc(\R_+;\mH)$, with $\int_1^\infty\|f_t\|_2^2 dt<\infty$, to the equation
\begin{equation}  \label{eq:firstorderODE}
  \pd_t f+ (DB+\tfrac{n-1}2 N) f=0,
\end{equation}
in $\R_+ \times S^n$ distributional sense.
\end{prop}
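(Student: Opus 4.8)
The plan is to verify the two halves of the statement separately: first that $A\mapsto\hat A$ is a self-inverse bijection on the strictly accretive matrices on $\mH_1$, and then that under the pairing $B=\hat A$ the gradient-to-conormal gradient map sends solutions of the Cauchy-Riemann system \eqref{eq:firstorderdiv} to solutions of the ODE \eqref{eq:firstorderODE}, bijectively. For the first part, I would note that the formula for $\hat A$ is exactly the classical ``partial inversion'' or Schur-complement swap that exchanges the roles of the normal component of the flux $(Ag)_\no$ and the normal component $g_\no$ of the gradient, while keeping the tangential part $g_\ta$ fixed. Concretely, if one writes $w=\begin{bmatrix} (Ag)_\no \\ g_\ta\end{bmatrix}$ in terms of $g=\begin{bmatrix} g_\no \\ g_\ta\end{bmatrix}$, then a direct block computation using invertibility of $A_{\no\no}$ (guaranteed by strict accretivity on $\mH_1$, as noted before the proposition) shows $g=\hat A w$; applying the same computation again returns $w$, so $\hat{\hat A}=A$. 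Strict accretivity of $\hat A$ on $\mH_1$ follows because $\re(\hat A w, w)$ over $w\in\mH_1$ rewrites, after the substitution $w\leftrightarrow g$, as $\re(Ag,g)$ over the corresponding $g\in\mH_1$ (the tangential constraint $g_\ta\in\ran(\nabla_S)$ is preserved since $w_\ta=g_\ta$), together with a quantitative lower bound on $\|g\|_2$ in terms of $\|w\|_2$ coming from boundedness of $A$ and of $A_{\no\no}^{-1}$.

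For the second part, I would start from a weak solution $u$, set $g=\nabla_\bx u\in L_2^\loc$, which by Proposition~\ref{prop:CR} solves \eqref{eq:firstorderdiv} in $\bO^{1+n}\setminus\{0\}$, and compute the $r$-derivatives using the polar-coordinate formulas for $\divv_\bx$ and $\curl_\bx$ recorded in Section~\ref{sec:ODE}. The divergence equation $\divv_\bx(Ag)=0$ gives $r^{-n}\pd_r(r^n((Ag)_r)_\no)+r^{-1}\divv_S((Ag)_r)_\ta=0$, and the curl equation gives $\pd_r(r(g_r)_\ta)=\nabla_S(g_r)_\no$ together with $\curl_S(g_r)_\ta=0$. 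Now express $((Ag)_r)_\ta = A_{\ta\no}(g_r)_\no+A_{\ta\ta}(g_r)_\ta$ and $(g_r)_\no$ in terms of the conormal variables via $\hat A$: with $f_t$ defined by \eqref{eq:gradtoconormal} and $r=e^{-t}$, one has $(g_r)_\no = (\hat A f_t)_\no \cdot e^{(n+1)t/2}$ up to the normalizing power (this is exactly the content of \eqref{eq:conormaltograd}), and substituting turns the two first-order equations in $r$ into a single system in $t$. The exponential prefactor $e^{-(n+1)t/2}$ is chosen precisely so that the $r^{-n}$ and $r^{-1}$ weights and the $\pd_r\leftrightarrow -e^t\pd_t$ change of variables combine to produce the clean operator $D\hat A+\tfrac{n-1}{2}N$ acting on $f_t$; carrying out the bookkeeping yields \eqref{eq:firstorderODE}. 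That $f$ takes values in $\mH=\ran(D)$ follows because $(g_r)_\ta\in\nul(\curl_S)=\ran(\nabla_S)$ (for $n\ge2$; for $n=1$ one checks the mean-value condition) and $((Ag)_r)_\no$ lies in $\ran(\divv_S)=L_2(S^n;\C^m)$ automatically. The local square-integrability $f\in L_2^\loc(\R_+;\mH)$ and the tail bound $\int_1^\infty\|f_t\|_2^2\,dt<\infty$ translate directly from $g\in L_2^\loc(\bO^{1+n})$ and the definition of $f_t$ with the $e^{-(n+1)t/2}$ weight, via the polar-coordinate volume element $d\bx = r^n\,dr\,dx$.

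Conversely, given $f\in L_2^\loc(\R_+;\mH)$ solving \eqref{eq:firstorderODE} with the tail bound, I would define $g$ by the conormal gradient-to-gradient formula \eqref{eq:conormaltograd} and run the computation in reverse to see that $g$ solves \eqref{eq:firstorderdiv} on $\bO^{1+n}\setminus\{0\}$; then Proposition~\ref{prop:CR} produces a weak solution $u$ with $g=\nabla_\bx u$, and the two maps are inverse to each other by construction since $\hat{\hat A}=A=B$-inverse in the appropriate sense and the pointwise substitution $w\leftrightarrow g$ is a bijection. One should be slightly careful that \eqref{eq:firstorderODE} is an equation for $f$ with values in $\mH$: the projection $P_\mH$ is implicit, and one must check that the component of $DBf$ in $\mH^\perp$ does not obstruct anything — but $\ran(D)\subset\mH$ automatically, and the $N$-term is handled because on $\mH$ the relevant compatibility is exactly the removable-singularity/mean-zero condition already built into the definition of $\mH$.

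The main obstacle, I expect, is the precise algebraic verification that the combination of the polar-coordinate differential operators, the $r$-weights $r^{-n}$ and $r^{-1}$, the change of variables $r=e^{-t}$, and the normalizing factor $e^{-(n+1)t/2}$ collapse \emph{exactly} into $\pd_t f + (D B + \tfrac{n-1}{2}N)f=0$ with no leftover lower-order terms — in particular getting the constant $\tfrac{n-1}{2}$ and the sign of $N$ right, and confirming that the curvature of the sphere (unlike the flat half-space case of \cite{AA1}) contributes only through this scalar term and not through the operator $D$ itself. This is essentially the same computation as in \cite[Sec.~3]{AA1} adapted to polar rather than Cartesian coordinates, and the bookkeeping is where all the care is needed; everything else (the block-matrix identities, the accretivity transfer, the functional-space bookkeeping) is routine.
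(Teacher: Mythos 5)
Your overall route coincides with the paper's: treat the matrix transformation as the partial inversion that swaps normal flux and normal gradient, then rewrite the Cauchy--Riemann system in polar coordinates, substitute the conormal variables with the normalizing factor $e^{-(n+1)t/2}$ and the chain rule $-r\pd_r=\pd_t$, check that $f$ is $\mH$-valued, and reverse the computation for the converse. But two of your assertions are wrong, and one of them hides a genuine step. First, the block identity you claim is incorrect: with $w=\begin{bmatrix} (Ag)_\no & g_\ta \end{bmatrix}^t$ a direct computation gives $\hat A w=\begin{bmatrix} g_\no & (Ag)_\ta \end{bmatrix}^t$, not $g$; the transformation exchanges normal flux with normal gradient \emph{and} tangential gradient with tangential flux. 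The self-inverse property and the accretivity transfer do follow from this corrected identity, since $\re(\hat A w,w)=\re(Ag,g)$ pointwise (this is exactly the observation $e^{(n+1)t}\re(B_t f_t,f_t)=\re(A_r g_r,g_r)$ that the paper uses), but as written your identity $g=\hat A w$ is inconsistent with the accretivity rewriting you then perform.

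Second, and more seriously, you claim that $((Ag)_r)_\no$ lies in $\ran(\divv_S)=L_2(S^n;\C^m)$ ``automatically''. This is false: $\ran(\divv_S)=\nul(\nabla_S)^\perp$ is the mean-zero subspace, of codimension $m$ in $L_2(S^n;\C^m)$, so the required membership is the nontrivial constraint $\int_{S^n}(A_r g_r)_\no\,dx=0$ for a.e.\ $r$. Establishing it is a real step in the proof: the paper tests $\divv_\bx(Ag)=0$ against radial test functions approximating the characteristic functions of the balls $\{|\bx|<r\}$ (Gauss' theorem), relying on the removable singularity at the origin from Lemma~\ref{lem:removesing}; your proposal skips this entirely, and without it $f_t$ need not take values in $\mH$, so the stated correspondence would fail. (Your treatment of the tangential part --- $\curl_S(g_r)_\ta=0$ for $n\ge 2$ via commutation of pullback with the exterior derivative, and the mean-value condition for $n=1$ --- does match the paper.) Finally, the central bookkeeping producing $\pd_t f+(DB+\tfrac{n-1}{2}N)f=0$ is only announced rather than carried out; this substitution is where the constant $\tfrac{n-1}{2}$ and the sign of $N$ are actually earned, and the paper performs it explicitly, so as it stands your proposal defers precisely the computation the proposition asserts.
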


Recall that the Ricci curvature of $S^n$ is $n-1$, so the constant $\frac{n-1}{2}$ is related to curvature.
On the other hand, the exponent  $\frac{n+1}{2}$ appearing in the correspondence $g_r\leftrightarrow f_t$
is the only exponent for which no powers of $r$ remain in equation (\ref{eq:firstorderODE}).
It turns out that this also makes the  gradient-to-conormal gradient map an $L_{2}$ isomorphism since
\begin{equation}  \label{eq:dfl2iso}
   \int_{\bO^{1+n}} |g|^2 d\bx \approx \int_0^1 \|g_r\|^2_2 r^n dr \approx \int_0^\infty \|f_t\|_2^2 dt.
\end{equation}

\begin{proof}
  The stated properties of the matrix transformation are straightforward to verify, using the observation
  that $e^{(n+1) t}\re(B_t f_t, f_t)= \re(A_r g_r, g_r)$.
  See \cite[Prop. 4.1]{AA1} for details.
  
(i) 
 Assume first that the equations (\ref{eq:firstorderdiv}) hold on $\bO^{1+n}\setminus\{0\}$.
  In polar coordinates $\bx=rx$, the equations $\divv_{\bx}(Ag)=0, \curl_{\bx}(g)=0$ give
\begin{equation*}    \label{eq:splitdivform}
\begin{cases}  r^{-n} \pd_r (r^n (Ag)_\no) + r^{-1} \divv_S(Ag)_\ta=0, \\  \pd_r (r g_\ta) - \nabla_S g_\no=0.
\end{cases}
\end{equation*}
Next we pull back the equations to $\R_+\times S^n$.
Write $(Ag)_\no= r^{-(n+1)/2} f_\no$ and  $(Ag)_\ta= A_{\ta\no}g_\no + A_{\ta\ta}g_\ta$.
Then $g_\no= r^{-(n+1)/2} A_{\no\no}^{-1}(f_\no-A_\no\ta f_\ta)$ and $g_\ta=r^{-(n+1)/2} f_\ta$,
and
the equations further become
$$
\begin{cases}
  r^{-n} \pd_r (r^{(n-1)/2} f_\no) + r^{-(n+3)/2} \divv_S( B_{\ta\no} f_\no + B_{\ta\ta} f_\ta )=0, \\
  \pd_r (r^{(1-n)/2} f_\ta)- r^{-(n+1)/2} \nabla_S( B_{\no\no}f_\no + B_{\no\ta} f_\ta)=0.
\end{cases}
$$
Using product rule for $\pd_r$ and the chain rule $-r\pd_r= \pd_t$, this yields the equation 
(\ref{eq:firstorderODE}).

It remains to check that $f_{t}\in \mH$ for almost every $t>0$. This is equivalent to $(A_{r}g_{r})_\no\in \ran(\divv_S)$ and $(g_{r})_{\ta}\in \ran(\nabla_{S})$ for a.e. $r\in (0,1)$.
To see $(A_{r}g_{r})_\no\in \ran(\divv_S)$ amounts  to seeing that $\int_{S^n} (A_rg_r)_\no dx=0$.  We apply Gauss' theorem as follows.
For any radial function $\phi\in C_0^\infty(\bO^{1+n};\C^m)$, the divergence equation gives
$\int_{\bO^{1+n}} (  Ag, \nabla\phi )d\bx=0$.
Taking, for a.e. $r\in (0,1)$,
 the limit as $\phi$ approaches the characteristic function for balls $\{|\bx|<r\}$ shows  
that $\int_{rS^n} (Ag)_\no dx=0$. To check $(g_{r})_{\ta}\in \ran(\nabla_{S})$ we distinguish first $n=1$. In that case, a similar application of Stokes' theorem shows that $\int_{S^1} (\ang, g_r) dx=0$
for a.e. $r\in (0,1)$. For $n\ge 2$, that $\curl_S ((g_r)_\ta)=0$ is a consequence of $\curl_\bx g=0$ and the general fact that pullbacks and the exterior derivative commute.
Hence $f_t\in\mH$.

(ii)
Conversely, assume that equation (\ref{eq:firstorderODE}) holds and $f_t\in\mH$ for a.e.~$t>0$.
Define the corresponding function $g\in L_2^\loc(\bO^{1+n};\C^{(1+n)m})$ by the conormal gradient-to-gradient map and note that $\curl_S ((g_r)_\ta)=0$.
Reversing the rewriting of the equations in (i) shows that  $\divv_{\bx}(Ag)=0, \curl_{\bx}(g)=0$
hold on $\bO^{1+n}\setminus\{0\}$. 
This proves the proposition.
\end{proof}

\begin{cor}\label{cor:conormal} For any coefficients $A\in L_\infty(\bO^{1+n}; \mL(\C^{(1+n)m}))$ which are strictly accretive in the sense of \eqref{eq:accrasgarding},
gradients of weak solutions to \eqref{eq:divform} in $\bO^{1+n}$ are in one-to-one correspondence with $\R_{+}\times S^n$ distributional  solutions   to the   equation  \eqref{eq:firstorderODE}, belonging to
$L_2^\loc(\R_+;\mH)$ with estimate  $\int_1^\infty\|f_t\|_2^2 dt<\infty$.
\end{cor}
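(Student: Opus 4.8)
The plan is to assemble Corollary~\ref{cor:conormal} directly from the two results preceding it, Proposition~\ref{prop:CR} and Proposition~\ref{prop:divformasODE}, together with the observation that strict accretivity of $A$ in the sense of \eqref{eq:accrasgarding} is exactly the hypothesis required to form $\hat A$. So the first step is to record that, by Proposition~\ref{prop:CR}, gradients $g=\nabla_\bx u$ of weak solutions $u$ to \eqref{eq:divform} in $\bO^{1+n}$ are in one-to-one correspondence with solutions $g\in L_2^\loc(\bO^{1+n};\C^{(1+n)m})$ of the Cauchy--Riemann system \eqref{eq:firstorderdiv} on $\bO^{1+n}\setminus\{0\}$ in the distributional sense; the point to stress is that $u$ is recovered from $g$ up to an additive constant (both $\divv_\bx$ and $\curl_\bx$ have $0$ as a removable singularity by Lemma~\ref{lem:removesing}, so $\curl_\bx g=0$ on all of $\bO^{1+n}$ lets us write $g=\nabla_\bx u$).

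Next I would invoke Proposition~\ref{prop:divformasODE} with $B:=\hat A$, which is well defined and strictly accretive on $\mH_1$ precisely because \eqref{eq:accrasgarding} holds (this is the content of the remark that $A_{\no\no}$ is invertible, and of the self-inverse bijectivity of $A\mapsto\hat A$ stated in the proposition). That proposition gives a one-to-one correspondence, via the gradient-to-conormal gradient map \eqref{eq:gradtoconormal} with inverse \eqref{eq:conormaltograd}, between solutions $g\in L_2^\loc(\bO^{1+n};\C^{(1+n)m})$ of \eqref{eq:firstorderdiv} on $\bO^{1+n}\setminus\{0\}$ and solutions $f\in L_2^\loc(\R_+;\mH)$ of the ODE \eqref{eq:firstorderODE} satisfying $\int_1^\infty\|f_t\|_2^2\,dt<\infty$. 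Composing the two correspondences yields the bijection asserted in the corollary. I would also note that the local-in-$L_2$ membership and the integrability tail condition $\int_1^\infty\|f_t\|_2^2\,dt<\infty$ transfer faithfully under the change of variables because of the equivalence of norms \eqref{eq:dfl2iso} (indeed $\int_0^1\|g_r\|_2^2 r^n\,dr\approx\int_0^\infty\|f_t\|_2^2\,dt$, and finiteness on $\{1/2<|\bx|<1\}$ corresponds to finiteness of $\int_1^\infty$).

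The only subtlety worth spelling out — and the step I would treat most carefully rather than the routine gluing — is the compatibility of the two ``up to a constant'' ambiguities: Proposition~\ref{prop:CR} produces $u$ from $g$ only up to an additive constant, but the object the corollary puts in bijection with ODE solutions is the gradient $g$, not $u$ itself, so there is in fact nothing to reconcile; the phrase ``gradients of weak solutions'' already quotients out the constant. I would state this explicitly so the reader is not misled. With that remark in place the proof is a two-line composition:

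\begin{proof}
By Proposition~\ref{prop:CR}, a function $g\in L_2^\loc(\bO^{1+n};\C^{(1+n)m})$ is the gradient of a weak solution to \eqref{eq:divform} in $\bO^{1+n}$ if and only if $g$ solves the Cauchy--Riemann system \eqref{eq:firstorderdiv} in $\bO^{1+n}\setminus\{0\}$ distributional sense. Since $A$ is strictly accretive in the sense of \eqref{eq:accrasgarding}, the transformed coefficients $\hat A$ are well defined and strictly accretive on $\mH_1$, so Proposition~\ref{prop:divformasODE} (applied with $B=\hat A$) gives a one-to-one correspondence, via the gradient-to-conormal gradient map and its inverse \eqref{eq:conormaltograd}, between such $g$ and the distributional solutions $f\in L_2^\loc(\R_+;\mH)$ of \eqref{eq:firstorderODE} with $\int_1^\infty\|f_t\|_2^2\,dt<\infty$. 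Composing these two correspondences proves the claim; that the local $L_2$ control and the tail condition $\int_1^\infty\|f_t\|_2^2\,dt<\infty$ match up under the change of variables $r=e^{-t}$ follows from \eqref{eq:dfl2iso}.
\end{proof}
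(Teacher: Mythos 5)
Your proof is correct and coincides with the paper's, whose entire proof is simply to combine Proposition~\ref{prop:CR} with Proposition~\ref{prop:divformasODE}. One harmless slip in your parenthetical: under $r=e^{-t}$ the tail condition $\int_1^\infty\|f_t\|_2^2\,dt<\infty$ corresponds to square integrability of $g$ on the region $\{|\bx|<e^{-1}\}$ near the origin (automatic from $g\in L_2^\loc(\bO^{1+n};\C^{(1+n)m})$), not to the boundary shell $\{1/2<|\bx|<1\}$; this does not affect the argument, since the matching of the conditions on $g$ and $f$ is already built into the statement of Proposition~\ref{prop:divformasODE}.
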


\begin{proof} Combine
 Proposition~\ref{prop:CR} and Proposition~\ref{prop:divformasODE}.
 \end{proof}

There is a second way of constructing weak solutions that we now describe.

\begin{prop}  \label{prop:ODEtoPotential}
Let $A$ and $B= \hat A$ be as in Proposition~\ref{prop:divformasODE}.
Assume that $v\in L_2^\loc(\R_+;\dom(D))$  with $\int_1^\infty \|Dv_t\|_2^2 dt<\infty$
satisfies 
\begin{equation}   \label{eq:BDODE}
\pd_t v+ (BD-\tfrac {n-1}2 N)v=0
\end{equation}
in $\R_+\times S^n$ distributional sense.
Then
$$
  u_r:= r^{-\frac{n-1}{2}} (v_{t})_\no,\qquad r=e^{-t} \in (0,1),
$$
extends to  a weak solution of $\divv_\bx A \nabla_\bx u=0$ in $\bO^{1+n}$,
and $Dv$ equals the conormal gradient of $u$.
\end{prop}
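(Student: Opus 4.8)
The plan is to mirror the correspondence in Proposition~\ref{prop:divformasODE}, but now applied to the ODE \eqref{eq:BDODE} governing the vector potential $v$ rather than \eqref{eq:firstorderODE} governing the conormal gradient $f$. First I would set $g := Dv$, which by hypothesis lies in $L_2^\loc(\R_+;\mH)$ with $\int_1^\infty\|g_t\|_2^2\,dt<\infty$, and observe that applying $D$ to \eqref{eq:BDODE} and using that $D$ and $N$ anti-commute gives $\pd_t(Dv) + (DB + \tfrac{n-1}{2}N)(Dv)=0$, i.e. $g$ solves exactly \eqref{eq:firstorderODE}. Indeed $D(BD - \tfrac{n-1}{2}N)v = DB(Dv) - \tfrac{n-1}{2}D N v = DB(Dv) + \tfrac{n-1}{2}N(Dv)$ using $DN=-ND$, and $D\pd_t v = \pd_t(Dv)$ since $D$ is $t$-independent. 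So $f := Dv$ (with the identification $g_r \leftrightarrow f_t$ of Definition~\ref{defn:gradtoconormal}) is an $\mH$-valued distributional solution of \eqref{eq:firstorderODE} with the right square-integrability at infinity.

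Next I would invoke Proposition~\ref{prop:divformasODE} (the conormal gradient-to-gradient map) to produce from $f$ a solution $\tilde g \in L_2^\loc(\bO^{1+n};\C^{(1+n)m})$ of the Cauchy-Riemann system \eqref{eq:firstorderdiv} on $\bO^{1+n}\setminus\{0\}$, and then Proposition~\ref{prop:CR} to obtain a weak solution $w$ of $\divv_\bx A\nabla_\bx w = 0$ on $\bO^{1+n}$ with $\nabla_\bx w = \tilde g$; thus $Dv$ is the conormal gradient of this $w$. It remains to identify $w$ with the function $u_r := r^{-(n-1)/2}(v_t)_\no$ up to an additive constant (which is irrelevant for the statement). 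For this I would compute the normal component of the gradient of $w$ from $\tilde g$ via \eqref{eq:conormaltograd}: $(\tilde g_r)_\no = r^{-(n+1)/2}(B f_t)_\no = r^{-(n+1)/2}(B D v_t)_\no$. On the other hand, differentiating $u_r = r^{-(n-1)/2}(v_t)_\no$ in $r$ and using the polar-coordinate formula $\nabla_\bx u = (\pd_r u_r)\rad + r^{-1}\nabla_S u_r$ together with the normal component of \eqref{eq:BDODE} — namely $\pd_t(v_t)_\no + (BD v_t)_\no + \tfrac{n-1}{2}(v_t)_\no = 0$, since $N$ acts as $-I$ on normal components — should yield exactly $(\nabla_\bx u)_\no = r^{-(n+1)/2}(BD v_t)_\no$, matching $(\tilde g_r)_\no$. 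A parallel check on the tangential part, using $(Dv)_\ta = \nabla_S v_\no$ and the formula for $g_\ta$ in \eqref{eq:conormaltograd}, shows $(\nabla_\bx u)_\ta = r^{-(n+1)/2}(f_t)_\ta = (\tilde g_r)_\ta$. Hence $\nabla_\bx u = \tilde g = \nabla_\bx w$ in the distributional sense on $\bO^{1+n}\setminus\{0\}$, so $u - w$ is locally constant, $u$ extends across $0$ to a weak solution, and $Dv$ is its conormal gradient.

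The main obstacle I anticipate is the bookkeeping at the origin and the justification that the purely formal manipulations ($D$ commuting with $\pd_t$, passing derivatives through, and the removable-singularity step) are valid at the level of distributions with only $L_2^\loc$ regularity — this is where Lemma~\ref{lem:removesing} and the care already exercised in the proof of Proposition~\ref{prop:divformasODE} must be reused. A secondary point requiring attention is that \eqref{eq:BDODE} is posed for $v \in L_2^\loc(\R_+;\dom(D))$ without an a priori bound on $\|v_t\|_2$ near $t=\infty$ (only on $\|Dv_t\|_2$); but since the conclusion concerns only $Dv$ and $u_r = r^{-(n-1)/2}(v_t)_\no$, and the correspondence of Proposition~\ref{prop:divformasODE} is stated in terms of $f = Dv$, this causes no difficulty — the potential $v$ itself is only determined modulo $\nul(D) = \mH^\perp$, consistent with $u$ being determined modulo constants.
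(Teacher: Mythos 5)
Your proposal is correct and follows essentially the same route as the paper: apply $D$ to \eqref{eq:BDODE} (using $DN=-ND$) to see that $f=Dv$ solves \eqref{eq:firstorderODE}, invoke the correspondence of Proposition~\ref{prop:divformasODE}/Corollary~\ref{cor:conormal} to get a weak solution with conormal gradient $Dv$, and then use the normal and tangential components of \eqref{eq:BDODE} to identify $\nabla_\bx u$ (with $u_r=r^{-(n-1)/2}(v_t)_\no$) with the image of $f$ under \eqref{eq:conormaltograd}, so that $u$ agrees with that solution up to a constant and extends across the origin. The only difference is cosmetic ordering: the paper first derives the componentwise identities for $Dv$ in terms of $u$ and then applies $D$ to the ODE, whereas you do these two steps in the opposite order.
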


\begin{proof}
  By definition of $u$ in the statement, 
 $(Dv)_\ta= \nabla_S v_\no= r^{(n+1)/2}(r^{-1}\nabla_S u)$. 
On the other hand, taking the normal component of $\pd_t v+ (BD-\tfrac {n-1}2 N)v=0$ gives
$$
  \pd_t v_\no-A_{\no\no}^{-1} ( \divv_S v_\ta + A_{\no\ta}\nabla_Sv_\no) + \sigma v_\no=0,
$$
or equivalently
\begin{multline*}
  (Dv)_\no= -\divv_S v_\ta= -A_{\no\no}(\pd_t +\sigma) v_\no + A_{\no\ta}\nabla_S v_\no \\
  = r^{(n+1)/2} (A_{\no\no} \pd_r u+ A_{\no\ta}r^{-1} \nabla_S u)= r^{(n+1)/2} (A \nabla_\bx u)_\no.
\end{multline*}
These equations hold in $\bO^{1+n}\setminus \{0\}$.
Next, applying $D$  to \eqref{eq:BDODE}
yields 
$$
  (\pd_t + DB+\tfrac{n-1}2N)(Dv)=0.
$$
Thus   $f:=Dv$  satisfies \eqref{eq:firstorderODE} and $f_{t}\in \ran(D)=\mH$. By Corollary~\ref{cor:conormal}, there is a weak solution  $\tilde u$  in $\bO^{1+n}$ of the divergence form equation associated to $f$. In particular, $f_{\ta}=r^{(n+1)/2}(r^{-1}\nabla_S \tilde u)$ and $f_{\no}=r^{(n+1)/2} (A \nabla_\bx \tilde u)_\no$.
   Applying the  conormal gradient-to-gradient map, we deduce $\nabla_{\bx}\tilde u=\nabla_{\bx}u$ in $\bO^{1+n}\setminus \{0\}$ distribution sense. 
   In particular, $u=\tilde u +c$ in 
   $\bO^{1+n}\setminus \{0\}$
 for some constant $c$. As $\tilde u +c$ is also a weak solution in   $\bO^{1+n}$ to the divergence form equation with coefficients $A$,  this provides us with the desired extension for $u$.    \end{proof}

For perturbations $A$ of radially independent coefficients, 
Corollary~\ref{cor:diransatz}(i) proves a converse of this result, \textit{i.e.}~the existence of 
such a vector-valued potential $v$ containing a given solution $u$ to $\divv_\bx A \nabla_\bx u=0$
as normal component. We do not know whether such $v$ can be defined for general coefficients (except in $\bO^2$, see Section~\ref{sec:disk}). 

\begin{rem}\label{rem:extension}
Assume that the coefficients $A$ are defined in $\R^{1+n}$ and that the accretivity condition \eqref{eq:accrasgarding} or (\ref{eq:accrassumption}) holds for a.e $r\in (0,\infty)$. 
As in Proposition~\ref{prop:divformasODE},
there is also a one-to-one correspondence between solutions $g\in L_{2}^{loc}(\R^{1+n}\setminus\clos{\bO^{1+n}}; L_{2}(S^n;\V))$ to
$ \divv_\bx (Ag)=0$, $\curl_\bx g=0$ in the exterior 
of the unit ball and solutions $f:\R_-\to \mH$ to the equation
$\pd_t f+ (DB+\tfrac{n-1}2 N) f=0$ for $t<0$ in $L_{2}(\R_{-}; \mH)$.
Also, as in Proposition~\ref{prop:ODEtoPotential}, $L_{2}^{loc}$-solutions $v:\R_-\to L_2(S^n; \dom(D))$ to the equation
$\pd_t v+ (BD-\tfrac{n-1}2 N) v=0$ for $t<0$, give weak solutions $u$ to $\divv_\bx A \nabla_\bx u=0$
in the exterior  of the unit ball.
\end{rem}

%
%
%
%
%
\section{Study of the infinitesimal generator}   \label{sec:infgene}

In this section, we study the {\em infinitesimal generators} 
$D B_0+ \tfrac {n-1}2 N$ and $B_0 D- \tfrac {n-1}2 N$ 
for the vector-valued ODEs appearing in (\ref{eq:firstorderODE}) and \eqref{eq:BDODE}
for radially independent coefficients $B_0= \widehat{A_1}\in L_\infty(S^n;\mL(\V))$, strictly accretive on 
$\mH$ with constant $\kappa= \kappa_{B_0}>0$. 
Note that strict accretivity of $A_1$ on $\mH_1$ is needed for the construction of $B_0= \widehat{A_1}$
as a mutiplication operator.
Once we have $B_0$, only strict accretivity of $B_0$ on $\mH$ is needed in our analysis. This has the following consequences used often in this work. First, $B_{0}:\mH \to B_{0}\mH$ is an isomorphism. Second, the map $P_{\mH}B_{0}$ is an isomorphism of $\mH$.

The first operator will be used to get estimates of $\nabla_\bx u$, needed for the Neumann and 
regularity problems.
The second operator will be used to get estimates of the potential $u$, needed for the Dirichlet problem.

\begin{defn}\label{defn:operators} 
   Let $\sigma\in \R$.
   Define the unbounded linear operators
$$
  D_{0} := DB_0 +\sigma N \qquad\text{and}\qquad
  \tD_{0} := B_0D -\sigma N
$$
in $L_2(S^n;\V)$, 
with domains $\dom(D_0):= B_0^{-1} \dom(D)$ and
$\dom(\tD_0):= \dom(D)$ respectively.
Here $B_0^{-1}(X):= \sett{f\in L_2}{B_0 f\in X}$.
When more convenient,  we use the  notation $D_{A_1}:=D_0$ and $ \tD_{A_1}:=\tD_0$.
\end{defn}

For these two operators, we have the following intertwining and duality relations.

\begin{lem}     \label{lem:intertwduality}
In the sense of unbounded operators, we have
$D_{0}D= D \tD_{0}$ and 
$(\tD_{A_{1}})^* = DB_0^*-\sigma N= -N (D\widehat{A_1^*}+\sigma N)N$.
\end{lem}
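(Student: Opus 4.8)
The plan is to verify the two identities in Lemma~\ref{lem:intertwduality} by direct computation at the level of unbounded operators, being careful about domains.

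\textbf{Intertwining.} First I would establish $D_0 D = D\tD_0$. Starting from the definitions $D_0 = DB_0 + \sigma N$ and $\tD_0 = B_0 D - \sigma N$, compute
\begin{align*}
  D_0 D &= (DB_0 + \sigma N)D = DB_0 D + \sigma N D, \\
  D \tD_0 &= D(B_0 D - \sigma N) = DB_0 D - \sigma D N.
\end{align*}
These agree precisely because $N$ and $D$ anticommute, $ND = -DN$, which was recorded as a basic observation after Definition~\ref{defn:DNops}. The only real point to check is the domain bookkeeping: the natural domain of $D_0 D$ is $\sett{f \in \dom(D)}{Df \in \dom(D_0)} = \sett{f\in\dom(D)}{B_0 Df \in \dom(D)}$, while the natural domain of $D\tD_0$ is $\sett{f\in\dom(\tD_0)}{\tD_0 f \in \dom(D)} = \sett{f\in\dom(D)}{B_0 Df - \sigma Nf \in \dom(D)}$. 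Since $N$ preserves $\dom(D)$ (it is bounded and maps $\dom(\nabla_S)\oplus\dom(\divv_S)$ to itself, as $N$ just flips a sign on one component), these two domains coincide, so the operator identity holds on the nose, not merely on a common core.

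\textbf{Duality.} For the adjoint formula, I would use the standard fact that for a bounded operator $M$ and a closed densely defined operator $T$ one has $(MT)^* = T^* M^*$ and $(TM)^* = M^* T^*$, together with $D^* = D$ (the matrix $D$ is formally self-adjoint: the off-diagonal blocks are $\nabla_S$ and $-\divv_S$, which are negative adjoints of each other) and $N^* = N$. Then
$$
  (\tD_{A_1})^* = (B_0 D - \sigma N)^* = D^* B_0^* - \sigma N^* = DB_0^* - \sigma N.
$$
Here one should note that $D - \sigma B_0^{-1} N B_0$ being the "raw" expression is avoided because we take the adjoint of $B_0 D$ as a whole; the subtlety is that $B_0 D$ with domain $\dom(D)$ is closed (since $B_0$ is a bounded invertible multiplication operator), so $(B_0 D)^* = D^* B_0^* = D B_0^*$ with domain $\sett{g}{B_0^* g \in \dom(D)} = (B_0^*)^{-1}\dom(D)$, which is exactly $\dom(D_{A_1^*})$ up to the sign of $\sigma$. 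For the last equality, I would compute, using $ND = -DN$ and $N^2 = I$,
$$
  -N(D\widehat{A_1^*} + \sigma N)N = -NDN\,\widehat{A_1^*}\cdot(\text{?})
$$
— more carefully, write $-N(D\widehat{A_1^*} + \sigma N)N = -(ND N)\widehat{A_1^*} \cdot$, but since $\widehat{A_1^*}$ is a multiplication operator it does not commute with $N$ in general, so instead expand $-N(D\widehat{A_1^*})N - \sigma N^3 = -(NDN)(N\widehat{A_1^*}N) - \sigma N = (DN N)\cdots$. The clean way: $-N D \widehat{A_1^*} N = (DN)\widehat{A_1^*} N = D(N\widehat{A_1^*}N)$, and one checks from the block form of $\widehat{\cdot}$ in Proposition~\ref{prop:divformasODE} that $N \widehat{A_1^*} N = B_0^*$; indeed conjugating the block matrix $\widehat{A_1^*}$ by $N = \mathrm{diag}(-I, I)$ flips the sign of the off-diagonal blocks, and comparing with the formula for $\widehat{\phantom{A}}$ applied to $A_1^*$ versus the blocks of $B_0^* = (\widehat{A_1})^*$ gives the claimed identity. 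Also $-\sigma N^3 = -\sigma N$. Hence the right side equals $DB_0^* - \sigma N$, matching.

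\textbf{Main obstacle.} The computations with $D$, $N$ are one-line; the genuinely delicate part is the identification $N\widehat{A_1^*}N = (\widehat{A_1})^*$, i.e.\ that the Schur-complement-type transformation $\widehat{\phantom{A}}$ intertwines adjunction with conjugation by $N$. This is an algebraic identity among the four blocks $A_{\no\no}, A_{\no\ta}, A_{\ta\no}, A_{\ta\ta}$ and their adjoints; it is exactly the sort of verification carried out in \cite[Prop.~4.1]{AA1}, so I would either cite that or spell out the $2\times 2$ block algebra. The other thing to be careful about, as noted, is that all identities are asserted "in the sense of unbounded operators," so each step must preserve domains exactly — this is fine because every manipulation either involves the bounded invertible operators $B_0, N$ or the formally self-adjoint $D$, for which the relevant adjoint and closedness facts are standard.
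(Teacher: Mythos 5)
Your proposal is correct and follows essentially the same route as the paper, whose proof is simply the remark that everything is straightforward once one has the identity $B_0^*=N\widehat{A_1^*}N$ together with $ND=-DN$. Your block-matrix verification of $N\widehat{A_1^*}N=(\widehat{A_1})^*$ and your domain bookkeeping (using that $N$ preserves $\dom(D)$ and that $(B_0D)^*=D^*B_0^*$ for bounded $B_0$) correctly supply the details the paper leaves to the reader.
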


\begin{proof}
  The proof is straightforward, using the identity $B_0^*= N\widehat{A_1^*}N$ for the second statement.
\end{proof}

\begin{prop}   \label{prop:DBprops}
In $L_2=L_2(S^n; \V)$, the operator $D_0$ is a closed unbounded 
operator with dense domain.
There is a topological Hodge splitting 
$$
  L_2= \mH\oplus B_0^{-1}\mH^\perp,
$$
i.e the projections $P^1_{B_0}$ and $P^0_{B_0}$ onto $\mH$ and 
$B_0^{-1}\mH^\perp$ in this splitting are bounded.
The operator $D_0$ leaves $\mH$ invariant, and the restricted operator
$D_0:\mH\to\mH$, with domain $\dom(D_0)\cap\mH$, 
is closed, densely defined, injective, onto, and has a compact inverse.

If $\sigma\ne 0$, then $D_0:L_2\to L_2$ is also injective and onto,
and $D_0|_{B_0^{-1}\mH^\perp}= \sigma N$.

If $\sigma=0$, then $D_0= D B_0$, $\nul(D_0)= B_0^{-1}\mH^\perp$ and $\ran(D_0)=\mH$
are closed and invariant.
In particular, when $n=1$, $\dim\nul(D_0)=2m= \dim(L_2/\ran(D_0))$.
\end{prop}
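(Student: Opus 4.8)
The plan is to establish the Hodge splitting first, then deduce the mapping properties of $D_0$ by comparing it, via the intertwining relation $D_0 D = D\tD_0$ and the structure of $D$ and $N$, to known properties of $D$ together with the isomorphism $P_\mH B_0$ of $\mH$. First I would record that since $B_0$ is strictly accretive on $\mH$, the operator $P_\mH B_0 : \mH \to \mH$ is an isomorphism (noted just before Definition~\ref{defn:operators}); writing $I = P_\mH + (I-P_\mH)$ and using $(I-P_\mH)B_0$ maps into $\mH^\perp$, a standard abstract-Hodge argument (as in the analogous first-order setups) gives the topological splitting $L_2 = \mH \oplus B_0^{-1}\mH^\perp$ with bounded projections $P^1_{B_0}, P^0_{B_0}$; concretely $P^1_{B_0} = (P_\mH B_0)^{-1} P_\mH B_0$ is bounded because $P_\mH B_0$ is invertible on $\mH$, and $P^0_{B_0} = I - P^1_{B_0}$. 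That $B_0^{-1}\mH^\perp$ is closed and complementary follows since $B_0$ is bounded with bounded inverse on $\mH$, and $\mathrm{dom}(D_0) = B_0^{-1}\mathrm{dom}(D)$ makes $D_0$ closed and densely defined (pull back closedness of $D$ through the bounded bijection $B_0$).

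Next I would analyse the action of $D_0 = DB_0 + \sigma N$ on each summand. On $B_0^{-1}\mH^\perp$: if $B_0 f \in \mH^\perp = \nul(D)$ then $DB_0 f = 0$, so $D_0 f = \sigma N f$; one checks $N$ preserves $B_0^{-1}\mH^\perp$ using $ND = -DN$ and $B_0^* = N\widehat{A_1^*}N$ (Lemma~\ref{lem:intertwduality}), or more directly that $N\mH^\perp = \mH^\perp$ since $N$ commutes with the orthogonal projection onto $\mH^\perp$ (both are block-diagonal in the normal/tangential splitting), hence $NB_0^{-1}\mH^\perp = B_0^{-1}N\mH^\perp = B_0^{-1}\mH^\perp$. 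On $\mH$: $D_0$ leaves $\mH$ invariant because $DB_0$ maps $\mathrm{dom}(D_0)\cap\mH$ into $\ran(D) = \mH$, and $\sigma N$ — wait, $N$ does \emph{not} preserve $\mH$ in general; rather one uses that $P^1_{B_0} D_0 = D_0$ on $\mathrm{dom}(D_0)\cap \mH$ by the splitting, i.e. the relevant statement is that $D_0$ restricted and then projected back is the operator to study. The cleanest route: for $f \in \mathrm{dom}(D_0)\cap\mH$, $D_0 f = DB_0 f + \sigma Nf$ and $DB_0 f \in \mH$; applying $D$ and using $D_0 D = D\tD_0$ shows $D(D_0 f) \in \ran(D)$ automatically, but to see $D_0 f \in \mH$ itself I would instead argue that on $\mH$ the operator $D_0$ agrees with $D(P_\mH B_0)$ up to a piece in $\mH^\perp$ that must vanish by the splitting consistency — concretely, decompose $B_0 f = P_\mH B_0 f + (I-P_\mH)B_0 f$, note $DB_0 f = D P_\mH B_0 f \in \mH$, and the term $\sigma N f$ for $f \in \mH$: since $\mH = \begin{bmatrix}\ran(\divv_S)\\ \ran(\nabla_S)\end{bmatrix}$ is block-diagonal and $N$ is block-diagonal, $N\mH = \mH$ too. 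So $D_0$ does preserve $\mH$. Good — both $\mH$ and $B_0^{-1}\mH^\perp$ are invariant.

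For the restricted operator $D_0:\mH\to\mH$, injectivity, surjectivity and compact inverse: I would use the intertwining $D_0 D = D\tD_0$ and the fact, from Section~\ref{sec:ODE}, that $D:\mH_1 \supset \mathrm{dom}(D) \to \mH$ has closed range $\mH$ with finite-dimensional kernel, together with $D:\mathrm{dom}(D)\cap\mH \to \mH$ being a bijection with compact inverse (this is essentially the spectral theory of $\sqrt{-\Delta_S}$ on divergence-free fields — the resolvent of $D$ on $\mH$ is compact by Rellich on $S^n$). Then on $\mH$, $D_0|_\mH = DB_0|_\mH$ when $\sigma$-term is handled; but more robustly, write $D_0 = D(P_\mH B_0) + (\text{something})$ — actually on $\mH$, $D \circ (B_0|_\mH)$ and the fact that $B_0:\mH\to B_0\mH$ is an isomorphism and $D: B_0\mH \cap \mathrm{dom}(D) \to \mH$: composing, $D_0|_\mH$ is (bounded isomorphism)$\,\circ\,$(bijection with compact resolvent), hence itself a bijection $\mH\to\mH$ with compact inverse. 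Injectivity: if $D_0 f = 0$ with $f\in\mH$, then $DB_0 f = -\sigma Nf$; take $\mH$-components to get $DB_0 f + \sigma N f = 0$, pair with $f$ and use accretivity of $B_0$ plus $\langle Nf, f\rangle$ real to force $f = 0$ when combined with $D$ being injective on $\mH$ — the hard part is exactly controlling the $\sigma N$ term, which is where the anti-commutation $ND = -DN$ and the identity $e^{(n+1)t}\re(B_t f_t, f_t) = \re(A_r g_r, g_r)$ style energy estimate enters.

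The claims for $\sigma \neq 0$ (full injectivity/surjectivity on $L_2$, and $D_0|_{B_0^{-1}\mH^\perp} = \sigma N$) follow from the splitting: $D_0$ is bijective on $\mH$ as above, and on $B_0^{-1}\mH^\perp$ it equals $\sigma N$ which is a bijection of that space (as $N^2 = I$ and $N$ preserves the space), so $D_0$ is bijective on the direct sum $L_2$. For $\sigma = 0$: $D_0 = DB_0$, and $\nul(D_0) = B_0^{-1}\nul(D) = B_0^{-1}\mH^\perp$, $\ran(D_0) = D(B_0\,\mathrm{dom}(D)) = D\,\mathrm{dom}(D) = \mH$ since $B_0$ is a bijection; closedness of $\ran(D_0) = \mH$ is clear and invariance was shown. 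Finally, when $n=1$, $\nul(\nabla_S)$ has dimension $m$ (constants) and $\nul(\divv_S)$ has dimension $m$ (by the codimension statements in Section~\ref{sec:ODE}, $\ran(\nabla_S)$ has codimension $m$ in $L_2(S^1;\C^m)$, dually $\nul(\divv_S)$ — wait, need to recheck: $\ran(\divv_S)$ has codimension $m$, so $\nul(\nabla_S) = \ran(\divv_S)^\perp$ has dimension $m$; and $\ran(\nabla_S)$ has codimension $m$ in $L_2(S^1;\C^m)$ for $n=1$, so $\nul(\divv_S)$ has dimension $m$), hence $\dim \mH^\perp = 2m$, and since $B_0^{-1}$ is an isomorphism, $\dim\nul(D_0) = 2m$; and $\dim(L_2/\ran(D_0)) = \dim(L_2/\mH) = \mathrm{codim}\,\mH = 2m$ likewise.

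\textbf{Main obstacle.} The delicate point is showing $D_0:\mH\to\mH$ is onto with \emph{compact} inverse and, for $\sigma\neq0$, that the $\sigma N$ perturbation does not destroy injectivity — i.e. separating the elliptic/Fredholm behaviour on $\mH$ (coming from $D$'s compact resolvent via Rellich on the compact manifold $S^n$) from the algebraically trivial but non-symmetric zeroth-order term $\sigma N$, which does not commute with $D$. I expect the argument to hinge on the Hodge splitting being $B_0$-adapted so that $\sigma N$ acts diagonally with respect to it, reducing everything to (i) the known mapping properties of $D$ on $\mH$ and (ii) invertibility of $\sigma N$ on $B_0^{-1}\mH^\perp$.
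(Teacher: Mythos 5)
Your treatment of the Hodge splitting (via the isomorphism $P_{\mH}B_0$ of $\mH$), the invariance of $\mH$ (both $N$ and $\mH$ are block-diagonal in the normal/tangential splitting), the identity $D_0|_{B_0^{-1}\mH^\perp}=\sigma N$, the use of Rellich's theorem to get compactness of $D^{-1}$ on $\mH$, and the $\sigma=0$ and $n=1$ statements all agree with the paper. But the core invertibility assertions are not established, and one step is incorrect. You claim that $N$ preserves $B_0^{-1}\mH^\perp$ by writing $NB_0^{-1}\mH^\perp=B_0^{-1}N\mH^\perp$; this presupposes $NB_0=B_0N$, i.e.\ that the off-diagonal blocks of $B_0$ vanish, which is false in general. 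Indeed, the remark following Proposition~\ref{prop:BDprops} stresses that for $\sigma\neq0$ the subspace $B_0^{-1}\mH^\perp$ is in general \emph{not} invariant under $D_0$, equivalently not under $N$. So your ``block-diagonal'' deduction that $D_0$ is bijective on $L_2=\mH\oplus B_0^{-1}\mH^\perp$ when $\sigma\neq0$ collapses. (It could be repaired: since $N\mH^\perp=\mH^\perp$ one has $NB_0^{-1}\mH^\perp=(NB_0N)^{-1}\mH^\perp$, and $NB_0N$ is again strictly accretive on $\mH$, so $L_2=\mH\oplus NB_0^{-1}\mH^\perp$ is a second topological splitting into whose summands $D_0$ maps the summands of the first; but you did not make this argument.)

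The second gap is on $\mH$ itself. Your ``bounded isomorphism composed with a bijection having compact resolvent'' argument only covers $D(P_{\mH}B_0)$, i.e.\ the case $\sigma=0$; for $\sigma\neq0$ you explicitly leave the $\sigma N$ term as ``the hard part'', and the energy sketch you offer (pairing $DB_0f+\sigma Nf=0$ with $f$) does not close, since $(DB_0f,f)=(B_0f,Df)$ carries no accretivity. The paper's device is to multiply by the unitary $iN$: $(iN)D_0=(iND)B_0+i\sigma$, where $iND=-iDN$ is self-adjoint with range $\mH$; closedness, density, injectivity and surjectivity (on $\mH$, and on $L_2$ when $\sigma\neq0$) are then quoted from \cite[Prop.~3.3]{AAM} for operators of this form. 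Concretely, injectivity on $\mH$ for $\sigma\neq0$ follows by pairing $(iND)B_0f+i\sigma f=0$ with $B_0f$ and taking imaginary parts: the self-adjoint term drops out, leaving $\sigma\re(B_0f,f)=0$, and accretivity of $B_0$ on $\mH$ forces $f=0$. Surjectivity could also be recovered from your own ingredients by a Fredholm argument: $D_0=D(P_{\mH}B_0)\bigl(I+(D(P_{\mH}B_0))^{-1}\sigma N\bigr)$ is a compact perturbation of an invertible operator on $\mH$, hence has index zero, so injectivity gives ontoness, and then $D_0^{-1}=(D(P_{\mH}B_0))^{-1}(I-\sigma N D_0^{-1})$ is compact; some such argument must be supplied. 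A last small point: $\dom(D_0)=B_0^{-1}\dom(D)$ is closed because $B_0$ is bounded and $D$ is closed --- $B_0$ is \emph{not} a bijection of $L_2$ in general, as your phrasing suggests --- and density of the domain also needs a word, e.g.\ $(P_{\mH}B_0)^{-1}(\dom(D)\cap\mH)\oplus B_0^{-1}\mH^\perp\subset\dom(D_0)$ is dense.
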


\begin{proof}
The splitting is a consequence of the strict accretivity of $B_0$ on $\mH$, and it is clear that $\mH$
is invariant under $D_0$.
Note that 
$$
  (iN) (DB_0+ \sigma N)= (iND)B_0+ i\sigma,
$$
where $iN$ is unitary on $L_2$ as well as $\mH$, and where $iND= -iDN$ is a self-adjoint operator
with range $\mH$.
This shows that $D_0$ is closed, densely defined, injective and onto on $\mH$, 
and on $L_2$ when $\sigma\ne 0$, as a consequence of
properties of operators such as $(iND)B_0$ stated in \cite[Prop. 3.3]{AAM}.

Next we show that $D_0:\mH\to\mH$ has a compact inverse.
Write
$
  D_{0}= D(P_\mH B_0) + \sigma N.
$
  Since  $P_\mH B_0$ is an isomorphism on $\mH$,  
  it suffices to prove that the inverse of $D:\mH\to \mH$ is compact.
  Note that $\dom(\nabla_S)= W^1_{2}(S^n;\C^m)$ is compactly embedded in $L_2(S^n;\C^m)$
  by Rellich's theorem. In particular 
  $\nabla_S: \ran(\divv_S)\to \ran(\nabla_S)$ has compact inverse.
  Since $\nabla_S^*=-\divv_S$, 
  it follows that $\divv_S: \ran(\nabla_S)\to \ran(\divv_S)$ has a compact 
  inverse as well.
   This proves that the inverse of $D$ is compact on 
  $\mH$.
  
  The remaining properties when $\sigma \ne 0$ and $\sigma =0$
   are straightforward and are left to the reader.
\end{proof}

\begin{prop}    \label{prop:BDprops}
In $L_2=L_2(S^n; \V)$, the operator $\tD_0$ is a closed unbounded 
operator with dense domain.
There is a topological Hodge  splitting 
$$
  L_2= B_0 \mH\oplus \mH^\perp,
$$
i.e the projections $\tP^1_{B_0}$ and $\tP^0_{B_0}$ onto $B_0\mH$ and 
$\mH^\perp$ in this splitting are bounded.
Here $\mH^\perp\subset\dom(\tD_0)$ and $\tD_0$ leaves $\mH^\perp$ invariant.

If $\sigma\ne 0$, then $\tD_0:L_2\to L_2$ is also injective and onto,
and $\tD_0|_{\mH^\perp}= -\sigma N$.

If $\sigma=0$, then $\tD_0=B_0D$, $\nul(\tD_0)= \mH^\perp$ and $\ran(\tD_0)=B_0 \mH$ is closed.
In particular,  the subspace $B_0 \mH$ is invariant under $\tD_0$ and 
when $n=1$,  $\dim\nul(\tD_0)=2m= \dim(L_2/\ran(\tD_0))$.
\end{prop}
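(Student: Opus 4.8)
The plan is to mirror the proof of Proposition~\ref{prop:DBprops}, exploiting the intertwining relation $D_0 D = D\tD_0$ from Lemma~\ref{lem:intertwduality} together with the duality $(\tD_{A_1})^* = -N(D\widehat{A_1^*}+\sigma N)N$, which identifies $\tD_0$ up to unitary conjugation with an operator of the same type $D_0$ built from the adjoint coefficients $A_1^*$. Since $\widehat{A_1^*}$ is again strictly accretive on $\mH_1$ (this follows from $B_0^* = N\widehat{A_1^*}N$ and the fact that $N$ preserves $\mH_1$), Proposition~\ref{prop:DBprops} applies verbatim to it.

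First I would establish the Hodge splitting $L_2 = B_0\mH \oplus \mH^\perp$: since $B_0$ is an isomorphism of $\mH$ onto $B_0\mH$ and $\mH \oplus \mH^\perp = L_2$ is the orthogonal splitting, strict accretivity of $B_0$ on $\mH$ gives that $B_0\mH$ is closed and $B_0\mH \cap \mH^\perp = \{0\}$ with the sum being all of $L_2$; boundedness of the projections $\tP^1_{B_0}, \tP^0_{B_0}$ is then the usual consequence of the closed graph theorem. Equivalently, one can observe $\tP^1_{B_0} = B_0 P^1_{B_0} B_0^{-1}$ on the relevant pieces — or simply dualize the splitting $L_2 = \mH\oplus B_0^*{}^{-1}\mH^\perp$ applied to $B_0^*$. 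That $\mH^\perp \subset \dom(\tD_0) = \dom(D)$ is clear since $\mH^\perp = \nul(D)$, and on $\mH^\perp$ one has $\tD_0 = B_0 D - \sigma N = -\sigma N$, so $\tD_0$ leaves $\mH^\perp$ invariant because $N$ does (indeed $N\mH^\perp \subset \mH^\perp$ is immediate from the block description of $\mH^\perp$). Closedness and density of the domain of $\tD_0$ follow either directly from $\tD_0 = B_0D - \sigma N$ with $B_0$ bounded invertible and $D$ closed densely defined, or by transporting these properties through the unitary $N$ from $\tD_0^* $.

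For the case $\sigma \neq 0$: injectivity and surjectivity of $\tD_0$ on all of $L_2$ I would get from the analogous statement for $D_0$ via the intertwining $D_0 D = D\tD_0$ restricted appropriately, or more cleanly from the duality: $\tD_0$ is injective and onto iff $\tD_0^*$ is, and $\tD_0^* = -N(D\widehat{A_1^*}+\sigma N)N$ is unitarily equivalent to $D\widehat{A_1^*} + \sigma N$, which is bijective on $L_2$ by Proposition~\ref{prop:DBprops}(applied to $A_1^*$, $\sigma$) since $\sigma\neq 0$. The identity $\tD_0|_{\mH^\perp} = -\sigma N$ was already noted. For $\sigma = 0$: then $\tD_0 = B_0 D$, so $\nul(\tD_0) = \nul(D) = \mH^\perp$ (using $B_0$ injective) and $\ran(\tD_0) = B_0\ran(D) = B_0\mH$, which is closed because $\ran(D) = \mH$ is closed and $B_0$ is an isomorphism on $\mH$; invariance of $B_0\mH$ under $\tD_0$ and the equality $\dim\nul(\tD_0) = 2m = \dim(L_2/\ran(\tD_0))$ when $n=1$ both transfer from the corresponding facts about $D$ (where $\mH^\perp$ and $L_2/\mH$ have dimension $2m$ when $n=1$, as recorded before Definition~\ref{defn:DNops}) since $B_0$ is a bounded invertible operator on $L_2$ mapping $\mH$ onto $B_0\mH$.

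The main obstacle I anticipate is purely bookkeeping: keeping the two Hodge splittings ($L_2 = \mH \oplus B_0^{-1}\mH^\perp$ for $D_0$ versus $L_2 = B_0\mH \oplus \mH^\perp$ for $\tD_0$) straight and correctly matching which projection is $\tP^1_{B_0}$ versus $\tP^0_{B_0}$, and making sure the duality argument is applied to the \emph{correct} coefficient matrix $\widehat{A_1^*}$ rather than $\widehat{A_1}^*$ — these differ, and the relation $B_0^* = N\widehat{A_1^*}N$ is exactly what reconciles them. There is no serious analytic difficulty beyond what is already contained in Proposition~\ref{prop:DBprops} and \cite[Prop.~3.3]{AAM}; the compactness of the inverse of $\tD_0$ on $B_0\mH$, if needed, would again follow from Rellich via $\tD_0 = (B_0 P_{\mH} + \ldots)D$-type factorization, exactly as in the proof of Proposition~\ref{prop:DBprops}, though the statement as given does not explicitly claim it.
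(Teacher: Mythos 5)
Your proposal is correct and takes essentially the same route as the paper, whose entire proof is to deduce the statement from Proposition~\ref{prop:DBprops} by duality via Lemma~\ref{lem:intertwduality} (noting $\tD_0^*=DB_0^*-\sigma N=-N(D\widehat{A_1^*}+\sigma N)N$ is of $D_0$-type with accretive coefficients); your extra details are the routine verifications left implicit there. One small caution: $B_0$ need not be invertible on all of $L_2$ in general (only $B_0:\mH\to B_0\mH$ is an isomorphism), so where you say ``$B_0$ bounded invertible'' you should fall back on your alternative arguments (the lower bound of $B_0$ on $\mH$, or transport through the adjoint), which indeed suffice, and for the $n=1$ dimension count pointwise accretivity (Lemma~\ref{lem:pointwiseaccr}) does make $B_0$ invertible.
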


\begin{proof}
  These results for $\tD_0$ follow from Proposition~\ref{prop:DBprops} by
  duality, using Lemma~\ref{lem:intertwduality}.
\end{proof}

\begin{rem}
  The reader familiar with \cite{AKMc, AA1} should carefully note the following fundamental 
  difference between the cases $\sigma \ne 0$ and $\sigma =0$. 
  When $\sigma=0$, each of the operators $D_0$ and $\tD_0$ is of the type considered in \cite{AKMc, AA1},
  and each has two complementary invariant subspaces.
  On the other hand when $\sigma\ne 0$, the operator $D_0$ 
  has in general only the invariant subspace
  $\mH$, and $\tD_0$ only has the invariant subspace $\mH^\perp$.
  One can define an induced operator $\tD_0$ 
  on the quotient space $L_2/\mH^\perp$, but this cannot 
  be realized as an action in a subspace complementary to $\mH^\perp$ in $L_2$
  in general.
   As $\sigma$ will be set to $ \frac{n-1}{2}$ this means  for us a difference in the treatment of
   $n=1$ (\textit{i.e.} space dimension 2) and $n\ge 2$ (\textit{i.e.}  space dimension $3$ and higher). 
\end{rem} 

We prove here a technical lemma for later use.

\begin{lem}\label{lem:htotildeh} There is a unique isomorphism
\begin{equation}   \label{eq:htotilde}
  \mH\to L_2 / \mH^\perp : h\mapsto \tilde h
\end{equation}
such that $D_0 h = D \tilde h$ for $h\in \mH\cap\dom (D_0)$.
\end{lem}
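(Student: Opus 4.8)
The plan is to construct the map directly from the topological Hodge splitting $L_2 = \mH \oplus B_0^{-1}\mH^\perp$ established in Proposition~\ref{prop:DBprops}. Given $h \in \mH$, I want to produce $\tilde h \in L_2/\mH^\perp$ with $D_0 h = D\tilde h$ whenever $h \in \mH \cap \dom(D_0)$. The natural candidate comes from the observation that $D_0 = DB_0 + \sigma N$ on $\mH$, so for $h \in \mH \cap \dom(D_0)$ we have $D_0 h = D(B_0 h) + \sigma N h$. Since $D$ annihilates $\mH^\perp$ and $B_0 h - (B_0 h)_{\mH^\perp\text{-part}}$ differs from $B_0 h$ by an element of $\mH^\perp$, the first term $D(B_0 h)$ is $D$ applied to the $\mH$-component of $B_0 h$ modulo $\mH^\perp$; but more cleanly, I would work in the quotient directly. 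Define $\tilde h := \pi(B_0 h) + \sigma\, \pi(w)$ where $\pi: L_2 \to L_2/\mH^\perp$ is the quotient map and $w$ is chosen so that $Dw = Nh$; such $w$ exists since $Nh \in N\mH \subset \mH = \ran D$ (using $ND = -DN$, which sends $\mH = \ran D$ into itself). Actually the slicker route: note $\sigma N h \in \mH$ always (as $N$ preserves $\mH$), and $D:\mH\to\mH$ is onto with the inverse well-defined modulo $\nul(D)\cap\mH = \{0\}$, so one can write $\sigma Nh = D(D|_\mH^{-1}\sigma Nh)$.

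Concretely, here is the cleanest formulation I would carry out. For $h \in \mH$, set $\tilde h := \pi\big(B_0 h + \sigma (D|_{\mH})^{-1} N h\big) \in L_2/\mH^\perp$, where $(D|_{\mH})^{-1}$ is the bounded inverse of the restriction $D:\mH\cap\dom(D)\to\mH$ — wait, this requires $B_0 h \in \dom(D)$, which fails for general $h\in\mH$. So instead I must define $\tilde h$ more carefully: the assignment should be an isomorphism $\mH \to L_2/\mH^\perp$ that need not land in the domain of anything. I would therefore define it as follows. Observe $L_2/\mH^\perp$ is canonically isomorphic (via $\pi$ restricted to $\mH$) to $\mH$ itself, since $L_2 = \mH \oplus \mH^\perp$ orthogonally; call this isomorphism $\iota^{-1}: L_2/\mH^\perp \to \mH$. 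So I need an isomorphism $T:\mH\to\mH$ with $\pi \circ (\text{something}) $ — equivalently an isomorphism $T:\mH\to\mH$ such that $D_0 h = D(\iota(Th))$ for $h\in\mH\cap\dom(D_0)$, i.e. $D_0 h = D(Th)$ reading $Th\in\mH\subset L_2$. Since for such $h$ one has $D_0 h \in \mH$ and $D:\mH\cap\dom(D)\to\mH$ is a bijection with bounded inverse $R$, I am forced to take $Th := R(D_0 h)$ for $h\in\dom(D_0)\cap\mH$, and then I must check $T$ extends to a bounded isomorphism of all of $\mH$. But $T = R D_0|_{\mH}$ on the domain, and $D_0:\mH\cap\dom(D_0)\to\mH$ is a bijection with \emph{compact} inverse $D_0^{-1}$ by Proposition~\ref{prop:DBprops}; hence $T = R D_0$ has inverse $D_0^{-1} D|_{\mH}$... which is unbounded since $D|_\mH$ is unbounded. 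So $T$ itself is unbounded — contradiction with "isomorphism."

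Let me reconsider: the resolution must be that $D_0 = D(P_\mH B_0) + \sigma N$, so for $h\in\dom(D_0)\cap\mH$ we get $D_0 h = D(P_\mH B_0 h) + \sigma N h$, and $P_\mH B_0 h \in \dom(D)\cap\mH$ by definition of $\dom(D_0) = B_0^{-1}\dom(D)$ together with $P_\mH$ preserving domains appropriately — the point being $B_0 h\in\dom(D)$ so $P_\mH B_0 h\in\dom(D)\cap\mH$. Then $D_0 h = D(P_\mH B_0 h) + D R(\sigma N h)$ where $R = (D|_\mH)^{-1}$ and $\sigma Nh\in\mH$. So $D_0 h = D\big(P_\mH B_0 h + \sigma R N h\big)$, and since $P_\mH B_0 h + \sigma R N h \equiv B_0 h + \sigma R N h \pmod{\mH^\perp}$ (as $B_0 h - P_\mH B_0 h = P^0_{B_0}$-type term lies in $\mH^\perp$), I \emph{define}
\begin{equation*}
  \tilde h := \pi\big(B_0 h + \sigma R N h\big), \qquad h\in\mH,
\end{equation*}
which makes sense for all $h\in\mH$ since $R,N,B_0$ are all bounded on $\mH$ (well, $B_0:\mH\to L_2$ bounded, $RN:\mH\to\mH$ bounded). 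I would then verify: (1) $D_0 h = D\tilde h$ for $h\in\mH\cap\dom(D_0)$, by the computation just sketched, interpreting $D\tilde h$ via $D\pi^{-1}$ on the $\mH^\perp$-quotient (noting $D$ kills $\mH^\perp$ so $D$ descends to $L_2/\mH^\perp$... actually $D$ maps $\dom(D)$ onto $\mH$ and factors through the quotient since $\mH^\perp = \nul D$, giving a well-defined injective $\bar D: \dom(D)/\mH^\perp \to \mH$). (2) Injectivity: if $\tilde h = 0$ then $B_0 h + \sigma RNh \in \mH^\perp$, but $RNh\in\mH$, so $P_\mH B_0 h = -\sigma RNh$, hence $D_0 h = D(P_\mH B_0 h) + \sigma Nh = -\sigma Nh + \sigma Nh = 0$ — wait need $h\in\dom(D_0)$; instead argue $P_\mH B_0 h = -\sigma RNh \in\mH$, and also $P^0_{B_0}B_0h\in\mH^\perp$, but that's automatic. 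Hmm — better: $\pi$ is injective on $\mH$, $B_0 h + \sigma RNh \in\mH^\perp$ means its $\mH$-component vanishes: $P_\mH(B_0 h) + \sigma RNh = 0$, i.e. $P_\mH B_0 h = -\sigma RNh$. Apply $D$ (both sides in $\mH$, and $RNh\in\dom(D)\cap\mH$, but is $P_\mH B_0 h\in\dom(D)$? Only if $h\in\dom(D_0)$.) For the general injectivity I would instead note $P_\mH B_0: \mH\to\mH$ is an isomorphism, so $h\mapsto P_\mH(B_0 h + \sigma RNh) = P_\mH B_0 h + \sigma RNh = (P_\mH B_0 + \sigma RN)h$; this is injective iff $P_\mH B_0 + \sigma RN$ is, which holds because composing with the isomorphism $D|_\mH$ on the left... is unbounded again.

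Given the delicacy, \textbf{the main obstacle} I anticipate is precisely pinning down the correct bounded formula for $\tilde h$ and proving its invertibility without circular appeals to unbounded operators; the right move is almost certainly to \emph{define} the isomorphism as $h \mapsto \pi\big((P_\mH B_0 + \sigma R N)h\big)$ where $R=(D|_\mH)^{-1}$ is the compact (hence bounded) inverse from Proposition~\ref{prop:DBprops}, then prove bijectivity by exhibiting the inverse explicitly — it should be $\tilde h \mapsto (P_\mH B_0 + \sigma RN)^{-1}(\text{lift to }\mH)$, with invertibility of $P_\mH B_0 + \sigma RN: \mH\to\mH$ following from a Fredholm/perturbation argument: $\sigma RN$ is compact, so $P_\mH B_0 + \sigma RN$ is Fredholm of index $0$, and injectivity (which I get from $D_0$ being injective on $\mH$, by applying the closed operator $D$ and using $D_0 = D(P_\mH B_0)+\sigma N$) then gives surjectivity. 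Finally I would check the intertwining $D_0 h = D\tilde h$ holds on $\mH\cap\dom(D_0)$ by the direct computation above, and uniqueness of such an isomorphism follows since $D$ is injective on $L_2/\mH^\perp$ (as $\nul D = \mH^\perp$), so $\tilde h$ is forced on the dense subspace $\mH\cap\dom(D_0)$, hence everywhere by boundedness.
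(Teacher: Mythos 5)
Your final formulation is correct and is essentially the paper's own proof: your defining formula $\tilde h=\pi\big(P_{\mH}B_0h+\sigma (D|_{\mH})^{-1}Nh\big)$ is exactly the paper's identity $P_{\mH}B_0 h+\sigma D^{-1}Nh=P_{\mH}\tilde h$, your injectivity argument (the equation forces $B_0h\in\dom(D)$, hence $h\in\dom(D_0)$ and $D_0h=0$) is the same, and your surjectivity via compactness of $(D|_{\mH})^{-1}N$ is just a slight repackaging of the paper's semi-Fredholm-plus-dense-range argument. The uniqueness remark (injectivity of $D$ on $L_2/\mH^\perp$ plus density of $\mH\cap\dom(D_0)$) also matches, so no gap remains.
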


\begin{proof} When $\sigma=0$, we can take 
$\tilde h:= B_0h  \in B_{0}\mH \approx L_{2}/\mH^\perp$ as $D_{0}h=DB_{0}h=D\st h$.

When $\sigma\ne 0$, we use that $D: L_{2}\to \mH$ is surjective with null space $\mH^\perp$. 
This defines $\st h$ for $h\in \mH\cap \dom(D_{0})$. With   $D^{-1}$  the
compact inverse of $D: \mH\to \mH$,
the equation $D_0 h = D \tilde h$ is equivalent to
\begin{equation}   \label{eq:ptildeeqh}
  P_{\mH}B_0 h + \sigma  D^{-1}Nh = P_{\mH} \tilde h.
\end{equation}
This shows that (\ref{eq:htotilde}) extends to a bounded map since 
$\|\tilde h\|_{L_2/\mH^\perp}\approx \|P_{\mH}\tilde h\|_2$. 
Moreover, since $P_{\mH}B_0$ is an isomorphism on $\mH$, we have also the lower bound
$\|h\|_2 \lesssim \|P_{\mH}B_0 h\|_2 \lesssim \|\tilde h\|_{L_2/\mH^\perp}+ \|D^{-1}h\|_2$,
which shows that  (\ref{eq:htotilde}) is a semi-Fredholm operator.
If $\tilde h=0$, then (\ref{eq:ptildeeqh}) implies $h\in \mH\cap\dom (D_0)$.
Therefore $D_0 h=0$ and (\ref{eq:htotilde}) is injective.
Since the range contains the dense subspace $\dom(D)/\mH^\perp$,
invertibility follows.
\end{proof}

%
%
%
%
%
\section{Elliptic systems in the unit disk}  \label{sec:disk}

In dimension $n=1$, \textit{i.e.} for the unit disk $\bO^2\subset \R^2$ with boundary $S^1$, some special phenomena
occurs. In this section we collect these results.

\begin{lem}   \label{lem:pointwiseaccr}
  If $n=1$ and $A$ is strictly accretive in the sense of \eqref{eq:accrasgarding}, then $A$ is pointwise strictly accretive, \textit{i.e.}
$$
  \re(A(\bx)v,v)\ge \kappa |v|^2,\qquad \text{for all } v\in\C^{2m}, \text{ and a.e. }  \bx\in \bO^2.
$$
\end{lem}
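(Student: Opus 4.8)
The plan is to exploit the arbitrariness of the test function $u\in C^1(\bO^2;\C^m)$ in the accretivity condition \eqref{eq:accrasgarding} by localizing in the angular variable. Fix a point $\bx_0=r_0x_0$ and a vector $v\in\C^{2m}$. Write $v=v_\no\rad(x_0)+(v,\ang(x_0))\ang(x_0)$ in the normal/tangential splitting at $x_0$. The idea is to build, for small $\epsilon>0$, a function $u=u^\epsilon$ whose polar-coordinate gradient $\nabla_\bx u(r_0x)=(\pd_r u_{r_0})(x)\rad(x)+r_0^{-1}\nabla_S u_{r_0}(x)$ concentrates near $x_0$ and approximates the constant field $v$ there. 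Concretely, choose $u_{r}(x)=\psi(r)\,\eta_\epsilon(x)\,w(x)$ where $\eta_\epsilon$ is a bump on $S^1$ supported in an arc of length $\epsilon$ around $x_0$ with $\int \eta_\epsilon^2\,dx=1$, $w$ is a smooth $\C^m$-valued function chosen so that at $x_0$ one has $r_0^{-1}\nabla_S(\eta_\epsilon w)\approx (v,\ang(x_0))\ang(x_0)\eta_\epsilon$ (taking $w$ essentially linear along the circle with the correct slope), and $\psi$ is chosen so that $\psi'(r_0)$ produces the normal component $v_\no$ while $\psi(r_0)$ is negligible. Because in dimension $n=1$ the tangent space to $S^1$ is one-dimensional, the tangential part of $\nabla_\bx u$ genuinely ranges over all of $(T_{x}S^1)^m\cong\C^m$, which is exactly why $\mH_1$ coincides with all of $L_2(S^1;\C^{2m})$ and the pointwise statement can be reached; this would fail for $n\ge2$ since $\curl_S$-free tangential fields are constrained.

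The key steps, in order: (1) reduce to showing $\re(A(r_0x_0)v,v)\ge\kappa|v|^2$ for a.e.\ $r_0$ and every Lebesgue point $x_0$ of $A(r_0\cdot)$, using that \eqref{eq:accrasgarding} holds for a.e.\ $r\in(0,1)$ and continuity of the integrand in the test function; (2) for fixed $r_0,x_0,v$ and $\epsilon>0$, construct the test function $u^\epsilon$ as above so that $\nabla_\bx u^\epsilon(r_0x)=\eta_\epsilon(x)^2$-mass-one approximation of $v$ concentrated in the $\epsilon$-arc, with $\int_{S^1}|\nabla_\bx u^\epsilon(r_0x)|^2dx\to |v|^2$ and $\int_{S^1}\re(A(r_0x)\nabla_\bx u^\epsilon(r_0x),\nabla_\bx u^\epsilon(r_0x))dx\to\re(A(r_0x_0)v,v)$ as $\epsilon\to0$, the latter by the Lebesgue differentiation theorem; (3) pass to the limit in \eqref{eq:accrasgarding} to get the pointwise inequality at $(r_0,x_0)$; (4) conclude for a.e.\ $\bx\in\bO^2$ by Fubini. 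Here one uses polar coordinates $\bx=r_0x$ and the decomposition $v=v_\no\rad+(v,\ang)\ang$ valid on $S^1$.

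The main obstacle I expect is step (2): arranging that the \emph{full} vector $v$, not just its tangential part, is realized by $\nabla_\bx u^\epsilon$ while keeping the angular bump and the radial profile compatible. One must check that the lower-order terms — the contribution of $\psi(r_0)\nabla_S\eta_\epsilon$ to the tangential part (which blows up like $\epsilon^{-1}$ and must be cancelled or dominated by choosing $\psi(r_0)$ small, e.g.\ $\psi(r_0)=o(\epsilon)$ while $\psi'(r_0)=v_\no$), and the non-constancy of $\rad(x)$, $\ang(x)$ over the arc (negligible since the arc has length $\epsilon\to0$) — do not spoil the two limits. A clean way to organize this is to transfer the problem to a flat model via a local bi-Lipschitz chart near $x_0$ and invoke the standard fact that on $\R$ (or a flat strip) pointwise accretivity of $L_\infty$ coefficients follows from the integrated Gårding inequality tested against localized functions; the curvature of $S^1$ enters only through uniformly bounded, smoothly varying frame fields and does not affect the argument. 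Alternatively, one may note that since $n=1$ forces $\mH_1=L_2(S^1;\C^{2m})$, \eqref{eq:accrassumption} already says $A(r\cdot)$ is accretive as a multiplication operator on \emph{all} of $L_2(S^1;\C^{2m})$ for a.e.\ $r$, and a multiplication operator $M_a$ on $L_2$ of a space with no atoms is accretive with constant $\kappa$ if and only if $\re(a(x)v,v)\ge\kappa|v|^2$ for a.e.\ $x$ — a routine measure-theoretic fact — which immediately yields the lemma.
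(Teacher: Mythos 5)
Your step (2) is where the argument breaks, and the two shortcuts you offer do not repair it. At radius $r_0$ the tangential part of your test gradient is $r_0^{-1}\psi(r_0)\nabla_S(\eta_\epsilon w)$, the derivative of a periodic function, hence it has mean zero over $S^1$, while the target profile $(v,\ang(x_0))\eta_\epsilon\ang$ has mean of size $\approx |(v,\ang(x_0))|\,\epsilon^{1/2}$ concentrated on an arc of length $\epsilon$. By Cauchy--Schwarz on that arc, $\|\pd_\theta(\eta_\epsilon w)-c\,\eta_\epsilon\|_{L_2}\gtrsim \epsilon^{-1/2}\,\big|\textstyle\int(\pd_\theta(\eta_\epsilon w)-c\eta_\epsilon)\big|\gtrsim |c|$, which is of the same order as the main term $\|c\,\eta_\epsilon\|_{L_2}$; concretely, with $w$ linear of slope $c$ the term $\eta_\epsilon' w$ is comparable to $c\,\eta_\epsilon$ and cannot be made small, no matter how $w$ is chosen, as long as the compensating mass stays inside the shrinking arc. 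Your proposed remedy, taking $\psi(r_0)=o(\epsilon)$ while $\psi'(r_0)=v_\no$, kills the entire tangential component (it is multiplied by $\psi(r_0)$), so in the limit you only test vectors with vanishing tangential part; moreover the single-product ansatz $\psi(r)\eta_\epsilon(x)w(x)$ forces the same $w$ to be essentially constant (for the normal profile $\psi'(r_0)\eta_\epsilon w$) and essentially linear (for the tangential profile), which is inconsistent. Finally, the ``standard fact'' on a flat model that pointwise accretivity follows from the integrated inequality is precisely the lemma being proved, and its standard proof uses oscillating exponentials, so invoking it is circular.

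The closing alternative rests on a false premise: for $n=1$, $\mH_1$ is \emph{not} all of $L_2(S^1;\C^{2m})$. By Section~\ref{sec:ODE}, $g_\ta\in\ran(\nabla_S)$ if and only if $\int_{S^1}(g_\ta,\ang)\,dx=0$, so $\mH_1$ has codimension $m$; hence \eqref{eq:accrassumption} only gives accretivity of the multiplication operator on this proper subspace, and the ``routine measure-theoretic fact'' does not apply --- overcoming exactly this global mean-zero constraint is the whole content of the lemma. The paper does it by oscillation: test functions of the form $(ik)^{-1}w_\alpha e^{ik\frac{r}{r_0}\frac{z_\alpha}{w_\alpha}}\eta(e^{i\theta})e^{ik\theta}$, so that as $k\to\infty$ the gradient at radius $r_0$ becomes a unimodular phase times $\eta$ times an arbitrary constant vector $(z_\alpha,w_\alpha)_\alpha\in\C^{2m}$, the mean-zero constraint being absorbed by the oscillation, after which $|\eta|^2$ is taken to be an approximate identity. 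Your localization route can in fact be salvaged without oscillation, but differently from what you wrote: replace the target tangential profile $c\,\eta_\epsilon$ by $c\,(\eta_\epsilon-\tfrac{1}{2\pi}\int_{S^1}\eta_\epsilon)$, which lies in $\ran(\nabla_S)$ and differs from $c\,\eta_\epsilon$ by a constant of $L_2$ norm $O(\epsilon^{1/2})$ because the compensation is spread over the whole circle rather than the arc; together with a separate radial factor producing the normal component, this gives admissible test data whose limit at Lebesgue points yields the pointwise bound. As written, however, both your construction and your final shortcut have genuine gaps.
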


\begin{proof}
  By scaling and continuity, it suffices to consider 
  $v=\begin{bmatrix} (z_\alpha) & (w_\alpha) \end{bmatrix}^t \in \C^{2m}$, with $w_\alpha\ne 0$,
  $\alpha=1,\ldots, m$.
  In (\ref{eq:accrasgarding}), let
$$
  u^\alpha(re^{i\theta}):= (ik)^{-1}w_\alpha e^{ik\frac r{r_0} \frac{z_\alpha}{w_\alpha}} \eta(e^{i\theta})e^{ik\theta}, \qquad \alpha=1,\ldots, m, 
$$
with a smooth function $\eta:S^1\to\R$, $k\in \Z_{+}$ and $r_0\in (0,1)$.
Using polar coordinates and letting $k\to \infty$ yields
$$
   \re\int_{S^1}\left(A(r_0x)v, v\right) 
   |\eta(x)|^2 dx
   \ge \kappa |v|^2\int_{S^1}|\eta(x)|^2 dx,  \qquad \text{ for a.e. } r_0\in (0,1).
$$
Taking $|\eta|^2$ to be an approximation to the identity at a given point $x\in S^1$ now proves
the pointwise strict accretivity in the statement.
\end{proof}

\begin{defn}    \label{defn:conjugate}
 Assume that $A\in L_\infty(\bO^2; \mL(\C^{2m}))$ is  pointwise strictly accretive.
Given a weak solution $u\in W_2^{1,\loc}(\bO^2;\C^{m})$ to $\divv_\bx A \nabla_\bx u=0$, 
we say that a solution $\tilde u\in W_2^{1,\loc}(\bO^2;\C^{m})$ to
$
  J\nabla_\bx \tilde u= A \nabla_\bx u
$
is a {\em conjugate} of $u$ {where } $ J:= \begin{bmatrix} 0 & -I \\ I & 0  \end{bmatrix}$.
\end{defn}
We note that since $A\nabla_\bx u$ is divergence-free, there always exists a conjugate of $u$, unique modulo constants in $\C^m
$.
The notion of conjugate solution for two dimensional divergence form equations,
in the scalar case $m=1$, goes back to Morrey. See \cite{morrey:mult}.
Note that when $A=I$, the system $J\nabla_\bx \tilde u= \nabla_\bx u$ is the anti Cauchy--Riemann
equations.

\begin{lem}\label{lem:conjugate}
Assume that $A\in L_\infty(\bO^2; \mL(\C^{2m}))$ is  pointwise strictly accretive. Let $u\in W_2^{1,\loc}(\bO^2;\C^{m})$ be a weak solution  to $\divv_\bx A \nabla_\bx u=0$. Then     
$$A \nabla_\bx u = J\nabla_\bx \tilde u  \Leftrightarrow \begin{cases}  (A \nabla_\bx u)_{\no}=- (\nabla_{\bx}\tilde u)_{\ta}\\ (\st A \nabla_\bx \st u)_{\no}= (\nabla_{\bx}u)_{\ta}\end{cases}  \Leftrightarrow \tilde A\nabla_\bx \tilde u= J^t \nabla_\bx u  \Rightarrow \divv_\bx\tilde A \nabla_\bx \tilde u=0
$$
where $\tilde A$ is  the {\em conjugate coefficient}   defined by
$$
  \tilde A:= J^t A^{-1} J.
$$
We have
 $$
   \tilde A
   = \begin{bmatrix}  (d-ca^{-1}b)^{-1} &  (d-ca^{-1}b)^{-1} ca^{-1}   \\ 
  a^{-1}b(d-ca^{-1}b)^{-1} & a^{-1} +a^{-1} b(d-ca^{-1}b)^{-1} ca^{-1}  \end{bmatrix}
  \quad\text{if}\quad
   A=\begin{bmatrix} a & b \\ c & d \end{bmatrix}.
$$
When $m=1$, this reduces to $\tilde A=(\det A)^{-1}A^t$. 
\end{lem}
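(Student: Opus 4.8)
The plan is to verify the chain of equivalences in Lemma~\ref{lem:conjugate} by a sequence of elementary linear-algebraic manipulations, using the algebraic identity $J^{-1}=J^t=-J$ and the block inversion formula, and then to read off the formula for $\tilde A$ and the scalar specialization.

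First I would establish the middle equivalence $A\nabla_\bx u = J\nabla_\bx \tilde u \Leftrightarrow \tilde A\nabla_\bx \tilde u= J^t \nabla_\bx u$. Starting from $A\nabla_\bx u = J\nabla_\bx \tilde u$, apply $A^{-1}$ to get $\nabla_\bx u = A^{-1}J\nabla_\bx\tilde u$, then apply $J^t$: $J^t\nabla_\bx u = J^tA^{-1}J\nabla_\bx\tilde u = \tilde A\nabla_\bx\tilde u$. The converse runs backwards in the same way, using $J^tJ=I$ (equivalently $J^{-1}=J^t$). This step is purely formal since $A$ is pointwise strictly accretive, hence pointwise invertible by Lemma~\ref{lem:pointwiseaccr}, so $A^{-1}\in L_\infty$ and $\tilde u\in W_2^{1,\loc}$ iff the identity makes sense.

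Next I would handle the first equivalence, relating $A\nabla_\bx u = J\nabla_\bx\tilde u$ to the pair of scalar/tangential identities. In the normal/tangential splitting of $\C^{2m}$ one computes the action of $J=\begin{bmatrix} 0 & -I \\ I & 0 \end{bmatrix}$ on a vector $\begin{bmatrix}\alpha & \beta\end{bmatrix}^t$: it sends normal part to (minus) tangential part and tangential part to normal part, so $(J w)_\no = -w_\ta$ (suitably interpreted via the identification of $\C^m$-valued functions with $m$-tuples of radial fields) and $(Jw)_\ta = w_\no\rad$. Applying this to $w=\nabla_\bx\tilde u$ gives that the normal component of $A\nabla_\bx u = J\nabla_\bx\tilde u$ is exactly $(A\nabla_\bx u)_\no = -(\nabla_\bx\tilde u)_\ta$, while the tangential component gives $(A\nabla_\bx u)_\ta = (\nabla_\bx\tilde u)_\no\rad$. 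The claim is that this tangential identity is equivalent, given the normal one, to $(\tilde A\nabla_\bx\tilde u)_\no = (\nabla_\bx u)_\ta$ — which is just the normal component of the already-established equivalent equation $\tilde A\nabla_\bx\tilde u = J^t\nabla_\bx u$, since $(J^t\nabla_\bx u)_\no = (\nabla_\bx u)_\ta$ by the same computation applied to $J^t=-J$. So the first $\Leftrightarrow$ is really just the decomposition of the two equivalent vector equations into their normal components, observing that each vector equation is determined by its normal component together with the single relation $(A\nabla_\bx u)_\no=-(\nabla_\bx\tilde u)_\ta$. I would write this out carefully, as the bookkeeping between the $\V$-picture and the $\C^{2m}$-picture is where a reader is most likely to stumble — this is the main obstacle, though it is bookkeeping rather than genuine difficulty.

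Finally, the implication $\Rightarrow \divv_\bx\tilde A\nabla_\bx\tilde u=0$ follows because $\tilde A\nabla_\bx\tilde u = J^t\nabla_\bx u$ and $\divv_\bx(J^t\nabla_\bx u) = 0$ identically: $J^t$ is a constant antisymmetric matrix, and $\divv_\bx J^t\nabla_\bx u$ is, componentwise, a sum of mixed second partials $\sum_{i,j}(J^t)_{ij}\partial_i\partial_j u$ which vanishes by symmetry of second derivatives (in the distributional sense for $u\in W_2^{1,\loc}$, after testing against $C_0^\infty$). For the explicit formula, I would simply compute $\tilde A = J^tA^{-1}J$ using the block inverse of $A=\begin{bmatrix} a & b \\ c & d\end{bmatrix}$ via the Schur complement $d - ca^{-1}b$ (which is invertible because $A$ and $a=A_{\no\no}$ are), namely
\[
  A^{-1}=\begin{bmatrix} a^{-1}+a^{-1}b(d-ca^{-1}b)^{-1}ca^{-1} & -a^{-1}b(d-ca^{-1}b)^{-1} \\ -(d-ca^{-1}b)^{-1}ca^{-1} & (d-ca^{-1}b)^{-1}\end{bmatrix},
\]
and then conjugating by $J=\begin{bmatrix} 0 & -I \\ I & 0\end{bmatrix}$, which swaps the two block rows and the two block columns and introduces signs, yielding the stated matrix. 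When $m=1$ this is the classical $\tilde A = (\det A)^{-1}A^t$, checked directly from the $2\times 2$ adjugate formula $A^{-1}=(\det A)^{-1}\begin{bmatrix} d & -b \\ -c & a\end{bmatrix}$ and $J^t\begin{bmatrix} d & -b \\ -c & a\end{bmatrix}J = \begin{bmatrix} a & c \\ b & d\end{bmatrix}=A^t$.
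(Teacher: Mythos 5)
Your proposal follows exactly the route the paper takes (the authors simply assert that the equivalences and the implication "are verified from $\tilde A=J^tA^{-1}J$" and that the block formula is a straightforward computation): the middle equivalence via $J^tJ=I$ and pointwise invertibility of $A$, the final implication via $\divv_\bx(J^t\nabla_\bx u)=0$ by equality of mixed distributional derivatives, and the Schur-complement computation of $J^tA^{-1}J$ together with the $m=1$ adjugate check are all correct and are precisely the omitted details.

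The one spot where your justification is looser than it should be is the converse of the first equivalence. As literally written, the claim that ``each vector equation is determined by its normal component together with the single relation $(A\nabla_\bx u)_\no=-(\nabla_\bx\st u)_\ta$'' fails for the first equation, whose normal component \emph{is} that relation; and the step is not pure bookkeeping, since the two displayed identities are the normal components of two \emph{different} (albeit equivalent) vector equations. The missing argument is short but does use accretivity: assume $(A\nabla_\bx u)_\no=-(\nabla_\bx\st u)_\ta$ and $(\st A\nabla_\bx\st u)_\no=(\nabla_\bx u)_\ta$, and set $h:=A^{-1}J\nabla_\bx\st u$ pointwise in the frame $\{\rad,\ang\}$. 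Since $(J^tw)_\no=w_\ta$, the second identity reads $h_\ta=(\nabla_\bx u)_\ta$, while the first gives $(Ah)_\no=(J\nabla_\bx\st u)_\no=(A\nabla_\bx u)_\no$. So $v:=h-\nabla_\bx u$ satisfies $v_\ta=0$ and $(Av)_\no=0$, whence $\re(Av,v)=\re((Av)_\no,v_\no)+\re((Av)_\ta,v_\ta)=0$, and pointwise strict accretivity forces $v=0$, i.e. $A\nabla_\bx u=Ah=J\nabla_\bx\st u$ (equivalently, one can argue with the invertibility of $A_{\no\no}$). With this two-line absorption inserted — and the cosmetic fix that the tangential part of $Jw$ is $w_\no\ang$, not $w_\no\rad$ — your plan is complete.
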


Here, we have identified the tangential part $(\cdot)_{\ta}$ with its component along $\ang$. (See below.)  

\begin{proof} 
The equivalences and implication are verified from  $\tilde A= J^t A^{-1} J$. The explicit formula for $\tilde A$ is classical if $m=1$.  If $m\ge 2$, the proposed formula for $\tilde A$ can be checked by a straightforward computation. Note that  $a,b,c,d\in L_{\infty}(\bO^2; \mL(\C^{m}))$ and all the entries of $\tilde A$ as well: the inverses are pointwise multiplications. We omit further details.
\end{proof}

We next show that the vector-valued potential $v$ in Proposition~\ref{prop:ODEtoPotential} contains,
along with $u$ as normal component, its conjugate $\tilde u$ as tangential component. 
To do that, it is convenient to identify $\V$ with the trivial bundle $ \C^{2m}$ by  identifying the tangential component $\beta$  to the tangential part $\beta\ang\in (T_{\C}S^1)^m$.

Given this identification, $D$ becomes
$$
D= \begin{bmatrix} 0 & -\pd_{\ang} \\ \pd_{\ang} & 0  \end{bmatrix},
$$
where $\pd_{\ang}$ denotes the tangential counter clockwise derivative of $m$-tuples
of scalar functions on $S^1$. A coefficient $A \in L_\infty(\bO^2; \mL(\C^{2m}))$ is thus identified with its matrix representation in the moving frame  $\{\rad,\ang\}$. We remark that this identification commutes with the matrix $J$.

\begin{prop}    \label{prop:conjugates}
  Let $A\in L_\infty(\bO^2; \mL(\C^{2m}))$ be pointwise strictly accretive and let 
  $B:= \hat A \in L_\infty(\bO^2; \mL(\C^{2m}))$. Assume that
  $v=\begin{bmatrix} u & \tilde u \end{bmatrix}^t\in L_2^\loc(\R_+;\dom(D))$ with  $\int_1^\infty \|Dv_t\|_2^2 dt<\infty$ is a  $\R_+\times S^n$ distributional solution to 
  $\pd_t v+ BDv=0$ as in Proposition~\ref{prop:ODEtoPotential}, so that $u_r= (v_t)_\no$, $r=e^{-t}$,
  is a weak solution to $\divv_\bx A \nabla_\bx u=0$ in $\bO^2$. Then $\tilde u$ is a conjugate to $u$. 
  
  Conversely, given a weak solution $u$ to $\divv_\bx A \nabla_\bx u=0$ in 
  $\bO^2$ and a conjugate $\st u$, the potential vector $v=\begin{bmatrix} u & \tilde u \end{bmatrix}^t$ has the above properties. 
    \end{prop}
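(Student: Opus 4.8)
The plan is to exploit the structure of equation \eqref{eq:BDODE} with $\sigma=\tfrac{n-1}2$ specialized to $n=1$, so that it reads $\pd_t v + BDv=0$, and to compare it with the first-order formulation of the conjugate pair. First I would unpack what Proposition~\ref{prop:ODEtoPotential} already gives: with $v=\begin{bmatrix} u & \tilde u\end{bmatrix}^t$ and $u_r=(v_t)_\no$, that proposition says $u$ is a weak solution to $\divv_\bx A\nabla_\bx u=0$ and that $Dv$ is the conormal gradient of $u$, i.e. $(Dv)_\no = r^{(n+1)/2}(A\nabla_\bx u)_\no = r(A\nabla_\bx u)_\no$ and $(Dv)_\ta = r^{(n+1)/2}(r^{-1}\nabla_S u) = \nabla_S u$ (here $n=1$). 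In the moving frame identification, $Dv = \begin{bmatrix} -\pd_\ang \tilde u \\ \pd_\ang u\end{bmatrix}$, so reading off the two components gives $-\pd_\ang \tilde u = r(A\nabla_\bx u)_\no$ and $\pd_\ang u = \nabla_S u$. The second is a tautology; the first is the identity $(A\nabla_\bx u)_\no = -r^{-1}\pd_\ang \tilde u = -(\nabla_\bx \tilde u)_\ta$, since in polar coordinates $(\nabla_\bx \tilde u)_\ta = r^{-1}\nabla_S\tilde u = r^{-1}\pd_\ang \tilde u$ (again using $n=1$). By Lemma~\ref{lem:conjugate}, the single scalar identity $(A\nabla_\bx u)_\no = -(\nabla_\bx\tilde u)_\ta$ is one of the two equivalent forms of $A\nabla_\bx u = J\nabla_\bx\tilde u$, hence $\tilde u$ is a conjugate of $u$.

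To make this airtight I would also need to check the normal component of \eqref{eq:BDODE}, which should reproduce the transverse equation; but since $Dv$ is already identified with the conormal gradient of $u$ in Proposition~\ref{prop:ODEtoPotential}, both components of $Dv$ are pinned down and no further computation is needed — the content is purely in translating $(Dv)_\no$ and $(Dv)_\ta$ into the frame $\{\rad,\ang\}$ and invoking the chain of equivalences in Lemma~\ref{lem:conjugate}. One must be slightly careful that the identification of $\V$ with $\C^{2m}$ used to write $D=\begin{bmatrix} 0 & -\pd_\ang \\ \pd_\ang & 0\end{bmatrix}$ is the same one under which $v=\begin{bmatrix} u & \tilde u\end{bmatrix}^t$ and under which $J$ acts as stated; the remark that this identification commutes with $J$ is exactly what is needed.

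For the converse, I would start from a weak solution $u$ and a conjugate $\tilde u$ (which exists, unique mod constants, since $A\nabla_\bx u$ is divergence-free), set $v := \begin{bmatrix} u & \tilde u\end{bmatrix}^t$, and run the computation backwards. From $A\nabla_\bx u = J\nabla_\bx\tilde u$ and the equivalent form in Lemma~\ref{lem:conjugate}, together with the polar-coordinate expressions for $\nabla_\bx$, $\divv_\bx$, $\curl_\bx$ collected in Section~\ref{sec:ODE}, one verifies that $f:=Dv$ (computed in the moving frame) satisfies \eqref{eq:firstorderODE} with $\sigma=0$; then one checks $v\in L_2^\loc(\R_+;\dom(D))$ with $\int_1^\infty\|Dv_t\|_2^2\,dt<\infty$, the latter following from \eqref{eq:dfl2iso} applied to $g=\nabla_\bx u$ plus the analogous estimate for $\nabla_\bx\tilde u$ (which is controlled by $\nabla_\bx u$ via $|\nabla_\bx\tilde u| \approx |A\nabla_\bx u|\lesssim |\nabla_\bx u|$). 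Finally, to get $\pd_t v + BDv=0$ itself rather than just the equation for $Dv$, I would argue as in the proof of Proposition~\ref{prop:ODEtoPotential}: the normal component of $\pd_t v + BDv$ vanishes because $(BDv)_\no$ unwinds, via $B=\hat A$, to $r^{(n+1)/2}(A\nabla_\bx u)_\no$ minus $\pd_t u_\no$, which is zero by the definition $u_r = (v_t)_\no = r^{(n-1)/2}(v_t)_\no$ (with $n=1$ the weight is trivial); the tangential component vanishes because $(BDv)_\ta$ reduces to $\nabla_S$ of the normal part and matches $-\pd_t \tilde u_\ta$ by the conjugate relation.

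The main obstacle I anticipate is bookkeeping rather than conceptual: keeping the several identifications consistent — $\V \cong \C^{2m}$ via $\beta\mapsto\beta\ang$, the split $v=v_\no\rad+v_\ta\ang$, the matrix $J$ and the transformed coefficients $B=\hat A$ versus the conjugate coefficient $\tilde A = J^tA^{-1}J$ — and making sure that the scalar identity extracted from the vector equation is precisely the one appearing in Lemma~\ref{lem:conjugate}. In particular the sign conventions on $\pd_\ang$ (counter-clockwise) and on $J$ must be tracked so that $(A\nabla_\bx u)_\no = -(\nabla_\bx\tilde u)_\ta$ comes out with the correct sign; everything else is a direct substitution into formulas already established in Sections~\ref{sec:ODE} and~\ref{sec:disk}.
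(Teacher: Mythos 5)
Your forward direction has a genuine gap. From the conclusion of Proposition~\ref{prop:ODEtoPotential} that $Dv$ is the conormal gradient of $u$, you correctly extract the single (per component) scalar identity $(A\nabla_\bx u)_\no=-(\nabla_\bx\tilde u)_\ta$, but you then claim this is ``one of the two equivalent forms'' in Lemma~\ref{lem:conjugate} and hence that $\tilde u$ is a conjugate, adding that ``no further computation is needed.'' That is not what Lemma~\ref{lem:conjugate} says: the middle term of its equivalence is the \emph{pair} of equations $(A\nabla_\bx u)_\no=-(\nabla_\bx\tilde u)_\ta$ and $(\st A\nabla_\bx\st u)_\no=(\nabla_\bx u)_\ta$, and only the pair is equivalent to $A\nabla_\bx u=J\nabla_\bx\tilde u$. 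The single identity you obtained says nothing about $\pd_r\tilde u$: replacing $\tilde u$ by $\tilde u+\phi(r)$ with $\phi$ non-constant preserves it while destroying conjugacy, so your argument as written would ``prove'' a false statement. The missing tangential component of $A\nabla_\bx u=J\nabla_\bx\tilde u$, namely $(A\nabla_\bx u)_\ta=\pd_r\tilde u$, is not contained in the identification of $Dv$; it comes from the part of the ODE \eqref{eq:BDODE} you explicitly declined to use. Concretely, taking the tangential part of $\pd_t v+BDv=0$ and unwinding $(B\,Dv)_\ta=B_{\ta\no}(Dv)_\no+B_{\ta\ta}(Dv)_\ta=r(A\nabla_\bx u)_\ta$ (using $B=\hat A$ and $Dv$ equal to the conormal gradient) together with $\pd_t v_\ta=-r\pd_r\tilde u$ gives exactly the missing identity. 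The paper gets the second equation differently: it applies $J^t$ to the ODE, uses $JD=DJ$ and $J^tBJ=\widehat{\tilde A}$, and invokes Proposition~\ref{prop:ODEtoPotential} once more for $J^tv$, so that $D(J^tv)$ is the conormal gradient of $\tilde u$ for the coefficients $\tilde A$, which is precisely $(\st A\nabla_\bx\st u)_\no=(\nabla_\bx u)_\ta$.

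Your converse sketch is essentially the ``immediate check'' the paper leaves to the reader and is workable: with $v=\begin{bmatrix} u & \tilde u\end{bmatrix}^t$, the conjugate relation makes $Dv$ the conormal gradient of $u$, the normal component of $\pd_t v+BDv$ vanishes by $(B\,Dv)_\no=r\pd_r u$, and the tangential one by $(B\,Dv)_\ta=r(A\nabla_\bx u)_\ta=r\pd_r\tilde u$, with the $L_2^\loc$ and $\int_1^\infty\|Dv_t\|_2^2\,dt<\infty$ bounds following from $|\nabla_\bx\tilde u|\lesssim|\nabla_\bx u|$. Note that this check uses \emph{both} components of the conjugate relation, which is precisely the symmetry your forward argument overlooked.
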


Note that the construction of $v$ this way  is a feature of two-dimensional systems as compared to higher dimensions. 

\begin{proof} 
Applying $J^t$ to  $\pd_t v+ BDv=0$ gives $\pd_t (J^t  v)+ \widetilde B D (J^t v)=0$ with $\widetilde B=J^t BJ$, since $JD=DJ$. A calculation shows that $\widetilde B= \widehat{\tilde A}$. Applying Proposition~\ref{prop:ODEtoPotential} shows that $\tilde u_{r}= (J^t v_t)_\no$ is a weak solution to $\divv_\bx \tilde A \nabla_\bx \tilde u=0$.  Also we know that $Dv$ and $D\tilde v$ are respectively equal to the conormal gradients of $u$ and $\st u$, and since $J^t v=\tilde v$,  this gives the middle term in the equivalence of  Lemma~\ref{lem:conjugate}. Thus  $\tilde u$ is a conjugate of $u$.  The converse is immediate to check and left to the reader. 
\end{proof}

We finish this section with  the following simple expressions for the projections
$P^0_{B_0}$ and $\tP^0_{B_0}$ of Propositions~\ref{prop:DBprops}~and~\ref{prop:BDprops} when $n=1$. We still make the identification $\V\approx \C^{2m}$. 

\begin{lem}   \label{lem:proj1D}  Let $A_{1}\in L_\infty(\bO^2; \mL(\C^{2m}))$ be pointwise strictly accretive radially independent coefficients,  and let 
  $B_{0}:= \widehat A_{1} \in L_\infty(\bO^2; \mL(\C^{2m}))$ the corresponding coefficients. 
Then
$$
  \tP^0_{B_0}g= \left( \int_{S^1} B_0^{-1} \, dx\right)^{-1}  \int_{S^1} B_0^{-1}g \, dx, 
  \qquad g\in L_2(S^1;\C^{2m}),
$$
and $P^0_{B_0}= B_0^{-1} \tP^0_{B_0}B_0$.
\end{lem}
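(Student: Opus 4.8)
The plan is to identify the null spaces of $\tD_0$ and $D_0$ explicitly when $\sigma = 0$ and $n=1$, and then recognize the asserted formulas as the Hodge projections onto these null spaces. By Proposition~\ref{prop:BDprops}, with $\sigma=0$ we have $\nul(\tD_0) = \mH^\perp$ and $\ran(\tD_0) = B_0\mH$, with $\tP^0_{B_0}$ the bounded projection onto $\mH^\perp$ along $B_0\mH$. So I must show that the proposed operator, call it $Eg := \big(\int_{S^1}B_0^{-1}\,dx\big)^{-1}\int_{S^1}B_0^{-1}g\,dx$, is exactly that projection. Here I use the $n=1$ description from Section~\ref{sec:ODE}: $\mH^\perp = \nul(D)$ consists of the constant sections of $\V \approx \C^{2m}$ (since for $n=1$, $\nul(\nabla_S) = \nul(\pd_\ang)$ is the constants and likewise $\nul(\divv_S)$; indeed $\nul(D) = \nul(\pd_\ang)^2 = \C^{2m}$ as constant functions), and $\mH = \ran(D) = \ran(\pd_\ang)^2$ consists of those $g$ with $\int_{S^1} g\,dx = 0$.

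First I would check that $Eg$ is a well-defined constant section: the matrix $\int_{S^1}B_0^{-1}\,dx$ is invertible because $B_0$ is strictly accretive on all of $\C^{2m}$ pointwise (Lemma~\ref{lem:pointwiseaccr} applies to $A_1$, hence to $B_0 = \widehat{A_1}$ via the identity $\re(B_0 f,f)$ relating to $\re(A_1 g,g)$ — more directly, accretivity of $B_0$ on $\mH$ plus the self-inverse transformation structure gives invertibility of $B_0^{-1}$ pointwise, so its average against the positive measure $dx$ is accretive, in particular invertible). Second, $E$ is clearly a bounded operator on $L_2(S^1;\C^{2m})$ with range contained in the constants $\mH^\perp$. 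Third, $E$ fixes $\mH^\perp$: if $g \equiv c$ is constant then $B_0^{-1}g = B_0^{-1}c$ and $\int_{S^1}B_0^{-1}g\,dx = \big(\int_{S^1}B_0^{-1}\,dx\big)c$, so $Eg = c$. Fourth, $E$ annihilates $B_0\mH$: if $g = B_0 h$ with $h\in\mH$, then $B_0^{-1}g = h$ and $\int_{S^1} h\,dx = 0$ since $h\in\mH = \ran(\pd_\ang^2)$, so $Eg = 0$. An idempotent bounded operator that is the identity on $\mH^\perp$ and zero on $B_0\mH$, with $L_2 = B_0\mH \oplus \mH^\perp$, must coincide with $\tP^0_{B_0}$; hence $\tP^0_{B_0} = E$.

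The formula $P^0_{B_0} = B_0^{-1}\tP^0_{B_0}B_0$ then follows from the relation between the two Hodge splittings: by Proposition~\ref{prop:DBprops} (with $\sigma=0$), $P^0_{B_0}$ projects onto $\nul(DB_0) = B_0^{-1}\mH^\perp$ along $\mH = \ran(D)$. One verifies directly that $Q := B_0^{-1}\tP^0_{B_0}B_0$ is idempotent (since $\tP^0_{B_0}$ is), has range $B_0^{-1}(\ran\tP^0_{B_0}) = B_0^{-1}\mH^\perp$, and null space $B_0^{-1}(\nul\tP^0_{B_0}) = B_0^{-1}(B_0\mH) = \mH$; by uniqueness of the projection associated to the splitting $L_2 = \mH \oplus B_0^{-1}\mH^\perp$, $Q = P^0_{B_0}$.

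The main obstacle is really just the bookkeeping at $n=1$: pinning down precisely that $\mH^\perp$ reduces to the constant sections of $\C^{2m}$ and $\mH$ to the mean-zero sections under the identification $\V \approx \C^{2m}$ with $D = \begin{bmatrix} 0 & -\pd_\ang \\ \pd_\ang & 0 \end{bmatrix}$, together with confirming invertibility of $\int_{S^1}B_0^{-1}\,dx$ from the accretivity hypothesis — neither is deep, but both must be stated carefully since the whole computation hinges on them. Everything else is the standard characterization of a bounded projection by its range, kernel, and idempotency.
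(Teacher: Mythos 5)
Your proof is correct and follows essentially the same route as the paper: one checks that the proposed operator vanishes on $B_0\mH$ (because $B_0^{-1}g\in\mH$ has mean zero when $n=1$) and acts as the identity on the constants $\mH^\perp$, and then identifies it with $\tP^0_{B_0}$ via the splitting $L_2=B_0\mH\oplus\mH^\perp$; your direct range/kernel verification of $P^0_{B_0}=B_0^{-1}\tP^0_{B_0}B_0$ is just the unpacked form of the paper's similarity relation $DB_0=B_0^{-1}(B_0D)B_0$.
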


\begin{proof}
By accretivity, $(\int_{S^1} B_0^{-1} \, dx)^{-1}$ is a bounded operator (called the harmonic mean of 
$B_{0}$). 
If $g\in B_0 \mH$, then $B_0^{-1}g\in \mH$ and $\int_{S^1} B_0^{-1}g \, dx=0$, hence $\tP^0_{B_0}g=0$, follows.
On the other hand, if $g\in\mH^\perp$, then $g$ is constant, and therefore
the right hand side equals
$$
  \left( \int_{S^1} B_0^{-1} \, dx\right)^{-1}  \left(\int_{S^1} B_0^{-1}\, dx\right) g=g. 
$$
This proves the expression for $\tP^0_{B_0}$.
The formula for $P^0_{B_0}$ comes from the similarity relation $DB_{0}=B_{0}^{-1}(B_{0}D)B_{0}$.
\end{proof}

%
%
%
%
%
\section{Resolvent estimates}     \label{sec:resolvents}

In this section we prove that the spectra of $D_0$ and $\tD_0$
are contained in certain double hyperbolic regions, 
and we estimate the resolvents.
For parameters $0<\omega<\nu<\pi/2$ and $\sigma\in\R$, 
define closed and open hyperbolic regions in the complex plane by
\begin{align*}
    S_{\omega,\sigma} &:= \sett{x+iy\in\C}{(\tan^2\omega) x^2 \ge y^2+ \sigma^2},  \\
    S_{\nu,\sigma}^o &:=  \sett{x+iy\in\C}{(\tan^2\nu) x^2 > y^2+ \sigma^2}, \\
    S_{\omega,\sigma+} &:= \sett{x+iy\in\C}{(\tan\omega) x \ge (y^2+ \sigma^2)^{1/2}},  \\
    S_{\nu,\sigma+}^o &:=  \sett{x+iy\in\C}{(\tan\nu) x > (y^2+ \sigma^2)^{1/2}}.
\end{align*}
When $\sigma=0$, we drop the subscript $\sigma$ in the notation for the sectorial regions.

\begin{prop}   \label{prop:spectrum} On $L_{2}=L_{2}(S^n;\V)$, 
there is a constant $\omega\in (0, \pi/2)$, depending only on $\|B_0\|_\infty$ and the accretivity
constant $\kappa_{B_0}$, such that the spectra of the operators $D_0$ and $\tD_0$ 
are contained in the double hyperbolic region $S_{\omega,\sigma}$.
Moreover, there are resolvent bounds
$$
  \|(\lambda-D_0)^{-1}\|_{L_2\to L_2}, \|(\lambda-\tD_0)^{-1}\|_{L_2\to L_2}
  \le \frac{1}{\sqrt{y^2+\sigma^2}/\tan\omega- |x|},
$$
for all $\lambda= x+iy\notin S_{\omega,\sigma}$.
These same estimates hold for the restriction
$D_0:\mH\to\mH$.
\end{prop}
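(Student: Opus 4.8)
The plan is to use the algebraic identity already exploited in the proof of Proposition~\ref{prop:DBprops} to reduce to the resolvent bounds for operators of the form $\Pi B$ with $\Pi$ self-adjoint and $B$ accretive, which are standard from \cite{AKMc,AAM}, and then to account for the curvature term $\sigma N$ by a direct quadratic estimate. First I would dispose of $\tD_0$: by Lemma~\ref{lem:intertwduality}, $\tD_0^{*}=DB_0^{*}-\sigma N=-N(D\widehat{A_1^*}+\sigma N)N=-ND_{A_1^*}N$, and $N$ is unitary; since $\|\widehat{A_1^*}\|_\infty=\|B_0\|_\infty$ and $\kappa_{A_1^*}=\kappa_{A_1}$ (as $\re(A_1^*v,v)=\re(A_1v,v)$), while $S_{\omega,\sigma}$ and $\dist(\,\cdot\,,S_{\omega,\sigma})$ are invariant under $\lambda\mapsto\overline\lambda$ and $\lambda\mapsto-\lambda$, the estimates for $\tD_0$ at $\lambda$ are equivalent to the estimates for $D_{A_1^*}$ at $-\overline\lambda$, which are of the same form as those for $D_0=D_{A_1}$. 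So it suffices to treat $D_0$.

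Recall then that $iND_0=(iND)B_0+i\sigma$. Put $\Pi_0:=iND=-iDN$, a self-adjoint operator with $\overline{\ran\Pi_0}=\mH$, on which $B_0$ is bounded and strictly accretive with constant $\kappa_{B_0}$; note $N$ preserves $\mH$ and anticommutes with $\Pi_0$. Since $iN$ is unitary and $iN(\lambda-D_0)=-L_\lambda$ with $L_\lambda:=\Pi_0B_0+i\sigma-i\lambda N$, the operator $\lambda-D_0$ is invertible exactly when $L_\lambda$ is, and $\|(\lambda-D_0)^{-1}\|=\|L_\lambda^{-1}\|$; moreover $\mH$ is invariant under $L_\lambda$ because it is invariant under $\Pi_0B_0$ and under $N$. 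Hence the task reduces to showing that for $\lambda=x+iy\notin S_{\omega,\sigma}$ the operator $L_\lambda$ is bijective, on $L_2$ and on $\mH$, with $\|L_\lambda^{-1}\|\le(\sqrt{y^2+\sigma^2}/\tan\omega-|x|)^{-1}$.

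The heart is a quadratic estimate for $L_\lambda$, following the scheme of \cite[\S4]{AA1} (which treated the curvature-free case $\sigma=0$, where the spectrum only lies in a double sector) combined with the bisectoriality of $\Pi_0B_0$. The latter, being of the form $\Pi B$ with $\Pi$ self-adjoint and $B$ accretive on $\overline{\ran\Pi}$, is bisectorial of some angle $\omega=\omega(\|B_0\|_\infty,\kappa_{B_0})\in(0,\pi/2)$ by \cite[Prop.~3.3]{AAM} (cf.\ \cite{AKMc}), and this is what fixes $\omega$; the same applies to the restriction to $\mH$. Given $L_\lambda u=f$, set $v:=B_0u$, so that $\Pi_0v=f-(i\sigma-i\lambda N)u$. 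Pairing this with $v$ and with $Nv$, and using that $(\Pi_0v,v)$ is real (self-adjointness of $\Pi_0$) and $(\Pi_0v,Nv)$ is purely imaginary (the anticommutation $\Pi_0N=-N\Pi_0$), removes the uncontrolled terms; meanwhile $\re(B_0u,u)\ge\kappa_{B_0}\|u\|^2$ and $|(B_0u,u)|\le\|B_0\|_\infty\|u\|^2$ place $(B_0u,u)$ in the sector of half-angle $\omega$ about the positive real axis. Taking the combination of these identities — after a $\lambda$-dependent rotation, as in the half-space case — dictated by the position of $\lambda$ relative to $S_{\omega,\sigma}$, one extracts $\|f\|\ge(\sqrt{y^2+\sigma^2}/\tan\omega-|x|)\|u\|$ whenever $\lambda\notin S_{\omega,\sigma}$. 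The same estimate applied to the adjoint $L_\lambda^{*}=B_0^{*}\Pi_0-i\sigma+i\overline\lambda N$ (of the dual type, with $B_0^{*}$ accretive on $\mH$ with the same constant) shows $L_\lambda$ has dense range; hence $L_\lambda$ is invertible with the stated bound, and the argument on the invariant subspace $\mH$ is verbatim.

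I expect the decisive difficulty to be this last extraction: obtaining exactly the hyperbolic region $S_{\omega,\sigma}$ and the sharp resolvent constant from the pairing identities, i.e.\ correctly tracking how the curvature term $\sigma N$ simultaneously rotates and repels the spectrum away from the imaginary axis through the non-commuting factor $N$ — precisely the feature that is absent on the upper half-space of \cite{AA1}, where $\sigma=0$ and a double sector is all that appears.
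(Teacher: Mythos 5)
Your outline is close to the paper's actual strategy (pass to $iN(\lambda-D_0)$, exploit that $iND$ is self-adjoint and anticommutes with $N$, and get surjectivity from the duality between $DB_0^{\,}+\sigma N$ and $B_0^*D+\sigma N$), but the step you defer --- extracting the hyperbola $S_{\omega,\sigma}$ with the constant $\sqrt{y^2+\sigma^2}/\tan\omega-|x|$ --- is precisely the one idea the proof needs, and your sketch of it is not yet an argument. The paper's device is the auxiliary operator $N_y:=i\sigma N-yI$: one multiplies the resolvent equation by $N_y$, notes that $N_y(\sigma N-iy)=-i(\sigma^2+y^2)I$ is a \emph{scalar}, that $\|N_y\|=\|N_y^{-1}\|^{-1}=\sqrt{y^2+\sigma^2}$, and that $N_yD$ is self-adjoint (because $ND=-DN$); pairing the transformed equation with $B_0u_1$ and taking imaginary parts then kills the $N_yD$ term and produces $(y^2+\sigma^2)\|u\|^2$ on the left against only $\sqrt{y^2+\sigma^2}\,(\|f\|\,\|u\|+|x|\,\|u\|^2)$ on the right, which is exactly where the stated bound comes from. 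Your plan of pairing with $v$ and $Nv$ and taking ``a $\lambda$-dependent combination'' is morally pairing against $N_y^*v$, i.e. the combination with coefficients $-y$ and $-i\sigma$, but you never identify it, and without it the coupling $y^2+\sigma^2$ does not appear. Note also that $\omega$ is not imported from bisectoriality of $(iND)B_0$ via \cite{AAM}; it is simply $\arctan$ of the constants arising in this estimate and in the analogous one for the dual-type operator.

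There is a second, more concrete flaw: you use $\re(B_0u,u)\ge\kappa_{B_0}\|u\|^2$ for a general solution $u\in L_2$ of $L_\lambda u=f$, but $B_0=\widehat{A_1}$ is only strictly accretive on $\mH$ (this is stressed at the start of Section~\ref{sec:infgene}), so the sector localization of $(B_0u,u)$ fails off $\mH$ and the lower bound collapses on $B_0^{-1}\mH^\perp$. The paper avoids this by splitting $u=u_1+u_0\in\mH\oplus B_0^{-1}\mH^\perp$ with the bounded Hodge projections of Proposition~\ref{prop:DBprops}, applying them to the $N_y$-transformed equation, using accretivity only in the $u_1$-equation, and reading off $\|u_0\|$ directly from the $u_0$-equation since $DB_0u_0=0$; you would need the same splitting. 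Finally, your surjectivity step quietly requires a lower bound for the dual-type operator $B_0^*(iND)+\cdots$, i.e. a $\tD_0$-type estimate (done in the paper with $DN_y^{-1}$ self-adjoint and the other splitting $L_2=B_0\mH\oplus\mH^\perp$), so the opening reduction of $\tD_0$ to $D_0$ by conjugation with $N$ does not actually spare you that computation.
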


\begin{proof}
(i)
To prove the spectral estimates for $D_0$, assume that 
$$
  (DB_0+ \sigma N-x-iy)u=f.
$$
Introduce the auxiliary operator $N_y:= i\sigma N-yI$, and note that 
$\|N_y\|= \|N_y^{-1}\|^{-1}= \sqrt{y^2+\sigma^2}$.
Multiply with $N_y$ and rewrite as
\begin{equation}   \label{eq:resolveq1}
  (N_yD) B_0u + i(y^2+\sigma^2)u= N_y f+ x N_y u.
\end{equation}
Now split the function $u$ as
$$
  u= u_1+u_0\in \mH\oplus B_0^{-1}\mH^\perp,
$$
and note that $\|u\|\approx \|u_1\|+ \|u_0\|$.
Apply the associated bounded projections $P^i_{B_0}$
to (\ref{eq:resolveq1}) to get
\begin{align*} 
  (N_yD) B_0u_1 + i(y^2+\sigma^2)u_1 &= P^1_{B_0}N_y f+ x P^1_{B_0}N_y u, \\
  0+ i(y^2+\sigma^2)u_0 &= P^0_{B_0} N_y f+ x P^0_{B_0}N_y u.
\end{align*}
Take the imaginary part of the inner product between the first equation and $B_0u_1$
(using that $N_y D$ is self-adjoint), and the second equation and $u_0$ to get
\begin{align*} 
  (y^2+\sigma^2) \re (u_1,B_0 u_1) =  \im(P^1_{B_0}N_y f, B_0u_1)+ \im (xP^1_{B_0}N_y u,B_0 u_1), \\
  (y^2+\sigma^2) \|u_0\|^2 =\im(P^0_{B_0}N_y f, u_0)+  \im (xP^0_{B_0}N_y u,u_0).
\end{align*}
Using the strict accretivity of $B_0$ on $\mH$ gives the estimate
$$
  (y^2+\sigma^2)\|u\|^2  \le C_1 \sqrt{y^2+\sigma^2}( \|f\|\|u\|+ |x| \|u\|^2),
$$
for some constant $C_1<\infty$.
Thus $\|u\|\le (\sqrt{y^2+\sigma^2}/C_1-|x|)^{-1} \|f\|$.

(ii)
To prove a similar lower bound on $\tD_0$, assume that 
$(B_0D- \sigma N-x-iy)u=f$,
and rewrite as
\begin{equation}   \label{eq:resolveq2}
   B_0 DN_y^{-1} N_y u + iN_y u= f+ x u.
\end{equation}
Write $N_y u= B_0u_1+u_0\in B_0\mH\oplus \mH^\perp$.
Apply the bounded projections $\tP^i_{B_0}$
to (\ref{eq:resolveq2}) to get
\begin{align*} 
  B_0 (DN_y^{-1}) B_0u_1 + iB_0 u_1 &= \tP^1_{B_0} f+ x \tP^1_{B_0} u, \\
  0+ iu_0 &= \tP^0_{B_0} f+ x \tP^0_{B_0} u.
\end{align*}
Recall that $B_0: \mH\to B_0\mH$ is an isomorphism and 
apply its inverse $B_0^{-1}: B_0\mH\to \mH$ to the first equation.
Then take the imaginary part of the inner product between the first equation and $B_0u_1$
(using that $DN_y^{-1}$ is self-adjoint),
and the second equation and $u_0$ to get
\begin{align*} 
  \re (u_1,B_0 u_1) =  \im(B_0^{-1}\tP^1_{B_0} f, B_0u_1)+ \im (xB_0^{-1} \tP^1_{B_0}u,B_0 u_1), \\
  \|u_0\|^2 = \im(\tP^0_{B_0}f, u_0)+ \im (x\tP^0_{B_0}u, u_0).
\end{align*}
Using the strict accretivity of $B_0$ on $\mH$ gives the estimate
$$
   (y^2+\sigma^2)\|u\|^2  \le C_2( |x| \|u\| + \|f\|) (y^2+\sigma^2)^{1/2} \|u\|,
$$
for some constant $C_2<\infty$.
Thus $\|u\|\le (\sqrt{y^2+\sigma^2}/C_2-|x|)^{-1} \|f\|$.

(iii)
Using that $DB_0 +\sigma N$ and $B_0^* D+ \sigma N$ are adjoint operators,
combining the results in (i) and (ii) shows that both operators $D_0-\lambda$
and $\tD_0-\lambda$ are onto, with bounded inverse, when 
$\lambda\notin S_{\omega,\sigma}$. Here $\omega:= \arctan(\max(C_1,C_2))$.
The estimates on $\mH$ follow.
\end{proof}

We shall also need the following off-diagonal estimates for the resolvents,
both in $L_2$ and in $L_p$ for $p$ near $2$.

\begin{lem}  \label{lem:offdiagonal}
 (i) There exist $\epsilon, \alpha>0$ such that 
  for $|\frac{1}{p}-\frac{1}{2}|<\epsilon$, closed sets $E,F\subset S^n$ and $f\in L_p(S^n;\V)$
  with $\supp f\subset E$ and $t\in \R$,
$$
  \| (I+itD_0)^{-1}f \|_{L_p(F)}\lesssim e^{-\alpha d(E,F)/|t|}  \|f\|_{L_p(E)}, 
$$
where $d(E,F)$ is the distance between the sets $E$ and $F$.

(ii) There exist $q>2$ with $\frac{1}{2}-\frac{1}{q}<\epsilon$, 
and $\alpha>0$ such that  for closed sets $E,F\subset S^n$ and $f\in L_2(S^n;\V)$
  with $\supp f\subset E$  and $f_{\ta}=0$ and $|t|\le 1$,
$$
  \| (I+itD_0)^{-1}f \|_{L_q(F)}\lesssim |t[^{-n(\frac{1}{2}-\frac{1}{q})} e^{-\alpha d(E,F)/|t|}  \|f\|_{L_2(E)}, 
$$
\end{lem}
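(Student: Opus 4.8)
The plan is to prove these Gaffney--Davies type off-diagonal estimates by the standard commutator/multiplier argument adapted to the bisectorial resolvent. For part (i), fix $p$ with $|\tfrac1p-\tfrac12|<\epsilon$; the $L_p$-boundedness of $(I+itD_0)^{-1}$ uniformly in $t$ is first established from the resolvent bounds of Proposition~\ref{prop:spectrum} (these are $L_2$ bounds) together with an interpolation/extrapolation argument: the key inputs are the $L_p$ off-diagonal decay of the resolvents of the operators $D$ and $N$ (the multiplication operator $N$ has no issue, and $(I+itD)^{-1}$ enjoys such estimates by the first-order structure of $D$ on $S^n$ — this is precisely the kind of estimate established in \cite{AKMc,AKMc1}), and the boundedness of the Hodge projections $P^1_{B_0}, P^0_{B_0}$ on $L_p$ for $p$ near $2$, which follows from their $L_2$-boundedness plus the off-diagonal decay by the usual Šne\u\i berg-type stability argument. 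Once we have uniform $L_p$-boundedness, let $u=(I+itD_0)^{-1}f$ with $\supp f\subset E$, pick a Lipschitz function $\eta$ with $\eta=0$ on a neighborhood of $E$, $\eta=1$ on $F$, and $\|\nabla_S\eta\|_\infty\lesssim d(E,F)^{-1}$. Then $\eta u$ satisfies $(I+itD_0)(\eta u)=itD_0(\eta u)-it\eta D_0 u=it[D_0,\eta]u$ (using $(I+itD_0)u=f$ and $\eta f=0$), and the commutator $[D_0,\eta]=[DB_0,\eta]=[D,\eta]B_0$ is a bounded multiplication operator with norm $\lesssim d(E,F)^{-1}$ (here we use that $N$ commutes with multiplication by scalar functions and that $D$ is first order). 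Hence $\|\eta u\|_{L_p}\lesssim |t|\,d(E,F)^{-1}\|B_0 u\|_{L_p(\supp\nabla_S\eta)}$; iterating this with a sequence of cutoffs supported on successive annular regions between $E$ and $F$ (the standard iteration producing the factorial gain, then the exponential) yields the claimed bound $e^{-\alpha d(E,F)/|t|}$.

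For part (ii), the extra ingredient is the improvement of the exponent $q$: we need $\|(I+itD_0)^{-1}f\|_{L_q}\lesssim |t|^{-n(\frac12-\frac1q)}\|f\|_{L_2}$ for data $f$ with $f_\ta=0$ (i.e.\ $f\in N^-L_2$). The plan is to exploit the second resolvent estimate of Lemma~\ref{lem:offdiagonal} in combination with the smoothing inherent in $D_0^{-1}$ restricted to the normal part: since $f_\ta=0$, $f=N^-f$, and the action of $(I+itD_0)^{-1}$ on such data gains a derivative in the tangential directions via $\nabla_S$, which by Sobolev embedding on the compact $n$-manifold $S^n$ upgrades $L_2$ to $L_q$ with the scaling factor $|t|^{-n(\frac12-\frac1q)}$ (exactly as in the Euclidean half-space case of \cite{AA1}, where this reflects the $n$-dimensional Sobolev exponent). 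Concretely one writes $(I+itD_0)^{-1}N^- = (I+itD_0)^{-1}N^-$ and inserts a Littlewood--Paley/parametrix decomposition of the resolvent adapted to frequency $\sim 1/|t|$, using the $L_2\to L_q$ bound for a single frequency annulus on $S^n$ combined with the exponential off-diagonal decay from part (i) to sum the pieces. Then the cutoff/commutator iteration of part (i) is run again, now keeping track of the $L_2\to L_q$ norm at each step, which produces both the polynomial factor $|t|^{-n(\frac12-\frac1q)}$ and the exponential $e^{-\alpha d(E,F)/|t|}$.

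The main obstacle I expect is part (ii): getting the precise scaling exponent $|t|^{-n(\frac12-\frac1q)}$ on the sphere rather than on flat space, and verifying that the localization arguments survive the curvature of $S^n$ and the non-smoothness of $B_0$. On $\R^n$ one has dilation invariance to pin down the exponent; on $S^n$ one must instead argue by localization to geodesic balls of radius $\sim|t|$ (where the geometry is essentially flat), apply the Euclidean estimate there, and then reassemble — the reassembly is where the off-diagonal decay from part (i) is essential, and where one must be careful that the constant $\alpha$ and the admissible range of $q$ depend only on $\|B_0\|_\infty$, $\kappa_{B_0}$, and $n$. A secondary technical point is ensuring the $L_p$-stability range ($\epsilon$) for the Hodge projections and the resolvents is nonempty and uniform; this is routine given the $L_2$ theory of Section~\ref{sec:resolvents} and the first-order off-diagonal bounds, but it must be stated carefully since all subsequent constants inherit this $\epsilon$.
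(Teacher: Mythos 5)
There is a genuine gap, and it sits exactly at the step you treat as routine: the uniform-in-$t$ $L_p$ boundedness of $(I+itD_0)^{-1}$ for $p$ near $2$. Your derivation of it — $L_2$ resolvent bounds from Proposition~\ref{prop:spectrum}, $L_p$ off-diagonal bounds for the resolvent of $D$, and $L_p$ boundedness of the Hodge projections $P^1_{B_0},P^0_{B_0}$ "by a \v{S}ne\u\i berg-type stability argument" — does not assemble into a proof. Since $B_0$ is only $L_\infty$ and accretive on $\mH$, there is no perturbation or splitting formula expressing $(I+it(DB_0+\sigma N))^{-1}$ in $L_p$ through the resolvent of $D$ and the Hodge projections (note in particular that for $\sigma\neq 0$ the complementary Hodge subspace $B_0^{-1}\mH^\perp$ is not invariant under $D_0$). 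The paper's proof hinges on a different mechanism: the resolvent equation $(I+itD_0)h=f$ is rewritten as a second-order divergence-form equation $L\tilde h_\no=\,$(data), with $L=\begin{bmatrix}1 & -i\tau\divv_S\end{bmatrix}A_\theta\begin{bmatrix}1 \\ -i\tau\nabla_S\end{bmatrix}$, and \v{S}ne\u\i berg extrapolation is applied to $L:W^1_p\to W^{-1}_p$ (invertible at $p=2$ by accretivity), after rescaling to the sphere $S^n_{1/\tau}$ so the constants are uniform in $t$ and $\theta$. Once that $L_p$ bound is in hand, the $L_q$ off-diagonal estimates follow simply by interpolating with the $L_2$ Davies--Gaffney bounds; your cutoff/commutator iteration in $L_p$ is correct and would also work at that point, but it cannot substitute for the missing $L_p$ resolvent bound.

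The same gap undermines your part (ii). The paper gets the $L_2\to L_q$ gain for data with $f_\ta=0$ directly from the second-order formulation: in that case the right-hand side of the equation for $\tilde h_\no$ contains no $\divv_S$ term, hence lies in $L_2\subset W^{-1}_q$ by Sobolev embedding, so $\tilde h_\no\in W^1_q$ and $h\in L_q$, with the factor $|t|^{-n(\frac12-\frac1q)}$ coming from the rescaling; one then interpolates with the $L_2$ off-diagonal decay. Your heuristic ("the resolvent gains a tangential derivative on normal data") points at the right phenomenon, but the proposed Littlewood--Paley/parametrix construction adapted to frequency $1/|t|$ is not developed, and in any case it rests on the same unestablished $L_p$ machinery. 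To repair the proposal you would need either to adopt the paper's second-order reduction plus \v{S}ne\u\i berg argument, or to give an independent proof of the $L_p$ (and $L_2\to L_q$ on $N^-L_2$) resolvent bounds for $DB_0+\sigma N$ with rough $B_0$; neither is supplied by the ingredients you list.
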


\begin{proof} 
We first prove (i). 
The case $p=2$ follows  the argument in \cite[Prop.~5.1]{elAAM}. 
It remains to prove $L_{p}$ boundedness for $p$ near 2 as, the $L_{q}$ off-diagonal bounds follow by interpolation with the $L_{2}$ off-diagonal bounds for $q$ between $p$ and $2$.  

For $f\in L_p\cap L_{2}$, we let $h=(I+itD_0)^{-1}f$ and wish to prove
 $\|h\|_p\lesssim \|f\|_p$ when $p$ is near $2$ and uniformly in $t$.
To prove this, we rewrite the equation $(I+itD_0)h=  f$ first as
$(I+it\sigma N + it DB_{0})h=f$ and then in terms of a divergence 
form equation, with coefficients $A_1=\widehat B_0$. 
Write $h=  \begin{bmatrix} (A_1\tilde h)_\no & \tilde h_\ta \end{bmatrix}^t$ and $f= \begin{bmatrix} (A_1\tilde f)_\no & \tilde f_\ta \end{bmatrix}^t$. Then
$$
\begin{cases}
(1-it\sigma)  (A_1\tilde h)_\no - it \divv_S (A_1 \tilde h)_\ta=  (A_1 \tilde f)_\no, \\
 (1+it\sigma) \tilde h_\ta +it  \nabla_S \tilde h_\no = f_\ta.
\end{cases}
$$
Using the second equation to eliminate $\tilde h_\ta$ in the first equation, and letting
$z=(1+ it\sigma)^{-1}$, we obtain
$$
 L\tilde h_\no  
   =    \begin{bmatrix} 1  & -it\conj z \divv_S \end{bmatrix} 
    \begin{bmatrix} \conj z (A_1)_{\no\no} \tilde f_\no + (\conj z-z) (A_1)_{\no\ta} \tilde f_\ta  \\ -z(A_1)_{\ta\ta} \tilde f_\ta \end{bmatrix}
$$
with 
$$
L:=
   \begin{bmatrix} 1  &  -it\conj z\divv_S \end{bmatrix} A_1
    \begin{bmatrix} 1  \\ -itz\nabla_S \end{bmatrix} 
  =   \begin{bmatrix} 1  &  -i\tau\divv_S \end{bmatrix} A_\theta
    \begin{bmatrix} 1  \\ -i\tau\nabla_S \end{bmatrix} 
   $$
   and $A_{\theta}= D_{-\theta}A_{1}D_{\theta}$ with $tz=e^{i\theta} \tau$, $\tau=|tz|$, and $D_{\theta}$ the diagonal matrix with entries $1, e^{i\theta}$ in the normal/tangential splitting. We note that $A_{\theta}$ is strictly accretive on $\mH_{1}$ with the same constants as $A_{1}$, and that $|z|\le 1$ and $|\tau|\le |\sigma|^{-1}$.  
   We claim that   $L$ is invertible from the Sobolev space $W^1_{p}(S^n; \C^m)$ equipped with the scaled norm 
   \begin{equation}   \label{eq:sobnorm}
  \|u\|_{W^1_p} := \left(\int_{S^n} (|u(x)|^2+ |\tau\nabla_S u(x)|^2)^{p/2} dx\right)^{1/p}
\end{equation}
to its dual, with bounds independent of $\tau, \theta$, for $p$ in a neighborhood of 2.

To prove this, if we rescale from the sphere $S^n$ of radius $1$ to the sphere $S_{1/\tau}^n$ of radius $1/\tau$,
we obtain the same equation with $A_{\theta}, A_{1}, z_{\pm}$ unchanged,  $f(x)$, $h(x)$ replaced by $f(\tau x)$, $h(\tau x)$, 
and $\tau\divv_S$, $\tau\nabla_S$ replaced by $\divv_{S^n_{1/\tau}}$, $\nabla_{S^n_{1/\tau}}$,
and we want to show $\|h(\tau\cdot)\|_{L_p(S^n_{1/\tau})}\lesssim \|f(\tau\cdot)\|_{L_p(S^n_{1/\tau})}$
(with implicit constant uniform in $\tau, \theta$). Thus it is enough to set $\tau=1$ and work on $S^n$,
as long as we only use estimates on $S^n$ which hold (with same constant) on $S^n_{1/\tau}$
as well.

Having set $\tau=1$,  we have, for $1<p,q<\infty$ such that $1/p+1/q=1$, estimates
$$
  \|Lu\|_{W^{-1}_p} \le \|A_\theta\|_\infty \|u\|_{W^1_p}=  \|A_1\|_\infty \|u\|_{W^1_p},
$$
where 
$\|u\|_{W^{-1}_p} := \sup_{\|v\|_{W^1_q}=1} |(u,v)|$
and $(u,v)$ denotes the $L_2(S^n; \C^m)$ pairing extended in the sense of distributions.
For $p=q=2$, the accretivity assumption on $A_\theta$ yields
$\|Lu\|_{W^{-1}_2} \ge \kappa \|u\|_{W^1_2}$.
Applying the extrapolation result of {\v{S}}ne{\u\i}berg~\cite{sneiberg} to the 
complex interpolation scale $\{W^1_p\}_{1<p<\infty}$, shows the existence of $\epsilon>0$
such that
$$
  \|Lu\|_{W^{-1}_p} \approx \|u\|_{W^1_p},
$$
for $|\frac 1p-\frac 12|<\epsilon$.
(Even for $\tau\ne 1$, one can verify that the Sobolev norms given by (\ref{eq:sobnorm})
on $S^n_{1/\tau}$ (with $\tau\nabla_S$ replaced  $\nabla_{S^n_{1/\tau}}$) are equivalent to the ones given by the complex interpolation
method, with constant independent of $\tau$. Hence $\epsilon$ depends only on the ellipticity constants and dimension,  and is thus independent of $\tau, \theta$.)
Applying this isomorphism, we obtain the resolvent estimate
$$
  \|h\|_p \approx \|\tilde h_\no \|_p + \|\tilde h_\ta\|_p
  \lesssim \|\tilde h_\no \|_{W^1_p} + \|\tilde f_\ta\|_p
  \lesssim \| f\|_p.
$$
In the second step we used  $\tilde h_\ta=-i e^{i\theta}\nabla_S \tilde h_\no + z\tilde f_\ta$ and $|z|\le 1$ (recall we have rescaled and set $\tau=1$),
and in the third step that $\begin{bmatrix} 1 & -i\divv_S \end{bmatrix}: L_p(S^n; \C^{(1+n)m})\to W^{-1}_p$ is an isometry
since $\begin{bmatrix} 1 & -i\nabla_S \end{bmatrix}^t: W^1_q\to L_q(S^n; \C^{(1+n)m})$ is so.
This finishes the proof of (i).

To prove the inequality (ii), the above argument shows that  $L\tilde h_{\no}= z_{-}(A_{1})_{\no\no}\tilde f_{\no}$ and $\tilde h_{\ta}= -itz_+ \nabla_{S}\tilde h_{\no}$. Having rescaled in the same way,  the Sobolev embedding  $L_{2}\subset W^{-1}_{q}$ for some $q>2$ with $\frac{1}{2}-\frac{1}{q}<\epsilon$, allows us to conclude  that $h\in L_{q}(S^n;\V)$ and since we assume $|t|\le 1$, we have $|\tau|\approx |t|$ and obtain $\|h\|_{q} \lesssim  |t[^{-n(\frac{1}{2}-\frac{1}{q})}  \|f\|_{2}$, the power coming from scaling. 
It suffices to interpolate again with the $L_{2}$ off-diagonal decay, and conclude for any exponent between 2 and $q$. 
\end{proof}

We state the following useful corollary.
Here and subsequently, $\tN^p$ is defined as $\tN$ replacing $L_2$ averages by $L_{p}$ averages and $M$ is the Hardy-Littlewood maximal operator.

\begin{cor} \label{cor:resolventNT} 
For the $\epsilon$ as above and $|\frac{1}{p}-\frac{1}{2}|<\epsilon$, we have the pointwise inequalities
$$
\tN^p((I+itD_{0})^{-1} f) \lesssim M(|f|^p)^{1/p}
$$
$$
\tN^p((I+it\tD_{0})^{-1} f) \lesssim M(|f|^p)^{1/p}
$$
and for some $p<2$ with $\frac{1}{p}-\frac{1}{2}<\epsilon$,
$$
\tN(((I+it\tD_{0})^{-1}f)_{\no}) \lesssim M(|f|^p)^{1/p}.
$$
\end{cor}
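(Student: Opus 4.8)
The plan is to deduce all three bounds from the off-diagonal estimates of Lemma~\ref{lem:offdiagonal} by the standard ``non-tangential maximal function dominated by the Hardy--Littlewood maximal function'' argument, in which the resolvent parameter $t$ plays the role of distance to the boundary (recall $t=\ln(1/r)\approx 1-r$ as $r\to1$). Fix $x_0\in S^n$ and $r$ close to $1$, put $t_0:=1-r$, and observe that a Whitney region $W^o(rx_0)$ lies at height $\approx t_0$ over a geodesic ball $B_0:=B(x_0,ct_0)$ and has measure $\approx t_0^{n+1}$. Split the datum as $f=\sum_{k\ge0}f_k$ with $f_0:=\chi_{C_0B_0}f$ and $f_k:=\chi_{2^{k+1}C_0B_0\setminus 2^kC_0B_0}f$ for $k\ge1$, where $C_0$ is a fixed large constant chosen so that $\dist(\supp f_k,B_0)\gtrsim 2^kt_0$.

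For the first inequality, apply $(I+itD_0)^{-1}$ with $t\approx t_0$ to each $f_k$. The local term is controlled by the uniform $L_p$-boundedness of the resolvent (the case $E=F=S^n$ of Lemma~\ref{lem:offdiagonal}(i)): $|W^o(rx_0)|^{-1}\int_{W^o(rx_0)}|(I+itD_0)^{-1}f_0|^p\lesssim t_0^{-n}\|f_0\|_p^p\lesssim M(|f|^p)(x_0)$, the last step since $\|f_0\|_p^p\le t_0^nM(|f|^p)(x_0)$. For $k\ge1$, Lemma~\ref{lem:offdiagonal}(i) with $E=\supp f_k$, $F=B_0$ gives the gain $e^{-\alpha 2^k}$, so the $k$-th contribution is $\lesssim t_0^{-n}e^{-p\alpha 2^k}\|f_k\|_p^p\lesssim 2^{kn}e^{-p\alpha 2^k}M(|f|^p)(x_0)$; the series $\sum_k2^{kn}e^{-p\alpha2^k}$ converges, and taking $p$-th roots gives $\tN^p((I+itD_0)^{-1}f)\lesssim M(|f|^p)^{1/p}$. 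The bound for $(I+it\tD_0)^{-1}$ is identical once one has the corresponding $L_p$ off-diagonal decay for $(I+it\tD_0)^{-1}$; this follows from Lemma~\ref{lem:offdiagonal}(i) by duality, via the identity $(I+it\tD_{A_1})^*=N(I+itD_{A_1^*})N$ (a consequence of Lemma~\ref{lem:intertwduality}), the fact that $N$ is an isometry on each $L_p$, and the symmetry of the interval $|1/p-1/2|<\epsilon$ about $2$.

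For the third inequality the scheme is the same, but the crucial input is now the improved integrability of the normal component supplied by Lemma~\ref{lem:offdiagonal}(ii). Write $T_tf:=((I+it\tD_0)^{-1}f)_\no$. Since $N^-$ sends scalars to sections with vanishing tangential part and $NN^-=-N^-$, Lemma~\ref{lem:intertwduality} identifies $T_t^*$, up to the isometry $N$, with $g\mapsto(I+itD_{A_1^*})^{-1}(N^-g)$, i.e. with a resolvent of $D_{A_1^*}$ acting on data with $(\cdot)_\ta=0$. Hence Lemma~\ref{lem:offdiagonal}(ii) (applied with $A_1^*$ in place of $A_1$) yields, for $|t|\le1$ and $\supp g\subset E$, the bound $\|\chi_FT_t^*g\|_{L_q}\lesssim|t|^{-n(1/2-1/q)}e^{-\alpha\dist(E,F)/|t|}\|g\|_{L_2}$ with $q>2$ and $\tfrac12-\tfrac1q<\epsilon$. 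Dualizing, with $p:=q'<2$,
\[
\|((I+it\tD_0)^{-1}f)_\no\|_{L_2(F)}\lesssim|t|^{-n(1/p-1/2)}e^{-\alpha\dist(E,F)/|t|}\|f\|_{L_p(E)},\qquad (\supp f\subset E).
\]
Feeding this into the annular decomposition, the scaling factor $|t_0|^{-n(1/p-1/2)}$ is exactly compensated by the $(t_0^n)^{1/p}$ coming from the $L_p$-norm of $f_k$ over a ball of radius $\approx 2^kt_0$: the local term is $\lesssim M(|f|^p)(x_0)^{2/p}$ and the $k$-th term is bounded by a summable multiple of $M(|f|^p)(x_0)^{2/p}$, so summing and taking square roots gives $\tN(((I+it\tD_0)^{-1}f)_\no)\lesssim M(|f|^p)^{1/p}$. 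Interpolating Lemma~\ref{lem:offdiagonal}(ii) with its $L_2$ off-diagonal counterpart makes this work for every $p$ in an interval just below $2$, matching the ``for some $p<2$'' in the statement.

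The routine parts are the Whitney-geometry bookkeeping and the convergence of the geometric-type series $\sum_k(1+2^k)^Ce^{-c2^k}$. The main obstacle is the duality step in the third estimate: one must verify carefully that the adjoint of $f\mapsto((I+it\tD_{A_1})^{-1}f)_\no$ reduces, through Lemma~\ref{lem:intertwduality} and $NN^-=-N^-$, precisely to a resolvent of $D_{A_1^*}$ applied to data with vanishing tangential part, which is exactly the hypothesis under which part (ii) of Lemma~\ref{lem:offdiagonal} provides the $L_q$, $q>2$, gain; this is also why the statement is restricted to the normal component and to $\tD_0$. A minor additional point is to keep all off-diagonal constants uniform for $|t|\le1$, which suffices since only $t\approx 1-r\to0$ occurs.
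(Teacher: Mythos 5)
Your argument is correct and takes essentially the same route as the paper: the first bound via the annular decomposition with Lemma~\ref{lem:offdiagonal}(i) (as in \cite[Prop.~2.56]{AAH}), and the second and third bounds by duality through $(\tD_{A_1})^*=-N D_{A_1^*}N$ from Lemma~\ref{lem:intertwduality} combined with Lemma~\ref{lem:offdiagonal}(i) and (ii) respectively. The only cosmetic difference is that you first convert the adjoint bounds into off-diagonal $L_p\to L_p$ (resp.\ $L_p\to L_2$ for the normal component) estimates for the resolvent itself and then run the Whitney-average argument, whereas the paper carries out the duality directly against test functions supported in the Whitney region; the substance is identical.
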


\begin{proof} 
We fix a Whitney region $W_{0}=W({t_{0},x_{0}})$ in $\R_{+}\times S^n$. Then   
$|W_0|^{-1}\int_{W_{0}} |(I+itD_{0})^{-1} f(x)|^p dtdx\lesssim M(|f|^p)(x_{0}) $ follows directly from the off-diagonal decay 
of Lemma~\ref{lem:offdiagonal} as in \cite[Prop.~2.56]{AAH}. 
Next, 
$|W_0|^{-1}\int_{W_{0}} |(I+it\tD_{0})^{-1} f(x)|^p dtdx \lesssim M(|f|^p)(x_{0}) $ follows by testing against $g\in L_{q}(W_{0};\V)$, supported in $W_{0}$ with $1/p + 1/q =1$.
We have 
$$
\int_{W_{0}} \Big((I+it\tD_{0})^{-1} f(x), g(t,x)\Big) dtdx = 
 \int_{t_0/c_0}^{c_0t_0} ( f, (I-it\tD_{0}^*)^{-1}g_t ) dt 
$$
so that for each fixed $t$, using that $\tD_{0}^*=DB_{0}^*-\sigma N$ has the same form as $D_{0}$, we can use the $L_{q}$ off-diagonal decay for each $t\approx t_{0}$ and obtain
for any $M>0$, 
\begin{multline} \label{eq:decay0}
|W_0|^{-1}\int_{W_{0}} |(I+it\tD_{0})^{-1} f(x)|^p dtdx\\
 \lesssim \sum_{j\ge 2} 2^{-jM} |B(x_{0},2^jt_{0})|^{-1}\int_{B(x_{0},2^jt_{0})}  | f(x)|^p dx,
\end{multline}
using standard computations on annuli around $B(x_{0},t_{0}) $ in $S^n$. 
Details are left to the reader. 
 
The last estimate starts in the same way with $g\in L_{2}(W_{0};\V)$, but since we want to estimate the normal component of $(I+it\tD_{0})^{-1} f$ we assume that $(g_t)_{\ta}=0$ for each $t$.  The second estimate in Lemma~\ref{lem:offdiagonal}, implies that 
$(I-it\tD_{0}^*)^{-1}g_t=:h_t$ has $L_{q}$ estimates with decay. Thus using H\"older's inequality on $\int_{t_0/c_0}^{c_0t_0}  (f, h_t) dt$ with exponent $q$ on $h_t$ and dual exponent on $f$ yields
\begin{align}
\label{eq:decay}
\bigg(|W_0|^{-1}\int_{W_{0}} |((I &+it\tD_{0})^{-1} f)_{\no}(x)|^2 dtdx\bigg)^{1/2} \\
& \lesssim \sum_{j\ge 2} 2^{-jM} \left(|B(x_{0},2^jt_{0})|^{-1}\int_{B(x_{0},2^jt_{0})}  | f(x)|^p dx\right)^{1/p} \nonumber
\end{align}  
and the conclusion follows.
\end{proof}

%
%
%
%
%
\section{Square function estimates and functional calculus}     \label{sec:squarefcn}

All the remainder of this article rests on the square function estimate below. 

\begin{thm}     \label{thm:QE}
  Let $n\ge 1$.
  The operator $D_0= DB_0+\sigma N$, with $\sigma\in\R$ fixed but arbitrary, has square function estimates
$$
  \int_0^\infty \| t D_0 (1+t^2 D_0^2)^{-1} f \|_2^2 \frac{dt}t \approx \|f\|_2^2,\qquad \text{for all } f\in \ran(D_{0}).
$$
The estimate $\lesssim$ holds for all $f\in L_{2}(S^n,\C^m)$. The same estimates hold for $\tD_0= B_0D-\sigma N$.
\end{thm}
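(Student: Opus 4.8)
The plan is to prove the single one-sided bound
$\int_0^\infty \|tD_0(1+t^2D_0^2)^{-1}f\|_2^2\,\frac{dt}t\lesssim\|f\|_2^2$ for \emph{every} operator $D_0=DB_0+\sigma N$ (all admissible $B_0$, all $\sigma\in\R$); everything else is then soft. Indeed, by Lemma~\ref{lem:intertwduality} the operator $(\tD_{A_1})^*=DB_0^*-\sigma N=-N(D\widehat{A_1^*}+\sigma N)N$ is, up to the bounded invertible conjugation by $N$, again an operator $D_{A_1^*}$ of our class, and the quadratic estimate for an operator is equivalent to the one for its adjoint; similarly $D_0^*=B_0^*D+\sigma N$ lies, up to $N$-conjugation, in the class. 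Granting the one-sided estimate for $D_0$ and for $D_0^*$, the two-sided equivalence $\approx$ on $\overline{\ran(D_0)}\supseteq\ran(D_0)$ follows from the standard duality argument of McIntosh--Yagi / \cite{AKMc}, and the bound $\lesssim$ on all of $L_2$ follows from the Hodge splittings of Propositions~\ref{prop:DBprops}--\ref{prop:BDprops}, since $tD_0(1+t^2D_0^2)^{-1}$ kills $\nul(D_0)$. Throughout we use that, by Proposition~\ref{prop:spectrum}, $D_0$ is bisectorial of some angle $\omega<\pi/2$ (and invertible when $\sigma\neq0$), so the operators $Q_t:=tD_0(1+t^2D_0^2)^{-1}$ are uniformly bounded on $L_2$.

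\textbf{Localization on the sphere.} Since $S^n$ is compact, I would fix a finite atlas $\{U_j\}$ of coordinate balls and a subordinate partition of unity $\{\eta_j\}\subset C_0^\infty$ with $\sum_j\eta_j^2=1$, so that $\|Q_tf\|_2^2=\sum_j\|\eta_j Q_tf\|_2^2$, and then commute the cutoff through the resolvents: $\eta_j Q_tf=Q_t(\eta_jf)+[\eta_j,Q_t]f$. Because $D_0$ is a first order operator and $B_0$ a multiplication, $[\eta_j,D_0]$ is a bounded zeroth order multiplication operator; expanding the commutator yields a family schematically of the form $t\,(1+t^2D_0^2)^{-1}D_0\cdot(\text{bounded})\cdot(1+t^2D_0^2)^{-1}$, which is $O(\min(t,1/t))$ in operator norm by the resolvent bounds of Proposition~\ref{prop:spectrum} together with invertibility of $D_0$ on $\mH$ (resp.\ on $L_2$ when $\sigma\neq0$). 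Hence $\int_0^\infty\|[\eta_j,Q_t]f\|_2^2\,\frac{dt}t\lesssim\|f\|_2^2$, and it remains to control $\int_0^\infty\|Q_t(\eta_jf)\|_2^2\,\frac{dt}t$ for each of the finitely many $j$. Using the off-diagonal resolvent estimates of Lemma~\ref{lem:offdiagonal}, $Q_t(\eta_jf)$ differs from $\chi_{V_j}Q_t(\eta_jf)$, for a slightly enlarged patch $V_j$ containing $\supp\eta_j$ in its interior, by a term with square-function norm $\lesssim\|f\|_2^2$; this reduces matters to a statement involving only $D_0$ on the coordinate patch $V_j$.

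\textbf{Transfer to a Euclidean model.} On a coordinate patch I would write $D=\Gamma+\Gamma^*$ with $\Gamma=\begin{bmatrix}0&0\\ \nabla_S&0\end{bmatrix}$ nilpotent, so that $D_0=\Gamma+\Gamma^*B_0+\sigma N$ is a perturbed first order Dirac-type operator with a bounded zeroth order term. Pulling back via the chart diffeomorphism to a smooth bounded domain $\Omega_j\subset\R^n$ (the metric and bundle data pulling back to bounded, uniformly elliptic ones) and imposing self-adjoint boundary conditions on $\partial\Omega_j$ is harmless: $\eta_j$ is supported well inside $\Omega_j$, so the boundary conditions do not affect $\eta_jQ_t$, while the off-diagonal decay of Lemma~\ref{lem:offdiagonal} shows that modifying the operator away from $\supp\eta_j$ costs only an acceptable error. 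One is then exactly in the framework of quadratic estimates for first order elliptic operators with boundary conditions: \cite{AKMc1} (equivalently, the $\Gamma+B_1\Gamma^*B_2$ calculus of \cite{AKMc} in this setting, allowing the bounded lower order term $\sigma N$, whose required hypotheses are supplied by the off-diagonal and $L_p$-resolvent bounds of Lemma~\ref{lem:offdiagonal} and the accretivity of $B_0$) gives $\int_0^\infty\|Q_t(\eta_jf)\|_2^2\,\frac{dt}t\lesssim\|\eta_jf\|_2^2\le\|f\|_2^2$. Summing over $j$ completes the argument, and the same scheme applies verbatim to $\tD_0$ via the reductions of the first paragraph.

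\textbf{Main obstacle.} The difficulty is not in any one step but in making the localization genuinely quantitative at all scales $t\in(0,\infty)$ simultaneously: the commutator and chart-modification errors must be square-summable against $\frac{dt}t$ with a fixed constant, which forces the off-diagonal resolvent decay of Lemma~\ref{lem:offdiagonal} to be used uniformly in $t$ and the bisectoriality bounds of Proposition~\ref{prop:spectrum} to be exploited at both ends $t\to0$ and $t\to\infty$. The second delicate point, and the genuine novelty compared with \cite{AA1} where the curvature term is absent, is to verify that the bounded zeroth order term $\sigma N$ is legitimately absorbed into the cited quadratic-estimate machinery; if no black-box reference covers it directly, one must re-run the underlying $T(b)$/stopping-time argument of \cite{AKMc} for the full operator $D_0$ on each patch, the needed off-diagonal and $L_p$ estimates being precisely those established in Lemma~\ref{lem:offdiagonal}.
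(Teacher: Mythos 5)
Your outline — reduce by duality to the one–sided bound, localize on $S^n$ using a partition of unity together with the off-diagonal bounds of Lemma~\ref{lem:offdiagonal}, transfer to a Euclidean model with boundary conditions, and quote \cite{AKMc1} combined with \cite{AKMc} — is the same route the paper takes, but two substantive steps are missing. The first is the zeroth order term $\sigma N$, which you explicitly leave unresolved: \cite{AKMc1} and the $\Gamma+B_1\Gamma^*B_2$ calculus of \cite{AKMc} concern homogeneous first order operators, and "re-run the $T(b)$ argument with a bounded perturbation" is a substantial project that your proposal does not carry out. The paper never faces this problem, because it removes $\sigma N$ \emph{before} localizing: for $t\le 1$ one has the elementary resolvent comparison $\|(I+itD_0)^{-1}f-(I+itDB_0)^{-1}f\|_2\lesssim |t|\,\|f\|_2$, so the two square functions differ by an admissible error, while the piece $\int_1^\infty$ is estimated directly using that $D_0$ is invertible on $\mH$ (Proposition~\ref{prop:DBprops}), with the Hodge splitting handling $n=1$. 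Without this (or a genuinely reworked stopping-time argument) the citation of the quadratic-estimate machinery does not apply to $D_0$. Relatedly, the identity ``$D_0=\Gamma+\Gamma^*B_0+\sigma N$'' is false — $DB_0=\Gamma B_0+\Gamma^* B_0$ — so even for $\sigma=0$ the localized operator is not literally of the quoted form; some algebraic preparation is needed before any black box applies.

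The second, more serious gap is the accretivity of the transferred coefficients. The only hypothesis on $B_0$ is the G\aa rding-type accretivity on $\mH_1$ (a nonlocal condition), not pointwise accretivity, and the paper stresses that this is exactly why the localization of \cite[Thm.~7.1]{AKMc} cannot be quoted. When you pull back to a Euclidean domain $\Omega_j$ and ``impose self-adjoint boundary conditions'', you must prove that the pulled-back coefficients are strictly accretive on the range of the pulled-back first order operator \emph{with those boundary conditions}; the choice of boundary condition is not harmless, it is what makes this possible. The paper resolves it by using a single conformal (stereographic) chart with rescaled pullbacks, so that the transferred operator is exactly $D_\rho B_\rho$ with $D_\rho$ the constant-coefficient Euclidean operator, Dirichlet ($H^1_0$) conditions on the scalar component, and then verifies accretivity of $B_\rho$ on $\ran(D_\rho)$ by extending elements of $\ran(D_\rho)$ by zero to elements of $\mH_1$ on $S^n$. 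In your scheme, generic charts additionally produce a variable-coefficient first order part, which \cite{AKMc1} as stated does not cover, so the Jacobian factors must be absorbed into the accretive matrix — precisely the computation the rescaled pullback performs — and nothing in your proposal shows the resulting local coefficients are accretive on the local model space. The extension-by-zero argument does survive on small patches provided the potential is taken in $H^1_0$, but this has to be stated and proved; as written, the crux of the theorem (G\aa rding rather than pointwise accretivity) is passed over.
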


\begin{proof} 
Note that the equivalence can only  hold on  $\clos{\ran(D_0)}=\ran(D_{0})$  which equals  $L_{2}(S^n;\V)$ if $\sigma\ne 0$ and $\mH$ if $\sigma=0$. 
By standard duality arguments, the estimates $\gtrsim$ on $\ran(D_0)$ follows from the estimates $\lesssim$ for $D_{0}^*$. See \cite{ADMc}. Note that $D_{0}^*$ is of type $\tD_{0}$. Hence it is enough to  prove 
\begin{equation}
\label{eq:SF}
 \int_0^\infty \| t D_0 (1+t^2 D_0^2)^{-1} f \|_2^2 \frac{dt}t \lesssim \|f\|_2^2
\end{equation}
  for all $f\in L_2(S^n;\V)$,  and similarly for $\tD_{0}$.  Consider first  the operator $D_0$.

(i) We first reduce \eqref{eq:SF} to 
\begin{equation}
\label{eq:SF1}
 \int_0^1 \| t DB_0 (1+t^2 (DB_0)^2)^{-1} f \|_2^2 \frac{dt}t \lesssim \|f\|_2^2
\end{equation}
 for all $f\in L_2(S^n;\V)$. First note that
$$
   \int_1^\infty \|  t D_0 (I+t^2 D_0^2)^{-1}f\|_2^2 \frac{dt}t 
   \lesssim \int_1^\infty \| t^2 D_0^2 (I+t^2 D_0^2)^{-1}f \|_2^2 \frac{dt}{t^3}\lesssim \int_1^\infty \| f \|_2^2 \frac{dt}{t^3}  
   \approx \|f\|_2^2,
$$
using that $D_0$ has bounded inverse by Proposition~\ref{prop:DBprops}.
(When $n=1$, write $f= f_1+ f_0\in \mH\oplus B_0^{-1}\mH^\perp$. The above estimate goes through for
$f_1$, and the contribution from $f_0$ is zero.)
For the integral $\int_0^1$, we may ignore the zero order term in $D_0$, using the idea from
\cite[Sec. 9]{elAAM}. Indeed,
$$
  \| (I+it D_0)^{-1}f-(I+itDB_0)^{-1}f \|_2 = \|  (I+it D_0)^{-1} it \sigma N(I+itDB_0)^{-1}f \|_2 \lesssim |t|\|f\|_{2}.
$$
Since $2i  t D_0 (I+t^2 D_0^2)^{-1}= (I-it D_0)^{-1} - (I+it D_0)^{-1}$, and similarly for $DB_0$, subtraction yields
$$
  \int_0^1 \|  t D_0 (I+t^2 D_0^2)^{-1} f \|_2^2 \frac{dt}t \lesssim \int_0^1\| t DB_0 (I+t^2 (DB_0)^2)^{-1} f \|_2^2 \frac{dt}t +
  \int_0^1 t dt \|f\|_2^2.
$$

(ii) Next, using a partition of unity, it suffices to show that
\begin{equation}     \label{eq:reducetolocalQE}
  \int_0^1 \| \zeta t DB_0 (I+t^2 (DB_0)^2)^{-1} f \|_2^2 \frac{dt}t \lesssim \|f\|_2^2,
\end{equation}
when $\zeta$ is a smooth cutoff that is 1 on a neighborhood of $\supp f $. 
Indeed,  $L_2$-off diagonal estimates 
of $t DB_0 (1+t^2 (DB_0)^2)^{-1}$ from Lemma~\ref{lem:offdiagonal}  and  again
$2i  t DB_0 (I+t^2 (DB_0)^2)^{-1}= (I-it DB_0)^{-1} - (I+it DB_0)^{-1}$ show in this case that 
$$
  \| (1-\zeta) t DB_0 (I+t^2 (DB_0)^2)^{-1} f \|_2^2  \lesssim  t^2
  \|f\|^2_2.
$$

(iii)  To prove (\ref{eq:reducetolocalQE}), we assume that $f$ and $\zeta$ are supported inside  the lower hemisphere which we parametrize by $\bO^n$ using stereographic coordinates, 
\textit{i.e.} using the map
$$
  \rho: \R^n\to S^n: y\mapsto x= \frac{|y|^2-1}{|y|^2+1}e_0+ \frac {2y}{|y|^2+1},
$$
where $e_0\in \R^{1+n}$ is a fixed unit normal vector to $\R^n\subset\R^{1+n}$,
which covers all $S^n$, except the north pole $e_0\in S^n$.
Note that $\rho$ is a conformal map with length dilation $d^{-1}$ and Jacobian determinant $dx/dy= d^{-n}$, where
$$
  d(y):= (|y|^2+1)/2.
$$
Let $T: \R^n\to \R^{1+n}: y\mapsto \pd_y \rho(y)$ be the differential of $\rho$,
and note that $T^tT= d^{-2} I$.
Define adjoint rescaled pullbacks and pushforwards
\begin{align*}
  \rho^* : L_2(\rho(\bO^n);\V)\to L_2(\bO^n;\C^{(1+n)m}) &: 
  \begin{bmatrix} f_\no & f_\ta \end{bmatrix}^t \mapsto 
  \begin{bmatrix} d^{-n} (f_\no\circ \rho) & T^t (f_\ta\circ\rho) \end{bmatrix}^t, \\
   \rho_* : L_2(\bO^n;\C^{(1+n)m}) \to L_2(\rho(\bO^n);\V) &: 
 \begin{bmatrix} g_\no & g_\ta \end{bmatrix}^t \mapsto 
 \begin{bmatrix} (g_\no\circ \rho^{-1}) &  ( d^n T g_\ta)\circ\rho^{-1} \end{bmatrix}^t.
\end{align*}
Note that $(\rho_*)^{-1}= \begin{bmatrix} d^n & 0 \\ 0 & d^{2-n} \end{bmatrix} \rho^*$.
We claim that 
$$
  \rho^* D = D_\rho \begin{bmatrix} d^n & 0 \\ 0 & d^{2-n} \end{bmatrix} \rho^*, \qquad
  \text{where }  D_\rho :=
  \begin{bmatrix} 0 & -\divv_y \\ \nabla_y & 0 \end{bmatrix}.
$$
Indeed, the tangential part of the equation is the chain rule, and the normal component is the adjoint statement.
We consider $D_\rho$ as a self-adjoint closed unbounded operator in $L_2(\bO^n;\C^{(1+n)m})$ with domain
$\dom(D_\rho):= \begin{bmatrix}H^1_{0}(\bO^n; \C^m) \\ \dom(\divv_y)\end{bmatrix}$,
where $H^1_0$ denotes the Sobolev $W^1_2$ functions vanishing at the boundary $S^{n-1}$.

Next we map coefficients $B_0$ in $\rho(\bO^n)$ to coefficients $B_\rho:= (\rho_*)^{-1} B_0 (\rho^*)^{-1}$
in $\bO^n$, and claim that $B_\rho$ is strictly accretive on $\ran(D_\rho)$.
To see this, let $g\in \ran(D_\rho)$. 
Then $\curl_y g_\ta=0$ and $g_\ta$ is normal on $\partial \bO^n$ (or if $n=1$ we have 
$\int_{-1}^1 g_\ta dy=0$).
Writing $g=\rho^* f$ and extending $f$ by $0$ outside $\rho(\bO^n)$, it follows that
$f\in \mH_1$.
(To see this, write $g_\ta=\nabla_y u$ with $u\in H^1_0(\bO^n; \C^m)$, and extend $u$ by $0$
to an $H^1(\R^n;\C^m)$-function.)
The assumed strict accretivity of $B_0$ on $\mH_1$ gives
\begin{multline*}
 \re \int_{\bO^n} (B_\rho g, g) dy=\re  \int_{\bO^n} ((\rho_*)^{-1} B_0 f, \rho^* f) dy \\
 =\re \int_{S^n}(B_0 f,f) dx
  \ge \kappa \int_{S^n}|f|^2 dx\approx \int_{\bO^n}|g|^2 dy.
\end{multline*}
Thus we obtain a bisectorial operator $D_\rho B_\rho$ in $L_2(\bO^n; \C^{(1+n)m})$, and we observe the intertwining relation
$$
  \rho^* DB_0f=  D_\rho \begin{bmatrix} d^n & 0 \\ 0 & d^{2-n} \end{bmatrix} \rho^*\rho_* B_\rho  \rho^*f= 
  D_\rho B_\rho \rho^*f.
$$
for $f$ supported in the lower hemisphere.
In $\bO^n$, let $K:= \{|y|\le 1/4\}$. By rotational invarince, it is enough to consider those $f=(\rho^*)^{-1}g$
with  $g$ supported on $K$ and  $\zeta=(\rho^*)^{-1}\eta=\eta\circ \rho^{-1}$ with $\eta\in C^\infty_0(\R^n)$ be such that $\eta=1$
on $\{|y|\le 1/2\}$ and $\supp \eta\subset\{|y|\le 3/4\}$. Using  $\eta g=g$ and understanding $\eta, \zeta$ as the operators of pointwise multiplication by $\eta,\zeta$,  one can check the identity 
\begin{multline*}
\zeta (I+it DB_0)^{-1}f  -(\rho^*)^{-1} \eta^2 (I+ it D_\rho B_\rho)^{-1} g \\ =
\zeta (I+it DB_0)^{-1}(\rho^*)^{-1}(\eta g)  -\zeta (\rho^*)^{-1} \eta (I+ it D_\rho B_\rho)^{-1} g \\
 = \zeta (I+it DB_0)^{-1}(\rho^*)^{-1} \left( \eta (I+it D_\rho  B_\rho)- \rho^* (I+it DB_0)(\rho^*)^{-1}\eta  \right) (I+ it D_\rho B_\rho)^{-1} g  \\
   =\zeta (I+it DB_0)^{-1}(\rho^*)^{-1}  it[\eta, D_\rho]  B_\rho (I+ it D_\rho B_\rho)^{-1} g. 
\end{multline*}
As in (i) above, subtracting the corresponding equation with $t$ replaced by $-t$, yields the estimate
$$
  \| \zeta t DB_0 (I+t^2 (DB_0)^2)^{-1} f   - (\rho^*)^{-1} \eta^2 t D_\rho B_\rho (I+t^2 ( D_\rho B_\rho)^2)^{-1} g 
      \|_2\lesssim |t| \|g\|_2,
$$
since  $[\eta, D_\rho]$ is bounded. As $\|f\|_{2}\approx \|g\|_{2}$ by the support conditions,  (\ref{eq:reducetolocalQE}) will follow from 
$$
  \int_0^1 \| t D_\rho  B_\rho (I+t^2 ( D_\rho B_\rho)^2)^{-1} g \|_2^2 \frac{dt}t \lesssim \|g\|_2^2,\qquad \text{for all } g\in L_2(\bO^n; \C^{(1+n)m}).
$$

(iv)
The latter square function estimate
follows from combining \cite[Thm.~2]{AKMc1} and \cite[Prop.~3.1(iii)]{AKMc}, 
the latter purely being of functional analytic content. (See \cite[Sec.~10.1]{elAAM} where this is pointed out.)

(v) Consider now $\tD_{0}$. Similarly one can reduce to prove $\lesssim$ for  $B_{0}D$. On $\nul(B_{0}D)=\mH^\perp$, this is trivial. On $\ran(B_{0}D)=B_{0}\mH$ we use that $B_{0}D$ is similar to $DB_{0}$ on $\ran(DB_{0})=\mH$ through the isomorphism $B_{0}: \mH \to B_{0}\mH$. Thus the square function upper estimate for $B_{0}D$ follows by similarity from the one for $DB_{0}$.\end{proof}

The square function estimates from Theorem~\ref{thm:QE} 
provide bounds on the $S^o_{\nu,\sigma}$-holomorphic functional calculus
of the operators $D_0$ and $\tD_0$, adapting the techniques described in \cite{ADMc}.
Write
\begin{align*}
  H(S^o_{\nu,\sigma}) &:= \{ \text{holomorphic } b: S^o_{\nu,\sigma}\to \C\}, \\
  H_\infty(S^o_{\nu,\sigma}) &:= \sett{b\in H(S^o_{\nu,\sigma})}{\sup\sett{ |b(\lambda)|}{\lambda\in S^o_{\nu,\sigma}}<\infty  }, \\
    \Psi(S^o_{\nu,\sigma}) &:= \sett{b\in H(S^o_{\nu,\sigma}) }{|b(\lambda)| \lesssim \min(|\lambda|^a, |\lambda|^{-a}),
  \text{ for some }  a>0}.
\end{align*}

We summarize the result for the $S^o_{\nu,\sigma}$-holomorphic functional calculus in the following corollary.
The proof is a straightforward adaption of the results in \cite{ADMc}.

\begin{cor}     \label{cor:fcalc} 
  Assume $\sigma\in \R$ and $D_{0}=DB_{0}+\sigma N$. 
  Fix $\omega<\nu<\pi/2$.
There is a unique continuous Banach algebra homomorphism 
$$
  H_\infty(S^o_{\nu,\sigma}) \to \mL(\ran(D_{0})): b\mapsto b(D_0),
$$
with bounds $\| b(D_0)f \|_2\le C(\sup_{S^o_{\nu,\sigma}}|b(\lambda)| )\|f\|_2$ for all $f\in \ran(D_{0})$,
where $C$ only depends on $\|B_0\|_\infty$, $\kappa_{B_0}$, $n$ and $\sigma$, and with the following two properties.
If $b\in \Psi(S^o_{\nu,\sigma})$ then 
$$
 b(D_0)= \frac 1{2\pi i} \int_\gamma b(\lambda)(\lambda- D_0)^{-1} d\lambda \in \mL(\ran(D_{0})),
$$
where $\gamma:= \partial S_{\theta,\sigma}$, $\omega<\theta<\nu$, oriented counter clockwise around $S_{\omega,\sigma}$.
For any $b\in H_\infty(S^o_{\nu,\sigma})$ we have strong convergence
$$
  \lim_{k\to\infty}  \|b_k(D_0)f-b(D_0)f\|_2=0,\qquad\text{for each } f\in \ran(D_{0}), 
$$
whenever $b_k\in \Psi(S^o_{\nu,\sigma})$, $k=1,2,\ldots$, are uniformly bounded, \textit{i.e.} $\sup_{k,\lambda}|b_k(\lambda)|<\infty$, and converges pointwise to $b$.

The corresponding results hold for $\tD_0=B_{0}D-\sigma N$  replacing $D_0$ by  throughout.
\end{cor}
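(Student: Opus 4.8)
The plan is to derive the bounded $S^o_{\nu,\sigma}$-holomorphic functional calculus from the quadratic estimates of Theorem~\ref{thm:QE} and the resolvent bounds of Proposition~\ref{prop:spectrum}, by the standard McIntosh-type argument, following \cite{ADMc} and only adapting it from a sector to the hyperbolic region $S_{\omega,\sigma}$. Throughout, all operators are understood on the closed subspace $\ran(D_0)$, which by Propositions~\ref{prop:DBprops} and~\ref{prop:spectrum} equals $L_2(S^n;\V)$ when $\sigma\ne0$ and $\mH$ when $\sigma=0$, and on which $D_0$ has spectrum contained in $S_{\omega,\sigma}$ and bounded away from the origin. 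First I would define, for $b\in\Psi(S^o_{\nu,\sigma})$, the operator $b(D_0):=\frac1{2\pi i}\int_\gamma b(\lambda)(\lambda-D_0)^{-1}\,d\lambda$ with $\gamma=\partial S_{\theta,\sigma}$, $\omega<\theta<\nu$; using Proposition~\ref{prop:spectrum} one checks that on $\gamma$ the resolvent decays like $|\lambda|^{-1}$ as $|\lambda|\to\infty$ and stays bounded near the vertices of $\gamma$ (which lie at a fixed positive distance from $S_{\omega,\sigma}$), so that the decay $|b(\lambda)|\lesssim\min(|\lambda|^a,|\lambda|^{-a})$ makes the Bochner integral converge absolutely. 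Independence of $\theta\in(\omega,\nu)$ follows from Cauchy's theorem, and the resolvent identity together with Fubini give the multiplicativity $b_1(D_0)b_2(D_0)=(b_1b_2)(D_0)$ on $\Psi(S^o_{\nu,\sigma})$.

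The heart of the matter is the uniform bound $\|b(D_0)f\|_2\lesssim(\sup_{S^o_{\nu,\sigma}}|b|)\,\|f\|_2$ for $b\in\Psi(S^o_{\nu,\sigma})$. For this I would first upgrade Theorem~\ref{thm:QE}, applied both to $D_0$ and to $D_0^*$ (which is again of the form $\tD_0$ by Lemma~\ref{lem:intertwduality}, with strictly accretive coefficients $B_0^*$), to a two-sided quadratic estimate $\int_0^\infty\|\psi(tD_0)f\|_2^2\,\tfrac{dt}{t}\approx\|f\|_2^2$ on $\ran(D_0)$ valid for every non-degenerate $\psi\in\Psi(S^o_{\nu,\sigma})$, and to a Calder\'on reproducing formula $f=\int_0^\infty\psi(tD_0)\tilde\psi(tD_0)f\,\tfrac{dt}{t}$ for a suitable pair $\psi,\tilde\psi\in\Psi(S^o_{\nu,\sigma})$ normalised by $\int_0^\infty\psi(t\lambda)\tilde\psi(t\lambda)\,\tfrac{dt}{t}=1$ on $S^o_{\nu,\sigma}$; all of this is routine once the single estimate of Theorem~\ref{thm:QE} is available. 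Plugging the reproducing formula into $b(D_0)f$, pairing against $g\in\ran(D_0^*)$, moving one factor onto the adjoint, and applying the Cauchy--Schwarz inequality in $L_2(\tfrac{dt}{t};L_2(S^n;\V))$ reduces the bound to the quadratic estimates for $(b\psi)(tD_0)$ in one variable and for $\tilde\psi(tD_0^*)$ in the other; since $b\psi\in\Psi(S^o_{\nu,\sigma})$ with $\Psi$-bound controlled by $\sup|b|$, this closes the argument.

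Finally I would pass to $b\in H_\infty(S^o_{\nu,\sigma})$ by regularisation: choose $b_k:=b\,\eta_k$ with $\eta_k\in\Psi(S^o_{\nu,\sigma})$ uniformly bounded and $\eta_k\to1$ pointwise on $S^o_{\nu,\sigma}$. By the previous step $(b_k(D_0))_k$ is uniformly bounded by $\lesssim\sup|b|$; using the reproducing formula one shows that $b_k(D_0)f$ is Cauchy for $f$ in the dense subspace $\ran(\psi(D_0))$, hence for all $f\in\ran(D_0)$ by uniform boundedness, and $b(D_0)f:=\lim_k b_k(D_0)f$ is the required extension. Independence of the regularising sequence, the homomorphism property, the norm bound, the Cauchy-integral formula on $\Psi(S^o_{\nu,\sigma})$, and the stated strong-convergence property all follow as in \cite{ADMc}, and these properties determine the homomorphism uniquely. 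The assertions for $\tD_0=B_0D-\sigma N$ are obtained in exactly the same way, since its resolvent bounds and square function estimates are likewise provided by Proposition~\ref{prop:spectrum} and Theorem~\ref{thm:QE}; alternatively, when $\sigma=0$ one may transport the calculus of $D_0$ through the isomorphism $B_0\colon\mH\to B_0\mH$, the part on $\nul(\tD_0)=\mH^\perp$ being trivial.

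I expect the only genuine obstacle, beyond faithfully transcribing the functional-analytic machinery of \cite{ADMc}, to be the bookkeeping around the hyperbolic region: one must verify that the resolvent of $D_0$ remains uniformly bounded in a full neighbourhood of the contour $\gamma$ --- in particular near the vertices of its two hyperbola branches, where $\gamma$ comes closest to the imaginary axis --- so that the $\Psi$-calculus integral converges and all contour deformations are legitimate, and that the various $\Psi$-functions used (such as $\lambda\mapsto\lambda/(1+\lambda^2)$ and the reproducing pair) are indeed holomorphic and suitably decaying on all of $S^o_{\nu,\sigma}$. These are precisely the points where the sharp resolvent estimates of Proposition~\ref{prop:spectrum} and the geometry of $S_{\omega,\sigma}$ enter.
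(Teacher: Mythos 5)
Your proposal is correct and follows essentially the same route as the paper, which simply invokes the standard McIntosh-type argument of \cite{ADMc} built on the resolvent bounds of Proposition~\ref{prop:spectrum} and the quadratic estimates of Theorem~\ref{thm:QE} (applied also to $D_0^*$, which is of the form $\tD_0$). The details you spell out --- the $\Psi$-calculus via the contour integral over $\partial S_{\theta,\sigma}$, the Calder\'on reproducing formula and duality, and the regularisation to pass to $H_\infty$ --- are exactly the ``straightforward adaptation'' the paper has in mind.
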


We remark that the square function estimates in Theorem~\ref{thm:QE} hold when $\psi(z)= z(1+z^2)^{-1}$ is replaced
by any $\psi\in \Psi(S^o_{\nu})$ which is non-zero on both components of $S^o_{\nu,\sigma}$.
We have
\begin{equation}    \label{eq:genQE}
  \int_0^\infty \| \psi(t D_0) f \|_2^2 \frac{dt}t\approx \|f\|_2^2,\qquad \text{for all } 
   f\in \ran(D_{0}).
\end{equation}
A similar extension of the square function estimates holds for $\tD_0$.

Fundamental operators in this paper are the following.

\begin{defn}    \label{defn:hardyprojs}
(i)
Let $\chi^+(\lambda)$ and $\chi^-(\lambda)$ be the characteristic functions
for the right and left half planes. Define spectral projections
$E_0^\pm:=\chi^\pm(D_0)$ and $\tE_0^\pm:=\chi^\pm(\tD_0)$
on $\ran(D_0)$ and $\ran(\tD_0)$ respectively.

(ii)
Define closed and dense defined operators $\Lambda= |D_0|:= \sgn(D_0)D_0$ 
and $\tilde\Lambda= |\tD_0|:= \sgn(\tD_0)\tD_0$ on $L_2(S^n;\V)$.
Here $|\lambda|:= \lambda\sgn(\lambda)$ and $\sgn(\lambda):= \chi^+(\lambda)-\chi^-(\lambda)$.

Define operators $e^{-t\Lambda}$ and $e^{-t\tilde\Lambda}$
on $\ran(D_0)$ and $\ran(\tD_0)$ respectively by applying Corollary~\ref{cor:fcalc} 
with $b(\lambda)=e^{-t|\lambda|}$, $t>0$.
\end{defn}

When $\sigma= 0$,  $ \ran(\tD_{0})=B_{0}\mH=B_{0}\ran(D_0)$ are strict subspaces of $L_{2}$ and it is
convenient to extend the above operators  to all $L_2$. 
 Using the Hodge splitting 
$L_2= B_0\mH\oplus \mH^{\perp}$,   on $\mH^\perp$ the operator $\tD_{0}=B_{0}D$ is already 0 and $\st\Lambda= |B_{0}D|$ is naturally  defined by  0. Using the other Hodge splitting $L_2=\mH\oplus B_0^{-1}\mH^{\perp}$, on $B_0^{-1}\mH^{\perp}$ the operator $D_{0}=DB_{0}$ is already 0 and  $\Lambda=|DB_{0}|$ is naturally defined by $0$.  It follows that 
$e^{-t\st \Lambda}$ and $e^{-t\Lambda}$ are naturally extended to  $L_2$, by  letting  $e^{-t\tilde\Lambda}|_{\mH^\perp}:=I$  and $e^{-t\Lambda}|_{B_0^{-1}\mH^\perp}:=I$.

However, for the projections the extension is more subtle. 
 Indeed,
we see  for the functional calculus of $\tD_0= DB_{0}-\sigma N$ that 
$$
   b(\tD_0)=b(-\sigma N)= \begin{bmatrix} b(\sigma)I & 0 \\ 0 & b(-\sigma)I \end{bmatrix}
$$
on $\mH^\perp$ when $\sigma\ne 0$ using the definition of $N$.
As we are mainly interested in  $\sigma=\frac{n-1}{2}$, it is more natural  for consistency of notation towards applications to divergence form equations to define the operators for $\sigma=0$ by continuity
$\sigma\to 0^+$.
Thus set
$$
  b(B_{0}D):= b(B_0D|_{B_0\mH}) \tP^1_{B_0} + 
  \begin{bmatrix} b(0+)I & 0 \\ 0 & b(0-)I \end{bmatrix} \tP^0_{B_0},
$$
where $b(0\pm):= \lim_{\pm\lambda\in S_{\omega+}, \lambda\to 0} b(\lambda)$,
assuming the limits exist, 
$b(B_0D|_{B_0\mH})$ is the operator from Corollary~\ref{cor:fcalc} and 
$\tP^i_{B_0}$, $i=0,1$,
denote the projections from Proposition~\ref{prop:BDprops} onto the subspaces 
in the Hodge splitting
$L_2= B_0\mH\oplus \mH^{\perp}$.

Similarly, for $\sigma\ne 0$, we have $D_0= DB_0+\sigma N$ so $D_{0}=\sigma N$ on $B_{0}^{-1}\mH^\perp$. For $\sigma=0$, set
$$
  b(DB_{0}):=    b(DB_0|_{\mH}) P^1_{B_0}    + 
  P^0_{B_0} \begin{bmatrix} b(0-)I & 0 \\ 0 & b(0+)I \end{bmatrix},
$$
where $P^i_{B_0}$, $i=0,1$,
denote the projections from Proposition~\ref{prop:DBprops} onto the subspaces 
in the Hodge splitting
$L_2=\mH\oplus B_0^{-1}\mH^{\perp}$.  Remark that $ P^0_{B_0}$ on the left of  the matrix is needed to obtain an element in $B_0^{-1}\mH^{\perp}$.  An elementary calculation shows that this extension of the functional calculus coincides with $(\conj b(B_0^*D))^*$ with $\conj b(\lambda)=\overline{b(\bar \lambda)} 
$ and that the extended functional calculi of $D_{0}$ and $\tD_{0}$ thus obtained are intertwined by $D$.  

Taking $b(\lambda)=\lambda$ or $\lambda\sgn (\lambda)$, this  provides us with the zero extension that we already chose so this is consistent. 
For the projections, this leads to the following definition.

\begin{defn}    \label{defn:hardyprojs1D} 
When $\sigma=0$,  extend $\tE_0^\pm$, $E_{0}^\pm$ originally defined on $\ran(B_{0}D)=B_{0}\mH$ and $\ran(DB_{0})=\mH$ respectively from Definition~\ref{defn:hardyprojs}  to operators on all $L_2(S^n; \V)$, letting
$$
\begin{cases}
  \tE_0^\pm f:= N^\mp f, & \qquad \text{for all } f\in \mH^\perp,
  \\
  E_0^\pm f:= P^0_{B_{0}}N^\pm f, & \qquad  \text{for all } f\in B_0^{-1}\mH^\perp.
  \end{cases}
$$
\end{defn}

\begin{lem}   \label{lem:spectralsplits}  
  With $L_{2}=L_{2}(S^n;\V)$,  the spectral projections $E_{0}^\pm$ and $\tE_{0}^\pm$ are bounded, 
 we have topological spectral splittings 
$$
L_2= E_0^+L_2 \oplus E_0^- L_2,
$$
  restricting to
  $\mH= E_0^+\mH \oplus E_0^- \mH$ in the subspace $\mH$ invariant under $D_0$,
  and 
$$
L_2= \tE_0^+L_2 \oplus \tE_0^- L_2, 
$$
  restricting to
  $\mH^\perp= \tE_0^+\mH^\perp \oplus \tE_0^- \mH^\perp$ 
  in the subspace $\mH^\perp$ invariant under $\tD_0$.
  We also have the intertwining relation
  \begin{equation}
  E_{0}^\pm D= D\tE_{0}^\pm
  \end{equation} 
   so that $D: \tE_0^\pm L_2 \to E_0^\pm\mH$ is surjective.

If $\sigma\ge 0$,
  then in the latter splitting we have $\tE_0^\pm= N^\mp$ in $\mH^\perp$.
  Hence $\tE_0^+\mH^\perp= N^-\mH^\perp$ 
  and $\tE_0^-\mH^\perp= N^+\mH^\perp$.
  (On the other hand, if $\sigma<0$, then $\tE_0^\pm= N^\pm$ in $\mH^\perp$.)  
\end{lem}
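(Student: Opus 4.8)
The plan is to obtain all four projections from the $S^o_{\nu,\sigma}$-holomorphic functional calculus of Corollary~\ref{cor:fcalc}, handling $\sigma\ne 0$ and $\sigma=0$ separately, and then to deduce the intertwining relation and the description on $\mH^\perp$ from the algebra of the operators. In the case $\sigma\ne 0$ one has $\ran(D_0)=\ran(\tD_0)=L_2$ by Propositions~\ref{prop:DBprops} and \ref{prop:BDprops}; the functions $\chi^\pm$ are locally constant, hence holomorphic, and bounded on the two components of $S^o_{\nu,\sigma}$, so $\chi^\pm\in H_\infty(S^o_{\nu,\sigma})$ and are pointwise limits of uniformly bounded $\Psi(S^o_{\nu,\sigma})$-functions (multiply $\chi^\pm$ by an approximate identity such as $\lambda\mapsto k^2\lambda^2((\lambda^2+k^{-2})(\lambda^2+k^2))^{-1}$). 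Corollary~\ref{cor:fcalc} then makes $E_0^\pm=\chi^\pm(D_0)$ and $\tE_0^\pm=\chi^\pm(\tD_0)$ bounded on $L_2$, and from $\chi^++\chi^-\equiv 1$, $\chi^+\chi^-\equiv 0$, $(\chi^\pm)^2=\chi^\pm$ the Banach algebra homomorphism property yields $E_0^++E_0^-=I$, $(E_0^\pm)^2=E_0^\pm$, $E_0^+E_0^-=0$ and likewise for $\tE_0^\pm$; this is the splitting $L_2=E_0^+L_2\oplus E_0^-L_2$ (and its analogue for $\tE_0^\pm$). Since $D_0$ leaves $\mH$ invariant (Proposition~\ref{prop:DBprops}) and has resolvents there by Proposition~\ref{prop:spectrum}, the operators $E_0^\pm$ also leave $\mH$ invariant and restrict to $\chi^\pm(D_0|_\mH)$, giving $\mH=E_0^+\mH\oplus E_0^-\mH$; similarly $\tD_0$ and its invariant subspace $\mH^\perp$ give the $\tE_0^\pm$-splitting of $\mH^\perp$.

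For $\sigma=0$, $E_0^\pm$ and $\tE_0^\pm$ are the extensions of Definition~\ref{defn:hardyprojs1D}, bounded because $L_2=\mH\oplus B_0^{-1}\mH^\perp$ and $L_2=B_0\mH\oplus\mH^\perp$ are topological (Propositions~\ref{prop:DBprops}, \ref{prop:BDprops}) and the extensions are finite sums of bounded operators. On $\mH$ (resp.\ $B_0\mH$) I would argue as in the case $\sigma\ne 0$, using that $D_0|_\mH=DB_0|_\mH$ (resp.\ $\tD_0|_{B_0\mH}$) is invertible. It then remains to check the extensions are complementary projections on the whole space: relative to $L_2=B_0\mH\oplus\mH^\perp$, $\tE_0^\pm$ is block-diagonal with second block $N^\mp$ (Definition~\ref{defn:hardyprojs1D}), and $N^\pm$ are complementary projections of $\mH^\perp$ since $N^2=I$; relative to $L_2=\mH\oplus B_0^{-1}\mH^\perp$, $E_0^\pm$ is block-diagonal with second block $P^0_{B_0}N^\pm$, and using $N^++N^-=I$, $N^+N^-=0$ together with the invariances $N^+\mH\subset\mH$, $N^-\mH\subset\mH$ (which force $P^0_{B_0}N^\pm P^1_{B_0}=0$ on $B_0^{-1}\mH^\perp$) one checks $(E_0^\pm)^2=E_0^\pm$, $E_0^+E_0^-=0$ and $E_0^++E_0^-=I$ there as well. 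This produces the four claimed splittings.

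For the intertwining $E_0^\pm D=D\tE_0^\pm$, I would start from $D_0D=D\tD_0$ (Lemma~\ref{lem:intertwduality}): the resolvents are intertwined by $D$ on $\dom(D)$ for $\lambda\notin S_{\omega,\sigma}$, hence so is $b(D_0)$ with $b(\tD_0)$ for $b\in\Psi(S^o_{\nu,\sigma})$, and then for $b=\chi^\pm$ by the approximation in Corollary~\ref{cor:fcalc} (for $\sigma=0$ this is exactly the intertwining of the extended calculi noted before Definition~\ref{defn:hardyprojs1D}). Surjectivity of $D:\tE_0^\pm L_2\to E_0^\pm\mH$ is then formal: for $v\in\tE_0^\pm L_2$, $Dv=D\tE_0^\pm v=E_0^\pm Dv\in E_0^\pm\ran(D)=E_0^\pm\mH$, and given $h\in E_0^\pm\mH$ write $h=Dg$ with $g\in\dom(D)$ (possible since $\ran(D)=\mH$), so $\tE_0^\pm g\in\tE_0^\pm L_2$ and $D(\tE_0^\pm g)=E_0^\pm Dg=E_0^\pm h=h$. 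The description on $\mH^\perp$: when $\sigma\ne 0$, $\tD_0|_{\mH^\perp}=-\sigma N$ is a bounded operator with eigenprojections $N^+$ (eigenvalue $-\sigma$) and $N^-$ (eigenvalue $\sigma$), both eigenvalues lying in $\sigma(\tD_0)\subset S_{\omega,\sigma}\subset S^o_{\nu,\sigma}$, so restricting the calculus to $\mH^\perp$ gives $\tE_0^\pm|_{\mH^\perp}=\chi^\pm(-\sigma)N^++\chi^\pm(\sigma)N^-$, which is $N^\mp$ for $\sigma>0$ and $N^\pm$ for $\sigma<0$; the case $\sigma=0$ is Definition~\ref{defn:hardyprojs1D}.

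I expect the one genuinely delicate point to be the $\sigma=0$ case: the spectral decomposition there has a nontrivial ``kernel block'' $B_0^{-1}\mH^\perp$ (resp.\ $\mH^\perp$) which must be split between $E_0^+$ and $E_0^-$ by hand through $N^\pm$, and one has to verify that the patched operators of Definition~\ref{defn:hardyprojs1D} remain idempotent, complementary and compatible with the Hodge splittings — this rests on the elementary but essential fact that $N^\pm$ leave $\mH$ invariant. Keeping the sign of $\sigma$ straight in the identification on $\mH^\perp$ is the only other place requiring attention.
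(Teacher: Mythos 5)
Your proof is correct and takes essentially the same route as the paper's (deliberately terse) argument: boundedness and the two splittings from the $H_\infty$-calculus of Corollary~\ref{cor:fcalc} together with the Hodge splittings, the intertwining from Lemma~\ref{lem:intertwduality}, surjectivity of $D$ from $\ran(D)=\mH$ and the splittings, the identification on $\mH^\perp$ from $\tD_0=-\sigma N$ there with $\chi^\pm(-\sigma N)=N^\mp$, and Definition~\ref{defn:hardyprojs1D} for $\sigma=0$. Your check that the $\sigma=0$ extensions remain complementary projections (via $N^\pm\mH\subset\mH$, hence $P^0_{B_0}N^\pm P^1_{B_0}=0$) is exactly the detail the paper leaves to the reader.
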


\begin{proof} 
When $\sigma\ne 0$, $\ran(\tD_{0})=L_{2}$ and $L_{2}=\ran(D_{0})$ by Proposition~\ref{prop:spectrum}. Boundedness on $L_{2}$ follows from Corollary~\ref{cor:fcalc}. The intertwining property is a consequence of Lemma~\ref{lem:intertwduality}. The surjectivity  of $D$ easily follows from the spectral subspaces and using $D:L_{2}\to \mH$ surjective and the splittings.  
  That $\tE_0^\pm= N^\mp$ in $\mH^\perp$ when $\sigma > 0$
  comes from  $\tD_0=-\sigma N$ in $\mH^\perp$ 
  and $\chi^\pm(-\sigma N)= N^\mp$.  The case $\sigma=0$ follows from 
  Definition~\ref{defn:hardyprojs1D}. We leave further details  to the reader.
  \end{proof}

%
%
%
%
%
\section{A detour to Kato's square root on Lipschitz surfaces}      \label{sec:Kato}

Let $\Sigma$ be a surface in $\R^{1+n}$, assumed to be Lipschitz diffeomorphic to
$S^n$ through a bilipschitz map $\rho_0: S^n\to \Sigma$.
Let $d\sigma$ denote surface measure on $\Sigma$.
Consider, for $n,m\ge 1$, coefficient matrices 
$H\in L_\infty(\Sigma; \mL((T_\C \Sigma)^m))$ (with $T_\C\Sigma$ denoting the complexified tangent 
bundle) and $h\in L_\infty(\Sigma;\mL(\C^m))$, assumed to be strictly accretive in the sense that
\begin{gather*}
  \re\int_\Sigma (H(x)\nabla_\Sigma u(x), \nabla_\Sigma u(x)) d\sigma(x) \ge 
  \kappa \int_\Sigma |\nabla_\Sigma u(x)|^2 d\sigma(x), \\
  \re (h(x)z,z)\ge \kappa |z|^2,\qquad\text{a.e. } x\in\Sigma, 
\end{gather*}
for all $u\in W_2^1(\Sigma;\C^m)$ and $z\in\C^m$, and some $\kappa>0$.
Then $L:= -\divv_\Sigma H \nabla_\Sigma$, with $\divv_\Sigma:= -(\nabla_\Sigma)^*$
in $L_2(\Sigma; d\sigma)$, constructed by the method of sesquilinear forms,
is a maximal accretive operator and $hL$ is defined on $\dom(L)$ and can be shown to 
be an $\omega$-sectorial operator on $L_2(\Sigma; d\sigma)$ for some $0<\omega<\pi$.
Thus it has a square root and we have 

\begin{thm}   \label{thm:Kato} 
   The square root of the operator $hL= -h\divv H\nabla_\Sigma$ has domain 
   $\dom(\sqrt{hL})= W_2^1(\Sigma;\C^m)$,
   and estimates $\|\sqrt{hL}u\|_2\approx \|\nabla_\Sigma u\|_2$.
\end{thm}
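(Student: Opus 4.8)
The plan is to reduce the Kato square root problem on $\Sigma$ to the first order functional calculus already established in this paper (Theorem~\ref{thm:QE} and Corollary~\ref{cor:fcalc}) by the standard device of realizing $\sqrt{hL}$ in terms of a bisectorial operator of the form $DB_0$. First I would pull everything back along the bilipschitz map $\rho_0: S^n\to\Sigma$. A computation with the chain rule, exactly as in the Lipschitz invariance discussion at the end of the introduction and the rescaled pullback/pushforward calculus in the proof of Theorem~\ref{thm:QE}, transforms $-\divv_\Sigma H\nabla_\Sigma$ and the multiplier $h$ into $-\divv_S \st H\nabla_S$ and a multiplier $\st h$ on $S^n$, with $\st H$ strictly accretive on $\ran(\nabla_S)$ and $\st h$ pointwise strictly accretive, so it suffices to prove the statement on $S^n$.

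Next I would set up the first order operator. On $L_2(S^n;\V)$, consider a coefficient matrix $B_0$ of block form adapted to the normal/tangential splitting: take $B_0$ with $(B_0)_{\no\no}=\st h^{-1}$ (or a suitable scalar-type entry), $(B_0)_{\ta\ta}=\st H$ and off-diagonal blocks zero. Then $B_0$ is strictly accretive on $\mH$ (indeed on $\mH_1$), so Theorem~\ref{thm:QE} and Corollary~\ref{cor:fcalc} apply to $D_0=DB_0$ (here $\sigma=0$). The point is the algebraic identity
$$
  (DB_0)^2 = \begin{bmatrix} \divv_S \st H\nabla_S \st h^{-1} & 0 \\ 0 & \ast \end{bmatrix}
$$
on the appropriate Hodge components, so that the restriction of $(DB_0)^2$ to the first component is similar, via the multiplication isomorphism $\st h^{-1}$, to $\st h\, L$ where $L=-\divv_S\st H\nabla_S$. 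Consequently $|DB_0|=\sqrt{(DB_0)^2}$ restricted to that component is similar to $\sqrt{\st h L}$, and the boundedness of the holomorphic functional calculus of $DB_0$ from Corollary~\ref{cor:fcalc} (together with the lower square function bound on $\ran(D_0)$) gives the two-sided estimate
$$
  \|\sqrt{\st h L}\,u\|_2 \approx \| |DB_0| \begin{bmatrix} 0 \\ \nabla_S u\end{bmatrix} \|_2 \approx \|D_0\begin{bmatrix} 0 \\ \nabla_S u\end{bmatrix}\|_2 \approx \|\nabla_S u\|_2,
$$
the last equivalence because $DB_0$ has bounded inverse on $\mH$ restricted appropriately, or more precisely because $\|Dv\|_2\approx\|\nabla_S u\|_2$ for $v=[0\ \nabla_S u]^t$. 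Identifying $\dom(\sqrt{hL})$ then amounts to matching the graph norm of $\sqrt{hL}$ with $\|u\|_2+\|\nabla_S u\|_2$, i.e.\ with $W_2^1(S^n;\C^m)$, which follows from the estimate just displayed plus accretivity of $hL$.

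The main obstacle I expect is not the functional calculus input — that is exactly what Theorem~\ref{thm:QE} is designed to supply — but rather the careful bookkeeping at the level of domains and Hodge projections when $\sigma=0$: one must check that the similarity between $(DB_0)^2$ and $\st h L$ is implemented by genuine isomorphisms on the relevant closed subspaces ($\mH$ versus $\ran(\nabla_S)$, $\ran(D_0)$ versus the full space), that the square root of the sectorial operator $hL$ defined by forms really coincides with the operator obtained from the functional calculus of $|D_0|$ under this similarity, and that restricting to the first component does not lose the lower bound. These are the same technical points handled in \cite{AKMc} and \cite{AKMc1}; here I would simply verify that the hypotheses of those results are met after the pullback and the block choice of $B_0$, and invoke them. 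A secondary, purely geometric point is to confirm that $W_2^1(\Sigma;\C^m)$ pulls back to $W_2^1(S^n;\C^m)$ with equivalent norms under a bilipschitz diffeomorphism, which is standard.
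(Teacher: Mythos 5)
Your overall route is the same as the paper's: pull back to $S^n$, form a block--diagonal $B_0$, and invoke Theorem~\ref{thm:QE} with $\sigma=0$ together with the functional calculus. However, the central algebraic step in your display is wrong, and it is exactly the step that produces the square root. For any block--diagonal $B_0$ one has
\begin{equation*}
  DB_0\begin{bmatrix} 0 \\ \nabla_S u\end{bmatrix}
  = D\begin{bmatrix} 0 \\ \tilde H\nabla_S u\end{bmatrix}
  = \begin{bmatrix} -\divv_S \tilde H\nabla_S u \\ 0\end{bmatrix},
\end{equation*}
so that, since $\sgn(DB_0)$ is bounded and invertible on $\mH$,
$\bigl\| \,|DB_0|\,[\,0\ \ \nabla_S u\,]^t \bigr\|_2 \approx \|\tilde L u\|_2$ with $\tilde L=-\divv_S\tilde H\nabla_S$: this is a \emph{second order} quantity, not $\|\sqrt{\tilde h\tilde L}\,u\|_2$ and not $\|\nabla_S u\|_2$. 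Likewise your justification of the last equivalence fails, because $Dv=[\,-\divv_S\nabla_S u\ \ 0\,]^t$ for $v=[\,0\ \ \nabla_S u\,]^t$, so $\|Dv\|_2\not\approx\|\nabla_S u\|_2$. The square root must instead be generated by applying the half power of the squared first order operator to the vector carrying $u$ itself in the scalar slot. With the paper's choice $B_0=\begin{bmatrix} h & 0\\ 0 & H\end{bmatrix}$ one has $(B_0D)^2=\begin{bmatrix} hL & 0\\ 0 & \ast\end{bmatrix}$, hence
\begin{equation*}
  \begin{bmatrix} \sqrt{hL}\,u \\ 0\end{bmatrix}
  = \sqrt{(B_0D)^2}\begin{bmatrix} u \\ 0\end{bmatrix}
  = \sgn(B_0D)\,B_0D\begin{bmatrix} u \\ 0\end{bmatrix}
  = \sgn(B_0D)\begin{bmatrix} 0 \\ H\nabla_S u\end{bmatrix},
\end{equation*}
and the two--sided bound follows from invertibility of $\sgn(B_0D)$ on $B_0\mH$ (Theorem~\ref{thm:QE} with $\sigma=0$) and accretivity of $H$ on $\ran(\nabla_S)$. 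In your scheme the gradient appears only after a full application of the first order operator, so estimating $|DB_0|$ on the gradient vector overshoots by one order.

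Two further points. First, your choice $(B_0)_{\no\ta}$-blocks zero with $(B_0)_{\no\no}=\tilde h^{-1}$ gives, for $DB_0$, the scalar block $-\divv_S\tilde H\nabla_S\tilde h^{-1}$ of $(DB_0)^2$, which is similar to $\tilde h^{-1}\tilde L$, not to $\tilde h\tilde L$: the inverse belongs to the $\hat A$-transform used for the BVPs, not here; you need the entry $\tilde h$ itself (and then either work with $B_0D$ as the paper does, which requires no similarity at all, or keep track of the conjugation by $\tilde h$ if you insist on $DB_0$ and apply $|DB_0|$ to $[\,\tilde h^{-1}u\ \ 0\,]^t$). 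Second, your worry about identifying the form square root $\sqrt{hL}$ with the functional-calculus square root of the block of $(B_0D)^2$ is legitimate and is not settled by merely checking the hypotheses of \cite{AKMc}/\cite{AKMc1}; the paper resolves it by following \cite{AMcN}, and this identification is what licenses the first equality in the display above.
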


In particular for $h=1$, we obtain a version of the Kato square root problem on 
Lipschitz surfaces $\Sigma$. The presence of $h$ makes the theorem 
invariant under bilipschitz changes of variables as we shall see in the proof.

Our Theorems~\ref{thm:QE} and \ref{thm:Kato} are inspired by  
\cite[Thm.~7.1]{AKMc}, and a comparison of these two results is in order. 
The main novelty in Theorems~\ref{thm:QE} and \ref{thm:Kato}, is that 
these do not require the coefficients $B_0$ or $H$ to be pointwise strictly accretive,
which was needed for the localization argument in \cite[Thm.~7.1]{AKMc}.
This theorem considered more general forms on $\Sigma$, and more general
compact Lipschitz surfaces $\Sigma$. 
It is straightforward to extend our results Theorems~\ref{thm:QE} and \ref{thm:Kato} 
here to more general compact Lipschitz manifolds. 
On the other hand, we do not know how to extend our localization argument here
to the case of forms, unless pointwise strict accretivity is assumed.

We also mention that in his PhD thesis \cite{morris}, A. Morris 
proved similar results on embedded (possibly non-compact) Riemannian manifolds with bounds on the 
second fundamental form and a lower bound on Ricci curvature.

\begin{proof}[Proof of Theorem~\ref{thm:Kato}]
  A calculation shows the pullback formula
$$
  (h \divv_\Sigma H \nabla_\Sigma u)(\rho_0(x))=
  (\tilde h \divv_S \widetilde H \nabla_S (u\circ\rho_0))(x), \qquad x\in S^n,
$$
where $\tilde h(x)= |J(\rho_0)(x)|^{-1} h(\rho_0(x))$ and
$\widetilde H(x):= |J(\rho_0)(x)| (\underline{\rho_0}(x))^{-1} H(\rho_0(x)) (\underline{\rho_0}^t(x) )^{-1}$. 
So we assume that $\Sigma=S^n$ from now on.
Let $D$ be as in Definition~\ref{defn:DNops} and let
$B_0:= \begin{bmatrix} h & 0 \\ 0 & H \end{bmatrix}\in L_\infty(S^n;\mL(\V))$.
Then $B_0$ is strictly accretive on the space $\mH_1$ from (\ref{eq:H1space})
and 
$$
  B_0 D = \begin{bmatrix} 0 & -h\divv_S \\ H\nabla_S & 0 \end{bmatrix}.
$$
Thus by Theorem~\ref{thm:QE}, with $\sigma=0$, we have bounded 
functional calculus of $B_0 D$ in $B_0\mH$.
Following \cite{AMcN}, we have for $u\in \dom(\nabla_S)$ that
$$
  \begin{bmatrix} \sqrt{hL}u \\ 0 \end{bmatrix}
  = \sqrt{(B_0 D)^2}   \begin{bmatrix} u \\ 0 \end{bmatrix}
 = \sgn(B_0 D) B_0 D   \begin{bmatrix} u \\ 0 \end{bmatrix}
= \sgn(B_0 D)   \begin{bmatrix} 0 \\ H\nabla_S u \end{bmatrix},
$$
so that $\|\sqrt{hL}u\|_2 \approx \|H\nabla_S u\|_2\approx \|\nabla_S u\|_2$,
using that $\sgn(B_0 D)$ is bounded and invertible on $B_0\mH$ and that
$H$ is bounded above and below on $\ran(\nabla_S)$. 
\end{proof}

\begin{rem}
It is interesting to note that we apply Theorem~\ref{thm:QE} with $\sigma=0$
no matter what the dimension is.
If $n\ge 2$, Kato's square root problem on $S^n$ is not directly linked
to the boundary operator appearing in (\ref{eq:firstorderODE}),
associated to the equation $\divv_\bx A\nabla_\bx u=0$ on $\bO^{1+n}$,
with $\hat A=\begin{bmatrix} h & 0 \\ 0 & H \end{bmatrix}$, \textit{i.e.} when one can separate in the equation radial derivatives from tangential derivatives.
This is different from the case of the half space ($\R^n$ replacing $S^n$)
and emphasizes the role of curvature.

In view of Section~\ref{sec:infgene}, the second order operator on the boundary
associated to this $\divv_\bx A \nabla_\bx$ on $\bO^{1+n}$, comes from
$$
  (B_0 D-\sigma N)^2 =  
  \begin{bmatrix} -hL+ \sigma^2 & 0 \\ 0 & -H\nabla_Sh\divv_S+\sigma^2 \end{bmatrix},
$$
with $\sigma=(n-1)/2$.
Thus, the naturally associated Kato square root is $\sqrt{-hL+\sigma^2}$,
and one has 
$$
\Big\| \sqrt{-hL+(\tfrac{n-1}2)^2}\ u  \Big\|_2\approx \|\nabla_S u\|_2 + \tfrac{n-1}2 \|u\|_2.
$$
\end{rem}

%
%
%
%
%
\section{Natural function spaces}      \label{sec:XYspaces}

By Corollary~\ref{cor:conormal},
our method to study and construct solutions $u$  to the divergence form equation (\ref{eq:divform}) 
consists in translating this equation  to the ODE (\ref{eq:firstorderODE}) 
for the conormal gradient $f$  in $\R_+\times S^n$.
Conormal gradients of variational solutions  belong to $L_2(\R_+\times S^n;\V)$ as noted in 
(\ref{eq:dfl2iso}). 
On the other hand, the appropriate function spaces for  $f$ with 
Dirichlet/Neumann boundary data for $u$  in $L_2(S^n;\C^m)$ are the following.

\begin{defn}    \label{defn:NTandC}
The (truncated) {\em modified non-tangential maximal function} of $f$ defined on $\R_+\times S^n$, is
$$
 \tN(f)(x):= \sup_{0<t<c_0}  t^{-(1+n)/2} \|f\chi_{s<1}\|_{L_2(W(t,x))}, \qquad x\in S^n,
$$
where $W(t,x):= \sett{(s,y)\in \R_+\times S^n}{|y-x|<c_1 t, c_0^{-1}<s/t<c_0}$
for some fixed constants $c_0>1$, $c_1>0$.
We assume that $c_0\approx 1$ and $c_1 <<1$, so that the {\em Whitney 
regions} $W(t,x)$ are non-degenerate for $t<c_0$.
For a function $f_0$ on $\bO^{1+n}$, we have  $\tN^o(f_0)= \tN(f)$ where $f(t,x):= f_0(e^{-t}x)$,
which properly defines $\tN^o$  in the introduction.

The (truncated) {\em modified Carleson norm} of $f$ in $\R_+\times S^n$ is
$$
  \|f\|_C:= \left( \sup_{r(Q)<r_0} \frac 1{|Q|}\iint_{(0,r(Q))\times Q}
  \essup_{W(t,x)}|f|^2 \frac{dtdx}t \right)^{1/2},
$$
and the $\sup$ is taken over geodesic balls $Q\subset S^n$ with volume $|Q|$, and 
with radius $r(Q)$ less than some fixed constant $r_0<<1$.
For a function $f_0$ on $\bO^{1+n}$, we have  $\|f_0\|_C=\ \|f\|_C$ where $f(t,x):= f_0(e^{-t}x)$,
which corresponds to $\|f_{0}\|_{C}$ as in \eqref{eq:defCarleson}.
\end{defn}

Note  that changing
the parameters $c_1, c_1$ does not affect the results. 

\begin{defn}    \label{defn:XY}
(i)  For $g:\bO^{1+n}\to \C^{(1+n)m}$, define norms
\begin{align*}
  \|g\|_{\mY^o}^2 &:= \int_{\bO^{1+n}} | g(\bx) |^2 (1-|\bx|) d\bx, \\
  \|g\|_{\mX^o}^2 & :=  \|\tN^o(g)\|^2_2 +\int_{|\bx|<e^{-1}} | g(\bx) |^2 d\bx.
\end{align*} 
Let $\mY^o$ and $\mX^o$ be the Hilbert/Banach spaces of  
functions $g$ for which the respective norm is finite.

(ii) For $f:\R_+\times S^n \to \V$, define norms
\begin{align*}
  \|f\|_{\mY}^2 &:= \int_0^\infty \| f_t \|^2_2 \min(t,1)dt, \\
  \|f\|_{\mX}^2 & :=  \|\tN(f)\|^2_2 +\int_1^\infty \| f_t \|^2_2 dt. 
\end{align*} 
Let $\mY$ and $\mX$ be the Hilbert/Banach spaces of sections 
$f$ for which the respective norm is finite.
\end{defn}

The gradient-to-conormal gradient map of Proposition~\ref{prop:divformasODE} is an isomorphism $\mY^o\to \mY$ and $\mX^o \to \mX$.  

\begin{lem}      \label{lem:XlocL2}
  There are estimates
$$
  \sup_{0<t<1/2} \frac 1t\int_t^{2t} \| f_s \|_2^2 ds \lesssim \| \tN(f) \|^2_2 \lesssim \int_0^1 \| f_s \|^2_2 \frac {ds}s,
  \qquad f\in L_2^\loc(\R_+\times S^n;\V).
$$
Denoting by $\mY^*$ the dual space of $\mY$ relative to $L_2(\R_+\times S^n; \V)$,
\textit{i.e.} the space of functions $f$ such that 
$\int_0^\infty \| f_t \|^2_2 \max(t^{-1},1)dt<\infty$,
we have continuous inclusions of Banach spaces
$$
  \mY^*\subset \mX \subset L_2(\R_+\times S^n; \V)\subset \mY.
$$
\end{lem}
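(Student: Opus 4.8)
The plan is to prove the three displayed estimates of Lemma~\ref{lem:XlocL2} in the order: (1) the left inequality $\sup_{0<t<1/2}\tfrac1t\int_t^{2t}\|f_s\|_2^2\,ds\lesssim\|\tN(f)\|_2^2$; (2) the right inequality $\|\tN(f)\|_2^2\lesssim\int_0^1\|f_s\|_2^2\,\tfrac{ds}{s}$; and (3) the chain of continuous inclusions $\mY^*\subset\mX\subset L_2\subset\mY$. For (1), I would fix $t\in(0,1/2)$, cover the spherical shell $\{(s,y):t<s<2t,\ y\in S^n\}$ by finitely many overlapping Whitney regions $W(c\,t,x_j)$ with the $x_j$ forming a maximal $c_1 t$-separated net in $S^n$; by the bounded-overlap property of such a net (uniform in $t$) and the fact that $|W(ct,x)|\approx t^{1+n}$, one gets
$$
  \frac1t\int_t^{2t}\|f_s\|_2^2\,ds
  \;\lesssim\; \frac1t\sum_j \|f\|_{L_2(W(ct,x_j))}^2
  \;\lesssim\; \frac1t\sum_j t^{1+n}\,\tN(f)(x_j)^2
  \;\lesssim\; \int_{S^n}\tN(f)(x)^2\,dx,
$$
where the last step uses $t^{1+n}\approx |B(x_j,c_1 t)|$ and again bounded overlap of the balls $B(x_j,c_1t)$; the restriction $t<1/2$ ensures all the relevant $s$ satisfy $s<1$, so the truncation $\chi_{s<1}$ in the definition of $\tN$ is harmless.

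For (2), this is the standard $L_2$ bound for the (modified) non-tangential maximal function by the $L_2$ of the function against $ds/s$. I would estimate, for each $x$ and each $t<c_0$,
$$
  t^{-(1+n)}\|f\chi_{s<1}\|_{L_2(W(t,x))}^2
  \;\lesssim\; t^{-(1+n)}\int_{t/c_0}^{c_0 t}\!\!\int_{|y-x|<c_1t}|f(s,y)|^2\,dy\,\frac{ds}{s}\cdot s
  \;\lesssim\; \int_{t/c_0}^{c_0t}\Xint-_{|y-x|<c_1 s}|f(s,y)|^2\,dy\,\frac{ds}{s},
$$
using $s\approx t$ on $W(t,x)$ and $t^{1+n}\approx|B(x,c_1 s)|$; taking the sup over $t$ and then integrating in $x\in S^n$, Fubini and the fact that $\int_{S^n}\chi_{|y-x|<c_1 s}\,dx\approx |B(y,c_1s)|\approx s^n$ cancels the averaging denominator and yields $\int_0^1\int_{S^n}|f(s,y)|^2\,dy\,\tfrac{ds}{s}$, i.e.\ the claimed bound. (One should take the sup over the annular pieces before integrating, or equivalently dominate $\tN(f)(x)^2$ by the sum over dyadic $t=2^{-k}$ of the corresponding averages and use that the $k$-sum telescopes into $\int_0^1\cdots\,\tfrac{ds}{s}$.)

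For (3): the inclusion $L_2(\R_+\times S^n;\V)\subset\mY$ is immediate since $\min(t,1)\le 1$. The inclusion $\mX\subset L_2$: split $\int_0^\infty\|f_t\|_2^2\,dt=\int_0^1+\int_1^\infty$; the tail $\int_1^\infty$ is controlled directly by $\|f\|_{\mX}^2$, and $\int_0^1\|f_t\|_2^2\,dt\le \int_0^1\|f_t\|_2^2\,\tfrac{dt}{t}$... — wait, that is the wrong direction, so instead use step (1): $\int_0^{1/2}\|f_t\|_2^2\,dt$ is a sum over dyadic annuli $\int_{2^{-k-1}}^{2^{-k}}\|f_t\|_2^2\,dt\le 2^{-k}\sup_{t<1/2}\tfrac1t\int_t^{2t}\|f_s\|_2^2\,ds$, but summing $2^{-k}$ converges, giving $\int_0^{1/2}\|f_t\|_2^2\,dt\lesssim\|\tN(f)\|_2^2$; together with the piece on $(1/2,\infty)$ (absorbed into $\int_1^\infty$ plus another application of (1) on $(1/2,1)$) this gives $\|f\|_{L_2}^2\lesssim\|f\|_{\mX}^2$. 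Finally $\mY^*\subset\mX$: the tail $\int_1^\infty\|f_t\|_2^2\,dt\le\int_1^\infty\|f_t\|_2^2\max(t^{-1},1)\,dt\le\|f\|_{\mY^*}^2$, and for the non-tangential part apply (2) and then Cauchy–Schwarz/monotonicity, $\|\tN(f)\|_2^2\lesssim\int_0^1\|f_s\|_2^2\,\tfrac{ds}{s}\le\int_0^\infty\|f_s\|_2^2\max(s^{-1},1)\,ds=\|f\|_{\mY^*}^2$. Hence $\|f\|_{\mX}\lesssim\|f\|_{\mY^*}$, completing the chain.

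The main obstacle is the geometric covering argument behind step (1): one must check that a Whitney shell $\{t<s<2t\}\times S^n$ is covered by boundedly many Whitney regions $W(ct,x_j)$ and that the corresponding balls $B(x_j,c_1t)\subset S^n$ have overlap bounded uniformly in $t\in(0,1/2)$ — this is where the curvature and compactness of $S^n$ enter, but it is a routine doubling-space packing estimate once one fixes a maximal $c_1t$-separated net; the remaining steps are bookkeeping with dyadic decompositions and Fubini.
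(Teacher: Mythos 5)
Your proposal is correct and follows essentially the same route as the paper: the paper simply cites \cite[Lem.~5.3]{AA1} for the two non-tangential maximal estimates (your covering/aperture and Fubini-averaging arguments are exactly the standard proofs behind that citation, adapted to the doubling space $S^n$), and its only written-out step is the dyadic-annuli bound for $\mX\subset L_2(\R_+\times S^n;\V)$, which coincides with your step (3). Only a cosmetic slip: in the covering step the relevant comparison is $|B(x_j,c_1t)|\approx t^{n}$ (the extra factor $t$ being supplied by $\tfrac1t$), and passing from $\tN(f)(x_j)$ to its values on $B(x_j,c_1t)$ uses the usual change-of-aperture equivalence, which the paper's remark that the parameters $c_0,c_1$ are irrelevant already covers.
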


Note that Lemma~\ref{lem:XlocL2} shows that another choice of threshold than $t=1$ in the definition of the norms for $\mX$ and $\mY$ would result in equivalent norms.

\begin{proof}
The $L_2^\loc (L_2)$ estimates of $\|\tN(f)\|_2$ is an adaption of the corresponding 
result for $\R^{1+n}_+$, proved in \cite[Lem. 5.3]{AA1}.
The remaining statements, except possibly that $\mX \subset L_2(\R_+\times S^n; \V)$,
are straightforward consequences.
To verify this embedding of $\mX$, we use the lower bound on $\|\tN(f)\|_2$ to estimate
\begin{multline*}
  \int_0^\infty \|f_t\|_2^2 dt = \sum_{k=0}^\infty  \int_{2^{-k-1}}^{2^{-k}} \|f_t\|_2^2 dt +  \int_1^\infty \|f_t\|_2^2 dt \\
  \lesssim \sum_{k=0}^\infty 2^{-k-1} \|\tN(f)\|_2^2 +  \int_1^\infty \|f_t\|_2^2 dt 
  = \|f\|_{\mX}^2.
\end{multline*}
\end{proof}

The following lemma gives necessary and (different) 
sufficient conditions for a multiplication operator $\E$
to map $\mX$ into $\mY^*$. Write
$$
 \|\E\|_{C\cap L_\infty}:= \|\E\|_C+  \|\E\|_{L_\infty(\R_+\times S^n)}.
$$

\begin{lem}
  For functions $\E:\R_+\times S^n\to \C^{(1+n)m}$, define the multiplicator norm
$\|\E\|_*:= \|\E\|_{\mX\to \mY^*}=\sup_{\|f\|_\mX=1}\|\E f\|_{\mY^*}$. 
Then we have estimates
$$
  \|\E\|_{L_\infty(\R_+\times S^n)}\lesssim \|\E\|_*\lesssim  \|\E\|_{C\cap L_\infty}.
  $$
\end{lem}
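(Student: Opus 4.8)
The plan is to establish the two estimates separately. The lower bound $\|\E\|_{L_\infty}\lesssim\|\E\|_*$ is a necessity statement, obtained by testing the multiplication operator on indicator functions supported in single Whitney regions. The upper bound $\|\E\|_*\lesssim\|\E\|_{C\cap L_\infty}$ splits, via the threshold $t=1$ built into the norms of $\mX$ and $\mY^*$, into a trivial far-field estimate controlled by $\|\E\|_{L_\infty}$ and a Carleson embedding argument near the boundary controlled by $\|\E\|_C$.

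For the lower bound, fix $(t_0,x_0)$ with $t_0<1$ small and a measurable set $F\subset W(t_0,x_0)$, and test with $f:=\chi_F$. Since $f$ is supported at height $\approx t_0$ and vanishes for $t\ge 1$, the $\int_1^\infty$-part of the $\mX$-norm vanishes; moreover $\tN(\chi_F)(x)^2\lesssim t_0^{-(1+n)}|F|$ and $\tN(\chi_F)$ is supported in a ball of radius $\lesssim t_0$ about $x_0$, so $\|f\|_\mX^2\lesssim t_0^{-(1+n)}|F|\cdot t_0^n=t_0^{-1}|F|$. On the other hand $\max(t^{-1},1)\approx t_0^{-1}$ on the support of $f$, whence $\|\E f\|_{\mY^*}^2\gtrsim t_0^{-1}\iint_F|\E|^2$. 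Therefore $\frac1{|F|}\iint_F|\E|^2\lesssim\|\E\|_*^2$, and letting $F$ shrink to a Lebesgue point of $|\E|^2$ gives $|\E(t_0,x_0)|\lesssim\|\E\|_*$. Taking the supremum over such $(t_0,x_0)$, and handling the region $t\ge1$ analogously using the $\int_1^\infty$-term of the $\mX$-norm and the now trivial weight, yields $\|\E\|_{L_\infty(\R_+\times S^n)}\lesssim\|\E\|_*$.

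For the upper bound, write $\|\E f\|_{\mY^*}^2=\int_0^1\|(\E f)_t\|_2^2\tfrac{dt}t+\int_1^\infty\|(\E f)_t\|_2^2\,dt=:\mathrm I+\mathrm{II}$, where $\mathrm{II}\le\|\E\|_{L_\infty}^2\int_1^\infty\|f_t\|_2^2\,dt\le\|\E\|_{L_\infty}^2\|f\|_\mX^2$. For $\mathrm I$, I would first pass from the raw product to Whitney-localized quantities: since $|\E(t,x)|\le\essup_{W(t,x)}|\E|$ for a.e.\ $(t,x)$ and this Whitney supremum is slowly varying (comparable at comparable points after enlarging the Whitney parameters, which changes $\|\cdot\|_C$ only by a constant), a Fubini argument gives
\[
 \mathrm I=\iint_{(0,1)\times S^n}|\E f|^2\,\frac{dt\,dx}t
 \lesssim \iint_{(0,1)\times S^n}\E^{**}(t,x)^2\,
   \Bigl(\frac1{|W(t,x)|}\iint_{W(t,x)}|f|^2\Bigr)\,\frac{dt\,dx}t,
\]
with $\E^{**}(t,x)$ the supremum of $|\E|$ over a slightly enlarged Whitney region. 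Then $d\mu:=\E^{**}(t,x)^2\tfrac{dt\,dx}t$ on $(0,1)\times S^n$ is a Carleson measure with constant $\lesssim\|\E\|_C^2$, essentially by the definition of the modified Carleson norm, while $F(t,x):=\bigl(\tfrac1{|W(t,x)|}\iint_{W(t,x)}|f|^2\bigr)^{1/2}$ satisfies $F(t,x)\lesssim\tN(f)(y)$ whenever $|x-y|\lesssim t$, directly from the definition of $\tN$. The Carleson embedding theorem — proved by the distribution-function argument: $\{F>\lambda\}$ is contained in the union of the tents over a Whitney cover of the open set $\{\tN(f)>c\lambda\}$, so $\mu(\{F>\lambda\})\lesssim\|\mu\|_C\,|\{\tN(f)>c\lambda\}|$, then integrate in $\lambda$ — yields $\mathrm I\lesssim\|\E\|_C^2\|\tN(f)\|_2^2\le\|\E\|_C^2\|f\|_\mX^2$. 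Adding the two pieces gives $\|\E f\|_{\mY^*}\lesssim(\|\E\|_C+\|\E\|_{L_\infty})\|f\|_\mX$.

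The main obstacle is the step inside $\mathrm I$: the passage from $|\E f|^2$ to the product of the Whitney-supremum of $|\E|$ and the Whitney-$L^2$-average of $f$ requires careful bookkeeping of which Whitney region is used where, a check that enlarging the Whitney parameters costs only a constant in $\|\cdot\|_C$, and an invocation of the Carleson embedding theorem in the form adapted to the \emph{modified} (i.e.\ $L^2$-averaged) non-tangential maximal function $\tN$ rather than the pointwise one. All of this is a routine transplantation to $(0,1)\times S^n$ of the half-space arguments in \cite{AA1} and \cite{KP}, but it is where the actual work lies.
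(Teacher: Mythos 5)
Your proposal is correct and follows essentially the same route as the paper, which adapts \cite[Lem.~5.5]{AA1}: the necessity $\|\E\|_\infty\lesssim\|\E\|_*$ by testing on functions supported in single Whitney regions (the paper invokes the $L_2^\loc$ estimates of Lemma~\ref{lem:XlocL2} for this), and the sufficiency by splitting at a fixed height, controlling the far piece by $\|\E\|_\infty$ and the near piece by Whitney averaging plus the Carleson embedding theorem adapted to $\tN$. One small bookkeeping point: since $\|\cdot\|_C$ only involves balls of radius $<r_0$, the Carleson constant of your measure $\E^{**}(t,x)^2\,\frac{dt\,dx}{t}$ on all of $(0,1)\times S^n$ is $\lesssim\|\E\|^2_{C\cap L_\infty}$ rather than $\|\E\|_C^2$ alone (the heights between $r_0$ and $1$ are handled by $\|\E\|_\infty$ and compactness of $S^n$), which is harmless for the stated estimate.
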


\begin{proof}
  This is an adaption to the unit ball of \cite[Lem.~5.5]{AA1}. 
  As in that proof, the estimate $\|\E\|_\infty\lesssim \|\E\|_*$ 
  follows from the $L_2^\loc$ estimates in Lemma~\ref{lem:XlocL2}.
  For the second estimate we write
$$
  \|\E f\|_{\mY^*}^2 = \int_0^{a} \|\E_t f_t\|_2^2 \frac {dt}t+ \int_{a}^\infty \|\E_t f_t\|_2^2 dt.
$$
As in \cite[Lem. 5.5]{AA1}, the first term is estimated with Whitney averaging and Carleson's theorem.
The second term is controlled with $\|\E\|_\infty$.
In total, this gives the bound $\|\E f\|_{\mY^*}\lesssim \|\E\|_C \|f\|_\mX+ \|\E\|_\infty\|f\|_\mX$
as desired.
\end{proof}

\begin{rem} It has been recently proved in \cite{AH} that $ \|\E\|_*\gtrsim  \|\E\|_{C\cap L_\infty}$ so all of our results use in fact the same condition on $\E$.
\end{rem}

We end this section by introducing an auxiliary subspace $\mY_\delta$ of $\mY$.

\begin{defn}    \label{defn:Ydelta}
For $\delta>0$, define the norm
$$
  \|f\|_{\mY_\delta}^2 := \int_0^\infty \| f_t \|^2_2 \min(t,1) e^{\delta t}dt.
$$
Let $\mY_\delta$ be the Hilbert spaces of sections 
$f:\R_+\times S^n\to \V$ such that $\|f\|_{\mY_\delta}$ is finite.
\end{defn}

Clearly $\mY_\delta\subset \mY$. The motivation for introducing $\mY_\delta$ is the following
result.

\begin{prop}     \label{prop:reverseholder} 
  Given coefficients $A\in L_\infty(\bO^{1+n}; \mL(\C^{(1+n)m}))$, which are
  strictly accretive on $\mH_1$, there is $\delta>0$ such that 
$$
  \int_{1}^\infty \|f_{t}\|_{2}^2 e^{\delta t} \, dt \lesssim \int_{1/2}^\infty \|f_t\|_2^2 \, dt,
$$
for all $f\in L_2^\loc(\R_+;\mH)$ solving $\pd_t f+ (DB+\tfrac{n-1}2 N) f=0$.
Hence, if $f\in\mY\cap L_2^\loc(\R_+;\mH)$ and $\pd_t f+ (DB+\tfrac{n-1}2 N) f=0$, then $f\in \mY_\delta$
and $\|f\|_{\mY_\delta}\lesssim \|f\|_\mY$.
\end{prop}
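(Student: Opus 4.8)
The plan is to establish the scale‑invariant inequality $\int_1^\infty\|f_t\|_2^2 e^{\delta t}\,dt\lesssim\int_{1/2}^\infty\|f_t\|_2^2\,dt$ first; the assertion $\|f\|_{\mY_\delta}\lesssim\|f\|_\mY$ then follows by splitting $\|f\|_{\mY_\delta}^2=\int_0^1\|f_t\|_2^2\,t e^{\delta t}\,dt+\int_1^\infty\|f_t\|_2^2 e^{\delta t}\,dt\le e^\delta\|f\|_\mY^2+C\int_{1/2}^\infty\|f_t\|_2^2\,dt$ and noting that $\int_{1/2}^\infty\|f_t\|_2^2\,dt\lesssim\|f\|_\mY^2$ because $\min(t,1)\ge\tfrac12$ for $t\ge\tfrac12$. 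For the scale‑invariant inequality we may assume $\int_{1/2}^\infty\|f_t\|_2^2\,dt<\infty$, otherwise there is nothing to prove. Under this assumption Corollary~\ref{cor:conormal} applies, so $f$ is the conormal gradient of a weak solution $u$ of $\divv_\bx A\nabla_\bx u=0$ in $\bO^{1+n}$; writing $g:=\nabla_\bx u$, the relation \eqref{eq:gradtoconormal} together with $A,A_{\no\no}^{-1}\in L_\infty$ gives $\|f_t\|_2^2\approx e^{-(n+1)t}\|g_{e^{-t}}\|_2^2$ with constants depending only on $\|A\|_\infty$ and $\kappa$, whence, after the substitution $r=e^{-t}$,
$$ \int_t^\infty\|f_s\|_2^2\,ds\ \approx\ \int_{|\bx|<e^{-t}}|\nabla_\bx u(\bx)|^2\,d\bx\ =:\ E(e^{-t}),\qquad t\ge\tfrac12, $$
with a uniform implied constant $C_0$.

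The heart of the argument is an interior geometric decay for $E$. For a \emph{radial} cutoff $\eta=\eta(|\bx|)$ with $\eta=1$ on $\{|\bx|<\rho\}$, $\supp\eta\subset\{|\bx|<2\rho\}$ and $|\nabla\eta|\lesssim\rho^{-1}$, I would test the equation against $\eta^2(u-c)$, $c\in\C^m$ constant, and use that, since $\eta$ is radial, the sphere‑by‑sphere Gårding inequality \eqref{eq:accrasgarding} integrates over $r$ to $\re\int\eta^2(A\nabla_\bx u,\nabla_\bx u)\,d\bx\ge\kappa\int\eta^2|\nabla_\bx u|^2\,d\bx$ (by density in $W^1_2$). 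This yields the Caccioppoli estimate $E(\rho)\lesssim\rho^{-2}\int_{\rho<|\bx|<2\rho}|u-c|^2\,d\bx$. Choosing $c$ to be the mean of $u$ over the fat annulus $\{\rho<|\bx|<2\rho\}$ and applying Poincar\'e's inequality there, $\int_{\rho<|\bx|<2\rho}|u-c|^2\lesssim\rho^2\int_{\rho<|\bx|<2\rho}|\nabla_\bx u|^2$, gives $E(\rho)\lesssim E(2\rho)-E(\rho)$, i.e. $E(\rho)\le\theta E(2\rho)$ for some $\theta=\theta(\kappa,\|A\|_\infty,n)<1$, and hence $E(2^{-k}\rho)\le\theta^k E(\rho)$ for all $k\ge1$.

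Feeding this into the comparison above, I fix $k$ so large that $C_0^2\theta^k\le\tfrac12$; then $J(t):=\int_t^\infty\|f_s\|_2^2\,ds$ satisfies $J(t+k\ln2)\le\tfrac12 J(t)$ for all $t\ge1$, and iterating the halving gives $J(t)\lesssim e^{-\delta' t}J(1/2)$ for some $\delta'=\delta'(k)>0$; in particular $e^{\delta t}J(t)\to0$ as $t\to\infty$ whenever $0<\delta<\delta'$. Since $\|f_t\|_2^2=-J'(t)$ a.e. and $J$ is absolutely continuous on $[\tfrac12,\infty)$, an integration by parts gives
$$ \int_1^\infty\|f_t\|_2^2 e^{\delta t}\,dt=e^\delta J(1)+\delta\int_1^\infty e^{\delta t}J(t)\,dt\lesssim J(1/2)\Big(1+\delta\int_1^\infty e^{(\delta-\delta')t}\,dt\Big)\lesssim\int_{1/2}^\infty\|f_t\|_2^2\,dt $$
for this $\delta$, which is the required inequality.

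The step I expect to be the main obstacle, or at least the one needing the most care, is the bookkeeping of constants: the comparison between $\|f_t\|_2$ and $\|g_{e^{-t}}\|_2$ holds only up to a fixed multiplicative constant $C_0$, while the elementary Caccioppoli decay factor $\theta$ is close to $1$, so $\theta$ must first be iterated sufficiently many times ($k$ steps, $C_0^2\theta^k\le\tfrac12$) before the decay can be transferred to the $f$‑side — without this the tail recursion need not contract. The other point worth spelling out is precisely why Caccioppoli is legitimate here for systems ($m\ge1$) under the mere sphere‑by‑sphere hypothesis \eqref{eq:accrasgarding}: this is exactly why the cutoff in the energy estimate must be taken radial, which is harmless since the localization takes place around the origin.
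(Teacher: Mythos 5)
Your proof is correct, but it takes a genuinely different route from the paper. The paper's own argument is very short: it invokes the Meyers-type reverse H\"older estimate of Theorem~\ref{thm:revholder} (higher integrability $L^p$, $p>2$, of $\nabla_\bx u$, whose proof goes through Caccioppoli under G\r{a}rding plus Gehring's lemma) and then a single application of H\"older's inequality with the weight $|\bx|^{-\delta}$, $0<\delta<\tfrac{(n+1)(p-2)}{p}$, which under $r=e^{-t}$ is exactly the weight $e^{\delta t}$. You instead prove power decay of the Dirichlet integral $E(\rho)=\int_{|\bx|<\rho}|\nabla_\bx u|^2\,d\bx$ by the classical hole-filling iteration (Caccioppoli with an origin-centered \emph{radial} cutoff plus Poincar\'e on annuli), transfer it to exponential decay of $J(t)=\int_t^\infty\|f_s\|_2^2\,ds$ via the two-sided comparison $\|f_t\|_2^2\approx e^{-(n+1)t}\|g_{e^{-t}}\|_2^2$, and conclude by integration by parts; your bookkeeping of the constants $C_0$, $\theta$, $k$ and the limits $J(T)e^{\delta T}\to0$ is sound. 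What each approach buys: the paper's is three lines given Theorem~\ref{thm:revholder}, which it needs elsewhere anyway, and ties $\delta$ explicitly to the Meyers exponent; yours is self-contained and more elementary, avoiding Gehring's lemma, and your radial-cutoff observation gives Caccioppoli directly from \eqref{eq:accrasgarding} without appealing to \cite{C} or \cite{AQ} --- which suffices here since only balls centered at the origin are needed. One small point of rigor: the justification of the integrated G\r{a}rding inequality $\re\int\eta^2(A\nabla_\bx u,\nabla_\bx u)\,d\bx\ge\kappa\int\eta^2|\nabla_\bx u|^2\,d\bx$ is better phrased not as a density argument but via the equivalence, noted after \eqref{eq:H1space}, of \eqref{eq:accrasgarding} with strict accretivity of $A_r$ on $\mH_1$ for a.e.\ $r$: by Fubini, for a.e.\ $r$ the restriction of $\nabla_\bx u$ to $rS^n$ has tangential part $r^{-1}\nabla_S u_r\in\ran(\nabla_S)$, so the sphere-by-sphere inequality applies to the weak solution itself, and multiplying by $\eta(r)^2r^n\ge0$ and integrating in $r$ gives the claim.
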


The proof of Proposition~\ref{prop:reverseholder} uses {\em reverse H\"older  inequalities}.

\begin{thm}   \label{thm:revholder} Fix $c>1$.
There exist $C<\infty$ and $p>2$ depending only on $n,m$, the ellipticity constants  $\|A\|_{\infty}, \kappa_{A}$ of $A$ and $c$, 
such that  for any ball $B$ with $c\overline{B}\subset \bO^{1+n}$ and any weak solution to  $\divv_\bx (A\nabla_{\bx}u)=0$ in $\bO^{1+n}$, we have 
$$
\left(\int_{B} |\nabla_{\bx}u|^p\, d\bx\right)^{1/p} \le C \left(\int_{cB} |\nabla_{\bx}u|^2\, d\bx\right)^{1/2}.
$$
\end{thm}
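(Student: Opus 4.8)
The plan is to prove the reverse Hölder inequality for gradients of weak solutions to $\divv_\bx(A\nabla_\bx u)=0$ by the standard Caccioppoli-plus-Sobolev-Poincaré-plus-Gehring route, which is purely local and interior. Since both balls $B$ and $cB$ are compactly contained in $\bO^{1+n}$, the curvature of the sphere and the boundary behaviour play no role whatsoever here: only boundedness and strict accretivity of $A$ (in the pointwise-on-spheres sense \eqref{eq:accrasgarding}, which suffices since we only integrate against genuine test functions supported in the interior) are used.

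First I would establish an interior Caccioppoli inequality: for any ball $B_\rho$ with $B_{2\rho}\subset\bO^{1+n}$ and any $c\in\C^m$, testing the weak formulation against $\eta^2(u-c)$ with a cutoff $\eta$ equal to $1$ on $B_\rho$, supported in $B_{2\rho}$, and $\|\nabla\eta\|_\infty\lesssim\rho^{-1}$, and using the accretivity \eqref{eq:accrasgarding} to absorb the good term together with $\|A\|_\infty$ to bound the cross term, yields
$$
  \int_{B_\rho}|\nabla_\bx u|^2\,d\bx \lesssim \rho^{-2}\int_{B_{2\rho}}|u-c|^2\,d\bx.
$$
Here one should note that \eqref{eq:accrasgarding} gives control after integrating over spheres, so one slices in polar coordinates or, more simply, observes that for a genuine $C^1_0$ (hence $W^{1,2}_0$) test function the accretivity is available in the form needed; alternatively one invokes that on any compact subset of $\bO^{1+n}$ away from the origin the condition implies the usual pointwise Gårding inequality for the form. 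Next I would combine this with the Sobolev–Poincaré inequality: choosing $c$ to be the average of $u$ on $B_{2\rho}$ and using $\|u-c\|_{L^2(B_{2\rho})}\lesssim\rho\,\|\nabla_\bx u\|_{L^{2_*}(B_{2\rho})}$ with $2_*=\frac{2(1+n)}{1+n+2}<2$, we obtain the reverse-Hölder-with-gain-of-integrability-on-the-left inequality
$$
  \left(\fint_{B_\rho}|\nabla_\bx u|^2\,d\bx\right)^{1/2}\lesssim \left(\fint_{B_{2\rho}}|\nabla_\bx u|^{2_*}\,d\bx\right)^{1/{2_*}},
$$
uniformly over all such balls.

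Then I would apply Gehring's lemma (in the form due to Giaquinta–Modica; see e.g. Giaquinta's book, or the self-improving property of reverse Hölder inequalities): an inequality of the above type, holding on all balls $B_\rho$ with $B_{2\rho}$ in the domain, self-improves to the existence of $p>2$ and $C$, depending only on $n$, $m$, $\|A\|_\infty$, $\kappa_A$ (through the Caccioppoli constant), such that
$$
  \left(\fint_{B}|\nabla_\bx u|^p\,d\bx\right)^{1/p}\le C\left(\fint_{cB}|\nabla_\bx u|^2\,d\bx\right)^{1/2}
$$
for every ball $B$ with $c\overline B\subset\bO^{1+n}$, which after absorbing the normalising volume factors (harmless since $|B|$ and $|cB|$ are comparable) is exactly the claimed estimate; the fixed dilation $c>1$ enters only in fixing how the covering/iteration in Gehring's lemma is set up, and the constants may be taken to depend on $c$ as stated.

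The only mildly delicate point — and the one I would flag as the main obstacle — is the very first step: justifying that the strict accretivity hypothesis \eqref{eq:accrasgarding}, which is phrased as an inequality after integration over spheres $S^n$ and for $C^1$ functions on the whole ball, delivers the coercivity needed to run Caccioppoli with cutoffs supported in a small interior ball. This is resolved by noting that such cutoff test functions, extended by $0$, are admissible competitors (density of $C^1_0$), and \eqref{eq:accrasgarding} integrated in $r$ over $(0,1)$ against the relevant weight gives $\re\int_{\bO^{1+n}}(A\nabla_\bx w,\nabla_\bx w)\,d\bx\ge\kappa\int_{\bO^{1+n}}|\nabla_\bx w|^2\,d\bx$ for all $w\in W^{1,2}_0$ supported away from $0$; applied to $w=\eta^2(u-c)$ combined with $\|A\|_\infty$-bounds on the error terms, this is precisely what Caccioppoli requires. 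Everything after this is the textbook Gehring argument and I would simply cite it rather than reproduce it.
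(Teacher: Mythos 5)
Your argument is correct and is essentially the paper's own proof: an interior Caccioppoli inequality valid under the G\aa rding-type accretivity \eqref{eq:accrasgarding} (the paper cites Campanato and Auscher--Qafsaoui for this step, while you sketch it by applying the integrated accretivity to the cutoff function and using the equation to control the leading term), followed by Sobolev--Poincar\'e to lower the exponent on the right and Gehring's self-improvement from Giaquinta's book, which is exactly the route the paper takes in extending Meyers' theorem to systems. The only point to adjust is your aside that \eqref{eq:accrasgarding} yields a ``pointwise'' G\aa rding inequality on compact subsets away from the origin -- pointwise accretivity genuinely fails for systems with $m,n\ge 2$, and the restriction away from the origin is unnecessary since integrating \eqref{eq:accrasgarding} in $r$ gives coercivity on all of $W^{1,2}_0(\bO^{1+n})$ -- but your main resolution via that integrated G\aa rding inequality applied to $\eta(u-c)$ (with the equation tested against $\eta^2(u-c)$) is the correct and sufficient one.
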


\begin{proof} 
This result is due to N. Meyers  \cite{Me} for equations. Here, we make sure that the result extends to elliptic systems in the sense of G\aa rding by giving appropriate references.  We begin by noting that the usual Caccioppoli inequality for weak solutions
$$\left(\int_{B} |\nabla_{\bx}u|^2\, d\bx\right)^{1/2} \le Cr \left(\int_{cB} |u|^2\, d\bx\right)^{1/2}$$
for any ball $B$ so that  $c\overline{B}\subset \bO^{1+n}$, with $r$ its radius, holds for any system that is elliptic in the sense  of the G\aa rding inequality \eqref{eq:accrasgarding}.
Although not stated like this in  \cite[Thm.~1.5, p.46]{C},  the proof  only uses G\aa rding's inequality. See also \cite{AQ} where it is done  explicitly  for second (and higher order)  equations  and it is said \cite[p.315]{AQ} that this applies \textit{in extenso} to such systems. 
 The constant $C$ depends only on $n$, $m$, $\kappa$, $\|A\|_{\infty}$ and $c$. Now, this combined with Poincar\'e inequality yields
$$
\left(\int_{B} |\nabla_{\bx}u|^2\, d\bx\right)^{1/2} \le \left(\int_{cB} |\nabla_{\bx}u|^q\, d\bx\right)^{1/q}
$$
for $\frac{2(n+1)}{n+3}<q<2$. Finally, Gerhing's method for improvement of reverse H\"older inequalities with increase of radii, presented  in \cite[Thm.~6.3]{Gia2}, applies.
 \end{proof}

\begin{proof}[Proof of Proposition~\ref{prop:reverseholder}]
Corollary~\ref{cor:conormal} shows  that  $f$  is the conormal gradient   of    a weak solution  to 
$\divv_\bx A\nabla_\bx u=0$ in $\bO^{1+n}$.
By H\"older's inequality and Theorem~\ref{thm:revholder}, we have
for $g= \nabla_\bx u$ the estimate
$$
\left(\int_{|\bx|<e^{-1}} |g(\bx)|^2 |\bx|^{-\delta} \, d\bx\right)^{1/2} 
\lesssim \left(\int_{|\bx|<e^{-1}} |g(\bx)|^p\, d\bx\right)^{1/p}
\lesssim  \left(\int_{|\bx|<e^{-1/2}} |g(\bx)|^2\, d\bx\right)^{1/2}
$$ 
for $0<\delta <\frac{(n+1)(p-2)}{p}$.
This translates to the stated estimate for $f$, using the gradient-to-conormal gradient map 
from Definition~\ref{defn:gradtoconormal}.
\end{proof}

%
%
%
%
%
\section{Semi-groups and radially independent coefficients}\label{sec:semigroups}

\noindent\textbf{In this section and the subsequent ones, we set $\sigma=\frac{n-1}{2}$.}

In this section, fix  radially independent coefficients $A_{1}$ and $B_{0}=\widehat {A_{1}}$. We show how to obtain weak solutions of $\divv_{\bx}A_{1}\nabla_{\bx}u=0$ inside and outside $\bO^{1+n}$ using the semi-groups associated to $\Lambda$ and $\tilde\Lambda$. 
Later, we show all weak solutions with prescribed growth towards the boundary have a representation in terms of these semi-groups. 

\begin{thm}     \label{thm:NT}
  Let $f_0$ belong to the spectral subspace $E_0^+ \mH$. Then 
$$
f_t:= e^{-t\Lambda}f_0
$$ 
  gives an $\mH$-valued solution
  to $\pd_t f+ D_0 f=0$, in the strong sense $f\in C^1(\R_+; L_2)\cap C^0(\R_+; \dom(D_0))$ and in $\R^+\times S^n$ distribution sense.
  (In particular $f$ is the conormal gradient of a weak solution of 
 $\divv_{\bx}A_{1}\nabla_{\bx}u=0$ in $\bO^{1+n}$.)
  The function $f$ has $L_2$ limit $\lim_{t\to 0} f_t =f_0$ and rapid decay 
  $\| \pd_t^j f_t \|_2 \le C_{j,k}/t^k \|f_0\|_2$, for each $k\ge j\ge 0$.
  Moreover, we have estimates
$$
  \|\pd_t f\|_\mY\approx \|f_0\|_2 \approx \|f\|_\mX.
$$
  
  If instead $f_0$ belongs to the spectral subspace $E_0^- \mH$, then 
  define $f_t:= e^{t\Lambda}f_0$ for $t<0$.
  Then $\pd_t f+ D_0 f=0$ for $t<0$.   
  (In particular $f$ is the conormal gradient of a weak solution of 
  $\divv_{\bx}A_{1}\nabla_{\bx}u=0$ in $\R^n\setminus\clos{\bO^{1+n}}$.)
  Limits and estimates as above hold for $f_t$, $t<0$.
\end{thm}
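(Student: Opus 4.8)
The plan is to establish the statement for $f_0\in E_0^+\mH$ in detail, and then deduce the claim for $f_0\in E_0^-\mH$ by the reflection $t\mapsto -t$, which interchanges the roles of the two spectral subspaces since $E_0^-= \chi^-(D_0)$ and $e^{t\Lambda}$ for $t<0$ is built from $b(\lambda)=e^{t|\lambda|}=e^{-|t||\lambda|}$ via Corollary~\ref{cor:fcalc}. So all the real work is the first part.

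First I would record the basic semigroup/holomorphy properties. By Corollary~\ref{cor:fcalc} applied with $b(\lambda)=e^{-t|\lambda|}$ (which lies in $H_\infty(S^o_{\nu,\sigma})$ for $t\ge 0$, and in $\Psi(S^o_{\nu,\sigma})$ for $t>0$ on the relevant component), $e^{-t\Lambda}$ is a bounded operator on $\ran(D_0)$, and since $f_0\in E_0^+\mH\subset\mH=\ran(D_0|_\mH)$ (Proposition~\ref{prop:DBprops}) it leaves $\mH$ invariant. Because on $E_0^+\mH$ we have $\Lambda=D_0$ (the sign is $+1$ there), $e^{-t\Lambda}f_0=e^{-tD_0}f_0$, and the contour-integral representation from Corollary~\ref{cor:fcalc} together with the resolvent bounds of Proposition~\ref{prop:spectrum} gives, for $j\le k$, that $\partial_t^j f_t=(-D_0)^j e^{-tD_0}f_0$ with $\|\partial_t^j f_t\|_2=\|D_0^j e^{-tD_0}f_0\|_2\lesssim t^{-k}\|f_0\|_2$, by differentiating under the integral and using that $|\lambda|^j e^{-t\re\lambda}\lesssim t^{-k}|\lambda|^{j-k}$ on $\gamma$ with $|\lambda|^{j-k}$ integrable against $|d\lambda|/|\lambda|$ when $k>j$ (and for $k=j$ one uses instead $\sup_t\|D_0^j e^{-tD_0}\|<\infty$ on the spectral subspace, or a standard analytic-semigroup estimate). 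This gives $f\in C^1(\R_+;L_2)\cap C^0(\R_+;\dom(D_0))$, the rapid decay, and $\partial_t f+D_0 f=0$ in the strong sense; the distributional identity on $\R^+\times S^n$ follows by testing against $C_0^\infty$ and integrating by parts in $t$. The $L_2$-limit $f_t\to f_0$ as $t\to 0$ is the strong continuity at $0$, which follows from the functional calculus convergence statement in Corollary~\ref{cor:fcalc} since $e^{-t|\lambda|}\to 1$ pointwise and boundedly on $S^o_{\nu,\sigma}$, restricted to the spectral subspace where the limit is the identity. Then Corollary~\ref{cor:conormal} identifies $f$ as the conormal gradient of a weak solution of $\divv_\bx A_1\nabla_\bx u=0$ on $\bO^{1+n}$ (one checks $\int_1^\infty\|f_t\|_2^2\,dt<\infty$, immediate from exponential decay).

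The quantitative norm equivalences $\|\partial_t f\|_\mY\approx\|f_0\|_2\approx\|f\|_\mX$ are the crux, and I expect the $\mX$-bound to be the main obstacle. For $\|\partial_t f\|_\mY$: writing $\partial_t f_t=-D_0 e^{-tD_0}f_0$, the function $\lambda\mapsto \lambda e^{-t\lambda}$ is, after the substitution $z=t\lambda$, of the form $\psi(tD_0)$ with $\psi(z)=z e^{-z}\in\Psi(S^o_{\nu,\sigma})$ and non-zero on the relevant half of $S^o_{\nu,\sigma}$; hence by the square function estimate \eqref{eq:genQE} (Theorem~\ref{thm:QE} and the remark following Corollary~\ref{cor:fcalc}), applied to $f_0\in E_0^+\mH\subset\ran(D_0)$,
$$
\|\partial_t f\|_\mY^2=\int_0^\infty\|D_0 e^{-tD_0}f_0\|_2^2\min(t,1)\,dt\approx\int_0^\infty\|\psi(tD_0)f_0\|_2^2\,\frac{dt}{t}\approx\|f_0\|_2^2,
$$
where the $\min(t,1)$ versus $dt/t$ discrepancy is absorbed using the rapid decay of $\|D_0 e^{-tD_0}f_0\|_2$ for large $t$ (Lemma~\ref{lem:XlocL2} and the remark after it show the threshold is irrelevant). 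For $\|f\|_\mX=\|\tN(f)\|_2+(\int_1^\infty\|f_t\|_2^2\,dt)^{1/2}$: the tail integral is controlled by exponential decay. For the non-tangential maximal part, I would use the resolvent representation $e^{-tD_0}=\frac{1}{2\pi i}\int_\gamma e^{-t\lambda}(\lambda-D_0)^{-1}\,d\lambda$ and expand each resolvent in terms of the kernels $(I+isD_0)^{-1}$, whose $L_2$ off-diagonal decay is Lemma~\ref{lem:offdiagonal}; Corollary~\ref{cor:resolventNT} gives $\tN^2((I+isD_0)^{-1}h)\lesssim M(|h|^2)^{1/2}$ pointwise, and integrating these bounds over the contour (with the exponential weight $e^{-t\re\lambda}$ providing convergence) yields $\tN(f)\lesssim M(|f_0|^2)^{1/2}$ up to a fixed cutoff scale, whence $\|\tN(f)\|_2\lesssim\|f_0\|_2$ by the Hardy–Littlewood maximal theorem on $S^n$. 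Conversely $\|f_0\|_2\lesssim\|f\|_\mX$ and $\|f_0\|_2\lesssim\|\partial_t f\|_\mY$: the first follows from the lower $L_2^\loc$ bound $\sup_{0<t<1/2}t^{-1}\int_t^{2t}\|f_s\|_2^2\,ds\lesssim\|\tN(f)\|_2^2$ of Lemma~\ref{lem:XlocL2} combined with $f_s\to f_0$ in $L_2$; the second from the reverse of the square function estimate on $\ran(D_0)$, or more simply by writing $f_0=\lim_{t\to 0}f_t$ and $f_t-f_T=\int_t^T(-\partial_s f_s)\,ds$ together with the $\mY$-control of $\partial_s f$ and the decay $f_T\to 0$. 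Finally the analogous statements for $t<0$ with $f_0\in E_0^-\mH$: set $\tilde f(t,x):=f(-t,x)$, note $\tilde f$ solves $\partial_t\tilde f+\tilde D\tilde f=0$ with the mirrored operator, or more directly observe that on $E_0^-\mH$ one has $-\Lambda=D_0$ so $f_t=e^{t\Lambda}f_0=e^{-(-t)(-\Lambda)}f_0$ is of exactly the previous form in the variable $-t>0$; all estimates transfer verbatim, and Remark~\ref{rem:extension} identifies $f$ as the conormal gradient of a weak solution in the exterior domain $\R^n\setminus\clos{\bO^{1+n}}$.

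The main obstacle, as indicated, is organizing the non-tangential maximal estimate $\|\tN(f)\|_2\lesssim\|f_0\|_2$ cleanly: one must convert the holomorphic functional calculus expression $e^{-tD_0}f_0$ into a form to which the pointwise maximal bounds of Corollary~\ref{cor:resolventNT} apply, controlling the contour integral uniformly in the Whitney parameter $t$ and handling the region $t$ near the cutoff $c_0$ separately. This is essentially the same computation as in the half-space setting of \cite{AA1} and the techniques of \cite{AAH}, so I would cite those for the routine parts and only spell out the modifications forced by the curvature term $\sigma N$ and the fact that $\ran(D_0)$ may be a proper subspace of $L_2$ when $\sigma=0$.
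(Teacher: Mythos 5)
The main gap is in the non-tangential maximal estimate $\|\tN(f)\|_2\lesssim\|f_0\|_2$, which is the heart of the theorem. You bound $\tN(f)$ pointwise by $M(|f_0|^2)^{1/2}$ and then invoke the Hardy--Littlewood maximal theorem; but $\|M(|f_0|^2)^{1/2}\|_2^2=\|M(|f_0|^2)\|_1$ and $M$ is not bounded on $L_1(S^n)$, so this last step fails. To use the maximal theorem one must work with $\tN^p$ and $M(|f_0|^p)^{1/p}$ for some $p<2$ (so that $M$ acts boundedly on $L_{2/p}$, $2/p>1$); but $\tN$, the $L_2$-Whitney-average maximal function that enters $\|\cdot\|_{\mX}$, is not dominated by $\tN^p$ for general functions. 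The missing idea --- and the one the paper uses --- is that $f=e^{-t\Lambda}f_0$ is the conormal gradient of a weak solution of $\divv_\bx A_1\nabla_\bx u=0$ (this is exactly where $\sigma=\tfrac{n-1}{2}$ matters, as the remark after the theorem stresses), so Meyers' reverse H\"older inequality (Theorem~\ref{thm:revholder}, as exploited in Proposition~\ref{prop:reverseholder}) gives the pointwise bound $\tN(f)\lesssim \tN^p(f)$ for some $p<2$; after that one splits $e^{-t|z|}=\psi(tz)+(1+itz)^{-1}$ with $\psi\in\Psi(S^o_{\nu,\sigma})$, treats $\psi(tD_0)f_0$ via Lemma~\ref{lem:XlocL2} and the square function estimates, and the resolvent piece via Corollary~\ref{cor:resolventNT} with exponent $p<2$. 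Your alternative of expanding $e^{-tD_0}$ as a contour integral of resolvents and integrating pointwise maximal bounds over the contour also has a summability problem: for a fixed Whitney scale $t$ one has $\int_\gamma |e^{-t\lambda}|\,|d\lambda|\approx t^{-1}$, so the triangle inequality over $\gamma$ produces an uncontrolled factor, whereas the same-scale splitting above avoids this entirely.

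Two smaller points. Your ``more simply'' route to $\|f_0\|_2\lesssim\|\pd_t f\|_{\mY}$ via $f_0=\int_0^\infty(-\pd_s f_s)\,ds$ does not work: Cauchy--Schwarz against the weight $\min(s,1)$ costs $\int_0^1 s^{-1}ds=\infty$ near $s=0$. The primary route via the two-sided square function estimate is correct, but you still must control $\int_1^\infty\|\Lambda e^{-t\Lambda}f_0\|_2^2\,t\,dt$ by $\|\pd_t f\|_{\mY}^2$ (e.g.\ write $\Lambda e^{-t\Lambda}f_0=e^{-(t-s)\Lambda}\Lambda e^{-s\Lambda}f_0$, average over $s\in(\tfrac12,1)$, and use the decay of the semigroup on $E_0^+\mH$); one cannot simply subtract the large-$t$ contribution, since its constant is not small. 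Finally, the exponential decay you invoke is not immediate from the stated $H_\infty(S^o_{\nu,\sigma})$ calculus when $n=1$ (then $\sigma=0$ and the region touches the origin); the rapid polynomial decay $\|\pd_t^j f_t\|_2\lesssim t^{-k}\|f_0\|_2$, obtained from the invertibility of $D_0$ on $\mH$ as in the paper, is what is available and it suffices for every use you make of decay.
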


\begin{proof}
  (i)
  The rapid decay of $f_t$ follows from the lower bound on $D_0|_\mH$ from Proposition~\ref{prop:DBprops},
  giving
$$
  \|\pd_t^j f_t\|_2 = \|\Lambda^j e^{-t\Lambda} f_0\|_2
  \lesssim \|(D_0)^{k-j} \Lambda^j e^{-t\Lambda}f_0\|_2\approx t^{-k} \|(t\Lambda)^k e^{-t\Lambda}f_0\|_2
  \lesssim t^{-k}\|f_0\|_2.
$$

(ii) That $f$ is the conormal gradient of a solution follows from Corollary~\ref{cor:conormal} and
it is straightforward to show that the ODE $\pd_t f+ D_0 f=0$ is satisfied in the strong and distribution sense.

(iii) Next,  $\|\pd_t f\|_\mY^2 \le  \int_0^\infty \|\pd_t f_t\|_2^2 tdt$, and 
the square function estimate $\int_0^\infty \|\pd_t f_t\|_2^2 tdt\approx \|f_0\|_2^2$ follows from 
(\ref{eq:genQE}), since $\pd_t f_t= -\Lambda e^{-t\Lambda} f_0$.
This together with the decay from (i) with $j=1$ shows $\|f_0\|_2\approx \|\pd_t f\|_\mY$.

(iv) It remains to show that $\|f_{0}\|_{2}\approx \|f\|_\mX$. For this, the decay from (i) with $j=0$
implies it is enough to prove 
$ \|\tN f\|_{2} \approx \|f_0\|_2 .$ The proof is an adaptation of the 
results on $\R^{1+n}_+$ from \cite[Prop. 2.56]{AAH} as follows.  

 The estimate $\|\tN(f)\|_2\gtrsim \|f_0\|_2$ follows from Lemma~\ref{lem:XlocL2}. Next consider the estimate $\lesssim$. 
 We follow the argument in \cite[Prop.~2.56]{AAH}. By the reverse H\"older inequalities noted in the proof of Proposition \ref{prop:reverseholder} applied to a weak solution of the divergence form equation with coefficients $A_{1}$ associated with $f=e^{-t|D_0|}f_0$, we can bound  $L_{2}$ averages   by $L_{p}$ averages for some $p<2$, \textit{i.e.}  $\tN f\lesssim \tN^p f$ in a pointwise sense (up to changing to constants $c_{0},c_{1}$).  Since $\psi(\lambda)=e^{-|\lambda|}-(1+i\lambda)^{-1}\in \Psi(S_{\nu,\sigma}^o)$, 
it follows from Lemma~\ref{lem:XlocL2} and Theorem~\ref{thm:QE}, 
or more precisely (\ref{eq:genQE}), that
$$
 \|\tN^p(\psi(tD_{0})f_0)\|_2 \lesssim   \|\tN(\psi(tD_{0})f_0)\|_2\lesssim \|f_0\|_2.
$$
For   
$h_t:= (I+itD_0)^{-1} f_0$ we have $\|\tN^p(h)\|_2 \lesssim \|M(|f_{0}|^p)^{1/p}\|_{2} \lesssim \|f_{0}\|_{2}$ by Corollary~\ref{cor:resolventNT} and the boundedness of $M$ on $L_{2/p}$. 
We have proved that $\|\tN f\|_{2}\lesssim \|f_{0}\|_{2}$. 

(v) The modifications for   $f_{0} \in E_{0}^-\mH$ are straightforward, and the correspondence with $u$ follows from applying  the methods of Proposition \ref{prop:divformasODE}.
\end{proof}

\begin{rem} 
The assumption $\sigma=\frac{n-1}{2}$ is used in part (iv) to pass from $\tN$ to $\tN^p$ with some $p<2$. Thus, for any $\sigma\in \R$, $f_{0} \in \mH$ and $p<2$,
we have $\|\tN^p(f)\|_{2}\lesssim \|f_{0}\|_{2}$. The converse, however, is not clear because $p<2$, and this shows that the value of $\sigma$ is significant. 
\end{rem}

\begin{thm}     \label{thm:NTtilde}
  Let $v_0\in \tE_{0}^+ L_{2}$. Then 
$$
v_t:= e^{-t\tilde\Lambda}v_0
$$
 gives a solution
  to $\pd_{t}v+ \tD_{0}v=0$, in the strong sense $v\in C^1(\R_+; L_2)\cap C^0(\R_+; \dom(\tD_0))$ and in $\R^+\times S^n$ distributional sense.
  (In particular $r^{-\sigma}(v_{t})_{\no}$ extends to a weak solution of 
  $\divv_{\bx}A_{1}\nabla_{\bx}u=0$ in $\bO^{1+n}$ as in Proposition~\ref{prop:ODEtoPotential}.)
  The function $v$ has $L_2$ limit $\lim_{t\to 0} v_t =v_0$ and rapid decay $\| \pd_t^j v_t \|_2 \le C_{j,k}/t^k \|v_0\|_2$ for each $k\ge j\ge 0$. (When $\sigma=0$, this estimate for $j=0$ only holds for 
  $v_0\in \ran(\tD_0)\cap \tE_{0}^+ L_{2}$.)
  Moreover, for $p<2$, we have estimates
$$
\|\pd_t v\|_\mY + \|\tN^p (v)\|_{2} +\|\tN(v_{\no})\|_{2} \lesssim \|v_0\|_2.
$$
  In dimension $n=1$, we have $\|v\|_{\mX}\approx \|v_{0}\|_{2}$. 
  
  If instead $v_0\in \tE_{0}^- L_{2}$, then 
  define $v_t:= e^{t\tilde\Lambda}v_0$ for $t<0$.
  Then $\pd_{t}v+ \tD_{0}v=0$ for $t<0$.   
  (In particular $r^{-\sigma}(v_{t})_{\no}$ satisfies 
  $\divv_{\bx}A_{1}\nabla_{\bx}u=0$ in $\R^n\setminus\clos{\bO^{1+n}}$ as in Proposition~\ref{prop:ODEtoPotential}.)
  Limits and estimates as above hold for $v_t$, $t<0$.
\end{thm}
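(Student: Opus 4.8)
The plan is to follow the proof of Theorem~\ref{thm:NT}, using throughout the holomorphic functional calculus of $\tD_0$ (Corollary~\ref{cor:fcalc}) and the square function estimates \eqref{eq:genQE} for $\tD_0$ in place of those for $D_0$. First I would dispatch the qualitative statements. Since $\sigma=\tfrac{n-1}2\ge0$ we have $\tilde\Lambda=\tD_0$ on $\tE_0^+L_2$, so $v_t=e^{-t\tD_0}v_0$, and the semigroup property together with the bounded functional calculus give $v\in C^1(\R_+;L_2)\cap C^0(\R_+;\dom(\tD_0))$, the equation $\pd_tv+\tD_0v=0$ in the strong and hence distributional sense, and $v_t\to v_0$ in $L_2$ as $t\to0$. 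The rapid decay $\|\pd_t^jv_t\|_2\le C_{j,k}t^{-k}\|v_0\|_2$ I would get from the uniform boundedness of $\lambda\mapsto(t|\lambda|)^ke^{-t|\lambda|}$ on $S^o_{\nu,\sigma}$ together with invertibility of $\tD_0$: on all of $L_2$ when $\sigma\ne0$ (Proposition~\ref{prop:spectrum}), and on $\ran(\tD_0)=B_0\mH$ when $\sigma=0$ (Proposition~\ref{prop:BDprops})---this is why, for $\sigma=0$, the bound with $j=0$ requires $v_0\in\ran(\tD_0)$, while for $j\ge1$ there is nothing to prove on $\mH^\perp$ since $\pd_tv\equiv0$ there. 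As $v_t\in\dom(\tD_0)=\dom(D)$ with $\int_1^\infty\|Dv_t\|_2^2\,dt<\infty$ by the decay, Proposition~\ref{prop:ODEtoPotential} applies and gives that $r^{-\sigma}(v_t)_\no$ extends to a weak solution of $\divv_\bx A_1\nabla_\bx u=0$ in $\bO^{1+n}$; the exterior version for $v_0\in\tE_0^-L_2$, $t<0$, follows the same way via Remark~\ref{rem:extension}, with the obvious sign changes.

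For the quantitative estimates with $p<2$, I would first note $\pd_tv_t=-t^{-1}\psi(t\tD_0)v_0$ with $\psi(\lambda):=|\lambda|e^{-|\lambda|}\in\Psi(S^o_{\nu,\sigma})$, so $\|\pd_tv\|_\mY^2\le\int_0^\infty\|\pd_tv_t\|_2^2\,t\,dt=\int_0^\infty\|\psi(t\tD_0)v_0\|_2^2\,\frac{dt}t\lesssim\|v_0\|_2^2$ by \eqref{eq:genQE} for $\tD_0$ (when $\sigma=0$ the $\mH^\perp$-part of $v_0$ contributes nothing, as $\pd_tv\equiv0$ there). For the maximal estimates I would split $e^{-t\tilde\Lambda}=\psi_1(t\tD_0)+(I+it\tD_0)^{-1}$ with $\psi_1(\lambda):=e^{-|\lambda|}-(1+i\lambda)^{-1}\in\Psi(S^o_{\nu,\sigma})$ and $\psi_1(0\pm)=0$, so the splitting is compatible with the extended calculus when $\sigma=0$. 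For the $\psi_1$-term, Lemma~\ref{lem:XlocL2} and \eqref{eq:genQE} give $\|\tN(\psi_1(t\tD_0)v_0)\|_2\lesssim\|v_0\|_2$, a fortiori controlling $\tN^p$ and the normal part of this term. For the resolvent term, Corollary~\ref{cor:resolventNT} gives $\tN^p((I+it\tD_0)^{-1}v_0)\lesssim M(|v_0|^p)^{1/p}$ and $\tN(((I+it\tD_0)^{-1}v_0)_\no)\lesssim M(|v_0|^p)^{1/p}$ for suitable $p<2$, and boundedness of $M$ on $L_{2/p}$ closes $\|\tN^p(v)\|_2+\|\tN(v_\no)\|_2\lesssim\|v_0\|_2$.

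For $n=1$, where $\sigma=0$, the bound $\|v\|_\mX\gtrsim\|v_0\|_2$ is immediate from the lower bound of Lemma~\ref{lem:XlocL2} and $v_t\to v_0$ in $L_2$. For $\|v\|_\mX\lesssim\|v_0\|_2$ I would upgrade $\tN^p(v)$ to $\tN(v)$ using Proposition~\ref{prop:conjugates}: writing $v=\begin{bmatrix}u&\tilde u\end{bmatrix}^t$, both $u$ and its conjugate $\tilde u$ are weak solutions of divergence form equations (with coefficients $A_1$ and $\widetilde{A_1}$), so the reverse Hölder inequalities of Theorem~\ref{thm:revholder} apply to each component and give $\tN(v)\lesssim\tN^p(v)$ pointwise for some $p<2$, whence $\|\tN(v)\|_2\lesssim\|v_0\|_2$ by the previous paragraph; the tail $\int_1^\infty\|v_t\|_2^2\,dt\lesssim\|v_0\|_2^2$ comes from the exponential decay of $e^{-t\tilde\Lambda}$ on $\tE_0^+(B_0\mH)$ (where $\tD_0$ is bounded below), the finite-dimensional piece of $v_0$ in $\tE_0^+\mH^\perp=N^-\mH^\perp$ being split off and handled directly. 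The statements for $v_0\in\tE_0^-L_2$, $t<0$, mirror the above.

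The step I expect to be the main obstacle is the normal-component estimate $\|\tN(v_\no)\|_2\lesssim\|v_0\|_2$: for the full section $v$ only $\tN^p$ with $p<2$ is available, yet the normal part must be controlled in genuine $L_2$ Whitney averages, which forces the use of the sharper $L_q$ ($q>2$) off-diagonal and maximal bounds for the normal part of the resolvent (Lemma~\ref{lem:offdiagonal}(ii) and the last estimate of Corollary~\ref{cor:resolventNT}). The other delicate point is the bookkeeping of $\nul(\tD_0)$ when $\sigma=0$---visible in the $j=0$ decay and in the $n=1$ $\mX$-equivalence---which must be split off consistently each time.
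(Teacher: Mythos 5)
Your proposal follows the paper's route for essentially everything except the endgame in dimension one. The paper's proof is a one-liner declaring the argument identical to that of Theorem~\ref{thm:NT} (using Proposition~\ref{prop:BDprops} and Corollary~\ref{cor:resolventNT}), and for $n\ge 2$ it obtains $\|\tN(v_\no)\|_2\lesssim\|v_0\|_2$ exactly as you do: split off the $\Psi$-class piece $e^{-t|\lambda|}-(1+i\lambda)^{-1}$, control it by square functions and Lemma~\ref{lem:XlocL2}, and reduce to $\tN\big(((I+it\tD_0)^{-1}v_0)_\no\big)$ via the last estimate of Corollary~\ref{cor:resolventNT} and the maximal theorem. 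Your treatment of the qualitative statements, of $\nul(\tD_0)$ when $\sigma=0$, and of $\|\pd_t v\|_\mY$ by square functions is likewise the paper's.

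Where you genuinely differ is $n=1$. The paper uses the Hodge splitting $L_2=B_0\mH\oplus\mH^\perp$ of Proposition~\ref{prop:BDprops}: $e^{-t\tilde\Lambda}=I$ on $\mH^\perp$, while $\tilde\Lambda$ on $B_0\mH$ is similar to $\Lambda$ on $\mH$ through $B_0$, so the $\mX$-estimate is imported wholesale from Theorem~\ref{thm:NT}. You instead invoke the conjugate pair structure (Proposition~\ref{prop:conjugates}) and interior regularity to upgrade $\tN^p(v)$ to $\tN(v)$. This works, but with two caveats. First, Theorem~\ref{thm:revholder} is a reverse H\"older inequality for the \emph{gradient}; to dominate $L_2$ Whitney averages of $u$ and $\tilde u$ themselves by $L_p$ averages you need the solution-level estimate, i.e.\ the two-dimensional local boundedness coming from Theorem~\ref{thm:revholder} plus Sobolev embedding (as recorded after Corollary~\ref{cor:aentcv}), together with the standard self-improvement lowering the exponent below $2$ --- a small extra step which the paper's similarity argument avoids entirely, and which buys you nothing here since the similarity trick is shorter. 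Second, your claim that the tail $\int_1^\infty\|v_t\|_2^2\,dt$ is controlled after splitting off the component of $v_0$ in $\tE_0^+\mH^\perp$ cannot be ``handled directly'': on that finite-dimensional piece $e^{-t\tilde\Lambda}$ is the identity, so $v_t$ is constant in $t$ and the tail integral diverges unless that component vanishes. This tension is already present in the paper's own statement and one-line proof, so it is not a defect specific to your argument, but you should not assert that bound; only the $\tN$-part and the $B_0\mH$-part of the tail are actually controlled by $\|v_0\|_2$.
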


\begin{proof} 
The proof, except for the non-tangential maximal estimates, 
is identical to that of Theorem~\ref{thm:NT}, 
using Proposition~\ref{prop:BDprops} and Corollary~\ref{cor:resolventNT}. 
When $n\ge 2$, the estimate of $\|\tN(v_{\no})\|_{2}$ follows, using the same $\psi$ 
as above and reduction to 
$\|\tN((I+it\tD_{0})^{-1}v_{0})_{\no})\|_{2}$, from Corollary~\ref{cor:resolventNT} and the maximal theorem. 
When $n=1$, one uses the splitting in Proposition~\ref{prop:BDprops}:
we have that $e^{-t\tilde\Lambda}$ is the identity on $\mH^\perp$ and that $\tilde \Lambda$ 
on $B_{0}\mH$ is similar to $\Lambda$ on $\mH$, so $\|v\|_{\mX}\approx \|v_{0}\|_{2}$ follows from Theorem~\ref{thm:NT}.

The modifications when $v_{0}\in \tE_{0}^- L_{2}$ are straightforward.
\end{proof}

%
%
%
%
%
\section{The ODE in integral form}     \label{sec:integration}

  Following \cite{AA1}, for radially dependent coefficients
  we solve   (\ref{eq:firstorderODE})  for $f$ by rewriting it as
$$
  \pd_t f + (D B_0+ \sigma N) f = D \E f,\qquad \text{where } \E_t:= B_0-B_t.$$
Recall that solutions $f_t$ belong to $\mH$, where $\mH$ splits into $E_{0}^+\mH$ and 
$E_{0}^-\mH$ by Lemma \ref{lem:spectralsplits}, with $E_{0}^\pm=\chi^\pm(D_{0})$ on $\mH$.
Applying $E_{0}^\pm$,  integrating formally each subequation and subtracting 
the obtained equations  we obtain
\begin{equation}    \label{eq:formalint}
  f_t = e^{-t\Lambda} E_0^+ f_0 +  \int_0^t e^{-(t-s)\Lambda} E_0^+ D \E_s f_s ds -
  \int_t^\infty  e^{-(s-t)\Lambda} E_0^- D \E_s f_s ds,
\end{equation}
provided $\lim_{t\to 0} f_t =f_0$ and $\lim_{t\to \infty} f_t =0$ in appropriate sense. We first study proper definition, boundedness of the integral operators in \eqref{eq:formalint}
 on appropriate spaces and their limits.   The justification of \eqref{eq:formalint} is done in Section \ref{sec:representation}.

\begin{lem}     \label{lem:preints}
  If $f\in L_2^\loc(\R_+; \mH)$ satisfies $\pd_t f+ (DB+\sigma N) f=0$ in $\R_+\times S^n$ distributional sense,
  then 
\begin{align*}
   -\int_0^t \pd_s \eta^+_\epsilon(t,s) e^{-(t-s)\Lambda} E_0^+ f_s ds &= \int_0^t \eta^+_{\epsilon}(t,s) e^{-(t-s)\Lambda} E_0^+ D \E_s f_s ds, \\
   -\int_t^\infty \pd_s \eta^-_\epsilon(t,s) e^{-(s-t)\Lambda} E_0^- f_s ds &= \int_t^\infty \eta^-_{\epsilon}(t,s) e^{-(s-t)\Lambda} E_0^- D \E_s f_s ds, 
\end{align*}
for all $t>0$. 
The bump functions $\eta_{\epsilon}^\pm$ are constructed as follows.
Let $\eta^0(t)$ to be the piecewise linear continuous function with support
$[1,\infty)$, which equals $1$ on $(2,\infty)$ and is linear on $(1,2)$.
Then let $\eta_\epsilon(t):= \eta^0(t/\epsilon)(1- \eta^0(2\epsilon t))$ and
$\eta_\epsilon^\pm(t,s):= \eta^0(\pm (t-s)/\epsilon) \eta_\epsilon(t)\eta_\epsilon(s)$.
\end{lem}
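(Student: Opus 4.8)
The plan is to verify both identities by a direct integration-by-parts argument that converts the ODE $\pd_t f+(DB+\sigma N)f=0$, i.e. $\pd_s f_s = -(DB_0+\sigma N)f_s + D\E_s f_s = -D_0 f_s + D\E_s f_s$, into the stated integral identities. Fix $t>0$. For the first identity, set $g(s):= e^{-(t-s)\Lambda} E_0^+ f_s$ and note that $\eta_\epsilon^+(t,\cdot)$ is a compactly supported Lipschitz bump inside $(0,t)$, so $s\mapsto \eta_\epsilon^+(t,s) g(s)$ is a legitimate test object. The key computation is $\pd_s\big(e^{-(t-s)\Lambda}E_0^+ f_s\big) = \Lambda e^{-(t-s)\Lambda}E_0^+ f_s + e^{-(t-s)\Lambda}E_0^+ \pd_s f_s$, and then substituting the ODE $\pd_s f_s = -D_0 f_s + D\E_s f_s$ together with the fact that on $\ran(D_0)$ one has $D_0 = \Lambda\,\sgn(D_0)$ and $E_0^+ D_0 = E_0^+\Lambda$ (since $\sgn(D_0)=I$ on $E_0^+\mH$ and $-I$ on $E_0^-\mH$), so that $\Lambda e^{-(t-s)\Lambda}E_0^+ f_s - e^{-(t-s)\Lambda}E_0^+ D_0 f_s = 0$. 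Hence the "interior" terms cancel and one is left with $\pd_s\big(\eta_\epsilon^+(t,s)g(s)\big) = (\pd_s\eta_\epsilon^+(t,s)) g(s) + \eta_\epsilon^+(t,s) e^{-(t-s)\Lambda}E_0^+ D\E_s f_s$. Integrating over $s\in(0,\infty)$ and using that $\eta_\epsilon^+(t,\cdot)$ has compact support away from $0$ and from $t$ (so there are no boundary contributions) gives $0 = \int_0^t (\pd_s\eta_\epsilon^+(t,s)) g(s)\,ds + \int_0^t \eta_\epsilon^+(t,s)e^{-(t-s)\Lambda}E_0^+ D\E_s f_s\,ds$, which is the first claimed identity. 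The second identity is proved in exactly the same way with $e^{-(s-t)\Lambda}$ replacing $e^{-(t-s)\Lambda}$, $E_0^-$ replacing $E_0^+$, and the sign of the exponent handled by $\sgn(D_0)=-I$ on $E_0^-\mH$; here the support of $\eta_\epsilon^-(t,\cdot)$ is a compact subinterval of $(t,\infty)$, again killing boundary terms, and the extra factor $(1-\eta^0(2\epsilon s))$ in $\eta_\epsilon$ ensures integrability/compact support at $s=\infty$.

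The main technical care — and the point I expect to be the genuine obstacle — is justifying that these manipulations are legitimate given only that $f\in L_2^\loc(\R_+;\mH)$ solves the ODE in the \emph{distributional} sense, rather than classically. The clean way around this is to test the distributional equation $\pd_t f + D_0 f = D\E f$ against the $L_2$-valued test function $s\mapsto \eta_\epsilon^+(t,s)\, \conj{b}(\Lambda) E_0^+ \phi$ for $\phi$ in a dense subset, or more precisely to pair with the semigroup kernel: since $\eta_\epsilon^+(t,\cdot)$ is Lipschitz with compact support in $(0,\infty)$ and $e^{-(t-s)\Lambda}E_0^+$ is, for $s$ in that support, a bounded operator depending smoothly (even real-analytically) on $s$ with values in $\mL(L_2)$, the product $\eta_\epsilon^+(t,s)e^{-(t-s)\Lambda}E_0^+$ is an admissible $\mL(L_2)$-valued test multiplier; one applies the definition of distributional solution to move $\pd_s$ onto this multiplier. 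The operator identities needed — boundedness of $E_0^\pm$, the spectral splitting $\mH = E_0^+\mH\oplus E_0^-\mH$, the semigroup $e^{-t\Lambda}$ on $\ran(D_0)$, and $D_0|_{E_0^\pm\mH} = \pm\Lambda$ — are all available from Lemma~\ref{lem:spectralsplits}, Definition~\ref{defn:hardyprojs} and Corollary~\ref{cor:fcalc}. I would also note that $D\E_s f_s$ makes sense as a distribution (it is $D$ applied to an $L_2^\loc$ function), and that the right-hand integrals converge absolutely for fixed $\epsilon>0$ because the $\eta_\epsilon^\pm$ truncate to a compact $s$-interval on which $\|e^{-(t-s)\Lambda}E_0^\pm\|\lesssim 1$ and $s\mapsto f_s\in L_2$ is locally $L_2$ in $s$; regularity in $s$ of $f$ itself is not needed since all $s$-derivatives have been shifted onto the smooth multiplier.

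Concretely, the steps in order are: (1) record the operator facts $E_0^\pm$ bounded, $\ran(D_0)=\mH$, $D_0 E_0^\pm = \pm\Lambda E_0^\pm$, and smoothness in $s$ of $s\mapsto e^{-(t-s)\Lambda}E_0^\pm$ on compact subintervals; (2) fix $t>0$ and $\epsilon>0$, and observe $\eta_\epsilon^+(t,\cdot)$, $\eta_\epsilon^-(t,\cdot)$ are Lipschitz, compactly supported in $(0,t)$ and in $(t,\infty)$ respectively; (3) test the distributional ODE $\pd_s f_s = -D_0 f_s + D\E_s f_s$ against the multiplier $\eta_\epsilon^+(t,s)e^{-(t-s)\Lambda}E_0^+$, integrate by parts in $s$, and use the cancellation $\Lambda e^{-(t-s)\Lambda}E_0^+ = e^{-(t-s)\Lambda}E_0^+ D_0$ to eliminate the $D_0 f_s$ term; (4) collect the surviving terms to obtain the first identity, noting the absence of boundary contributions; (5) repeat verbatim for the multiplier $\eta_\epsilon^-(t,s)e^{-(s-t)\Lambda}E_0^-$, where now the cancellation uses $\sgn(D_0)=-I$ on $E_0^-\mH$, to obtain the second identity.
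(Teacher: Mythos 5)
Your proposal is correct and follows essentially the same route as the paper, whose proof of this lemma is simply a reference to \cite[Prop.~4.4]{AA1}: substitute $\pd_s f_s=-D_0f_s+D\E_sf_s$, exploit $E_0^\pm D_0=\pm\Lambda E_0^\pm$ so that the semigroup derivative cancels the $D_0$-term, and integrate by parts in $s$ against the compactly supported Lipschitz bumps $\eta_\epsilon^\pm(t,\cdot)$, with the distributional hypothesis handled by shifting all $s$-derivatives (and the unbounded operators, via adjoints and the bounded extension of $e^{-u\Lambda}E_0^\pm D$ for $u>0$) onto the operator-valued multiplier. Your identification of the only delicate point, namely the admissibility of these $\dom(D)$-valued, Lipschitz-in-$s$ test objects in the distributional formulation, is exactly the technical step carried out in the cited proposition.
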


\begin{proof} Follow \cite[Prop. 4.4]{AA1}.\end{proof}

Define for $f \in L_{2}^{loc}(\R^+; L_{2}(S^n; \V))$,
$$
  S_A^\epsilon f_t:= \int_0^t \eta_\epsilon^+(t,s) e^{-(t-s)\Lambda} E_0^+ D \E_s f_s ds -
  \int_t^\infty  \eta_\epsilon^-(t,s) e^{-(s-t)\Lambda} E_0^- D \E_s f_s ds.
$$
In fact, this formula makes sense by extension  thanks to the following algebraic relations.
\begin{lem}    \label{lem:SAdecomp}
  We have
$S_A^\epsilon f_t = \hS_{A}^\epsilon f_t -\sigma \vS_{A}^\epsilon f_t = D \tilde S_A^\epsilon f_t$,
where 
\begin{align*}
  \hS_A^\epsilon f_t &:= \int_0^t \eta_\epsilon^+(t,s) \Lambda e^{-(t-s)\Lambda} \hE_0^+ \E_s f_s ds +
  \int_t^\infty \eta_\epsilon^-(t,s) \Lambda  e^{-(s-t)\Lambda} \hE_0^- \E_s f_s ds, \\
  \vS_A^\epsilon f_t &:= \int_0^t \eta_\epsilon^+(t,s) e^{-(t-s)\Lambda} \vE_0^+ \E_s f_s ds -
  \int_t^\infty  \eta_\epsilon^-(t,s) e^{-(s-t)\Lambda} \vE_0^-  \E_s f_s ds, \\
  \tS_A^\epsilon f_t &:= \int_0^t \eta_\epsilon^+(t,s) e^{-(t-s)\tilde\Lambda} \tE_0^+ \E_s f_s ds -
  \int_t^\infty  \eta_\epsilon^-(t,s) e^{-(s-t)\tilde \Lambda} \tE_0^-  \E_s f_s ds.
\end{align*}
Here  $\hE^\pm_0 := E_0^\pm B_0^{-1}\tP^1_{B_0}$, $\vE^\pm_0 := E_0^\pm N B_0^{-1}\tP^1_{B_0}$,
with $\tP^1_{B_0}$ as in Proposition~\ref{prop:BDprops}.
\end{lem}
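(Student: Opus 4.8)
All three asserted identities reduce to a single family of operator identities on $L_2(S^n;\V)$, indexed by the "time lag" $r=\pm(t-s)>0$, after which one merely integrates against the bump functions $\eta_\epsilon^\pm$ of Lemma~\ref{lem:preints}. The plan is to prove that for every $g\in\dom(D)$ and every $r>0$,
$$
 e^{-r\Lambda}E_0^+ Dg \;=\; \Lambda e^{-r\Lambda}\hE_0^+ g-\sigma e^{-r\Lambda}\vE_0^+ g\;=\; D e^{-r\tilde\Lambda}\tE_0^+ g,
$$
together with the companion identity in which $+$ is replaced by $-$ and the first block acquires an overall sign,
$$
 e^{-r\Lambda}E_0^- Dg \;=\; -\Lambda e^{-r\Lambda}\hE_0^- g-\sigma e^{-r\Lambda}\vE_0^- g\;=\; D e^{-r\tilde\Lambda}\tE_0^- g .
$$
Once these are in hand, the lemma follows by putting $g=\E_s f_s$, inserting the cutoffs, and adding the integral over $(0,t)$ to the one over $(t,\infty)$: the $\Lambda\hE_0^\pm$-terms reassemble into $\hS_A^\epsilon f_t$, the $\vE_0^\pm$-terms into $-\sigma\vS_A^\epsilon f_t$, and the $De^{-r\tilde\Lambda}\tE_0^\pm$-terms combine, with $D$ pulled out of the integral (legitimate since $D$ is closed and the integrand is continuous and integrable in the graph norm thanks to the cutoffs), into $D\tilde S_A^\epsilon f_t$. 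The one place to be careful is the sign bookkeeping in the $t\to\infty$ integral, where the outer minus sign in the definition of $S_A^\epsilon$ meets the $-\Lambda$ coming from $E_0^-D_0$.

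\textbf{The middle equalities.} These exploit exactly the decomposition encoded in $\hE_0^\pm=E_0^\pm B_0^{-1}\tP^1_{B_0}$ and $\vE_0^\pm=E_0^\pm N B_0^{-1}\tP^1_{B_0}$. Since $\mH^\perp=\nul(D)$ and $L_2=B_0\mH\oplus\mH^\perp$ with bounded projection $\tP^1_{B_0}$ onto $B_0\mH$ (Proposition~\ref{prop:BDprops}), for $g\in\dom(D)$ one has $Dg=D\tP^1_{B_0}g=DB_0\big(B_0^{-1}\tP^1_{B_0}g\big)$, and the element $B_0^{-1}\tP^1_{B_0}g\in\mH$ lies in $\dom(D_0)=B_0^{-1}\dom(D)$ because $\tP^0_{B_0}g\in\mH^\perp\subset\dom(D)$. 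Then $DB_0=D_0-\sigma N$ gives $Dg=D_0\big(B_0^{-1}\tP^1_{B_0}g\big)-\sigma N\big(B_0^{-1}\tP^1_{B_0}g\big)$; applying $E_0^\pm$ and invoking the functional-calculus identities $E_0^+D_0=\Lambda E_0^+$, $E_0^-D_0=-\Lambda E_0^-$ on $\ran(D_0)$ (valid since $|\lambda|=\lambda$ on the right and $|\lambda|=-\lambda$ on the left component of $S_{\omega,\sigma}$) produces precisely the middle expressions; composing with $e^{-r\Lambda}$ and commuting it past $E_0^\pm$ and $\Lambda$ finishes this step.

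\textbf{The last equalities.} These are the intertwining $De^{-r\tilde\Lambda}\tE_0^\pm g=e^{-r\Lambda}E_0^\pm Dg$ on $\dom(D)$, which I would deduce from $b(D_0)D=Db(\tD_0)$ for the (extended) holomorphic functional calculi — this follows from $D_0D=D\tD_0$ of Lemma~\ref{lem:intertwduality} and is recorded in the discussion preceding Definition~\ref{defn:hardyprojs1D}, and is consistent with $E_0^\pm D=D\tE_0^\pm$ of Lemma~\ref{lem:spectralsplits} — by taking $b(\lambda)=\chi^\pm(\lambda)e^{-r|\lambda|}$ and using that $\tE_0^\pm$ commutes with $e^{-r\tilde\Lambda}$.

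\textbf{The extension, and the main obstacle.} The genuine point of the lemma is the remark preceding it that $S_A^\epsilon f$ "makes sense by extension": the expression $e^{-r\Lambda}E_0^\pm D$ is not a priori defined on all of $L_2$ because $D\E_s f_s$ is only a distribution. The resolution is that each of the three expressions in the identities above is, for fixed $r>0$, a \emph{bounded} operator on $L_2$: $\Lambda e^{-r\Lambda}$ and $e^{-r\Lambda}$ are bounded by Corollary~\ref{cor:fcalc}, since $|\lambda|e^{-r|\lambda|}$ and $e^{-r|\lambda|}$ are bounded and holomorphic on $S^o_{\nu,\sigma}$ (with the correct limits at $0$ in the case $\sigma=0$, matching the zero/extended definitions); $\hE_0^\pm,\vE_0^\pm,\tE_0^\pm$ are bounded; and $De^{-r\tilde\Lambda}=B_0^{-1}(\tD_0+\sigma N)e^{-r\tilde\Lambda}$ is bounded because $e^{-r\tilde\Lambda}$ maps $L_2$ into $\dom(\tD_0)=\dom(D)$ and $\tD_0e^{-r\tilde\Lambda}$ is bounded. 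Hence the identities extend from the dense set $\dom(D)$ to all $g\in L_2$ by continuity, this common bounded operator \emph{is} the meaning of $e^{-r\Lambda}E_0^\pm D$, and with this reading $S_A^\epsilon f$ is well defined for arbitrary $f\in L_2^\loc(\R_+;L_2(S^n;\V))$: the cutoff $\eta_\epsilon^\pm$ confines $(s,t)$ to a region where $|t-s|\gtrsim\epsilon$ and $s,t$ lie in a fixed compact interval, so all integrals converge absolutely in $L_2$. I expect the only real subtlety to be the case distinction $\sigma\neq 0$ versus $\sigma=0$, where one must consistently use the extended functional calculus and the extended projections of Definition~\ref{defn:hardyprojs1D}; everything else is routine bookkeeping.
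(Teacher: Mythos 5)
Your proposal is correct and follows essentially the same route as the paper: the identity $E_0^\pm D = D_0\hE_0^\pm - \sigma\vE_0^\pm$ obtained from $\nul(D)=\mH^\perp$ and $D B_0 = D_0-\sigma N$ via $\tP^1_{B_0}$, extension to all of $L_2$ by boundedness of $\Lambda e^{-r\Lambda}$, $e^{-r\Lambda}$, and the intertwining $b(D_0)D = D\, b(\tD_0)$ for the identity $S_A^\epsilon = D\tS_A^\epsilon$. The only caveat is that in your boundedness argument for $De^{-r\tilde\Lambda}$ the symbol $B_0^{-1}$ must be read, as in the paper, as the inverse of the isomorphism $B_0:\mH\to B_0\mH$ (legitimate here since $(\tD_0+\sigma N)e^{-r\tilde\Lambda}=B_0De^{-r\tilde\Lambda}$ takes values in $B_0\mH$, $B_0$ being bounded below on $\mH$ by accretivity).
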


\begin{proof}    Here, $B_0^{-1}$ denotes the inverse of the isomorphism $B_0: \mH\to B_0\mH$.
  Since $\nul(D)=\mH^\perp$, we have
$$
  E_0^\pm D = E_0^\pm D\tP^1_{B_0}= E_0^\pm \big((DB_0+ \sigma N)-\sigma N \big)B_0^{-1}\tP^1_{B_0}= 
  D_0 \hE_0^\pm - \sigma \vE_0^\pm,
$$
Using that $e^{-u\Lambda} $ and $e^{-u\Lambda} \Lambda$ extend to bounded operators on $\mH$, this also shows that 
$e^{-u\Lambda} E_0^+ D$ extend to bounded operators on $L_2$ for $u>0$. We now readily obtain $S_A^\epsilon = \hS_{A}^\epsilon -\sigma \vS_{A}^\epsilon$.
The identity $S_A^\epsilon= D \tS_A^\epsilon$ is a consequence of the intertwining relation 
$$
  b(D_0) D= D b(\tD_0)
$$
between the two functional calculi.
\end{proof}

\begin{thm}    \label{thm:XYbounded} 
  Assume  $\|\E\|_*<\infty$.
  We have bounded operators
$$
  S_A^\epsilon: \mX\to \mX,\qquad S_A^\epsilon: \mY\to \mY,
$$
with norms $\lesssim \|\E\|_*$, uniformly for $\epsilon>0$.
In the space $\mX$ there is a limit operator $S_A^\mX\in \mL(\mX; \mX)$
such that 
$$
  \lim_{\epsilon\to 0}\|S_A^\epsilon f- S_A^\mX f\|_{L_2(a,b;L_2)}=0,\qquad \text{for any } f\in \mX, 0<a<b<\infty.
$$
The same bounds and limits hold for $\hS_A^\epsilon$ and $\vS_A^\epsilon$ on $\mX$.

In the space $\mY$, there is a limit operator $S_A^\mY\in \mL(\mY; \mY)$ such that
$$
\lim_{\epsilon\to 0}\|S_A^\epsilon f- S_A^\mY f\|_{\mY}= 0,\qquad\text{for any }f\in \mY.
$$
The same bounds and limits hold for $\hS_A^\epsilon$ and $\vS_A^\epsilon$ on $\mY$.
\end{thm}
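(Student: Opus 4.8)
The plan is to adapt the scheme of \cite[Sect.~5]{AA1} to the sphere. The starting point is the algebraic decomposition $S_A^\epsilon = \hS_A^\epsilon - \sigma \vS_A^\epsilon = D\tS_A^\epsilon$ of Lemma~\ref{lem:SAdecomp}, which has absorbed the unbounded operator $D$ into the \emph{bounded} factors $\hE_0^\pm = E_0^\pm B_0^{-1}\tP^1_{B_0}$, $\vE_0^\pm = E_0^\pm N B_0^{-1}\tP^1_{B_0}$, $\tE_0^\pm$ (bounded by Propositions~\ref{prop:DBprops}--\ref{prop:BDprops} and Lemma~\ref{lem:spectralsplits}, with norms controlled by $\|B_0\|_\infty$ and $\kappa_{B_0}$). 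Hence it suffices to bound, uniformly in $\epsilon>0$, the model operators
\[
  (\Theta^+_\epsilon h)_t := \int_0^t \eta_\epsilon^+(t,s)\,\theta_{t-s}(\Lambda)\,\mathcal E_0^+ h_s\,ds, \qquad
  (\Theta^-_\epsilon h)_t := \int_t^\infty \eta_\epsilon^-(t,s)\,\theta_{s-t}(\Lambda)\,\mathcal E_0^- h_s\,ds,
\]
where $\theta_u(\lambda) \in \{\,|\lambda| e^{-u|\lambda|},\ e^{-u|\lambda|}\,\}$ (and the $\tilde\Lambda$-analogues), $\mathcal E_0^\pm$ is one of $\hE_0^\pm,\vE_0^\pm,\tE_0^\pm$, and $h_s = \E_s f_s$; here one uses the extension to $L_2$ of the functional calculi of Section~\ref{sec:squarefcn}. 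The multiplication by $\E$ enters in two ways: for the $\mX$-estimate one uses $\|\E f\|_{\mY^*}\lesssim \|\E\|_* \|f\|_\mX$ by definition of the multiplicator norm, while for the $\mY$-estimate one uses $\|\E f\|_\mY \le \|\E\|_\infty \|f\|_\mY \lesssim \|\E\|_*\|f\|_\mY$ since $\mY$ is a weighted $L_2$-space. Thus the theorem reduces to the boundedness of the $\Theta^\pm_\epsilon$ from $\mY^*$ into $\mX$ and on $\mY$, uniformly in $\epsilon$.

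For these mapping properties I would combine three inputs. First, \emph{decay in $t$}: by Proposition~\ref{prop:DBprops} the restriction $D_0|_\mH$ is boundedly invertible, so $\Lambda=|D_0|$ is bounded below on $\mH$ (likewise $\tilde\Lambda$ on $B_0\mH$), and since $E_0^\pm D$ and its relatives land in $\mH$ (resp.\ $B_0\mH$), one gets the operator-norm bound $\|\theta_u(\Lambda)\mathcal E_0^\pm\|_{\mL(L_2)} \lesssim \min(u^{-1},1)\,e^{-cu}$; the low-dimensional exceptions coming from $\mH^\perp$ when $\sigma=0$ (i.e.\ $n=1$) are finite-rank and handled trivially. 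This exponential gain, absent in the half-space, turns the contributions away from the diagonal $s=t$ into routine Schur estimates against the mild weights $\min(t,1)$ and $\max(t^{-1},1)$ defining $\mY$ and $\mY^*$. Second, the \emph{diagonal singularity} $\min(|t-s|^{-1},1)$ produced by $\theta_u(\lambda)=|\lambda| e^{-u|\lambda|}$ is handled exactly as in \cite{AA1}: one uses the quadratic estimates \eqref{eq:genQE} for $\Lambda$ and $\tilde\Lambda$ (Theorem~\ref{thm:QE}, Corollary~\ref{cor:fcalc}) together with Carleson's embedding theorem, as in \cite[Lem.~5.5]{AA1}, to control the $\mY^*$-input. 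Third, the \emph{non-tangential maximal estimate} $\|\tN(\Theta^\pm_\epsilon h)\|_2 \lesssim \|h\|_{\mY^*}$ needed for the $\mX$-norm: writing $\theta_u(\lambda) = \psi(u\lambda)$ with $\psi\in\Psi(S^o_{\nu,\sigma})$ or as a resolvent difference, one splits $h_s$ dyadically in $s$ relative to a Whitney region $W(t,x)$, applies the $L_2$ spatial off-diagonal bounds of Lemma~\ref{lem:offdiagonal}, Whitney averaging and the Hardy--Littlewood maximal theorem, exactly as in Corollary~\ref{cor:resolventNT}, \cite[Prop.~2.56]{AAH} and \cite[Sect.~5]{AA1}; the lower bounds of Lemma~\ref{lem:XlocL2} close the remaining (local $L_2$ and $\int_1^\infty$) parts of the $\mX$- and $\mY$-norms.

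Finally, for the $\epsilon\to0$ limits I would note that $\eta_\epsilon^\pm(t,s)\to \chi_{\{0<\pm(t-s)\}}$ pointwise and boundedly, that $S_A^\epsilon - S_A^{\epsilon'}$ is an integral against $\eta_\epsilon^\pm - \eta_{\epsilon'}^\pm$, supported near the diagonal and near $s=0$, $s=\infty$, and that rerunning the above estimates on these truncations yields, by dominated convergence, Cauchy-ness of $(S_A^\epsilon f)$ in $L_2(a,b;L_2)$ for $f\in\mX$ and in $\|\cdot\|_\mY$ for $f\in\mY$; the uniform bounds then identify the limit operators $S_A^\mX\in\mL(\mX)$ and $S_A^\mY\in\mL(\mY)$ (convergence is in norm on $\mY$, which has no $\tN$-component, and only locally in $L_2$ on $\mX$). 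The same argument applied verbatim to the model operators gives the corresponding statements for $\hS_A^\epsilon$ and $\vS_A^\epsilon$. The main obstacle is the second and third inputs above: transporting the delicate square-function/Carleson and Whitney-averaging arguments of \cite{AA1} to $S^n$ while controlling the diagonal singularity of the $|\lambda| e^{-u|\lambda|}$-kernel simultaneously with the non-tangential maximal function, and keeping track of the curvature term $\sigma N$ and of the difference between $\sigma=0$ (where $E_0^\pm$ and the functional calculi are the extended ones of Definition~\ref{defn:hardyprojs1D}) and $\sigma\neq0$.
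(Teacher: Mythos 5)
Your reduction of the $\mY\to\mY$ bound is where the argument breaks down. You propose to decouple the multiplication by $\E$ via $\|\E f\|_{\mY}\le \|\E\|_{\infty}\|f\|_{\mY}$ and then prove that the $\E$-free model operators $\Theta^{\pm}_{\epsilon}$ are bounded on $\mY$. For the maximal-regularity-type symbol $\theta_u(\lambda)=|\lambda|e^{-u|\lambda|}$ this is false: the operator $f\mapsto \int_0^t \Lambda e^{-(t-s)\Lambda}f_s\,ds$ is not bounded on $L_2(\min(t,1)\,dt;L_2)$. The failure lives in the region $0<s<t<1$, exactly where your ``exponential gain'' from the invertibility of $D_0$ on $\mH$ gives nothing beyond the bound $\min((t-s)^{-1},1)$: already the scalar model at spectral parameter $\lambda\approx 1$ reduces there to the Volterra--Hardy operator $f\mapsto\int_0^t f(s)\,ds$, which is unbounded on $L_2(t\,dt)$ (Muckenhoupt's condition for the weighted Hardy inequality with both weights equal to $t$ fails; concretely, an input behaving like $s^{-1}\chi_{(\epsilon,1)}(s)$ times a fixed vector with spectral content near $|\lambda|\approx 1$ has $\mY$-norm squared $\approx\log(1/\epsilon)$ while the output has $\mY$-norm squared $\gtrsim\log(1/\epsilon)^2$). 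Put differently, if your reduction were valid, the theorem would hold on $\mY$ with constant $\|\E\|_{\infty}$, so the Carleson part of $\|\E\|_{*}$ (recall $\|\E\|_{*}\approx\|\E\|_{C\cap L_\infty}$ by the remark quoting \cite{AH}) would be irrelevant; but the weight $t\,dt$ is precisely an endpoint for maximal regularity, and it is only the Carleson property of $\E$, used \emph{inside} the weighted estimate, that restores boundedness. This is the content of the weighted maximal regularity estimate \cite[Prop.~7.1]{AA1}, with norm $\lesssim\|\E\|_{*}$, which the paper invokes for $\chi_{t<1}f$, reserving the $\|\E\|_{\infty}$-bound on $L_2(dt;L_2)$ (\cite[Thm.~6.5]{AA1}) only for $\chi_{t>1}f$, where $\min(t,1)=1$ and the weight is harmless.

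The $\mX$ half of your reduction (boundedness of $g\mapsto\Theta^{\pm}_{\epsilon}g$ from $\mY^*$ into $\mX$, applied to $g=\E f$ with $\|\E f\|_{\mY^*}\lesssim\|\E\|_{*}\|f\|_{\mX}$) is sound in substance and parallels the paper, but be aware that it is not obtained by Schur estimates off the diagonal plus quadratic estimates near it: one needs the splitting of \cite[Thm.~6.8]{AA1}, in which a modified operator $\widehat Z^{\epsilon}$ is bounded on $L_2(dt/t;L_2)$ (this uses the operator-valued, $S^o_{\omega,\sigma}$ \emph{operational} calculus of $D_0$ built from Theorem~\ref{thm:QE}, not scalar functional calculus alone), while the subtracted boundary term $\eta_{\epsilon}(t)e^{-t\Lambda}\int_0^\infty \eta_{\epsilon}(s)\Lambda e^{-s\Lambda}\hE_0^+\E_s f_s\,ds$ is controlled in $L_2$ by duality against the square function for $\Lambda^*$ and then sent into $\mX$ by Theorem~\ref{thm:NT}; the $\epsilon\to 0$ limits are likewise inherited from \cite[Sec.~6--7]{AA1} rather than from dominated convergence on truncations alone. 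So your plan can be repaired for $\mX$, but for $\mY$ you must keep $\E$ inside the singular integral and prove (or quote) the weighted estimate with constant $\|\E\|_{*}$.
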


Let $S_A:= \lim_{\epsilon\to 0} S_A^\epsilon$, $\hS_A:= \lim_{\epsilon\to 0} \hS_A^\epsilon$ and $\vS_A:= \lim_{\epsilon\to 0} \vS_A^\epsilon$ denote the limit operators on $\mY$ from Theorem~\ref{thm:XYbounded}. Since $\mX$ is densely embedded in $\mY$, these limit operators restricts to the 
  corresponding limit operators on $\mX$ from Theorem~\ref{thm:XYbounded}.
  
  One sees that $S_A= \hS_A-\sigma \vS_A$ holds, and that
$$
S_A f_t= \lim_{\epsilon\to 0}\left(
\int_\epsilon^{t-\epsilon} e^{-(t-s)\Lambda} E_0^+ D \E_s f_s ds -
  \int_{t+\epsilon}^{\epsilon^{-1}}  e^{-(s-t)\Lambda} E_0^- D \E_s f_s ds \right),
$$
with convergence in $L_2(a,b;L_2)$ for any $0<a<b<\infty$,  both on $\mY$ and $\mX$.

\begin{proof}
  The proof is essentially an application of \cite[Sec.~6]{AA1}, where the results were proved abstractly.
  Given Theorems~\ref{thm:QE} and \ref{thm:NT}, these results from \cite{AA1} apply.
  In particular, this makes use of the holomorphic $S^o_{\omega, \sigma}$ operational calculus of $D_0$,
  where more general operator-valued holomorphic functions are applied to $D_0$.
  It is straightforward, given Theorem~\ref{thm:QE} , to adapt the results in \cite[Sec.~6-7]{AA1} 
  and construct this $S^o_{\omega, \sigma}$ operational calculus of $D_0$, and we omit the details.
  
(i)
  Consider the operators $\hS_A^\epsilon:\mX\to \mX$. Here \cite[Thm.~6.5]{AA1} shows that 
  $\hS_A^\epsilon: L_2(\R_+, dt;L_2)\to L_2(\R_+, dt;L_2)$ are uniformly bounded, with norm $\lesssim \|\E\|_\infty$,
  and converge strongly in $\mL(L_2(\R_+, dt;L_2))$ as $\epsilon\to 0$.
  Moreover, \cite[Thm.~6.8]{AA1} applies and shows
  that
$$
  \hS_A^\epsilon f_t = \widehat Z^\epsilon (\E f)_t +  
  \eta_\epsilon(t)e^{-t \Lambda} \int_0^\infty \eta_\epsilon(s) \Lambda e^{-s\Lambda}\hE_0^+ \E_s f_s ds
$$
where $ \widehat Z^\epsilon: L_2(\R_+, dt/t;L_2)\to L_2(\R_+, dt/t;L_2)$ are uniformly bounded and converge strongly 
as $\epsilon\to 0$. These estimates build on the square function estimates and make use of the operational calculus for $D_0$.
On the other hand, using Theorem~\ref{thm:NT} and Theorem~\ref{thm:QE}, the last term has estimates
\begin{multline*}  
\left\| \eta_\epsilon(t)e^{-t \Lambda} \int_0^\infty \eta_\epsilon(s) \Lambda e^{-s\Lambda}\hE_0^+ \E_s f_s ds \right\|_\mX\lesssim
 \left\| \int_0^\infty \eta_\epsilon(s) \Lambda e^{-s\Lambda}\hE_0^+ \E_s f_s ds \right\|_2 \\
 = \sup_{\|h\|_2=1}
  \left| \int_0^\infty  (s\Lambda^* e^{-s\Lambda^*} h, \eta_\epsilon(s)\hE_0^+ \E_s f_s) \frac{ds}s\right| \lesssim \|\eta_\epsilon \E f\|_{\mY^*}
  \lesssim \|\E\|_* \|f\|_{\mX},
\end{multline*}
and is seen to converge strongly in $\mL(\mX, L_2(a,b;L_2))$ for any $0<a<b<\infty$, as in \cite[lem. 6.9]{AA1}.
Piecing these estimates together, we obtain
\begin{multline*}
  \|\hS_A^\epsilon f\|_\mX \lesssim \| \widehat Z^\epsilon(\E f)\|_{L_2(dt/t;L_2)}+
  \| \widehat Z^\epsilon(\E f)\|_{L_2(dt;L_2)}+ \|\hS_A^\epsilon f-  \widehat Z^\epsilon(\E f)\|_\mX \\
  \lesssim  \|\E\|_* \|f\|_\mX + \|\E\|_\infty \|f\|_{L_2(dt;L_2)} + \|\E\|_* \|f\|_\mX,
\end{multline*}
with strong convergence in $\mL(\mX, L_2(a,b;L_2))$.

(ii)
For the operators $\vS_A^\epsilon:\mX\to\mX$, we note that the estimates for $\hS_A^\epsilon$ go through when replacing
$\hE_0^\pm$ by $\vE_0^\pm$. Since $\vS_A^\epsilon=\Lambda^{-1} \hS_A^\epsilon$ (with $\hE_0^\pm$ replaced
by $\vE_0^\pm$) and $\Lambda^{-1}:L_2(dt/t;\mH)\to L_2(dt/t;\mH)$ is bounded, it only remains to estimate the term
$\eta_\epsilon(t)e^{-t \Lambda} \int_0^\infty \eta_\epsilon(s) e^{-s\Lambda}\vE_0^+ \E_s f_s ds$.
But again using the boundedness of $\Lambda^{-1}$ gives
\begin{multline*}  
\left\| \eta_\epsilon(t)e^{-t \Lambda} \int_0^\infty \eta_\epsilon(s) e^{-s\Lambda}\vE_0^+ \E_s f_s ds \right\|_\mX\lesssim
 \left\| \int_0^\infty \eta_\epsilon(s) e^{-s\Lambda}\vE_0^+ \E_s f_s ds \right\|_2 \\
\lesssim \left\| \Lambda \int_0^\infty \eta_\epsilon(s) e^{-s\Lambda}\vE_0^+ \E_s f_s ds \right\|_2,
\end{multline*}
and the rest of the estimates go though as for $\hS^\epsilon_A$.
Altogether, this proves the stated bounds and convergence for $S_A^\epsilon: \mX\to\mX$.

(iii)
 Next consider the operators $\hS_A^\epsilon:\mY\to \mY$.
 We have
\begin{multline*}
  \|\hS_A^\epsilon f\|_\mY \le \|\hS_A^\epsilon (\chi_{t<1}f)\|_\mY + \|\hS_A^\epsilon(\chi_{t>1}f)\|_\mY \\
  \le \|\hS_A^\epsilon (\chi_{t<1}f)\|_{L_2(tdt; L_2)} + \|\hS_A^\epsilon(\chi_{t>1}f)\|_{L_2(dt;L_2)} \\
  \lesssim \|\E\|_*\|\chi_{t<1}f\|_{L_2(tdt; L_2)} + \|\E\|_\infty \|\chi_{t>1}f\|_{L_2(dt;L_2)} \lesssim \|\E\|_* \|f\|_\mY,
\end{multline*}
 where the $L_2(tdt; L_2)$ estimate follows from \cite[Prop.~7.1]{AA1} and the $L_2(dt; L_2)$ estimate from \cite[Prop.~6.5]{AA1}, along with convergence.
 This immediately gives the estimates for $\vS_A^\epsilon:\mY\to \mY$ since $\Lambda^{-1}:L_2(tdt;\mH)\to L_2(tdt;\mH)$ and $\Lambda^{-1}:L_2(dt;\mH)\to L_2(dt;\mH)$
are bounded.
 \end{proof}

Denote by $C(a,b; L_2)$ the space of continuous functions
$(a,b) \ni t \mapsto v_t\in L_2(S^n;\V)$.

\begin{thm}    \label{thm:Cbounded} 
Assume  $\|\E\|_*<\infty$.
If  $n\ge 2$,  then $\tS_A^\epsilon f\in C(0,\infty; L_2)$ for any $f\in\mY$.
There are bounds $\|\tS_A^\epsilon f_t\|_2 \lesssim \|\E\|_* \|f\|_{\mY}$,
uniformly for all $f\in\mY$, $t,\epsilon>0$,
and for each $f\in\mY$ there is a limit function $\tS_A f\in C(0,\infty; L_2)$
such that $\lim_{\epsilon\to 0}\|\tS_A^\epsilon f_t- \tS_A f_t\|_2= 0$ 
locally uniformly for $t>0$. We have  the expression 
\begin{equation}  \label{def:tildeSA}
     \tS_A f_t = \int_0^t e^{-(t-s)\tilde\Lambda} \tE_0^+ \E_s f_s ds -
      \int_t^\infty e^{-(s-t)\tilde\Lambda} \tE_0^- \E_s f_s ds,
 \end{equation}
  where the integrals are weakly convergent in $L_2$ for all $f\in \mY$ and $t>0$. 
  Finally, $S_Af = D\tS_A f$ holds in $\R_+\times S^n$ distributional sense for each $f\in\mY$.

If $n=1$, then the above results hold if $\mY$ is replaced by $\mY_\delta$,
for any fixed $\delta>0$.
\end{thm}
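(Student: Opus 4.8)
The plan is to establish Theorem~\ref{thm:Cbounded} by following the abstract framework of \cite[Sec.~6--7]{AA1}, now adapted to the operator $\tD_0$ and its functional calculus (Corollary~\ref{cor:fcalc}), using the square function estimates of Theorem~\ref{thm:QE} and the decay estimates of Theorem~\ref{thm:NTtilde}. Since $S_A^\epsilon = D\tS_A^\epsilon$ by Lemma~\ref{lem:SAdecomp}, and $D: \tE_0^\pm L_2 \to E_0^\pm \mH$ is bounded and surjective by Lemma~\ref{lem:spectralsplits}, the identity $S_A f = D\tS_A f$ in $\R_+\times S^n$ distributional sense will follow once the limit $\tS_A f$ is constructed and $\tS_A^\epsilon f \to \tS_A f$ appropriately; the point of the theorem is really the \emph{pointwise-in-$t$} continuity and boundedness in $L_2$ (not merely $L_2(a,b;L_2)$), which is what is needed later for trace results.

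First I would treat the \textbf{interior piece} $\int_0^t e^{-(t-s)\tilde\Lambda}\tE_0^+\E_s f_s\,ds$. Split $f = \chi_{s<1}f + \chi_{s>1}f$. For the part with $s<1$, write $e^{-(t-s)\tilde\Lambda}\tE_0^+ = \psi_{t-s}(\tD_0)$ where $\psi_u(\lambda) = e^{-u|\lambda|}\chi^+(\lambda) \in \Psi(S^o_{\nu,\sigma})$ for $u>0$, and estimate $\|\tS_A^\epsilon f_t\|_2$ by testing against $h\in L_2$, $\|h\|_2=1$: one gets $\int_0^t |(\E_s f_s, e^{-(t-s)\tilde\Lambda^*}\tE_0^{+*}h)|\,ds$, and bounds the $\tD_0^*$-side using the square function estimate (which holds since $\tD_0^*$ is of type $D_0$), getting a factor $\|h\|_2$ after a Schur-type / Carleson argument exactly as in \cite[Prop.~7.1]{AA1}; the $\E$-side contributes $\|\E\|_*\|f\|_\mY$ via the $\mX$-$\mY^*$ bound on multiplication by $\E$ once the relevant $\min(t,1)$ weight is accounted for. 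For the part with $s>1$, use instead the exponential decay of $e^{-u\tilde\Lambda}$ on $\ran(\tD_0)$ together with $\|\E\|_\infty$ and $\|\chi_{s>1}f\|_{L_2(ds;L_2)} \le \|f\|_\mY$. The \textbf{exterior piece} $\int_t^\infty e^{-(s-t)\tilde\Lambda}\tE_0^-\E_s f_s\,ds$ is handled symmetrically. Continuity in $t$ and the existence of the $\epsilon\to 0$ limit follow from dominated convergence applied to the weakly convergent integrals together with the uniform bounds, mimicking \cite[Lem.~6.9]{AA1}.

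The \textbf{main obstacle} is the case $n=1$, i.e. $\sigma = 0$. Then $\ran(\tD_0) = B_0\mH$ is a proper subspace of $L_2$, and on the complementary part $\mH^\perp$ the semigroup $e^{-t\tilde\Lambda}$ is the identity (Definition~\ref{defn:hardyprojs1D} and the remarks following Corollary~\ref{cor:fcalc}), so it provides \emph{no decay at all}; consequently the exterior integral $\int_t^\infty e^{-(s-t)\tilde\Lambda}\tE_0^-\E_s f_s\,ds$ need not even converge for $f\in\mY$, because $\E_s f_s$ restricted to $\mH^\perp$ is only $L_2(ds;L_2)$-integrable near infinity at best. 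This is precisely why the statement must be weakened to $f\in\mY_\delta$: the weight $e^{\delta t}$ in Definition~\ref{defn:Ydelta} forces $\E_s f_s$ to decay like $e^{-\delta s/2}$ in $L_2$, which restores absolute convergence of the $\mH^\perp$-component of the exterior integral and gives the uniform $L_2$ bound. I would therefore, in the $n=1$ case, decompose $\tE_0^\pm\E_s f_s = \tP^1_{B_0}(\cdots) + \tP^0_{B_0}(\cdots)$ using Lemma~\ref{lem:proj1D}: on the $B_0\mH$-component the argument of the $\sigma\ne 0$ case applies verbatim (with decay coming from $\tilde\Lambda$ on $B_0\mH$, which is similar to $\Lambda$ on $\mH$), while on the $\mH^\perp$-component one uses $e^{-u\tilde\Lambda}=I$, the explicit projection formula, and the $e^{\delta t}$-weighted norm to sum the tails. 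Finally the distributional identity $S_A f = D\tS_A f$ extends from the dense subspace $\mX$ (where Theorem~\ref{thm:XYbounded} already gives $S_A = D\tS_A$) to all of $\mY_\delta$ by continuity, since $D$ is a closed operator and the convergence is local in $t$.
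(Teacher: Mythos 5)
Your plan for $n\ge 2$ is essentially the paper's route: the paper splits $e^{-(t-s)\tilde\Lambda}=e^{-(t-s)\tilde\Lambda}(I-e^{-2s\tilde\Lambda})+e^{-t\tilde\Lambda}e^{-s\tilde\Lambda}$ (and symmetrically for the exterior integral with $(I-e^{-2t\tilde\Lambda})$), bounds the first kernel by $\min(s/t,1,1/(t-s))$ and runs Schur/Cauchy--Schwarz estimates against $\|f_s\|_2$ with the $\min(s,1)$ weight, and treats the remaining term $e^{-t\tilde\Lambda}\int e^{-s\tilde\Lambda}\tE_0^\pm\E_s f_s\,ds$ by the duality argument of \cite[Prop.~7.2]{AA1}. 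One caution about your duality step: you cannot charge ``the $\E$-side'' with $\|\E\|_*\|f\|_\mY$ by applying the $\mX\to\mY^*$ multiplier bound to $\E f$, since $f$ is only in $\mY$ and $\mY\not\subset\mX$. The bound must be applied on the test side: pair $f\in\mY$ against $\E^*$ acting on the adjoint semigroup extension of $h\in L_2$, whose $\mX$-norm is $\lesssim\|h\|_2$ by the non-tangential maximal estimate (Theorem~\ref{thm:NT}, transferred via Lemma~\ref{lem:intertwduality}); square function estimates alone do not give the needed $\mX$-control. With that correction, your $n\ge2$ argument is a variant of the paper's.

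The genuine gap is in the $n=1$ case. You correctly single out the $\mH^\perp$-component, on which $e^{-u\tilde\Lambda}=I$ and $\tE_0^\pm=N^\mp$, and you correctly note that the $e^{\delta s}$ weight of $\mY_\delta$ repairs the tail $s>1$ (Cauchy--Schwarz against $e^{-\delta s}$). But the near-boundary part $s\in(0,1)$ of this component is equally non-convergent in the naive sense, and the weight does nothing there: since $\min(s,1)e^{\delta s}\approx s$ near $s=0$, one can have $\int_0^1\|f_s\|_2\,ds=\infty$ for $f\in\mY_\delta$, so ``$e^{-u\tilde\Lambda}=I$, the explicit projection formula, and the weighted norm'' do not bound $\|\int_0^{\min(t,1)}\tP^0_{B_0}\E_s f_s\,ds\|_2$. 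Note also that the factors $(I-e^{-2s\tilde\Lambda})$, $(I-e^{-2t\tilde\Lambda})$ vanish identically on $\mH^\perp$, and a duality against a constant-in-$s$ test function fails because the constant extension of a general $L_2(S^1)$ function is not in $\mX$ ($\tN$ is not bounded on $L_2$ data). The paper's resolution is specific: by Lemma~\ref{lem:pointwiseaccr}, $A$ is pointwise accretive, so the explicit formula of Lemma~\ref{lem:proj1D} gives $|\tP^0_{B_0}u|\lesssim\int_{S^1}|u|\,dx$; then one chooses an auxiliary function $h$ with $|h_s(x)|=1$ and $|\E_s h_s|=|\E_s|$ for $s<1$, $h_s=0$ for $s>1$, so that $\|h\|_\mX\lesssim1$ and $\int_0^1\int_{S^1}|\E_s|\,|f_s|\,dx\,ds\lesssim\|\E h\|_{\mY^*}\|f\|_\mY\le\|\E\|_*\|h\|_\mX\|f\|_\mY$. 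This is precisely where the Carleson/multiplier norm, and not merely $\|\E\|_\infty$, enters the $n=1$ estimate, and it is absent from your proposal.
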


\begin{proof}
(i)
  Consider first the case  $n\ge 2$.
  The proof is a adaption of the proof of \cite[Prop.~7.2]{AA1},
  which we refer to for further details.
  We split the $(0,t)$-integral
$$
   \int_0^t \eta_\epsilon^+(t,s) e^{-(t-s)\tilde\Lambda} (I- e^{-2s\tilde\Lambda}) \tE_0^+ \E_s f_s ds 
   +   e^{-t\tilde\Lambda}\int_0^{t} \eta_\epsilon^+(t,s) e^{-s\tilde\Lambda} \tE_0^+ \E_s f_s ds,
$$
The same duality estimate of the second term as in \cite[Prop.~7.2]{AA1}, 
given Theorem~\ref{thm:NT} and Lemma~\ref{lem:intertwduality}, goes through here.
For the first term, we note the estimates
$\|e^{-(t-s)\tilde\Lambda} (I- e^{-2s\tilde\Lambda})\|\lesssim \min(s/t, 1, 1/(t-s))$.
For $t\le 2$, this yields the bound
$\|\E\|_\infty \int_0^t (s/t)\|f_s\|_2 ds\lesssim \|\E\|_\infty \|f\|_\mY$.
On the other hand, for $t\ge 2$ we have the estimate
$$
  \|\E\|_\infty \left( \int_0^1 \frac st \|f_s\|_2 ds + \int_1^{t-1} \frac1{t-s}\|f_s\|_2 ds
  + \int_{t-1}^t \|f_s\|_2 ds \right) \lesssim \|\E\|_\infty \|f\|_\mY.
$$
The $(t,\infty)$-integral is estimated similarly, by splitting it
$$
   \int_t^\infty \eta_\epsilon^-(t,s) e^{-(s-t)\tilde\Lambda} (I- e^{-2t\tilde\Lambda}) \tE_0^- \E_s f_s ds 
   +   e^{-t\tilde\Lambda}\int_t^\infty \eta_\epsilon^-(t,s) e^{-s\tilde\Lambda} \tE_0^+ \E_s f_s ds,
$$
The second term is estimated as before, and for the first term we note the estimates
$\|e^{-(s-t)\tilde\Lambda} (I- e^{-2t\tilde\Lambda})\|\lesssim \min(t/s, 1, 1/(s-t))$,
which give the bound
$$
  \|\E\|_\infty \left( \int_t^{t+1} \frac ts \|f_s\|_2 ds + \int_{t+1}^\infty \frac1{s-t}\|f_s\|_2 ds
   \right) \lesssim \|\E\|_\infty \|f\|_\mY.
$$

(ii)
Consider next the case  $n=1$. Since $e^{-t\tilde \Lambda}=I$ on $\mH^\perp$
and $\tE_0^\pm= N^\mp$ on $\mH^\perp$,
we also need to estimate the $L_2$-norm of
$$
  \left( \int_0^t \eta_\epsilon^+(t,s) \tP^0_{B_0}\E_s f_s \right)_\no - 
  \left( \int_t^\infty \eta_\epsilon^-(t,s) \tP^0_{B_0}\E_s f_s \right)_\ta,
$$
uniformly for $t>0$,
where $\tP^0_{B_0}$ is projection onto $\mH^\perp$ from Proposition~\ref{prop:BDprops}.
So it is enough to obtain the bound 
$$
 \left\|  \int_0^\infty |\tP^0_{B_0}\E_s f_s| ds \right\|_2 \lesssim  \|\E\|_* \|f\|_{\mY_\delta}.
 $$
On one hand, we obtain from Proposition~\ref{prop:reverseholder} the estimate
\begin{multline*}
  \left\| \int_1^\infty |\tP^0_{B_0}\E_s f_s| ds \right\|_2 \lesssim \|\E\|_\infty \int_1^\infty \|f_s\|_2 ds \\
  \lesssim  \|\E\|_\infty\left( \int_1^\infty \|f_s\|_2^2 e^{\delta s} ds \right)^{1/2}
  \lesssim \|\E\|_\infty \|f\|_{\mY_\delta}.
\end{multline*}
On the other hand, note that
$A$, hence $B_0^{-1}$, is pointwise strictly accretive by Lemma~\ref{lem:pointwiseaccr} 
and by the  explicit expression  in Lemma~\ref{lem:proj1D} (expressed in other coordinates),    $\tP^0_{B_0}$  maps into constant functions and $|\tP^0_{B_0}u| \lesssim \int_{S^1}|u(x)|dx$.
Thus
$$
  \left\| \int_0^1 |\tP^0_{B_0} \E_s f_s| ds \right\|_2   \lesssim \int_0^1 \int_{S^1}|\E_s(x)| |f_s(x)|  dxds.
$$

Pick $h:\R_+\times S^1\to \V$ such that $|h_s(x)|=1$ and $|\E_s(x)h_s(x)|= |\E_s(x)|$ when
$s<1$, and $h_s(x)=0$ when $s>1$.
Cauchy--Schwarz inequality yields
$$
  \int_0^1 \int_{S^1}|\E_s(x)| |f_s(x)|  dsdx\lesssim \|\E h\|_{\mY^*} \|f\|_\mY\le \|\E\|_*\|h\|_\mX\|f\|_\mY\lesssim \|\E\|_* \|f\|_\mY.
$$
This completes the proof of the estimate of
$\| \tS_A^\epsilon f_t \|_2$.

(iii) As in the proof of \cite[Prop.~7.2]{AA1}, replacing $\eta^\pm_\epsilon$ by $\eta^\pm_\epsilon-\eta^\pm_{\epsilon'}$
in the estimates shows convergence of $\tS_A^\epsilon$ and yield the expression for the limit operator. The relation $S_{A}=D\tS_{A}$ follows at the limit from the relation in Lemma~\ref{lem:SAdecomp}.
\end{proof}

We turn to boundary behavior of the integral operators at $t=0$.

\begin{lem}   \label{lem:inttodiff} 
  Assume $\|\E\|_{*}<\infty$.

(i)   Let $f\in \mX$ (or $f\in\mY$) and define $f^0:= S_A f$.
   Then $f^0$ and $f$ satisfy
$$
  (\pd_t + D_0)f^0= D\E f
$$
in $\R_+\times S^n$ distributional sense.
If $f\in \mX$, then there are limits 
$$
   \lim_{t\to 0} t^{-1}\int_t^{2t} \| S_A f_s - h^- \|_2^2 ds=0,
$$
where 
$
h^- := - \int_0^\infty e^{-s\Lambda} E_0^- D \E_s f_s ds\in E_0^-\mH$
has bounds $\|h^-\|_{2}\lesssim \|f\|_{\mX}$. 

(ii) Let $n\ge 2$. If $f\in\mY$ and $v:= \tS_A f$, then
$$
  (\pd_t + \tD_0 )v= \E f
$$
in $\R_+\times S^n$ distributional sense, and there are limits
$$
   \lim_{t\to 0}  \| \tS_A f_t - \tilde h^- \|_2=0,
$$
where
$\tilde h^- := -\int_0^\infty e^{-s\tilde \Lambda} \tE_0^- \E_s f_s ds\in \tE_0^- L_2$
has bounds $\|\tilde h^-\|_{2}\lesssim \|f\|_{\mY}$.
If $n=1$, these results for $\tS_A f$ hold when replacing $\mY$ by $\mY_\delta$,
for any fixed $\delta>0$.
\end{lem}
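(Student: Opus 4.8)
The plan is to treat the two statements of Lemma~\ref{lem:inttodiff} in parallel, exploiting the factorization $S_A = D\tS_A$ from Theorem~\ref{thm:Cbounded} and the algebraic decomposition $S_A^\epsilon = \hS_A^\epsilon - \sigma\vS_A^\epsilon = D\tS_A^\epsilon$ from Lemma~\ref{lem:SAdecomp}, so that part (i) follows from part (ii) by applying $D$ to the corresponding identities. First I would establish the distributional ODE. Differentiating the regularized expression in Lemma~\ref{lem:preints}, or more directly differentiating (\ref{def:tildeSA}) under the integral sign (justified by the uniform bounds and local uniform convergence in $t$ from Theorem~\ref{thm:Cbounded}), one computes $\pd_t \tS_A f_t$. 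The $(0,t)$ and $(t,\infty)$ integrals each contribute a term $\mp\tilde\Lambda$ applied to the respective truncated integral, and the differentiation of the endpoints produces $e^{0}\tE_0^+\E_t f_t + e^0 \tE_0^-\E_t f_t = (\tE_0^+ + \tE_0^-)\E_t f_t = \E_t f_t$; meanwhile $\tilde\Lambda$ acting with the sign $\chi^+$ on the first integral and $-\chi^-$ on the second assembles into $-\tD_0 v_t$ modulo the handling of the $\sigma=0$ null space, using $\tD_0 = \tilde\Lambda\,\sgn(\tD_0)$ and $\sgn(\tD_0) = \tE_0^+ - \tE_0^-$ on $\ran(\tD_0)$, with the $\mH^\perp$-part where $\tD_0=-\sigma N$ and $\tE_0^\pm = N^\mp$ checked separately (this is where $n=1$ forces the passage to $\mY_\delta$, exactly as in the proof of Theorem~\ref{thm:Cbounded}(ii)). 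This yields $(\pd_t+\tD_0)v = \E f$ distributionally, and applying $D$ together with the intertwining $Db(\tD_0)=b(D_0)D$ and $DN = -ND$ gives $(\pd_t+D_0)f^0 = D\E f$.

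Next I would identify the boundary limits. For $\tS_A f$, I would show that the $(t,\infty)$-integral converges, as $t\to 0$, in $L_2$ to $\tilde h^- := -\int_0^\infty e^{-s\tilde\Lambda}\tE_0^-\E_s f_s\,ds$: split off $e^{-t\tilde\Lambda}$ as in the proof of Theorem~\ref{thm:Cbounded} and use strong continuity of the semigroup $e^{-t\tilde\Lambda}$ on $\ran(\tD_0)$ (Corollary~\ref{cor:fcalc}, Definition~\ref{defn:hardyprojs}) together with the dominated-convergence-type control furnished by the uniform bounds $\|\tS_A^\epsilon f_t\|_2\lesssim\|\E\|_*\|f\|_\mY$ already proved. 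Simultaneously the $(0,t)$-integral tends to $0$ in $L_2$, again by the estimates in Theorem~\ref{thm:Cbounded}(i) (the $\min(s/t,1,1/(t-s))$ bounds), so that $\tS_A f_t \to \tilde h^-$. That $\tilde h^-\in \tE_0^- L_2$ is immediate since each integrand lies in $\ran(\tE_0^-)$ and this space is closed; the bound $\|\tilde h^-\|_2\lesssim\|f\|_\mY$ follows by the same duality/square-function argument as the $\|\tS_A^\epsilon f_t\|_2$ estimate, or simply by taking $t\to 0$ in it. For part (i), applying $D$ and using $S_A f = D\tS_A f$, one gets $S_A f_t = D\tS_A f_t$ and, since $D: \tE_0^- L_2\to E_0^-\mH$ is bounded (Lemma~\ref{lem:spectralsplits}), $D\tilde h^- = -\int_0^\infty e^{-s\Lambda}E_0^- D\E_s f_s\,ds = h^-$; the convergence in the Whitney-averaged sense $\lim_{t\to0} t^{-1}\int_t^{2t}\|S_A f_s - h^-\|_2^2\,ds = 0$ then follows from the $\mX$-convergence statement of Theorem~\ref{thm:XYbounded} plus the $L_2^\loc$-control of Lemma~\ref{lem:XlocL2}, and $\|h^-\|_2\lesssim\|f\|_\mX$ from the same lemma applied to the $\mX$-bound.

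The main obstacle I anticipate is the careful justification of differentiating the weakly convergent integrals in (\ref{def:tildeSA}) and of the endpoint contributions, particularly matching the semigroup generator action with $\tD_0$ on the whole space when $\sigma = \frac{n-1}{2}\neq 0$ versus the degenerate case $\sigma=0$, $n=1$, where $\ran(\tD_0)$ is a proper subspace and the $\mH^\perp$-component must be tracked by hand through $\tE_0^\pm = N^\mp$ and the $\mY_\delta$-norm; this is precisely the point where the reverse Hölder inequality (Proposition~\ref{prop:reverseholder}) and the explicit projection formula (Lemma~\ref{lem:proj1D}) re-enter, mirroring step (ii) of the proof of Theorem~\ref{thm:Cbounded}. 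Everything else is a routine adaptation of \cite[Prop.~4.4, 7.2]{AA1} to the present setting, using the square function estimates of Theorem~\ref{thm:QE}, the operational calculus of $D_0$, and the non-tangential bounds of Theorems~\ref{thm:NT} and \ref{thm:NTtilde}.
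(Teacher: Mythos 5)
Your treatment of part (ii) is essentially the paper's argument: differentiate the (regularized) integral formula, observe that the endpoint contributions assemble to $(\tE_0^++\tE_0^-)\E_tf_t=\E_tf_t$ (valid also for $n=1$ by Definition~\ref{defn:hardyprojs1D}), and obtain the $t\to0$ limit by splitting off $e^{-t\tilde\Lambda}$ from the tail integral and reusing the Schur-type bounds from Theorem~\ref{thm:Cbounded}. That part is sound.

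The gap is in your plan to deduce part (i) from part (ii) by ``applying $D$''. First, statement (ii) is only available for $n\ge 2$ with $f\in\mY$, or $n=1$ with $f\in\mY_\delta$; but (i) must hold in all dimensions for arbitrary $f\in\mX$ or $f\in\mY$. For $n=1$ one has $\mX\not\subset\mY_\delta$ and $\mY\not\subset\mY_\delta$ (the $\mY_\delta$-norm demands exponential decay in $t$, and Proposition~\ref{prop:reverseholder} only supplies it when $f$ is the conormal gradient of a solution, which is not assumed here), so $\tS_Af$ is simply not under control and the factorization $S_Af=D\tS_Af$ is unavailable. Second, even where (ii) holds, $D$ is an unbounded operator: your assertion that $D:\tE_0^-L_2\to E_0^-\mH$ is bounded is false (Lemma~\ref{lem:spectralsplits} gives surjectivity and intertwining, not boundedness), so neither the $L_2$-limit $\tS_Af_t\to\tilde h^-$ nor the bound on $\tilde h^-$ transfers to $S_Af_t$ and $h^-$ by applying $D$. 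Third, your justification of the Whitney-averaged limit for $S_Af$ via ``the $\mX$-convergence statement of Theorem~\ref{thm:XYbounded} plus Lemma~\ref{lem:XlocL2}'' does not work: that convergence statement concerns $S_A^\epsilon\to S_A$ in $L_2(a,b;L_2)$ as $\epsilon\to0$ and says nothing about behavior as $t\to0$. What is actually needed (and what the paper uses) is the decomposition, from the proof of Theorem~\ref{thm:XYbounded}, $S_Af_t=Z_Af_t+e^{-t\Lambda}\int_0^\infty e^{-s\Lambda}E_0^-D\E_sf_s\,ds$ with $Z_Af\in\mY^*$; membership in $\mY^*$ forces the Whitney averages of $Z_Af$ to vanish as $t\to0$, while the second term converges to $h^-$ by strong continuity of the semigroup, and the bound $\|h^-\|_2\lesssim\|\E f\|_{\mY^*}\lesssim\|\E\|_*\|f\|_\mX$ comes from a duality/square-function estimate against $s\Lambda^*e^{-s\Lambda^*}$, not from (ii). The paper proves the distributional identity in (i) directly, by pairing $S_A^\epsilon f$ with $(-\pd_t+B_0^*D+\sigma N)\phi$ for test functions $\phi$, using Fubini and integration by parts and then letting $\epsilon\to0$; this route avoids $\tS_A$ altogether and is what you should do for (i).
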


\begin{proof}
(i)
By the convergence properties of $S_A^\epsilon$ from Theorem~\ref{thm:XYbounded},
it suffices to show
that for $\phi\in C^\infty_0(\R_+\times S^n; \C^{(1+n)m})$ there is convergence
$$
  \int \Big((-\pd_t \phi_t+ B_0^*D+\sigma N)\phi_t, f^\epsilon_t\Big) dt\to \int (D\phi_s, \E_s f_s)ds,\qquad \epsilon\to 0,
$$
where $f^\epsilon_t:=S_A^\epsilon f_t$.
For the term $(0,t)$-integral, Fubini's theorem and integration by parts gives
\begin{multline*}
  \int_0^\infty \int_0^t \eta^+_\epsilon(t,s) ((-\pd_t+ \Lambda^*)\phi_t, e^{-(t-s)\Lambda} E_0^+D \E_s f_s ) ds dt \\
= - \int_0^\infty \left( \int_s^\infty \eta^+_\epsilon(t,s) D (E_0^+)^*  \pd_t (e^{-(t-s) \Lambda^*} \phi_t)dt,  \E_s f_s  \right) ds \\
  =\int_0^\infty \left( \int_s^\infty (\pd_t\eta^+_\epsilon)(t,s) D(E_0^+)^* e^{-(t-s) \Lambda^*}  \phi_t dt, \E_s f_s  \right) ds 
    \to \int_0^\infty ( D(E_0^+)^*\phi_s, \E_s f_s  ) ds.
\end{multline*}
Adding the corresponding limit for the $(t,\infty)$-integral, we obtain in total the limit
$\int_0^\infty( D\phi_s, \E_s f_s)ds$, since $D((E^+_0)^*+ (E^-_0)^*)= ((E^+_0+ E^-_0)D)^*= D^*=D$.

To prove the limit of $S_A f_t$ for $f\in\mX$, we note from the proof of Theorem~\ref{thm:XYbounded}
that 
$$
  S_A f_t= Z_{A}f_t + e^{-t\Lambda} \int_0^\infty e^{-s\Lambda} E_0^- D \E_s f_s ds,
$$
where $Z_{A}f\in \mY^*$. When taking limits $\epsilon\to 0$, we have used \cite[Thm.~6.8, Lem.~6.9]{AA1}.
This proves the stated limit.

(ii)
To prove $(\pd_t + \tD_0 )v= \E f$, we let $t\in (a,b)$ and 
differentiate $\tS_A^\epsilon f$ to get
$$
  \pd_t \tS_A^\epsilon f_t = \frac 1\epsilon \int_\epsilon^{2\epsilon}  
  e^{-s\tilde\Lambda} (\tE_0^+ \E_{t-s}f_{t-s}+ \tE_0^-\E_{t+s}f_{t+s}) ds- \tD_0 (\tS_A^\epsilon f_t),
$$
for small $\epsilon$.
The first term on the right is seen to converge to $\E f$ in $L_2(a,b;L_2)$ as $\epsilon\to 0$, with an 
argument as in \cite[Thm.~8.2]{AA1}.
Note that this uses $\tE_0^+ + \tE_0^-= I$, which holds also when $n=1$ by Definition~\ref{defn:hardyprojs1D}.
Letting $\epsilon\to 0$, we obtain $\pd_t v= \E f- \tD_0 v$ in distributional sense, 
since $(a,b)$ was arbitrary.

The limit for $\tS_A f_t$ when $f\in \mY$ (or $\mY_\delta$ when $n=1$) 
is proved as in \cite[Prop.~7.2, Lem.~6.9]{AA1}. Note in particular this uses an identity
$$
  \tS_A f_t= \tZ_{A}f_t + e^{-t\st\Lambda} \int_0^\infty e^{-s\st\Lambda} \tE_0^-  \E_s f_s ds,
$$
with $\tZ_{A}f \in C(0,\infty; L_{2})$ and $\lim_{t\to 0} \tZ_{A}f_{t}=0$ in $L_{2}$.
\end{proof}

%
%
%
%
%
\section{Representation and traces of solutions}     \label{sec:representation}

We now come to the heart of the matter. The natural classes of solutions for the Dirichlet and Neumann problems, with $L_2$ boundary data,
use the spaces $\mY^o\approx \mY$ and $\mX^o\approx \mX$ from Definition~\ref{defn:XY}.

\begin{defn}    \label{defn:XYsol}
(i)
  By a $\mY^o$-{\em solution to the divergence form equation}, with coefficients $A$, 
  we mean a weak solution $u$ of $\divv_{\bx} A \nabla u=0$ in $\bO^{1+n}$ with 
  $\| \nabla_{\bx} u \|_{\mY^o}<\infty$.

(ii)
  By an $\mX^o$-{\em solution to the divergence form equation}, with coefficients $A$, 
  we mean the gradient $g:= \nabla_\bx u$ of a weak solution $u$ of 
  $\divv_{\bx} A \nabla u=0$ in $\bO^{1+n}$ with 
  $\| g \|_{\mX^o}<\infty$.
\end{defn}

Note the slight abuse of notation when referring to the gradient $\nabla_\bx u$ rather than $u$
as an $\mX^o$-solution. 
The reason for this convention, here as well as in \cite{AA1}, is that the Neumann and regularity
problems are BVPs for $g$ (and not for the potential $u$), and $\mX^o$-solutions is the natural class 
of solutions for these problems. This point of view is the one that lead us to our representations.  However, when more convenient we  call the potential $u$ itself an $\mX^o$-solution.

\begin{rem}
(i) No boundary trace is assumed in our definitions, but will be deduced.

(ii) The semi-norm $\|\nabla_\bx u\|_{\mY^o}$ on $\mY^o$-solutions is modulo constants,
which is unusual for Dirichlet problems. Once we have shown that $\mY^o$-solutions
have boundary traces, we will be able to put constants back in the norm in a natural way.

(iii) For any $\mX^o$-solution $g$, the potential $u$ has a boundary trace in 
appropriate sense (replacing pointwise values by averages) and the trace belongs to
$W^1_2(S^n; \C^m)$. This is essentially in \cite{KP}. We also recover this from our representations. See Section~\ref{sec:conjugate}.
\end{rem}

Here and subsequently, we use the notation $e^{-t\Lambda} g$ to denote the function 
$(t,x)\mapsto (e^{-t\Lambda} g)(x)$. Similarly for $e^{-t\tilde\Lambda} g$.

\subsection{$\mX^o$-solutions}

We begin with representation and boundary trace for solutions of the  corresponding ODE.

\begin{thm}   \label{thm:inteqforNeu}
  Assume that $\|\E\|_*<\infty$. Let $f\in \mX$.
  Then $f\in L_2^\loc(\R_+;\mH)$ satisfies $\pd_t f+ (DB+\tfrac {n-1}2 N) f=0$ in $\R_+\times S^n$ distributional sense 
  if and only if $f$
   satisfies the equation
\begin{equation}   \label{eq:ODEonintegral}
    f_t= e^{-t\Lambda} h^+ + S_Af_t,\qquad \text{for some } h^+\in E_0^+\mH.
 \end{equation}
In this case, $f$ has limit 
\begin{equation}  \label{eq:neuavlim}
\lim_{t\to 0} t^{-1} \int_t^{2t} \| f_s -f_0 \|_2^2 ds =0,
\end{equation}
where $f_0:= h^++h^-$ and $h^-:= -\int_0^\infty e^{-s\Lambda} E_0^- D \E_s f_s ds\in E_0^-\mH$,
with estimates 
$$
 \max (\|h^+\|_2, \|h^-\|_2) \approx \|f_0\|_2 \lesssim \|f\|_\mX.
$$
If furthermore $I-S_A$ is invertible on $\mX$, then 
\begin{equation}\label{eq:ansatzRN}
f=(I-S_{A})^{-1} e^{-t\Lambda} h^+
\end{equation}
and $\| f \|_\mX\lesssim \|h^+\|_2$.
\end{thm}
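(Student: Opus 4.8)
The plan is to prove the two directions of the equivalence separately, then deduce the limit and the final formula. For the "only if" direction, suppose $f \in \mX \cap L_2^\loc(\R_+;\mH)$ solves $\pd_t f + (DB + \tfrac{n-1}{2}N)f = 0$. I would rewrite this as $\pd_t f + D_0 f = D\E f$ with $\E_t := B_0 - B_t$, and apply Lemma~\ref{lem:preints} with the bump functions $\eta_\epsilon^\pm$. Adding the two identities there and using that $f \in \mX$ decays appropriately near $t = 0$ and is in $L_2^\loc$, one lets $\epsilon \to 0$. The left-hand sides telescope: $-\int_0^t \pd_s\eta_\epsilon^+(t,s) e^{-(t-s)\Lambda}E_0^+ f_s\,ds$ converges to $e^{-t\Lambda}E_0^+ f_{0^+}$ minus boundary contributions, and similarly for the $E_0^-$ piece, so that in the limit $f_t = e^{-t\Lambda}h^+ + S_A f_t$ with $h^+ := E_0^+ f_{0^+} \in E_0^+\mH$ (using Theorem~\ref{thm:XYbounded} to identify the right-hand integrals with $S_A f_t$). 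Here one uses $f \in \mX$ so that all integral operators act boundedly and the convergence in $L_2(a,b;L_2)$ from Theorem~\ref{thm:XYbounded} makes sense.

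For the "if" direction, suppose $f \in \mX$ satisfies \eqref{eq:ODEonintegral}. By Lemma~\ref{lem:inttodiff}(i), $f^0 := S_A f$ satisfies $(\pd_t + D_0)f^0 = D\E f$ distributionally; also $\pd_t(e^{-t\Lambda}h^+) + D_0(e^{-t\Lambda}h^+) = 0$ since $h^+ \in E_0^+\mH \subset \mH$ and $e^{-t\Lambda}$ is the semigroup generated by $\Lambda = |D_0|$ restricted to the spectral subspace (Theorem~\ref{thm:NT}). Adding, $f_t = e^{-t\Lambda}h^+ + S_A f_t$ satisfies $(\pd_t + D_0)f = D\E f$, i.e. $\pd_t f + (DB_0 + \sigma N)f = D(B_0 - B)f$, which rearranges to $\pd_t f + (DB + \sigma N)f = 0$ with $\sigma = \tfrac{n-1}{2}$. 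One checks $f_t \in \mH$ for a.e. $t$: $e^{-t\Lambda}h^+ \in \mH$, and $S_A f_t = D\tS_A f_t \in \ran(D) = \mH$ by Lemma~\ref{lem:SAdecomp}/Theorem~\ref{thm:Cbounded}.

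For the limit \eqref{eq:neuavlim}: from the equation $f_t = e^{-t\Lambda}h^+ + S_A f_t$, the first term converges to $h^+$ in $L_2$ as $t \to 0$ (Theorem~\ref{thm:NT}), and by Lemma~\ref{lem:inttodiff}(i) the Whitney averages of $S_A f_s$ converge to $h^- = -\int_0^\infty e^{-s\Lambda}E_0^- D\E_s f_s\,ds \in E_0^-\mH$ in the $t^{-1}\int_t^{2t}\|\cdot\|_2^2\,ds$ sense, with $\|h^-\|_2 \lesssim \|f\|_\mX$. Hence $f_0 := h^+ + h^-$ is the limit, and since $h^\pm = E_0^\pm f_0$ lie in complementary spectral subspaces with $\|f_0\|_2 \approx \|h^+\|_2 + \|h^-\|_2$ by Lemma~\ref{lem:spectralsplits}, the estimate $\max(\|h^+\|_2, \|h^-\|_2) \approx \|f_0\|_2 \lesssim \|f\|_\mX$ follows, the last inequality from Lemma~\ref{lem:XlocL2} applied to $\tN(f)$.

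Finally, if $I - S_A$ is invertible on $\mX$, rewrite \eqref{eq:ODEonintegral} as $(I - S_A)f = e^{-t\Lambda}h^+$, giving $f = (I-S_A)^{-1}(e^{-t\Lambda}h^+)$; note $e^{-t\Lambda}h^+ \in \mX$ with $\|e^{-t\Lambda}h^+\|_\mX \approx \|h^+\|_2$ by Theorem~\ref{thm:NT}, so $\|f\|_\mX \lesssim \|(I-S_A)^{-1}\|\,\|h^+\|_2 \lesssim \|h^+\|_2$. The main obstacle I anticipate is the careful $\epsilon \to 0$ limiting argument in the "only if" direction — controlling the boundary terms from $\pd_s\eta_\epsilon^\pm$ near $t = 0$ and near $s = 0$, and verifying that the telescoped limit genuinely produces $e^{-t\Lambda}h^+$ rather than something with an unwanted extra term; this requires the fine properties of the $\eta_\epsilon$ cutoffs from Lemma~\ref{lem:preints} together with the boundedness and convergence from Theorem~\ref{thm:XYbounded}, essentially mirroring \cite[Prop.~4.4, Sec.~6]{AA1}.
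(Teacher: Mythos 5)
Your proposal is correct and follows essentially the same route as the paper's proof (which itself defers details to \cite[Thm.~8.2]{AA1}): Lemma~\ref{lem:preints} plus Theorem~\ref{thm:XYbounded} for the ``only if'' direction with $h^+$ obtained as a weak limit, Lemma~\ref{lem:inttodiff} combined with $(\pd_t+D_0)e^{-t\Lambda}h^+=0$ and Theorem~\ref{thm:NT} for the converse, Lemma~\ref{lem:inttodiff} again for the trace $f_0=h^++h^-$, and the invertibility of $I-S_A$ for \eqref{eq:ansatzRN}. The only caveat, which you yourself flag, is that $h^+$ should be described as the weak limit of the $\epsilon$-averages of $E_0^+f_s$ rather than a presupposed strong limit $E_0^+f_{0^+}$; this matches the paper's own treatment.
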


\begin{proof}
  The proof is an adaption of \cite[Thm.~8.2]{AA1} to which we refer for details.
  Here is a quick summary.

We show that $f$ satisfies (\ref{eq:firstorderODE}) if and only if $f$ satisfies
(\ref{eq:ODEonintegral}).
Assume (\ref{eq:firstorderODE}) and apply Lemma~\ref{lem:preints}. 
Letting $\epsilon\to 0$ and applying Theorem~\ref{thm:XYbounded}, 
we obtain the stated equation for $f$, with $h^+$ as a certain weak limit as
in part (i) of the proof of \cite[Thm.~8.2]{AA1}, with $\Lambda= |D_0|$ here.

Conversely, if $f\in\mX$ satisfies (\ref{eq:ODEonintegral}), 
then we apply Lemma~\ref{lem:inttodiff}
with $f^o:= f- e^{-t\Lambda} h^+$. Since $(\pd_t+ D_0)e^{-t\Lambda}h^+=0$ and 
$e^{-t\Lambda}h^+\in \mX$ by 
Theorem~\ref{thm:NT}, it follows that $f$ satisfies (\ref{eq:firstorderODE}).

Lemma~\ref{lem:inttodiff} also shows existence of the limit $f_0$.
The stated estimates follow as in part (iii) of the proof of \cite[Thm.~8.2]{AA1}.

If $I-S_{A}$ is invertible, the equation \eqref{eq:ansatzRN} follows immediatly from (\ref{eq:ODEonintegral}), and the estimate  $\| f \|_\mX\lesssim \|h^+\|_2$ follows again from 
Theorem~\ref{thm:NT}.
\end{proof}

\begin{thm}   \label{thm:inteqforNeuandg}
  Assume that $\|\E\|_*<\infty$.
   Then $g$ is an $\mX^o$-solution to the divergence form equation with coefficients $A$ if and only if
   the corresponding conormal gradient $f \in \mX$ 
   satisfies the equation
\begin{equation}  
    f_t= e^{-t\Lambda} h^+ + S_Af_t,\qquad \text{for some } h^+\in E_0^+\mH.
 \end{equation}
In this case, $g$ has limit 
$$
  \lim_{r\to 1} \frac 1{1-r} \int_{r<|\bx|<(1+r)/2} | g(\bx) - g_1(x) |^2 d\bx =0,
$$
where $g_1:= (B_0 f_0)_\no\rad+ (f_0)_\ta$ and $\|g_1\|_2\lesssim \|g\|_{\mX^o}$ holds.
If furthermore $I-S_A$ is invertible on $\mX$, then $\|h^+\|_2\approx \|g_1\|_2\approx \| g\|_{\mX^o}$.
\end{thm}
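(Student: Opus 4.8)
The plan is to deduce Theorem~\ref{thm:inteqforNeuandg} from Theorem~\ref{thm:inteqforNeu} by transferring everything through the gradient-to-conormal gradient correspondence of Proposition~\ref{prop:divformasODE} (with $\sigma=\frac{n-1}{2}$, $B=\widehat A$, $B_0=\widehat{A_1}$). First I would recall that, by Proposition~\ref{prop:CR} and Corollary~\ref{cor:conormal}, $g=\nabla_\bx u$ for a weak solution $u$ of the divergence form equation if and only if the associated conormal gradient $f$ (defined by \eqref{eq:gradtoconormal}) lies in $L_2^\loc(\R_+;\mH)$ and solves $\pd_t f+(DB+\tfrac{n-1}2 N)f=0$ in $\R_+\times S^n$ distributional sense. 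The crucial compatibility fact is that this correspondence is an isomorphism $\mX^o\to\mX$ (stated after Definition~\ref{defn:XY}), so $\|g\|_{\mX^o}\approx\|f\|_\mX$, and in particular $g$ is an $\mX^o$-solution precisely when $f\in\mX$ and solves the ODE. Applying Theorem~\ref{thm:inteqforNeu} to this $f$ then immediately gives the stated equivalence with $f_t=e^{-t\Lambda}h^+ + S_A f_t$ for some $h^+\in E_0^+\mH$, together with the limit \eqref{eq:neuavlim} for $f$ and the estimates $\max(\|h^+\|_2,\|h^-\|_2)\approx\|f_0\|_2\lesssim\|f\|_\mX\approx\|g\|_{\mX^o}$.

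Next I would translate the averaged $L_2$ limit \eqref{eq:neuavlim} for $f$ at $t\to 0$ into the stated limit for $g$ at $r\to 1$. The point is that the conormal gradient-to-gradient map \eqref{eq:conormaltograd} reads $g_r=r^{-(n+1)/2}\big((B_0 f_t)_\no\rad+(f_t)_\ta\big)$ for radially independent $B_0$; but here $B$ is radially dependent. However, the difference $B-B_0=-\E$ is controlled in Carleson norm, and near $t=0$ (i.e.\ $r$ near $1$) the factor $r^{-(n+1)/2}\to 1$. So I would set $g_1:=(B_0 f_0)_\no\rad+(f_0)_\ta$ and estimate
\[
  g_r - g_1 = \big(r^{-(n+1)/2}-1\big)\big((B_0 f_t)_\no\rad+(f_t)_\ta\big)
  + (B_0(f_t-f_0))_\no\rad + (f_t-f_0)_\ta - r^{-(n+1)/2}(\E_t f_t)_\no\rad .
\]
The first term is $O(t)\|f_t\|_2$, the second is controlled by $\|f_t-f_0\|_2$, and for the last one I would use the Carleson/Whitney estimate: $t^{-1}\int_t^{2t}\|\E_s f_s\|_2^2\,ds$ is bounded by a Carleson average that tends to $0$ as $t\to 0$ (this uses $\|\E\|_*<\infty$ together with the $L_2^\loc$ control of $f$ by $\tN(f)$ from Lemma~\ref{lem:XlocL2}; in fact one should be slightly careful and note $\E^*$ vanishes at the boundary in the Carleson sense, which forces the average to vanish). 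Converting $t^{-1}\int_t^{2t}\|g_s-g_1\|_2^2\,ds$ into $\tfrac1{1-r}\int_{r<|\bx|<(1+r)/2}|g(\bx)-g_1(x)|^2 d\bx$ via $r=e^{-t}$, polar coordinates, and $r^n\,dr$-weights (cf.\ \eqref{eq:dfl2iso}) is then routine. The bound $\|g_1\|_2\lesssim\|g\|_{\mX^o}$ follows from $\|g_1\|_2\approx\|f_0\|_2\lesssim\|f\|_\mX\approx\|g\|_{\mX^o}$.

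Finally, when $I-S_A$ is invertible on $\mX$, Theorem~\ref{thm:inteqforNeu} gives $f=(I-S_A)^{-1}e^{-t\Lambda}h^+$ with $\|f\|_\mX\lesssim\|h^+\|_2$, while conversely $\|h^+\|_2\lesssim\|f\|_\mX$ always, so $\|h^+\|_2\approx\|f\|_\mX\approx\|g\|_{\mX^o}$; combined with $\|g_1\|_2\approx\|f_0\|_2$ and the equivalence $\|f_0\|_2\approx\|h^+\|_2$ from Theorem~\ref{thm:inteqforNeu}, this yields $\|h^+\|_2\approx\|g_1\|_2\approx\|g\|_{\mX^o}$. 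I expect the main obstacle to be the careful handling of the error term $r^{-(n+1)/2}(\E_t f_t)_\no$ in the $r\to 1$ limit: one must show its Whitney-averaged $L_2$ norm genuinely vanishes, not merely stays bounded, which is where the fact that $\E$ satisfies the Carleson condition (and hence $\E^*$ vanishes at the boundary in the appropriate averaged sense) is essential, rather than just $\|\E\|_\infty<\infty$.
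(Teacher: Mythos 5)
Your proposal is correct and takes essentially the same route as the paper, whose proof is simply the reduction to Theorem~\ref{thm:inteqforNeu} via Corollary~\ref{cor:conormal} and the conormal gradient-to-gradient map of Proposition~\ref{prop:divformasODE}, with the limit and estimates for $g$ read off from those for $f$. Your extra care with the term $r^{-(n+1)/2}(\E_t f_t)_\no$ is justified and can be closed even more directly: since $f\in\mX$ and $\|\E\|_*<\infty$ give $\E f\in\mY^*$, the finiteness of $\int_0^1\|\E_s f_s\|_2^2\,ds/s$ already forces the Whitney-slice averages $t^{-1}\int_t^{2t}\|\E_s f_s\|_2^2\,ds$ to vanish as $t\to 0$, so no separate appeal to the Carleson condition is needed.
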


\begin{proof} The equivalence follows from Corollary~\ref{cor:conormal} and Theorem~\ref{thm:inteqforNeu}. The limit and the estimates follow on applying the conormal gradient-to-gradient map of Proposition~\ref{prop:divformasODE} from the ones satisfied by $f$.
\end{proof}

It is worth specifying the previous theorem in the case of radially independent coefficients.

\begin{cor} Assume $A$ is radially independent. Then any $\mX^o$-solution has corresponding conormal gradient given by $f=e^{-t\Lambda}h^+$ for a unique $h^+\in E_{0}^+\mH$. 
\end{cor}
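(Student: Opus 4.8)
The plan is to deduce this corollary directly from Theorem~\ref{thm:inteqforNeuandg} together with the observation that when $A$ is radially independent, the perturbation term vanishes identically. Specifically, with $A$ radially independent we have $A_1 = A$ (the boundary trace), hence $B = B_0 = \widehat{A_1}$ and $\E_t = B_0 - B_t = 0$ for all $t$. Consequently $\|\E\|_* = 0 < \infty$, so Theorem~\ref{thm:inteqforNeuandg} applies, and moreover $S_A = 0$ since the integral operator $S_A$ is built entirely from $\E$ (see Theorem~\ref{thm:XYbounded} and the bound $\|S_A^\epsilon f\|_\mX \lesssim \|\E\|_*\|f\|_\mX$). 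Thus $I - S_A = I$ is trivially invertible on $\mX$.

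First I would invoke Theorem~\ref{thm:inteqforNeuandg}: $g$ is an $\mX^o$-solution if and only if the corresponding conormal gradient $f \in \mX$ satisfies $f_t = e^{-t\Lambda} h^+ + S_A f_t$ for some $h^+ \in E_0^+\mH$. Substituting $S_A = 0$, this becomes simply $f = e^{-t\Lambda} h^+$. This already gives existence of the representation. For uniqueness of $h^+$, I would note that taking the limit as $t\to 0$ (using the $L_2$-average convergence $\lim_{t\to 0} t^{-1}\int_t^{2t}\|f_s - f_0\|_2^2\,ds = 0$ from Theorem~\ref{thm:inteqforNeu}, or the strong limit from Theorem~\ref{thm:NT} since $e^{-t\Lambda}h^+ \to h^+$ in $L_2$), we recover $h^+ = f_0 = \lim_{t\to 0} f_t$, so $h^+$ is uniquely determined by $f$, hence by $g$.

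There is essentially no obstacle here; the content is entirely a specialization of the previously established machinery. The only point requiring a word of care is confirming that $S_A$ genuinely vanishes (not merely is bounded by zero in norm) when $\E = 0$: this is immediate from the defining integral formula for $S_A^\epsilon$ in Section~\ref{sec:integration}, each term of which contains a factor $\E_s$, so $S_A^\epsilon = 0$ for every $\epsilon$ and thus $S_A = \lim_{\epsilon\to 0} S_A^\epsilon = 0$. One should also remark that $E_0^+ = \chi^+(D_0) = \chi^+(D_{A_1})$ is exactly the spectral projection associated with the radially independent coefficients, so the statement is internally consistent with the notation of Theorem~\ref{thm:NT}.

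\begin{proof}
Since $A$ is radially independent, its boundary trace is $A_1 = A$, so $B = B_0 = \widehat{A_1}$ and $\E_t = B_0 - B_t = 0$ for all $t > 0$. In particular $\|\E\|_* = 0 < \infty$, and inspection of the defining formula for $S_A^\epsilon$ in Section~\ref{sec:integration} shows that each of its terms carries a factor $\E_s$; hence $S_A^\epsilon = 0$ for every $\epsilon > 0$, and therefore $S_A = \lim_{\epsilon\to 0} S_A^\epsilon = 0$ on $\mX$. Consequently $I - S_A = I$ is invertible on $\mX$.

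By Theorem~\ref{thm:inteqforNeuandg}, $g$ is an $\mX^o$-solution to the divergence form equation with coefficients $A$ if and only if the corresponding conormal gradient $f \in \mX$ satisfies
$$
  f_t = e^{-t\Lambda} h^+ + S_A f_t, \qquad \text{for some } h^+ \in E_0^+\mH.
$$
Since $S_A = 0$, this reduces to $f_t = e^{-t\Lambda} h^+$ with $h^+ \in E_0^+\mH$, which proves the existence of the representation. For uniqueness, note that by Theorem~\ref{thm:NT} the function $e^{-t\Lambda} h^+$ has $L_2$ limit $h^+$ as $t \to 0$; equivalently, by Theorem~\ref{thm:inteqforNeu}, $\lim_{t\to 0} t^{-1}\int_t^{2t}\|f_s - f_0\|_2^2\,ds = 0$ with $f_0 = h^+$ (here $h^- = 0$ since $\E = 0$). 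Thus $h^+ = \lim_{t\to 0} f_t$ is uniquely determined by $f$, and hence by $g$.
\end{proof}
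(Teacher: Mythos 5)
Your proof is correct and follows exactly the route the paper intends: for radially independent coefficients $\E=0$, hence $S_A=0$ (as the paper itself notes in the remark following the corollary), and the representation plus uniqueness of $h^+$ via the boundary limit drop out of Theorem~\ref{thm:inteqforNeuandg} and Theorem~\ref{thm:NT}. Nothing further is needed.
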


\begin{rem} 
A careful examination of the proof of Theorem~\ref{thm:inteqforNeu} 
in the case of radially independent coefficients, shows in fact that for $f\in L_2^{loc}(\R_+;\mH)$ 
 the weaker condition 
$\sup_{0<t<1/2} \frac 1t\int_t^{2t} \| f_s \|_2^2 ds<\infty$ 
is sufficient to obtain this corollary, as in this case $S_{A}=0$.
\end{rem}

\subsection{$\mY^o$-solutions}

We now turn to representations and boundary behavior pertaining to $\mY^o$-solutions.

\begin{thm}   \label{thm:inteqforDir}
  Assume that $\|\E\|_*<\infty$ and $f\in \mY$.

(i)
    Then $f\in L_2^\loc(\R_+;\mH)$ satisfies $\pd_t f+(DB+\tfrac {n-1}2 N) f=0$ in $\R_+\times S^n$ distributional sense  if and only if
  $f$
   satisfies the equation
\begin{equation}   \label{eq:ODEonintegralDir}
   f_t = De^{-t\tilde\Lambda}\tilde h^+ + S_A f_t, \qquad \text{for some } \tilde h^+\in \tE_0^+ L_2.
\end{equation}
Here 
$\tilde h^+$ is unique modulo $\tE_0^+ \mH^\perp$ and
$\|\tilde h^+\|_{L_2/\mH^\perp} \lesssim \|f\|_\mY$, and if furthermore 
$I-S_A$ is invertible on $\mY$
then \begin{equation}\label{eq:ansatzDf}
f=(I-S_{A})^{-1} De^{-t\tilde\Lambda}\tilde h^+
\end{equation}
with $\|f\|_\mY\lesssim \|\tilde h^+\|_{L_2/\mH^\perp}$.

(ii)
If (\ref{eq:ODEonintegralDir}) holds, let $v_t:= e^{-t\tilde\Lambda}\tilde h^+ + \tS_A f_t$.
Then $f=Dv$ and $\pd_{t}v+(BD-\tfrac{n-1}2 N)v=0$,  and $v_t$ has $L_2$ limit
\begin{equation}    \label{eq:dirlimits}
   \lim_{t\to 0} \| v_t  - v_0\|_2=0,
\end{equation}
where $v_0:= \tilde h^+ + \tilde h^-$ and
$\tilde h^-:= -\int_0^\infty e^{-s\tilde\Lambda}\tE_0^- \E_s f_s ds\in \tE_0^- L_2$, with
estimates $\|\tilde h^-\|_2\lesssim \|f\|_\mY$ and
\begin{equation}    \label{eq:vtest}
  \|v_t\|_2 \lesssim \|\tilde h^+\|_2 + \|f\|_\mY,\qquad\text{for all } t>0.
\end{equation}
\end{thm}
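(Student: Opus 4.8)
The plan is to treat Theorem~\ref{thm:inteqforDir} as the $\mY$-analogue of Theorem~\ref{thm:inteqforNeu}, replacing the operator $\Lambda=|D_0|$ and the integral operator $S_A$ on $\mX$ by the operators $\tilde\Lambda=|\tD_0|$ and $\tS_A$ on $\mY$ (resp. $\mY_\delta$ when $n=1$), and following the scheme of \cite[Thm.~8.2]{AA1}. First I would prove part (i). For the ``only if'' direction, suppose $f\in \mY\cap L_2^\loc(\R_+;\mH)$ solves the ODE $\pd_t f+(DB+\sigma N)f=0$. By Proposition~\ref{prop:ODEtoPotential} (and Remark: its converse is available for radially independent coefficients; in general I use Lemma~\ref{lem:preints} directly on $f$), I apply Lemma~\ref{lem:preints} to get the pre-integrated identities, let $\epsilon\to 0$ using the $\mY$-boundedness and convergence of $S_A^\epsilon$ from Theorem~\ref{thm:XYbounded}, and extract $S_A f_t$. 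The remaining term is of the form $e^{-t\Lambda}E_0^+(\text{something})$, which via the intertwining $b(D_0)D=Db(\tD_0)$ of Lemma~\ref{lem:spectralsplits} (or Corollary~\ref{cor:fcalc}) and the surjectivity of $D:\tE_0^+L_2\to E_0^+\mH$ can be written $De^{-t\tilde\Lambda}\tilde h^+$ for some $\tilde h^+\in\tE_0^+L_2$; the weak limit producing $\tilde h^+$ is extracted exactly as in part (i) of \cite[Thm.~8.2]{AA1}. Uniqueness modulo $\tE_0^+\mH^\perp$ follows because $D$ kills $\mH^\perp$ and is injective on $\tE_0^+L_2/(\tE_0^+\mH^\perp)$; the bound $\|\tilde h^+\|_{L_2/\mH^\perp}\lesssim\|f\|_\mY$ comes from $\|\tilde h^+\|_{L_2/\mH^\perp}\approx\|De^{-t\tilde\Lambda}\tilde h^+\|$-type estimates combined with $\|S_Af\|_\mY\lesssim\|\E\|_*\|f\|_\mY$. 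For the ``if'' direction, given \eqref{eq:ODEonintegralDir}, set $f^o:=f-De^{-t\tilde\Lambda}\tilde h^+$; since $De^{-t\tilde\Lambda}\tilde h^+$ solves the ODE and lies in $\mY$ (Theorem~\ref{thm:NTtilde} and boundedness of $D$ from the relevant graph norm), Lemma~\ref{lem:inttodiff}(i) applied to $f^o$ forces $(\pd_t+D_0)f=D\E f$, i.e. the ODE \eqref{eq:firstorderODE}. When $I-S_A$ is invertible on $\mY$, \eqref{eq:ansatzDf} is immediate from \eqref{eq:ODEonintegralDir}, and $\|f\|_\mY\lesssim\|\tilde h^+\|_{L_2/\mH^\perp}$ follows from the bound on $(I-S_A)^{-1}$ together with $\|De^{-t\tilde\Lambda}\tilde h^+\|_\mY\lesssim\|\tilde h^+\|_{L_2/\mH^\perp}$.

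Next I would prove part (ii). Define $v_t:=e^{-t\tilde\Lambda}\tilde h^++\tS_A f_t$. Applying $D$ and using $S_A=D\tS_A$ (Theorem~\ref{thm:Cbounded}) together with $De^{-t\tilde\Lambda}\tilde h^+=$ the first term of \eqref{eq:ODEonintegralDir} gives $Dv=f$. That $v$ solves $\pd_t v+(BD-\sigma N)v=0$: the first term does by Theorem~\ref{thm:NTtilde}, and for the second term Lemma~\ref{lem:inttodiff}(ii) gives $(\pd_t+\tD_0)\tS_A f=\E f$; adding, and using $D\tS_A f=S_A f$ so that $(\pd_t+\tD_0)v=\E f+(\pd_t+\tD_0)e^{-t\tilde\Lambda}\tilde h^+=\E f$, and then observing that $\E f=\E Dv$... more precisely one checks $\pd_t v+(BD-\sigma N)v$ has zero $D$-image and zero by the inttodiff identity, hence vanishes; alternatively invoke Proposition~\ref{prop:ODEtoPotential}'s setup directly. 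The $L_2$-limit \eqref{eq:dirlimits} with $v_0=\tilde h^++\tilde h^-$ comes from $\lim_{t\to0}e^{-t\tilde\Lambda}\tilde h^+=\tilde h^+$ (Theorem~\ref{thm:NTtilde}) and $\lim_{t\to0}\tS_A f_t=\tilde h^-$ with $\tilde h^-=-\int_0^\infty e^{-s\tilde\Lambda}\tE_0^-\E_s f_s\,ds\in\tE_0^-L_2$ and $\|\tilde h^-\|_2\lesssim\|f\|_\mY$ from Lemma~\ref{lem:inttodiff}(ii) (replacing $\mY$ by $\mY_\delta$ when $n=1$, which is harmless since $f\in\mY\cap L_2^\loc(\R_+;\mH)$ implies $f\in\mY_\delta$ by Proposition~\ref{prop:reverseholder}). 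Finally \eqref{eq:vtest} follows by combining $\|e^{-t\tilde\Lambda}\tilde h^+\|_2\lesssim\|\tilde h^+\|_2$ (Theorem~\ref{thm:NTtilde}, noting the caveat there for $\sigma=0$, $n=1$, handled via the $\mH^\perp$-part being literally constant) with the uniform bound $\|\tS_A f_t\|_2\lesssim\|\E\|_*\|f\|_\mY$ from Theorem~\ref{thm:Cbounded}.

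The step I expect to be the main obstacle is the extraction of $\tilde h^+$ and the precise bookkeeping of the quotient norm $\|\cdot\|_{L_2/\mH^\perp}$: unlike the $\mX$-case where $h^+\in E_0^+\mH$ is honestly a vector in $\mH$, here $\tilde h^+$ is only defined modulo $\tE_0^+\mH^\perp$ and one must carefully track how the weak limit of the relevant bump-function integrals lands in $\tE_0^+L_2$, invoke Lemma~\ref{lem:htotildeh}-type isomorphisms to identify $\mH\ni h^+\leftrightarrow\tilde h^+\in L_2/\mH^\perp$, and verify the two-sided estimate $\|De^{-t\tilde\Lambda}\tilde h^+\|_\mY\approx\|\tilde h^+\|_{L_2/\mH^\perp}$ uniformly. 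This requires the square function estimates of Theorem~\ref{thm:QE} for $\tD_0$ and the functional calculus of Corollary~\ref{cor:fcalc}, together with the delicate handling of the $n=1$ case where $\tE_0^\pm=N^\mp$ on $\mH^\perp$ and the semigroup $e^{-t\tilde\Lambda}$ acts as the identity there. Everything else is a routine transcription of \cite[Thm.~8.2]{AA1} with $\Lambda\rightsquigarrow\tilde\Lambda$, $S_A\rightsquigarrow\tS_A$, and $\mX\rightsquigarrow\mY$ (or $\mY_\delta$).
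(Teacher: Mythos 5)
Your plan reproduces the paper's own proof essentially step for step: Lemma~\ref{lem:preints} plus Theorem~\ref{thm:XYbounded} to obtain $f_t=\tilde f_t+S_Af_t$, the surjectivity/intertwining argument together with Lemma~\ref{lem:htotildeh} to rewrite the homogeneous part as $De^{-t\tilde\Lambda}\tilde h^+$ with the quotient-norm estimate $\|\tilde h^+\|_{L_2/\mH^\perp}\approx\|De^{-t\tilde\Lambda}\tilde h^+\|_{\mY}$ from Theorem~\ref{thm:NT}, Lemma~\ref{lem:inttodiff} applied to $f-De^{-t\tilde\Lambda}\tilde h^+$ for the converse, and Lemma~\ref{lem:inttodiff}(ii), Theorem~\ref{thm:Cbounded} and Theorem~\ref{thm:NTtilde} (with $\mY_\delta$ and Proposition~\ref{prop:reverseholder} when $n=1$) for part (ii). One small correction: the correct template for the weak-limit extraction is \cite[Thm.~9.2]{AA1}, not Thm.~8.2 --- in the $\mY$ setting the limit must be taken at the level of the primitive $h_t$ solving $D_0h_t=\tilde f_t$ on $E_0^+\mH$ (using the semigroup property $\tilde f_{t_0+t}=e^{-t\Lambda}\tilde f_{t_0}$ and the invertibility of $D_0$ on $\mH$), since $f\in\mY$ itself need not have an $L_2$ boundary trace --- which is precisely the step you flag as the main obstacle.
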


\begin{proof}
The proof is an adaption, with some modifications, of \cite[Thm.~9.2]{AA1} to which we refer  for  omitted details.

(i)
Assume (\ref{eq:firstorderODE}).
We apply Lemma~\ref{lem:preints} to $f$. 
Letting $\epsilon\to 0$ and applying Theorem~\ref{thm:XYbounded}, we obtain for $f$
 the equation
$$
   f_t = \tilde f_t + S_A f_t,
$$
with the limit
$\tilde f_t:= \lim_{\epsilon\to 0} \epsilon^{-1}\int_\epsilon^{2\epsilon} e^{-(t-s)\Lambda} E_0^+ f_s ds$.
From here, one can proceed as in \cite[Thm.~9.2]{AA1} to represent $\tilde f_t$
as $D_0e^{-t\Lambda} h^+$ for some $h^+\in E_0^+\mH$, 
or use a simpler argument (owing to the boundedness of the boundary here):
since $D_0: E_0^+\mH\to E^+_0\mH$ is surjective,
there exists  $h_t\in E^+_0\mH$  such that $\tilde f_t= D_0 h_t$.
From there and $\tilde f_{t_0+t}= e^{-t\Lambda} \tilde f_{t_0}$,
we conclude as in \cite{AA1} that the weak $L_2$-limit $h^+:= \lim_{t\to 0} h_t$
exists and that $\tilde f_t= D_0 e^{-t\Lambda} h^+$.

To write $D_0 e^{-t\Lambda} h^+$ as $D e^{-t\tilde \Lambda} \tilde h^+$ for some
$\tilde h^+\in\tE_0^+ L_2$, we use Lemma \ref{lem:htotildeh}. Indeed, there is an isomorphism  $M:\mH \to   L_{2}/\mH^\perp$ with $D_{0}=D\circ M$ on $\dom(D_{0})$.  It is easy to see that the restriction of $M$  to $ E_{0}^+\mH$ maps onto $\tE_0^+L_2 / \tE_0^+\mH^\perp$. Now, on $\dom(D_0)$,   $D_0 e^{-t\Lambda} = e^{-t\Lambda} D_0  = e^{-t\Lambda} D \circ M=
D e^{-t\tilde\Lambda} \circ M$. By density and boundedness, the left and right terms agree on $\mH$. 
Thus,  $\st h^+=Mh^+\in \tE_0^+L_2 / \tE_0^+\mH^\perp$ satisfies $D_0 e^{-t\Lambda} h^+=D e^{-t\tilde \Lambda} \tilde h^+$.

We conclude that $f_t= D e^{-t\tilde \Lambda} \tilde h^+ + \tS_A f_t$, with estimates
\begin{equation}  \label{eq:tildehestimate}
\|\tilde h^+\|_{L_2/\mH^\perp} \approx \|h^+\|_2 \approx
 \|D_0 e^{-t\Lambda} h^+\|_{\mY}
 = \|f-S_A f\|_\mY\lesssim \|f\|_\mY.
\end{equation}
The middle equivalence uses Theorem~\ref{thm:NT}. 

(i')
Conversely, if $f\in\mY$ satisfies (\ref{eq:ODEonintegralDir}) for some 
$\tilde h^+\in \tE^+_0 L_2$, 
then we apply Lemma~\ref{lem:inttodiff}
with $f^o= f- D e^{-t\tilde\Lambda} \tilde h^+ =  f- D_{0} e^{-t\Lambda} h^+$ with $h^+\in E_{0}^+\mH$ given by the isomorphism above. Since $(\pd_t+ D_0)D_{0}e^{-t\Lambda} h^+=0$, it follows that $f$ satisfies (\ref{eq:firstorderODE}).
For the estimate of $\|f\|_\mY$ when $I-S_{A}$ is invertible on $\mY$, 
use that the last estimate in \eqref{eq:tildehestimate} in this case is $\approx$.

(ii)
Lemma~\ref{lem:inttodiff} and Theorem~\ref{thm:Cbounded}
show the ODE satisfied by $v$, existence of the limit $v_0$ and
the estimates of $\|v_t\|_2$ and $\|\tilde h^-\|_2$.
This completes the proof.
\end{proof}

\begin{cor}   \label{cor:diransatz}
Assume that $\|\E\|_*<\infty$.
With the notation from Theorem~\ref{thm:inteqforDir}, the following holds.

(i) Any $\mY^o$-solution $u$ to the divergence form equation
has representation $u_{r}=r^{-\frac{n-1}{2}} (v_{t})_{\no}$ with $r=e^{-t}$, for some $v$ as in 
Theorem~\ref{thm:inteqforDir},  
boundary trace in the sense $\lim_{r\to 1}\|u_r-u_1\|_2=0$, and there are estimates
$$
 \|u_r\|_2 \lesssim r^{-\frac{n-1}{2}} \|\nabla_\bx u\|_{\mY^o}+ \left|\int_{S^n} u_1(x) dx\right|, \quad r\in (0,1).
$$

(ii) The map taking $\mY^o$-solutions $u$ to boundary functions 
$\tilde h^+=\tE_{0}^+v_{0} \in  \tE_0^+L_2$  is well-defined and bounded in the sense that
$$ \| \tilde h^+\|_2\lesssim \|\nabla_\bx u\|_{\mY^o}+ \left|\int_{S^n} u_1(x) dx\right|.$$

(iii) If furthermore $I-S_A$ is invertible on $\mY$, then this map is an isomorphism  
and its inverse  $\tE_0^+ L_2 \ni \tilde h^+\to u\in \{\mY^o \text{-solutions}\}$
is given by
\begin{equation}\label{eq:ansatzDu}
  u_r:= r^{-\frac{n-1}{2}} \Big((I + \tS_A (I-S_A)^{-1} D ) e^{-t\tilde \Lambda}  \tilde h^+\Big
   )_\no,
\end{equation} 

with estimates $\|\nabla_\bx u\|_{\mY^o}+ |\int_{S^n} u_1(x) dx|\approx \| \tilde h^+\|_2$. 
\end{cor}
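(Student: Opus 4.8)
The plan is to deduce Corollary~\ref{cor:diransatz} from Theorem~\ref{thm:inteqforDir} by passing back and forth between a $\mY^o$-solution $u$, its conormal gradient $f$, and the vector potential $v$ supplied by that theorem; the two dictionaries are Corollary~\ref{cor:conormal} (weak solutions $\leftrightarrow$ $\mH$-valued solutions of \eqref{eq:firstorderODE}) and Proposition~\ref{prop:ODEtoPotential} (solutions of \eqref{eq:BDODE} $\leftrightarrow$ weak solutions via the normal component of $v$). No new analytic input is needed; the work is organizational, chiefly the treatment of the constant ($\mH^\perp$) directions. So, for (i), given a $\mY^o$-solution $u$, its conormal gradient $f$ lies in $L_2^\loc(\R_+;\mH)$ and solves \eqref{eq:firstorderODE} by Corollary~\ref{cor:conormal}, and $f\in\mY$ with $\|f\|_\mY\approx\|\nabla_\bx u\|_{\mY^o}$ since the gradient-to-conormal gradient map of Proposition~\ref{prop:divformasODE} is a $\mY^o\to\mY$ isomorphism; for $n=1$ I upgrade this to $f\in\mY_\delta$, $\|f\|_{\mY_\delta}\lesssim\|f\|_\mY$, by Proposition~\ref{prop:reverseholder}, so that $\tS_A f$ is defined. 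Theorem~\ref{thm:inteqforDir} then produces $\tilde h^+\in\tE_0^+L_2$ with \eqref{eq:ODEonintegralDir}, and the potential $v_t:=e^{-t\tilde\Lambda}\tilde h^++\tS_A f_t$ satisfies $f=Dv$, the ODE \eqref{eq:BDODE}, and $v_t\to v_0=\tilde h^++\tilde h^-$ in $L_2$ with $\|\tilde h^-\|_2\lesssim\|f\|_\mY$ and \eqref{eq:vtest}.

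By Proposition~\ref{prop:ODEtoPotential}, $\tilde u_r:=r^{-(n-1)/2}(v_t)_\no$ extends to a weak solution whose conormal gradient is $Dv=f$; since $u$ shares this conormal gradient, $\nabla_\bx\tilde u=\nabla_\bx u$ and $u=\tilde u+c$ for some $c\in\C^m$. I then replace $v$ by $v-e^{-(n-1)t/2}\begin{bmatrix} c & 0\end{bmatrix}^t$: on $\mH^\perp$ one has $\tD_0=-\tfrac{n-1}2N$, so $\begin{bmatrix} c & 0\end{bmatrix}^t$ is an eigenvector with eigenvalue $\tfrac{n-1}2$ and $e^{-(n-1)t/2}\begin{bmatrix} c & 0\end{bmatrix}^t$ solves the homogeneous version of \eqref{eq:BDODE}, while $D$ annihilates it; hence the new $v$ still has all the listed properties and now $u_r=r^{-(n-1)/2}(v_t)_\no$ exactly. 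Since $(v_t)_\no\to(v_0)_\no$ in $L_2$ and $r^{-(n-1)/2}\to1$ as $r\to1$, this gives the trace $u_1:=(v_0)_\no$ and $\|u_r-u_1\|_2\to0$.

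The quantitative statements in (i) and (ii) are the delicate point. Theorem~\ref{thm:inteqforDir}(i) gives $\|\tilde h^+\|_{L_2/\mH^\perp}\lesssim\|f\|_\mY$; since $\tE_0^+$ is a bounded projection onto $\tE_0^+L_2$, this upgrades to $\inf\{\|\tilde h^+-z\|_2:z\in\tE_0^+\mH^\perp\}\lesssim\|f\|_\mY$, and because $\sigma=\tfrac{n-1}2\ge0$, Lemma~\ref{lem:spectralsplits} identifies $\tE_0^+\mH^\perp=N^-\mH^\perp$ with the constants; thus $\tilde h^+=w+\begin{bmatrix} c_0 & 0\end{bmatrix}^t$ with $\|w\|_2\lesssim\|f\|_\mY$. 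Integrating the normal component of $v_0=\tilde h^++\tilde h^-$ over $S^n$ pins down $|c_0|\lesssim|\int_{S^n}u_1\,dx|+\|w\|_2+\|\tilde h^-\|_2\lesssim|\int_{S^n}u_1\,dx|+\|\nabla_\bx u\|_{\mY^o}$, so $\|\tilde h^+\|_2\lesssim\|\nabla_\bx u\|_{\mY^o}+|\int_{S^n}u_1\,dx|$, which is (ii); well-definedness of $u\mapsto\tilde h^+=\tE_0^+v_0$ holds because two admissible $v$'s for the same $u$ differ by a divergence-free tangential section in $N^+\mH^\perp$, the only remaining gauge freedom, which $\tE_0^+=N^-$ annihilates on $\mH^\perp$. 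For $\|u_r\|_2$ I write $v_t=e^{-t\tilde\Lambda}w+e^{-(n-1)t/2}\begin{bmatrix} c_0 & 0\end{bmatrix}^t+\tS_A f_t$, observe that $r^{-(n-1)/2}e^{-(n-1)t/2}=1$ so the constant term carries no growth factor, and bound $\|e^{-t\tilde\Lambda}w\|_2\lesssim\|w\|_2$ and $\|\tS_A f_t\|_2\lesssim\|\E\|_*\|f\|_\mY$ (Theorem~\ref{thm:Cbounded}), obtaining $\|u_r\|_2\lesssim r^{-(n-1)/2}\|\nabla_\bx u\|_{\mY^o}+|\int_{S^n}u_1\,dx|$.

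For (iii), assume $I-S_A$ invertible on $\mY$. Given $\tilde h^+\in\tE_0^+L_2$, set $f:=(I-S_A)^{-1}De^{-t\tilde\Lambda}\tilde h^+\in\mY$ as in \eqref{eq:ansatzDf} (here $De^{-t\tilde\Lambda}\tilde h^+=D_0e^{-t\Lambda}h^+=-\pd_t(e^{-t\Lambda}h^+)\in\mY$ by Lemma~\ref{lem:htotildeh} and Theorem~\ref{thm:NT}); then $f$ solves \eqref{eq:firstorderODE}, $v_t=(I+\tS_A(I-S_A)^{-1}D)e^{-t\tilde\Lambda}\tilde h^+$, and $u$ defined by \eqref{eq:ansatzDu} is, by Proposition~\ref{prop:ODEtoPotential}, a $\mY^o$-solution with conormal gradient $f$, with $\tE_0^+v_0=\tE_0^+(\tilde h^++\tilde h^-)=\tilde h^+$ since $\tilde h^\pm\in\tE_0^\pm L_2$; this is a right inverse to $u\mapsto\tilde h^+$. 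It is also a left inverse: starting from $u$, relation \eqref{eq:ODEonintegralDir} reads $(I-S_A)f=De^{-t\tilde\Lambda}\tilde h^+$, forcing $f$, hence $v$ and $u$, to be given by \eqref{eq:ansatzDu}. The norm equivalence combines (ii) with the lower bound $\|f\|_\mY\gtrsim\|\tilde h^+\|_{L_2/\mH^\perp}$ that holds under invertibility, plus the same control of the constant direction. I expect the main obstacle throughout to be exactly this bookkeeping of the constant directions: producing an exact representation (not one modulo constants) and getting the term $|\int_{S^n}u_1\,dx|$ in the estimates with no spurious $r^{-(n-1)/2}$ attached --- which works only because the constant part of $v$ carries precisely the weight $e^{-(n-1)t/2}$ cancelling the prefactor, and because $\tE_0^+\mH^\perp$ reduces to the constants by Lemma~\ref{lem:spectralsplits}.
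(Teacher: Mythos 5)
Your proof is correct and follows essentially the same route as the paper's: apply Theorem~\ref{thm:inteqforDir} to the conormal gradient, recover $u$ from $(v_t)_\no$ via Proposition~\ref{prop:ODEtoPotential} up to a constant, absorb that constant using the gauge freedom $\tE_0^+\mH^\perp=N^-\mH^\perp$ on which $e^{-t\tilde\Lambda}$ acts by $e^{-\sigma t}$, and then track the constant direction in the trace and estimates; your variants (normalizing $v$ rather than $\tilde h^+$, and proving well-definedness in (ii) by observing that the difference of admissible potentials lies in $N^+\mH^\perp$, which $\tE_0^+$ annihilates) are cosmetic. The only blemish is in (iii): the estimate that actually uses invertibility of $I-S_A$ is $\|f\|_\mY\lesssim\|\tilde h^+\|_{L_2/\mH^\perp}$ (the reverse inequality holds always, by Theorem~\ref{thm:inteqforDir}(i)); with that direction corrected, the norm equivalence follows exactly as you indicate.
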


\begin{proof}
(i)
Let $f$ be the conormal gradient of $u$ and define $\tilde h^+$ and $v$ applying Theorem~\ref{thm:inteqforDir}.
As in the proof of Proposition~\ref{prop:ODEtoPotential}, it follows that
\begin{equation*}
u_r= r^{-\sigma} (v_t)_\no+ c
\end{equation*} 
for some $c\in\C^m$, where $r= e^{-t}\in (0,1)$ and $\sigma=\frac{n-1}{2}$.

Recall  that by  (\ref{eq:ODEonintegralDir}), $\tilde h^+$ is uniquely defined in $\tE_0^+ L_2$  modulo 
$ \tE_0^+ \mH^\perp$ and we now use this freedom to  
choose it in $\tE_0^+ L_2$  such 
that $c=0$. 
Indeed, by 
Lemma~\ref{lem:spectralsplits},  
$\tE_0^+ \mH^\perp= N^-\mH^\perp= \{\begin{bmatrix} c & 0 \end{bmatrix}^t ; c\in \C^m\} $ and since 
$\st\Lambda=\sigma I$ on $\mH^\perp$, we have
\begin{equation}\label{eq:c}
e^{-t\tilde\Lambda} (\begin{bmatrix} c & 0 \end{bmatrix}^t)= e^{-\sigma t} \begin{bmatrix} c & 0 \end{bmatrix}^t, \qquad c\in \C^m.
\end{equation} 
(The superscript $t$ of the brackets denotes transpose.)
Replacing 
$\tilde h^+$ by $\tilde h^+ - \begin{bmatrix} c & 0 \end{bmatrix}^t$, then  $f_t$ remains unchanged,
 $e^{-t\tilde\Lambda} \tilde h^+$ is replaced by 
$e^{-t\tilde\Lambda} \tilde h^+- e^{-\sigma t} \begin{bmatrix} c & 0 \end{bmatrix}^t$,
 and $(v_t)_{\no}$  by 
$(v_t)_{\no}-  e^{-\sigma t}c$.
Thus we may assume $c=0$.

As $v_{t}$ has an $L_{2}(S^n; \C^{(1+n)m})$ limit $v_{0}$ when $t\to 0$, one can set $u_1:= (v_0)_\no$ and $u_{r}$ converges in $L_{2}(S^n;\C^m)$ to $u_{1}$.
For the estimate on $\|u_r\|_2$ it suffices to prove
$$
 \left\|u_r- m\right\|_2 \lesssim r^{-\frac{n-1}{2}} \|\nabla_\bx u\|_{\mY^o}, \quad r\in (0,1).
 $$
with $m$ the mean value of $u_{1}$ on $S^n$. We may assume that $m=0$  as by \eqref{eq:c} this amounts to modifying $\tilde h^+$ modulo $N^-\mH^\perp$ without changing the conormal gradient $f$ of $u$.
We have 
$$
\|u_r\|_2 \le r^{-\sigma}\|v_t\|_2 \lesssim r^{-\sigma} (\|\tilde h^+\|_2+ \|f\|_\mY).
$$
By orthogonal projection onto $N^-\mH^\perp$, it follows
$\|\tilde h^+\|_2 \approx \|\tilde h^+\|_{L_2/\mH^\perp} +  |\int_{S^n} (\tilde h^+)_\no dx|$ 
since $\tilde h^+\in \tE_0^+L_2$. 
We can now conclude since $\|\tilde h^+\|_{L_2/\mH^\perp}\lesssim \|f\|_\mY$ and,  since $m=0$,
$$\left|\int_{S^n} (\tilde h^+)_\no(x) dx\right| = \left|\int_{S^n} (u_1-(\tilde h^-)_\no)(x)dx\right|\lesssim \|\tilde h^-\|_2\lesssim \|f\|_\mY.$$

(ii) The argument using \eqref{eq:c} shows that given a $\mY^o$-solution $u$ and its conormal gradient $f$, there exists   $\st h^+ \in \tE_0^+L_2$ such that $u_r= r^{-\sigma} (e^{-t\tilde\Lambda} \tilde h^+ + \tS_A f_t)_\no$. Moreover,  $\tilde h^+=\tE_{0}^+v_{0}$ by construction and  the estimate $ \| \tilde h^+\|_2\lesssim \|\nabla_\bx u\|_{\mY^o}+ \left|\int_{S^n} u_1(x) dx\right| $ follows from the above argument. To define the map and prove its boundedness, it suffices to show  uniqueness of such $\st h^+  \in \tE_0^+L_2 $.
So assume $u_r= r^{-\sigma} (e^{-t\tilde\Lambda} \tilde h^+ + \tS_A f_t)_\no=  r^{-\sigma} (e^{-t\tilde\Lambda} \tilde h_{1}^+ + \tS_A f_t)_\no$ with $f$ the conormal gradient of $u$ and $\st h^+, \st h^+_{1}\in \tE_{0}^+L_{2}$. This implies
that $f_{t}=De^{-t\st\Lambda} \st h^+ + S_{A}f_{t}=  De^{-t\st\Lambda} \st h^+_{1} + S_{A}f_{t}$ so we know that $\st h^+-\st h_{1}^+\in \tE_{0}^+\mH^\perp$ by Theorem~\ref{thm:inteqforDir}.  As $\tE_{0}^+\mH^\perp =N^-\mH^\perp$, write $\st h^+-\st h_{1}^+=\begin{bmatrix} c & 0 \end{bmatrix}^t$ with $c\in \C^m$.
We have from \eqref{eq:c}, $ 0= r^{-\sigma} (e^{-t\tilde\Lambda} (\tilde h^+- \tilde h^+_{1}))_{\no}= c$.

(iii)
Given $\tilde h^+\in  \tE_0^+L_2$, define $f_t := (I-S_A)^{-1} D e^{-t\tilde \Lambda}\tilde h^+$,
$v_t:= e^{-t\tilde\Lambda} \tilde h^+ + \tS_A f_t$ and $u_r:= r^{-\sigma}(v_t)_\no$.
By Theorem~\ref{thm:NTtilde} and Lemma~\ref{lem:inttodiff},
$v$ satisfies the equation $\pd_t v+ \tD_0 v=0$, and by
Proposition~\ref{prop:ODEtoPotential}, $u$ extends to a $\mY^o$-solution and $f$ is the conormal gradient of $u$. For the continuity estimate $ \|\nabla_\bx u\|_{\mY^o}+ \left|\int_{S^n} u_1(x) dx\right| \lesssim  \| \tilde h^+\|_2$, 
Theorem~\ref{thm:inteqforDir} implies $\|f\|_\mY\lesssim \|\tilde h^+\|_2$
and 
$|\int_{S^n} u_1(x) dx|\lesssim \|u_1\|_2\lesssim \|v_0\|_2\lesssim \|\tilde h^+\|_2+ \|f\|_\mY\lesssim \|\tilde h^+\|_2$.
This map  clearly inverts the map in (ii).
This completes the proof.
\end{proof}

It is worth specifying the Corollary~\ref{cor:diransatz} in the case of radially independent coefficients.

\begin{cor}\label{cor:uniquenessdir} 
Assume $A$ is radially independent. Then any $\mY^o$-solution is given by  $u=r^{-\frac{n-1}{2}}(e^{-t\tilde \Lambda}\tilde h^+)_{\no}$ for a unique $\tilde h^+ \in  \tE_0^+L_2$  with 
$
\|\tilde h^+\|_{2}\approx \|\nabla_\bx u\|_{\mY^o}+ \left|\int_{S^n} u_1 dx\right|.
$
\end{cor}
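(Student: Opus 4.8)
The plan is to specialize Corollary~\ref{cor:diransatz} to the radially independent case, where $\E = A - A_1 = 0$, hence $\|\E\|_* = 0 < \infty$ and the operator $S_A = 0$. First I would observe that with $S_A = 0$, the operator $I - S_A = I$ is trivially invertible on both $\mY$ and $\mX$, so part (iii) of Corollary~\ref{cor:diransatz} applies unconditionally. The representation formula \eqref{eq:ansatzDu} then collapses: since $\tS_A = 0$ as well (it is built from $\E$), we get $u_r = r^{-(n-1)/2}\big(e^{-t\tilde\Lambda}\tilde h^+\big)_\no$ with $r = e^{-t}$, which is exactly the claimed formula. The estimate $\|\tilde h^+\|_2 \approx \|\nabla_\bx u\|_{\mY^o} + |\int_{S^n} u_1(x)\,dx|$ is likewise the norm equivalence from Corollary~\ref{cor:diransatz}(iii) with $\|f\|_\mY$ terms absorbed.

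Next I would address uniqueness of $\tilde h^+$ in $\tE_0^+ L_2$. Corollary~\ref{cor:diransatz}(ii) already asserts that the map sending a $\mY^o$-solution $u$ to $\tilde h^+ = \tE_0^+ v_0 \in \tE_0^+ L_2$ is well-defined, i.e.\ $\tilde h^+$ is \emph{uniquely} determined; in the radially independent case the argument there simplifies because $\tS_A f = 0$, so $v_t = e^{-t\tilde\Lambda}\tilde h^+$ and $v_0 = \tilde h^+$ directly (here I'd recall that $\tilde h^+\in\tE_0^+L_2$ already, so $\tE_0^+ v_0 = \tilde h^+$). Thus the map $u \mapsto \tilde h^+$ is a well-defined bounded bijection onto $\tE_0^+ L_2$ by parts (ii) and (iii), and the existence of the limit $\lim_{r\to 1}\|u_r - u_1\|_2 = 0$ with $u_1 = (v_0)_\no = (\tilde h^+ + \tilde h^-)_\no = (\tilde h^+)_\no$ (since $\tilde h^- = 0$) comes from Corollary~\ref{cor:diransatz}(i) and Theorem~\ref{thm:inteqforDir}(ii).

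I do not expect any genuine obstacle here: the corollary is purely a matter of setting $\E = 0$ in the already-proven Corollary~\ref{cor:diransatz} and tracking which terms vanish. The one point requiring a line of care is confirming that every $\mY^o$-solution $u$ \emph{does} arise this way — that is, that the representation $u_r = r^{-(n-1)/2}(v_t)_\no$ of Corollary~\ref{cor:diransatz}(i) holds with a genuinely unique $\tilde h^+ \in \tE_0^+L_2$ rather than merely modulo $\tE_0^+\mH^\perp$; this is handled exactly as in the proof of Corollary~\ref{cor:diransatz}(i)(ii), using \eqref{eq:c} to normalize the additive constant $c$ so that $u_r = r^{-\sigma}(v_t)_\no$ with no leftover constant, and then the uniqueness argument of part (ii). I would write the proof in two or three sentences: invoke Corollary~\ref{cor:diransatz} with $S_A = \tS_A = 0$, note $I - S_A = I$ is invertible, read off the formula, and cite part (ii) for uniqueness of $\tilde h^+$ and the norm equivalence.

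\begin{proof}
Since $A$ is radially independent, $\E = A - A_1 = 0$, so $\|\E\|_* = 0$ and the operators $S_A$ and $\tS_A$ of Theorems~\ref{thm:XYbounded} and~\ref{thm:Cbounded} are identically zero. In particular $I - S_A = I$ is invertible on $\mY$, so Corollary~\ref{cor:diransatz}(iii) applies: the map sending a $\mY^o$-solution $u$ to $\tilde h^+ = \tE_0^+ v_0 \in \tE_0^+ L_2$ is an isomorphism onto $\tE_0^+ L_2$, with inverse given by \eqref{eq:ansatzDu}. As $\tS_A = 0$ and $S_A = 0$, formula \eqref{eq:ansatzDu} reads $u_r = r^{-(n-1)/2}\big(e^{-t\tilde\Lambda}\tilde h^+\big)_\no$ with $r = e^{-t}$, which is the asserted representation, and the norm equivalence $\|\tilde h^+\|_2 \approx \|\nabla_\bx u\|_{\mY^o} + |\int_{S^n} u_1\,dx|$ is exactly the estimate in Corollary~\ref{cor:diransatz}(iii) (the terms $\|f\|_\mY$ being absorbed, with $f$ the conormal gradient of $u$). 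Uniqueness of $\tilde h^+$ in $\tE_0^+ L_2$ is the content of Corollary~\ref{cor:diransatz}(ii); here $v_t = e^{-t\tilde\Lambda}\tilde h^+$ so $v_0 = \tilde h^+$ and $\tilde h^- = 0$, whence $u_1 = (v_0)_\no = (\tilde h^+)_\no$, and $\lim_{r\to 1}\|u_r - u_1\|_2 = 0$ by Theorem~\ref{thm:inteqforDir}(ii) and Corollary~\ref{cor:diransatz}(i).
\end{proof}
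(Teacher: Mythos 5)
Your proposal is correct and follows essentially the same route as the paper, which presents this corollary precisely as the specialization of Corollary~\ref{cor:diransatz} to radially independent coefficients, where $\E=0$ forces $S_A=\tS_A=0$ so that $I-S_A=I$ is invertible and the representation, uniqueness of $\tilde h^+$, and the norm equivalence all read off directly from parts (ii) and (iii). Your extra care about normalizing the constant via \eqref{eq:c} and identifying $v_0=\tilde h^+$, $\tilde h^-=0$ is exactly the bookkeeping implicit in the paper's statement.
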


\subsection{Conclusion}

It is clear from \eqref{eq:ansatzRN} that provided $I-S_{A}$ is invertible on $\mX$,
 the ansatz 
$$
  E^+_0\mH \to \mX: h^+\mapsto f_t= (I-S_A)^{-1} e^{-t\Lambda} h^+
$$
maps onto all conormal gradients of  $\mX^o$-solutions  
to the divergence form equation with
coefficients $A$.

Similarly,   \eqref{eq:ansatzDu}  implies  that  provided $I-S_{A}$ is invertible on $\mY$,
 the ansatz 
$$
  \tE^+_0\mH \to \mY^o: \st h^+\mapsto u_r:= r^{-\frac{n-1}{2}} \Big((I + \tS_A (I-S_A)^{-1} D ) e^{-t\tilde \Lambda}  \tilde h^+\Big
   )_\no,
$$
maps onto all  $\mY^o$-solutions  
to the divergence form equation with
coefficients $A$.
 
Thus we have a way of constructing solutions and our two main goals towards well-posedness results are the following.

First  understand when invertibility of $I-S_{A}$ holds. This will be done in Section~\ref{sec:fredholm}.

Secondly,  introduce   the boundary maps that connect  the traces of solutions to the data for the BVPs and invert them. This is the object of Section~\ref{sec:BVPs}. 

Before we do this, we continue with different \textit{a priori} representations of solutions in the next section. This will be useful to prove non-tangential maximal estimates and obtain convergence of Fatou type at the boundary.

%
%
%
%
%
\section{Conjugate systems}\label{sec:conjugate}

The results in the preceding section allow to  represent $\mX^o$-solutions in terms of the conormal gradient $f$. Actually, if one is interested in $u$ itself, one can try to further describe   the corresponding potential vector $v$. Similarly,  representation of  $\mY^o$-solutions is embedded into a potential vector $v$ but it could be interesting to describe the properties of the conormal gradient $f$.  Both are related by the rule $Dv=f$. This leads us to the following notion.

\begin{defn} 
A {\em pair of conjugate systems} to the divergence equation with coefficients $A$ is a pair $(v,f) \in L_2^\loc(\R_+;L_2(S^n; \V)) \times  L_2^\loc(\R_+;L_{2}(S^n; \V))$ with 

(i)  $v_{t}\in \dom(D)$ for almost every $t$
and  $\int_1^\infty\|Dv_t\|_2^2 dt<\infty$,

(ii)  $v$ is an $\R^+\times S^n$-distributional solution   of 
\eqref{eq:BDODE},

(iii) $f_{t}=Dv_{t}$ for almost every $t>0$, 

(iv)  $f$ is a $\mH$-valued  $\R^+\times S^n$-distributional solution  of  
\eqref{eq:firstorderODE}.  
\end{defn}

By Proposition~\ref{prop:ODEtoPotential} 
and  its proof,  a pair of conjugate systems is completely determined by $v$ satisfying (i-ii). That is, $f$ defined by (iii) automatically satisfies (iv). Moreover,  the function  
\begin{equation}\label{eq:associated}
  u_r:= r^{-(n-1)/2} (v_{t})_\no,\qquad r=e^{-t}\in (0,1),
\end{equation}
extends to  a weak solution of $\divv_\bx A \nabla_\bx u=0$ in $\bO^{1+n}$ and $f$ must be the conormal gradient  of $u$. 
 {\em We say that a weak solution $u$ and a pair of conjugate systems $(v,f)$ to the divergence form equation for which  \eqref{eq:associated} holds are associated.} 

 It is our goal to give a  description of the  pair (not only $f$ or $v$) in each case. Recall that in integrating $Dv=f$, $v_t$ is only determined by $f_t$ modulo $\mH^\perp$ so there is a choice to make. 

\begin{thm} \label{thm:conj}
Assume $\|\E\|_{*}<\infty$. Let $u$ be an $\mX^o$- or $\mY^o$-solution.   
Then $u$ has an $L_{2}(S^n; \C^m)$ trace $u_{1}$ at the boundary and there exists an associated pair of conjugate systems given  
by
\begin{equation}
\label{eq:repconjugatepairs}
\begin{cases}  v_{t}&=e^{-t\st\Lambda}v_{0}+ \st w_{t}\\
f_{t}&=e^{-t\Lambda}f_{0}+ w_{t}
\end{cases}
\end{equation}
with the following properties.
\begin{enumerate}
  \item[(i)] If $u$ is an $\mX^o$-solution, then $u_{1}\in W^1_{2}(S^n; \C^m)$,  $(v_{0}, f_{0})\in \dom(D) \times \mH$ with $Dv_{0}=f_{0}$, $\|\nabla_{S}u_{1}\|_{2}\lesssim\|f_{0}\|_{2}\lesssim \|\nabla_{\bx}u\|_{\mX^o}$,  $\|v_{0}\|_{2} \lesssim \|\nabla_\bx u\|_{\mX^o}+ \left|\int_{S^n} u_1 dx\right|$, $D\tilde w_{t}= w_{t} \in \mY^*$, $v_{t}\in C(\R^+; L_{2})$ and $\|v_{t}-v_{0}\|_{2}+\|\st w_{t}\|_{2}=O(t)$ for $t>0$. 
  \item[(ii)] If $u$ is a $\mY^o$-solution, then $u_{1}\in L_{2}(S^n; \C^m)$,   $(v_{0}, f_{0})\in L^2 \times \dot W^{-1}_{2}(S^n;\V)$ with $Dv_{0}=f_{0}$,   $\|u_{1}\|_{2}\lesssim \|v_{0}\|_{2} + \|f_{0}\|_{\dot W^{-1}_{2}}   \lesssim \|\nabla_\bx u\|_{\mY^o}+ \left|\int_{S^n} u_1 dx\right|$,  $D\tilde w_{t}= w_{t} \in \mY$, $v_{t}\in C(\R^+; L_{2})$ and $\|v_{t}-v_{0}\|_{2}+\|\st w_{t}\|_{2}=O(1)$ for $t>0$ and $o(1) $ for $t\to 0$.
\end{enumerate} 
\end{thm}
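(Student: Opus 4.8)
\textbf{Proof plan for Theorem~\ref{thm:conj}.}
The strategy is to split the two cases according to which of the two integral representations is available, and in each case to read off $(v,f)$ from the already established representation theorems, then to verify that the pieces have the claimed regularity.

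First consider the $\mX^o$-case. Let $f$ be the conormal gradient of $u$. By Theorem~\ref{thm:inteqforNeuandg} (via Theorem~\ref{thm:inteqforNeu}), $f\in\mX$ and $f_t=e^{-t\Lambda}h^+ + S_A f_t$ with $h^+\in E_0^+\mH$, and $f$ has the average limit $f_0=h^++h^-\in\mH$ with $h^-\in E_0^-\mH$, together with $\|f_0\|_2\lesssim\|\nabla_\bx u\|_{\mX^o}$. Set $f_0$ as this limit and $w_t:=S_A f_t$; by Theorem~\ref{thm:XYbounded} (combined with the $\mY^*\subset\mX$ inclusion of Lemma~\ref{lem:XlocL2} and the proof of Theorem~\ref{thm:XYbounded}, where $S_Af$ is exhibited up to a boundary term lying in $\mX$) one gets $w\in\mY^*$. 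To obtain the potential side, I would invoke Corollary~\ref{cor:diransatz}: since an $\mX^o$-solution is in particular a $\mY^o$-solution (by the a priori inclusion $\mX\subset L_2\subset\mY$ of Lemma~\ref{lem:XlocL2}), Theorem~\ref{thm:inteqforDir} applies and yields $v_t=e^{-t\tilde\Lambda}\tilde h^+ + \tS_A f_t$ with $f=Dv$, $v\in C(\R^+;L_2)$, $L_2$-limit $v_0=\tilde h^+ + \tilde h^-$, and $u_r=r^{-\sigma}(v_t)_\no$ after the normalization of $\tilde h^+$ modulo $N^-\mH^\perp$ used in the proof of Corollary~\ref{cor:diransatz}(i). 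Put $v_0$ equal to this limit, $\tilde w_t:=\tS_A f_t$, so $\tilde h^+=\tE_0^+v_0$; then $D\tilde w_t=w_t$ by the relation $S_A=D\tS_A$ of Theorem~\ref{thm:Cbounded}, and $Dv_0=f_0$ follows by passing to the limit in $f_t=Dv_t$ (using that $D$ is closed and that $v_t\to v_0$ in $L_2$ with $Dv_t=f_t\to f_0$ in the averaged sense of \eqref{eq:neuavlim}; more cleanly, apply $D$ to the two representations and match $D e^{-t\tilde\Lambda}\tilde h^+ = D_0 e^{-t\Lambda}h^+$ via Lemma~\ref{lem:htotildeh}). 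For $u_1$: since $v_t\to v_0$ in $L_2$ and $u_r=r^{-\sigma}(v_t)_\no$, one has $u_r\to u_1:=(v_0)_\no$ in $L_2$; that $u_1\in W^1_2(S^n;\C^m)$ and $\|\nabla_S u_1\|_2\lesssim\|f_0\|_2$ comes from $(f_0)_\ta=(Dv_0)_\ta=\nabla_S (v_0)_\no=\nabla_S u_1$ together with $f_0\in\mH$. The rate $\|v_t-v_0\|_2+\|\tilde w_t\|_2=O(t)$ should follow from the quantitative limit estimates for $e^{-t\tilde\Lambda}$ on $E_0^+$-data in Theorem~\ref{thm:NTtilde} (rapid decay of derivatives giving $\|e^{-t\tilde\Lambda}\tilde h^+-\tilde h^+\|_2\lesssim t\|\tilde\Lambda\tilde h^+\|_2$, and the latter controlled since $\tilde h^+$ comes from $h^+\in E_0^+\mH\subset\dom(\Lambda)$-type regularity after the iso of Lemma~\ref{lem:htotildeh}) and from the corresponding quantitative form of Lemma~\ref{lem:inttodiff} applied to $\tS_A f$; I would cite the adaptation of \cite[Thm.~9.2, Lem.~6.9]{AA1}.

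The $\mY^o$-case is entirely parallel but with the roles of $\mX$ and $\mY$ interchanged and weaker boundary traces. Here Corollary~\ref{cor:diransatz} and Theorem~\ref{thm:inteqforDir} give $f\in\mY$, $v_t=e^{-t\tilde\Lambda}\tilde h^+ + \tS_A f_t$ with $\tilde h^+\in\tE_0^+L_2$ (normalized modulo $N^-\mH^\perp$ as before so that $u_r=r^{-\sigma}(v_t)_\no$), $v\in C(\R^+;L_2)$, $v_0=\tilde h^+ + \tilde h^-$, $u_1:=(v_0)_\no$, $\|u_1\|_2\lesssim\|v_0\|_2\lesssim\|\nabla_\bx u\|_{\mY^o}+|\int_{S^n}u_1\,dx|$. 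Again set $\tilde w_t:=\tS_A f_t$, $w_t:=S_A f_t=D\tilde w_t$; Theorem~\ref{thm:XYbounded} now gives $w\in\mY$ and Theorem~\ref{thm:Cbounded} gives $\tilde w\in C(0,\infty;L_2)$ with $\lim_{t\to0}\tilde w_t=\tilde h^-$ so $\|\tilde w_t\|_2=O(1)$, with the additional $o(1)$-convergence $\|\tilde w_t-\tilde h^-\|_2\to0$; since $\|v_t-v_0\|_2\le\|e^{-t\tilde\Lambda}\tilde h^+-\tilde h^+\|_2+\|\tilde w_t-\tilde h^-\|_2\to0$ this gives the $o(1)$ as $t\to0$ and the $O(1)$ globally. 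The space $\dot W^{-1}_2(S^n;\V)$ and the estimate $\|f_0\|_{\dot W^{-1}_2}\lesssim\|v_0\|_2$ come from $f_0=Dv_0$ with $D$ mapping $L_2$ boundedly into $\dot W^{-1}_2$, and $Dv_0=f_0$ is again the limiting identity. Finally $Dv_0=f_0$ together with $\tilde h^+=\tE_0^+v_0$ matches the representation $f_t=De^{-t\tilde\Lambda}\tilde h^+ + S_A f_t$ of Theorem~\ref{thm:inteqforDir}, closing the consistency check.

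\textbf{Main obstacle.} The delicate point is not the mere existence of the two representations — those are handed to us by Theorems~\ref{thm:inteqforNeu}, \ref{thm:inteqforDir} and Corollary~\ref{cor:diransatz} — but rather making the \emph{pair} $(v,f)$ genuinely compatible, i.e. proving the pointwise (a.e. $t$) identity $Dv_t=f_t$ from the two independently-derived formulas and transferring it to the boundary identity $Dv_0=f_0$ in the correct (weak, $\dot W^{-1}_2$) sense, while simultaneously pinning down $\tilde h^+\in\tE_0^+L_2$ uniquely via the normalization modulo $N^-\mH^\perp$ so that $u_r=r^{-\sigma}(v_t)_\no$ with no additive constant. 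This hinges on the isomorphism $\mH\to L_2/\mH^\perp$ of Lemma~\ref{lem:htotildeh} intertwining $D_0$ and $D$, and on the intertwining $S_A=D\tS_A$ of Theorem~\ref{thm:Cbounded}; I expect the bookkeeping of which objects live modulo $\mH^\perp$ and how the constant $c\in\C^m$ is absorbed (as in the proof of Corollary~\ref{cor:diransatz}(i)) to be where the real care is needed, the rest being routine citations to the quantitative estimates in \cite{AA1} and Theorems~\ref{thm:NT}, \ref{thm:NTtilde}, \ref{thm:XYbounded}, \ref{thm:Cbounded}.
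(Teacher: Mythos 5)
Your overall plan follows the paper's architecture, but there is a genuine gap in the identification of $w$ and $\tilde w$. You set $w_t:=S_Af_t$ and $\tilde w_t:=\tS_Af_t$, and with these choices the decomposition \eqref{eq:repconjugatepairs} does not hold: since $f_t=e^{-t\Lambda}h^++S_Af_t$ and $f_0=h^++h^-$, one has $e^{-t\Lambda}f_0+S_Af_t=f_t+e^{-t\Lambda}h^-$, so your representation is off by $e^{-t\Lambda}h^-$ unless $h^-=0$; likewise $e^{-t\tilde\Lambda}v_0+\tS_Af_t=v_t+e^{-t\tilde\Lambda}\tilde h^-$. The correct choices are $w_t=S_Af_t-e^{-t\Lambda}h^-$ and $\tilde w_t=\tS_Af_t-e^{-t\tilde\Lambda}\tilde h^-$, as in the paper. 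This is not cosmetic. First, in case (i) the claim $w=S_Af\in\mY^*$ is false in general: the proof of Theorem~\ref{thm:XYbounded}/Lemma~\ref{lem:inttodiff} exhibits $S_Af$ as a $\mY^*$-function \emph{plus} a semigroup boundary term of the form $e^{-t\Lambda}(\text{an }L_2\text{ function})$, which lies in $\mX$ but not in $\mY^*$; it is precisely the corrected $w$ that is the $\mY^*$-part, and your appeal to the inclusion $\mY^*\subset\mX$ runs in the wrong direction. Second, your $\tilde w_t=\tS_Af_t$ converges in $L_2$ to $\tilde h^-$, which is nonzero in general, so the estimates $\|\tilde w_t\|_2=O(t)$ in (i) and $\|\tilde w_t\|_2=o(1)$ as $t\to0$ in (ii) fail for your choice of $\tilde w$.

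Even after correcting the definitions, the bound $\|\tilde w_t\|_2=O(t)$ in case (i) — the main quantitative content of the theorem beyond Theorems~\ref{thm:inteqforNeu}, \ref{thm:inteqforDir} and Corollary~\ref{cor:diransatz}, and what later feeds the Fatou-type results — still requires an argument you do not supply: the paper writes
\begin{equation*}
 \tilde  w_{t}=\int_0^t e^{-(t-s)\tilde\Lambda} \tE_0^+ \E_s f_s \,ds
 - \int_t^\infty \bigl(e^{-(s-t)\tilde\Lambda} - e^{-(t+s)\tilde\Lambda}\bigr) \tE_0^- \E_s f_s \,ds +  e^{-t\tilde\Lambda}\int_{0}^t e^{-s\tilde\Lambda} \tE_0^- \E_s f_s \,ds,
\end{equation*}
and estimates each term using $\E f\in\mY^*$ (valid because $f\in\mX$ and $\|\E\|_*<\infty$) together with uniform boundedness and decay of the semigroup; your citation of a ``quantitative form'' of Lemma~\ref{lem:inttodiff} only gives convergence of $\tS_Af_t$ to $\tilde h^-$, not a rate. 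The rest of your plan is sound and essentially the paper's route: obtaining $v$ from Theorem~\ref{thm:inteqforDir} (since $\mX\subset\mY$), identifying $\tilde h^+$ through Lemma~\ref{lem:htotildeh}, normalizing by a constant in $N^-\mH^\perp$, $w_t=D\tilde w_t$ from $S_A=D\tS_A$, and $Dv_0=f_0$ by closedness of $D$ along a sequence $t_k\to0$ where \eqref{eq:neuavlim} gives $L_2$-convergence of $f_{t_k}$; the paper instead verifies $\tilde h^-\in\dom(D)$ directly from $\tilde\Lambda\tilde h^-\in L_2$ (again because $\E f\in\mY^*$), which simultaneously yields $v_0\in\dom(D)$ and hence the $O(t)$ bound for $e^{-t\tilde\Lambda}v_0-v_0$. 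Also note the small slip that $E_0^+\mH\not\subset\dom(\Lambda)$: what puts $\tilde h^+$ in $\dom(\tilde\Lambda)$ is $D\tilde h^+=h^+\in L_2$ together with $\dom(D)=\dom(\tD_0)=\dom(\tilde\Lambda)$.
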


Besides $\mX^o$- and $\mY^o$-solutions to the divergence form equation, we shall 
in the following sections also consider the following  classical class of variational solutions.

\begin{defn}   \label{defn:variationalsolutions} 
  By a {\em variational solution to the divergence form equation}, with coefficients $A$, 
  we mean  a weak solution of $\divv_{\bx} A \nabla u=0$ in $\bO^{1+n}$ with 
  $\| \nabla_{\bx} u \|_{2}<\infty$.
\end{defn}

It is illuminating to see how the representation for variational solutions lies in between the ones for $\mX^o$- and $\mY^o$-solutions, independently of solvability issues which are well-known for variational solutions. We state this result without proof as it is not  used in this paper. 
Note that, as compared to Theorem~\ref{thm:conj}, the Carleson condition $\|\E\|_{*}<\infty$ is not needed in the following result.

\begin{prop}
Let $u$ be a variational solution to the divergence form equation with coefficients $A$.
Then $u$ has an $L_{2}(S^n; \C^m)$ trace $u_{1}$ at the boundary and there exists an associated pair of conjugate systems given  
by \eqref{eq:repconjugatepairs} with the following properties:  

$u_{1}\in W^{1/2}_{2}(S^n; \C^m)$, $ (v_{0}, f_{0})\in \dom(|D|^{1/2}) \times \dot W^{-1/2}_{2}(S^n;\V)$ with $Dv_{0}=f_{0}$,  $\|v_{0}\|_{2} \lesssim \|\nabla_\bx u\|_{2}+ \left|\int_{S^n} u_1 dx\right|$, $\|u_{1}\|_{\dot W^{1/2}_{2}}  \lesssim \|f_{0}\|_{\dot W^{-1/2}_{2}} \lesssim \|\nabla_\bx u\|_{2}$,  $D\tilde w_{t}= w_{t} \in L_{2}(\R^+; L_{2})$, $v_{t}\in C(\R^+; L_{2})$ and $\|v_{t}-v_{0}\|_{2}+\|\st w_{t}\|_{2}=O(t^{1/2})$ for $t>0$. 

Here $\dot W^{1/2}_{2}$ is equipped with homogeneous norm and $\dot W^{-1/2}_{2}$ is its dual.
\end{prop}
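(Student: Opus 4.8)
The plan is to run in parallel the arguments already developed for $\mX^o$- and $\mY^o$-solutions (Theorems~\ref{thm:inteqforNeu}, \ref{thm:inteqforDir} and Corollary~\ref{cor:diransatz}), but now \emph{without} the Carleson hypothesis, since for a variational solution we have the stronger information $f\in L_2(\R_+\times S^n;\V)$ rather than $f\in\mX$ or $f\in\mY$. The starting point is Corollary~\ref{cor:conormal}: the conormal gradient $f$ of $u$ satisfies $\pd_t f+(DB+\tfrac{n-1}2 N)f=0$ in $\R_+\times S^n$ distributional sense, belongs to $L_2^\loc(\R_+;\mH)$, and by \eqref{eq:dfl2iso} one has $\int_0^\infty\|f_t\|_2^2\,dt\approx\|\nabla_\bx u\|_2^2<\infty$, i.e.\ $f\in L_2(\R_+;\mH)$. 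Note $L_2(\R_+;\mH)$ is exactly the form domain level: $\psi(tD_0)$-square function estimates (Theorem~\ref{thm:QE}, \eqref{eq:genQE}) identify $L_2(\R_+,dt/t;\mH)$ with $\dom(|D_0|^{1/2})$ via $f_0\mapsto \psi(tD_0)f_0$, and the semigroup $e^{-t\Lambda}$ maps $\dom(|D_0|^{1/2})$ isomorphically (modulo rapid decay) onto $\{f\in L_2(\R_+;\mH): \pd_tf+D_0f=0\}$. So first I would run the proof of Theorem~\ref{thm:inteqforNeu} with $S_A=0$ (no $\E$) but with $\mX$ replaced by $L_2(\R_+;\mH)$: applying $E_0^\pm$, integrating, one gets $f_t=e^{-t\Lambda}f_0$ with $f_0=h^+\in E_0^+\mH$ (the $h^-$ term vanishes since $\E=0$), and the square function estimate gives $\|f_0\|_{\dot W^{-1/2}_2}\approx\|f\|_{L_2(dt;\mH)}\approx\|\nabla_\bx u\|_2$, where I identify $\mH\cap\dom(|D_0|^{-1/2})$ with the homogeneous space $\dot W^{-1/2}_2(S^n;\V)$ (using that $|D_0|$ is comparable to $|D|$ on $\mH$ up to the bounded invertible factor $P_\mH B_0$, and $|D|$ is a first-order operator so $\dom(|D|^{s})$ are the $W^s_2$ spaces).

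Next, for the potential side: exactly as in the proof of Theorem~\ref{thm:inteqforDir}(i), use Lemma~\ref{lem:htotildeh} to produce the isomorphism $M:\mH\to L_2/\mH^\perp$ with $D_0=D\circ M$, restrict it to $E_0^+\mH$, and set $\tilde h^+:=M f_0\in\tE_0^+L_2/\tE_0^+\mH^\perp$, so that $D_0e^{-t\Lambda}f_0=De^{-t\tilde\Lambda}\tilde h^+$; then $v_t:=e^{-t\tilde\Lambda}\tilde h^+$ (with $\tilde h^+$ normalized as in Corollary~\ref{cor:diransatz}(i) so that the constant $c$ vanishes and $u_r=r^{-\sigma}(v_t)_\no$) is a pair partner, $Dv_t=f_t$, and $\pd_tv+\tD_0 v=0$. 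Since $\tilde h^+\in\dom(\tilde\Lambda^{-1/2})$ (transported from $f_0\in\dom(|D_0|^{-1/2})$ through $M$, which is bounded both ways), Theorem~\ref{thm:QE} for $\tD_0$ and \eqref{eq:genQE} give the $L_2$-convergence $v_t\to v_0:=\tilde h^+$ as $t\to0$ with $v_t\in C(\R^+;L_2)$ and $\|v_t-v_0\|_2=O(t^{1/2})$ (the $O(t^{1/2})$ is the standard holomorphic-semigroup rate on a $1/2$-interpolation space: $\|(e^{-t\tilde\Lambda}-I)\tilde h^+\|_2\lesssim t^{1/2}\||\tD_0|^{1/2}\tilde h^+\|_2$). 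The correction terms $\tilde w_t=\st S_A f_t$ and $w_t=S_A f_t$ are \emph{zero} here since $\|\E\|=0$ on a variational solution's coefficients... — actually $\E$ need not vanish, so one keeps $\tilde w_t,w_t$; but with $f\in L_2(\R_+;\mH)$ the operators $\tS_A,S_A$ map into $C(\R^+;L_2)$ resp.\ $L_2(\R_+;L_2)$ by direct estimation (the relevant bound is $\|\int_0^\infty e^{-|t-s|\tilde\Lambda}\tE_0^\pm\E_sf_s\,ds\|_2\lesssim\|\E\|_\infty\|f\|_{L_2(dt)}$ via Schur/Young on the kernel $e^{-|t-s|\tilde\Lambda}$, no Carleson needed since we are at the $L_2$ level), so $w_t\in L_2(\R^+;L_2)$, $\tilde w_t=O(t^{1/2})$, and the trace of $\tS_A f$ at $0$ contributes to $v_0$ as in Lemma~\ref{lem:inttodiff}. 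This gives $\|v_t-v_0\|_2+\|\st w_t\|_2=O(t^{1/2})$.

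Finally, to land the boundary trace statements: $u_r=r^{-\sigma}(v_t)_\no\to (v_0)_\no=:u_1$ in $L_2(S^n;\C^m)$ with $\|u_1\|_2\lesssim\|v_0\|_2\lesssim\|\nabla_\bx u\|_2+|\int_{S^n}u_1\,dx|$ (the mean value term enters exactly as in Corollary~\ref{cor:diransatz}(i), through the $N^-\mH^\perp$ freedom in $\tilde h^+$). For the homogeneous Sobolev membership $u_1\in\dot W^{1/2}_2$: $(Dv_0)_\ta=\nabla_S v_{0,\no}=\nabla_S u_1$ componentwise (this is the identity $(Dv)_\ta=\nabla_S v_\no$ used in the proof of Proposition~\ref{prop:ODEtoPotential}, extended to the trace), so $\|u_1\|_{\dot W^{1/2}_2}=\|\,|D|^{1/2}u_1\|_2\approx\|\,|D|^{-1/2}(Dv_0)_\ta\|_2\le\|\,|D|^{-1/2}f_0\|_2\approx\|f_0\|_{\dot W^{-1/2}_2}\lesssim\|\nabla_\bx u\|_2$, and conversely $\|f_0\|_{\dot W^{-1/2}_2}\lesssim\|\nabla_\bx u\|_2$ directly from the square function estimate. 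The main obstacle is purely bookkeeping of the function spaces: one must check carefully that the abstract spaces $\dom(|D_0|^{\pm1/2})\cap\ran(D_0)$ coincide with $\dot W^{\pm1/2}_2$-sections of $\V$ with equivalent norms (using Theorem~\ref{thm:QE} to pass from $D_0=DB_0+\sigma N$ to the self-adjoint model $|D|$ on $\mH$, the boundedness of $P_\mH B_0^{\pm1}$, and that the zeroth-order term $\sigma N$ is a bounded perturbation invisible at the homogeneous $\dot W^{\pm1/2}$ scale) — this is where the statement ``$\dot W^{1/2}_2$ equipped with homogeneous norm and $\dot W^{-1/2}_2$ its dual'' does the real work, and where one should be slightly careful on $\bO^2$ ($n=1$, $\sigma=0$) that the splitting $L_2=B_0\mH\oplus\mH^\perp$ does not interfere, the $\mH^\perp$ part being finite-dimensional and harmless.
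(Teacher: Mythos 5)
The paper itself offers no proof to compare against: this proposition is explicitly ``stated without proof'', so the only benchmark is the proof of Theorem~\ref{thm:conj}, which your plan rightly tries to transpose to the $L_2(\R_+,dt)$ level using only $\|\E\|_\infty$ (this is indeed why no Carleson condition is needed). The architecture is the correct one, but two steps, as written, do not hold up. The most serious is the Sobolev identification: you justify $\dom(|D_0|^{\pm 1/2})\cap\mH\simeq \dot W^{\pm 1/2}_2$ by saying $|D_0|$ is ``comparable to $|D|$ up to the bounded invertible factor $P_\mH B_0$''. That comparison is only valid at the level of graph norms (exponent $1$), and fractional powers of the non-self-adjoint $DB_0+\sigma N$ do not transport along such a rough similarity: $\dom(D_0)\cap\mH=B_0^{-1}\dom(D)\cap\mH$ is not $\dom(D)\cap\mH$ for merely bounded measurable $B_0$, and the coincidence of the $1/2$-domains is exactly what the paper proves in Lemma~\ref{lem:rellichdom} \emph{under the hypothesis} $B_0\in C^{1/2+\varepsilon}$. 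For rough coefficients your argument would only place $f_0$ in an abstract $|D_0|^{-1/2}$-adapted space, not in $\dot W^{-1/2}_2$. The statement can still be reached without extra regularity, but only by routing everything through the side where domains are unperturbed: $\dom(\tD_0)=\dom(D)$ and $\dom(D_0^*)=\dom(B_0^*D+\sigma N)=\dom(D)$, so Theorem~\ref{thm:QE} plus interpolation legitimately gives $\dom(|\tD_0|^{1/2})=\dom(|D|^{1/2})$ and $\dom(|D|^{1/2})\subset\dom(|D_0^*|^{1/2})$; one then obtains $v_0\in\dom(|D|^{1/2})$ from the square-function bounds on $\tilde h^{\pm}$ (or directly by the trace method, since $\int_0^1(\|\pd_t v_t\|_2^2+\|Dv_t\|_2^2)\,dt\lesssim \|f\|_{L_2(dt)}^2+\sup_t\|v_t\|_2^2$), and \emph{defines} $f_0:=Dv_0\in \dot W^{-1/2}_2$ distributionally, extending $e^{-t\Lambda}$ to $\dot W^{-1/2}_2$ by the intertwining relation — exactly the device used one order lower in the proof of Theorem~\ref{thm:conj}(ii).

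The second gap is that you never actually build the pair when $\E\neq0$. Your first paragraph derives $f_t=e^{-t\Lambda}f_0$, $h^-=0$ and the two-sided equivalence $\|f_0\|_{\dot W^{-1/2}_2}\approx\|\nabla_\bx u\|_2$ under the assumption $S_A=0$, and the mid-proof retraction is not propagated back: with general $A$ you must re-derive $f_t=e^{-t\Lambda}h^++S_Af_t$ and $v_t=e^{-t\tilde\Lambda}\tilde h^++\tS_Af_t$ with $f$ only in $L_2(dt)$, produce $h^-,\tilde h^-$, and normalize $\tilde h^+$ modulo $N^-\mH^\perp$ so that $u_r=r^{-\sigma}(v_t)_\no$, as in Theorem~\ref{thm:inteqforDir} and Corollary~\ref{cor:diransatz}. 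The required bounds ($S_A$ on $L_2(dt;L_2)$ with norm $\lesssim\|\E\|_\infty$; $\||\tD_0|^{1/2}\tilde h^\pm\|_2\lesssim\|f\|_{L_2(dt)}$; $\|\st w_t\|_2\lesssim t^{1/2}\|f\|_{L_2(dt)}$) do follow, but from square-function duality and elementary splittings of the integrals, not from the Schur/Young bound you invoke: the kernel $e^{-|t-s|\tilde\Lambda}$ has no decay on $\mH^\perp$ when $n=1$, which is also why your dismissal of the disk case as ``finite-dimensional and harmless'' misses the actual difficulty — there the $(t,\infty)$ integrals need decay of $\|f_s\|_2$ as $s\to\infty$, supplied by Proposition~\ref{prop:reverseholder} as in Theorem~\ref{thm:Cbounded}, not by finite dimensionality. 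Finally, note that the proposition claims only $\|f_0\|_{\dot W^{-1/2}_2}\lesssim\|\nabla_\bx u\|_2$; the reverse inequality you assert is an artifact of the $\E=0$ simplification and should be dropped.
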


\begin{proof}[Proof of Theorem~\ref{thm:conj}]
(i)  From Theorem~\ref{thm:inteqforNeu},  we have
$$
f_t= e^{-t\Lambda} h^+ + S_Af_t = e^{-t\Lambda} f_{0} + w_{t},  \quad w_{t}:=S_{A}f_{t}- e^{-t\Lambda}h^-.
$$
with $f_{0}=h^++h^-\in \mH$, $\|f_{0}\|_{2}\lesssim \|\nabla_{\bx}u\|_{\mX^o}$ and $h^-=-\int_0^\infty e^{-s\Lambda} E_0^- D \E_s f_s ds$. 

We define $v_{0}, \st h^+, \st h^-$ and $v$ as follows: 
 $\st h^+$ is the unique element in $\tE_{0}^+L_{2}/\tE_{0}^+\mH^\perp$ such that
$D\st h^+=h^+(= D_{0}(D_{0}^{-1}h^+))$, $\tilde h^-:= -\int_0^\infty e^{-s\tilde\Lambda}\tE_0^- \E_s f_s ds$, $v_{0}=\tilde h^+ + \tilde h^-$ and
$$
v_{t}: =e^{-t\tilde\Lambda}\tilde h^+ + \tS_A f_t= e^{-t\st\Lambda}v_{0}+ \st w_{t}, \quad \st w_{t}:= \tS_A f_t- e^{-t\tilde\Lambda}\tilde h^-.
$$

Clearly, $ \st h^+ \in \dom(D)$. Next, $\st \Lambda \st h^- \in L_{2}$ because $\E f\in \mY^*$, so $\st h^- \in \dom(D)=\dom(\st \Lambda)$ and $D\st h^-=h^-$. So $v_{0}\in \dom(D)$ and $Dv_{0}=f_{0}$. 

The  estimate on  $\|e^{-t\st\Lambda}v_{0} -v_{0}\|_{2}$ follows from  $v_{0}\in \dom(D)$. 

Next, $D\st w_{t}=w_{t}$ by construction and $w_{t}\in \mY^*$ from the proof of Lemma~\ref{lem:inttodiff}. (In fact, $w_{t}$ is nothing but $Z_{A}f_{t}$ defined in that proof.) 

The estimate on $\|\st w_{t}\|_{2}$ follows from
\begin{multline*}
 \st  w_{t}=\int_0^t e^{-(t-s)\tilde\Lambda} \tE_0^+ \E_s f_s ds \\
 - \int_t^\infty (e^{-(s-t)\tilde\Lambda} - e^{-(t+s)\tilde\Lambda}) \tE_0^- \E_s f_s ds +  e^{-t\tilde\Lambda}\int_{0}^t e^{-s\tilde\Lambda} \tE_0^- \E_s f_s ds,      
 \end{multline*}
using  $\E f \in \mY^*$, the uniform boundedness of the semigroup  and  its decay at infinity.  Details are left to the reader.

Eventually,  as in Corollary~\ref{cor:diransatz}, one can adjust $\st h^+$  by adding an element in $N^-\mH^\perp$ such that $u$ and $v$ satisfy \eqref{eq:associated}. In particular, $u$ has an $L_{2}$ trace. It also follows that $f$ is the conormal gradient of $u$ with a limit $f_{0}$ when $t\to 0$ by \eqref{eq:neuavlim}. So $u_{1}\in W^1_{2}(S^n; \C^m)$ with $\|\nabla_{S}u_{1}\|_{2}\lesssim\|f_{0}\|_{2}$.

(ii) By  Corollary~\ref{cor:diransatz}, we have description of 
$$
v_{t}= e^{-t\tilde \Lambda}\st h^+ + \tS_{A}f_{t} =e^{-t\tilde \Lambda}v_{0} + \st w_{t}, \quad \st w_{t}=\tS_{A}f_{t}-e^{-t\st\Lambda}\st h^-, 
$$
with $v_{0}=\st h^++\st h^-$ such that $u$ and $v$ satisfy \eqref{eq:associated} and of trace and growth estimates for $\|e^{-t\tilde \Lambda}v_{0}-v_{0}\|_{2}+\|\st w_{t}\|_{2}$. It remains to consider the representation of $f$. We have by Theorem~\ref{thm:inteqforDir},
$$
f_{t}= De^{-t\tilde \Lambda} \tilde h^+ + S_{A}f_{t}= De^{-t\tilde \Lambda}v_{0} + w_{t}, \quad 
w_{t}= S_{A}f_{t}-De^{-t\st\Lambda}\st h^-=D\st w_{t}.
$$

Define $f_{0}:=Dv_{0}$  in distribution sense, so that $f_{0}\in \dot W^{-1}_{2}(S^n;\V)$ and 
$\|f_{0}\|_{\dot W^{-1}_{2}} \lesssim \|v_{0}\|_{2}$.
We obtain
$$
f_t = e^{-t\Lambda} f_{0}+w_{t}
$$  and here, the action of $e^{-t\Lambda}$ is extended to $\dot W^{-1}_{2}(S^n;\V)$ by extending the interwining formula $De^{-t\Lambda}=e^{-t\st \Lambda}D$.
\end{proof}

%
%
%
%
%
\section{Non-tangential maximal estimates}     \label{sec:NT}

\begin{thm} \label{thm:NTu}
Assume $\|\E\|_{C\cap L_{\infty}}<\infty$. Then any $\mY^o$-solution to the divergence form equation with coefficients $A$ satisfies
$$
 \|u_{1}\|_{2}^2\lesssim  \|\tN^o(u)\|^2_{2}  \lesssim  \int_{\bO^{1+n}} |\nabla_\bx u|^2 (1-|\bx|)d\bx
  + \left| \int_{S^n} u_1(x) dx \right|^2.
$$
When $n=1$, the conjugate $\tilde u$ of a $\mY^o$-solution $u$ also satisfies the estimates
$$
 \|\tilde u_1\|_{2}^2\lesssim  \|\tN^o(\tilde u)\|_{2}^2 \lesssim \int_{\bO^{1+n}} |\nabla_\bx u|^2 (1-|\bx|)d\bx
  + \left| \int_{S^n} \tilde u_1(x) dx \right|^2.
$$
\end{thm}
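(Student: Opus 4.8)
The plan is to base everything on the conjugate-system representation of a $\mY^o$-solution $u$ obtained in Theorem~\ref{thm:conj}(ii). Transporting to $\R_+\times S^n$ by $r=e^{-t}$, one has $u_r=r^{-(n-1)/2}(v_t)_\no$ with $v_t=e^{-t\tilde\Lambda}v_0+\tilde w_t$, where $v_0\in L_2(S^n;\V)$ satisfies $u_1=(v_0)_\no$ and $\|v_0\|_2\lesssim\|\nabla_\bx u\|_{\mY^o}+|\int_{S^n}u_1\,dx|$, and $\tilde w_t=\tS_Af_t-e^{-t\tilde\Lambda}\tilde h^-$ is built from the conormal gradient $f\in\mY$ of $u$, with $\tilde h^-=-\int_0^\infty e^{-s\tilde\Lambda}\tE_0^-\E_sf_s\,ds$. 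Since $r^{-(n-1)/2}$ is comparable to $1$ on the truncated Whitney regions near $S^n$, it suffices to control the non-tangential maximal function on $\R_+\times S^n$ of $t\mapsto(v_t)_\no$, which by the triangle inequality splits into a semigroup term $\tN((e^{-t\tilde\Lambda}v_0)_\no)$ and a singular-integral term $\tN((\tilde w_t)_\no)$. The lower bound $\|u_1\|_2\lesssim\|\tN^o(u)\|_2$ I would obtain at once by applying the $L_2^\loc$ estimate of Lemma~\ref{lem:XlocL2} to $g(t,x):=u(e^{-t}x)$ and letting $t\to0$, using the $L_2$ convergence $u_r\to u_1$ from Corollary~\ref{cor:diransatz}(i).

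For the semigroup term I would repeat, essentially verbatim, the non-tangential argument in the proof of Theorem~\ref{thm:NTtilde}: write $e^{-|\lambda|}=\psi(\lambda)+(1+i\lambda)^{-1}$ with $\psi\in\Psi(S^o_{\nu,\sigma})$, estimate the $\tN$ of $\psi(t\tD_0)v_0$ by Lemma~\ref{lem:XlocL2} and the square function bound \eqref{eq:genQE} of Theorem~\ref{thm:QE}, and estimate the $\tN$ of the normal component of the resolvent part $(I+it\tD_0)^{-1}v_0$ by $M(|v_0|^p)^{1/p}$ via Corollary~\ref{cor:resolventNT}; the Hardy--Littlewood maximal theorem then yields $\|\tN((e^{-t\tilde\Lambda}v_0)_\no)\|_2\lesssim\|v_0\|_2$ (for $n=1$ one first decomposes $L_2=B_0\mH\oplus\mH^\perp$ as in Theorem~\ref{thm:NTtilde}). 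Combined with the first paragraph, this contributes $\lesssim\|\nabla_\bx u\|_{\mY^o}+|\int_{S^n}u_1\,dx|$.

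The singular-integral term is the heart of the matter, and I expect it to be the main obstacle. Writing $\tilde w_t$ as the three-term integral appearing in the proof of Theorem~\ref{thm:conj} (so that $D\tilde w=w$), one must show $\|\tN((\tilde w_t)_\no)\|_2\lesssim\|\E\|_{C\cap L_\infty}\,\|f\|_\mY$. I would adapt the non-tangential maximal estimates proved for the Dirichlet operator on $\R^{1+n}_+$ in \cite{AA1}: the $L_2$ off-diagonal decay of the operators $e^{-s\tilde\Lambda}\tE_0^\pm$ and of the resolvents (Lemma~\ref{lem:offdiagonal}, Corollary~\ref{cor:resolventNT}), together with the Carleson hypothesis $\|\E\|_{C\cap L_\infty}<\infty$ and Carleson's embedding theorem, bound the Whitney $L_2$-averages of $(\tilde w_t)_\no$ by a Hardy--Littlewood maximal function of $\E f$ plus a Carleson term, and then integration over $S^n$ gives the claim; when $n=1$ one uses $f\in\mY_\delta$ (Proposition~\ref{prop:reverseholder}) in the places where the tail integral must be summed. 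The difficulty is precisely in making this transplant work: one has to carry the zeroth-order curvature term $\sigma N$ inside the semigroups, replace the dilation/translation structure of the half-space by the truncated compact geometry of the ball, and deal with the two non-local pieces $\tS_Af$ and $e^{-t\tilde\Lambda}\tilde h^-$ rather than a single operator. Once this is done, since the gradient-to-conormal-gradient map is an isomorphism $\mY^o\to\mY$ one has $\|f\|_\mY\approx\|\nabla_\bx u\|_{\mY^o}$, and adding the two terms completes the upper bound.

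It remains to treat the conjugate $\tilde u$ when $n=1$, and here I would avoid a separate argument by reducing to the case already handled. By Lemma~\ref{lem:pointwiseaccr}, $A$ is pointwise strictly accretive, so by Lemma~\ref{lem:conjugate} the conjugate $\tilde u$ is a weak solution of $\divv_\bx\widetilde A\,\nabla_\bx\tilde u=0$ with $\widetilde A=J^tA^{-1}J$ pointwise strictly accretive, and $J\nabla_\bx\tilde u=A\nabla_\bx u$ gives $|\nabla_\bx\tilde u|\approx|\nabla_\bx u|$ pointwise, hence $\|\nabla_\bx\tilde u\|_{\mY^o}\approx\|\nabla_\bx u\|_{\mY^o}$ and $\tilde u$ is a $\mY^o$-solution. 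Moreover the boundary trace of $\widetilde A$ is $\widetilde{A_1}=J^tA_1^{-1}J$, and from $\widetilde A-\widetilde{A_1}=J^t(A^{-1}-A_1^{-1})J=-J^tA_1^{-1}\E A^{-1}J$ one gets $\|\widetilde A-\widetilde{A_1}\|_{C\cap L_\infty}\lesssim\|\E\|_{C\cap L_\infty}<\infty$. Thus the estimate just proved applies to $\tilde u$ with coefficients $\widetilde A$, which is exactly the stated bound for $\tilde u$.
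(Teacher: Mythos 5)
Your proposal is correct and follows essentially the same route as the paper: the lower bound via Lemma~\ref{lem:XlocL2} and Corollary~\ref{cor:diransatz}(i), the representation $u_r=r^{-\sigma}(e^{-t\tilde\Lambda}v_0+\tilde w_t)_\no$ from Theorem~\ref{thm:conj} with the semigroup term handled by Theorem~\ref{thm:NTtilde}, the singular-integral term by transplanting \cite[Lem.~10.2]{AA1} to the ball (this is exactly the paper's Lemma~\ref{lem:NTofSA}, proved with the same splitting, Schur estimates, off-diagonal bounds and the Coifman--Meyer--Stein tent-space estimate, with $\mY_\delta$ via Proposition~\ref{prop:reverseholder} when $n=1$), and the $n=1$ conjugate treated through the coefficients $\widetilde A=J^tA^{-1}J$. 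The step you flag as the main obstacle is indeed where the paper invests its work, but your named ingredients coincide with those actually used, so there is no gap in the approach.
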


The proof follows the strategy of \cite{AA1} with a slight modification in view of preparing the proof of almost everywhere non-tangential convergence. 

\begin{proof} 
The estimate $\|\tN^o(u)\|_2\gtrsim \|u_{1}\|_2$ follows from Lemma~\ref{lem:XlocL2} and Corollary~\ref{cor:diransatz}(i). For the upper bound,  we proceed as follows. 
 From the  representation 
$u_{r}=r^{-\sigma} (v_{t})_{\no}$
with 
$ v_{t} =  e^{-t\tilde\Lambda}v_{0}+ \st w_{t}$ in Theorem~\ref{thm:conj}, it is enough to bound   $\|\tN((e^{-t\tilde\Lambda}v_{0})_{\no})\|_{2}$ and $\|\tN(\st w_{\no})\|_{2}$. 
Theorem~\ref{thm:NTtilde}, and Lemma~\ref{lem:NTofSA} below, show
\begin{multline}
   \|\tN^o(u)\|_2\lesssim  \|v_{0}\|_2+ \|f\|_\mY\lesssim  
    \|\tilde h^+\|_2+  \|\tilde h^-\|_2+ \|f\|_\mY \\\lesssim  \|\tilde h^+\|_{L_2/\mH^\perp}+\left| \int_{S^n} \tilde h^+_\no dx \right|+ \|f\|_\mY,
\end{multline}
and  $\|\tilde h^+\|_{L_2/\mH^\perp}\lesssim \|f\|_\mY$, 
$|\int_{S^n} \tilde h^+_\no dx|= |\int_{S^n} (u_1 -\tilde h^-_\no) dx|\lesssim |\int_{S^n} u_1 dx|+\|f\|_\mY$
as in the proof of Corollary~\ref{cor:diransatz}.

When $n=1$, replacing $A$ by the conjugate coefficients $\tilde A$ defined in Section~\ref{sec:disk}  in the above argument, and using $|\nabla_\bx \tilde u|\approx |\nabla_\bx u|$, 
proves the estimates of $\|\tN^o(\tilde u)\|_2$.
\end{proof}

\begin{lem}    \label{lem:NTofSA}
  Assume $\|\E\|_{C\cap L_{\infty}}<\infty$.
 Then we have for each $p<2$, 
 $$
 \|\tN^p(\st w)\|_{2} +\|\tN(\st w_{\no})\|_{2} \lesssim \|\E\|_{C\cap L_{\infty}} \|f\|_{\mY}.
 $$
Here $\tN^p$ is defined similarly to $\tN$, replacing $L_{2}$ averages by $L_{p}$ averages.
When $n=1$, we also have 
$$
\|\tN(\st w_{\ta})\|_{2}\lesssim \|\E\|_{C\cap L_{\infty}} \|f\|_{\mY}.
$$
Furthermore, these estimates hold with $\st w$ replaced by the truncation $\chi_{t<\tau}\st w$,
and $\|f\|^2_\mY$ replaced by $\int_0^\infty \|f_t\|_2^2 \min(t,\tau) dt$, for any $\tau<1$.
\end{lem}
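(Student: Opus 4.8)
\textbf{Plan of proof of Lemma~\ref{lem:NTofSA}.}
The starting point is the explicit formula for $\st w_t=\tS_A f_t - e^{-t\tilde\Lambda}\tilde h^-$ derived in the proof of Theorem~\ref{thm:conj}(i), namely
\begin{multline*}
 \st  w_{t}=\int_0^t e^{-(t-s)\tilde\Lambda} \tE_0^+ \E_s f_s \, ds
 - \int_t^\infty \big(e^{-(s-t)\tilde\Lambda} - e^{-(t+s)\tilde\Lambda}\big) \tE_0^- \E_s f_s \, ds \\
 +  e^{-t\tilde\Lambda}\int_{0}^t e^{-s\tilde\Lambda} \tE_0^- \E_s f_s \, ds.
\end{multline*}
The plan is to estimate the three pieces separately, decomposing dyadically in $s$ relative to the Whitney scale $t$: for $s\approx t$ the semigroup factors contribute an $O(1)$ operator, while for $s\ll t$ or $s\gg t$ one exploits the decay $\|e^{-u\tilde\Lambda}\|\lesssim 1$ together with the gain coming from the difference $e^{-(s-t)\tilde\Lambda}-e^{-(t+s)\tilde\Lambda}$ which is $O(\min(t/s,1))$, and from $\int_0^t e^{-s\tilde\Lambda}\tE_0^-\,ds$ which is $O(\min(1,t))$ acting on $\mH^\perp$-type data. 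This is exactly the $\R^{1+n}_+$ argument of \cite[Prop.~2.56, Lem.~2.58]{AAH} and \cite[Thm.~8.2, Prop.~7.2]{AA1}, adapted to the sphere; the replacement of $L_2$ averages by $L_p$ averages with $p<2$ is harmless here since the semigroup $e^{-u\tilde\Lambda}$ and the resolvents $(I+iu\tilde\Lambda)^{-1}$ have $L_p$ off-diagonal bounds for $p$ near $2$ by Lemma~\ref{lem:offdiagonal}, and Corollary~\ref{cor:resolventNT} already packages the non-tangential maximal bound $\tN^p((I+iu\tilde\Lambda)^{-1}g)\lesssim M(|g|^p)^{1/p}$.

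For the bound on $\tN^p(\st w)$ the mechanism is: fix a Whitney region $W(t_0,x_0)$; write $\psi(\lambda)=e^{-|\lambda|}-(1+i\lambda)^{-1}\in\Psi(S^o_{\nu,\sigma})$ so that $e^{-u\tilde\Lambda}$ splits into a resolvent part (handled by Corollary~\ref{cor:resolventNT} and the maximal theorem on $L_{2/p}$) and a $\psi$-part (handled by the square function estimate \eqref{eq:genQE} for $\tD_0$ together with the local $L_2$-to-$\tN$ comparison of Lemma~\ref{lem:XlocL2}); then the Carleson hypothesis $\|\E\|_{C\cap L_\infty}<\infty$ enters through the $\mX\to\mY^*$ bound $\|\E g\|_{\mY^*}\lesssim\|\E\|_{C\cap L_\infty}\|g\|_\mX$ combined with a Carleson-measure/Whitney-averaging argument (as in \cite[Lem.~5.5, Lem.~5.7]{AA1}) to absorb the $\int_0^t$-part, while the $\int_t^\infty$-part is controlled purely by $\|\E\|_\infty$ and the $\mY$-norm of $f$ thanks to the exponential decay of the semigroup past scale $t$. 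Summing the dyadic contributions with a geometric series in the off-diagonal exponent yields $\tN^p(\st w)(x_0)\lesssim\|\E\|_{C\cap L_\infty}\big(M(|\!\cdot\!|^p)(x_0)\text{-type terms}\big)$, and integrating in $x_0$ over $S^n$ and invoking boundedness of $M$ on $L_{2/p}$ gives $\|\tN^p(\st w)\|_2\lesssim\|\E\|_{C\cap L_\infty}\|f\|_\mY$.

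For $\tN(\st w_{\no})$ one uses the second, improved off-diagonal estimate in Lemma~\ref{lem:offdiagonal}(ii): since the normal component is being measured and the dual resolvent $(I-iu\tilde\Lambda^*)^{-1}$ applied to a datum with vanishing tangential part enjoys $L_q$-decay for some $q>2$, Hölder's inequality in $s$ over the Whitney region (exponent $q$ on the resolvent term, dual exponent on $f$) produces genuine $L_2$ averages of $\st w_{\no}$ controlled by $L_p$ averages of $\E f$ with $p<2$; this is precisely the content of the last displayed estimate \eqref{eq:decay} in the proof of Corollary~\ref{cor:resolventNT}, and it is why the normal component does not lose the extra $\epsilon$ of integrability. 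When $n=1$ the estimate on $\tN(\st w_{\ta})$ follows by the same route applied to the conjugate coefficients $\tilde A$ of Section~\ref{sec:disk}: by Proposition~\ref{prop:conjugates} the tangential part of the potential for $A$ is the normal part of the potential for $\tilde A$, and $|\nabla_\bx\tilde u|\approx|\nabla_\bx u|$ so the $\mY$-norms are comparable. Finally, for the truncated statement one simply replaces $f$ by $\chi_{t<\tau}f$ throughout, noting that all integrals in the formula for $\st w$ then run over $s<\tau$, so $\|f\|_\mY^2$ is replaced by $\int_0^\infty\|f_t\|_2^2\min(t,\tau)\,dt$ with no change to any constant.

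The main obstacle is bookkeeping rather than conceptual: one must carry the dyadic-in-$s$ decomposition through the three terms uniformly in the Whitney scale $t_0<c_0<1$, keeping track of which part of the semigroup is being treated (resolvent versus $\psi$-part), and make sure the Carleson argument and the off-diagonal decay interact correctly so that the $\int_0^t$-part really is absorbed by $\|\E\|_{C\cap L_\infty}\|f\|_\mX$ (using $\|\chi_{t<\tau}f\|_\mX^2\lesssim\int_0^\infty\|f_t\|_2^2\min(t,1)\,dt$ via Lemma~\ref{lem:XlocL2}) and not by something involving the full $\mX$-norm of $f$, which we do not control. Since the $\R^{1+n}_+$ versions of all these estimates are in \cite{AA1, AAH}, the proof will proceed by citing those and indicating the (minor) changes forced by compactness of $S^n$ and by the zero-order term $\sigma N$, and details will be left to the reader.
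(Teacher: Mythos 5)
Your outline assembles most of the right toolbox (the three-term splitting of $\st w$, Schur estimates with the gains $s/t$ and $t/s$, the $\psi_t(\lambda)=e^{-|\lambda|}-(1+i\lambda)^{-1}$ splitting of the semigroup, Corollary~\ref{cor:resolventNT}, Lemma~\ref{lem:offdiagonal}(ii) for the normal component, conjugation for $n=1$), but the step where the Carleson norm actually meets the mere $\mY$-control of $f$ is wrong as you describe it. You propose to absorb the $\int_0^t$-part through the multiplier bound $\|\E g\|_{\mY^*}\lesssim\|\E\|_{C\cap L_\infty}\|g\|_{\mX}$ applied to $g=f$, and to justify this with the inequality $\|\chi_{t<\tau}f\|_{\mX}^2\lesssim\int_0^\infty\|f_t\|_2^2\min(t,1)\,dt$ ``via Lemma~\ref{lem:XlocL2}''. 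That inequality is false: Lemma~\ref{lem:XlocL2} gives $\|\tN(f)\|_2^2\lesssim\int_0^1\|f_s\|_2^2\,ds/s$, i.e.\ the continuous inclusions $\mY^*\subset\mX\subset L_2\subset\mY$, so the $\mX$-norm is \emph{not} dominated by the $\mY$-norm (take any $f$ with $\int_0^1\|f_s\|_2^2\,s\,ds<\infty$ but $\sup_t t^{-1}\int_t^{2t}\|f_s\|_2^2\,ds=\infty$). Since $f$ is only controlled in $\mY$ in this lemma, the route through $\|\E f\|_{\mY^*}$ is unavailable; this is precisely the difficulty the lemma has to overcome, and without a correct replacement the contribution of the near range $s<t$ (where the semigroup factors give no decay, so Schur estimates lose a logarithm against $\|f\|_\mY$) is not closed.

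The mechanism in the paper's proof is different: after the $\psi_t$-splitting, everything reduces to the single term $(I+it\tD_{0})^{-1}\int_0^t \E_s f_s\,ds$, which is estimated by duality on each Whitney box, moving the resolvent onto the test function, using the $L_p$ (resp.\ $L_2\to L_q$ for the normal component) off-diagonal bounds for $(I-it\tD_0^*)^{-1}$ from Lemma~\ref{lem:offdiagonal}, and then invoking the tent-space estimate of Coifman--Meyer--Stein: the Carleson functional built from Whitney averages of $\E$ (finite by $\|\E\|_C$) is paired with the area functional of $f$, which is exactly $\|f\|_\mY$; the multiplier bound $\mX\to\mY^*$ is only ever applied to the \emph{dual} semigroup extension (as in Theorem~\ref{thm:Cbounded}), never to $f$. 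Two further points your sketch glosses over: for $n=1$ the term $\psi_t(\tD_0)\int_0^\infty e^{-s\tilde\Lambda}\E_sf_s\,ds$ needs $\|f\|_{\mY_\delta}$ from Theorem~\ref{thm:Cbounded}, which is recovered from $\|f\|_\mY$ only because $f$ is the conormal gradient of a solution (Proposition~\ref{prop:reverseholder}); and the far range $s>t$ should be handled via the algebraic gain $\|e^{-(s-t)\tilde\Lambda}(I-e^{-2t\tilde\Lambda})\|\lesssim t/s$ rather than ``exponential decay of the semigroup past scale $t$'', which fails on $\mH^\perp$ when $n=1$ where $e^{-u\tilde\Lambda}=I$.
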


\begin{proof}   
The proof will follow closely the strategy of \cite[Lem.~10.2]{AA1}  on $\R^{1+n}_+$.
We remark that $\tN^p \le \tN$ pointwise. Thus we will work with $\tN$, and indicate when we need to consider $\tN^p$  or  the normal component. 
Recall that $\tN$ estimates the truncation of the function to $t<1$.  

(i)
From $\st w_{t}= \tS_A f_t - e^{-t\tilde\Lambda}\tilde h^-$ and  the definition of $\tilde h^-$,
\begin{multline*}
\st  w_{t}=\int_0^t e^{-(t-s)\tilde\Lambda} \tE_0^+ \E_s f_s ds - \int_t^\infty e^{-(s-t)\tilde\Lambda}  \tE_0^- \E_s f_s ds +  e^{-t\tilde\Lambda}\int_{0}^\infty e^{-s\tilde\Lambda} \tE_0^- \E_s f_s ds
 \\
= \int_0^t e^{-(t-s)\tilde\Lambda}(I-e^{-2s\tilde\Lambda}) \tE_0^+ \E_s f_s ds - \int^\infty_{t} e^{-(s-t)\tilde\Lambda}(I-e^{-2t\tilde\Lambda}) \tE_0^- \E_s f_s ds\\+ e^{-t\tilde\Lambda} \int_0^t e^{-s\tilde\Lambda} \E_s f_s ds
 =I - II + III.
\end{multline*}
Note that  $\tE_0^+ + \tE_0^-= I$  (also in dimension $n=1$) is used in getting $III$. For the first two terms, we use Schur estimates as follows.
Since 
$\|e^{-(t-s)\tilde\Lambda}(I-e^{-2s\tilde\Lambda})\| \lesssim s/t$, we have as in 
 \cite[Lem.~10.2]{AA1}
 $$\|\tN(I)\|_2^2 \lesssim \int_0^1 \left(\int_0^t st^{-1} \|f_s\|_2 ds\right)^2dt/t\lesssim \|\chi_{t<1}f\|_{\mY}^2.$$
Similarly, as $\|e^{-(s-t)\tilde\Lambda}(I-e^{-2t\tilde\Lambda})\| \lesssim t/s$,  we have
\begin{multline*}
\|\tN(II)\|_2^2\lesssim \int_0^1\left(\int_t^\infty ts^{-1}\|f_s\|_2 ds\right)^2dt/t \\
\lesssim  \int_{0}^1 \left(\int_t^\infty t/s^2 ds\right) \left(\int_t^\infty t \|f_s\|_2^2 ds\right) dt/t \\
\lesssim \int_0^\infty \left( \int_0^{\min(s,1)} t dt/t \right) \|f_s\|_2^2 ds= \|f\|_\mY^2.
\end{multline*}

Note that the estimates so far hold for all $\tilde w$, not only for its normal component.
By inspection, the stated estimates of the truncated maximal function hold for these terms.

(ii)
It remains to consider $III= e^{-t\tilde\Lambda} \int_0^t e^{-s\tilde\Lambda} \E_s f_s ds$.
To make use of off-diagonal estimates in Lemma~\ref{lem:offdiagonal}, we need to replace $e^{-t\tilde\Lambda}$ by
the resolvents $(I+it\tD_{0})^{-1}$. To this end, define 
$\psi_t(z):= e^{-t|z|}-(1+itz)^{-1}$ and split the integral
\begin{multline*}
  e^{-t\tilde\Lambda} \int_0^t e^{-s\tilde\Lambda} \E_s f_s ds
  = \psi_t(\tD_{0}) \int_0^\infty e^{-s\tilde\Lambda} \E_s f_s ds -\int_t^\infty \psi_t(\tD_{0}) e^{-s\tilde\Lambda} \E_s f_sds \\
  +\int_0^t (I+ it\tD_{0})^{-1} (e^{-s\tilde\Lambda}-I) \E_s f_sds + (I+it\tD_{0})^{-1} \int_0^t \E_s f_s ds.
\end{multline*}
For the first term, square function estimates show that $\psi_t(\tD_{0}): L_2\to\mY^*\subset\mX$ is continuous, and Theorem~\ref{thm:Cbounded} shows
$\|\int_0^\infty e^{-s\tilde\Lambda} \E_s f_s ds\|_2\lesssim \|f\|_{\mY}$ (or $\lesssim \|f\|_{\mY_\delta}$ when $n=1$, but $\|f\|_{\mY_\delta} \lesssim \|f\|_{\mY}$ for conormal gradients of solutions by Proposition \ref{prop:reverseholder}).
For the second and third terms, we proceed as above for $I$ and $II$ by  Schur estimates    using
$
  \|\psi_t(\tD_{0}) e^{-s\tilde\Lambda}\|  \lesssim t/s,
$
and
$
  \|(I+ it\tD_{0})^{-1} (e^{-s\tilde\Lambda}-I)\|  \lesssim s/t.
$

(iii)
It remains to estimate
$(I+it\tD_0)^{-1}\int_0^t \E_s f_s ds$, and this is where we use $\|\E\|_{C}$. 
Consider first $\tN^p$. 
Fix a Whitney box $W_{0}=W(t_0, x_0)$.
We proceed by a duality argument in the spirit of Corollary~\ref{cor:resolventNT}, and
bound $\|(I+it\tD_0)^{-1}\int_0^t \E_s f_s ds\|_{L_p(W_{0})}$ by testing against
$h\in L_q(W_0; \V)$, $1/p+1/q = 1$.
As in step (iii) of the proof of \cite[Lem.~10.2]{AA1}, this leads to a pointwise estimate implying 
$$
   \left\|\tN^p\left( (I+it\tD_{0})^{-1}\int_0^t \E_s f_s ds  \right)\right\|_2 
   \lesssim \|\E\|_C \|f\|_\mY.
$$
Since the proof here is essentially the same as there, but replacing $\R^n$ by $S^n$, using 
area and maximal functions on $S^n$ instead, we omit the details.
The main ingredients are the $L_p$ off-diagonal estimates
for $(I+it\tD_{0}^*)^{-1}$ from Lemma~\ref{lem:offdiagonal}(i)
and the tent space estimate \cite[Thm.~1(a)]{CMS} of Coifman, Meyer and Stein.

To estimate $\tN((I+it\tD_{0})^{-1}\int_0^t \E_s f_s ds)_{\no})$,
we proceed by duality as above. 
We now instead test against $h\in L_{2}(W_{0};\V)$ with $h_{\ta}=0$ and use
the $L_2\to L_q$ off-diagonal estimates for $(I+it\tD_{0}^*)^{-1}$ from Lemma~\ref{lem:offdiagonal}(ii)
to obtain
$$
   \left\|\tN\left(\left( (I+it\tD_{0})^{-1}\int_0^t \E_s f_s ds \right)_\no \right)\right\|_2 
   \lesssim \|\E\|_C \|f\|_\mY.
$$

It remains to see that, when $n=1$, the $\tN$ estimate also applies to the tangential part $w_{\ta}$.
Consider the transformed conjugate coefficients $\widetilde B= \widehat{\tilde A}$ and
$\widetilde B_0= \widehat{\tilde A_1}$ from the proof of Proposition~\ref{prop:conjugates}, 
and let $\widetilde \E:= \widetilde B_0-\widetilde B$.
Then $\tilde f:= J^t f$ solves $(\pd_t +D \widetilde B)\tilde f=0$,
which yields the estimate of $\|\tN(w_\ta)\|_2$ since 
$(\tS_{A}f)_{\ta}=(J^t\tS_{A}f)_{\no}= (\tS_{\tilde A}\tilde f)_{\no}$.
This completes the proof.
\end{proof}

\begin{rem}    \label{rem:NTofSA}  Note that the proof also shows \textit{a priori} estimates for the operators $\tS_{A}$ when $f$ is not supposed to be a conormal gradient of a solution.
  Assume $\|\E\|_{C\cap L_{\infty}}<\infty$. 
 If $n\ge 2$, then we have for each $p<2$, 
 $$
 \|\tN^p(\tS_{A}f)\|_{2} +\|\tN((\tS_{A}f)_{\no})\|_{2} \lesssim \|\E\|_{C\cap L_{\infty}} \|f\|_{\mY},
 \qquad f\in \mY.
 $$
When $n=1$, we have for each $\delta>0$,
$$
\|\tN(\tS_{A}f)\|_{2}\lesssim \|\E\|_{C\cap L_{\infty}} \|f\|_{\mY_\delta}, \qquad 
f\in \mY_\delta.
$$
\end{rem}

%
%
%
%
%
\section{Almost everywhere non-tangential convergence}\label{sec:aecv}

 Since solutions are not defined in a pointwise sense,  the classical notion of  non-tangential convergence at a boundary point $x$ is replaced here by $$\lim_{r\to 1} |W^o(rx)|^{-1}\int_{W^o(rx)} h(\by) d\by \ \ \mathrm{exists},$$
which we call \emph{convergence of Whitney averages at $x$} because  the region $W^o(rx)$ is a Whitney ball. Note that since the Whitney balls at $x$ cover a truncated cone with vertex $x$, it really amounts to a non-tangential convergence. Besides, a slight modification of the proofs below yields limits of averages on Whitney regions $W^o({\bf z})$ for ${\bf z}$ in a fixed cone with vertex at  $x_{0}$,
as $|{\bf z}|\to 1$.  The exact choice of the Whitney balls does not matter.  

\begin{defn} 
Let $h$ be a function in $\bO^{1+n}$ with range in the bundle $\V$ in the sense that $h(rx) \in \V_{x}$ for all $r>0$ and $x\in S^n$. 
Let $x_{0}\in S^n$ and $1\le p<\infty$. We say that the {\em Whitney averages of $h$ converge  at $x_{0}$ in $L_{p}$ sense} to    $c\in \V_{x_{0}}$ if for any/some section $c_{x_{0}}\in C^\infty(S^n;\V)$ with $c_{x_{0}}(x_{0})=c$, 
$$\lim_{r\to 1} |W^o(rx_0)|^{-1}\int_{W^o(rx_0)} |h(\by)-c_{x_{0}}(y)|^p d\by=0.$$
Here $W^o(\bx)$ denotes a Whitney ball in $\bO^{1+n}$ centered at $\bx$.
We say that the {\em Whitney averages of $h$ converge in $L_{p}$ sense almost everywhere to $h_{0}$} with respect to surface measure if this happens with $c=h_{0}(x_{0})$ for almost every point $x_{0}\in S^n$.  
For functions with values in a trivial bundle, the sections $c_{x_{0}}$ are just constant functions. 

\end{defn}

Note that the limit does not depend on the choice of the section $c_{x_{0}}$, so  this  explains the ``any/some'' and it suffices to prove the  existence of the limit for one chosen section.
Clearly this notion entails convergence of Whitney averages. 

\begin{thm}\label{thm:aecv} 
Let $A$ be  coefficients with $\|\E\|_{C\cap L_{\infty}}<\infty$. Let $u$ be a $\mY^o$-solution to the divergence form equation with coefficients $A$ and let $u_{1}$ be the boundary trace of $u$ given by Corollary \ref{cor:diransatz}. Then  Whitney averages of $u$ converge in $L_{2}$ sense almost everywhere to $u_{1}$.
In particular, 
Whitney averages of $u$ converge almost everywhere to $u_{1}$.
\end{thm}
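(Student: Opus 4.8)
The starting point is the representation from Theorem~\ref{thm:conj}(ii): writing $u_r = r^{-\sigma}(v_t)_\no$ with $v_t = e^{-t\tilde\Lambda}v_0 + \tilde w_t$, where $v_0 = \tilde h^+ + \tilde h^-\in L_2(S^n;\V)$ and $\tilde w_t = \tS_A f_t - e^{-t\tilde\Lambda}\tilde h^-$. Since $r^{-\sigma}\to 1$ smoothly and boundedly as $r\to 1$, and since convergence of Whitney averages to $u_1$ is insensitive to multiplication by such a factor (one absorbs it into the section $c_{x_0}$), it suffices to prove that the Whitney averages of $(e^{-t\tilde\Lambda}v_0)_\no$ and of $(\tilde w_t)_\no$ converge almost everywhere, the first to $(v_0)_\no$ (equivalently to $\tE_0^+$- and $\tE_0^-$-components summing correctly) and the second to $0$. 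The overall architecture mirrors the classical decomposition into a ``semigroup part'' (handled by abstract functional calculus plus off-diagonal decay, as for harmonic extensions) and an ``error part'' governed by the Carleson norm $\|\E\|_C$.

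First I would treat the semigroup term $(e^{-t\tilde\Lambda}v_0)_\no$. Here I would use the functional calculus of $\tD_0$ from Corollary~\ref{cor:fcalc} together with Corollary~\ref{cor:resolventNT}, which bounds $\tN((I+it\tD_0)^{-1}v_0)_\no)$ by a maximal function $M(|v_0|^p)^{1/p}$ for some $p<2$. The standard move is: for $v_0$ in a dense class (say $v_0 \in \dom(\tD_0)\cap\dom(\tD_0^{-1})$ restricted to the relevant spectral subspaces, or $v_0$ smooth), one shows $e^{-t\tilde\Lambda}v_0 \to v_0$ not just in $L_2$ but with convergence of Whitney averages pointwise a.e., using that $t\tilde\Lambda e^{-t\tilde\Lambda}v_0$ and $(e^{-t\tilde\Lambda}-I)v_0$ are controlled by $t$ times a fixed $L_2$ function on Whitney boxes; then one upgrades to all $v_0\in L_2$ by the usual Banach-space density argument, the maximal estimate $\|\tN((\cdot)_\no)\|_2 \lesssim \|M(|\cdot|^p)^{1/p}\|_2 \lesssim \|\cdot\|_2$ controlling the oscillation. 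Writing $\psi_t(z) = e^{-t|z|} - (1+itz)^{-1}$, the split $e^{-t\tilde\Lambda} = (I+it\tD_0)^{-1} + \psi_t(\tD_0)$ reduces matters to the resolvent (handled by Corollary~\ref{cor:resolventNT}) and to $\psi_t(\tD_0)v_0$, for which square function estimates (Theorem~\ref{thm:QE}, in the form \eqref{eq:genQE}) plus $\psi\in\Psi$ give decay and hence a.e.\ vanishing of Whitney averages at the boundary. The normal component is what allows us to use the better off-diagonal bound Lemma~\ref{lem:offdiagonal}(ii) when $n\ge 2$; in dimension $n=1$ one has the stronger statement and may invoke Theorem~\ref{thm:NTtilde} directly.

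Next the error term $(\tilde w_t)_\no$. From the decomposition of $\tilde w_t$ in the proof of Lemma~\ref{lem:NTofSA} into pieces $I$, $II$, $III$, the first two are Schur-type integrals bounded by $\|\chi_{t<1}f\|_\mY$ with an extra factor $s/t$ or $t/s$, which forces the Whitney averages to $0$ as $t\to 0$ (the factor $s/t\le 1$ on the cone, and dominated convergence in the Schur integral kills the contribution near $s=0$); this is a routine quantitative refinement of the $\mY$-bound already proved. The genuinely delicate piece is $III = e^{-t\tilde\Lambda}\int_0^t e^{-s\tilde\Lambda}\E_s f_s\,ds$, and within it the term $(I+it\tD_0)^{-1}\int_0^t \E_s f_s\,ds$. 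The main obstacle is precisely here: one must show that the Whitney averages of this Carleson-type operator converge to $0$ a.e. I would handle it as in the proof of Lemma~\ref{lem:NTofSA}(iii), testing against $h\in L_2(W_0;\V)$ with $h_\ta=0$, using the $L_2\to L_q$ off-diagonal estimates of Lemma~\ref{lem:offdiagonal}(ii) for $(I+it\tD_0^*)^{-1}$ and the Coifman--Meyer--Stein tent-space estimate, but now tracking that the quantity is bounded by a ``tail'' of the Carleson functional: for the truncation $\chi_{t<\tau}\tilde w$ one gets a bound by $\|\E\|_C$ times $(\int_0^\infty\|f_t\|_2^2\min(t,\tau)\,dt)^{1/2}$, which tends to $0$ as $\tau\to 0$. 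Combining this with a Borel--Cantelli / maximal-function argument over dyadic $\tau$ yields a.e.\ vanishing of the Whitney averages. Finally, assembling the semigroup and error parts gives a.e.\ convergence of Whitney averages of $u$ to $u_1$ in $L_2$ sense; since $L_2$-convergence of Whitney averages implies convergence of the (scalar) Whitney averages, the last sentence of the theorem follows. The one point requiring care throughout is the bundle-valued nature of the statement — one fixes a smooth section $c_{x_0}$ through the Lebesgue value $u_1(x_0)$ and works with $h - c_{x_0}$, which is legitimate because all estimates are local and $c_{x_0}$ is smooth, hence harmless on Whitney boxes.
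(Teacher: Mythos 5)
Your proposal is correct in substance and shares the paper's skeleton: you start from the conjugate-pair representation $u_r=e^{\sigma t}(e^{-t\tilde\Lambda}v_0+\tilde w_t)_\no$ of Theorem~\ref{thm:conj}, and you dispose of the error term exactly as the paper does, via the truncated estimate in Lemma~\ref{lem:NTofSA} ($\|\tN(\chi_{t<\tau}\tilde w_\no)\|_2\lesssim\|\E\|_{C\cap L_\infty}(\int_0^\infty\|f_t\|_2^2\min(t,\tau)\,dt)^{1/2}\to0$), which by monotonicity in $\tau$ (or your dyadic Borel--Cantelli argument) gives a.e.\ vanishing of the Whitney averages of $\tilde w_\no$. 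Where you genuinely diverge is the semigroup term: the paper argues directly at $L_p$-Lebesgue points ($p<2$) of $v_0$, choosing a smooth section $v_{x_0}$ with $Dv_{x_0}=0$ so that the reproducing identity $e^{\sigma t}e^{-t\tilde\Lambda}v_{x_0}=v_{x_0}$ holds (this is the substitute for ``the Poisson kernel has integral one''), then splits $e^{-t\tilde\Lambda}$ into $(1+it\sigma)(I+it\tD_0)^{-1}$ plus $\psi_t(\tD_0)$ plus a small remainder and ends with the quantitative bound $M^p_\tau(h_{x_0})(x_0)+(t_0/\tau)M^p(h_{x_0})(x_0)+\tau M^p(v_0)(x_0)$; you instead run the classical dense-class-plus-maximal-function scheme, proving convergence for $v_0\in\dom(\tilde\Lambda)=\dom(D)$ from $\|(e^{-t\tilde\Lambda}-I)v_0\|_2=O(t)$ and upgrading by density using the bound $\|\tN((e^{-t\tilde\Lambda}\cdot)_\no)\|_2\lesssim\|\cdot\|_2$. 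This is a legitimate and arguably more elementary alternative, but two points need care. First, the $O(t)$ decay is an $L_2(S^n)$ statement, not a statement ``on Whitney boxes''; to convert it into a.e.\ vanishing of Whitney averages you should integrate it into a square-function/area-integral quantity ($\int_0^1\|(e^{-t\tilde\Lambda}-I)v_0\|_2^2\,dt/t<\infty$, hence finite cone integrals a.e., hence vanishing tails), which is the mechanism your sketch leaves implicit. Second, Theorem~\ref{thm:NTtilde} states the non-tangential bound only for $v_0\in\tE_0^+L_2$, whereas here $v_0=\tilde h^++\tilde h^-$ is a general $L_2$ function; you therefore need the (easy but unstated) extension of that bound to all of $L_2$, obtained exactly as in the paper's own proof by the split into the resolvent (Corollary~\ref{cor:resolventNT}, valid for all $f$) and $\psi_t(\tD_0)$ (square functions on $\ran(\tD_0)$, with the $\mH^\perp$ part trivial when $n=1$). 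With these two points made explicit, your argument closes; what the paper's route buys is that it never needs dense-class convergence at all, working at Lebesgue points of $v_0$ itself, at the price of the algebraic observation about $\nul(D)$-sections and the curvature factor $e^{\sigma t}$.
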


The result also holds for the $\R^{1+n}_{+}$ setup of \cite{AA1}, with almost identical proof.

\begin{proof} 
As in the proof of Theorem~\ref{thm:conj},  we can write 
$$
u(\bx)=e^{\sigma t}(e^{-t\tilde \Lambda}v_{0} + \st w_{t})_{\no}(x),
$$ 
where $\bx=e^{-t}x$, $\sigma= \frac{n-1}{2}$,   $v_{0}\in   L_{2}$  with 
$\|v_{0}\|_{2} \lesssim \|\nabla_\bx u\|_{\mY^o}+ \left|\int_{S^n} u_1 dx\right|$ and $u_{1}=(v_{0})_{\no}$. 

Let $p<2$ as in the third inequality of Corollary \ref{cor:resolventNT}. 
Let $x_{0}$ be a point on $S^n$, and let $B(x_{0},t)$ be the surface ball centered at $x_{0}$ 
with radius $t$.
Adapting the usual Lebesgue point argument for $p=1$, it is seen that for almost all points $x_{0}$
$$
\lim_{t\to 0} |B(x_{0}, t)|^{-1} \int_{B(x_{0},t)} |v_{0}(x)- v_{x_0}(x)|^p dx=0
$$
for any section $v_{x_0}\in C^\infty(S^n;\V)$ with   $v_{x_{0}}(x_{0})=v_{0}(x_{0})$ and one can further assume  $Dv_{x_0}=0$, which in particular implies that its
normal component is the constant scalar function $(v_{0}(x_{0}))_\no=u_{1}(x_{0})$. 
The key point is the identity
\begin{equation}
\label{eq:almostu}
u(\bx)-u_{1}(x_{0})= (e^{\sigma t} e^{-t\tilde \Lambda}(v_{0}-v_{x_0}))_{\no}(x) + e^{\sigma t}(\st w_{t})_{\no}(x),
\end{equation}
which follows since $\tD_0 v_{x_0}= -\sigma N v_{x_0}$, and hence $\tilde \Lambda v_{x_0}= \sigma v_{x_0}$ and $e^{\sigma t} e^{-t\tilde \Lambda}v_{x_0}=v_{x_{0}}$.

From Theorem \ref{thm:NTu},  $\| \tN(\chi_{t<\tau}\st w_\no)\|_2\to 0$ as $\tau\to 0$. 
Thus we can assume that the  Whitney averages of $\st w_{\no}$ converge to 0 in $L_2$ sense at $x_0$. It remains to show, with  $h_{x_0}:= v_{0}-v_{x_0}$,
$$
\lim_{t_{0}\to 0}|W(t_{0},x_{0})|^{-1}\int_{W(t_{0},x_{0})} |(e^{\sigma t} e^{-t\tilde \Lambda}h_{x_0})_{\no}(x)|^2 dtdx  = 0.
$$
As in \cite[Ch.~VII, Thm.~4]{Stein}, the rest of the argument consists in using the maximal estimates in Theorem~\ref{thm:NTtilde} with some adaptation. As we we do not have pointwise bounds on the operators that substitute the Poisson kernel we also have to handle more technicalities.
Let $0<c_{0}t_{0}<\tau$ with $t_{0},\tau<1$ to be chosen and  $c_{0}^{-1} t_{0}<t < c_{0} t_{0}$. 
In the $L_2$ average, write
$$
(e^{\sigma t}e^{-t\tilde \Lambda}h_{x_0})_{\no} = ((1+it\sigma)(I+it\tD_{0})^{-1}h_{x_0})_{\no} + (e^{\sigma t}e^{-t\tilde \Lambda}h_{x_0} - (1+it\sigma)(I+it\tD_{0})^{-1}h_{x_0})_{\no}.
$$
For the first term, we use \eqref{eq:decay}. 
Fixing $t$  and taking only the $L_{2}$ average in $x$, this gives us  a  bound  
$$
 \sum_{j\ge 2} 2^{-j} \left(|B(x_{0},2^j t)|^{-1}\int_{B(x_{0},2^j t)}  | h_{x_0}(x)|^p dx\right)^{1/p}. 
$$
This is controlled by 
$$
 M_{\tau}^p(h_{x_0})(x_{0}) + (t_{0}/\tau) M^p(h_{x_0})(x_{0}),
$$
where $M$ is the Hardy-Littlewood maximal operator over surface balls on $S^n$, $M^p(h):= M(|h|^p)^{1/p}$, and the subscript $\tau$ means that we restrict the maximal operator to balls  having radii less than $\tau$.  This control is obtained by truncating the sum at $2^j \approx \tau/t$ and using that $t\approx t_{0}$.  The average in $t$ now yields the same bound. 

For the second term, we note that 
$(e^{\sigma t}e^{-t\tilde \Lambda} - (1+it\sigma)(I+it\tD_{0})^{-1})v_{x_0}=0$. 
Thus we may replace $h_{x_0}$ by $v_{0}$ in this term, and write it
$$
(e^{\sigma t} \psi(t\tD_{0})v_{0})_{\no} + (e^{\sigma t} - (1+i\sigma t)^{-1}) ((I+it\tD_{0})^{-1}v_{0})_{\no}.
$$
with $\psi(\lambda):=e^{-|\lambda|} -(1+i\lambda)^{-1}$. 
The first term has estimates
$$
  \|\tN(\chi_{t<\tau}\psi(t\tD_{0})v_{0})\|_2^2 \lesssim \int_0^\tau \|\psi(t\tD_{0})v_{0}\|_2^2 \frac {dt}t\to 0,
  \qquad \tau\to 0,
$$
by Lemma~\ref{lem:XlocL2} and square function estimates.
Thus we can assume that Whitney averages of $(e^{\sigma t} \psi(t\tD_{0})v_{0})_{\no}$ 
converge to 0 in $L_2$ sense at $x_0$.
By Theorem \ref{thm:NTtilde}, the second is controlled by 
$$
\tau M^p(v_{0})(x_{0}).
$$
Thus it remains to show convergence to zero of
$$
M_{\tau}^p(h_{x_0})(x_{0}) + (t_{0}/\tau) M^p(h_{x_0})(x_{0}) + \tau M^p(v_{0})(x_{0}).
$$
Since $M^p(v_{0})\in L_{2}(S^n)$ as $p<2$, we can further assume for $x_{0}$ that
$M^p(v_{0})(x_{0})<\infty$.
For such fixed $x_0$ it follows that 
$M^p(h_{x_0})(x_{0}) \le M^p(v_{0})(x_{0}) + M^p(v_{x_0})(x_{0}) <\infty$. 
We now make $M_{\tau}^p(h_{x_0})(x_{0}) + \tau M^p(v_{0})(x_{0})$ small by 
choosing $\tau$ small. Then choose $t_0<\tau$ to make $(t_{0}/\tau) M^p(h_{x_0})(x_{0})$ small. All the constraints on $x_{0}$ are met almost everywhere and this completes the proof.
\end{proof}

\begin{rem} \label{rem:aecvv} 
The proof of almost everywhere convergence for averages applies to $v$ (with $\tN^p$, $p<2$, if $n\ge 2$). The starting point  is   
$$
e^{\sigma t} v_{t}(x)- v_{x_{0}}(x)=e^{\sigma t} e^{-t\tilde \Lambda}(v_{0}-v_{x_{0}})(x) + e^{\sigma t} \st w_{t}(x)
$$
replacing \eqref{eq:almostu} and the rest of the proof is as above. 
The only needed modification of the argument is that we now use \eqref{eq:decay0}
instead of  \eqref{eq:decay}. We obtain almost everywhere convergence of Whitney averages of $e^{\sigma t}v$ 
in $L_p$ sense to $v_{0}$ for $p<2$. Of course, the term $e^{\sigma t}$  can easily be removed in the end.
This factor was needed in order to have $e^{\sigma t}e^{-\sigma \st \Lambda} = I$ on $\nul(D)$.
\end{rem}

\begin{cor}\label{cor:aentcv} 
Assume that $A$ satisfies $\|\E\|_{C\cap L_{\infty}}<\infty$ and is such that all weak solutions $u$ to the divergence form equation with coefficients $A$, for some fixed constant $c>1$, 
satisfy the local boundedness property
$$
\sup_{\bx \in B}|u(\bx)| \le C \left( |cB|^{-1} \int_{cB} |u(\by)|^2 d\by\right)^{1/2},
$$
with a constant $C$ independent of $u$ and of closed balls $B$ with $cB\subset \bO^{1+n}$.  Then any 
$\mY^o$-solution to the divergence form equation with coefficients $A$ converges non-tangentially almost everywhere to its boundary trace. 
\end{cor}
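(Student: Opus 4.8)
\textbf{Proof sketch for Corollary~\ref{cor:aentcv}.} The plan is to combine Theorem~\ref{thm:aecv}, which gives almost everywhere convergence of \emph{Whitney averages} of a $\mY^o$-solution $u$ to its boundary trace $u_1$, with the assumed interior local boundedness estimate, which lets us upgrade convergence of averages to genuine pointwise non-tangential convergence. The mechanism is classical (see \cite[Ch.~II]{Kenig}): for a solution, the pointwise values on a Whitney ball $B$ are controlled by the $L_2$ average over a slightly dilated ball $cB$, so oscillation of $u$ near a boundary point is controlled by oscillation of the averages.

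First I would fix a $\mY^o$-solution $u$ with boundary trace $u_1$, and fix a point $x_0\in S^n$ at which the conclusion of Theorem~\ref{thm:aecv} holds, i.e.
$$
  \lim_{r\to 1}|W^o(rx_0)|^{-1}\int_{W^o(rx_0)}|u(\by)-u_1(x_0)|^2\, d\by = 0;
$$
such $x_0$ form a set of full surface measure. I also want $x_0$ to be a Lebesgue point of $u_1$ (with respect to surface measure), which again holds almost everywhere, so these constraints are compatible. Now let $\bz\in\bO^{1+n}$ approach $x_0$ non-tangentially, say $\bz=\rho z$ with $z\in S^n$, $|z-x_0|\le C_0(1-\rho)$. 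Choose a ball $B=B(\bz, \beta(1-|\bz|))$ with $\beta$ small enough that $cB\subset \bO^{1+n}$ and $cB$ is contained in a fixed enlargement of the Whitney region $W^o(\rho x_0)$ (for $\bz$ in a fixed cone at $x_0$, this enlargement is still a union of boundedly many Whitney balls associated to radii $\approx \rho$, so the argument of Theorem~\ref{thm:aecv} and Remark preceding it apply verbatim after adjusting $c_0,c_1$). Then by the local boundedness hypothesis applied to the solution $u-u_1(x_0)$ (a solution since constants are solutions of the divergence form equation, as $A$ need not annihilate constants but $\nabla_\bx$ of a constant is $0$),
$$
  |u(\bz)-u_1(x_0)|\le \sup_{\bx\in B}|u(\bx)-u_1(x_0)|
  \le C\Big(|cB|^{-1}\int_{cB}|u(\by)-u_1(x_0)|^2\, d\by\Big)^{1/2}.
$$
Since $|cB|\approx |W^o(\rho x_0)|$ and $cB\subset$ a fixed finite union of such Whitney regions with radii $\rho'\to 1$ as $\bz\to x_0$, the right-hand side is bounded by a constant times
$$
  \Big(|W^o(\rho' x_0)|^{-1}\int_{W^o(\rho' x_0)}|u(\by)-u_1(x_0)|^2\, d\by\Big)^{1/2},
$$
summed over the boundedly many relevant Whitney balls, and each such term tends to $0$ as $\bz\to x_0$ by the choice of $x_0$. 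Hence $u(\bz)\to u_1(x_0)$, which is non-tangential convergence at $x_0$.

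The one point requiring a little care — and the main (minor) obstacle — is the bookkeeping that a \emph{fixed} non-tangential cone at $x_0$, when one slides $\bz$ along it, is covered by a uniformly bounded number of Whitney regions $W^o(\rho' x_0)$ with $\rho'$ comparable to $|\bz|$, and that the dilated balls $cB$ used in the local boundedness estimate stay inside this union; this is exactly the ``slight modification of the proofs below yields limits of averages on Whitney regions $W^o({\bf z})$ for ${\bf z}$ in a fixed cone'' remark already made before Theorem~\ref{thm:aecv}, so one invokes that version of Theorem~\ref{thm:aecv} rather than the one stated only for $W^o(rx_0)$. With that in hand the estimate above is immediate and the corollary follows. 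Note the local boundedness hypothesis is automatic for real (or complex) equations $m=1$ by De~Giorgi--Nash--Moser, and more generally for systems it is an extra assumption; this is why it is stated as a hypothesis rather than proved. $\qed$
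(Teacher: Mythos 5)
Your argument is correct and is essentially the paper's own proof: the paper simply applies the local boundedness estimate to $u-u_1(x_0)$ on Whitney balls and invokes Theorem~\ref{thm:aecv} (together with the remark, made before that theorem, allowing Whitney regions centered at points of a fixed cone). Your extra bookkeeping about covering the cone by boundedly many Whitney regions, and the Lebesgue-point condition on $u_1$, are harmless elaborations of the same route.
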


The local boundedness property is a classical consequence of local H\"older regularity for weak solutions.  For real equations ($m=1$), the latter follows from \cite{Mo, DeG}. For   small complex $L_{\infty}$ perturbations of real equations, this is from \cite{Au2}.  
For two dimensional systems ($n=1$), local regularity follows immediately from  reverse H\"older inequalities described in Theorem \ref{thm:revholder} and Sobolev embeddings.  
For any dimension and system $(m\ge  1, n\ge 1)$, with continuous in $\overline{\bO^{1+n}}$ or $vmo$ coefficients, this is explicitly done in   \cite{AQ}.

\begin{proof} 
Applying the local boundedness property to  $u-u_{1}(x_{0})$ on Whitney balls  yields the desired convergence for almost every $x_{0}$ from Theorem~\ref{thm:aecv}. 
\end{proof}

We know describe new almost everywhere convergence results for $\mX^o$-solutions. 

\begin{thm}\label{thm:aecvReg/Neu} 
Let $A$ be  coefficients with $\|\E\|_{C\cap L_{\infty}}<\infty$. Let $g$ 
be an $\mX^o$-solution with potential $u$ to the divergence form equation with coefficients $A$. 
Then for any $p<2$,  Whitney averages of $g_\no= \pd_t u$, and of $(Ag)_\ta=(A\nabla_{\bx}u)_{\ta}$, converge in $L_{p}$ sense almost everywhere to $(g_{1})_\no$ and $(A_1 g_1)_\ta$ respectively,
where $g_{1}$ is the boundary trace of $g$ given by Theorem~\ref{thm:inteqforNeuandg}.

Furthermore, if we have pointwise ellipticity conditions on $A$, then the Whitney averages of $\nabla_{\bx}u$ and $\pd_{\nu_{A}}u$ converge in $L_{p}$ sense almost everywhere to $g_{1}$ and $(A_1 g_1)_\no$ respectively.

Finally, in all cases, Whitney averages of the potential $u$ converge almost everywhere in $L_{2}$ sense to $u_{1}$.
\end{thm}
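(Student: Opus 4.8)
\textbf{Proof proposal for Theorem~\ref{thm:aecvReg/Neu}.}

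The plan is to follow the same template as the proof of Theorem~\ref{thm:aecv}, but now starting from the representation of the conormal gradient $f$ rather than the potential vector $v$. By Theorem~\ref{thm:conj}(i) we may write, for an $\mX^o$-solution $g$ with conormal gradient $f$,
\begin{equation*}
  f_t = e^{-t\Lambda}f_0 + w_t, \qquad f_0 = h^++h^-\in\mH,\quad w_t = S_A f_t - e^{-t\Lambda}h^-\in\mY^*,
\end{equation*}
with $\|f_0\|_2\lesssim \|\nabla_\bx u\|_{\mX^o}$ and $g_1 = (B_0 f_0)_\no\rad + (f_0)_\ta$. Recalling that $g_r = r^{-(n+1)/2}((B_tf_t)_\no\rad + (f_t)_\ta)$, we must deal with the extra factor $r^{-(n+1)/2} = e^{(n+1)t/2}$ and, more importantly, with the fact that the radially dependent coefficient $B_t = B_0 - \E_t$ enters the conormal gradient-to-gradient map: so $g_\no = e^{(n+1)t/2}(f_t)_\no$ is clean, but $(Ag)_\ta = A_{\ta\no}g_\no + A_{\ta\ta}g_\ta$ involves $A$ rather than $A_1$. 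The first step is therefore to record, as in Definition~\ref{defn:gradtoconormal}, that the three quantities $g_\no = e^{(n+1)t/2}(f_t)_\no$, $(Ag)_\ta = e^{(n+1)t/2}(A_t B_t f_t)_\ta$-type expression, and $(Ag)_\no = e^{(n+1)t/2}(f_t)_\no$ reduce modulo the error $\E_t$ to $A_1$-versions, because $\|\E\|_{C\cap L_\infty}<\infty$ forces Whitney averages of $\E_t$ to vanish a.e.\ (the set where $\E^*$ fails to tend to $0$ has surface measure zero). Thus it suffices to prove convergence of Whitney averages of $e^{(n+1)t/2}(f_t)_\no$ to $(f_0)_\no$ and of $e^{(n+1)t/2}(f_t)_\ta$ to $(f_0)_\ta$ in $L_p$ sense, $p<2$; the passage from $(A_1$-combinations of $f_0)$ to $g_1$, $(g_1)_\no$, $(A_1g_1)_\ta$, $(A_1g_1)_\no$ is then purely algebraic.

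Next I would treat the principal term $e^{-t\Lambda}f_0$. Fix a Lebesgue point $x_0$ of $f_0$ for $L_p$ averages (a.e.\ $x_0$ works), and choose a smooth section $f_{x_0}$ with $f_{x_0}(x_0)=f_0(x_0)$ and, crucially, $D_0 f_{x_0}=0$ — this is possible by taking $f_{x_0}\in\mH^\perp$-complement appropriately, or more simply by noting $D_0$ annihilates the relevant constant section after the identification $\V\approx\C^{(1+n)m}$; one arranges $\Lambda f_{x_0}=0$ so that $e^{(n+1)t/2}e^{-t\Lambda}f_{x_0}$ differs from $f_{x_0}$ only by the harmless scalar factor $e^{(n+1)t/2}\to 1$ as $t\to 0$. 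Then
\begin{equation*}
  g_r - g_1(x_0)\text{-version} = e^{(n+1)t/2}\bigl(e^{-t\Lambda}(f_0-f_{x_0})\bigr)(x) + (e^{(n+1)t/2}-1)f_{x_0}(x) + e^{(n+1)t/2}w_t(x),
\end{equation*}
and we estimate the first term exactly as in Theorem~\ref{thm:aecv}: split $e^{-t\Lambda}=\psi(t D_0)+(I+itD_0)^{-1}$ with $\psi(\lambda)=e^{-|\lambda|}-(1+i\lambda)^{-1}\in\Psi$; the $\psi(tD_0)$ part has $\|\tN(\chi_{t<\tau}\psi(tD_0)(f_0-f_{x_0}))\|_2^2\lesssim\int_0^\tau\|\psi(tD_0)(\cdot)\|_2^2\,dt/t\to 0$ by the square function estimate, Theorem~\ref{thm:QE}, so its Whitney averages vanish a.e.; the resolvent part is controlled pointwise by $M^p(f_0-f_{x_0})(x_0)$ via the off-diagonal bounds of Lemma~\ref{lem:offdiagonal}(i) and Corollary~\ref{cor:resolventNT}, exactly as in the proof of Theorem~\ref{thm:aecv}, giving a bound $M^p_\tau(f_0-f_{x_0})(x_0)+(t_0/\tau)M^p(f_0-f_{x_0})(x_0)$ that is made small by choosing $\tau$ then $t_0$ small, at every point $x_0$ that is simultaneously a Lebesgue point and a point of finiteness of $M^p(f_0)$ — almost every $x_0$. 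The error term $e^{(n+1)t/2}w_t$ is handled by Lemma~\ref{lem:NTofSA} / Remark~\ref{rem:NTofSA}: $\|\tN^p(\chi_{t<\tau}w)\|_2\to 0$ as $\tau\to 0$, so Whitney averages of $w$ converge to $0$ in $L_p$ sense a.e. For the statement about $\nabla_\bx u$ and $\pd_{\nu_A}u$ under pointwise ellipticity, one simply observes that then $A$ is pointwise strictly accretive so $B_0^{-1}$, $A_{\no\no}^{-1}$ etc.\ are bona fide pointwise multipliers and the full gradient $\nabla_\bx u = r^{-(n+1)/2}((B_tf_t)_\no\rad + (f_t)_\ta)$ is an $A_1$-combination of $f$ modulo $\E_t$; for $n=1$ or $m=1$ this is automatic. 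Finally, the last assertion — Whitney averages of $u$ itself converge in $L_2$ to $u_1$ — is not new: an $\mX^o$-solution is in particular a $\mY^o$-solution (an $\mX^o$-solution has $\|\nabla_\bx u\|_{\mY^o}\lesssim\|\nabla_\bx u\|_{\mX^o}<\infty$ by Lemma~\ref{lem:XlocL2}, via $\mX\subset\mY$), so Theorem~\ref{thm:aecv} applies verbatim and gives $L_2$ convergence of Whitney averages of $u$ to $u_1$, where the two notions of boundary trace agree by Theorem~\ref{thm:inteqforNeuandg} and Corollary~\ref{cor:diransatz}.

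\textbf{Main obstacle.} The delicate point is the construction of the good comparison section $f_{x_0}$ with $\Lambda f_{x_0}=0$ (equivalently $D_0 f_{x_0}=0$) respecting the normal/tangential decomposition, since $D_0=DB_0+\sigma N$ has $\sigma=(n-1)/2\neq 0$ in general and its kernel is only $\mathcal{H}$-trivial ($D_0:\mathcal H\to\mathcal H$ is injective), so one cannot literally pick a nonzero constant in $\mathcal H$ killed by $\Lambda$. The resolution is to peel off the factor $e^{(n+1)t/2}$ first: the operator that actually acts on the relevant $\mathcal{H}^\perp$-type directions after this renormalization is $\sigma N$-free, i.e.\ one works with the conormal gradient-to-gradient correspondence where, as noted after Proposition~\ref{prop:divformasODE}, the exponent $(n+1)/2$ is precisely the one eliminating all powers of $r$ — so the comparison is really with a genuine $\nabla_\bx$ of a constant, and $f_{x_0}$ is the conormal gradient of the constant function $u_1(x_0)$, which indeed satisfies the ODE with vanishing $t$-derivative. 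Making this identification precise, and checking that the resulting $f_{x_0}$ is the smooth section one needs (it is the constant section $(0, \text{const})$ or its image under $\widehat{A_1}$, depending on which component one tracks), is the one genuinely fiddly step; everything else is a faithful transcription of the arguments already carried out for Theorems~\ref{thm:NTu} and~\ref{thm:aecv}.
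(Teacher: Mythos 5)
Your overall template (principal term plus a $\mY^*$-type error, Lebesgue points, splitting $e^{-t\Lambda}$ into $\psi(tD_0)$ plus a resolvent, Lemma~\ref{lem:NTofSA} for the error, and deducing the $L_2$ statement for $u$ from Theorem~\ref{thm:aecv} since $\mX\subset\mY$) is sound, but the proof has a genuine gap exactly at the point you flag as the ``main obstacle'', and the fix you sketch does not work. You propose to prove a.e.\ convergence of Whitney averages of the \emph{full} conormal gradient $e^{(n+1)t/2}f_t$, by comparing with a smooth section $f_{x_0}$ satisfying $\Lambda f_{x_0}=0$ and $f_{x_0}(x_0)=f_0(x_0)$. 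No such section exists: for $\sigma=\tfrac{n-1}{2}\neq 0$ the operator $D_0=DB_0+\sigma N$ is injective on all of $L_2$ (Proposition~\ref{prop:DBprops}), so $\nul(\Lambda)=\{0\}$; the subspace $B_0^{-1}\mH^\perp$ on which $D_0$ ``acts like $\sigma N$'' is in general not $D_0$-invariant and consists of non-smooth sections (the coefficients are only $L_\infty$), so it cannot serve as a reservoir of comparison sections; and ``the conormal gradient of the constant function $u_1(x_0)$'' is identically zero, so it cannot match a nonzero value $f_0(x_0)$. There is also a structural reason to distrust the conclusion you would reach: convergence of Whitney averages of $e^{(n+1)t/2}f_t$ (equivalently of $\pd_{\nu_A}u$ and $\nabla_S u$), combined with the stability of $L_p$-convergence under multiplication by bounded $r$-independent coefficients, would give the convergence of $\nabla_\bx u$ and $\pd_{\nu_A}u$ for general systems, which the paper explicitly states is open for $m\ge 2$, $n\ge 2$.

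The paper's proof circumvents precisely this obstruction by never working with $e^{-t\Lambda}$ alone on the principal term: since the quantities in the statement are the components of $B_tf_t$ (indeed $g_\no\rad+(Ag)_\ta=Bf$ up to the scaling factor), one writes
\begin{equation*}
  B_tf_t= e^{-t\tilde\Lambda}(B_0f_0) + \bigl(B_0e^{-t\Lambda}-e^{-t\tilde\Lambda}B_0\bigr)f_0 - \E_t e^{-t\Lambda}f_0 + B_t w_t ,
\end{equation*}
shows the last three terms lie in $\mY^*$ (the commutator via a four-term resolvent splitting with square function bounds and $O(t)$ resolvent comparisons, the $\E$-term via $\|\E\|_*\|e^{-t\Lambda}f_0\|_{\mX}$), and runs the Lebesgue-point argument on $e^{-t\tilde\Lambda}(B_0f_0)$ as in Theorem~\ref{thm:aecv}/Remark~\ref{rem:aecvv}, where comparison sections with $Dv_{x_0}=0$ \emph{do} exist and satisfy $e^{\sigma t}e^{-t\tilde\Lambda}v_{x_0}=v_{x_0}$. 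Only under pointwise ellipticity does one then write $f=B_0^{-1}(B_0f)$ and invoke the multiplication-stability lemma (H\"older plus Lebesgue points of the rough multiplier) to get the statements for $\nabla_\bx u$ and $\pd_{\nu_A}u$; note also that your ``purely algebraic'' passage between $f$- and $B_0f$-combinations in fact requires this same stability argument, since $B_0$ is only bounded measurable. To repair your proposal you would need to insert the commutator step and transfer the principal term to the $\tilde\Lambda$-semigroup before choosing comparison sections, and correspondingly weaken your intermediate claim from convergence of $f$ to convergence of $B_tf_t$.
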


Recall that pointwise ellipticity  holds  when $m=1$ (equations) or $n=1$ (two dimensional systems). If $A$ is continuous in $\overline{\bO^{1+n}}$, then  pointwise accretivity can be deduced from the strict accretivity  in the sense of \eqref{eq:accrasgarding}, for any $m,n$. See, \textit{e.g.} \cite{Fri}.
 We do not know if this convergence  of $\nabla_{\bx}u$ and $\pd_{\nu_{A}}u$ holds when $m\ge 2$ and $n\ge 2$ in general.  

\begin{proof}  We begin with the convergence for $u$. It is a straightforward consequence of the growth 
$\|v_{t}-v_{0}\|_{2}=O(t)$ for $t>0$ in Theorem~\ref{thm:conj} and $u(\bx)-u_{1}(x)=(e^{-\sigma t} v_{t}-v_{0})_{\no}(x)$. Let us turn to the gradient. 

By Theorem~\ref{thm:conj} 
we have
$f_t= e^{-t\Lambda} f_{0} + w_t$ for some $f_{0}\in \mH$ and $w\in \mY^*$.
From the correspondence between $g$ and $f$ in Proposition~\ref{prop:divformasODE}, it follows
that, modulo a rescaling, $(g)_\no \rad +(Ag)_\ta$ equals $Bf$.
Thus we need to prove convergence of Whitney averages of 
$$
   B_t f_t= e^{-t\tilde \Lambda} (B_0f_{0})+ (B_0 e^{-t\Lambda}- e^{-t\tilde \Lambda} B_0)f_0
-\E_t e^{-t\Lambda}f_0 + B_t w_{t}.
$$
It is clear that any $\mY^*$ element has  Whitney averages converging almost everywhere 
to $0$ in $L_2$ sense.
This applies to the last three terms. Indeed,
we have $\|w\|_{\mY^*}<\infty$, and hence $\|Bw\|_{\mY^*}<\infty$.  
Also $\|\E_t e^{-t\Lambda}f_0\|_{\mY^*}\lesssim \|\E\|_* \| e^{-t\Lambda}f_0\|_{\mX}<\infty$.
Furthermore, using $B_{0}(I+itDB_{0})^{-1}= (I+itB_{0}D)^{-1} B_{0}$, we write
\begin{multline*}
\big(B_{0}e^{-t\Lambda}- e^{-t\tilde\Lambda}B_{0}\big)f_{0}= B_{0}\big(e^{-t|DB_{0}+\sigma N|} - (I+it(DB_{0}+\sigma N))^{-1}\big)f_{0} \\+ B_{0}\big((I+it(DB_{0}+\sigma N))^{-1}- (I+itDB_{0})^{-1}\big)f_{0}\\
+\big((I+itB_{0}D)^{-1}-(I+it(B_{0}D-\sigma N))^{-1}\big)B_{0}f_{0} \\+ \big((I+it(B_{0}D-\sigma N))^{-1}- e^{-t|B_{0}D-\sigma N|}\big)B_{0}f_{0}.
\end{multline*}
Square function (that is, $\mY^*$) estimates hold for the first and fourth terms, whereas the second and third terms 
have $L_{2}$ norms bounded by $Ct$. Hence
$ \chi_{t<1} (B_{0}e^{-t\Lambda}- e^{-t\tilde\Lambda}B_{0}) \in \mY^*$.

For the  term $e^{-t\tilde \Lambda}  (B_{0}f_{0})$ we proceed as in the proof 
of Theorem~\ref{thm:aecv}, modified as in Remark~\ref{rem:aecvv}.

To complete  the proof, we now assume that $A$ is pointwise elliptic. Up to rescaling, we have to prove convergence of Whitney averages of  the conormal gradient $f$ of $u$. To see this, write  $f=B_{0}^{-1}(B_{0}f)$ using that $B_{0}$ is now invertible in $L_{\infty}(S^n; \mL(\V))$, seen as radial coefficients on $\bO^{1+n}$. Now the same argument as above replacing $B_{t}$ by $B_{0}$ shows that the Whitney averages of $B_{0}f$ converge  in $L_{p}$ sense to $B_{0}f_{0}$ almost everywhere for any $p<2$.
We claim that the notion of convergence in $L_{p}$-sense of Whitney averages is stable when $p<2$ under multiplication by bounded radially independent coefficients. Assume that $h$ has such a convergence property and let $M\in L_{\infty}(S^n; \mL(\V))$. Select smooth sections $h_{x_{0}}$ and $M_{x_{0}}$ with $h_{x_{0}}(x_{0})=h(x_{0})$ and $M_{x_{0}}(x_{0})=M(x_{0})$. Then take $L_{p}(W(t_{0},x_{0})$ average of 
 $$M(y)h(\by) - M_{x_{0}}(y)h_{x_{0}}(y)=(M(y)-M_{x_{0}}(y) )h(\by)+ M_{x_{0}}(y)(h(\by)-h_{x_{0}}(y))$$
 with $ \by= e^{-t}y \in W(t_{0},x_{0})$/  For the second term, one uses the assumption  on $h$ and that $M_{x_{0}}$ is bounded. For the first term, use H\"older inequality with exponents $1/p=1/r+1/q$ and $p<r<2$. The  exponent $q$ falls on  $M(y)-M_{x_{0}}(y)$ and Lebesgue convergence theorem applies (this is a further almost everywhere constraint on $x_{0}$). The exponent $r$ falls on $h$ which has uniform control by assumption. 
\end{proof}

%
%
%
%
%
\section{Fredholm theory for $(I-S_A)^{-1}$}       \label{sec:fredholm}

We saw in Section~\ref{sec:representation} that the invertibility of $I-S_{A}$ on $\mX$ (resp. $\mY$) allows to represent $\mX^o$  (resp. $\mY^o$) solutions through Cauchy type extensions
$$
  f= (I-S_A)^{-1} e^{-t\Lambda}E_0^+ f_0
$$
(resp. $f= (I-S_A)^{-1} De^{-t\tilde \Lambda}\tE_0^+ v_0)$).
Working in the space $\mX$ or $\mY$, it is clear from Theorem~\ref{thm:XYbounded} that $I-S_A$ is invertible 
provided $\|\E\|_*$ is small enough. 
In this section, we use Fredholm operator theory to relax this condition and 
show that it suffices to assume this smallness only near the boundary $t=0$. Our discussion  in this section is limited to the specific but relevant case where $\sigma=\frac{n-1}{2}$.

\begin{thm}   \label{thm:SAFredholm} 
  Assume that $\|\E\|_*<\infty$, so that $S_A$ is bounded on $\mX$ and $\mY$.  There exists $\epsilon>0$ such that if $\E$ satisfies the small Carleson condition
\begin{equation}  \label{eq:smallCarls}
\lim_{\tau\to 0} \|\chi_{t<\tau}\E\|_*<\epsilon,
\end{equation}
  then $I-S_A$ is invertible on $\mX$ and $\mY$.
\end{thm}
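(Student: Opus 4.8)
The plan is to deduce invertibility of $I-S_A$ from a Fredholm argument: show that $S_A$ is a compact perturbation of an operator that is small in norm, so that $I-S_A$ is Fredholm of index zero, and then show injectivity directly from the uniqueness part of the \textit{a priori} representation theorems (Theorem~\ref{thm:inteqforNeu}, Theorem~\ref{thm:inteqforDir}). More precisely, fix a threshold $\tau$ and split $\E= \chi_{t<\tau}\E + \chi_{t\ge\tau}\E =: \E^{\text{near}}+\E^{\text{far}}$, which induces a splitting $S_A = S_A^{\text{near}} + S_A^{\text{far}}$ where $S_A^{\text{near}}$ is built from $\E^{\text{near}}$ and $S_A^{\text{far}}$ from $\E^{\text{far}}$ (here one uses that the operator $S_A$ depends linearly on the multiplier $\E$ through the formulas in Theorem~\ref{thm:XYbounded} and Lemma~\ref{lem:SAdecomp}). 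By Theorem~\ref{thm:XYbounded}, $\|S_A^{\text{near}}\|_{\mX\to\mX}, \|S_A^{\text{near}}\|_{\mY\to\mY}\lesssim \|\E^{\text{near}}\|_* = \|\chi_{t<\tau}\E\|_*$, which by hypothesis \eqref{eq:smallCarls} can be made $<1/2$ (say) by choosing $\tau$ small; then $I - S_A^{\text{near}}$ is invertible by Neumann series. So it suffices to show that $S_A^{\text{far}}$ is \emph{compact} on $\mX$ and on $\mY$: then $I - S_A = (I-S_A^{\text{near}}) - S_A^{\text{far}} = (I-S_A^{\text{near}})(I - (I-S_A^{\text{near}})^{-1}S_A^{\text{far}})$ is a Fredholm operator of index zero, being the product of an invertible operator and an identity-plus-compact.

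The heart of the matter is therefore the compactness of $S_A^{\text{far}}$. The coefficient $\E^{\text{far}}=\chi_{t\ge\tau}\E$ is supported away from the boundary $t=0$, and there it is merely bounded ($\|\E\|_\infty<\infty$) with no Carleson smallness needed. Compactness should follow by combining three ingredients: (1) the decay of the semigroups $e^{-t\Lambda}$, $e^{-t\tilde\Lambda}$ away from $t$ near $0$ and the rapid decay estimates from Theorem~\ref{thm:NT}, Theorem~\ref{thm:NTtilde}, which confine the relevant range of $t$ effectively to a compact interval $[\tau,\infty)$ with exponential tails; (2) the compactness of the inverse of $D_0$ on $\mH$ (Proposition~\ref{prop:DBprops}), equivalently the Rellich embedding $W^1_2(S^n)\hookrightarrow L_2(S^n)$, which smooths in the spatial variable; and (3) a truncation/approximation argument in $t$. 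Concretely, I would approximate $S_A^{\text{far}}$ by operators $S_A^{\tau,R}$ obtained by further cutting off $\chi_{t\le R}$ and replacing $\Lambda$, $\tilde\Lambda$ by $\Lambda(I+\delta\Lambda)^{-1}$ etc.\ (or by composing with $(I+\delta D_0^2)^{-1}$, which is compact on $\mH$), show each $S_A^{\tau,R}$ is compact because it factors through a compact resolvent and an integral over a bounded $t$-interval with continuous kernel in the uniform operator topology, and show $S_A^{\tau,R}\to S_A^{\text{far}}$ in operator norm on $\mX$ and $\mY$ using the quantitative bounds from Theorem~\ref{thm:XYbounded} applied to the ``tail'' multipliers $\chi_{t>R}\E$ and $\E - \E(I+\delta D_0^2)^{-1}\cdots$, together with the square-function/Schur estimates in the proof of Theorem~\ref{thm:XYbounded} and Lemma~\ref{lem:NTofSA}. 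The main obstacle I anticipate is precisely making this approximation uniform in the \emph{Banach} space $\mX$ (and $\mY$, and $\mY_\delta$ when $n=1$): unlike the $L_2$-type spaces, $\mX$ involves the non-tangential maximal function $\tN$, so one must show the approximating compact operators converge in $\mL(\mX)$, which requires control of $\tN(S_A^{\text{far}}f - S_A^{\tau,R}f)$; here Lemma~\ref{lem:NTofSA} and Remark~\ref{rem:NTofSA}, with their localized (truncated) versions, are the key tools, applied to the tail multipliers.

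Once Fredholmness of index zero is established, injectivity finishes the proof. For $\mX$: if $(I-S_A)f=0$ with $f\in\mX$, then $f_t = S_A f_t$, so by the converse direction of Theorem~\ref{thm:inteqforNeu} (the case $h^+=0$) $f$ is the conormal gradient of an $\mX^o$-solution $u$ with trace $g_1$ satisfying $g_1 = 0$ (since $h^+=0$ forces $f_0 = h^- = -\int_0^\infty e^{-s\Lambda}E_0^- D\E_s f_s\,ds$ and then the estimate $\|h^+\|_2\approx\|g_1\|_2$ together with $h^+=0$, or more directly uniqueness in the class, gives $f\equiv 0$); one must check the uniqueness statement really yields $f=0$ and not merely $u$ constant, which is where the precise bookkeeping of $E_0^\pm$ and the estimate $\max(\|h^+\|_2,\|h^-\|_2)\approx\|f_0\|_2$ is used. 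For $\mY$: similarly, $(I-S_A)f=0$ with $f\in\mY$ gives via Theorem~\ref{thm:inteqforDir} (case $\tilde h^+ \in \tE_0^+\mH^\perp$, i.e.\ $\tilde h^+=0$ modulo $\mH^\perp$) that $f = Dv$ for a potential $v$ with $v_0 \in \tE_0^-L_2$ and $\pd_t v + \tD_0 v=0$, forcing $v=e^{-t\tilde\Lambda}(\tilde h^- )$-type decay at $t=0$ but also square-integrable growth, whence $v\equiv 0$ and $f\equiv 0$. I would present injectivity cleanly by invoking Theorem~\ref{thm:inteqforNeu}/\ref{thm:inteqforDir} in the contrapositive and noting that a nonzero kernel element would produce a nonzero $\mX^o$- (resp.\ $\mY^o$-) solution with vanishing boundary data, contradicting the estimate $\|f\|_{\mX}\lesssim\|h^+\|_2$ (resp.\ $\|f\|_{\mY}\lesssim\|\tilde h^+\|_{L_2/\mH^\perp}$) — but actually that estimate presupposes invertibility, so instead I would argue: $(I-S_A)f=0$, $h^+ = 0$ (resp.\ $\tilde h^+ \in \mH^\perp$), hence by the limit identities \eqref{eq:neuavlim} (resp.\ \eqref{eq:dirlimits}) and the uniqueness already known for the underlying ODE with zero data and the decay from the reverse-Hölder Proposition~\ref{prop:reverseholder}, $f=0$. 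Since index zero plus injective implies surjective, $I-S_A$ is invertible on $\mX$ and on $\mY$, which is the claim.
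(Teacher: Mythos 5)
Your first step (splitting $\E=\chi_{t<\tau}\E+\chi_{t\ge\tau}\E$ and absorbing the near part by a Neumann series, using the bound $\|S_A\|\lesssim\|\chi_{t<\tau}\E\|_*$ from Theorem~\ref{thm:XYbounded}) is sound and parallels the paper. The pivotal claim, however --- that the far part $S_A^{\text{far}}$ built from $\chi_{t\ge\tau}\E$ is \emph{compact} on $\mX$ and $\mY$ --- is unjustified and almost certainly false. Even with $s,t\ge\tau$, the operator retains the diagonal ``maximal regularity'' piece $f\mapsto\int_0^t\Lambda e^{-(t-s)\Lambda}\hE_0^+\E_s f_s\,ds$, and there is no smoothing or decay near the diagonal $s=t$: taking $f_s=\phi(s)u_k$ with $u_k$ spectral pieces of $\Lambda$ of frequency $\lambda_k\to\infty$, one has $\int_0^t\lambda_k e^{-(t-s)\lambda_k}\phi(s)\,ds\to\phi(t)$, so a weakly null sequence is not sent to a norm-null one; boundedness of the far coefficients cannot produce compactness, and the Rellich-type compactness of $D^{-1}$ on $\mH$ only helps when $t$ and $s$ are separated. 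This is precisely why the paper does not argue this way: away from the boundary it proves instead a direct \emph{lower bound} for $I-S_A$ on functions vanishing for $t<\tau$ (Lemma~\ref{lem:L2lowerbound}), by translating back to the divergence-form equation and using the accretivity of $A$ in an energy/Caccioppoli argument on $\bO^{1+n}_\tau$; compactness enters only through the commutator $[\eta_0,S_A]$, where the Lipschitz cutoff supplies the factor $|\eta_0(t)-\eta_0(s)|\lesssim|t-s|$ that tames the diagonal in the Schur estimate (Lemma~\ref{lem:compactcomm}), and through the off-diagonal truncations $\chi_0 S_A\chi_\infty$ and $\chi_\infty S_A\chi_0$ (Lemma~\ref{lem:offdiagcomm}). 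Without a proof of compactness of the far part, your Fredholm step collapses, and no amount of approximation by resolvent-regularized operators will repair it, because the obstruction sits on the diagonal, not in the tails.

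The injectivity step also has a genuine gap. You rightly notice that the estimates $\|f\|_\mX\lesssim\|h^+\|_2$ presuppose invertibility, but your fallback (``uniqueness already known for the underlying ODE with zero data'' plus Proposition~\ref{prop:reverseholder}) does not exist: from $(I-S_A)f=0$ one only learns $h^+=0$, i.e.\ the trace $f_0=h^-$ lies in $E_0^-\mH$, and nothing proved before Theorem~\ref{thm:SAFredholm} rules out a nonzero such $f$ --- that is exactly the content of Lemma~\ref{lem:injonX}, which requires a real argument: extend $f$ to $t\le 0$ by $e^{t\Lambda}h^-$, verify the extended function solves the ODE across $t=0$, translate to a global $L_2$ solution of the div/curl system on all of $\R^{1+n}$, and kill it by a Caccioppoli/Liouville argument letting $R\to\infty$. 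Moreover, injectivity on $\mY$ is not available by this route at all; the paper instead deduces invertibility on $\mY$ from index zero together with dense range, the latter coming from invertibility on $\mX$ and the dense continuous inclusion $\mX\subset\mY$. So both load-bearing steps of your outline need to be replaced by arguments of the type in Lemmas~\ref{lem:L2lowerbound}, \ref{lem:compactcomm}, \ref{lem:offdiagcomm} and \ref{lem:injonX}.
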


We remark that (\ref{eq:smallCarls}) is equivalent to the small Carleson condition 
(\ref{eq:smalllimCarleson}).
The proof of Theorem~\ref{thm:SAFredholm} requires the following lemmas.

\begin{lem}   \label{lem:injonX}
   Assume $\|\E\|_*<\infty$. 
   Then $I-S_A$ is  injective on $\mX$.
\end{lem}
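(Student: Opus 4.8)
The plan is to show that if $f\in\mX$ satisfies $f=S_Af$, then $f=0$, by exploiting that such an $f$ is the conormal gradient of an $\mX^o$-solution whose boundary trace vanishes, and then using a uniqueness argument together with the reverse Hölder / Caccioppoli machinery already available. First I would invoke Theorem~\ref{thm:inteqforNeu}: the equation $f=S_Af$ is exactly \eqref{eq:ODEonintegral} with $h^+=0$, so $f\in L_2^\loc(\R_+;\mH)$ solves the ODE $\pd_t f+(DB+\tfrac{n-1}2 N)f=0$, and moreover the boundary limit formula \eqref{eq:neuavlim} holds with $f_0=h^++h^-=h^-\in E_0^-\mH$. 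But the representation also forces $f_0=h^+ + h^-$ with $h^+=0$, and reading off the estimate $\max(\|h^+\|_2,\|h^-\|_2)\approx\|f_0\|_2$ does not immediately give $f_0=0$; the point is rather that $h^-$ is simultaneously in $E_0^-\mH$ and, being the trace of a genuine $\mH$-valued solution on $\R_+$, must be compatible with decay at $t\to\infty$. So the cleaner route is: from $h^+=0$ and $f=S_Af=(I-S_A)f + \cdots$, deduce $f_t=e^{-t\Lambda}f_0 + S_Af_t - e^{-t\Lambda}f_0$, i.e. directly that $f_t = e^{-t\Lambda}h^- + (S_Af_t - e^{-t\Lambda}h^-)$, and feed this into the lower-bound half of the analysis.

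Concretely, I expect the argument to mirror \cite[proof of injectivity, Sec.~8--9]{AA1}: take the inner product of the integral identity against suitable test functions, or rather use the a priori lower bound. Since $f$ is the conormal gradient of a weak solution $u$ with $\|\nabla_\bx u\|_{\mX^o}=\|f\|_\mX<\infty$, Theorem~\ref{thm:inteqforNeuandg} gives a boundary trace $g_1=(B_0f_0)_\no\rad+(f_0)_\ta$ with $g_1\in L_2(S^n;\C^{(1+n)m})$, and $h^+=0$. Now I would show $g_1=0$: the map $f_0\mapsto h^+$ sends $f_0 = h^++h^-$ to its $E_0^+\mH$-component, and the key structural fact (from Lemma~\ref{lem:spectralsplits} and the spectral splitting $\mH=E_0^+\mH\oplus E_0^-\mH$) together with the growth/decay constraint is that an $\mH$-valued solution of the ODE on all of $\R_+$ lying in $\mX$ is uniquely determined by $h^+=\chi^+(D_0)f_0$; indeed the $S_A$-correction is ``lower order''. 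The rigorous statement is precisely the invertibility-free part of Theorem~\ref{thm:inteqforNeu}: rearranging $f=S_Af$ and applying the same manipulations as in part (iii) of \cite[Thm.~8.2]{AA1} yields $\|f_0\|_2\lesssim \|h^+\|_2 + (\text{terms involving }\|\E\|_*\|f\|_\mX)$ — but this alone is not enough unless $\|\E\|_*$ is small, which we are not assuming.

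Therefore the main obstacle, and the heart of this lemma, is getting injectivity \emph{without} smallness of $\|\E\|_*$. The standard device (used in \cite{AA1} and going back to the Kato-type methods) is: suppose $f=S_Af$ with $f\in\mX$, $f\neq 0$. Then $f$ is the conormal gradient of $u$, an $\mX^o$-solution, with trace $g_1$, and $h^+ = 0$. Using $S_A = D\tS_A$ (Lemma~\ref{lem:SAdecomp}), set $v:=\tS_Af$; then $f=Dv$ and, by Lemma~\ref{lem:inttodiff}(ii) adapted to this setting, $v$ solves $(\pd_t+\tD_0)v=\E f$ with an $L_2$-trace $v_0=\tE_0^-$-part only (since the $\tE_0^+$-ingredient corresponds to $h^+=0$). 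One then runs an energy/integration-by-parts identity on $\R_+\times S^n$ — integrating $(\pd_t f + D_0 f, B_0 f)$ or the corresponding bilinear form against $f$ over $t\in(\epsilon,1/\epsilon)$ and using the accretivity of $B_0$ on $\mH$ from \eqref{eq:accrassumption} — and controls the boundary terms at $t=0$ and $t=\infty$ using the decay of $f$ (from $f\in\mX\subset L_2$, Lemma~\ref{lem:XlocL2}, and the reverse Hölder bound Proposition~\ref{prop:reverseholder} giving exponential decay, $f\in\mY_\delta$) and the vanishing of the relevant spectral component at $t=0$. The accretivity then forces $\int_0^\infty\|f_t\|_2^2\,dt=0$, hence $f=0$. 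I expect to cite \cite[Sec.~8]{AA1} for the precise form of this energy identity, noting that the only new ingredient over that reference is the zero-order term $\sigma N$ with $\sigma=\tfrac{n-1}2\ge0$, which contributes a term of the correct sign (since $\re(\sigma Nf,B_0f)$ combines with the accretivity estimate — here one uses $\sigma\ge 0$ and that $N$ interacts with the Hodge splitting as in the proof of Proposition~\ref{prop:spectrum}), so the argument goes through verbatim. The upshot: injectivity of $I-S_A$ on $\mX$ holds under the sole assumption $\|\E\|_*<\infty$.
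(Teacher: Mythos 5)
Your setup is right (with $f=S_Af$ you get $h^+=0$, so the trace $f_0=h^-$ lies in $E_0^-\mH$, and you correctly observe that the perturbative estimate $\|f\|_\mX\lesssim\|\E\|_*\|f\|_\mX$ is useless without smallness), but the argument you then propose — an energy/integration-by-parts identity pairing $(\pd_t f+D_0f, B_0f)$ and invoking accretivity of $B_0$ on $\mH$ — has a genuine gap and is not what is needed. First, $f$ does not solve $\pd_tf+D_0f=0$ but $\pd_tf+D_0f=D\E f$, and this right-hand side is exactly the term that cannot be absorbed without smallness of $\|\E\|_*$; your sketch never says how the energy identity disposes of it. Second, for non-Hermitean $B_0$ the quantity $\re(\pd_tf_t,B_0f_t)$ is not a total derivative of a coercive form (that device works in Proposition~\ref{prop:hermwp} precisely because $B_0^*=NB_0N$ there), and $\re(\sigma Nf,B_0f)$ has no definite sign since $N$ has both eigenvalues $\pm1$; so "accretivity forces $\int_0^\infty\|f_t\|_2^2dt=0$" does not follow. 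Third, the boundary term at $t=0$ does not vanish: $f_0=h^-$ is a general element of $E_0^-\mH$, not zero, and membership in the negative Hardy subspace gives no sign information for expressions like $\re(Nf_0,B_0f_0)$ by itself.

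The paper's proof uses the hypothesis $f_0=h^-\in E_0^-\mH$ in a completely different, non-perturbative way: since $h^-$ generates a decaying solution of the \emph{unperturbed} ODE on $t<0$ (equivalently, a solution in the exterior of the ball with coefficients $A_1$), one extends $f$ to $f^1\in L_2(\R;\mH)$ by $f^1_t:=e^{t\Lambda}h^-$ for $t\le 0$, and checks with a cutoff $\xi_\epsilon$ that the glued function satisfies $\pd_tf^1+(DB^1+\sigma N)f^1=0$ distributionally across $t=0$ (the two one-sided boundary terms are both $(\phi_0,h^-)$ and cancel). Translating back via Proposition~\ref{prop:divformasODE} (extended to all of $\R^{1+n}$ as in Remark~\ref{rem:extension}, using $\sigma=\tfrac{n-1}2$), one obtains $g^1\in L_2(\R^{1+n};\C^{(1+n)m})$ with $\divv_\bx(A^1g^1)=0$ and $\curl_\bx g^1=0$ globally, and then a Caccioppoli/Liouville argument — write $g^1=\nabla_\bx u$ on $\{|\bx|<2R\}$ with a Poincar\'e-normalized $u$, test with $\eta$ equal to $1$ on $\{|\bx|<R\}$, and let $R\to\infty$ using $g^1\in L_2$ — forces $g^1=0$, hence $f=0$. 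This extension-and-Liouville step is the heart of the lemma (it is also why the global $L_2$ bound $\mX\subset L_2(\R_+;L_2)$ from Lemma~\ref{lem:XlocL2} matters), and it is the idea missing from your proposal; your remark that $h^-$ "must be compatible with decay at $t\to\infty$" gestures in the wrong direction, since what is exploited is decay of $e^{t\Lambda}h^-$ as $t\to-\infty$, i.e.\ in the exterior region.
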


\begin{proof}
Assume that $f\in\mX$ satisfies $f=S_Af$.
Lemma~\ref{lem:inttodiff} shows that $f$ has trace $h^-\in E_0^- \mH$. As $\mX\subset L_2(\R_+;L_2)$ and $f$ is valued in $\mH$, we have $f\in L_2(\R_+;\mH)$. Extend $f$ to  $f^1\in L_2(\R;\mH)$, letting
$$
  f^1_t:=
  \begin{cases}
    f_t, & t>0, \\
    e^{t\Lambda} h^-, & t\le 0.
  \end{cases}
$$
To verify that $f^1$ satisfies $\pd_t f^1+ (DB^1+\sigma N) f^1=0$ in $\R\times S^n$ distributional sense,
where $B^1_t:=B_t$ for $t>0$ and $B^1_t= B_0$ for $t\le 0$,
consider a test function $\phi\in C_0^\infty(\R\times S^n;\C^{(1+n)m})$ and let
$\xi_\epsilon(t):= 1- \eta^0(|t|/\epsilon)$, where $\eta^0$ is the function from Lemma~\ref{lem:preints}.
Then
\begin{multline*}
  \int_\R ((-\pd_t+ (B^1)^* D+\sigma N)\phi, f^1) dt \\
  = \int_\R \Big( ((-\pd_t+ (B^1)^* D+\sigma N)((1-\xi_\epsilon) \phi), f^1) 
 + ((-\pd_t+ (B^1)^* D+\sigma N)(\xi_\epsilon \phi), f^1) \Big) dt \\
 = 0+  \int_\R \xi_\epsilon((-\pd_t+ (B^1)^* D+\sigma N)\phi, f^1) dt 
 + \epsilon^{-1}\int_\epsilon^{2\epsilon} (\phi_t, f^1_t) dt- \epsilon^{-1}\int_{-2\epsilon}^{-\epsilon} (\phi_t, f^1_t) dt \\
 \to 0 + (\phi_0, h^-)- (\phi_0, h^-)=0,
\end{multline*}
with $\phi_0(x):= \phi(0,x)$,
using that the equation holds both in $\R_+$ and $\R_-$.
Hence $\pd_t f^1+ (DB^1+\sigma N) f^1=0$ in all $\R\times S^n$.
Since $\sigma=\frac{n-1}{2}$, extending Proposition~\ref{prop:divformasODE} from $\bO^{1+n}$ to all $\R^{1+n}$ (see Remark \ref{rem:extension}), we see that
$f^1$ corresponds to a function $g^1\in L_2(\R^{1+n};\C^{(1+n)m})$ solving $\divv_\bx(A^1 g^1)=0,\curl_\bx g^1=0$
in all $\R^{1+n}$, with $A^1$ corresponding to $B^1$. To verify that this forces $g^1$, and therefore $f^1$ and $f$, to vanish, note that for any fixed $R>0$ we can find 
$u$ such that $g^1=\nabla_\bx u$, where 
$\int_{|\bx|<2R} |u|^2 d\bx\lesssim R^2\int_{|\bx|<2R}|g^1|^2 d\bx$
by Poincar\'e's inequality and the implicit constant is independent of $R$. 
Take a 
test function $\eta\in C_0^\infty(|\bx|<2R)$ with $\eta=1$ on $|\bx|<R$ with $|\nabla_\bx \eta|\lesssim R^{-1}$,
and use that $\divv_\bx (A^1g^1)=0$ in the distributional sense to get
\begin{multline*} 
  \int_{|\bx|<R}|g^1|^2 d\bx \lesssim  \re\int (A^1g^1,\nabla_\bx u) \eta d\bx 
  =  -\re\int (A^1g^1,\nabla_\bx \eta) u d\bx \\
  \lesssim \left(\int_{R<|\bx|<2R} |g^1|^2 d\bx\right)^{1/2}
  \left( \int_{|\bx|<2R}|g^1|^2 d\bx \right)^{1/2}
  \lesssim \left(\int_{R<|\bx|<2R} |g^1|^2 d\bx\right)^{1/2} \|g^1\|_2.
\end{multline*}
Letting $R\to \infty$ this shows that $g^1=0$, which proves the lemma.
\end{proof}

\begin{lem}    \label{lem:L2lowerbound}
  Assume $\|\E\|_*<\infty$ and fix $\tau>0$.
  Then there are lower bounds 
$$
  \|f\|_{L_2(\tau,\infty;\mH)} \lesssim \|(I-S_A) f\|_{L_2(\tau/2,\infty; \mH)},
$$
where the implicit constant depends on $\tau$,
for all $f\in L_2(\R_+;\mH)$ such that $f_t=0$ for $t<\tau$.
\end{lem}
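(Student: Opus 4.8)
The plan is to argue by contradiction, reducing the estimate to the injectivity already established in Lemma~\ref{lem:injonX}. Fix $\tau$ and set $V_\tau:=\{h\in L_2(\R_+;\mH): h_t=0 \text{ for } t<\tau\}$, a closed subspace of $L_2(\R_+;\mH)$. If the claimed bound failed there would be $f^{(k)}\in V_\tau$ with $\|f^{(k)}\|_{L_2(\tau,\infty;\mH)}=1$ and $g^{(k)}:=(I-S_A)f^{(k)}\to 0$ in $L_2(\tau/2,\infty;\mH)$. Passing to a subsequence, $f^{(k)}\rightharpoonup f$ weakly in $L_2(\R_+;\mH)$, and $f\in V_\tau$. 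The goal is to upgrade this to norm convergence: granting $f^{(k)}\to f$ in $L_2$, one gets $\|f\|_{L_2(\tau,\infty;\mH)}=1$, while $(I-S_A)f$ vanishes on $(\tau/2,\infty)$ and hence identically (for $h\in V_\tau$ one has $(I-S_A)h=-S_Ah$ on $(0,\tau)$, and $S_A$ is continuous), so $f\in L_2(\R_+;\mH)$ satisfies $f=S_Af$. The proof of Lemma~\ref{lem:injonX} uses only that $f$ lies in $L_2(\R_+;\mH)$ with values in $\mH$, is extended by $e^{t\Lambda}h^-$ across $t=0$ to a global solution of a divergence form system on $\R^{1+n}$, and is forced to vanish by Caccioppoli and Poincar\'e on growing balls; hence $f=0$, contradicting $\|f\|_{L_2(\tau,\infty;\mH)}=1$.

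It therefore suffices to prove that $S_A$ is compact as a map $V_\tau\to L_2(\R_+;\mH)$, since then $f^{(k)}=g^{(k)}+S_Af^{(k)}$ with $g^{(k)}\to 0$ and $S_Af^{(k)}\to S_Af$ in norm. For the compactness, note first that for $h\in V_\tau$ only the values $\E_s$ with $s\ge\tau$ enter $S_Ah$ (the forward integral $\int_0^t$ over $s\in[\tau,t]$, the backward integral over $s\in[\max(t,\tau),\infty)$), so $S_A|_{V_\tau}$ is unchanged if $\E$ is replaced by $\E\chi_{s\ge\tau}$. Split $\E\chi_{s\ge\tau}=\E\chi_{[\tau,R]}+\E\chi_{(R,\infty)}$ and write $S_A|_{V_\tau}=T_R+U_R$ accordingly. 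For $T_R$ the integrand is supported in $s\in[\tau,R]$; combining the exponential off-diagonal decay of $e^{-(t-s)\Lambda}E_0^\pm D$ (equivalently of $e^{-(t-s)\tilde\Lambda}\tE_0^\pm$ via $S_A=D\tS_A$ and Lemma~\ref{lem:SAdecomp}) in the $S^n$-variable at scale $|t-s|$, the compactness of $D^{-1}$ on $\mH$ furnished by Rellich's theorem exactly as in Proposition~\ref{prop:DBprops}, and the decay of $T_Rh_t$ for $t$ outside a fixed neighbourhood of $[\tau,R]$, a Fr\'echet--Kolmogorov (equicontinuity plus tightness) argument shows $T_R:V_\tau\to L_2(\R_+;\mH)$ is compact.

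It remains to control $U_R$, which depends only on $h|_{(R,\infty)}$. Using the Schur-type decay bounds $\|e^{-(t-s)\Lambda}\Lambda\hE_0^+\|\lesssim (t-s)^{-1}$ for the forward part and $\|e^{-(s-t)\Lambda}\hE_0^-\|\lesssim 1$ for the backward part (from the $S^o_{\nu,\sigma}$-functional calculus of $\Lambda$ and Lemma~\ref{lem:offdiagonal}), one shows $\|U_R\|_{V_\tau\to L_2(\R_+;\mH)}$ stays bounded and, more importantly, that in the contradiction argument $\|f^{(k)}\|_{L_2(R,\infty;\mH)}\to 0$ as $R\to\infty$ uniformly in $k$: restrict $f^{(k)}=g^{(k)}+S_Af^{(k)}$ to $(R,\infty)$, estimate the backward contribution of $S_Af^{(k)}$ there by $C\|f^{(k)}\|_{L_2(R,\infty)}$ and the forward contribution by a $t$-convolution of $\|f^{(k)}_\cdot\|_2$ against an $L^1_{\loc}$ kernel, and absorb once $R$ is chosen so that the relevant kernel mass on $(R,\infty)$ is small; the decay of $g^{(k)}|_{(R,\infty)}$ closes the Gr\"onwall-type loop. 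Combining, $S_A|_{V_\tau}=T_R+U_R$ with $T_R$ compact and $U_Rf^{(k)}$ negligible for $R$ large, which yields a norm-convergent subsequence and completes the contradiction.

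The main obstacle is precisely this last tail analysis at $t=\infty$. When $f$ is the conormal gradient of a solution one has the reverse-H\"older/exponential-decay estimate of Proposition~\ref{prop:reverseholder} at one's disposal, but here $f^{(k)}$ is an arbitrary element of $V_\tau$, so the decay of $\|f^{(k)}_t\|_2$ as $t\to\infty$ must be extracted from the equation $(I-S_A)f^{(k)}=g^{(k)}$ itself, uniformly in $k$; this is the delicate point, and it is also where the slightly larger interval $(\tau/2,\infty)$ on the right-hand side is genuinely used, since $g^{(k)}|_{(\tau/2,\tau)}$ encodes (through $g_t=-e^{-(\tau-t)\Lambda}S_Af_\tau$ for $t<\tau$) the "trace'' of $f^{(k)}$ at $t=\tau$ that pins down the solution and prevents it from escaping to infinity.
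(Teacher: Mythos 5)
Your strategy (contradiction, plus compactness of $S_A$ restricted to functions vanishing for $t<\tau$, plus the injectivity of Lemma~\ref{lem:injonX}) is not the paper's route, and it contains gaps that I do not see how to close. The central one is the claimed compactness of the truncated piece $T_R$. For $f$ supported in $t>\tau$ the integrals defining $S_Af_t$ at times $t>\tau$ still contain the full near-diagonal region $s\approx t$, where $\Lambda e^{-(t-s)\Lambda}$ is in no sense smoothing uniformly; composed with multiplication by $\E_s$, which away from the boundary is merely an arbitrary bounded measurable function (the Carleson condition constrains $\E$ only for small $t$), this part of $S_A$ behaves like a singular integral in $t$ of Calder\'on--Zygmund type and is bounded but not compact. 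Indeed, the paper only ever proves compactness of genuinely off-diagonal pieces $\chi_0 S_A\chi_\infty$ and of commutators $[\eta,S_A]$ (Lemmas~\ref{lem:offdiagcomm} and \ref{lem:compactcomm}); if $S_A$ restricted to $\{f_t=0,\ t<\tau\}$ were compact, the whole Fredholm analysis of Section~\ref{sec:fredholm} could be short-circuited. Two further steps are also unsound as written: from $(I-S_A)f^{(k)}\to 0$ in $L_2(\tau/2,\infty)$ you may only conclude that the weak limit satisfies $(I-S_A)f=0$ on $(\tau/2,\infty)$, not identically, so Lemma~\ref{lem:injonX} does not apply to $f$ as you invoke it; and your tail estimate at $t=\infty$ has no genuine smallness parameter, since the forward contribution to $\|f^{(k)}\|_{L_2(R,\infty)}$ coming from $f^{(k)}$ on $(\tau,R)$ is of size $\|\E\|_\infty\,O(1)$, which cannot be absorbed by choosing $R$ large; the decay of Proposition~\ref{prop:reverseholder} is not available because your $f^{(k)}$ are not conormal gradients of solutions.

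The paper's proof is direct and quantitative, with no compactness and no contradiction. Setting $f^0:=(I-S_A)f$, Lemma~\ref{lem:inttodiff} gives $(\pd_t+DB_0+\sigma N)f^0=(\pd_t+DB+\sigma N)f$, which by Proposition~\ref{prop:divformasODE} translates into $\divv_\bx(A_1g^0)=\divv_\bx(Ag)$ and $\curl_\bx g^0=\curl_\bx g$ in $\bO^{1+n}$, where $g$ is supported in the smaller ball $\{|\bx|<e^{-\tau}\}$ because $f_t=0$ for $t<\tau$. One then writes $g-g^0=\nabla_\bx u$ on $\{|\bx|<e^{-\tau/2}\}$ with $\|u\|_2\lesssim\|g-g^0\|_2$, tests the divergence identity against $\eta u$ for a cutoff $\eta$ equal to $1$ on the support of $g$, and uses the accretivity \eqref{eq:accrassumption} of $A$ on each sphere to absorb, obtaining $\|g\|_{L_2(|\bx|<e^{-\tau})}\lesssim\|g^0\|_{L_2(|\bx|<e^{-\tau/2})}$, which is exactly the stated bound after applying \eqref{eq:dfl2iso}. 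This is also where the interval $(\tau/2,\infty)$ on the right-hand side enters: the cutoff must live in the slightly larger ball. I would encourage you to redo the proof along these lines; the key idea you are missing is to exploit the support condition on $f$ through the PDE in the ball together with the G\aa rding inequality, rather than trying to manufacture compactness for $S_A$.
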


\begin{proof}
  By Lemma~\ref{lem:inttodiff}, $f$ and $f^0:= (I-S_A)f$ satisfy
 $(\pd_t + DB_0 +\sigma N)f^0= (\pd_t + DB +\sigma N)f$.
 As in Proposition~\ref{prop:divformasODE} combined with Proposition~\ref{prop:CR},
 this can be translated to
$$
\begin{cases}
  \divv_\bx(A_1 g^0)= \divv_\bx(Ag),\\
  \curl_\bx g^0 = \curl_\bx g,
\end{cases}
$$
in $\bO^{1+n}$ distributional sense, where
$g^0_r= r^{-(n+1)/2}( (B_0 f^0_t)_\no \rad+  (f^0_t)\ta)$ and $g_r= r^{-(n+1)/2}( (B f_t)_\no \rad+  (f_t)\ta)$.
Write $\bO^{1+n}_\tau:= \{|\bx |<e^{-\tau}\}$, so that $\bO^{1+n}_\tau\subset \bO^{1+n}_{\tau/2}$.
In particular, the last equation implies that there is a potential $u:\bO_{\tau/2}^{1+n}\to \C^m$
such that
$$
  g-g^0= \nabla_\bx u \qquad \text{in } \bO^{1+n}_{\tau/2},
$$
and we may choose $u$ so that $\|u\|_{L_2(\bO^{1+n}_{\tau/2})}\lesssim \|g-g^0\|_{L_2(\bO^{1+n}_{\tau/2})}$.
Fix $\eta\in C_0^\infty(\bO^{1+n})$ such that $\eta|_{\bO^{1+n}_\tau}=1$ and $\supp\eta\subset\bO^{1+n}_{\tau/2}$.
Using the first equation and $\supp g\subset \bO^{1+n}_\tau$ gives
\begin{multline*}
  \re \int (Ag,g-g^0) d\bx 
  = \re \int (Ag,\nabla_\bx(\eta u)) d\bx 
  =\re \int (Ag^0,\nabla_\bx(\eta u)) d\bx  \\
  = \re \int_{\bO^{1+n}_{\tau/2}} \Big(A_1g^0,\eta (g-g^0)+ (\nabla_\bx\eta)u\Big) d\bx 
  \lesssim  \|g^0\|_{L_2(\bO^{1+n}_{\tau/2})}  \|g-g^0\|_{L_2(\bO^{1+n}_{\tau/2})}.
\end{multline*}
Note that $(g_r)_\ta= r^{-(n+1)/2}(f_t)_\ta\in\ran(\nabla_S)$, so that $g_r\in \mH_1$.
The accretivity (\ref{eq:accrassumption}) of $A_r$, for each fixed $r\in(0,1)$, and integration for 
$0<r<e^{-\tau}$ imply that 
\begin{multline*}
  \|g\|^2_{L_2(\bO^{1+n}_{\tau})}\lesssim \re \int_{\bO^{1+n}_{\tau}} (Ag,g) d\bx
    \le \re \int_{\bO^{1+n}_{\tau}} (Ag,g-g^0) d\bx \\
    + \|g\|_{L_2(\bO^{1+n}_{\tau})}\|g^0\|_{L_2(\bO^{1+n}_{\tau})}\lesssim 
  \|g\|_{L_2(\bO^{1+n}_{\tau})}\|g^0\|_{L_2(\bO^{1+n}_{\tau/2})}
  + \|g^0\|^2_{L_2(\bO^{1+n}_{\tau/2})},
\end{multline*}
and hence that $\|g\|_{L_2(\bO^{1+n}_{\tau})}\lesssim \|g^0\|_{L_2(\bO^{1+n}_{\tau/2})}$.
By the isomorphism (\ref{eq:dfl2iso}), this translates to 
$\|f\|_{L_2(\tau,\infty; \mH)}\lesssim \|f^0\|_{L_2(\tau/2,\infty;\mH)}$
and proves the lemma.
\end{proof}

\begin{lem}    \label{lem:compactcomm}
Assume $\|\E\|_*<\infty$.
Let $\eta:\R_+\to \R$ be a Lipschitz function, that is $|\eta(t)-\eta(s)|\le C|t-s|$ for all $t,s>0$.
Then the commutator
$$
  [\eta, S_A]= \eta S_A-S_A \eta
$$
is a compact operator on $L_2(\R_+, dt;L_2)$.
\end{lem}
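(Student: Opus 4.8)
The plan is to exploit the kernel structure of $S_A$ established in Theorem~\ref{thm:XYbounded} and Lemma~\ref{lem:SAdecomp}, together with the exponential off-diagonal decay of $e^{-|t-s|\Lambda}$ and the fact that $\E$ is multiplication by an $L_\infty$ function, to show that $[\eta,S_A]$ is an integral operator whose ``kernel'' gains extra decay from the Lipschitz difference $\eta(t)-\eta(s)$. Concretely, writing $S_Af_t=\int_0^\infty K(t,s)\E_sf_s\,ds$ where $K(t,s)=\chi_{s<t}e^{-(t-s)\Lambda}E_0^+D-\chi_{s>t}e^{-(s-t)\Lambda}E_0^-D$ (interpreted via $\hS_A,\vS_A$ so that $e^{-|t-s|\Lambda}E_0^\pm D$ is genuinely bounded on $L_2$ with operator norm $\lesssim |t-s|^{-1}$ and with exponential decay $\lesssim |t-s|^{-1}e^{-c|t-s|}$ coming from the lower bound on $D_0|_\mH$ in Proposition~\ref{prop:DBprops}), we get
$$
[\eta,S_A]f_t=\int_0^\infty (\eta(t)-\eta(s))K(t,s)\E_sf_s\,ds,
$$
and the factor $|\eta(t)-\eta(s)|\le C|t-s|$ cancels the $|t-s|^{-1}$ singularity, leaving a kernel $L(t,s):=(\eta(t)-\eta(s))K(t,s)\E_s$ with $\|L(t,s)\|_{\mL(L_2)}\lesssim \|\E\|_\infty\, e^{-c|t-s|}$. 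This already shows $[\eta,S_A]$ is bounded on $L_2(\R_+,dt;L_2)$ by a Schur estimate; the point is to upgrade boundedness to compactness.

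First I would handle compactness by approximation. The operator with kernel $L(t,s)$ is an integral operator on $L_2(\R_+,dt;L_2(S^n;\V))$ with an exponentially decaying operator-valued kernel; I would truncate it to the region $\{a<t,s<b,\ |t-s|>\delta\}$, which throws away a piece of operator norm $\lesssim e^{-ca}+e^{-c/b}+\delta\|\E\|_\infty$ (the last from the Schur bound on the near-diagonal strip, using that the kernel is $\lesssim\|\E\|_\infty$ there after the cancellation — actually on $|t-s|<\delta$ we have $\|L(t,s)\|\lesssim\|\E\|_\infty$ with no singularity, so the strip contributes $\lesssim\delta\|\E\|_\infty$). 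On the compact box $\{a\le t,s\le b\}$ away from the diagonal, $t\mapsto L(t,\cdot)$ is norm-continuous with values in the Hilbert--Schmidt-in-$t,s$ / bounded-in-$L_2(S^n)$ class, and I would approximate $L$ in the Schur norm by a finite sum $\sum_j \chi_{I_j}(t)\chi_{J_j}(s)\,M_j$ with $M_j\in\mL(L_2(S^n;\V))$ and $I_j,J_j$ intervals — each such summand is $\chi_{I_j}\otimes M_j\otimes\chi_{J_j}$, a rank-one-in-$t,s$ operator tensored with a bounded operator on $L_2(S^n)$, hence compact on $L_2(\R_+;L_2(S^n))$ precisely because $\chi_{I_j}(t)\chi_{J_j}(s)$ defines a finite-rank (rank one) Hilbert--Schmidt operator in the $t$-variable. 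A norm-limit of compact operators being compact then gives the claim. The continuity and approximation step rests on the norm-continuity $t\mapsto e^{-(t-s)\Lambda}E_0^\pm D$ on the relevant region, which follows from the holomorphic functional calculus of $D_0$ in Corollary~\ref{cor:fcalc} (analyticity in $t$ for $t$ away from $0$), and on the fact that $\E$ as a multiplication operator, while not norm-continuous in $t$, appears only sandwiched so that the $t$-dependence we must make continuous is that of the smoothing factor $K(t,s)$, not of $\E_s$ — and the $s$-dependence of $\E_s$ is absorbed since $\E_s$ acts as a uniformly bounded multiplier and the $s$-integration is against an $L_2$-in-$s$ weight.

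The main obstacle I anticipate is precisely that $\E_s$ is not continuous (even weakly) in $s$, so one cannot naively approximate $L(t,s)$ in Schur norm by step-functions in both variables as an $\mL(L_2(S^n))$-valued map. The fix, which I would carry out carefully, is to separate the roles: approximate $K(t,s)\E_s$ in the form $\sum_j\chi_{I_j}(t)\,T_j(s)$ where $T_j(s)=K(t_j,s)\E_s$ is still only measurable in $s$, but then observe that for a fixed continuous-in-$t$ family, the operator $f\mapsto\int \chi_{I_j}(t)T_j(s)f_s\,ds=\chi_{I_j}(t)\int T_j(s)f_s\,ds$ factors through the fixed-rank-in-$t$ map $f\mapsto (\int T_j(s)f_s\,ds)\cdot\chi_{I_j}$; this is bounded from $L_2(\R_+;L_2(S^n))$ to a \emph{finite-dimensional-in-}$t$ subspace of the target (spanned by the $\chi_{I_j}$), hence compact, regardless of the $s$-regularity of $T_j$. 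Thus the only place genuine continuity is needed is the $t$-variable of the kernel, where we do have it; the $s$-variable roughness of $\E$ is harmless. Assembling: $[\eta,S_A]=\lim$ (in operator norm) of finite sums of operators with finite-dimensional range in $t$, each compact, so $[\eta,S_A]$ is compact on $L_2(\R_+,dt;L_2(S^n;\V))$. One final bookkeeping remark: the commutator identity and the kernel representation should be justified first on the dense subspace where the defining integrals for $S_A^\epsilon$ converge absolutely and then passed to the limit $\epsilon\to0$ using Theorem~\ref{thm:XYbounded}, which is routine.
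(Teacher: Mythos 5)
Your setup (the kernel representation of $S_A$ via Lemma~\ref{lem:SAdecomp}, the bound $\|e^{-|t-s|\Lambda}E_0^\pm D\|\lesssim |t-s|^{-1}e^{-c|t-s|}$ on $\mH$, the Lipschitz cancellation and the Schur bound) is fine and matches the paper's starting point, but the compactness argument has two genuine gaps. First, the truncation to a compact box $\{a<t,s<b\}$ does \emph{not} have small operator-norm error: your kernel bound $\|L(t,s)\|\lesssim \|\E\|_\infty e^{-c|t-s|}$ decays only in $|t-s|$, not in $t+s$, so the discarded piece supported near the diagonal in $\{t,s>b\}$ has Schur norm bounded below uniformly in $b$ (your claimed error $e^{-ca}+e^{-c/b}$ is not correct). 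In fact a bound of the form $\|L(t,s)\|\lesssim e^{-c|t-s|}$ alone can never give compactness: the operator-valued convolution kernel $\phi(t-s)M$, with $M$ a fixed bounded non-compact operator, satisfies it and defines a Wiener--Hopf type operator that is not compact; so some additional structure must be used (in the intended application $\eta$ is constant for large $t$, so $\eta(t)-\eta(s)$ kills the far region $t,s>\tau$ --- you never use this, and for a general unbounded Lipschitz $\eta$ such as $\eta(t)=t$ the $(t,s)$-part of your kernel is genuinely translation-invariant at infinity). Second, and more fundamentally, the final step ``finite-dimensional-in-$t$ range $\Rightarrow$ compact'' is false: the operator $f\mapsto \chi_{I_j}(t)\int T_j(s)f_s\,ds$ has range $\chi_{I_j}\otimes L_2(S^n;\V)$, which is infinite dimensional; it is compact exactly when the spatial factor $f\mapsto\int T_j(s)f_s\,ds$ is a compact operator into $L_2(S^n;\V)$, and this is precisely what you never establish (``regardless of the $s$-regularity of $T_j$'' it is in general only bounded).

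The missing idea is that the compactness must come from the \emph{spatial} variable, not from decay of the $(t,s)$-kernel. Your argument never uses that the boundary is the compact sphere; run verbatim it would ``prove'' the same statement in the half-space setting of \cite{AA1}, where no such compactness holds --- the whole point of Section~\ref{sec:fredholm} is that Rellich's theorem on $S^n$ makes the resolvent of $\Lambda$ on $\mH$ compact (Proposition~\ref{prop:DBprops}) and thereby makes Fredholm theory available. The paper's proof exploits exactly this: it reduces to $\hS_A$, then to the model operator $F(\Lambda)f_t=\int_0^t(\eta(t)-\eta(s))\Lambda e^{-(t-s)\Lambda}f_s\,ds$ on $L_2(\R_+;\mH)$, proves the symbol bound $\|F(\lambda)\|\lesssim|\lambda|^{-1}$ by Schur estimates together with compactness of the scalar operator $F(\lambda)$ on $L_2(\R_+;\C)$, and then writes $F(\Lambda)$ as a norm-convergent Dunford integral $\frac1{2\pi i}\int_\gamma F(\lambda)(\lambda-\Lambda)^{-1}\,d\lambda$ in which every integrand is compact because $(\lambda-\Lambda)^{-1}$ is compact on $\mH$; a norm limit of compacts is compact. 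To repair your kernel approach you would have to inject this spatial compactness (e.g.\ use that $e^{-u\Lambda}$, $u>0$, is compact on $\mH$, so that the operators $\int T_j(s)(\cdot)_s\,ds$ are compact) and simultaneously use the vanishing of $\eta(t)-\eta(s)$ for large $t,s$ to handle the tail; exponential decay in $|t-s|$ by itself is not enough.
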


\begin{proof} 
Write $S_A= \hS_A-\sigma \vS_A$ as in Theorem~\ref{thm:XYbounded}.
Since $\vS_A=\Lambda^{-1}\hS_A$, except that $\hE_0^\pm$ are replaced by $\vE_0^\pm$,
it is enough to show compactness of $[\eta_0, \hS_A]$.
It suffices to verify that
\begin{equation}    \label{eq:cptcommutator}
 F(\Lambda): f_t\mapsto \int_0^t (\eta(t)-\eta(s))\Lambda e^{-(t-s)\Lambda} f_s ds,\qquad
\end{equation}
is a compact operator on $L_2(\R_+, dt;\mH)$.
(The proof below only depends on the fact that $\Lambda$ has compact resolvents.)
Indeed, by duality this implies that also
$f_t\mapsto \int_t^\infty (\eta(t)-\eta(s))\Lambda e^{-(s-t)\Lambda} f_s ds$
is compact, upon changing $\Lambda$ to $\Lambda^*$. 
Since $\hE^\pm_0 \E$ are bounded $L_2(\R_+;L_2)\to L_2(\R_+;\mH)$ and commute with $\eta$, 
we conclude that $[\eta, \hS_A]$ is compact.

Consider the symbol
$$
   F(\lambda) : f_t \mapsto  \int_0^t (\eta(t)-\eta(s))\lambda e^{-(t-s)\lambda} f_s ds.
$$
To estimate the norm of this integral operator, acting in $L_2(\R_+;\C)$ for fixed $\lambda\in S^o_{\nu,\sigma+}$,
we apply Schur estimates as in \cite[Lem. 6.6]{AA1}.
We need to estimate
$$
  \sup_{t>0} \int_0^t |(\eta(t)-\eta(s))\lambda e^{-(t-s)\lambda}| ds +
  \sup_{s>0} \int_s^\infty |(\eta(t)-\eta(s))\lambda e^{-(t-s)\lambda}| dt.
$$
Using Lipschitz regularity, the first integral has estimate
$$
  \int_0^t (t-s)\lambda_1 e^{-(t-s)\lambda_1} dt= \lambda_1^{-1} \int_0^{t\lambda_1} x e^{-x} dx\lesssim \lambda^{-1},
$$
where $\lambda_1:= \re\lambda\approx |\lambda|$ for $\lambda\in S^o_{\nu,\sigma+}$,
and a similar estimate for the second integral gives the bound
$$
  \|F(\lambda)\|_{L_2(\R_+;\C)\to L_2(\R_+;\C)}\lesssim \lambda^{-1}.
$$
It is also clear that $F(\lambda)$ defines a compact operator on $L_2(\R_+;\C)$ (for example
truncate the kernel and show from the Schur estimates that $F(\lambda)$ is a uniform limit of Hilbert-Schmidt operators).

Consider now the Dunford integral
$$
  F(\Lambda) = \frac 1{2\pi i}\int_{\partial S_{\theta,\sigma+}} F(\lambda) (\lambda- \Lambda)^{-1} d\lambda,\qquad
  \omega<\theta<\nu.
$$
From the compactness of $F(\lambda):L_2(\R_+;\C)\to L_2(\R_+;\C)$, and of $(\lambda-\Lambda)^{-1}:\mH\to \mH$
by Proposition~\ref{prop:DBprops}, we deduce compactess of
$F(\lambda)(\lambda-\Lambda)^{-1}: L_2(\R_+;\mH)\to L_2(\R_+;\mH)$ (for example by approximating 
$(\lambda-\Lambda)^{-1}$ uniformly by finite rank operators).
Since $\|F(\lambda)(\lambda-\Lambda)^{-1}\|\lesssim \lambda^{-2}$, the Dunford integral converges in norm,
at least when $\sigma>0$, and we conclude that $F(\Lambda)$ is a compact operator on 
$L_2(\R_+;\mH)$ (for example, approximate with Riemann sums, using norm continuity of
$\lambda\mapsto F(\lambda)(\lambda-\Lambda)^{-1}$). 
In dimension $n=1$, \textit{i.e.} $\sigma=0$, note that $\lambda=0$ does not belong to the
spectrum of $D_0$ on $\mH$. Hence it is not needed to integrate through $\lambda=0$ in the Dunford
integral, in which case the Dunford integral converges in norm also here.
This proves the lemma.
\end{proof}

\begin{lem}    \label{lem:offdiagcomm}
  Assume $\|\E\|_*<\infty$.
  Let $0<a<b<\infty$ and write $\chi_0:=\chi_{(0,a)}$ and $\chi_\infty:=\chi_{(b,\infty)}$ for the characteristic functions of these intervals.
Then 
$$
  \chi_0 S_A \chi_\infty:\mX\to \mX,\qquad \text{and}\qquad \chi_\infty S_A\chi_0: \mY\to \mY
$$
are compact operators.
\end{lem}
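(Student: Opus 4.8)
\textbf{Proof plan for Lemma~\ref{lem:offdiagcomm}.}
The plan is to exploit the separation of the two intervals $(0,a)$ and $(b,\infty)$ to gain exponential off-diagonal decay in the semigroup factors $e^{-(t-s)\Lambda}$ appearing in $S_A$, combined with the compactness of the resolvents of $\Lambda$ on $\mH$ (Proposition~\ref{prop:DBprops}). First I would record the explicit kernel: for $t<a<b<s$ we have $\chi_0(t)(S_Af)_t\chi_\infty(s) = -\chi_0(t)e^{-(s-t)\Lambda}E_0^-D\E_s f_s\chi_\infty(s)$, because the $(0,t)$-integral in $S_A$ contributes only $s<t$ and so is killed by $\chi_\infty(s)$, while in the $(t,\infty)$-integral only $s>b$ survives. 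Thus $\chi_0 S_A\chi_\infty$ is the integral operator $f\mapsto \big(t\mapsto -\chi_0(t)\int_b^\infty e^{-(s-t)\Lambda}E_0^-D\E_s f_s\,ds\big)$, and similarly $\chi_\infty S_A\chi_0$ involves only the $(0,t)$-integral restricted to $s<a$, giving $f\mapsto\big(t\mapsto \chi_\infty(t)\int_0^a e^{-(t-s)\Lambda}E_0^+D\E_s f_s\,ds\big)$. In both cases $s-t\ge b-a>0$ (resp. $t-s\ge b-a>0$), so the semigroup is evaluated at times bounded below.

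Next I would use Lemma~\ref{lem:SAdecomp} to write $e^{-u\Lambda}E_0^\pm D = \Lambda e^{-u\Lambda}\hE_0^\pm - \sigma e^{-u\Lambda}\vE_0^\pm$, which extends to a bounded operator on $L_2$ for each fixed $u>0$ with operator norm $\lesssim (1+1/u)e^{-cu}$ on $\mH$ using the lower spectral bound of $\Lambda|_\mH$ (from Proposition~\ref{prop:DBprops}, $\Lambda$ has spectrum bounded away from $0$ on $\mH$ when restricted appropriately, and in general $\|\Lambda e^{-u\Lambda}\|+\|e^{-u\Lambda}\|\lesssim 1/u$ with exponential decay for large $u$). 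Since $s-t$ stays in $[b-a,\infty)$ on the support, the operator-valued kernel $(t,s)\mapsto \chi_0(t)e^{-(s-t)\Lambda}E_0^-D\,\chi_\infty(s)$ is, for each pair $(t,s)$, a compact operator on $L_2(S^n;\V)$ (approximate $(\lambda-\Lambda)^{-1}$ by finite-rank operators as in the proof of Lemma~\ref{lem:compactcomm}, and note $e^{-u\Lambda}E_0^-D$ is, via the Dunford integral over $\partial S_{\theta,\sigma+}$, a norm-convergent integral of compact operators), depends norm-continuously on $(t,s)$, and has norm decaying exponentially as $s-t\to\infty$. Multiplication by the bounded multiplicator $\E$ preserves this. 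Therefore $\chi_0 S_A\chi_\infty$ is a norm-limit (truncating $s$ to $[b,R]$ and using the exponential tail estimate together with the $\mX$- and $\mY$-bounds from Theorem~\ref{thm:XYbounded} to control the remainder) of operators with compact, norm-continuous, compactly supported kernels, each of which is compact (e.g. approximable in norm by finite-rank operators built from a finite rank approximation of the operator-valued kernel and a Riemann-sum discretization in $(t,s)$), hence itself compact.

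Finally I would check that this compactness survives on the right spaces $\mX$ and $\mY$ rather than merely on $L_2(\R_+;L_2)$. For $\chi_0 S_A\chi_\infty:\mX\to\mX$: the range of $\chi_0 S_A\chi_\infty$ is supported in $(0,a)$, where the $\mX$-norm is equivalent to the $\tN$-norm plus an $L_2$-norm on a compact set; using the structure $S_A=D\tS_A$ (Theorem~\ref{thm:Cbounded}) and the non-tangential maximal estimates of Lemma~\ref{lem:NTofSA}/Remark~\ref{rem:NTofSA} applied to the truncated piece $\chi_\infty f$, together with Theorem~\ref{thm:XYbounded}, one sees $\chi_0 S_A\chi_\infty$ factors boundedly as $\mX\to \mX\hookrightarrow L_2(\R_+;L_2)\xrightarrow{\text{cpt}}\mX$, the last map being the same operator reinterpreted — more precisely one shows the operator maps $L_2$-bounded sets of $\chi_\infty$-supported inputs into $\mX$-precompact sets, by the exponential decay giving equicontinuity of $\tN$ of the image. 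For $\chi_\infty S_A\chi_0:\mY\to\mY$: the input $\chi_0 f$ lives in $L_2((0,a);L_2)\subset\mY$ boundedly, the output is supported in $(b,\infty)$ where the $\mY$-norm is $L_2(dt;L_2)$, and on $(b,\infty)$ we have the pointwise bound $\|(\chi_\infty S_A\chi_0 f)_t\|_2\lesssim \int_0^a\|e^{-(t-s)\Lambda}E_0^+D\E_s f_s\|_2\,ds\lesssim e^{-c(t-b)}\|f\|_\mY$, which is Hilbert--Schmidt-type decay making the operator a norm-limit of finite-rank operators $\mY\to\mY$. \textbf{The main obstacle} I anticipate is precisely this last point — verifying compactness in the graph norms $\mX$ and $\mY$ rather than in $L_2$ — because the non-tangential maximal function is not a Hilbert-space norm and one cannot directly cite the Dunford/Riemann-sum finite-rank approximation argument; the exponential separation must be leveraged carefully to upgrade $L_2$-precompactness of the image to $\mX$-precompactness, likely via the $L^2_{\mathrm{loc}}$ control of $\tN$ in Lemma~\ref{lem:XlocL2} and an Arzelà--Ascoli-type argument in the $t$-variable.
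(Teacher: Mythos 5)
Your reduction of the two truncated operators to single integrals over the separated region (only the $(t,\infty)$-piece with $s>b$ survives in $\chi_0 S_A\chi_\infty$, only the $(0,t)$-piece with $s<a$ in $\chi_\infty S_A\chi_0$) is correct and is implicitly the starting point of the paper as well. But both of your compactness arguments have genuine gaps. For $\chi_\infty S_A\chi_0:\mY\to\mY$, the claimed pointwise bound $\|(\chi_\infty S_A\chi_0 f)_t\|_2\lesssim e^{-c(t-b)}\|f\|_\mY$ is false, and so is the assertion that ``$\chi_0 f$ lives in $L_2((0,a);L_2)\subset\mY$ boundedly'': near $s=0$ the $\mY$-norm only controls $\int_0^a\|f_s\|_2^2\,s\,ds$, so $\int_0^a\|\E_s f_s\|_2\,ds$ can be infinite for $f\in\mY$ (Cauchy--Schwarz against the weight produces the divergent factor $\int_0^a s^{-1}ds$), and the inclusion between $L_2(ds)$ and the weighted space goes in the unhelpful direction. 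This is exactly why the boundedness of $S_A$ on $\mY$ in Theorem~\ref{thm:XYbounded} needs the quadratic-estimate machinery rather than crude kernel bounds, and why the paper's proof of this half works at the level of the scalar symbol $\int_0^a\lambda e^{-(t-s)\lambda}f_s\,ds$ with the weight $s\,ds$ built into the Hilbert--Schmidt computation, and then transfers compactness to $\Lambda$ by the Dunford-integral/operational-calculus argument already used in Lemma~\ref{lem:compactcomm} (using the compact resolvents of $D_0$ on $\mH$). Your ``Hilbert--Schmidt-type decay'' step collapses precisely at the weight near $s=0$.

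For $\chi_0 S_A\chi_\infty:\mX\to\mX$ you correctly identify that upgrading $L_2$-compactness to $\mX$-compactness is the crux, but you do not resolve it: compactness of the operator-valued kernel values plus Riemann-sum/truncation arguments only yields compactness into $L_2$-type spaces, and the proposed ``equicontinuity of $\tN$ of the image plus Arzel\`a--Ascoli'' is not carried out and is not obviously workable, since $\tN$ is not a Hilbert-space norm and no compactness criterion in $\mX$ is established. The paper avoids this entirely by a factorization through $\mH$: for $t<a$ it splits $\int_b^\infty\Lambda e^{-(s-t)\Lambda}f_s\,ds$ into two terms, each of the form (bounded map $L_2(b,\infty;\mH)\to\mH$ by Cauchy--Schwarz) followed by the compact operator $e^{-\delta\Lambda}$ on $\mH$ (compact because $D_0^{-1}$ is compact on $\mH$, Proposition~\ref{prop:DBprops}) followed by a map into $\mX$ that is bounded either by the non-tangential estimate $\|e^{-t\Lambda}h\|_\mX\approx\|h\|_2$ of Theorem~\ref{thm:NT} or via $\mY^*\subset\mX$ and square function estimates. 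Compactness into $\mX$ then follows from ``bounded composed with compact composed with bounded'' with no precompactness criterion in $\mX$ needed; this extraction of the compact factor $e^{-\delta\Lambda}$, which exploits the separation $s-t\ge b-a$ exactly where you only extract norm decay, is the missing idea in your proposal.
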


\begin{proof}
As in the proof of Lemma~\ref{lem:compactcomm}, we may replace $S_A$ by $\hS_A$
as straightforward modifications of the proof below give the result for $\vS_A$.
(i)
  We claim that the integral operator
$$
  F(\lambda) f_t := \int_0^a\lambda e^{-(t-s)\lambda} f_s ds
$$
is a Hilbert--Schmidt (hence compact) operator $F(\lambda): L_2(0,a; sds)\to L_2(b,\infty; dt)$.
Indeed, a straightforward calculation shows that
$$
  \int_b^\infty\int_0^a | \lambda e^{-(t-s)\lambda}|^2 sds dt\le \tfrac a4e^{-2(b-a)\lambda}.
$$

As in the proof of Lemma~\ref{lem:compactcomm}, it follows by operational calculus that
$$
  L_2(0,a; s ds;\mH)\to L_2(b,\infty; \mH): f_t \mapsto  \int_0^a\Lambda e^{-(t-s)\Lambda} f_s ds
$$
is compact.
Since $\hE_0^- \E$ is bounded on $L_2(0,a; s ds;\mH)$, this proves that 
$\chi_\infty\hS_A\chi_0: \mY\to \mY$ is compact.

(ii)
To prove that $\chi_0 \hS_A \chi_\infty:\mX\to \mX$ is compact, it suffices to show that 
\begin{equation}   \label{eq:offdiagcompact}
  L_2(b,\infty;\mH)\to \mX: f_t\mapsto \chi_0(t) \int_b^\infty \Lambda e^{-(s-t)\Lambda}  f_s ds
\end{equation}
is compact, since $\hE_0^- \E$ is bounded on $L_2(b,\infty;\mH)$.
To prove this, we write for $t<a$, 
\begin{multline*}
    \int_b^\infty \Lambda e^{-(s-t)\Lambda}  f_s ds 
    =  \int_b^\infty \Lambda e^{-(s+t)\Lambda} f_s ds+
     \int_b^\infty (I- e^{-2t\Lambda}) \Lambda e^{-(s-t)\Lambda} f_s ds \\
   = e^{-t\Lambda} e^{-\delta \Lambda} \int_b^\infty \Lambda e^{-(s-\delta)\Lambda} f_s ds \\
   + \left( \sqrt t e^{-(a-t)\Lambda}\frac{I- e^{-2t\Lambda}}{\sqrt{t\Lambda}}  \right)  e^{-\delta\Lambda} \int_b^\infty \Lambda^{3/2} 
   e^{-(s-a-\delta)\Lambda} f_s ds = I+II,
\end{multline*}
where $\delta>0$ is small enough.
Cauchy--Schwarz' inequality shows that the integral expressions in both $I$ and $II$ define bounded operators
$L_2(b,\infty;\mH)\to \mH$, whereas $e^{-\delta\Lambda}= D_0^{-1}(D_0 e^{-\delta|D_0|})$ is compact 
on $\mH$ by Proposition~\ref{prop:DBprops}.
For $I$, the factor $e^{-t\Lambda}:\mH\to \mX$ is bounded by Theorem~\ref{thm:NT}.
Since $\mY^*\subset \mX$, boundedness of the first factor in $II$ follows from
boundedness of $\sqrt t e^{-(a-t)\Lambda}$ for $t\in(0,a)$, and square function estimates for $\Lambda$
since $\psi(\lambda)= (1-e^{-2\lambda})/\sqrt\lambda\in \Psi(S^o_{\nu+})$.
This completes the proof.
\end{proof}

\begin{proof}[Proof of Theorem~\ref{thm:SAFredholm}]
(i)
Consider first invertibility in the space $\mX$.
By Theorem~\ref{thm:XYbounded}, we have $\|S_A\|_{\mX\to\mX}\lesssim \|\E\|_*$,
for any perturbation of coefficients $\E$.
Thus, for any $\tau>0$
$$
  \|S_A f\|_\mX\le C\|\chi_{t<\tau}\E\|_* \|f\|_\mX,\qquad\text{whenever } f_t=0\text{ for } t>\tau,
$$
with $C$ independent of $\tau$.
This follows upon writing $\E f= (\chi_{t<\tau}\E)f$.
Under the hypothesis, we can choose $\tau>0$ such that
$C\|\chi_{t<\tau}\E\|_*\le 1/2$.
We obtain
$$
  \|(I-S_A)f\|_\mX\ge \|f\|_\mX- \tfrac 12\|f\|_\mX= \tfrac 12\|f\|_\mX,\qquad\text{whenever } f_t=0\text{ for } t>\tau.
$$

Next consider an arbitrary $f\in\mX$. 
Pick $\eta_0\in C^\infty(\R_+)$ such $\supp\eta_0\subset [0,\tau]$ and $\eta_0=1$ for $t<\tau/2$.
Write $\eta_1:= 1-\eta_0$.
Then $\|(I-S_A)(\eta_0f)\|_\mX\ge \tfrac 12\|\eta_0 f\|_\mX$, and Lemma~\ref{lem:L2lowerbound} shows that 
$\| (I-S_A)(\eta_1 f)\|_\mX\gtrsim \|\eta_1 f\|_\mX$. 
This gives
\begin{multline*}
  \|f\|_\mX \le \|\eta_0 f\|_\mX + \|\eta_1 f\|_\mX 
  \lesssim \|(I-S_A)(\eta_0 f)\|_\mX + \| (I-S_A)(\eta_1 f)\|_\mX \\
    \le \|\eta_0(I-S_A) f\|_\mX + \|[\eta_0, S_A]f\|_\mX + \|\eta_1 (I-S_A)f\|_\mX + \|[\eta_1, S_A]f\|_\mX \\
    \lesssim \|(I-S_A)f\|_\mX + \|[\eta_0, S_A]f\|_\mX.
\end{multline*}
To show that $[\eta_0,S_A]:\mX\to\mX$ is compact, we write
$$
   [\eta_0, S_A]=  \chi_0  [\eta_0, S_A]+ (1- \chi_0) [\eta_0, S_A]
   = \chi_0 S_A (1-\eta_0) + (1-\chi_0)  [\eta_0, S_A],
$$
where $\chi_0:= \chi_{(0, \tau/4)}$.
Hence, compactness of the first term is granted from Lemma~\ref{lem:offdiagcomm}. Next,  as  the $\mX$ and $L_{2}$ norms are the same away from the boundary, 
Lemma~\ref{lem:compactcomm}
implies that the  second term is compact from $\mX\to \mX$.  This shows that $I-S_A:\mX\to\mX$ is a semi-Fredholm operator. 
 
 To see that it is a Fredholm operator with index 0, note that the lower estimate on $I-S_A$ 
 above goes through with $\E$ replaced by 
 $\alpha \E$, $\alpha\in[0,1]$. Apply the method of continuity. 
 Since $I-S_A$ is injective on $\mX$ by Lemma~\ref{lem:injonX}, it follows that
 it is invertible.
 
(ii)
Consider now invertibility in the space $\mY$.
That $I-S_A:\mY\to \mY$ is a Fredholm operator with index $0$ follows as in (i), provided
we show that $[\eta_0, S_A]:\mY\to \mY$ is compact.
Here we write
$$
   [\eta_0, S_A]=   [\eta_0, S_A]\chi_0+ [\eta_0, S_A](1-\chi_0)
   = (\eta_0-1) S_A \chi_0 + [\eta_0, S_A](1-\chi_0),
$$
and  Lemmas~\ref{lem:offdiagcomm} and \ref{lem:compactcomm}
are applied in the same way.

To verify bijectivity, we note that $\mX\subset\mY$ is a dense continuous inclusion,
where $I-S_A:\mX\to\mX$ is an isomorphism.
This implies that $I-S_A:\mY\to \mY$ has dense range, hence is an isomorphism since its index is 0.
\end{proof}

%
%
%
%
%
\section{Solvability of BVPs}     \label{sec:BVPs}

\subsection{Characterization of well-posedness}\label{sec:char}

For $A$ such that $I-S_{A}$ is invertible, we introduce boundary maps and 
characterize well-posedness in terms of their invertibility.

\begin{defn}
  For coefficients $A$ such that $\|\E\|_{*}<\infty$ and
  $I-S_A: \mX\to \mX$ is invertible, define the {\em perturbed
  Hardy projection}
$$
  E_A^+h := E_0^+h - E_0^-\int_0^\infty e^{-s\Lambda} D \E_s f_s ds, \qquad h\in L_{2}(S^n;\V),
$$
  where $f:= (I-S_A)^{-1} e^{-t \Lambda} E_0^+ h$.
  Write $E_A^-:= I-E_A^+$. Here, $E_0^\pm$ denote the Hardy projections associated to the corresponding  radially independent coefficients $A_{1}$.
\end{defn}

\begin{prop}\label{prop:EA}  The operators $E_A^\pm : L_{2}(S^n;\V)\to L_{2}(S^n;\V)$ are bounded projections and  the range $E_A^+ \mH\subset \mH$ consists of all traces
$f_0$ of conormal gradients $f$  of $\mX^o$-solutions  to the divergence
form equation with coefficients $A$ in $\bO^{1+n}$.
\end{prop}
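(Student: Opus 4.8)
The plan is to establish the three asserted properties of $E_A^\pm$ in turn: boundedness and projection property, and the identification of the range of $E_A^+$ restricted to $\mH$.

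First I would verify that $E_A^+$ is a bounded operator on $L_2(S^n;\V)$. Given $h\in L_2$, the function $f:= (I-S_A)^{-1}e^{-t\Lambda}E_0^+h$ is well-defined and lies in $\mX$ with $\|f\|_\mX\lesssim \|E_0^+h\|_2\lesssim \|h\|_2$, using Theorem~\ref{thm:NT} (the estimate $\|e^{-t\Lambda}E_0^+h\|_\mX\approx\|E_0^+h\|_2$) together with invertibility of $I-S_A$ on $\mX$. The correction term $E_0^-\int_0^\infty e^{-s\Lambda}D\E_s f_s\,ds$ is exactly $-h^-$ in the notation of Lemma~\ref{lem:inttodiff}(i) / Theorem~\ref{thm:inteqforNeu}, and Lemma~\ref{lem:inttodiff} gives $\|h^-\|_2\lesssim \|f\|_\mX\lesssim\|h\|_2$. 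Hence $E_A^+$ is bounded, and so is $E_A^-=I-E_A^+$.

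Next, the projection property $(E_A^+)^2=E_A^+$. The natural route is to observe that by Theorem~\ref{thm:inteqforNeu}, $f=(I-S_A)^{-1}e^{-t\Lambda}h^+$ is the conormal gradient (up to the gradient-to-conormal-gradient correspondence) of an $\mX^o$-solution precisely when $h^+\in E_0^+\mH$, and in that case $f$ has trace $f_0=h^++h^-=E_A^+h$ where here $h^+=E_0^+h$. The key algebraic fact I would isolate is that if $f_0$ is already the trace of such a solution $f$, i.e. $f_0=E_A^+ f_0$ in the sense that the ansatz $(I-S_A)^{-1}e^{-t\Lambda}E_0^+f_0$ reproduces $f$, then feeding $f_0$ (or indeed $E_A^+h$) back into the definition returns the same value. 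Concretely: apply $E_0^+$ to $f_0=h^++h^-$; since $h^+\in E_0^+\mH$ and $h^-\in E_0^-\mH$ we get $E_0^+ f_0=h^+$, so the solution generated by $E_A^+h$ via the ansatz is the same $f$, and its trace is again $h^++h^-=E_A^+h$. This shows $E_A^+(E_A^+h)=E_A^+h$. I would write this out carefully using the uniqueness statement in Theorem~\ref{thm:inteqforNeu} (the representation \eqref{eq:ODEonintegral} with its $h^+$ uniquely determined).

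Finally, the range identification: $E_A^+\mH\subset\mH$ and it equals the set of traces $f_0$ of conormal gradients of $\mX^o$-solutions. The inclusion $E_A^+\mH\subset\mH$ follows because $E_0^+h\in\mH$ for $h\in\mH$ (as $E_0^\pm$ restrict to $\mH$ by Lemma~\ref{lem:spectralsplits}), $f\in\mX$ is $\mH$-valued, so $f_0\in\mH$; and $h^-\in E_0^-\mH\subset\mH$. For the equality: given $h\in\mH$, Theorem~\ref{thm:inteqforNeu} (via Corollary~\ref{cor:conormal} and Theorem~\ref{thm:inteqforNeuandg}) shows $f=(I-S_A)^{-1}e^{-t\Lambda}E_0^+h$ is the conormal gradient of an $\mX^o$-solution with trace $f_0=E_A^+h$, giving $\subset$. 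Conversely, any $\mX^o$-solution has conormal gradient $f\in\mX$ satisfying \eqref{eq:ODEonintegral} for some $h^+\in E_0^+\mH$ by Theorem~\ref{thm:inteqforNeuandg}; then $f=(I-S_A)^{-1}e^{-t\Lambda}h^+$ by invertibility, so $E_0^+f_0=h^+$ and $f_0=h^++h^-=E_A^+ f_0\in E_A^+\mH$, giving $\supset$. The main obstacle I anticipate is bookkeeping the identification between the ``$f$-picture'' (solutions of the first-order ODE \eqref{eq:firstorderODE}) and the ``$g$-picture'' ($\mX^o$-solutions, i.e.\ gradients) through the gradient-to-conormal-gradient map of Proposition~\ref{prop:divformasODE}, and making sure the uniqueness of $h^+$ in \eqref{eq:ODEonintegral} is invoked correctly so that the projection identity is genuinely an identity and not merely an equality of traces; once that is pinned down the rest is routine.
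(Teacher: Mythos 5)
Your proposal is correct and follows essentially the same route as the paper's (very terse) proof: boundedness from the construction via the $\mX$-estimates for $e^{-t\Lambda}$ and $(I-S_A)^{-1}$, the projection property from the algebraic fact $E_0^+E_A^+=E_0^+$ (i.e. $E_0^+E_0^-=0$), which forces the regenerated $f$ to coincide and hence the trace to be reproduced, and the range identification from Theorem~\ref{thm:inteqforNeu} together with the conormal-gradient correspondence. No essential difference beyond your spelling out the details the paper leaves implicit.
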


\begin{proof} That $E_A^\pm$ are bounded follows from their construction. The projection property $(E_A^\pm)^2= E_A^\pm$ follows from $E_0^+E_0^-=0$. Next,  the statement about the range follows from Theorem~\ref{thm:inteqforNeu}. \end{proof}

\begin{defn}   \label{defn:perttildeHardy}
  For coefficients $A$ such that $\|\E\|_{*}<\infty$ and 
  $I-S_A: \mY\to \mY$ is invertible, define the {\em perturbed
  Hardy projection}
$$
  \tE_A^+ \tilde h := \tE_0^+\tilde h - \tE_0^-\int_0^\infty e^{-s\tilde\Lambda} \E_s f_s ds,
  \qquad \tilde h\in L_2(S^n;\V),
$$
  where $f:= (I-S_A)^{-1}D e^{-t \tilde\Lambda} \tE_0^+ \tilde h$.
  Write $\tE_A^-:= I-\tE_A^+$. Here, $\tE_0^\pm$ denote the Hardy projections associated to the corresponding  radially independent coefficients $A_{1}$. 
\end{defn}

\begin{prop}\label{prop:tEA} The operators $\tE_A^\pm : L_{2}(S^n;\V)\to L_{2}(S^n;\V)$ are bounded projections and  $\sett{(\tE_A^+ \tilde h^+)_\no}{\tilde h^+\in \tE_0^+L_2}$ consists of all traces
 of $\mY^o$-solutions  to the divergence
form equation with coefficients $A$ in $\bO^{1+n}$.
\end{prop}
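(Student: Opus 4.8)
The plan is to mirror the proof of Proposition~\ref{prop:EA}, making the adjustments forced by the fact that for $\mY^o$-solutions the relevant data is the normal component of the trace of the potential $v$, not the trace of the conormal gradient $f$ itself. First I would check that $\tE_A^\pm$ are bounded projections. Boundedness is immediate from the construction: the map $\tilde h\mapsto f=(I-S_A)^{-1}De^{-t\tilde\Lambda}\tE_0^+\tilde h$ is bounded $L_2\to\mY$ by Theorem~\ref{thm:XYbounded} together with Theorem~\ref{thm:NTtilde} (which gives $\|De^{-t\tilde\Lambda}\tE_0^+\tilde h\|_\mY\lesssim\|\tilde h\|_2$), and then $f\mapsto\int_0^\infty e^{-s\tilde\Lambda}\E_sf_s\,ds$ is bounded into $L_2$ by Theorem~\ref{thm:Cbounded} (resp.\ its $\mY_\delta$ version when $n=1$, combined with Proposition~\ref{prop:reverseholder} so that $\|f\|_{\mY_\delta}\lesssim\|f\|_\mY$ for conormal gradients of solutions). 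For the projection property $(\tE_A^+)^2=\tE_A^+$, I would argue as for $E_A^\pm$: the term $\int_0^\infty e^{-s\tilde\Lambda}\tE_0^-\E_sf_s\,ds$ lies in $\tE_0^-L_2$, so applying $\tE_0^+$ to $\tE_A^+\tilde h$ recovers $\tE_0^+\tilde h$, whence $\tE_A^+(\tE_A^+\tilde h)$ starts from the same $\tE_0^+$-component and produces the same $f$; one must also use $\tE_0^+\tE_0^-=0$ and that $\tE_A^+\tilde h-\tE_0^+\tilde h\in\tE_0^-L_2$ so it contributes nothing to the construction of $f$ the second time around.

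Next I would identify the range. Given $\tilde h^+\in\tE_0^+L_2$, set $f:=(I-S_A)^{-1}De^{-t\tilde\Lambda}\tilde h^+$; by Corollary~\ref{cor:diransatz}(iii) the function $u_r:=r^{-(n-1)/2}\big((I+\tS_A(I-S_A)^{-1}D)e^{-t\tilde\Lambda}\tilde h^+\big)_\no$ is a $\mY^o$-solution, and by Theorem~\ref{thm:inteqforDir}(ii) its potential $v_t=e^{-t\tilde\Lambda}\tilde h^+ + \tS_Af_t$ has $L_2$-trace $v_0=\tilde h^++\tilde h^-$ with $\tilde h^-=-\int_0^\infty e^{-s\tilde\Lambda}\tE_0^-\E_sf_s\,ds$. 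Since $u_1=(v_0)_\no$, this is exactly $(\tE_A^+\tilde h^+)_\no$ by Definition~\ref{defn:perttildeHardy}. So every element of $\{(\tE_A^+\tilde h^+)_\no : \tilde h^+\in\tE_0^+L_2\}$ is the trace of a $\mY^o$-solution. Conversely, given a $\mY^o$-solution $u$, Corollary~\ref{cor:diransatz}(i)--(ii) produces $\tilde h^+=\tE_0^+v_0\in\tE_0^+L_2$ with $u_r=r^{-(n-1)/2}(e^{-t\tilde\Lambda}\tilde h^+ + \tS_Af_t)_\no$ where $f$ is the conormal gradient of $u$; by Theorem~\ref{thm:inteqforDir}(i), $f_t=De^{-t\tilde\Lambda}\tilde h^+ + S_Af_t$, so $f=(I-S_A)^{-1}De^{-t\tilde\Lambda}\tilde h^+$ once $I-S_A$ is invertible, and then $u_1=(v_0)_\no$ with $v_0=\tilde h^++\tilde h^-$ agrees with $(\tE_A^+\tilde h^+)_\no$. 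Hence the trace of any $\mY^o$-solution lies in the asserted set.

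I expect the main subtlety to be the bookkeeping around the ambiguity $\tilde h^+\bmod\tE_0^+\mH^\perp$. In Theorem~\ref{thm:inteqforDir} the potential-level datum is only determined modulo $\tE_0^+\mH^\perp=N^-\mH^\perp$, i.e.\ modulo adding a constant $\begin{bmatrix}c&0\end{bmatrix}^t$; and in the construction of $\tE_A^\pm$ one must be sure this ambiguity is consistent, namely that the class of the output $(\tE_A^+\tilde h^+)_\no$ is well-defined and that the correspondence between $\mY^o$-solutions and $\tE_0^+L_2$ genuinely becomes a bijection (not just up to constants) thanks to the normalization performed in the proof of Corollary~\ref{cor:diransatz}(ii) via \eqref{eq:c}. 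I would handle this exactly as there: since $\tilde\Lambda=\sigma I$ on $\mH^\perp$ and $\sigma=(n-1)/2$, replacing $\tilde h^+$ by $\tilde h^+-\begin{bmatrix}c&0\end{bmatrix}^t$ leaves $f$ unchanged and shifts $(v_t)_\no$ by $e^{-\sigma t}c$, so the $\no$-component of the trace is genuinely parametrized by $\tE_0^+L_2$ after fixing the constant, and $\tE_A^+$ is compatible with this. Everything else is a routine transcription of the $E_A^\pm$ argument with $D_0,\Lambda,E_0^\pm$ replaced by $\tD_0,\tilde\Lambda,\tE_0^\pm$ and with Theorem~\ref{thm:inteqforNeu} replaced by Theorem~\ref{thm:inteqforDir} and Corollary~\ref{cor:diransatz}.
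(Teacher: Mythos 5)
Your proposal is correct and follows essentially the same route as the paper: boundedness from the construction (via Theorems~\ref{thm:XYbounded}, \ref{thm:NTtilde} and \ref{thm:Cbounded}), the projection property from $\tE_0^+\tE_0^-=0$, and the identification of the trace space from Corollary~\ref{cor:diransatz} together with Theorem~\ref{thm:inteqforDir}. You simply spell out the details (including the $n=1$ use of $\mY_\delta$ and the $\tE_0^+\mH^\perp$ normalization) that the paper's very terse proof leaves implicit.
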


\begin{proof} That $\tE_A^\pm$ are bounded follows from their construction. The projection property $(\tE_A^\pm)^2= \tE_A^\pm$ follows from $\tE_0^+\tE_0^-=0$. Next,  the statement about the trace space follows from Corollary~\ref{cor:diransatz}(ii). \end{proof}

We remark that, unlike the case of $r$-independent coefficients, the complementary projections $E_{A}^-$ and 
$\tE_A^-$ are in general not related to solutions of a divergence form equation in 
the complementary domain $\R^{1+n}\setminus \bO^{1+n}$.

\begin{prop}    \label{prop:wpequiviso}
  For coefficients $A$ such that 
 $I-S_{A}$ is invertible on $\mX$ for (i) and (ii), or $I-S_{A}$ is invertible on $\mY$ for (iii),
   the following hold.
  
(i)
  The Neumann problem (with coefficients $A$) is well-posed in the sense of 
  Definition~\ref{defn:wpbvp} if and only if
\begin{equation}
\label{eq:Neumap}  
  E_0^+\mH\to \mH_\no: h^+\mapsto (E_A^+ h^+)_\no
\end{equation}
  is an isomorphism.

(ii)
  The regularity problem (with coefficients $A$) is well-posed in the sense of 
  Definition~\ref{defn:wpbvp} if and only if
  \begin{equation}
\label{eq:Regmap}
E_0^+\mH\to \mH_\ta: h^+\mapsto (E_A^+ h^+)_\ta
\end{equation}
  is an isomorphism.

(iii)
  The Dirichlet problem (with coefficients $A$) is well-posed in the sense of 
  Definition~\ref{defn:wpbvp} if and only if
    \begin{equation}
\label{eq:Dirmap}
\tE_0^+L_2(S^n;\V) \to L_2(S^n;\C^m): \tilde h^+\mapsto (\tE_A^+ \tilde h^+)_\no
\end{equation}
  is an isomorphism.
\end{prop}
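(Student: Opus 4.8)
The plan is to prove each of the three equivalences by translating well-posedness, as formulated in Definition~\ref{defn:wpbvp}, into a statement about the boundary maps, using the representations from Theorem~\ref{thm:inteqforNeu}, Theorem~\ref{thm:inteqforNeuandg}, Theorem~\ref{thm:inteqforDir} and Corollary~\ref{cor:diransatz}, all of which are available under the invertibility hypothesis on $I-S_A$. The common scheme is: existence of a solution with the prescribed datum corresponds to surjectivity of the relevant boundary map, and uniqueness corresponds to injectivity. I would first recall that, since $I-S_A$ is invertible, Theorem~\ref{thm:inteqforNeuandg} gives a bijection between $\mX^o$-solutions and their conormal gradient traces $f_0\in E_A^+\mH$, via $f=(I-S_A)^{-1}e^{-t\Lambda}h^+$ and $f_0=E_A^+h^+$ with $\|h^+\|_2\approx \|f_0\|_2\approx \|g\|_{\mX^o}$; similarly Corollary~\ref{cor:diransatz}(iii) gives a bijection between $\mY^o$-solutions (modulo the normalization of the constant) and boundary data $\tilde h^+\in\tE_0^+L_2$, with $(\tE_A^+\tilde h^+)_\no$ the trace $u_1$.

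For (i), the Neumann problem asks: given $\bphi\in L_2(S^n;\C^m)$ with mean zero, find a unique-modulo-constants $\mX^o$-solution $u$ with $(A_1g_1)_\no=\bphi$. By Definition~\ref{defn:gradtoconormal} the conormal gradient $f_0$ of $u$ satisfies $(f_0)_\no=(A_1g_1)_\no$, so the Neumann condition is exactly $(f_0)_\no=\bphi$; note $(f_0)_\no$ ranges over $\ran(\divv_S)=\mH_\no$ (the mean-zero subspace), matching the data space. By the bijection above, solutions correspond to $h^+\in E_0^+\mH$ via $f_0=E_A^+h^+$, and "$u$ unique modulo constants with given $(f_0)_\no$" unwinds to "$h^+$ unique with $(E_A^+h^+)_\no=\bphi$" — here one uses that the constants in $u$ correspond precisely to $\nul(D_0|_\mH)=0$ on the level of $f_0$, so that distinct $f_0$'s give genuinely distinct solutions and the map $h^+\mapsto f_0$ is already injective. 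Hence well-posedness is equivalent to \eqref{eq:Neumap} being bijective, i.e.\ an isomorphism by the open mapping theorem together with the norm equivalences. Part (ii) is identical with $(f_0)_\no$ replaced by $(f_0)_\ta=(g_1)_\ta$, whose range is $\ran(\nabla_S)=\mH_\ta$, matching the datum space $\bphi\in\ran(\nabla_S)$ required in Definition~\ref{defn:wpbvp}.

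For (iii), the Dirichlet problem asks: given $\bphi\in L_2(S^n;\C^m)$, find a unique $\mY^o$-solution $u$ with $\lim_{r\to1}u_r=\bphi$ in the sense of Whitney averages. By Corollary~\ref{cor:diransatz}(iii) combined with Theorem~\ref{thm:aecv} (which identifies the $L_2$-trace $u_1$ with the a.e.\ limit of Whitney averages), the solutions are parametrized by $\tilde h^+\in\tE_0^+L_2$ with trace $u_1=(\tE_A^+\tilde h^+)_\no$, and the correspondence $\tilde h^+\mapsto u$ is a bijection with $\|\tilde h^+\|_2\approx\|u\|_{\mY^o\text{-norm}}$. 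Hence existence of a solution for every $\bphi$ is surjectivity of \eqref{eq:Dirmap}, and uniqueness of the solution is injectivity: if two data $\tilde h^+,\tilde h_1^+$ give the same trace, the corresponding $\mY^o$-solutions have the same trace, hence (by injectivity of the parametrization in Corollary~\ref{cor:diransatz}(iii)) $\tilde h^+=\tilde h_1^+$, so $(\tE_A^+(\tilde h^+-\tilde h_1^+))_\no=0\Rightarrow \tilde h^+=\tilde h_1^+$. Thus well-posedness is equivalent to \eqref{eq:Dirmap} being an isomorphism, again invoking the open mapping theorem and the norm equivalences from Corollary~\ref{cor:diransatz}.

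The main obstacle I anticipate is the careful bookkeeping of the constant in the Dirichlet case and of the constant-modulo in the Neumann case: one must check that the freedom $\tilde h^+\in\tE_0^+\mH^\perp=N^-\mH^\perp$ in Theorem~\ref{thm:inteqforDir} is exactly absorbed by the normalization $c=0$ performed in the proof of Corollary~\ref{cor:diransatz}(i), so that after this normalization $\tilde h^+$ is genuinely unique and the map \eqref{eq:Dirmap} is well-defined on all of $\tE_0^+L_2$ (not just a quotient); and dually, that the passage from "solution modulo constants" to "$f_0$" loses no information in the Neumann/regularity cases. Once this is pinned down, the equivalences are formal consequences of the representation theorems, and the isomorphism statements follow from boundedness of $E_A^\pm$, $\tE_A^\pm$ (Propositions~\ref{prop:EA} and \ref{prop:tEA}) plus the open mapping theorem.
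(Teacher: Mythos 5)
Your proposal is correct and follows essentially the same route as the paper: under the invertibility of $I-S_A$, the ansatzes of Theorem~\ref{thm:inteqforNeu} and Corollary~\ref{cor:diransatz}(iii) give one-to-one correspondences $h^+\leftrightarrow f$ (with $f_0=E_A^+h^+$) and $\tilde h^+\leftrightarrow u$ (with $u_1=(\tE_A^+\tilde h^+)_\no$), and well-posedness translates into bijectivity of the boundary maps. Your extra care about the Whitney-average trace versus the $L_2$ trace and about the constant normalization in $\tE_0^+\mH^\perp$ is exactly the bookkeeping the paper delegates to Theorem~\ref{thm:aecv} and to the proof of Corollary~\ref{cor:diransatz}, so nothing essential is missing.
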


\begin{proof}
(i)
The ansatz \eqref{eq:ansatzRN} in Theorem~\ref{thm:inteqforNeu} gives is a one-to-one correspondence
between $h^+ \in E_0^+\mH$ and conormal gradients 
$f= (I-S_A)^{-1} e^{-t\Lambda} h^+$  of  
$\mX^o$-solutions to the divergence form equation. Moreover, $f_{0}=E_{A}^+h^+$ by  Proposition~\ref{prop:EA}. 
Under this correspondence, invertibility of $h^+\mapsto (E_A^+ h^+)_\no$ translates 
to well-posedness of the Neumann problem.
The proof of (ii) is similar. 

(iii)
The ansatz \eqref{eq:ansatzDu} from Corollary~\ref{cor:diransatz}(iii) gives a one-to-one 
correspondence between $\tilde h^+\in  \tE_0^+L_2$ and
 $\mY^o$-solutions $u$ to the divergence form equation. Moreover, $(\tE_A^+ \tilde h^+)_\no=u_{1}$ by Proposition~\ref{prop:tEA}. 
Under this correspondence, invertibility of $\tilde h^+\mapsto (\tE_A^+ \tilde h^+)_\no$ translates 
to well-posedness of the Dirichlet problem.
\end{proof}

\subsection{Equivalence between Dirichlet and Regularity problems} \label{sec:equiv}

We show that the Dirichlet and regularity problems are the same up to taking adjoints.

\begin{prop}    \label{prop:reg=dir}
  Assume that $A$ are coefficients such that $I-S_A$ is invertible on $\mX$ and
  $I-S_{A^*}$ is invertible on $\mY$. 
  Then the regularity problem with coefficients $A$ is well-posed if and only if the 
  Dirichlet problem with coefficients $A^*$ is well-posed.
\end{prop}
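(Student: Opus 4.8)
The plan is to establish a duality pairing between the regularity data space for $A$ and the Dirichlet data space for $A^*$, and then to show that the boundary isomorphism \eqref{eq:Regmap} for $A$ is, up to this pairing, the adjoint of the boundary isomorphism \eqref{eq:Dirmap} for $A^*$. The natural pairing is the restriction of the $L_2(S^n;\V)$ inner product; the key algebraic input is the relation $(\tD_{A_1})^* = DB_0^* - \sigma N$ from Lemma~\ref{lem:intertwduality}, which says that $\tD_{A^*_1}$ and $D_{A_1}$ are adjoint up to the unitary conjugation by $N$, together with the intertwining $E_0^\pm D = D\tE_0^\pm$ from Lemma~\ref{lem:spectralsplits}. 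I would first record the consequence that the spectral projections satisfy $(E_0^{\pm,A_1})^* = \tE_0^{\pm,A^*_1}$ modulo $N$-conjugation, so that $E_0^+\mH$ and $\tE_0^+L_2$ (for the adjoint coefficients) are in appropriate duality.

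The second step is to prove the analogous duality for the \emph{perturbed} Hardy projections: $(E_A^+)^* = \tE_{A^*}^+$ up to $N$-conjugation, as operators pairing the relevant spectral subspaces. This is where the bulk of the work lies. Starting from the definitions of $E_A^+$ (Definition before Proposition~\ref{prop:EA}) and $\tE_{A^*}^+$ (Definition~\ref{defn:perttildeHardy}), one expands $(I-S_A)^{-1}$ and $(I-S_{A^*})^{-1}$ as Neumann-type series (or works directly with the defining equations) and uses the identity $S_A = D\tS_A$ from Theorem~\ref{thm:Cbounded}, the expression for $\tS_A$ in \eqref{def:tildeSA}, and the fact that $\E^* = B_0^* - B_t^*$ is the perturbation associated to $A^*$. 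Concretely, the operator $b(D_0)D = Db(\tD_0)$ intertwining (used already in Lemma~\ref{lem:SAdecomp}) lets one transpose the integral kernels defining $S_A$ into those defining $\tS_{A^*}$, with the time variables $s$ and $t$ swapped and $e^{-t\Lambda}$ replaced by $e^{-t\tilde\Lambda^*}$; the correction term involving $\int_0^\infty e^{-s\Lambda}D\E_s f_s\,ds$ in $E_A^+$ matches the term $\int_0^\infty e^{-s\tilde\Lambda}\E_s f_s\,ds$ in $\tE_{A^*}^+$ under the same transposition. I expect this computation to be essentially parallel to the corresponding half-space argument in \cite{AA1}, so I would cite the relevant lemma there and only indicate the modifications (the zero-order term $\sigma N$, the curvature, the splitting when $n=1$).

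The third step is to decode what \eqref{eq:Regmap} and \eqref{eq:Dirmap} become under the duality. The regularity map sends $h^+\in E_0^+\mH$ to $(E_A^+h^+)_\ta = N^+ E_A^+ h^+$, and the Dirichlet map sends $\tilde h^+\in\tE_0^+L_2$ to $(\tE_{A^*}^+\tilde h^+)_\no = N^- \tE_{A^*}^+\tilde h^+$; here one uses $N^+ + N^- = I$ and $N^+ = \tfrac12(I+N)$, $N^- = \tfrac12(I-N)$. Since $N$ anticommutes with $D$ and $N^* = N$, the pairing $\langle (E_A^+ h^+)_\ta, (\tE_{A^*}^+\tilde h^+)_\no\rangle$ can be rewritten, using the step-two duality, as a pairing of $h^+$ against $\tilde h^+$ with the roles of tangential and normal components exchanged; one checks that the relevant bilinear form is nondegenerate on $E_0^+\mH\times \tE_0^+L_2$ (this uses that $\mH$ has codimension $m$ in $\mH_1$ and the normal/tangential decomposition of $\mathcal V$, so no information is lost). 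Consequently the regularity boundary operator for $A$ is an isomorphism if and only if its adjoint, the Dirichlet boundary operator for $A^*$, is — and by Proposition~\ref{prop:wpequiviso}(ii),(iii) this is exactly the claimed equivalence of well-posedness.

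The main obstacle I anticipate is bookkeeping in step two: getting all the $N$-conjugations, the sign of $\sigma$, and the direction of the intertwining $D$ exactly right so that the correction terms in $E_A^+$ and $\tE_{A^*}^+$ genuinely match under transposition, rather than matching only modulo $\mH^\perp$ or modulo a constant. The $n=1$ case needs separate care because $\ran(D_0)$ and $\ran(\tilde D_0)$ are proper subspaces and the Hardy projections were extended by hand in Definition~\ref{defn:hardyprojs1D}; one must verify the duality $(E_0^\pm)^* \leftrightarrow \tE_0^\pm$ still holds on the nullspace pieces $\mH^\perp$, which follows from $\tE_0^\pm = N^\mp$ and $E_0^\pm = P^0_{B_0}N^\pm$ there together with the adjoint relation $P^0_{B_0} = (B_0^{-1})^* (\tP^0_{B_0^*})^* B_0^*$ — a short but slightly delicate check. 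Once these technical points are pinned down, the logical skeleton of the proof is short.
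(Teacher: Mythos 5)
Your overall strategy (dualize the perturbed Hardy projections of $A$ against those of $A^*$ and transfer invertibility of the boundary maps) is the paper's strategy, and your step two is essentially the paper's computation behind \eqref{eq:dualityregtodir}. But there is a genuine gap, and it is not the "bookkeeping" you flag: the adjoint necessarily \emph{swaps} the $+$ and $-$ spectral projections. Since $(\tD_{A_1})^* = -N(D\widehat{A_1^*}+\sigma N)N$ (Lemma~\ref{lem:intertwduality}) carries a minus sign, one gets $(E_0^\pm)^*=N\tE_0^\mp N$ and, after the perturbation computation, $(E_A^-)^*=N\tE_{A^*}^+N$; your claimed identities $(E_0^\pm)^*\sim\tE_0^\pm$ and $(E_A^+)^*\sim\tE_{A^*}^+$ are false, and no choice of $N$-conjugation repairs them. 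This matters structurally, because with the correct relation the Dirichlet boundary map for $A^*$ is \emph{not} the adjoint of the regularity boundary map for $A$ under any fixed natural pairing, which is the mechanism your step three relies on. Indeed, if you try to verify your proposed adjointness even for $r$-independent coefficients, it reduces to an identity of the type $(h,N\tilde h)=0$ for $h\in E_0^+\mH$, $\tilde h\in\tE_0^+L_2$, i.e.\ a Rellich-type identity on the Hardy subspaces that holds for Hermitean coefficients but not in general. Likewise, the "nondegeneracy of the bilinear form on $E_0^+\mH\times\tE_0^+L_2$" that you propose to "check" is not automatic: it is essentially equivalent to (half of) the unperturbed well-posedness statements, so you would be assuming what is to be proved.

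What actually carries the argument is not adjointness of the two maps but duality of \emph{splittings} involving the complementary projections: invertibility of $N^+\colon E_{A^*}^+\mH\to N^+\mH$ is equivalent to the splitting $\mH=E_{A^*}^+\mH\oplus N^-\mH$, and dualizing this splitting (the paper's Lemma~\ref{lem:abstract}, i.e.\ \cite[Prop.~2.52]{AAH}) produces $(N^-)^*$ restricted to $(E_{A^*}^-)^*\mH^*$, at which point \eqref{eq:dualityregtodir} converts $(E_{A^*}^-)^*$ into $N\tE_A^+N$ on $\mH^*=L_2/\mH^\perp$ — so the complementary projections $E^-$ and $N^-$ enter essentially, exactly the $\pm$ swap your write-up suppresses. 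Finally, you also need the step converting invertibility of the Dirichlet map on $\tE_0^+L_2\to L_2(S^n;\C^m)$ into invertibility of its quotient version on $L_2/\mH^\perp$ modulo constants (the paper's Lemma~\ref{lem:dir}); you mention the $n=1$ extension of the projections on $\mH^\perp$, but this quotient/constants reduction is needed in all dimensions and must be proved, not just flagged. With the signs corrected, the abstract complementary-projection lemma inserted in place of your "map and its adjoint" claim, and the quotient lemma supplied, your outline becomes the paper's proof.
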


It is not clear to us whether invertibility of $I-S_{A}$ on $\mX$ implies or is implied by invertibility of $I-S_{A^*}$ on $\mY$. Thus we assume both. 
We need three lemmas, the first being useful reformulations of invertibility of the Dirichlet boundary map, the second  an identity between Hardy projections and the third an abstract principle.

\begin{lem}  \label{lem:dir}
The maps
$$
    \tE_0^+L_2(S^n;\V) \to L_2(S^n;\C^m): \tilde h^+\mapsto (\tE_A^+ \tilde h^+)_\no
$$
 and
$$
    \tE_0^+(L_2(S^n;\V)/\mH^\perp) \to L_2(S^n;\C^m)/\C^m: \tilde h^+\mapsto (\tE_{A}^+\tilde h^+)_\no
$$
are simultaneous isomorphisms.
\end{lem}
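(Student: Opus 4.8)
The plan is to prove Lemma~\ref{lem:dir} by a direct bookkeeping argument comparing the two boundary maps on the two pairs of spaces, exploiting the fact that the "difference" between $L_2(S^n;\V)$ and $L_2(S^n;\V)/\mH^\perp$, as well as between $L_2(S^n;\C^m)$ and $L_2(S^n;\C^m)/\C^m$, is precisely accounted for by the finite-dimensional space $\tE_0^+\mH^\perp$. Recall from Lemma~\ref{lem:spectralsplits} that when $\sigma=\frac{n-1}{2}\ge 0$ we have $\tE_0^+\mH^\perp = N^-\mH^\perp = \{\begin{bmatrix} c & 0\end{bmatrix}^t : c\in\C^m\}$, so this space has dimension $m$ and maps, under the normal-component map $(\cdot)_\no$, isomorphically onto the constants $\C^m\subset L_2(S^n;\C^m)$. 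This is the geometric heart of why quotienting by $\mH^\perp$ upstairs corresponds exactly to quotienting by $\C^m$ downstairs.

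First I would record the behaviour of the perturbed Hardy projection $\tE_A^+$ on the distinguished subspace $\tE_0^+\mH^\perp$. Using \eqref{eq:c}, i.e.\ $e^{-t\tilde\Lambda}(\begin{bmatrix} c & 0\end{bmatrix}^t)= e^{-\sigma t}\begin{bmatrix} c & 0\end{bmatrix}^t$, together with the fact that $D\begin{bmatrix} c & 0\end{bmatrix}^t = \nabla_S c = 0$ for $c$ a constant, the formula $f:=(I-S_A)^{-1}De^{-t\tilde\Lambda}\tE_0^+\tilde h$ in Definition~\ref{defn:perttildeHardy} gives $f=0$ when $\tilde h\in\tE_0^+\mH^\perp$, and hence $\tE_A^+\tilde h = \tE_0^+\tilde h = \tilde h$ on that subspace. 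Consequently $\tE_A^+$ fixes $\tE_0^+\mH^\perp$ pointwise, and $(\tE_A^+\tilde h)_\no = c$ there. This shows that the first boundary map $\tilde h^+\mapsto(\tE_A^+\tilde h^+)_\no$ restricts to the identity-type isomorphism $\tE_0^+\mH^\perp\cong\C^m$, and therefore descends to a well-defined map on the quotients; moreover it is an isomorphism on the quotients if and only if it is an isomorphism on the full spaces, by the standard $2$-out-of-$3$ argument (a commutative diagram of short exact sequences $0\to \tE_0^+\mH^\perp\to\tE_0^+L_2\to\tE_0^+(L_2/\mH^\perp)\to 0$ mapping to $0\to\C^m\to L_2\to L_2/\C^m\to 0$, with the left vertical arrow already an isomorphism; then apply the five lemma, or just chase kernels and cokernels directly).

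The one point requiring a little care is the identification $\tE_0^+(L_2/\mH^\perp)$: one must check that $\tE_0^+$, which a priori is defined on $L_2$, genuinely descends to the quotient $L_2/\mH^\perp$ and that its image there is what the notation suggests. This follows because $\tE_0^\pm$ commute with $D$ via the intertwining relation $E_0^\pm D = D\tE_0^\pm$ of Lemma~\ref{lem:spectralsplits} and because $\mH^\perp = \nul(D)$ is $\tD_0$-invariant (hence $\tE_0^\pm$-invariant) — so $\tE_0^+$ preserves $\mH^\perp$ and induces a bounded projection on $L_2/\mH^\perp$, whose range is naturally $\tE_0^+L_2/\tE_0^+\mH^\perp$. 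I expect the main obstacle to be purely organizational: setting up the two short exact sequences and the vertical maps so that everything commutes on the nose, keeping track of the fact that the norm on $\tE_0^+(L_2/\mH^\perp)$ is the quotient norm and checking that the displayed quotient map is bounded below precisely when the original is (modulo the already-understood finite-dimensional piece). Once the diagram is in place the equivalence of the two invertibility statements is immediate from the five lemma, and no estimate beyond the boundedness of $\tE_A^\pm$ (Proposition~\ref{prop:tEA}) is needed.
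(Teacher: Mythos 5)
Your argument is correct and follows essentially the same route as the paper: you identify that $\tE_A^+$ reduces to $\tE_0^+=N^-$ on the $m$-dimensional piece $\tE_0^+\mH^\perp$ (because $\mH^\perp$ is preserved by $\tilde\Lambda$ and $\tE_0^\pm$ and annihilated by $D$, so $f=0$ in Definition~\ref{defn:perttildeHardy}), so that the normal-component map carries this piece isomorphically onto the constants $\C^m$, and then the simultaneous invertibility is just the mod-out of this finite-dimensional subspace. The paper leaves the final bookkeeping implicit where you spell it out with the two short exact sequences and the five lemma, but the content is the same.
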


\begin{proof} 
This amounts to mod out $\mH^\perp$.  
We recall that  $\mH^\perp$ is preserved by $\tilde \Lambda$ and $\tE_{0}^\pm$, and annihilated by $D$, so from the definition $\tE_{A}^+\tilde h^+=\tE_{0}^+\tilde h^+\in \mH^\perp$ for $\tilde h^+\in \mH^\perp$.  
 By Lemma~\ref{lem:spectralsplits}, $(\tE_{0}^+\tilde h^+)_{\no}  = (\tilde h^+)_\no$ for $\tilde h^+\in \mH^\perp$,
so $\tE_0^+(L_2(S^n;\V)/\mH^\perp) \to L_2(S^n;\C^m)/\C^m: \tilde h^+ \mapsto (\tE_{A}^+\tilde h^+)_{\no} $ is a well defined map.
That the two maps simultaneously are isomorphisms can now be verified from 
$\{(\tE_{A}^+\tilde h^+)_{\no} ; \tilde h^+ \in \mH^\perp\}=\C^m$. 
\end{proof}

\begin{lem} On $L_{2}(S^n;\V)$ we have the duality relation \begin{equation}   \label{eq:dualityregtodir}
  (E_A^-)^*= N \tE_{A^*}^+ N.
\end{equation}
\end{lem}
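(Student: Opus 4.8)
The plan is to prove the duality identity $(E_A^-)^* = N\tE_{A^*}^+ N$ by unravelling both sides in terms of the integral operators and the functional calculi, using the algebraic relations already established. First I would recall the definitions: $E_A^+ h = E_0^+ h - E_0^- \int_0^\infty e^{-s\Lambda} D\E_s f_s\,ds$ with $f = (I-S_A)^{-1} e^{-t\Lambda}E_0^+ h$, and $\tE_{A^*}^+\tilde h = \tE_0^{*,+}\tilde h - \tE_0^{*,-}\int_0^\infty e^{-s\tilde\Lambda^*}\E^*_s f^*_s\,ds$ with $f^* = (I-S_{A^*})^{-1} D e^{-t\tilde\Lambda^*}\tE_0^{*,+}\tilde h$, where the starred projections and semigroups are those attached to $A^*$ (equivalently $B_0^*$). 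The key structural inputs are: the intertwining $b(D_0)D = Db(\tD_0)$ (hence $E_0^\pm D = D\tE_0^\pm$, $e^{-t\Lambda}D = De^{-t\tilde\Lambda}$); the duality relations from Lemma~\ref{lem:intertwduality}, in particular $(\tD_{A_1})^* = -N(D\widehat{A_1^*}+\sigma N)N = -ND_{A_1^*}N$ and correspondingly $N\tD_{A_1^*}N = -(D_{A_1})^*$, which gives $N\tE_0^{*,\pm}N = \chi^\mp((D_0)^*) = (E_0^\mp)^*$ after checking the sign bookkeeping; and the fact that $N$ anticommutes with $D$ while $N\Lambda N$, $N\tilde\Lambda N$ relate the absolute-value operators of $D_0$ and $\tD_0^*$.

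The heart of the argument is to identify the $\epsilon$-regularized operators $S_A^\epsilon$ and $\tS_{A^*}^\epsilon$ under the $N$-conjugation and adjoint. I would first establish the operator identity $(S_A)^* = N\tS_{A^*}N$ (or the closely related $(\hS_A)^* = N\hS_{A^*}^{\mathrm{something}}N$), by writing $S_A = D\tS_A$ from Lemma~\ref{lem:SAdecomp}, taking adjoints, using $D^* = D$ and $N^* = N$, $N^2 = I$, and the relation between $e^{-t\tilde\Lambda}$, $\tE_0^\pm$, $\E$ for $A$ versus their starred counterparts for $A^*$. This reduces the claimed identity to a purely algebraic manipulation: expanding $(E_A^-)^* = (I - E_A^+)^* = I - (E_A^+)^*$, then pushing the adjoint through the formula for $E_A^+$ — the term $E_0^+ h$ dualizes to $(E_0^+)^* = N\tE_0^{*,-}N$, and the integral term, after inserting $f = (I-S_A)^{-1}e^{-t\Lambda}E_0^+ h$ and using $e^{-s\Lambda}D = De^{-s\tilde\Lambda}$ together with $((I-S_A)^{-1})^* = N(I-S_{A^*})^{-1}N$, reassembles exactly into $N(I - \tE_{A^*}^+)N h = N\tE_{A^*}^- N h$... wait, one must be careful that it assembles into $N\tE_{A^*}^+ N$ and not its complement; the sign on the $E_0^-$ versus $E_0^+$ prefactor in the definition of $E_A^\pm$ is precisely what makes $(E_A^-)^*$ rather than $(E_A^+)^*$ match $N\tE_{A^*}^+N$.

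The main obstacle I anticipate is the careful bookkeeping of which spectral half-space each projection picks out after conjugation by $N$: because $N\tD_{A_1^*}N = -(D_{A_1})^*$ carries a minus sign, $N\tE_0^{*,+}N$ equals $(E_0^-)^*$, not $(E_0^+)^*$, and this sign flip must propagate consistently through the integral term (which runs $\int_0^\infty$ with a semigroup $e^{-s\Lambda}$ that is insensitive to the flip, but with projections $E_0^\pm$ that are not). Getting this exactly right — and confirming the invertibility hypotheses ($I-S_A$ on $\mX$, $I-S_{A^*}$ on $\mY$) are precisely what is needed for both $E_A^\pm$ and $\tE_{A^*}^\pm$ to be defined — is where the proof will spend its effort. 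Once the identity $(S_A)^* = N\tS_{A^*}N$ and the adjoint formula for $(I-S_A)^{-1}$ are in hand, the rest is a short computation: expand, conjugate, collect, and read off $(E_A^-)^* = N\tE_{A^*}^+ N$. A cleaner alternative, which I would pursue if the direct computation becomes unwieldy, is to test against an arbitrary pair $h, g \in L_2(S^n;\V)$ and use the weak-limit characterizations of the integral operators from Theorems~\ref{thm:XYbounded} and \ref{thm:Cbounded}, moving the adjoint onto the explicit integrands where the $N$-conjugation acts transparently via Lemma~\ref{lem:intertwduality}.
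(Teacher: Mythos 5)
Your overall plan (dualize via Lemma~\ref{lem:intertwduality}, handle the integral term by a duality between $S_A$ and $S_{A^*}$, then assemble) is in the right spirit, and your fallback suggestion at the end -- testing $(N\tilde h, E_A^+h)$ against pairs and moving the adjoint onto the explicit integrands -- is essentially what the paper does. But the main route you propose rests on the clean operator identities $(S_A)^*=N\tS_{A^*}N$ and $\bigl((I-S_A)^{-1}\bigr)^*=N(I-S_{A^*})^{-1}N$, and these are false as stated. The obstruction is the placement of the multiplication operator $\E$ inside $S_A$: computing the $L_2(\R_+\times S^n)$ adjoint of
$S_Af_t=\int_0^t e^{-(t-s)\Lambda}E_0^+D\E_sf_s\,ds-\int_t^\infty e^{-(s-t)\Lambda}E_0^-D\E_sf_s\,ds$
and using $(E_0^\pm)^*=N\,\chi^\mp(\widehat{A_1^*}D-\sigma N)\,N$, $\Lambda^*=N|\widehat{A_1^*}D-\sigma N|N$, $DN=-ND$, one finds
\begin{equation*}
  (S_A)^*g_s=\E_s^*\,N\Big[\int_0^s \chi^+(D\widehat{A_1^*}+\sigma N)e^{-(s-t)|D\widehat{A_1^*}+\sigma N|}D(Ng)_t\,dt-\int_s^\infty \chi^-(D\widehat{A_1^*}+\sigma N)e^{-(t-s)|D\widehat{A_1^*}+\sigma N|}D(Ng)_t\,dt\Big],
\end{equation*}
i.e.\ the adjoint carries $\E^*$ at the \emph{outer} variable $s$ and a bare $D$ at the inner variable $t$, whereas $S_{A^*}$ (built with $\widetilde\E=N\E^*N$) carries $D\widetilde\E_t$ at the \emph{inner} variable and no weight outside. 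No conjugation by $N$ converts one into the other, so the adjoint of the resolvent cannot be transported in the clean form you use to "reassemble" the integral term. There is also a functional-analytic mismatch: $I-S_A$ is inverted on $\mX$ and $I-S_{A^*}$ on $\mY$, and the dual of $\mX$ is not $\mY$, so even the setting for your proposed adjoint identity is problematic.

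What is true, and what the paper proves and uses, is the $\E$-weighted bilinear duality
\begin{equation*}
  \int_0^\infty (N\tilde f_t,\E_t(S_Af)_t)\,dt=\int_0^\infty (N(S_{A^*}\tilde f)_s,\E_sf_s)\,ds,\qquad \tilde f\in\mY,\ f\in\mX,
\end{equation*}
obtained by Fubini from the $\epsilon$-regularized formulas and the relations $(E_0^\pm)^*=N\tE_0^\mp N$, $\Lambda^*=N\tilde\Lambda N$ (tilde objects attached to $A_1^*$), together with $\widetilde\E=N\E^*N$; the weight $\E:\mX\to\mY^*$ is exactly what makes both sides converge. This relation, extended to the resolvents in the same weak sense, is then fed into the direct computation of $(N\tilde h,E_A^+h)$ with $f=(I-S_A)^{-1}e^{-t\Lambda}E_0^+h$ and $\tilde f=(I-S_{A^*})^{-1}De^{-t\tilde\Lambda}\tE_0^+\tilde h$, yielding $(N\tE_{A^*}^-\tilde h,h)$ and hence $(E_A^-)^*=N\tE_{A^*}^+N$. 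So to repair your argument, replace the claimed operator adjoint identities by this weighted duality and carry out the pairing computation directly; your sign bookkeeping for $N\tE_0^\pm N$ versus $(E_0^\mp)^*$ is correct and survives unchanged.
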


\begin{proof} 
The proof of this duality  builds on the 
formula
$$
  (D_{A_1})^* = -N \tD_{A_1^*} N
$$
on $L_{2}(S^n;\V)$
from Lemma~\ref{lem:intertwduality}  with $A_{1}$ equal to the boundary trace of $A$ and where we  used the notation at the end of Definition~\ref{defn:operators}. 
Using this observation and short hand notation $E_{0}^\pm=E^\pm_{A_{1}}$, $\Lambda=|D_{A_{1}}|$, 
$\tE_{0}^\pm=\tE^\pm_{A_{1}^*}$ and $\st \Lambda=|\tD_{A_1^*}|$,  it follows that we have
$$
  (E^\pm_{0})^*= N\tE^\mp_{0} N,\qquad \Lambda^*= N\tilde \Lambda N.
  $$
  Note that when $n=1$, these identities can be also checked from the extensions of the projections  in Definition~\ref{defn:hardyprojs1D}.
  This implies that 
$$
  \int_0^\infty (N \tilde f_t, \E_t (S_A f)_t) dt
  = \int_0^\infty (N (S_{A^*} \tilde f)_s,\E_s f_s ) ds, \qquad \tilde f\in \mY, f\in\mX,
$$
which follows from Fubini's theorem and  the formula defining $S_A^\epsilon$ from
Lemma~\ref{lem:SAdecomp}, and then letting $\epsilon\to 0$ using boundedness on $\mX$ and $\mY$. Details are left to the reader.
Note that $S_{A^*}$ is defined using the coefficients $ \widetilde \E_t := \widehat{A_1^*}-\widehat{A^*}$, while $\E_{t}=\widehat{A_1}-\widehat{A}$. This duality relation between $S_{A}$ and $S_{A^*}$ clearly extends to their resolvents.

For $h,\tilde h\in L_2$, using the isomorphism assumption on $I-S_{A}$ and $I-S_{A^*}$,  we let $f= (I-S_{A})^{-1} e^{-t \Lambda} E_0^+h\in \mX$ and $\tilde f:= (I-S_{A^*})^{-1} De^{-s \tilde\Lambda} \tE_0^+\tilde h\in \mY$ and
  calculate
\begin{multline*}
  (N \tilde h, E^+_A h)= (N \tilde h, E_0^+ h)-
  \int_0^\infty (N\tilde h, E_0^-e^{-s\Lambda} D\E_s f_s) ds \\
  =   (N \tE_0^- \tilde h, h)+
  \int_0^\infty (N De^{-s\tilde\Lambda} \tE_0^+ \tilde h, \E_s ((I-S_A)^{-1} e^{-t\Lambda} E_0^+h)_{s}) ds \\
  =   (N \tE_0^- \tilde h, h)+
  \int_0^\infty (N ((I-S_{A^*})^{-1} De^{-s\tilde\Lambda} \tE_0^+ \tilde h)_{t}, \E_t  e^{-t\Lambda} E_0^+ h) ds \\
  =   (N \tE_0^- \tilde h, h)+
  \int_0^\infty (N \tE_0^- e^{-t\tilde\Lambda} \tilde \E_t \tilde f_t,   h) dt
 = (N \tE_{A^*}^-\tilde h, h).
\end{multline*}
This completes the proof.
\end{proof}

\begin{lem}\label{lem:abstract} Assume that $N^\pm$ and $E^\pm$ are two pairs of complementary projections
in a Hilbert space $\mH$, \textit{i.e.} $(N^\pm)^2=N^\pm$ and $N^++N^-=I$, and similarly for $E^\pm$.
Then the adjoint operators $(N^\pm)^*$ and $(E^\pm)^*$ are also two pair of complementary projections on $\mH^*$, and the restricted projection $N^+: E^+\mH\to N^+ \mH$ is an isomorphism if and only if 
$(N^-)^*: (E^-)^*\mH^*\to (N^-)^* \mH^*$ is an isomorphism.
\end{lem}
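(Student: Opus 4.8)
The plan is to reduce the statement to the elementary fact that two pairs of complementary projections $N^\pm$, $E^\pm$ on a Hilbert space $\mH$ are in \emph{general position} (meaning $N^+ : E^+\mH \to N^+\mH$ is an isomorphism) if and only if $\mH = E^+\mH \oplus N^-\mH$ topologically. First I would record that the adjoints $(N^\pm)^*$ and $(E^\pm)^*$ are indeed pairs of complementary projections on $\mH^*$ (identified with $\mH$): this is immediate from $((N^+)^*)^2 = ((N^+)^2)^* = (N^+)^*$ and $(N^+)^* + (N^-)^* = (N^++N^-)^* = I$, and similarly for $E^\pm$. So the statement is symmetric in form and it suffices to prove one implication of an equivalence.

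Next I would establish the key reduction: $N^+ : E^+\mH \to N^+\mH$ is an isomorphism if and only if $E^+\mH \cap N^-\mH = \{0\}$ and $E^+\mH + N^-\mH = \mH$ (the sum being automatically topological since both summands are closed). Indeed, injectivity of $N^+|_{E^+\mH}$ is exactly the triviality of $E^+\mH \cap \nul(N^+) = E^+\mH \cap N^-\mH$; and surjectivity onto $N^+\mH$: given $h \in \mH$, writing $N^+h = N^+ e$ for some $e \in E^+\mH$ means $h - e \in N^-\mH$, i.e. $h \in E^+\mH + N^-\mH$, so surjectivity of $N^+|_{E^+\mH}$ onto $N^+\mH$ is equivalent to $E^+\mH + N^-\mH = \mH$. (Boundedness of the inverse, when it exists, is then automatic by the open mapping theorem, the restricted operator being bounded between Banach spaces.) Thus $N^+ : E^+\mH \to N^+\mH$ is an isomorphism iff $\mH = E^+\mH \oplus N^-\mH$.

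Finally I would dualize this splitting condition. The orthogonal complement of a topological direct sum $\mH = X \oplus Y$ of closed subspaces is the topological direct sum $\mH^* = Y^\perp \oplus X^\perp$, and here $X^\perp = (E^+\mH)^\perp = \nul((E^+)^*) = (E^-)^*\mH^*$ and $Y^\perp = (N^-\mH)^\perp = \nul((N^-)^*) = (N^+)^*\mH^*$; equivalently, one uses $\ran(E^+)^\perp = \nul((E^+)^*)$ and $\ran((E^-)^*) = \nul(E^-)^\perp = \ran(E^+)^\perp$. Hence $\mH = E^+\mH \oplus N^-\mH$ is equivalent to $\mH^* = (N^+)^*\mH^* \oplus (E^-)^*\mH^*$, which by the same reduction (applied to the projection pairs $(N^\pm)^*$ and $(E^\pm)^*$) is equivalent to $(N^-)^* : (E^-)^*\mH^* \to (N^-)^*\mH^*$ being an isomorphism. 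This chain of equivalences is precisely the assertion.

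I do not expect any serious obstacle here; the only point requiring a little care is the passage between complement-of-a-direct-sum and annihilators, where one must be careful that $(E^+\mH)^\perp$ equals $(E^-)^*\mH^*$ and not $(E^+)^*\mH^*$ — i.e. the swap of $\pm$ that produces the $N^-$ versus $N^+$ discrepancy in the statement. Once the bookkeeping of which projection annihilates which range is pinned down, the argument is a short diagram chase, and it is exactly this lemma that feeds \eqref{eq:dualityregtodir} to yield Proposition~\ref{prop:reg=dir}.
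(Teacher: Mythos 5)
Your argument is correct, but it is worth noting that the paper does not actually prove this lemma: its ``proof'' is the single line ``This is \cite[Prop.~2.52]{AAH}'', so your write-up supplies a self-contained argument for what the citation hides. Your route — reduce ``$N^+:E^+\mH\to N^+\mH$ is an isomorphism'' to the topological splitting $\mH=E^+\mH\oplus N^-\mH$ (injectivity $\Leftrightarrow$ $E^+\mH\cap N^-\mH=\{0\}$, surjectivity $\Leftrightarrow$ $E^+\mH+N^-\mH=\mH$, the inverse bounded by the open mapping theorem and the splitting topological by the closed graph theorem since both ranges of bounded projections are closed), then dualize via $(E^+\mH)^\perp=\nul((E^+)^*)=\ran((E^-)^*)$ and $(N^-\mH)^\perp=\nul((N^-)^*)=\ran((N^+)^*)$ to get $\mH^*=(N^+)^*\mH^*\oplus(E^-)^*\mH^*$, and finally apply the same reduction to the adjoint pairs — is sound, and the symmetry remark at the start (one implication suffices, since biadjoints recover the original projections) correctly handles the converse without needing a separate argument for ``$\mH^*=Y^\perp\oplus X^\perp\Rightarrow\mH=X\oplus Y$''. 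You also correctly flag the one place where sign bookkeeping matters, namely that $(E^+\mH)^\perp$ is the range of $(E^-)^*$, not $(E^+)^*$, which is exactly what produces the $N^-$/$E^-$ pairing in the dual statement. In short: the paper buys brevity by outsourcing the lemma to \cite{AAH}; your proof buys self-containedness at the cost of a short diagram chase, and I see no gap in it.
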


\begin{proof} This is \cite[Prop.~2.52]{AAH}
\end{proof}

\begin{proof}[Proof  of Proposition~\ref{prop:reg=dir}]  
We apply the abstract result  as follows. Here $\mH$ is the Hilbert space $\ran(D)\subset L_2(S^n;\V)=L_{2}$
and we realize its dual $\mH^*$ as $L_2/\mH^\perp$. The operators $N^\pm$ are $N^+: f\mapsto \begin{bmatrix} 0  \\ 
   f_\ta \end{bmatrix}$  and $N^-: f\mapsto \begin{bmatrix} f_{\no}  \\ 
   0 \end{bmatrix}$ from Definition~\ref{defn:DNops}. As both preserve  $\mH$, their adjoints induce operators on $\mH^*$. 
We choose 
$E^+= E_A^+$ and $E^-= E^-_A$.  
By Proposition~\ref{prop:wpequiviso}(ii) and reformulating \eqref{eq:Regmap} using $N^+$, well-posedness of the regularity problem for $A^*$ is equivalent to $N^+: E_{A^*}^+\mH \to N^+\mH$ being an isomorphism. By Lemma~\ref{lem:abstract}  this is equivalent to 
$(N^-)^*: (E_{A^*}^-)^*\mH^* \to (N^-)^*\mH^*$ being an isomorphism.
By (\ref{eq:dualityregtodir}) with the roles of $A$ and $A^*$ reversed, and written as an identity  on $\mH^*$ since both terms preserve $\mH^\perp$, this 
 translates into   $(N^-)^*: \tE_{A}^+\mH^* \to (N^-)^*\mH^*$ is an isomorphism. Using the definition of $\tE_{A}^+$, $(N^-)^*=N^-$ and $\mH^*=L_{2}/\mH^\perp$, this amounts to 
$\tE_0^+(L_2/\mH^\perp)\to L_2(S^n;\C^m)/\C^m: \tilde h^+\mapsto (\tE_A^+ \tilde h^+)_\no$
is an isomorphism. Using Lemma~\ref{lem:dir} and Proposition~\ref{prop:wpequiviso}(iii), 
this means that the Dirichlet problem for $A$ is well-posed.
\end{proof}

\subsection{Perturbations results}\label{sec:pert}

Proposition~\ref{prop:reg=dir} shows that it suffices to consider the 
Neumann and regularity problems and to study invertibility of the maps  \eqref{eq:Neumap}   and \eqref{eq:Regmap}.
Note that for $r$-independent coefficients $A= A_1$, we have $E_A^+= E_0^+$ and therefore
$(E_A^+ h^+)_\no = h^+_\no$ and $(E_A^+ h^+)_\ta= h^+_\ta$. 

\begin{lem}   \label{lem:automaticinj}
  Assume that $A$ are coefficients such that $I-S_{A}$ is invertible on $\mX$.
  Then the maps (\ref{eq:Neumap}) and (\ref{eq:Regmap}) are injective.
\end{lem}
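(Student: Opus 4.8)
\textbf{Proof plan for Lemma~\ref{lem:automaticinj}.}

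The plan is to reduce injectivity of each boundary map to the \emph{a priori} uniqueness already available for the ODE, namely Theorem~\ref{thm:inteqforNeu} together with the estimate $\|h^+\|_2\approx \|f_0\|_2$ on traces. Suppose $h^+\in E_0^+\mH$ lies in the kernel of (\ref{eq:Neumap}), i.e. $(E_A^+h^+)_\no=0$. Set $f:=(I-S_A)^{-1}e^{-t\Lambda}h^+\in\mX$, which by Theorem~\ref{thm:inteqforNeu} is the conormal gradient of an $\mX^o$-solution and has boundary trace $f_0=E_A^+h^+\in\mH$ (Proposition~\ref{prop:EA}). By hypothesis $(f_0)_\no=0$, so $f_0\in N^+\mH$, i.e. $f_0=\begin{bmatrix} 0 & (f_0)_\ta \end{bmatrix}^t$ with $(f_0)_\ta\in\ran(\nabla_S)$. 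The point is then that the corresponding weak solution $u$ of the divergence form equation must be constant. Indeed, translating back through the gradient-to-conormal gradient map, $g_1=(B_0f_0)_\no\rad+(f_0)_\ta$, and the normal component of the conormal derivative $(A_1g_1)_\no=(f_0)_\no=0$ while also, by the construction of $\mX^o$-solutions and the limit in Theorem~\ref{thm:inteqforNeuandg}, the tangential part $(g_1)_\ta=(f_0)_\ta$; feeding $(A_1g_1)_\no=0$ and the equation $\divv_\bx(Ag)=0$ into a Caccioppoli/Rellich-type integration by parts on shrinking annuli near the boundary, combined with the accretivity (\ref{eq:accrassumption}) on $\mH_1$, forces $g=0$ near the boundary, hence $f_0=0$ by the trace estimate, hence $h^+=0$ since $\|h^+\|_2\lesssim\|f_0\|_2$.

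Concretely, the cleanest route is to mimic the argument of Lemma~\ref{lem:L2lowerbound} (or of Lemma~\ref{lem:injonX}), which already packages exactly this kind of energy estimate: if $f\in\mX$ solves the ODE and its trace $f_0$ has vanishing normal part, then pairing $A_rg_r$ against $g_r$ over $\{|\bx|<\rho\}$, using Gauss' theorem and the fact that $(A_rg_r)_\no$ has mean zero on each sphere (shown in the proof of Proposition~\ref{prop:divformasODE}), together with the vanishing of the boundary term coming from $(f_0)_\no=0$, gives $\|g\|_{L_2(\{|\bx|<\rho\})}^2\lesssim \kappa^{-1}\cdot(\text{boundary flux})=0$. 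The same scheme handles the regularity map (\ref{eq:Regmap}): if $(E_A^+h^+)_\ta=0$ then $f_0\in N^-\mH$, so $(f_0)_\ta=0$; now the relevant solution has vanishing tangential gradient at the boundary, and one applies the analogous integration by parts, this time using that $(g_r)_\ta\in\ran(\nabla_S)$ so $g_r\in\mH_1$ and the accretivity again closes the estimate, forcing $f_0=0$ and $h^+=0$.

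In both cases one finally invokes Theorem~\ref{thm:inteqforNeu}, which gives $\max(\|h^+\|_2,\|h^-\|_2)\approx\|f_0\|_2$, so $f_0=0$ yields $h^+=0$; this is what makes the maps injective as maps out of $E_0^+\mH$. The main obstacle I anticipate is \emph{not} the functional-analytic bookkeeping but making the boundary integration by parts rigorous given only $g\in L_2^\loc$ with the Whitney/non-tangential control of $\mX^o$: one must justify the limiting flux computation (taking $\phi$ approaching the characteristic function of $\{|\bx|<\rho\}$, as done in the proof of Proposition~\ref{prop:divformasODE}) and verify that the boundary term genuinely vanishes when $(f_0)_\no=0$ (resp. $(f_0)_\ta=0$) rather than merely being controlled. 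Since all the needed ingredients — the mean-zero property of $(A_rg_r)_\no$ on spheres, the accretivity on $\mH_1$, and the $L_2$ trace with estimate — are already established in the excerpt, I expect this to go through by a direct adaptation of the energy argument in Lemmas~\ref{lem:injonX} and~\ref{lem:L2lowerbound}, and I would simply refer to those proofs for the repetitive estimates.
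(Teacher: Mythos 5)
Your overall strategy is the paper's: represent the kernel element as the trace $f_0=E_A^+h^+$ of the conormal gradient of an $\mX^o$-solution, use Green's formula/energy pairing $\int_{\bO^{1+n}}(Ag,g)\,d\bx=\int_{S^n}(A_1g_1)_\no\,\bar u_1\,dx$ together with accretivity to force $g=0$, and conclude $h^+=E_0^+f_0=0$ from the trace estimate. For the Neumann map your argument is complete: $(A_1g_1)_\no=(f_0)_\no=0$ kills the boundary term outright. Also note the paper's justification of the integration by parts is simpler than you anticipate: since $\mX\subset L_2(\R_+\times S^n;\V)$ (Lemma~\ref{lem:XlocL2}), one has $g\in L_2(\bO^{1+n})$ and $u\in H^1(\bO^{1+n})$, so Green's formula is standard once the trace identifications from Theorem~\ref{thm:inteqforNeuandg} are in hand.

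The gap is in the regularity case. There $(f_0)_\ta=0$ does \emph{not} make the boundary term vanish by itself, and "$g_r\in\mH_1$ plus accretivity closes the estimate" cannot do it either: accretivity only handles the solid integral, while the boundary term is now $\int_{S^n}(A_1g_1)_\no\,\bar u_1\,dx$ with $(A_1g_1)_\no=(f_0)_\no$ generally nonzero. The missing assembly, which is exactly what the paper uses, is: $(f_0)_\ta=(g_1)_\ta=\nabla_S u_1=0$ forces $u_1$ to be constant, so the boundary term equals $\bar u_1\int_{S^n}(f_0)_\no\,dx$, and this vanishes because $f_0\in\mH$ means $(f_0)_\no\in\ran(\divv_S)$, hence has mean zero on $S^n$. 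You list the mean-zero property among your ingredients (though you invoke it in the Neumann discussion, where it is not needed), but you never combine it with the constancy of $u_1$; as written, the regularity half of the proof does not close. With that one step inserted, your argument coincides with the paper's proof.
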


\begin{proof}
  Assume that $h^+\in E_0^+\mH$ is such that $(E_A^+ h^+)_\perp=0$.
  As in Theorem~\ref{thm:inteqforNeu}, let $f\in\mX$ be such that $f_0= E_A^+ h^+$,
  so that we are assuming $(f_0)_\no=0$.
  For the corresponding $\mX^o$-solution $g= \nabla_\bx u$ to $\divv_\bx A g=0$,
  Green's formula shows that
$$
  \int_{\bO^{1+n}} (Ag, g) d\bx = \int_{S^n} (A_{1}g_1)_\no u_1 dx,
$$
where $g\in\mX^o\subset L_2(\bO^{1+n}; \C^{(1+n)m})$, $(A_{1}g_1)_\no= (f_0)_\no\in L_2(S^n;\C^m)$
and $u\in H^1(\bO^{1+n}; \C^m)$.
The accretivity of $A$ then shows that $g=0$. Hence $f=0$ and 
$h^+= E_0^+ f_0=0$.

The proof that the map $h^+\mapsto (E_A^+ h^+)_\ta$ is injective is similar.
In this case, we use that $u_1$ is constant, and $f_0\in\mH$ so that 
$\int_{S^n}(f_0)_\no dx=0$.
\end{proof}

We can now derive two perturbations results.
Our first result is about $L_{\infty}$ perturbation within the class of radially independent coefficients. We need two preliminary lemmas. 

\begin{lem}   \label{lem:vardomains}
Let $P_t$ be bounded projections in a Hilbert space $\mH$ which depend continuously on a parameter $t\in(-\delta,\delta)$,
and let $S:\mH\rightarrow\mK$ be a bounded operator into a Hilbert space $\mK$.
If $S: P_0\mH\rightarrow \mK$ is an isomorphism, then there exists $0<\epsilon<\delta$,
such that $S:P_t \mH\rightarrow\mK$ is an isomorphism when $|t|<\epsilon$.
If each $S: P_t\mH\rightarrow \mK$ is a semi-Fredholm operators with index $i_t$,
then all indices $i_t$ are equal.
\end{lem}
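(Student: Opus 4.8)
The plan is to reduce both statements to the fixed reference subspace $P_0\mH$ by conjugating nearby range subspaces with an invertible operator close to the identity, using the classical fact that if $\|P-Q\|<1$ for two bounded idempotents then $U:=QP+(I-Q)(I-P)$ is invertible and restricts to an isomorphism $P\mH\to Q\mH$.

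First I would fix $\epsilon\in(0,\delta)$ small enough that $\|P_t-P_0\|<1$ for $|t|<\epsilon$ (possible by norm continuity of $t\mapsto P_t$), and set $U_t:=P_tP_0+(I-P_t)(I-P_0)$. I would record the two routine identities $U_tP_0=P_tU_t$ and $\big(P_0P_t+(I-P_0)(I-P_t)\big)U_t=I-(P_t-P_0)^2$; the first shows $U_t$ maps $P_0\mH$ into $P_t\mH$ (and, from $P_0U_t^{-1}=U_t^{-1}P_t$, that $U_t^{-1}$ maps $P_t\mH$ into $P_0\mH$), the second shows $U_t$ is invertible on $\mH$ since $\|(P_t-P_0)^2\|<1$. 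Hence $U_t\colon P_0\mH\to P_t\mH$ is an isomorphism. Since $U_0=I$ and $t\mapsto U_t$ is norm continuous, $SU_t\to S$ in $\mL(\mH,\mK)$, so the restrictions $SU_t|_{P_0\mH}\to S|_{P_0\mH}$ in $\mL(P_0\mH,\mK)$; as the invertible operators form an open subset of $\mL(P_0\mH,\mK)$ and $S|_{P_0\mH}$ is one by hypothesis, after shrinking $\epsilon$ the operator $SU_t|_{P_0\mH}$ is an isomorphism, and then $S|_{P_t\mH}=\big(SU_t|_{P_0\mH}\big)\big(U_t^{-1}|_{P_t\mH}\big)$ is too.

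For the index statement I would not use a single reference point but argue locally: around each $s\in(-\delta,\delta)$ the same construction with $P_0$ replaced by $P_s$ gives, for $t$ in a neighbourhood of $s$, isomorphisms $U_t^{(s)}\colon P_s\mH\to P_t\mH$ depending norm continuously on $t$ with $U_s^{(s)}=I$. Composition with an isomorphism preserves the semi-Fredholm property and the index, so $t\mapsto SU_t^{(s)}|_{P_s\mH}$ is a norm continuous family of semi-Fredholm operators in $\mL(P_s\mH,\mK)$ whose index at $t$ equals $i_t$; invoking openness of the set of semi-Fredholm operators together with local constancy of the index under norm perturbations, $i_t=i_s$ for $t$ near $s$. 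Thus $t\mapsto i_t$ is locally constant on the connected interval $(-\delta,\delta)$, hence constant.

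The main obstacle is bookkeeping rather than conceptual: one must verify the algebraic identities making $U_t$ an isomorphism between the moving subspaces (standard in the theory of pairs of projections) and cite the stability of the semi-Fredholm index under norm-continuous perturbation. Neither is hard, so once these ingredients are in place the proof is short.
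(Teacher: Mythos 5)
Your proof is correct, and it is in substance the argument the paper relies on: the paper simply cites \cite[Lem.~4.3]{AKMc} for the first claim and invokes ``the continuity method'' for the second, and your construction of the intertwining isomorphism $U_t=P_tP_0+(I-P_t)(I-P_0)$ between nearby ranges, combined with openness of the invertibles and local constancy of the semi-Fredholm index on the connected interval, is exactly the standard way those two ingredients are proved. So you have supplied a correct, self-contained version of the same approach rather than a genuinely different route.
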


\begin{proof}
The first conclusion is in
\cite[Lem.~4.3]{AKMc} and the second one is proved similarly using in addition the continuity method.
\end{proof}

\begin{prop}   \label{prop:contofhardy}
  The operators
$\chi^+(DB_0+\sigma N)\in \mL(\mH)$,
defined for strictly accretive coefficients $A_1\in L_\infty(S^n; \mL(\V))$ and $\sigma\in\R$, depend continuously on $A_{1}$
 and $\sigma$.
\end{prop}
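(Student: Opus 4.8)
The plan is to write $E_0^+=\chi^+(D_0)$, $D_0=DB_0+\sigma N$, as a functional-calculus operator on $\mH$ and to show that the two inputs it is built from --- the resolvents of $D_0|_\mH$ and the quadratic estimates of Theorem~\ref{thm:QE} --- depend continuously, and with \emph{locally uniform} constants, on $(B_0,\sigma)$. That the constants are locally uniform is immediate: in Proposition~\ref{prop:spectrum} the sectoriality angle $\omega$ and the resolvent bounds depend only on $\|B_0\|_\infty$ and $\kappa_{B_0}$, and in Corollary~\ref{cor:fcalc} the functional-calculus bound depends only on $\|B_0\|_\infty,\kappa_{B_0},n,\sigma$. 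So I would fix a pair $(B_0,\sigma)$ and choose $\omega<\theta<\nu<\pi/2$ valid for all coefficients in a small ball around it. Since $D_0$ is invertible on $\mH$ with compact inverse (Proposition~\ref{prop:DBprops}), the spectrum of $D_0|_\mH$ stays at distance $\ge c>0$ from the origin, uniformly on that ball, so one may use a contour $\gamma$ following $\partial S_{\theta,\sigma}$ but detoured around the origin by a circular arc of radius $c/2$ (this only matters when $\sigma=0$, where $\partial S_{\theta,0}$ passes through $0$); along $\gamma$ one then has the uniform bound $\|(\lambda-D_0)^{-1}\|_{\mL(\mH)}\lesssim(1+|\lambda|)^{-1}$.

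The next step is norm-continuity of $(B_0,\sigma)\mapsto(\lambda-D_0)^{-1}\in\mL(\mH)$, locally uniformly for $\lambda\in\gamma$. For the $\sigma$-dependence this is immediate from the resolvent identity, because $DB_0+\sigma N$ and $DB_0+\sigma'N$ have the same domain $B_0^{-1}\dom(D)$ and differ by the bounded operator $(\sigma-\sigma')N$. For the $B_0$-dependence the difference $D(B_0-B_0')$ is unbounded, but on $\mH$ one has, from Proposition~\ref{prop:DBprops}, the identity $D|_\mH=(D_0-\sigma N)(P_\mH B_0)^{-1}$, hence $(\lambda-D_0)^{-1}D|_\mH=\big[\lambda(\lambda-D_0)^{-1}-I-\sigma(\lambda-D_0)^{-1}N\big](P_\mH B_0)^{-1}$ extends to a bounded operator on $\mH$; inserting this into $(\lambda-D_0)^{-1}-(\lambda-D_0')^{-1}=(\lambda-D_0)^{-1}(D_0-D_0')(\lambda-D_0')^{-1}$, interpreted with this bounded extension, yields $\|(\lambda-D_0)^{-1}-(\lambda-D_0')^{-1}\|_{\mL(\mH)}\lesssim(1+|\lambda|)^{-1}\big(\|B_0-B_0'\|_\infty+|\sigma-\sigma'|\big)$. (The bookkeeping justifying this interpretation, on a suitable dense set, is the kind of argument carried out in \cite{AKMc}.) Combining this with $|b(\lambda)|\lesssim\min(|\lambda|^a,|\lambda|^{-a})$ and dominated convergence in $b(D_0)=\frac1{2\pi i}\int_\gamma b(\lambda)(\lambda-D_0)^{-1}\,d\lambda$ shows that $(B_0,\sigma)\mapsto b(D_0)\in\mL(\mH)$ is norm-continuous for every fixed $b\in\Psi(S^o_{\nu,\sigma})$.

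The remaining, and genuinely delicate, point is that $\chi^+\notin\Psi(S^o_{\nu,\sigma})$: it is bounded and (on each of the two components of $S^o_{\nu,\sigma}$) locally constant, but it does not decay, so the naive Cauchy integral over $\gamma$ diverges and pointwise-in-$\lambda$ resolvent control is not by itself enough --- indeed the boundedness of $\chi^+(D_0)$ itself already rests on Theorem~\ref{thm:QE}. Writing $\chi^+=\tfrac12(I+\sgn)$ and $\sgn(D_0)=\tfrac2\pi\int_0^\infty\psi(tD_0)\tfrac{dt}t$ with $\psi(\lambda)=\lambda(1+\lambda^2)^{-1}\in\Psi(S^o_{\nu,\sigma})$, the plan is to bound $\sgn(D_0)-\sgn(D_0')=\tfrac2\pi\int_0^\infty\big(\psi(tD_0)-\psi(tD_0')\big)\tfrac{dt}t$ by splitting the $t$-integral at the scales where each factor is close to $I$ or to $0$, and combining the pointwise-in-$t$ smallness of $\psi(tD_0)-\psi(tD_0')$ from the previous step with the \emph{uniform} quadratic estimates $\int_0^\infty\|\psi(tD_0)f\|_2^2\tfrac{dt}t\approx\|f\|_2^2$ on $\mH$, and their dual versions, which hold with locally uniform constants by Theorem~\ref{thm:QE} and Corollary~\ref{cor:fcalc}. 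This is precisely the perturbation argument for Hardy/spectral projections carried out in \cite{AKMc}, and it transfers once the uniform resolvent and quadratic bounds above are in place; I would not reproduce the computation. The main obstacle, then, is not any single estimate but making this last passage from $\Psi(S^o_{\nu,\sigma})$ to $\chi^+$ rigorous with \emph{norm} (not merely strong) convergence --- which is exactly why the uniformity of the quadratic estimates, rather than just their validity coefficient-by-coefficient, is essential.
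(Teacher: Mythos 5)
Your proposal is correct and takes essentially the same route as the paper: the paper disposes of this proposition in one line as a corollary of Theorem~\ref{thm:QE} together with the abstract perturbation result \cite[Prop.~2.42]{AAH}, which is precisely the ``uniform quadratic estimates plus resolvent continuity in $(B_0,\sigma)$ implies norm continuity of $\chi^+(DB_0+\sigma N)$ on $\mH$'' machinery that you outline and then defer to \cite{AKMc}. The scaffolding you add (locally uniform sectoriality and resolvent bounds, the bounded extension of $(\lambda-D_0)^{-1}D$ via $P_{\mH}B_0$, and the reduction of the $\sigma$-dependence to a bounded perturbation) is exactly what goes into that cited result, so nothing further is required.
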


\begin{proof}
This is a corollary of Theorem~\ref{thm:QE} and  \cite[Prop.~2.42]{AAH}.
\end{proof}

 Here, note that for fixed $\sigma$ we called this operator $E_{0}^+$. Only its action on $\mH$ matters for well-posedness issues. In particular, this does not depend on the extension defined in Definition~\ref{defn:hardyprojs1D} when $\sigma=0$.

\begin{thm}    \label{cor:rindeppert}
  Assume that $A_1$ are $r$-independent coefficients for which the
  Neumann problem is well-posed.
  Then there exists $\epsilon>0$ such that the Neumann problem is well-posed for any 
  $r$-independent coefficients $A_1'$ such that $\|A_1-A_1'\|_\infty<\epsilon$.
  The corresponding results for the regularity and Dirichlet problems hold.
\end{thm}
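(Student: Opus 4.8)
The plan is to reduce everything to the characterization of well-posedness in Proposition~\ref{prop:wpequiviso} together with the perturbation Lemma~\ref{lem:vardomains}. Since $A_1$ and $A_1'$ are radially independent, Theorem~\ref{thm:SAFredholm} (or simply the direct observation that $\E=0$ for radially independent coefficients, so $S_{A_1}=S_{A_1'}=0$) guarantees that $I-S_{A_1}=I-S_{A_1'}=I$ is invertible on $\mX$ and $\mY$. Hence Proposition~\ref{prop:wpequiviso} applies to both, and moreover the perturbed Hardy projections coincide with the unperturbed ones: $E_{A_1}^+=E_{0}^+=\chi^+(D_{A_1})$ and $E_{A_1'}^+=\chi^+(D_{A_1'})$ with $\sigma=\frac{n-1}{2}$. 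So well-posedness of the Neumann problem for $A_1$ means exactly that
$$
  N^-: E_{A_1}^+\mH \to N^-\mH=\mH_\no
$$
is an isomorphism, and we want the same for $A_1'$.

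The key step is then the continuity of $A_1\mapsto \chi^+(D_{A_1})=\chi^+(DB_0+\sigma N)$ as a bounded operator on $\mH$, which is exactly Proposition~\ref{prop:contofhardy}: the map $B_0\mapsto \widehat{A_1}$ is continuous on the set of coefficients strictly accretive on $\mH_1$, and the square function bounds of Theorem~\ref{thm:QE} give the holomorphic functional calculus bounds needed to apply the abstract continuity result \cite[Prop.~2.42]{AAH}. Thus if $\|A_1-A_1'\|_\infty$ is small, then $\|E_{A_1}^+-E_{A_1'}^+\|_{\mL(\mH)}$ is small, i.e. the projections $P_t:=E_{A_1(t)}^+$ along a linear path $A_1(t):=(1-t)A_1+tA_1'$ (each of which is still strictly accretive on $\mH_1$ for $t\in[0,1]$ since strict accretivity on $\mH_1$ is an open convex condition, with a uniform lower bound on the segment) depend continuously on $t$. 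Applying Lemma~\ref{lem:vardomains} with $\mH$, $\mK=\mH_\no$, the fixed operator $S=N^-|_\mH$, and the family $P_t$: since $S: P_0\mH\to\mH_\no$ is an isomorphism by hypothesis, there is $\epsilon>0$ such that $S:P_t\mH\to\mH_\no$ is an isomorphism for $|t|<\epsilon$; choosing $\|A_1-A_1'\|_\infty$ small enough that $t=1$ lies in the corresponding range gives that $N^-:E_{A_1'}^+\mH\to\mH_\no$ is an isomorphism, i.e. the Neumann problem for $A_1'$ is well-posed.

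For the regularity problem the argument is identical, replacing $N^-$ by $N^+$ and $\mH_\no$ by $\mH_\ta$ in the above, using Proposition~\ref{prop:wpequiviso}(ii). For the Dirichlet problem one can either repeat the argument directly on the spaces $\tE_0^+L_2$, $L_2(S^n;\C^m)$ using $\tE_{A_1}^+=\tE_0^+=\chi^+(\tD_{A_1})$ and the analogue of Proposition~\ref{prop:contofhardy} for $\tD_0$ (which follows by the same reasoning via Theorem~\ref{thm:QE} for $\tD_0$ and Lemma~\ref{lem:intertwduality}), or more economically invoke Proposition~\ref{prop:reg=dir}: well-posedness of the Dirichlet problem for $A_1'$ is equivalent to well-posedness of the regularity problem for $(A_1')^*$, and $\|A_1^*-(A_1')^*\|_\infty=\|A_1-A_1'\|_\infty$, so the regularity statement just proved (applied to the adjoint coefficients, whose Dirichlet problem for $A_1$ — equivalently the regularity problem for $A_1^*$ — is well-posed by hypothesis) gives it. Either way one should note $\epsilon$ depends on $A_1$ (through the norm of the inverse of the boundary map) but not on $A_1'$.

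The main obstacle I anticipate is purely bookkeeping rather than conceptual: one must make sure that all the coefficients $A_1(t)$ along the segment remain strictly accretive \emph{on $\mH_1$} with a uniform constant (so that $B_0(t)=\widehat{A_1(t)}$ is well-defined and the functional calculus bounds are uniform), and that $\sigma=\frac{n-1}{2}$ is held fixed so that no dimension-dependent subtleties from Definition~\ref{defn:hardyprojs1D} enter — but as remarked after Proposition~\ref{prop:contofhardy}, only the action of $\chi^+(D_0)$ on $\mH$ is relevant to well-posedness, and that is genuinely continuous. No new analysis is needed beyond what is already in the paper; the result is essentially an application of Lemma~\ref{lem:vardomains} to the continuous family of perturbed Hardy projections, exactly as in \cite[Cor.~4.5]{AKMc}.
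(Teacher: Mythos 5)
Your proposal is correct and is essentially the paper's own proof, which consists precisely of combining Lemma~\ref{lem:vardomains} with the continuity of the Hardy projections from Proposition~\ref{prop:contofhardy} (noting $S_{A_1}=S_{A_1'}=0$ so that $E_{A_1'}^\pm=\chi^\pm(D_{A_1'})$) to perturb the Neumann and regularity boundary maps of Proposition~\ref{prop:wpequiviso}, and then invoking Proposition~\ref{prop:reg=dir} for the Dirichlet problem. The extra details you supply (uniform accretivity along the segment, $\sigma=\frac{n-1}{2}$ fixed, $\epsilon$ depending only on $A_1$) are exactly the bookkeeping the paper leaves implicit with its reference to \cite{AAH}.
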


\begin{proof} Lemma~\ref{lem:vardomains} and Proposition~\ref{prop:contofhardy} give the
 result for Regularity and Neumann problems as in \cite{AAH}.
For the Dirichlet problem, apply Proposition~\ref{prop:reg=dir}.
\end{proof}

The second result is perturbation from radially independent to radially dependent coefficients.

\begin{thm}     \label{prop:rdeppert}
  Assume that $A_1$ are $r$-independent coefficients for which the
  Neumann problem is well-posed.
  Then there exists $\epsilon>0$ such that the Neumann problem is well-posed for any 
  $r$-dependent coefficients $A$ such that $\lim_{\tau\to 0}\|\chi_{t<\tau}\E_t\|_*<\epsilon$.
  The corresponding results for the regularity and Dirichlet problems hold.
\end{thm}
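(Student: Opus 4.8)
The strategy is to combine the Fredholm stability already in place with the perturbation lemma for variable domains. By Theorem~\ref{thm:SAFredholm}, the hypothesis $\lim_{\tau\to 0}\|\chi_{t<\tau}\E_t\|_*<\epsilon$ (for a possibly smaller $\epsilon$ than the one we finally use) guarantees that $I-S_A$ is invertible on $\mX$; moreover, applying the same to $A^*$ (whose boundary trace perturbation is $\widetilde\E_t=\widehat{A_1^*}-\widehat{A^*}$, which also satisfies the small Carleson condition, since passing to adjoints and the transformation $\widehat{\cdot}$ are continuous) we get $I-S_{A^*}$ invertible on $\mY$, and likewise $I-S_A$ invertible on $\mY$. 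Hence by Proposition~\ref{prop:wpequiviso} well-posedness of the Neumann (resp.\ regularity, Dirichlet) problem for $A$ is equivalent to invertibility of the boundary map \eqref{eq:Neumap} (resp.\ \eqref{eq:Regmap}, \eqref{eq:Dirmap}), and by Proposition~\ref{prop:reg=dir} the Dirichlet problem for $A$ reduces to the regularity problem for $A^*$, so it suffices to treat the Neumann and regularity maps.

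\textbf{Key steps.} First I would record that for the radially independent coefficients $A_1$, well-posedness of the Neumann problem means that $h^+\mapsto (E_0^+h^+)_\no$ is an isomorphism $E_0^+\mH\to \mH_\no$. The point is then to show the perturbed map $h^+\mapsto (E_A^+h^+)_\no$ on $E_0^+\mH$ stays an isomorphism when $\|\chi_{t<\tau}\E\|_*$ is eventually small. The natural device is Lemma~\ref{lem:vardomains}: write $E_A^+h^+ = E_0^+h^+ - E_0^-\int_0^\infty e^{-s\Lambda}D\E_s f_s\,ds$ with $f=(I-S_A)^{-1}e^{-t\Lambda}E_0^+h^+$, and observe from the bounds in Theorems~\ref{thm:XYbounded} and \ref{thm:NT} that $\|E_A^+ - E_0^+\|_{\mL(\mH)}\lesssim \|\E\|_*$; actually, since the correction term only depends on $\chi_{t<\tau}\E f$ and $f\in\mX$ with $\|f\|_\mX\lesssim\|h^+\|_2$ uniformly (using invertibility of $I-S_A$ and the bound $\|S_A^\epsilon\|\lesssim\|\E\|_*$), one gets $\|E_A^+-E_0^+\|_{\mL(\mH)}\lesssim \lim_{\tau\to 0}\|\chi_{t<\tau}\E\|_*$. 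Then $P_0:=E_0^+$, $P_1:=E_A^+$ are close bounded projections, $S:=(\cdot)_\no$ is the fixed bounded map, $S:P_0\mH\to\mH_\no$ is an isomorphism by hypothesis, so Lemma~\ref{lem:vardomains} yields that $S:P_1\mH\to\mH_\no$ is an isomorphism once the gap is small enough; this fixes the final $\epsilon$. Injectivity is in any case automatic from Lemma~\ref{lem:automaticinj}, so only surjectivity is at stake, which is exactly what the smallness buys. The regularity problem is identical with $S=(\cdot)_\ta$, and the Dirichlet problem follows from Proposition~\ref{prop:reg=dir} applied to $A^*$, noting that the small Carleson condition is preserved under $A\mapsto A^*$.

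\textbf{Main obstacle.} The delicate point is making the estimate $\|E_A^+-E_0^+\|_{\mL(\mH)}\lesssim \lim_{\tau\to 0}\|\chi_{t<\tau}\E\|_*$ genuinely quantitative and \emph{uniform}: one must control $\|f\|_\mX=\|(I-S_A)^{-1}e^{-t\Lambda}E_0^+h^+\|_\mX$ by $\|h^+\|_2$ with a constant not blowing up as the Carleson norm of $\E$ is adjusted, and then show the correction integral $E_0^-\int_0^\infty e^{-s\Lambda}D\E_s f_s\,ds$ has operator norm governed by $\|\chi_{t<\tau}\E\|_*$ alone (not the full $\|\E\|_*$). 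This requires re-running the bound from the proof of Theorem~\ref{thm:XYbounded} with the cutoff $\chi_{t<\tau}$ inserted — writing $\E f=(\chi_{t<\tau}\E)f$ and using that the map $\tilde h^-$-type integral factors through $\mY^*$ with norm $\lesssim\|\chi_{t<\tau}\E\|_*\|f\|_\mX$ — which is routine given the machinery but must be done carefully to decouple the ``near the boundary'' smallness from the possibly large global size of $\E$. Once that uniform gap estimate is in hand, Lemma~\ref{lem:vardomains} closes the argument immediately, so the whole proof is short.
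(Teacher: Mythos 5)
Your overall framing (reduce via Proposition~\ref{prop:wpequiviso} and Proposition~\ref{prop:reg=dir} to invertibility of the Neumann/regularity boundary maps, with injectivity from Lemma~\ref{lem:automaticinj}) matches the paper, but the central quantitative claim in your argument has a genuine gap. You assert $\|E_A^+-E_0^+\|_{\mL(\mH)}\lesssim \lim_{\tau\to 0}\|\chi_{t<\tau}\E\|_*$, justified by ``the correction term only depends on $\chi_{t<\tau}\E f$'' and by ``writing $\E f=(\chi_{t<\tau}\E)f$''. That identity is false: the correction is $E_0^-\int_0^\infty e^{-s\Lambda}D\,\E_s f_s\,ds$, an integral over \emph{all} $s>0$, and $f$ is certainly not supported in $\{t<\tau\}$. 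The hypothesis of the theorem only makes $\E$ small (in Carleson/multiplier norm) \emph{near the boundary}; away from the boundary $\|\E\|_\infty$ can be of the order of $\|A\|_\infty+\|A_1\|_\infty$, so the tail $\int_\tau^\infty e^{-s\Lambda}D\,\E_s f_s\,ds$ is bounded by $\|\E\|_\infty\|h^+\|_2$ but is in no sense small. Consequently the map $h^+\mapsto(E_A^+h^+)_\no$ is \emph{not} a small perturbation of $h^+\mapsto h^+_\no$, and neither a Neumann-series argument nor the continuity-of-projections route through Lemma~\ref{lem:vardomains} closes the proof. Your ``main obstacle'' paragraph correctly identifies the issue but then declares it routine, which it is not: no rerun of the bounds in Theorem~\ref{thm:XYbounded} will make the far-from-boundary contribution controlled by $\|\chi_{t<\tau}\E\|_*$ alone.

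The paper's mechanism for the tail is different and is where the compactness of the boundary enters: split the correction at $s=\tau$, bound the near part by $\lesssim\|\chi_{t<\tau}\E\|_*\|h^+\|_2$ (so that $h^+\mapsto(h^++h_1)_\no$ is invertible for $\tau$ small), and write the far part as $e^{-(\tau/2)\Lambda}h_2$ with $\|h_2\|_2\lesssim\|\E\|_\infty\|h^+\|_2$; since $e^{-(\tau/2)\Lambda}$ is compact on $\mH$ (Proposition~\ref{prop:DBprops}, ultimately Rellich), the tail contributes a compact operator. Hence the boundary map is invertible plus compact, so Fredholm of index $0$, and invertibility then follows from the injectivity supplied by Lemma~\ref{lem:automaticinj} (a Green's formula/energy argument). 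In other words, injectivity is not the afterthought you make it ("only surjectivity is at stake, which is exactly what the smallness buys"): smallness cannot buy surjectivity here, and it is precisely the index-zero Fredholm structure combined with Lemma~\ref{lem:automaticinj} that does. Your treatment of the Dirichlet case via Proposition~\ref{prop:reg=dir} applied to $A^*$, and the observation that the small Carleson condition passes to adjoints, are fine once the Neumann/regularity step is repaired along these lines.
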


\begin{proof} The condition on the coefficients implies that $I-S_{A}$ is invertible on $\mX$ and $I-S_{A^*}$ invertible on $\mY$ by Theorem~\ref{thm:SAFredholm}.

  We write the map (\ref{eq:Neumap}) as 
\begin{multline*}
  (E_A^+ h^+)_\no = h^+_\no+ \left( E_0^-\int_0^\tau e^{-s\Lambda} D\E_s f_s \right)_\no
  + \left( e^{-(\tau/2)\Lambda}E_0^-\int_\tau^\infty e^{-(s-\tau/2)\Lambda} D\E_s f_s \right)_\no \\
  =: h^+_\no + (h_1)_\no + (e^{-(\tau/2)\Lambda} h_2)_\no,
\end{multline*}
for $h^+\in E_0^+\mH$,
where $\|f\|_\mX\lesssim \|h^+\|_2$ by Theorem~\ref{thm:inteqforNeu}.
By assumption the map $E_0^+\mH\to \mH_\no:h^+\mapsto h^+_\no$ is invertible.
By \cite[Lem. 6.9]{AA1}, the norm of $E_0^+\mH\to \mH_\no:h^+\mapsto (h_1)_\no$
is $\lesssim \|\chi_{t<\tau}\E_t\|_*$.
Fix $\tau$ small enough so that $E_0^+\mH\to \mH_\no:h^+\mapsto (h^++h_1)_\no$
is invertible.
For the last term, we then have estimates 
\begin{multline*}
  \|h_2\|_2 \lesssim \int_\tau^\infty \| e^{-(s-\tau/2)\Lambda} D \|_{2\to 2} \|\E\|_\infty \|f_s\|_2 ds 
  \lesssim \|\E\|_\infty\int_\tau^\infty s^{-1}\|f_s\|_2 ds \\
  \lesssim \|\E\|_\infty\left( \int_\tau^\infty \|f_s\|_2^2 ds \right)^{1/2}
   \lesssim\|\E\|_\infty\|f\|_\mX \lesssim\|\E\|_\infty\|h^+\|_2.
\end{multline*}
Here we used the estimate
$\|e^{-(s-\tau/2)\Lambda} Dg\|_2\lesssim \|\Lambda e^{-(s-\tau/2)\Lambda} (D_0-\sigma N)B_0^{-1}P_{B_0\mH}g\|_2
\lesssim  ((s-\tau/2)^{-2}+ \sigma (s-\tau/2)^{-1} ) \|g\|_2$.

It follows that $E_0^+\mH\to \mH_\no:h^+\mapsto (e^{-(\tau/2)\Lambda}h_2)_\no$ is a compact
operator since $e^{-(\tau/2)\Lambda}$ is compact as a consequence of 
Proposition~\ref{prop:DBprops}.
We conclude that $E_0^+\mH\to \mH_\no:h^+\mapsto (E_A^+ h^+)_\no$ is a Fredholm operator
with index $0$. Lemma~\ref{lem:automaticinj} shows that it is injective,
hence an isomorphism.

Replacing normal components $(\cdot)_\no$ by tangential parts $(\cdot)_\ta$ in the proof above shows the
result for the regularity problem. 
Proposition~\ref{prop:reg=dir} then gives the result for the Dirichlet problem.
\end{proof}

\subsection{Positive results}\label{sec:positive}

We now give examples of radially dependent coefficients for which one has well-posedness.
Given Theorems~\ref {cor:rindeppert}
 and \ref{prop:rdeppert}, this induces results for perturbed coefficients.

\begin{prop}    \label{prop:blockwp}
  If $A$ are $r$-independent coefficients, and if $A$ is a block matrix, \textit{i.e.} $A_{\no\ta}= 0= A_{\ta\no}$,
  then the Neumann, regularity and Dirichlet problems with coefficients $A$ are well-posed.
\end{prop}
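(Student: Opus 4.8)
\textbf{Proof plan for Proposition~\ref{prop:blockwp}.}

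The plan is to exploit the block structure to diagonalize the operator $D_0 = DB_0 + \sigma N$ (with $\sigma = \frac{n-1}{2}$) and reduce well-posedness to invertibility statements that can be read off from spectral positivity. Since $A$ is $r$-independent, $B_0 = \widehat A$ is again block diagonal, say $B_0 = \begin{bmatrix} b & 0 \\ 0 & a \end{bmatrix}$ with $b = A_{\no\no}^{-1}$ and $a = A_{\ta\ta}$, each strictly accretive as a multiplication operator. By the corollaries to Theorem~\ref{thm:inteqforNeu} and Corollary~\ref{cor:diransatz} (the $r$-independent case), $S_A = 0$, so $E_A^\pm = E_0^\pm$, $\tE_A^\pm = \tE_0^\pm$, and by Proposition~\ref{prop:wpequiviso} well-posedness of the three problems is exactly invertibility of
\[
  E_0^+\mH \to \mH_\no: h^+ \mapsto h^+_\no, \qquad
  E_0^+\mH \to \mH_\ta: h^+ \mapsto h^+_\ta, \qquad
  \tE_0^+ L_2 \to L_2(S^n;\C^m): \tilde h^+ \mapsto \tilde h^+_\no.
\]

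First I would compute the spectral splitting $\mH = E_0^+\mH \oplus E_0^-\mH$ explicitly. Writing $DB_0 = \begin{bmatrix} 0 & -\divv_S a \\ \nabla_S b & 0\end{bmatrix}$ and $\sigma N = \begin{bmatrix} -\sigma & 0 \\ 0 & \sigma\end{bmatrix}$, I would square $D_0$: on $\mH$ one gets $(D_0)^2 = \begin{bmatrix} -b\divv_S a \nabla_S b + \sigma^2 & 0 \\ 0 & -a\nabla_S b \divv_S a + \sigma^2\end{bmatrix}$, a diagonal operator whose diagonal blocks are self-adjoint-in-a-weighted-metric, strictly positive (the $\sigma^2$ shift plus the nonnegative second-order part), hence with $|D_0| = \sqrt{(D_0)^2}$ well-defined and $\ge \sigma$ on $\mH$ when $\sigma>0$, and $\ge$ a positive constant on $\ran(D)$ even when $\sigma = 0$ by Proposition~\ref{prop:DBprops}. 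The Hardy projection $E_0^\pm = \chi^\pm(D_0) = \tfrac12(I \pm D_0|D_0|^{-1})$ then has the form $E_0^\pm = \tfrac12\left(I \pm \begin{bmatrix} -\sigma & -\divv_S a \\ \nabla_S b & \sigma\end{bmatrix}|D_0|^{-1}\right)$. Given $h^+ = \begin{bmatrix} \alpha \\ \beta\end{bmatrix} \in E_0^+\mH$, the equation $E_0^+ h^+ = h^+$ lets me solve $\beta$ in terms of $\alpha$ (namely $\beta = (\sigma + |D_0|)^{-1}\nabla_S b\,\alpha$ on the relevant block, using that $\sigma + |D_0|$ is boundedly invertible), and symmetrically $\alpha$ in terms of $\beta$. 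This shows the normal-component map $h^+ \mapsto h^+_\no = \alpha$ and the tangential-component map $h^+ \mapsto h^+_\ta = \beta$ are both boundedly invertible onto $\mH_\no$ resp.\ $\mH_\ta$, with inverse given by the formulas just displayed; injectivity is already granted by Lemma~\ref{lem:automaticinj}, so only surjectivity and the bound need checking, and both follow from boundedness of $(\sigma+|D_0|)^{-1}$. This settles the Neumann and regularity problems.

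For the Dirichlet problem I would argue similarly with $\tD_0 = B_0 D - \sigma N$ on $L_2 = B_0\mH \oplus \mH^\perp$, using Lemma~\ref{lem:intertwduality} ($D_0 D = D\tD_0$) to transport the splitting, or more directly diagonalize $(\tD_0)^2$ which by the block structure is $\begin{bmatrix} -b\divv_S a\nabla_S b + \sigma^2 & 0 \\ 0 & -a\nabla_S b\divv_S a + \sigma^2\end{bmatrix}$ as well (conjugate to the $D_0$ computation). The same manipulation expresses $\tilde h^+ = \begin{bmatrix}\tilde h^+_\no \\ \tilde h^+_\ta\end{bmatrix} \in \tE_0^+ L_2$ with $\tilde h^+_\ta$ determined by $\tilde h^+_\no$, so $\tilde h^+ \mapsto \tilde h^+_\no$ is an isomorphism $\tE_0^+ L_2 \to L_2(S^n;\C^m)$; care is needed on the $\mH^\perp$ part where by Lemma~\ref{lem:spectralsplits} $\tE_0^+\mH^\perp = N^-\mH^\perp$ consists of constants, which map isomorphically onto the constants in $L_2(S^n;\C^m)$, consistently with Lemma~\ref{lem:dir}. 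Alternatively, and more cheaply, once Neumann and regularity are known for both $A$ and $A^*$ (note $A^*$ is also $r$-independent block form), Proposition~\ref{prop:reg=dir} — whose hypotheses $I - S_A = I$, $I - S_{A^*} = I$ are trivially met here — gives well-posedness of the Dirichlet problem for $A$ from well-posedness of the regularity problem for $A^*$.

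The main obstacle I anticipate is bookkeeping the Hodge splittings and the precise action of $|D_0|^{-1}$ on the two blocks rather than any genuine analytic difficulty: one must be careful that $\nabla_S b$ maps $\dom(\nabla_S)$-appropriately and that $(\sigma + |D_0|)^{-1}$ composed with $\nabla_S b$ lands in $\ran(\nabla_S)$ (it does, by the intertwining of $|D_0|$ with $D$ and the invariance of $\mH$), and to handle the $\sigma = 0$, $n=1$ case where $|D_0|$ has nontrivial kernel and one works on $\ran(D)$ with the zero-extension conventions of Definition~\ref{defn:hardyprojs1D}. None of these require new ideas beyond Proposition~\ref{prop:DBprops}, Proposition~\ref{prop:BDprops} and the explicit diagonalization.
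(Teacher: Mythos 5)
Your reduction is the same as the paper's (for $r$-independent coefficients $S_A=0$, so well-posedness is invertibility of the boundary maps of Proposition~\ref{prop:wpequiviso}, injectivity comes from Lemma~\ref{lem:automaticinj}, and Dirichlet follows from Proposition~\ref{prop:reg=dir} applied to $A^*$, which is again block and $r$-independent). The gap is in the surjectivity step. Membership $h^+=\begin{bmatrix}\alpha\\ \beta\end{bmatrix}\in E_0^+\mH$ is the full vector equation $D_0h^+=|D_0|h^+$; since $D_0^2=(DB_0)^2+\sigma^2$ is block diagonal in the normal/tangential splitting (your formula for the blocks has spurious factors of $a,b$: they are $-\divv_S a\nabla_S b+\sigma^2$ and $-\nabla_S b\divv_S a+\sigma^2$), so is $|D_0|$, and writing $\Lambda_{\no},\Lambda_{\ta}$ for its two blocks the component equations read $(\Lambda_{\no}+\sigma)\alpha=-\divv_S(a\beta)$ and $(\Lambda_{\ta}-\sigma)\beta=\nabla_S(b\alpha)$. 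So to solve $\beta$ from $\alpha$ you must invert $\Lambda_{\ta}-\sigma$, not $\sigma+|D_0|$: the latter is a resolvent at $-\sigma$, outside the spectral sector, and is trivially invertible, but $\sigma$ itself lies inside the region $S_{\omega,\sigma+}$ that contains the spectrum (and $\omega$ may exceed $\pi/4$ since $B_0$ is not self-adjoint), so no spectral localization gives invertibility of $\Lambda_{\ta}-\sigma$; likewise statements such as ``$|D_0|\ge\sigma$'' or ``self-adjoint in a weighted metric, strictly positive'' have no meaning for these non-self-adjoint sectorial blocks. Moreover, solving one component does not by itself place the pair in $E_0^+\mH$: you must also verify the other component, which needs the intertwining $\Lambda_{\no}\divv_S a=\divv_S a\,\Lambda_{\ta}$ together with injectivity of $\Lambda_{\no}-\sigma$ (resp.\ $\Lambda_{\ta}-\sigma$). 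These points are exactly where the difficulty sits; they can be repaired (Proposition~\ref{prop:DBprops} gives invertibility of $DB_0$ on $\mH$, hence of $\Lambda_{\ta}^2-\sigma^2=-\nabla_S b\divv_S a$ on $\mH_\ta$, and then $\Lambda_{\ta}-\sigma=(\Lambda_{\ta}^2-\sigma^2)(\Lambda_{\ta}+\sigma)^{-1}$ via a spectral mapping/factorization argument, plus the boundedness of $(\Lambda_{\ta}-\sigma)^{-1}\nabla_S b$ coming from the functional calculus), but that argument and the membership check constitute the proof, and your proposal skips them.

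For contrast, the paper avoids any explicit solve: since $N$ commutes with block $B_0$, $NE_0N=-\sgn(DB_0-\sigma N)$, so the anticommutator $C=\tfrac 12(E_0N+NE_0)=\sigma((DB_0)^2+\sigma^2)^{-1/2}$ is compact; the identities $(2E_0^+)N^\pm=I\pm C$ on the relevant subspaces show that the maps (\ref{eq:Neumap}) and (\ref{eq:Regmap}) are Fredholm, the index is $0$ by deforming $\sigma$ to $0$ (Lemma~\ref{lem:vardomains}), where $C=0$ and the maps are isomorphisms, and Lemma~\ref{lem:automaticinj} then upgrades Fredholm of index zero to isomorphism at $\sigma=\tfrac{n-1}2$. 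If you want to keep your direct graph description of $E_0^+\mH$, you must (i) correct the signs, (ii) prove invertibility of $|D_0|-\sigma$ on the appropriate block of $\mH$ as indicated above, and (iii) check that your candidate pair satisfies both components of $\sgn(D_0)h=h$.
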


\begin{proof}
  By Proposition~\ref{prop:reg=dir}, it suffices to consider the Neumann and regularity problems.  Consider the projections $E_A^\pm= E_0^\pm$. As the maps \eqref{eq:Neumap} and \eqref{eq:Regmap} act on $E_{0}^+\mH\subset \mH$, it suffices to consider their action on $\mH$ throughout this proof. In this case, we have  
  $E_0:=\sgn(DB_0+\sigma N)= E_0^+- E_0^-$.
  Consider also the $\mH$ preserving projections $N^\pm$ from Definition~\ref{defn:DNops}.
  Define the anti-commutator
$$
  C:= \tfrac 12(E_0N+NE_0).
$$
  Since $B_0$ is a block matrix, $N$ commutes with $B_0$, which shows that
  $NE_0 N= N\sgn(DB_0+\sigma N)N= \sgn(N(DB_0+\sigma N)N)= -\sgn(DB_0-\sigma N)$,
  using $ND= -DN$.
  Hence, 
\begin{multline*}
  C= (E_0+ NE_0 N)N/2= (\sgn(DB_0+\sigma N)- \sgn(DB_0-\sigma N))N/2 \\
  = ((DB_0)^2+\sigma^2)^{-1/2} ((DB_0+\sigma N)-(DB_0-\sigma N))N/2
  = \sigma((DB_0)^2+\sigma^2)^{-1/2},
\end{multline*}
and it follows from Proposition~\ref{prop:DBprops} that $C$ is a compact operator on $\mH$.

We claim that 
\begin{alignat*}{2}
   (2E_0^+) N^+|_{E_0^+\mH}&= I+C|_{E_0^+\mH}, \qquad  N^+(2E_0^+)|_{N^+\mH}&= I+C|_{N^+\mH}, \\
   (2E_0^+) N^-|_{E_0^+\mH}&= I-C|_{E_0^+\mH},  \qquad N^-(2E_0^+)|_{N^-\mH}&= I-C|_{N^-\mH}.
\end{alignat*}
The first identity follows from the computation
\begin{multline*}
  (2E_0^+) N^+h^+= E_0^+(I+N)h^+= h^++ \tfrac 12(I+E_0)Nh^+ \\
  = h^++\tfrac 12(Nh^++2Ch^+- NE_0h^+) 
  = h^++Ch^+,\qquad \text{for all } h^+\in E_0^+\mH,
\end{multline*}
and the other three identities are proved similarly.
This proves that the maps $E_0^+\mH\to \mH_\no: h^+\mapsto h^+_\no$ and 
$E_0^+\mH\to \mH_\ta: h^+\mapsto h^+_\ta$ are Fredholm operators for any $\sigma\in\R$,
and for $\sigma=0$ it follows that they are isomorphisms.
By Lemma~\ref{lem:vardomains}, the indices of these operators are zero for any $\sigma\in \R$,
and Lemma~\ref{lem:automaticinj} implies that in fact the operators are isomorphisms
for $\sigma =(n-1)/2$.
\end{proof}

\begin{prop}   \label{prop:hermwp}
  If $A$ are $r$-independent coefficients, and if $A$ is Hermitean, \textit{i.e.} $A^*=A$,
  then the Neumann, regularity and Dirichlet problems with coefficients $A$ are well-posed.
\end{prop}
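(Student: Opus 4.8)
The plan is to follow the same strategy as the block case in Proposition~\ref{prop:blockwp}, replacing the anti-commutator trick with the self-adjointness structure that is available when $A^*=A$. By Proposition~\ref{prop:reg=dir} it suffices to treat the Neumann and regularity problems, so I must show that the maps \eqref{eq:Neumap} and \eqref{eq:Regmap} are isomorphisms, and since $E_A^+=E_0^+$ for $r$-independent coefficients, these are the maps $E_0^+\mH\to\mH_\no: h^+\mapsto h^+_\no$ and $E_0^+\mH\to\mH_\ta: h^+\mapsto h^+_\ta$. First I would record the relevant adjoint relations: when $A$ is Hermitean, so is $A_1=A$, hence $B_0=\widehat{A_1}$ satisfies $B_0^*=NB_0N$ by Lemma~\ref{lem:intertwduality} (indeed $B_0^*=N\widehat{A_1^*}N=N\widehat{A_1}N=NB_0N$). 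From $(D_0)^*=DB_0^*-\sigma N = -N(DB_0+\sigma N)N = -ND_0 N$ on $L_2$ (using $ND=-DN$), it follows that $\sgn(D_0)^*=-N\sgn(D_0)N$, i.e. writing $E_0:=\sgn(D_0)=E_0^+-E_0^-$ we get $E_0^*=-NE_0N$, equivalently $(E_0^\pm)^*=N E_0^\mp N = NE_0^\mp N$.

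Next I would exploit a second symmetry coming from Hermiticity directly, not just from $B_0^*=NB_0N$. The key point is that $DB_0$ (more precisely $D_0=DB_0+\sigma N$ restricted to $\mH$) is similar to a self-adjoint operator via the positive operator $B_0$ on $\mH$: since $A$ Hermitean positive means $B_0$ is accretive and, being the Hodge-transform of a Hermitean matrix, $P_\mH B_0$ is a positive self-adjoint isomorphism of $\mH$ in the $L_2$ inner product; writing $D_0 = D(P_\mH B_0)+\sigma N$ on $\mH$, conjugation by $(P_\mH B_0)^{1/2}$ turns $D(P_\mH B_0)$ into the self-adjoint operator $(P_\mH B_0)^{1/2}D(P_\mH B_0)^{1/2}$. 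I would then argue as in \cite[Sec.~4]{AAM} or \cite{AAH}: the spectral projections $E_0^\pm$ are similar, via this fixed isomorphism of $\mH$, to the genuine spectral projections of a self-adjoint operator, so that $E_0^+\mH$ and $E_0^-\mH$ are \emph{mutually orthogonal} after the change of inner product. Combined with the fact that $N$ is self-adjoint and $N^+\mH$, $N^-\mH$ are orthogonal complements in $\mH$, this is exactly the configuration needed: one then shows $N^\pm: E_0^+\mH\to N^\pm\mH$ is invertible by a positivity/Rellich-type energy estimate. Concretely, for $h^+\in E_0^+\mH$ with corresponding $\mX^o$-solution $g=\nabla_\bx u$, Green's formula (as in Lemma~\ref{lem:automaticinj}) gives $\int_{\bO^{1+n}}(Ag,g)\,d\bx = \int_{S^n}(A_1 g_1)_\no\, u_1\,dx$; Hermiticity makes the left side real and comparable to $\|g\|_{\mX^o}^2\approx\|h^+\|_2^2$, so Cauchy--Schwarz yields $\|h^+\|_2\lesssim \|(h^+)_\no\|_2\,\|u_1\|_2$ together with a dual control $\|u_1\|_2\lesssim\|(h^+)_\no\|_2$ obtained by running the same argument for the adjoint (here equal) problem via \eqref{eq:dualityregtodir}; this gives the lower bound $\|(h^+)_\no\|_2\gtrsim\|h^+\|_2$, and likewise $\|(h^+)_\ta\|_2\gtrsim\|h^+\|_2$ using that $u_1$ is then constant and $\int_{S^n}(h^+)_\no\,dx=0$.

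To upgrade these lower bounds to genuine isomorphisms I would combine them with a Fredholm-index-zero argument: by Lemma~\ref{lem:automaticinj} the maps are always injective, and using Lemma~\ref{lem:vardomains} together with Proposition~\ref{prop:contofhardy} (continuity of $E_0^\pm$ in $\sigma$ and in $A_1$) one deduces that the index is constant along the deformation $\sigma\mapsto t\sigma$, $t\in[0,1]$; at $\sigma=0$, $D_0=DB_0$ is exactly of the type treated in \cite{AAM} with $E_0^\pm$ honest spectral projections of a self-adjoint-by-similarity operator, so the maps $h^+\mapsto h^+_\no$, $h^+\mapsto h^+_\ta$ are isomorphisms there (this is the Hermitean case of the classical Rellich identity). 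Hence the index is $0$ for all $\sigma$, in particular for $\sigma=(n-1)/2$, and injectivity then forces invertibility. The main obstacle I anticipate is making the ``$E_0^+\mH\perp E_0^-\mH$ after conjugation by $B_0$'' claim fully rigorous in the curved, $\sigma\neq0$ setting: for $\sigma\neq0$ the operator $D_0$ on $\mH$ is only similar to a self-adjoint operator \emph{plus} the bounded skew term $\sigma N$ which does not respect the $B_0$-inner product, so the spectral projections need not be orthogonal; this is precisely why the argument cannot be purely algebraic as in the block case and must route through the deformation to $\sigma=0$ plus the energy (Rellich) estimate and the Fredholm stability of the index, exactly as in the corresponding result for the half-space in \cite{AA1} and \cite{AAH}. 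I would therefore organize the write-up so that the energy estimate does all the quantitative work and the $\sigma$-deformation only transports the index, mirroring the end of the proof of Proposition~\ref{prop:blockwp}.
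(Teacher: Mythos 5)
There is a genuine gap, and it sits exactly at the quantitative core. First, Hermiticity of $A$ does \emph{not} make $P_\mH B_0$ self-adjoint: $A_1^*=A_1$ translates into $B_0^*=NB_0N$, not $B_0^*=B_0$ (that only happens in the block case, which is Proposition~\ref{prop:blockwp}, not this one). So your claim that $DB_0$ is conjugate to a self-adjoint operator via $(P_\mH B_0)^{1/2}$, and hence that $E_0^+\mH$ and $E_0^-\mH$ become orthogonal in a modified inner product, is unfounded; relatedly, $(E_0^\pm)^*$ equals $N\tE_0^\mp N$ (projections of $B_0D-\sigma N$), not $NE_0^\mp N$. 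Second, your energy estimate is not a Rellich estimate. Green's formula gives $\re\int_{\bO^{1+n}}(Ag,g)\,d\bx\le \|(h^+)_\no\|_2\,\|u_1\|_2$, but the left-hand side is comparable to $\|g\|_{L_2(\bO^{1+n})}^2\approx\|\Lambda^{-1/2}h^+\|_2^2$, \emph{not} to $\|g\|^2_{\mX^o}\approx\|h^+\|_2^2$ (if it were, every accretive $A$ would be solvable with no symmetry assumption); and the ``dual control'' $\|u_1\|_2\lesssim\|(h^+)_\no\|_2$ is precisely boundedness of the Neumann-to-Dirichlet map, i.e.\ part of what is being proved, so that step is circular. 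What is missing is the identity the paper actually uses: for $f_t=e^{-t\Lambda}h^+$, differentiate $t\mapsto (Nf_t,B_0f_t)$ and use that $B_0^*=NB_0N$ forces $NDB_0+DB_0^*N=0$, so that $(Nh^+,B_0h^+)=\sigma\int_0^\infty (f_t,(B_0+B_0^*)f_t)\,dt$. This bounds the Rellich difference $|(h^+_\no,(B_0h^+)_\no)-(h^+_\ta,(B_0h^+)_\ta)|$ by $\|f\|^2_{L_2(\R_+;\mH)}$, and since $\mH\ni h\mapsto (e^{-t\Lambda}h)_{t>0}\in L_2(\R_+;\mH)$ is compact (square functions plus compactness of $\Lambda^{-1/2}$ from Proposition~\ref{prop:DBprops}), accretivity of $B_0$ on $\mH$ turns \eqref{eq:Neumap} and \eqref{eq:Regmap} into semi-Fredholm maps. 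Nothing in your write-up produces this lower bound modulo compact.

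The index step also needs repair. Transporting the index along $\sigma\mapsto t\sigma$ requires semi-Fredholmness at every intermediate $\sigma$ (again the Rellich-plus-compact estimate you have not established), and your endpoint claim at $\sigma=0$ rests on the same incorrect ``self-adjoint by similarity'' assertion, or on citing \cite{AAM}, which is the half-space statement and would have to be reproved on the sphere. The paper avoids this: it keeps $\sigma=(n-1)/2$ fixed and deforms the coefficient along the straight line $B^s$ from $I$ to $B_0$ in Lemma~\ref{lem:vardomains} (with continuity from Proposition~\ref{prop:contofhardy}); each $B^s$ still satisfies $(B^s)^*=NB^sN$ and is accretive on $\mH$, so the semi-Fredholm estimate holds along the whole path, invertibility at $B=I$ is available from the block case, and Lemma~\ref{lem:automaticinj} upgrades index zero to isomorphism. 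Your reduction via Proposition~\ref{prop:reg=dir} and the appeal to Lemma~\ref{lem:automaticinj} for injectivity do match the paper; it is the Rellich identity along the semigroup flow and the coefficient (rather than $\sigma$) deformation that need to replace your current argument.
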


\begin{proof}
  By Proposition~\ref{prop:reg=dir}, it suffices to consider the Neumann and regularity problems.
  Let $h^+\in E_0^+\mH$ and define $f_t:= e^{-t\Lambda} h^+$.
  By Theorem~\ref{thm:NT}, we have $\pd_t f_t+ D_0f_t=0$, $\lim_{t\to 0}f_t=h^+$ and
  rapid decay of $f_t$ as $t\to \infty$.
  We calculate
\begin{multline*}
   (Nh^+, B_0 h^+)= -\int_0^\infty \pd_t(Nf_t, B_0 f_t)dt \\
   =  \int_0^\infty \Big( (N D_0f_t, B_0 f_t)+ (Nf_t, B_0 D_0f_t) \Big)dt \\
   =   \int_0^\infty \Big( ((NDB_0+ DB_0^* N) f_t, B_0 f_t)+ \sigma(f_t, (B_0+ NB_0N)f_t) \Big)dt \\
   = \sigma\int_0^\infty (f_t, (B_0+ B_0^*) f_t)dt.
\end{multline*}
On the last line, we used that $A^*=A$, or equivalently $B_0^*= NB_0 N$, so that $NDB_0+ DB_0^* N=0$.
This gives the estimate
$$
  | -(h^+_\no, (B_0h^+)_\no)+ (h^+_\ta, (B_0 h^+)_\ta) |\lesssim\sigma \int_0^\infty \|f_t\|^2_2 dt.
$$ 
From this we deduce the estimate
$$
  \|h^+\|_2^2 \lesssim \re(h^+, B_0 h^+)\lesssim | (h^+_\no, (B_0 h^+)_\no) |+ \|f\|_{L_2(\R_+;\mH)}^2
  \lesssim \|h^+_\no\|_2 \|h^+\|_2 + \|f\|_{L_2(\R_+;\mH)}^2.
$$
This shows that the map (\ref{eq:Neumap}) is a semi-Fredholm map, if we prove
that the map $\mH\to L_2(\R_+;\mH): h\mapsto (e^{-t\Lambda} h)_{t>0}$ is compact.
To see this, note that square function estimates for $D_0$ give the estimate
$$
  \int_0^\infty \|f_t\|_2^2 dt= \int_0^\infty \|\psi_t(D_0) (\Lambda^{-1/2}f)\|_2^2 \frac {dt}t
  \lesssim \| \Lambda^{-1/2}f \|_2^2,
$$
where $\psi_t(z):= \sqrt{t|z|}e^{-t|z|}$, and $\Lambda^{-1/2}$ can be seen to be a compact operator
on $\mH$ by Proposition~\ref{prop:DBprops}.
Taking $P_s= \chi^+(DB^{s}+\sigma N)$ in Lemma~\ref{lem:vardomains}, where $B^{s}$, $s\in[0,1]$, denotes the straight line in $L_\infty(S^n;\mL(\V))$ from $I$ to $B_0$, shows that the index 
of the map (\ref{eq:Neumap}) is $0$. By Lemma~\ref{lem:automaticinj}, this map is in fact an isomorphism.

The proof for the regularity problem is similar, using instead the estimate
$$
  \|h^+\|_2^2 \lesssim | (h^+_\ta, (B_0 h^+)_\ta) |+ \|f\|_{L_2(\R_+;\mH)}^2
  \lesssim \|h^+_\ta\|_2 \|h^+\|_2 + \|f\|_{L_2(\R_+;\mH)}^2.
$$
\end{proof}

\begin{prop}    \label{prop:smoothwp}
  If $A$ is a H\"older regular $ C^{1/2+\varepsilon}(S^n;\mL(\C^{(1+n)m}))$,  $r$-independent coefficients, for some $\varepsilon>0$,  
  then the Neumann, regularity and Dirichlet problems with coefficients $A$ are well-posed.
\end{prop}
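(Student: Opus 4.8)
\textbf{Proof plan for Proposition~\ref{prop:smoothwp}.}
The strategy is to reduce to the block case of Proposition~\ref{prop:blockwp} by a pointwise similarity transformation, exploiting that for smooth $r$-independent coefficients one can locally ``straighten'' the conormal derivative. I would first record that, since $A_1=A$ is $r$-independent, well-posedness is governed by invertibility of the maps \eqref{eq:Neumap} and \eqref{eq:Regmap} on $E_0^+\mH$, equivalently by Fredholmness plus the automatic injectivity from Lemma~\ref{lem:automaticinj}. By Proposition~\ref{prop:reg=dir} it suffices to treat the Neumann and regularity problems. The plan is to show that $E_0^+\mH\to\mH_\no$ and $E_0^+\mH\to\mH_\ta$ differ from the corresponding maps for a block coefficient matrix by a compact operator, and then invoke Lemma~\ref{lem:vardomains} (homotopy in $\sigma$, or along a path of coefficients) to transfer the index-zero statement, which holds for $\sigma=0$ by the block/Hermitean arguments, to $\sigma=(n-1)/2$.

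Concretely, I would proceed as follows. First, write $\widehat A=B_0$ and decompose $B_0=B_0^{\mathrm{diag}}+B_0^{\mathrm{off}}$ into its block-diagonal and block-off-diagonal parts in the normal/tangential splitting; note $B_0^{\mathrm{off}}\in C^{1/2+\varepsilon}$ as well. The operator $D_0=DB_0+\sigma N = D B_0^{\mathrm{diag}}+\sigma N + D B_0^{\mathrm{off}}$. The term $D B_0^{\mathrm{off}}$ is where H\"older regularity enters: because $B_0^{\mathrm{off}}$ has H\"older exponent $>1/2$, the commutator-type operator $D B_0^{\mathrm{off}}$ is, relative to $D_0^{\mathrm{block}}:=DB_0^{\mathrm{diag}}+\sigma N$, a \emph{relatively compact} perturbation: one uses that $(D_0^{\mathrm{block}})^{-1}$ (on $\mH$) has the smoothing property $\dom(D_0^{\mathrm{block}})=B_0^{-1}\dom(D)\hookrightarrow W^1_2(S^n;\V)\hookrightarrow\hookrightarrow L_2$ by Rellich (Proposition~\ref{prop:DBprops}), while multiplication by the $C^{1/2+\varepsilon}$ function $B_0^{\mathrm{off}}$ maps $W^1_2$ boundedly into $W^s_2$ for some $s>1/2$, which still embeds compactly into $L_2$; hence $D B_0^{\mathrm{off}}(D_0^{\mathrm{block}})^{-1}$ is compact on $\mH$. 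Consequently the Hardy projections $E_0^\pm=\chi^\pm(D_0)$ and the block Hardy projections $\chi^\pm(D_0^{\mathrm{block}})$ differ by a compact operator on $\mH$ (apply the resolvent difference identity inside the Dunford integral defining $\chi^\pm$, as in the proof of Proposition~\ref{prop:contofhardy} combined with Theorem~\ref{thm:QE}). Since for block coefficients the maps $E_0^+\mH\to\mH_\no$ and $E_0^+\mH\to\mH_\ta$ are Fredholm of index $0$ (this is exactly the content of the proof of Proposition~\ref{prop:blockwp}, which shows Fredholmness for all $\sigma\in\R$ and then identifies the index via Lemma~\ref{lem:vardomains}), the same holds for $B_0$; Lemma~\ref{lem:automaticinj} upgrades index zero to invertibility, giving well-posedness of the Neumann and regularity problems, and Proposition~\ref{prop:reg=dir} then yields the Dirichlet problem.

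The main obstacle is the relative-compactness claim for $D B_0^{\mathrm{off}}$, i.e.\ making rigorous that H\"older exponent strictly above $1/2$ suffices. The point is a mapping property: multiplication by $g\in C^{1/2+\varepsilon}(S^n)$ sends $W^1_2(S^n)$ into $W^{\min(1,1/2+\varepsilon')}_2(S^n)$ for some $\varepsilon'>0$ (paraproduct / pointwise-multiplier estimates on the compact manifold $S^n$), so that after composing with the order $-1$ operator $(D_0^{\mathrm{block}})^{-1}$ one gains strictly more than a full derivative minus the loss, landing in $W^{\delta}_2$ with $\delta>0$, which embeds compactly into $L_2$ by Rellich. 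One must be slightly careful that $D$ itself is order $1$ so $DB_0^{\mathrm{off}}$ is order $1$, and that the gain from $(D_0^{\mathrm{block}})^{-1}$ (order $-1$) precisely compensates, leaving a genuine gain of $\delta=2\varepsilon'>0$ derivatives; this is where $\varepsilon>0$ strict is used, and it is presumably why the threshold in Theorem~\ref{thm:rindepLiprellich}(3) is $s>1/2$ rather than $s\ge 1/2$. The remaining steps — the Dunford-integral manipulation for compactness of the difference of Hardy projections, and the homotopy argument — are routine given Theorem~\ref{thm:QE}, Proposition~\ref{prop:DBprops}, Lemma~\ref{lem:vardomains} and Lemma~\ref{lem:automaticinj}, exactly as in the proofs of Propositions~\ref{prop:blockwp} and \ref{prop:hermwp}.
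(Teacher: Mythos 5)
Your reduction to the block case breaks down at the central compactness claim, and the problem is structural, not just technical. First, the order counting is off: starting from $f\in\mH$, the resolvent $(D_0^{\mathrm{block}})^{-1}f$ gains (at best) one derivative, multiplication by $B_0^{\mathrm{off}}\in C^{1/2+\varepsilon}$ then only lands you in $W^{1/2+\varepsilon'}_2$ (a $C^s$ multiplier does not preserve $W^1_2$ unless $s\ge 1$), and applying the first order operator $D$ drops you to $W^{-1/2+\varepsilon'}_2$. So $DB_0^{\mathrm{off}}(D_0^{\mathrm{block}})^{-1}$ is not smoothing of positive order; under mere H\"older regularity it is not even obviously bounded on $L_2$, and the two operators $D_0$ and $D_0^{\mathrm{block}}$ do not share a domain ($B_0 u\in\dom(D)$ and $B_0^{\mathrm{diag}}u\in\dom(D)$ are different conditions). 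Even granting Lipschitz coefficients, writing $DB_0^{\mathrm{off}}=B_0^{\mathrm{off}}D+[D,B_0^{\mathrm{off}}]$ shows the perturbation has a genuinely first-order part $B_0^{\mathrm{off}}D$, so $DB_0^{\mathrm{off}}(D_0^{\mathrm{block}})^{-1}$ is bounded but not compact. Second, the conclusion you want to draw is false in general: $\chi^+(DB_0+\sigma N)-\chi^+(DB_0^{\mathrm{diag}}+\sigma N)$ is not compact. Already for constant coefficients with $B_0^{\mathrm{off}}\neq 0$ both projections are Fourier multipliers whose symbols at high frequency are the spectral projections of the full symbol of $DB_0$ resp. $DB_0^{\mathrm{diag}}$; these differ, so the difference of projections does not tend to zero at infinity and cannot be compact. (There is also the secondary issue that $\chi^\pm\notin\Psi$, so you cannot simply ``apply the resolvent difference identity inside the Dunford integral''; compactness does not pass through the strong limits of Corollary~\ref{cor:fcalc}.) Hence the off-diagonal part changes the Hardy projection at leading order, and no compact-perturbation argument can transfer the block-case Fredholm theory to general $C^{1/2+\varepsilon}$ coefficients.

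For comparison, the paper's proof does not compare $B_0$ with its block part at all. It proves a Rellich-type estimate directly: integrating $\pd_t(\sgn(D)f_t,f_t)$ for $f_t=e^{-t\Lambda}h^+$ and using $\sgn(D)D=|D|$, $\sgn(D)N+N\sgn(D)=0$, one is led to $\re\int(|D|B_0f_t,f_t)\,dt$, which is controlled by accretivity of $B_0$ once one knows that the commutator $[|D|^{1/2},B_0]$ is bounded on $L_2$ (Lemma~\ref{lem:rellichcom}, and this is exactly where $C^{1/2+\varepsilon}$ enters, via H\"older/Gaussian bounds for an auxiliary heat kernel after pullback to $\R^n$) together with the domain comparison $\dom(|D|^{1/2})\cap\mH=\dom(|D_0|^{1/2})\cap\mH$ (Lemma~\ref{lem:rellichdom}). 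This yields $\|h^+\|_2^2\lesssim\|h^+_\no\|_2\|h^+_\ta\|_2+\|f\|^2_{L_2(\R_+;\mH)}$, so both boundary maps \eqref{eq:Neumap} and \eqref{eq:Regmap} are semi-Fredholm (the only compactness used is that of $h\mapsto(e^{-t\Lambda}h)_{t>0}$, coming from $\Lambda^{-1/2}$); the index is then computed by the homotopy of Lemma~\ref{lem:vardomains} along the segment from $I$ to $B_0$, and Lemma~\ref{lem:automaticinj} upgrades index zero to invertibility, with the Dirichlet case following from Proposition~\ref{prop:reg=dir} as you indicated. If you want to salvage your outline, the commutator estimate of Lemma~\ref{lem:rellichcom} is the ingredient you would have to prove in any case, and once you have it the paper's Rellich argument is shorter than any projection-comparison scheme.
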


For the proof, we need the following lemmas.

\begin{lem}\label{lem:rellichcom} Let $B_{0} \in C^{1/2+\varepsilon}(S^n;\mL(\V))$ be the matrix associated to $A$. Then for all $f,g\in \mH$, $
  | ([ |D|^{1/2}, B_0]f, g) | \lesssim  \|f\|_2 \| g \|_2.
$
\end{lem}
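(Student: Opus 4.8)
The plan is to prove the commutator estimate $|([|D|^{1/2},B_0]f,g)|\lesssim \|f\|_2\|g\|_2$ for $f,g\in\mH$ by reducing it, via the local parametrization of $S^n$ by stereographic coordinates as in the proof of Theorem~\ref{thm:QE}, to the corresponding statement on $\R^n$, where $|D|^{1/2}$ becomes (up to bounded factors) a half-order Riesz-type operator. On $\R^n$ the estimate $|([\,|D_\rho|^{1/2},B_\rho]f,g)|\lesssim \|f\|_2\|g\|_2$ for a $C^{1/2+\varepsilon}$ multiplier $B_\rho$ is a classical Calder\'on commutator-type bound: writing $|D_\rho|^{1/2}$ through its integral representation $|D_\rho|^{1/2}=c\int_0^\infty (I-(I+t^2D_\rho^2)^{-1})\frac{dt}{t^{3/2}}$ (or via the subordination formula for the Poisson-type semigroup), the commutator $[(I+t^2D_\rho^2)^{-1},B_\rho]$ is controlled using the off-diagonal/resolvent bounds for $D_\rho$ together with the H\"older modulus of continuity of $B_\rho$, which yields an extra gain of $t^{1/2+\varepsilon}$ locally and an integrable tail at $t\to\infty$ since $D_\rho$ has a spectral gap on $\mH$. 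Summing over the dyadic scales and integrating in $t$ produces the uniform bound. I would assemble these pieces rather than redo the known one-dimensional Calder\'on commutator computation.

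Concretely, the steps in order are: (1) record that $|D|$ restricted to $\mH$ has bounded holomorphic functional calculus (Corollary~\ref{cor:fcalc}) and a spectral gap (Proposition~\ref{prop:DBprops}), so $|D|^{1/2}=\frac{1}{\pi}\int_0^\infty D^2(I+t^2D^2)^{-1}\,t^{-1/2}\,dt$ converges strongly on $\mH$; (2) pull the problem back locally to $\R^n$ using a partition of unity on $S^n$ subordinate to the stereographic charts, exactly as in part (iii) of the proof of Theorem~\ref{thm:QE}, noting that the conformal factor $d$ and the transition maps are smooth, so commutators with them are bounded and do not affect the estimate; (3) on $\R^n$, use the representation of $|D_\rho|^{1/2}$ above and commute $B_\rho$ past the resolvents, writing $[(I+t^2D_\rho^2)^{-1},B_\rho] = (I+t^2D_\rho^2)^{-1}\,[B_\rho, t^2 D_\rho^2]\,(I+t^2D_\rho^2)^{-1}$ and then $[B_\rho, D_\rho^2]$ in terms of a first-order operator applied to $(B_\rho(x)-B_\rho(y))$-type kernels; (4) apply the $L_2$ off-diagonal bounds for $t D_\rho(I+t^2D_\rho^2)^{-1}$ (Lemma~\ref{lem:offdiagonal}, adapted to $D_\rho$ on $\bO^n$ as in Theorem~\ref{thm:QE}(iii)) together with $|B_\rho(x)-B_\rho(y)|\lesssim |x-y|^{1/2+\varepsilon}$ to obtain a kernel bound for $[(I+t^2D_\rho^2)^{-1},B_\rho]$ with operator norm $\lesssim \min(t^{1/2+\varepsilon}, t^{-1})$ or similar; (5) integrate $\int_0^\infty \min(t^{1/2+\varepsilon},t^{-1})\,t^{-1/2}\,dt<\infty$ (the $t\to 0$ end converges because of the H\"older gain, the $t\to\infty$ end because of the spectral gap) to conclude $\|[|D_\rho|^{1/2},B_\rho]\|_{2\to 2}<\infty$, and transfer back to $S^n$.

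I expect the main obstacle to be step (4): making the kernel estimate for the commutator $[(I+t^2D_\rho^2)^{-1},B_\rho]$ rigorous with the exact power $1/2+\varepsilon$ of the H\"older gain. The operator $D_\rho^2 = -\begin{bmatrix}\divv_y\nabla_y & 0\\ 0 & \nabla_y\divv_y\end{bmatrix}$ is not scalar and $B_\rho$ is only H\"older, so one cannot simply invoke a textbook Calder\'on commutator bound; instead one must either run a $T(1)$-type argument for the commutator kernel or, more cleanly, exploit that $tD_\rho(I+t^2D_\rho^2)^{-1}$ and $t^2D_\rho^2(I+t^2D_\rho^2)^{-1}$ have Gaussian-type (or at least polynomially decaying, exponentially off-diagonal) kernels at scale $t$, so that the commutator kernel at scale $t$ picks up a factor $\sup_{|x-y|\lesssim t}|B_\rho(x)-B_\rho(y)|\lesssim t^{1/2+\varepsilon}$ against a normalized kernel, giving the claimed bound. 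I would phrase this via the off-diagonal decay already available in Lemma~\ref{lem:offdiagonal} rather than writing explicit kernels. A secondary technical point is that the representation $|D|^{1/2}=c\int_0^\infty D^2(I+t^2D^2)^{-1}t^{-1/2}\,dt$ must be justified on $\mH$ (where $D^{-1}$ is bounded), and that $f,g\in\mH$ ensures all the operators act within the spectral subspace where the functional calculus of Corollary~\ref{cor:fcalc} applies; this is routine given Section~\ref{sec:squarefcn}.
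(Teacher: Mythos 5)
Your route is genuinely different from the paper's. The paper uses $f,g\in\mH$ and self-adjointness to replace $|D|^{1/2}$ by $\Delta_S^{1/4}$ for the Hodge--Laplace operator, pulls this back through stereographic coordinates and a chain of resolvent comparisons to the scalar divergence form operator $T_2=-\divv d^2\nabla+1$ with continuous coefficients, and then obtains the scale gain $\|[s^2T_2e^{-s^2T_2},b]\|\lesssim s^{1/2+\epsilon}$ from Gaussian bounds and H\"older regularity of the heat kernel (Aronson, \cite{Au2}), before integrating. You instead keep the first order operator, represent $|D|^{1/2}$ by resolvents, and extract the gain at scale $t$ from $L_2$ off-diagonal bounds together with the $C^{1/2+\varepsilon}$ modulus of $B_0$. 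This is viable and in some respects cleaner: since the operator inside the fractional power is the unperturbed self-adjoint $D$ on $S^n$ (the roughness sits only in the multiplier), you can run the whole argument intrinsically on the sphere; exponential $L_2$ off-diagonal bounds for $(I+t^2D^2)^{-1}$ follow from the standard commutator-with-exponential argument (as in \cite{elAAM}), and no chart reduction, no $T_1/T_2$ comparison, and no kernel regularity input are needed. Carried out this way you would in fact bound $[\,|D|^{1/2},B_0]$ on all of $L_2$, so the restriction $f,g\in\mH$ (which the paper needs precisely to pass to $\Delta_S^{1/4}$) plays no role in your argument.

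That said, the proposal has concrete gaps to close. First, the step you yourself flag, step (4), is the heart of the proof and is only gestured at: the expansion of $[B_\rho,D_\rho^2]$ requires derivatives of $B_\rho$ that do not exist, and Lemma~\ref{lem:offdiagonal} concerns $(I+itD_0)^{-1}$ with $D_0=DB_0+\sigma N$, not the operator you need, so nothing quotable does the job. What must be proved is the principle that if $R_t$ is uniformly bounded with exponential $L_2$ off-diagonal decay at scale $t$ and $b$ is bounded with modulus of continuity $\omega$, then $\|[R_t,b]\|_{2\to 2}\lesssim \sum_{k\ge 0}e^{-\alpha k}\min\bigl(\omega((k+1)t),\|b\|_\infty\bigr)\lesssim t^{1/2+\varepsilon}$ for $t\le 1$; this follows by tiling $S^n$ at scale $t$, subtracting the value of $b$ at the center of each tile (constants commute), and resumming the off-diagonal tails, but it has to be written out, including the almost-orthogonal summation over tiles. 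Second, your transfer step (2) is too glib: by the computation in the proof of Theorem~\ref{thm:QE}, $\rho^*D=D_\rho\,\mathrm{diag}(d^n,d^{2-n})\,\rho^*$, so the pullback of $|D|^{1/2}$ is the modulus to the power $1/2$ of a smoothly weighted operator, not $|D_\rho|^{1/2}$ ``up to bounded factors''; moving smooth weights and cutoffs across a nonlocal operator of order $1/2$ needs an argument (the paper does this through $O(s)$ resolvent-difference estimates inside the integral representation; your own scale-$t$ commutator bound applied with a Lipschitz modulus would give the required $O(t)$ gain). The simplest repair is to drop the pullback altogether, as noted above. Finally, a small slip: the representation should be $|D|^{1/2}=c\int_0^\infty t^{1/2}D^2(I+t^2D^2)^{-1}\,dt=c\int_0^\infty\bigl(I-(I+t^2D^2)^{-1}\bigr)\,t^{-3/2}\,dt$; your weight $t^{-1/2}\,dt$ produces $|D|^{3/2}$. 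With the correct weight, $\int_0^1 t^{1/2+\varepsilon}\,t^{-3/2}\,dt+\|B_0\|_\infty\int_1^\infty t^{-3/2}\,dt<\infty$ closes the argument, and the spectral gap of Proposition~\ref{prop:DBprops} is not even needed at the large-$t$ end.
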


\begin{lem} \label{lem:rellichdom} Under the same assumptions, $\dom(|D|^{1/2})\cap \mH=\dom(|D_{0}|^{1/2})\cap \mH$ with equivalent graph domain norms. 
\end{lem}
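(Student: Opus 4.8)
First I would reduce to the case $\sigma=0$. Both $D_0=DB_0+\sigma N$ and $DB_0$ have domain $B_0^{-1}\dom(D)$ and, by Theorem~\ref{thm:QE} (with the given $\sigma$ and with $\sigma=0$ respectively), both have a bounded holomorphic functional calculus on $\mH$; since for an injective bisectorial operator $T$ with bounded $H^\infty$ calculus and bounded inverse one has $\dom(|T|^{1/2})=[\,\cdot\,,\dom(T)\,]_{1/2}$ with the graph norm of $|T|^{1/2}$ equivalent to the interpolation norm (standard, via bounded imaginary powers; and the graph norm is $\approx\|\,|T|^{1/2}\cdot\|_2$ here because $|D_0|$ is bounded below on $\mH$ by Proposition~\ref{prop:DBprops}), it follows that $\dom(|D_0|^{1/2})\cap\mH=\dom(|DB_0|^{1/2})\cap\mH$ with equivalent norms. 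Applying the same interpolation identity to the self‑adjoint operator $D|_\mH$ (injective with compact inverse, Proposition~\ref{prop:DBprops}), the assertion becomes
$$
  [\,\mH,\dom(DB_0)\cap\mH\,]_{1/2}=[\,\mH,\dom(D)\cap\mH\,]_{1/2}
$$
with equivalent norms.

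Next I would exploit the factorisation $DB_0|_\mH=D|_\mH\circ M$ with $M:=P_\mH B_0|_\mH$, which by Proposition~\ref{prop:DBprops} is an isomorphism of $\mH$; since $DB_0f=D(Mf)$, the map $M$ is also an isomorphism $\dom(DB_0)\cap\mH\to\dom(D)\cap\mH$ in the graph norms. Complex interpolation of these two isomorphisms shows that $M$ restricts to an isomorphism $\dom(|DB_0|^{1/2})\cap\mH\to\dom(|D|^{1/2})\cap\mH$. Hence it is enough to prove that $M$ and its inverse on $\mH$ restrict to bounded operators on $\dom(|D|^{1/2})\cap\mH$: then $M$ is an isomorphism of $\dom(|D|^{1/2})\cap\mH$ and $\dom(|DB_0|^{1/2})\cap\mH=M^{-1}\big[\dom(|D|^{1/2})\cap\mH\big]=\dom(|D|^{1/2})\cap\mH$.

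Boundedness of $M$ on $\dom(|D|^{1/2})\cap\mH$ is exactly where Lemma~\ref{lem:rellichcom} is used. Since $|D|^{1/2}|_\mH$ is self‑adjoint and $P_\mH$ commutes with it, for $f,g\in\dom(|D|^{1/2})\cap\mH$ one has
$$
  (\,Mf,\ |D|^{1/2}g\,)=(\,B_0f,\ |D|^{1/2}g\,)=(\,B_0|D|^{1/2}f,\ g\,)+(\,[\,|D|^{1/2},B_0\,]f,\ g\,),
$$
so $|(Mf,|D|^{1/2}g)|\lesssim(\|\,|D|^{1/2}f\|_2+\|f\|_2)\|g\|_2\lesssim\|\,|D|^{1/2}f\|_2\,\|g\|_2$ by Lemma~\ref{lem:rellichcom} and the lower bound on $|D||_\mH$; as $Mf\in\mH$ and $\dom(|D|^{1/2})\cap\mH$ is dense in $\mH$, this gives $Mf\in\dom(|D|^{1/2})$ with $\|\,|D|^{1/2}Mf\|_2\lesssim\|\,|D|^{1/2}f\|_2$. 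The pointwise inverse $B_0^{-1}$ is again a strictly accretive $C^{1/2+\varepsilon}$ matrix, so Lemma~\ref{lem:rellichcom} applies to it and the identical computation shows that $P_\mH B_0^{-1}|_\mH$ is bounded on $\dom(|D|^{1/2})\cap\mH$; combining this with the Hodge splitting $L_2=\mH\oplus B_0^{-1}\mH^\perp$, which identifies $M^{-1}$ with $P^1_{B_0}B_0^{-1}|_\mH$ ($P^1_{B_0}$ the projection onto $\mH$ in that splitting), and with the fact that $\mH^\perp\subset\dom(|D|^{1/2})$, yields boundedness of $M^{-1}$ on $\dom(|D|^{1/2})\cap\mH$.

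The main obstacle is precisely this last point — turning the one‑sided commutator estimate of Lemma~\ref{lem:rellichcom} into invertibility of $M$ on the fractional space, since the bookkeeping with the oblique Hodge projection $P^1_{B_0}$ on $\dom(|D|^{1/2})$ is delicate. An alternative route I would be prepared to take instead is the method of continuity: join $I$ to $B_0$ by a path $B_s$ of strictly accretive $C^{1/2+\varepsilon}$ coefficients (bounded in $C^{1/2+\varepsilon}$), note that $\|[\,|D|^{1/2},B_s\,]\|$ is controlled by $\|B_s-I\|_{C^{1/2+\varepsilon}}$ (the commutator annihilates constants), check directly that $M_s=P_\mH B_s|_\mH$ is an isomorphism of $\dom(|D|^{1/2})\cap\mH$ for $B_s$ close to $I$, and conclude that the set of such $s$ is open and closed. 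This is the step where $C^{1/2+\varepsilon}$ rather than mere $C^{1/2}$ regularity genuinely enters. I would also be careful to cite, rather than reprove, the interpolation identity $\dom(|T|^{1/2})=[\,\cdot\,,\dom(T)\,]_{1/2}$ for operators with bounded $H^\infty$ calculus (and, for $D|_\mH$, the Heinz–Kato identification of $\dom(|D|^{1/2})$).
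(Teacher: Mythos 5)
Your overall skeleton is the paper's: dispose of the zero--order term $\sigma N$, transfer along the isomorphism $M=P_{\mH}B_0$ of $\mH$ by complex interpolation between $\mH$ and the full domains, and reduce everything to commuting $|D|^{1/2}$ past $B_0$ via Lemma~\ref{lem:rellichcom}. Those steps are fine (your $H^\infty$--calculus/interpolation reduction of $|D_0|^{1/2}$ to $|DB_0|^{1/2}$ replaces the paper's ``representation formula'' and is legitimate; the duality computation for the forward direction should be phrased in form sense or via a core, since writing $(B_0f,|D|^{1/2}g)=(|D|^{1/2}B_0f,g)$ presupposes $B_0f\in\dom(|D|^{1/2})$, but that is cosmetic). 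The genuine gap is the treatment of $M^{-1}$, i.e.\ of the inclusion $\dom(|D_0|^{1/2})\cap\mH\subseteq\dom(|D|^{1/2})$, which is exactly the direction used in the proof of Proposition~\ref{prop:smoothwp}. The identity $M^{-1}=P^1_{B_0}B_0^{-1}|_{\mH}$ is algebraically correct, but to exploit it you must know that the oblique projection $P^1_{B_0}$ preserves $\dom(|D|^{1/2})$; since $P^1_{B_0}=M^{-1}P_{\mH}B_0$ and $P_\mH$, $B_0$ do preserve this domain, that is equivalent to the statement being proved, so this route is circular. (It also invokes the pointwise inverse $B_0^{-1}$: pointwise invertibility of $B_0=\widehat{A_1}$ is not automatic from accretivity on $\mH_1$ for systems --- it amounts to pointwise invertibility of $(A_1)_{\ta\ta}$, here rescued only by continuity of $A_1$ --- and in any case Lemma~\ref{lem:rellichcom} needs no accretivity of $B_0^{-1}$, only its $C^{1/2+\varepsilon}$ bound.)

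Your fallback by the method of continuity does not close as stated either: the commutator estimate only yields the G\aa rding--type bound $\||D|^{1/2}f\|_2\lesssim\||D|^{1/2}M_sf\|_2+\|f\|_2$ on $V:=\dom(|D|^{1/2})\cap\mH$, so you lack the uniform a priori estimate that would make the set of good parameters closed. The repair is to upgrade this to a semi--Fredholm statement and use constancy of the index, exactly in the spirit of Lemma~\ref{lem:vardomains}: the embedding $V\hookrightarrow\mH$ is compact because $|D|^{-1/2}$ is compact on $\mH$ (Proposition~\ref{prop:DBprops}), so each $M_s:V\to V$ (bounded by the operator--level form of Lemma~\ref{lem:rellichcom}, uniformly along the segment $B_s=(1-s)I+sB_0$, which stays uniformly accretive on $\mH$ and uniformly bounded in $C^{1/2+\varepsilon}$) is semi--Fredholm with finite--dimensional kernel; it is injective since $M_s$ is injective on $\mH$; the family $s\mapsto M_s\in\mL(V)$ is Lipschitz; and $M_0=I$ has index $0$. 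Hence $M=M_1$ has index $0$ on $V$ and, being injective, is bijective, which is the missing invertibility. The paper compresses precisely this point into ``a direct consequence of Lemma~\ref{lem:rellichcom}'', so you have located a real pressure point, but as written your proposal does not yet establish it.
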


\begin{proof}[Proof of Proposition \ref{prop:smoothwp}]
  Consider first the Neumann and regularity problems.
  Let $h^+\in E_0^+\mH$ and define $f_t:= e^{-t\Lambda} h^+$. 
  By Theorem~\ref{thm:NT}, we have $\pd_t f_t+ D_0f_t=0$ and $\lim_{t\to 0}f_t=h^+$ and $\lim_{t\to \infty}f_t=0$ with rapid decay. 
  We begin with the observation that $(\sgn(D)h^+, h^+)= \re (\nabla_S(-\divv_S\nabla_S)^{-1/2}h^+_{\no}, h^+_{\ta})$.
  Thus $|(\sgn(D)h^+, h^+)| \le \|h^+_{\no}\|_{2}\|h^+_{\ta}\|_{2}$. Now, we calculate for fixed $T>0$
\begin{multline*}   \label{eq:smoothrellich}
(\sgn(D)h^+, h^+)- (\sgn(D)f_{T}, f_{T})= -\int_0^T \pd_t(\sgn(D) f_t, f_t)dt  \\
   =  \int_0^T \Big( (\sgn(D) (DB_0+\sigma N)f_t, f_t)+ (f_t, \sgn(D)(DB_0+\sigma N)f_t) \Big)dt
\\
   =  2\re \int_0^T (|D|B_0 f_t, f_t) dt \\
  =  2\re \int_0^T\Big( (B_0 |D|^{1/2}f_t, |D|^{1/2} f_t) dt+ ([ |D|^{1/2}, B_0]f_t, |D|^{1/2} f_t) \Big)dt,
     \end{multline*}
using that $\sgn(D)D= |D|$ and $\sgn(D)N+ N\sgn(D)=0$ in the third equality and  Lemma \ref{lem:rellichdom} in the last since $f_{t}\in \dom(|D_{0}|^{1/2})\cap \mH\subset \dom(|D|^{1/2})$. 
Accretivity of $B_0$  and Lemma \ref{lem:rellichcom} lead to the estimate
$$
  \int_0^T\||D|^{1/2}f_t\|_2^2 dt\lesssim \|h^+_\no\|_2 \|h^+_\ta\|_2 + |(\sgn(D)f_{T}, f_{T})|+ \int_0^T \|f_t\|_2\||D|^{1/2}f_t\|_2 dt,
$$
and by absorption, to the same estimate but with last term equal $\int_0^T \|f_t\|_2^2 dt$. 
Due the rapid decay of $\|f_{t}\|_{2}$ when $t\to \infty$, we conclude that
$$
  \int_0^\infty\||D|^{1/2}f_t\|_2^2 dt\lesssim \|h^+_\no\|_2 \|h^+_\ta\|_2 + \int_0^\infty \|f_t\|_2^2 dt.
$$
Since   $\||D_0|^{1/2}f_{t}\|_2 \lesssim \||D|^{1/2}f_{t}\|_2 + \|f_{t}\|_2$ from 
Lemma \ref{lem:rellichdom}, we may replace $D$ by $D_{0}$ in the left hand side. 
Since square function estimates for $D_0$ give
$$
  \int_0^\infty\||D_0|^{1/2}f_t\|_2^2 dt= \int_0^\infty \|(t|D_0|)^{1/2} e^{-t|D_0|}h^+\|_2^2 \frac{dt}t \approx \|h^+\|_2^2,
$$
this implies
$$
  \|h^+\|_2^2 \lesssim  \|h^+_\no\|_2 \|h^+_\ta\|_2 + \int_0^\infty \|f_t\|_2^2 dt.
$$
Well-posedness of the Neumann and regularity problems now follows as in the proof
of Proposition~\ref{prop:hermwp}. Proposition~\ref{prop:reg=dir} then gives the result for the Dirichlet problem.
\end{proof}

 \begin{proof}[Proof of Lemma~\ref{lem:rellichcom}]
Note that $ D^2$ agrees on  $\mH$ with  the (positive) Hodge--Laplace operator
$$
   \Delta_S:= -\begin{bmatrix} \divv_S \nabla_S & 0 \\ 0 & \nabla_S \divv_S -\curl_S^* \curl_S
    \end{bmatrix},
$$
where $\curl_S: L_2(S^n; (T_\C S^n)^m)\to L_2(S^n; \wedge^2(T_\C S^n)^m)$ is the tangential curl/exterior derivative on $S^n$.
Since  $f, g\in \mH$, we have 
$$([ |D|^{1/2}, B_0]f, g)= ([ \Delta_S^{1/4}, B_0]f, g)$$ and it suffices to prove that $[ \Delta_S^{1/4}, B_0]$ is bounded on $L_{2}$. Since  the action of $B_{0}$ mixes functions and vector fields, some care has to be taken.  

(i) First, by functional calculus we can replace $\Delta_{S}$ by $T_{0}=\Delta_{S}+\lambda$ for any $\lambda\in \R^+$, to be chosen large later, as $ \Delta_S^{1/4} -  (\Delta_S + \lambda)^{1/4}$ is bounded.  

(ii) Next,  the commutator estimate is a local problem and by a partition of unity argument and rotational invariance of the assumptions,  we can assume that $f$ is supported in the lower hemisphere and it is enough to show that $\|\zeta[T_{0}^{1/4}, B_{0}]f\|_{2}\lesssim \|f\|_{2}$ when  the smooth scalar function $\zeta$ is 1 a neighborhood of the support of $f$. Indeed $(1-\zeta)[T_{0}^{1/4}, B_{0}]f= -[[\zeta, T_{0}^{1/4}], B_{0}]f$, where the inner commutator is seen to be bounded on $L_{2}$.  

(iii) Now  using rescaled pullback $\rho^*$ to $\R^n$ from the proof of 
Theorem~\ref{thm:QE} yields  $ \rho^*(T_{0}f) = T_{1}(\rho^*f)$ with 
$$
 T_1:=  -\begin{bmatrix} \divv_{\R^n} d^{2-n} \nabla_{\R^n} d^n & 0 \\ 0 & \nabla_{\R^n} d^n \divv_{\R^n} d^{2-n}-d^{n-2} \curl_{\R^n}^* d^{4-n} \curl_{\R^n} \end{bmatrix} + \lambda I
 $$
in $L_2(\R^n; \C^{(1+n)m})$, with $d(y)= (|y|^2+1)/2$ inside  $|y|<1$ and extended to a smooth function on $\R^n$, with $d(y)=2$ for $|y|>2$
and $1/2\le d(y)\le 2$ for all $y$.  Any extension would do since $\rho^*f$ is supported in $|y|<1$.
(The proof of this equality builds on the fundamental differential geometric fact that the standard pullback operation intertwines $\nabla$ on $S^n$ and $\R^n$, as well as $\curl$, and the adjoint results for
$\divv$ and $\curl^*$. Note that the rescaled pullback $\rho^*$ from Theorem~\ref{thm:QE}
equals the standard pullback on vectors, but is $d^{-n}$ times the standard pullback on scalars.)
A further calculation shows that $T_{1}= - \divv_{\R^n}d^2\nabla_{\R^n} + R +\lambda I$,
where $R$ is a first order differential operator with smooth coefficients and 
$\divv_{\R^n}d^2\nabla_{\R^n}$ acts componentwise on $\C^{(1+n)m}$-valued 
functions.  Note that the coefficients of $R$ must vanish outside $|y|<2$ by construction.
We now  choose  $\lambda$ large enough to guarantee the accretivity condition
 $\re (T_{1}g,g)\ge \delta \|g\|_{W^{1}_{2}}^2$ with $\delta >0$ and all $g\in W^1_{2}(\R^n; \C^{(1+n)m})$.
Consider $K$, $\eta$ and $g$ as in the proof of 
Theorem~\ref{thm:QE} and $\zeta=(\rho^*)^{-1}\eta$ and $f=(\rho^*)^{-1}g$. We claim that
$\|\zeta T_{0}^{1/4}f - (\rho^*)^{-1}\eta^2 T_{1}^{1/4}g \|_{2}\lesssim \|g\|_{2}\approx \|f\|_{2}$.
For both operators $T_{i}$, we use the identity 
\begin{equation}
\label{eq:1/4}
T_{i}^{1/4}= c \int_0^\infty s^{1/2} T_{i}(I+s^2 T_{i})^{-1} ds= c\int_0^\infty ( I-(I+s^2T_{i})^{-1} )\frac{ds}{s^{3/2}}.
\end{equation}
The part with $s>1$ gives rise to a bounded operator for each $T_{i}$. For the $s<1$ integral of the difference, we use
 the identity obtained as in Theorem \ref{thm:QE}
 \begin{multline*}
\zeta (I+s^2T_{0})^{-1}f  -(\rho^*)^{-1} \eta^2 (I+ s^2 T_{1})^{-1} g \\ 
   =\zeta (I+s^2T_{0})^{-1}(\rho^*)^{-1}  s^2[\eta, T_{1}]  (I+ s^2T_{1})^{-1} g
\end{multline*}
so that 
$$
  \| \zeta (I+s^2T_{0})^{-1}f  -(\rho^*)^{-1} \eta^2 (I+ s^2 T_{1})^{-1} g      \|_2\lesssim s \|g\|_2,
$$
using that the commutator $[\eta, T_{1}] $ is a first order operator.

(iii) We are reduced  to showing  that $[T_{1}^{1/4}, \widetilde B_{0}]$ is bounded on $L_2(\R^n; \C^{(1+n)m})$ with  $\widetilde B_0 := \rho^* B_0(\rho^*)^{-1}$ of $B_{0}$ on $|y|\le 1$ extended  to a   bounded matrix function of class $C^{1/2+\varepsilon}$ on $\R^n$.  We now eliminate the $R$ part of $T_{1}$.
Set $T_{2}:=- \divv_{\R^n}d^2\nabla_{\R^n} + 1$ acting componentwise in $L_2(\R^n; \C^{(1+n)m})$. The chosen extension of $d$ insures that $T_{2}$ is  accretive (in fact self-adjoint)  as $T_{1}$. We claim that 
$T_{1}^{1/4} - T_{2}^{1/4}$ is bounded. We use again \eqref{eq:1/4} for each $T_{i}$. The part with $s>1$ gives rise to a bounded operator for each $T_{i}$. For the $s<1$ integral of the difference, we use
$\|(I+s^2T_{1})^{-1}-(I+s^2T_{2})^{-1}\|\lesssim s$ by the resolvent formula, because $T_{i}$  have same second order term. This proves the claim. 

(iv) Hence, it remains to estimate the commutator $C= [T_{2}^{1/4}, \widetilde B_{0}]$.  
Since $T_{2}$ acts componentwise, so does $T_{2}^{1/4}$ and the commutator consists of a matrix of commutators with each component of $\widetilde B_{0}$. Thus it suffices to estimate $C= [T_{2}^{1/4}, b]$  in $L_2(\R^n; \C)$, with 
$b$  scalar-valued. 
To see this,  we use the the different representation  for $T_{2}^{1/4}$ to obtain
$$
C= c\int_{0}^{\infty} [s^2T_{2}e^{-s^2T_{2}} , b] \frac{ds}{s^{3/2}}.
$$
The $s>1$ integral is trivially bounded, using boundedness of $b$ and $s^2T_{2}e^{-s^2T_{2}}$. 
For $s<1$, we have $\| [s^2T_{2}e^{-s^2T_{2}} , b] \|_{L_2\to L_2}\lesssim s^{1/2+\epsilon}$ 
using pointwise  decay and regularity for the  kernel of $s^2T_{2}e^{-s^2T_{2}}$  and regularity of $b$.  See, for example, \cite{Au2} where it is proved that under continuity of the coefficients (here $d^2$), the kernel of the semigroup $e^{-sT_{2}}$, $s<1$, has Gaussian estimates (this is in fact due to Aronson for real measurable coefficients) and   H\"older regularity in each variable with any exponent in $(0,1)$, in particular larger that $1/2 + \epsilon$. From here, the same estimates hold for $sT_{2}e^{-sT_{2}}=-s\pd_{s}e^{-sT_{2}}$ by analyticity of the semigroup. This takes care of the $s<1$ integral. Further details are left to the reader. 
\end{proof}

\begin{proof}[Proof of Lemma \ref{lem:rellichdom}] Recall that $D_{0}=DB_{0}+\sigma N$. As before, by a representation formula it is easy to prove that $|DB_{0}+\sigma N|^{1/2} - |DB_{0}|^{1/2}$ is bounded on $L_{2}$. 
Hence we may replace $D_{0}$ by $DB_0$. We remark that $\mH$ is invariant for both $D$ and $DB_{0}$.

As $P_{\mH}B_{0}$ is an isomorphism of $\mH$, for $f\in \mH$, $f\in \dom(|DB_{0}|)$ if and only if $P_{\mH}B_{0}f \in \dom(|D|)$ and in this case
$$
\||DB_0|f\|_{2} \approx \|DB_{0}f\|_{2}\approx \|D(P_{\mH}B_{0}f)\|_{2} \approx \||D|(P_{\mH}B_{0}f)\|_{2}.
$$
Complex interpolation for sectorial operators  (see \cite{AMcNhol}) shows that for $f\in \mH$, $f\in \dom(|DB_{0}|^{1/2})$ if and only if $P_{\mH}B_{0}f \in \dom(|D|^{1/2})$ and
$$
\||DB_0|^{1/2}f\|_{2}\approx \||D|^{1/2}(P_{\mH}B_{0}f)\|_{{2}}.$$
Next, for $f\in \mH \cap \dom(|D|^{1/2})$, we have $|D|^{1/2}f \in \mH$ so that
$$\||D|^{1/2}f\|_{2} \approx \| P_{\mH}B_{0}|D|^{1/2}f\|_{2}.$$
Thus it suffices to show that for $f\in \mH$,  $f\in \dom(|D|^{1/2})$ if and only if $P_{\mH}B_{0}f \in \dom(|D|^{1/2})$. This is where we use the regularity of $B_{0}$ to yield
 $\||D|^{1/2}(P_{\mH}B_{0}f)-  P_{\mH}B_{0}|D|^{1/2}f\|_{2}\lesssim \|f\|_{2}$ when $f\in \mH$
as  a direct consequence of Lemma \ref{lem:rellichcom} and the fact that $D$ and $P_{\mH}$ commute.
\end{proof}

\begin{rem} 
Using the $T1$ theorem, the commutator $C$ of the proof of Lemma \ref{lem:rellichcom} is bounded on $L_{2}$ when $(-\Delta+1)^{1/4}b\in$ BMO (and $b\in L_{\infty}$). The converse is also true. This can be shown to be a regularity condition between $C^{1/2}$ and $C^{1/2+\varepsilon}$. So well-posedness holds under this condition (expressed in local coordinates  on the coefficients of $B_{0}$). This is probably the best  conclusion we can draw from this method. However, we suspect that $C^{\varepsilon}$ should be enough in general.  
\end{rem}

%
%
%
%
%
\section{Uniqueness}\label{sec:unique}

The following is the class of solutions in Definition~\ref{def:wpdahlberg}.

\begin{defn}   \label{defn:Dsolutions}
 
   By a {\em $\mD^o$-solution to the divergence form equation}, with coefficients $A$, 
   we mean a weak solution of $\divv_{\bx} A \nabla u=0$ in $\bO^{1+n}$ with $\|\tN^o(u)\|_{2} <\infty$. 
   \end{defn}
   
   Note that unlike the previous classes, $\mD^o$-solutions are defined through an estimate on
 $u$ itself, not on the gradient $\nabla_\bx u$.
 
Under the Carleson control on the discrepancy, we know that $\mY^o$-solutions are $\mD^o$-solutions. We would like to know the converse.  At this stage we need assumption of well-posedness in the sense of Definition~\ref{defn:wpbvp}.  It goes via identification with variational solutions for smooth data which will be also useful later. 

\begin{lem}  \label{prop:sioequalvarsol}
  Let  $A$ be coefficients such that $\|\E\|_*<\infty$ and $I-S_A$ is invertible on $\mY$ and on $\mX$,
  and assume that the regularity problem and the Dirichlet problem in the sense of Definition~\ref{defn:wpbvp} both are well posed.
  Let $\bphi\in L_{2}(S^n;\C^m)$ be Dirichlet datum such that $\nabla_S \bphi\in L_2(S^n; (T_{\C}S^n)^m)$.
  Then the solution $u$ to the Dirichlet problem in the sense of Definition~\ref{defn:wpbvp}
  coincides with the variational solution with datum $\bphi$.
\end{lem}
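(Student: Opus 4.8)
The plan is to show that the $\mY^o$-solution produced by our Dirichlet ansatz for the smooth datum $\bphi$ is in fact a variational solution, i.e.\ has $\|\nabla_\bx u\|_2<\infty$, and then to invoke uniqueness of variational solutions together with the uniqueness part of well-posedness of the Dirichlet problem in the sense of Definition~\ref{defn:wpbvp}. First I would let $u$ be the (unique) $\mY^o$-solution with trace $\bphi$ given by Corollary~\ref{cor:diransatz}(iii), with conormal gradient $f=(I-S_A)^{-1}De^{-t\tilde\Lambda}\tilde h^+$ and potential vector $v$, $u_r=r^{-\sigma}(v_t)_\no$. The regularity problem being well-posed, there is also an $\mX^o$-solution $w$ with $(\nabla_S w_1)=\nabla_S\bphi$, i.e.\ whose trace $g_1$ has tangential part $(g_1)_\ta=\nabla_S\bphi\in\ran(\nabla_S)$; its conormal gradient lies in $\mX\subset L_2(\R_+\times S^n;\V)$ by Lemma~\ref{lem:XlocL2}, so $\nabla_\bx w\in L_2(\bO^{1+n};\C^{(1+n)m})$ by \eqref{eq:dfl2iso}, and $w$ is a variational solution. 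Its Dirichlet trace $w_1$ satisfies $\nabla_S(w_1-\bphi)=0$, hence $w_1=\bphi+c$ for a constant $c\in\C^m$; replacing $w$ by $w-c$ we may assume $w_1=\bphi$ in the trace sense, and $w$ remains a variational solution with the same gradient.

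The key step is then to identify $u$ and $w$. Both are weak solutions of $\divv_\bx A\nabla_\bx u=0$ in $\bO^{1+n}$, and both are $\mY^o$-solutions: $u$ by construction, and $w$ because $\nabla_\bx w\in L_2\subset\mY^o$ (the inclusion $L_2(\bO^{1+n})\subset\mY^o$ is clear from the weight $1-|\bx|\le 1$). Moreover both have the same boundary trace: $u_1=\bphi$ by construction, and for $w$ the $\mX^o$-solution has an $L_2$ Dirichlet trace equal to $w_1=\bphi$ (this is part of Theorem~\ref{thm:conj}(i), and the $\mX^o$ trace agrees with the $\mY^o$ trace since $\mX\subset\mY$ and the traces are characterized by almost everywhere convergence of Whitney averages, Theorem~\ref{thm:aecv} / Theorem~\ref{thm:aecvReg/Neu}). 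Since $I-S_A$ is invertible on $\mY$, Corollary~\ref{cor:diransatz}(iii) gives that the map from $\mY^o$-solutions to boundary data $\tilde h^+\in\tE_0^+L_2$ is an isomorphism; in particular any two $\mY^o$-solutions with the same trace $u_1$ (and the same mean $\int_{S^n}u_1\,dx$) coincide. Applying this to $u$ and $w$ yields $u=w$, so $u$ is the variational solution with datum $\bphi$.

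Finally I would record that this variational solution is \emph{the} solution to the Dirichlet problem in the sense of Definition~\ref{defn:wpbvp}: it is an $\mY^o$-solution (being variational), its trace in the sense of almost everywhere convergence of Whitney averages is $\bphi$ by Theorem~\ref{thm:aecv}, and uniqueness in the class of $\mY^o$-solutions with this trace is exactly the hypothesis of well-posedness. Hence the solution to the Dirichlet problem with datum $\bphi$ equals the variational solution, as claimed.

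\textbf{Main obstacle.} The delicate point is matching up the several notions of boundary trace: the $\tE_0^+L_2$-trace used in the $\mY^o$ ansatz, the $L_2$-limit trace $u_1=\lim_r u_r$, and the almost-everywhere-Whitney-average trace appearing in Definition~\ref{defn:wpbvp} and in the statement that $w$ has trace $\bphi$. Under $\|\E\|_*<\infty$ these all coincide (Theorem~\ref{thm:conj}, Theorem~\ref{thm:aecv}, Theorem~\ref{thm:aecvReg/Neu}), so the argument goes through, but one must be careful to quote the correct identification at each step and to handle the constant ambiguity (the $\mY^o$ seminorm being modulo constants, and the regularity-problem trace being determined only up to a constant) consistently, normalizing by $\int_{S^n}u_1\,dx$.
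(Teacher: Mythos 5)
Your proposal is correct in substance, but it takes a genuinely different route from the paper, and two points deserve care. The paper never compares two solutions: it uses well-posedness of the regularity problem to choose $h^+\in E_0^+\mH$ with $(E_A^+h^+)_\ta=\nabla_S\bphi$, lifts it to $\tilde h^+\in\tE_0^+L_2$ by surjectivity of $D:\tE_0^+L_2\to E_0^+\mH$ (Lemma~\ref{lem:spectralsplits}), and uses the intertwining $D\tE_A^+=E_A^+D$ to see that, after adjusting $\tilde h^+$ by a constant in $\tE_0^+\mH^\perp$, the ansatz of Corollary~\ref{cor:diransatz}(iii) built on this $\tilde h^+$ is the Dirichlet solution with datum $\bphi$ and has conormal gradient $(I-S_A)^{-1}e^{-t\Lambda}h^+\in\mX\subset L_2(\R_+\times S^n;\V)$; hence $\nabla_\bx u\in L_2(\bO^{1+n})$ at once, with no recourse to non-tangential or Fatou-type results. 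You instead construct the regularity solution $w$ and the Dirichlet solution $u$ separately and glue them by uniqueness, which is conceptually simpler but heavier in what it quotes. First, your intermediate claim that Corollary~\ref{cor:diransatz}(iii) alone gives uniqueness of $\mY^o$-solutions with a given trace is not right: the isomorphism onto $\tE_0^+L_2$ must be combined with injectivity of $\tilde h^+\mapsto(\tE_A^+\tilde h^+)_\no$, which is precisely the Dirichlet well-posedness hypothesis via Proposition~\ref{prop:wpequiviso}(iii); you do invoke this correctly in your closing paragraph, so the slip is only in the attribution, but the citation should be fixed. Second, to feed $w$ into the uniqueness clause of Definition~\ref{defn:wpbvp} you need its almost everywhere Whitney-average trace to be $\bphi$, for which you quote Theorems~\ref{thm:aecv} and \ref{thm:aecvReg/Neu}; these are proved under $\|\E\|_{C\cap L_\infty}<\infty$, whereas the lemma assumes only $\|\E\|_*<\infty$, so strictly you must appeal to the equivalence of the two norms from \cite{AH} (harmless in all applications, where the Carleson condition is assumed, but an extra ingredient the paper's argument avoids by working only with $L_2$ traces and the representation formulas). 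Your identification $\nabla_S w_1=(g_1)_\ta$ is fine, being exactly $Dv_0=f_0$ from Theorem~\ref{thm:conj}(i), and the final step still rests, as in the paper, on uniqueness of variational solutions with trace $\bphi$, which you leave implicit and could state explicitly.
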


\begin{proof}
  By Proposition~\ref{prop:wpequiviso}, there is a unique $h^+\in E_0^+ \mH$ such that
  $(E_A^+ h^+)_\ta= \nabla_S \bphi$, since the regularity problem is well-posed.
From Lemma~\ref{lem:spectralsplits}, we know that $D:\tE_0^+ L_2\to E_0^+ \mH$
  is surjective. Let $\tilde h^+\in \tE_0^+L_2$ be such that $D\tilde h^+ =h^+$.
  Consider now $\tilde \bphi:= (\tE_A^+ \tilde h^+)_\no$.
  We claim that $\nabla_S \tilde \bphi= \nabla_S \bphi$. Indeed, this follows from taking the tangential part in the
  intertwining formula
$$
  D \tE_A^+ = E_A^+ D,
$$
which is readily verified from Lemma~\ref{lem:intertwduality} and definitions of $\tE_{A}^+$, $E_{A}^+$. 
Thus $\tilde \bphi-\bphi$ is constant.
As in the proof of Corollary~\ref{cor:diransatz}, by adding a normal constant in $\tE_0^+\mH^\perp$ 
to $\tilde h^+$, we may assume that $\tilde \bphi=\bphi$.

Given this $\tilde h^+$, the solution $u$ to the Dirichlet problem with datum $\bphi$ is
given by the normal component of 
$$
  v:= \Big( I+ \tS_A(I-S_A)^{-1} D \Big) e^{-t\tilde\Lambda} \tilde h^+
$$
as in Corollary~\ref{cor:diransatz}(iii). Next, we have
$$
  f :=Dv= (I-S_A)^{-1} e^{-t\Lambda} h^+
$$
and  $f$ is  the  conormal gradient to the solution to the regularity problem with datum $\nabla_S \bphi$.
In particular $f\in \mX\subset L_2(\R_+\times S^n;\V)$ by Lemma~\ref{lem:XlocL2}.

Translated to $\bO^{1+n}$, this shows that the solution $u$ to the Dirichlet problem with datum $\bphi$
has $\nabla_\bx u\in L_2(\bO^{1+n}; \C^{(1+n)m})$.
This shows that $u$ is  a variational solution. Uniqueness of the Dirichlet problem in this class completes the proof.
\end{proof}

\begin{rem}
Note that since $\mX\subset L_2(\R_+\times S^n;\V)$, solutions to the regularity
and Neumann problem always coincide with the variational solutions, by the uniqueness
of such.
In the setting of the half-space, as in \cite{AAM, AA1}, it was shown in \cite{AxNon} that this uniqueness
result does not hold. 
As pointed out in \cite[Rem. 5.6]{AAM}, the problem occurs at infinity for the regularity and Neumann 
problems, which explains why uniqueness holds for the bounded ball.
Although the analogue of \cite{AxNon} for the Dirichlet problem on the ball is not properly understood 
at the moment, Theorem~\ref{thm:equiv2} below shows that uniqueness of solutions essentially
holds also for the Dirichlet problem on the unit ball.
\end{rem}

\begin{prop} \label{pro:repradial} 
Let $A$ be radially independent coefficients and
 assume that the regularity problem and the Dirichlet problem in the sense of Definition~\ref{defn:wpbvp}  are both well-posed. Then all $\mD^o$-solutions are given by $u=e^{-\sigma t}( e^{-t\tilde\Lambda} \tilde h^+)_{\no}$ for a unique $\tilde h^+ \in \tE_{0}^+L_{2}$.  In particular, the class of $\mD^o$-solutions is the same as the class of $\mY^o$-solutions, and the estimate
$$
 \|\tN^o(u)\|^2_{2}  \approx \int_{\bO^{1+n}} |\nabla_\bx u|^2 (1-|\bx|)d\bx
  + \left| \int_{S^n} u_1(x) dx \right|^2  
  $$
  holds for all weak solutions. 
\end{prop}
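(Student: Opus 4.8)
## Proof Plan for Proposition~\ref{pro:repradial}

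The plan is to show that an arbitrary $\mD^o$-solution $u$ admits the semigroup representation, and then read off all the stated consequences from the results already established for $\mY^o$-solutions in the radially independent case (Corollary~\ref{cor:uniquenessdir}, Theorem~\ref{thm:NTtilde}, Theorem~\ref{thm:NTu}). Since $\|\tN^o(u)\|_2 < \infty$, the local $L_2$ bounds of Lemma~\ref{lem:XlocL2} give in particular $\sup_{0<t<1/2} \frac 1t\int_t^{2t}\|u_s\|_2^2\, ds \lesssim \|\tN^o(u)\|_2^2 < \infty$ (after pulling back to $\R_+\times S^n$ via $u_t(x) = u(e^{-t}x)$, up to the harmless factor $e^{-\sigma t}$). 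The first step is therefore to pass from $u$ to the associated first-order data: using Proposition~\ref{prop:CR} and the conormal-gradient/potential machinery of Section~\ref{sec:ODE}, I would set up a potential $v$ with $v_t = e^{\sigma t}(\cdot)$ normal part $u_t$ and try to show $v$ satisfies $\pd_t v + \tD_0 v = 0$ in $\bO^{1+n}\setminus\{0\}$ distributional sense (with $A = A_1$ radially independent, so $\E = 0$ and $S_A = 0$).

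The second and main step is to prove that a solution $v$ to $\pd_t v + \tD_0 v = 0$ on $\R_+\times S^n$ with only the weak control $\sup_{0<t<1/2}\frac 1t\int_t^{2t}\|v_s\|_2^2\, ds < \infty$ and $v$ decaying appropriately as $t\to\infty$ (equivalently $r\to 0$, which comes from the removable-singularity Lemma~\ref{lem:removesing} and local $L_2$ integrability of $\nabla_\bx u$ near the origin) is necessarily of the form $v_t = e^{-t\tilde\Lambda}\tilde h^+$ for some $\tilde h^+\in\tE_0^+ L_2$. This is precisely the analogue, for $\tD_0$ and the $\mD^o$-growth scale, of the representation in Corollary~\ref{cor:uniquenessdir}; the point is that the remark following Theorem~\ref{thm:inteqforNeu} already records that in the radially independent case the weaker Whitney-$L_2$ bound suffices (because $S_A = 0$), and the same argument — applying the spectral projections $\tE_0^\pm$, integrating the two sub-equations, using that $\tD_0|_{\tE_0^+ L_2}$ and $-\tD_0|_{\tE_0^- L_2}$ generate the decaying semigroups $e^{-t\tilde\Lambda}$, and checking that the $\tE_0^-$ component is forced to vanish by the growth restriction — applies verbatim with $D_0$ replaced by $\tD_0$ and $\Lambda$ by $\tilde\Lambda$. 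Uniqueness of $\tilde h^+$ is then inherited from Corollary~\ref{cor:uniquenessdir}, since once we know $u$ is a $\mY^o$-solution the two representations must agree, and $\tilde h^+\in\tE_0^+ L_2$ is uniquely determined modulo $\tE_0^+\mH^\perp$, with the normal-constant ambiguity pinned down exactly as in the proof of Corollary~\ref{cor:diransatz}(i).

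Once the representation is in hand, the remaining conclusions are immediate. That every $\mD^o$-solution is a $\mY^o$-solution follows because $v_t = e^{-t\tilde\Lambda}\tilde h^+$ has $\|\pd_t v\|_{\mY}\approx\|\tilde h^+\|_2 <\infty$ by Theorem~\ref{thm:NTtilde} (the square-function estimate), and hence $\int_{\bO^{1+n}}|\nabla_\bx u|^2(1-|\bx|)\,d\bx < \infty$ after translating via the gradient-to-conormal-gradient isomorphism $\mY^o\approx\mY$; conversely $\mY^o$-solutions are $\mD^o$-solutions by Theorem~\ref{thm:NTu} (which only needs $\|\E\|_{C\cap L_\infty} = 0$ here). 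Finally, the two-sided estimate $\|\tN^o(u)\|_2^2 \approx \int_{\bO^{1+n}}|\nabla_\bx u|^2(1-|\bx|)\,d\bx + |\int_{S^n} u_1\,dx|^2$ for \emph{all} weak solutions follows: the $\lesssim$ direction is Theorem~\ref{thm:NTu} applied to any solution once we know $\int|\nabla_\bx u|^2(1-|\bx|) < \infty$ or is trivial otherwise, and the $\gtrsim$ direction combines Corollary~\ref{cor:diransatz}(i) (giving $\|\tN^o(u)\|_2\gtrsim\|u_1\|_2\gtrsim|\int u_1|$) with the representation plus $\|\tilde h^+\|_2\approx\|\nabla_\bx u\|_{\mY^o}+|\int u_1|$ from Corollary~\ref{cor:uniquenessdir} and the lower square-function bound for $\tilde\Lambda$.

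The main obstacle I anticipate is the second step: verifying that the \emph{a priori} weaker Whitney-$L_2$ growth condition $\sup_{0<t<1/2}\frac 1t\int_t^{2t}\|v_s\|_2^2\,ds < \infty$ genuinely suffices to run the integration argument and, crucially, to force the backward ($\tE_0^-$) semigroup component to be zero — i.e. that a solution growing at most like this scale near $t=0$ cannot pick up a non-decaying piece. In the $\mY^o$-setting one has the honest $L_2(\R_+, \min(t,1)dt)$ control, which makes the weak limits and the vanishing argument clean; here one must instead lean on the interior reverse-Hölder/Caccioppoli machinery (Theorem~\ref{thm:revholder}, Proposition~\ref{prop:reverseholder}) to upgrade the Whitney averages, and carefully track that the construction of the weak limit $\tilde h^+ = \lim_{t\to 0} h_t$ (where $\tD_0 h_t = \tilde f_t$) still goes through. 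This is the content of the remark after Theorem~\ref{thm:inteqforNeu}, transported from $D_0$ to $\tD_0$ via the splitting $L_2 = \tE_0^+ L_2\oplus\tE_0^- L_2$ and the decay of $e^{-t\tilde\Lambda}$, so the obstacle is one of careful bookkeeping rather than a genuinely new idea.
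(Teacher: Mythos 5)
There is a genuine gap in your main step. Your plan hinges on attaching to the $\mD^o$-solution $u$ a potential $v$ solving $\pd_t v+\tD_0 v=0$ and then running the ``$S_A=0$'' integration argument under the Whitney-average control $\sup_{0<t<1/2}\frac 1t\int_t^{2t}\|v_s\|_2^2\,ds<\infty$. But neither ingredient is available. First, the construction of such a $v$ from a given weak solution is exactly what the paper only knows how to do for $\mY^o$-solutions (Corollary~\ref{cor:diransatz}(i)); for a solution with only $\|\tN^o(u)\|_2<\infty$ there is no way to produce $v$ a priori -- indeed proving that $u$ \emph{is} a $\mY^o$-solution is the point of the proposition, so this is circular. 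Second, even granting $v$, the bound $\|\tN^o(u)\|_2<\infty$ controls only the normal component $v_\no=e^{-\sigma t}u$; the tangential (conjugate) part of $v$ carries no a priori estimate, so the hypothesis of your claimed representation lemma is not met. Nor does the remark after Theorem~\ref{thm:inteqforNeu} transfer ``verbatim'': that remark concerns the $\mH$-valued conormal gradient $f$ and the operator $D_0$, which is invertible on $\mH$; for $\tD_0$ the function $v$ has a component in $\mH^\perp$, where $\tD_0=-\sigma N$, and killing the $\tE_0^-$ piece requires control of all of $v$, which you do not have. A telling symptom is that your representation step never uses the well-posedness hypotheses, whereas they cannot be dispensed with.

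The paper's actual argument is quite different and uses both hypotheses essentially: for a.e.\ dilation parameter $\rho<1$ the restriction $u_\rho$ lies in $W^1_2(S^n)$, so by well-posedness of the regularity and Dirichlet problems (through the identification with variational solutions in Lemma~\ref{prop:sioequalvarsol}) one finds $\tilde h^+_\rho\in\tE_0^+L_2$ whose semigroup extension is the variational solution with boundary data $u_\rho$; since the coefficients are radially independent, $\bx\mapsto u(\rho\bx)$ solves the same equation and agrees with it on $S^n$, so uniqueness of variational solutions gives $u(\rho r\,\cdot)=e^{\sigma t}(e^{-t\tilde\Lambda}\tilde h^+_\rho)_\no$. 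Well-posedness of the Dirichlet problem then yields $\|\tilde h^+_\rho\|_2\approx\|u_\rho\|_2$ uniformly in $\rho$, and the $\tN^o$ bound makes these norms bounded as $\rho\to1$, so a weak limit of $\tilde h^+_{\rho_n}$ furnishes the representation for $u$ itself. In short: dilation plus variational uniqueness plus weak compactness, rather than a direct integration of the first-order system, is what overcomes the lack of any estimate on the conjugate part of $v$.
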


\begin{proof} Let $u$ be a $\mD^o$-solution. For almost every $\rho\in(0,1)$, $\nabla_{S}u_\rho \in L_{2}(S^n; (T_{\C}S^n)^m)$ and $u_\rho \in L_{2}(S^n;  \C^m)$. Fix such $\rho$. As in the proof of 
Proposition~\ref{prop:sioequalvarsol}, we can find $ h^+_\rho\in E_{0}^+\mH$, $\tilde h^+_\rho\in \tE_{0}^+L_{2}$ with $D\tilde h^+_\rho=h^+_\rho$,  $(h^+_\rho)_\ta= \nabla_S u_\rho$ and $ (\tilde h^+_\rho)_\no= u_\rho$ on $S^n$.
Using radial independence, the function $\tilde u_\rho(r x):=e^{\sigma t}(e^{-t \tilde \Lambda} \tilde h^+_\rho)_{\no}(x)$ (here, $\rho$ is fixed and $e^{-t}=r \in (0,1)$) thus extends to a solution of the divergence form equation with coefficients $A$, and it is a variational solution by Proposition~\ref{prop:sioequalvarsol}. Since $\bx \mapsto u(\rho\bx)$ is also a variational solution and agrees with $\tilde u_\rho$ on $S^n$, we conclude by uniqueness that
$u(\rho r\cdot)= e^{\sigma t}(e^{-t\tilde \Lambda} \tilde h^+_\rho)_{\no}$ as $L^2(S^n;\C^m)$-functions for all $e^{-t}=r \in (0,1]$, and almost every $\rho\in (0,1)$. 

From this representation, we see that the right hand side is continuous in $t$, with range in $L_{2}$, so the left hand side is continuous in $r$. We also have  $\|u_{\rho r}\|_{2} \lesssim \|\tilde h^+_\rho\|_{2}\approx \|u_{\rho}\|_{2}$ for every $r\in (1/2, 1]$ and almost every $\rho\in (0,1)$. 
The last equivalence comes from the well-posedness of the Dirichlet problem, and the implicit constants are independent of $\rho$. 
 As $\sup\limits_{1/2<\rho<1} (1-\rho)^{-1} \int_{\rho}^{\frac{\rho +1}{2}}\|u_s\|_{2}ds \lesssim \|\tN^o(u)\|_{2}<\infty$, we conclude that $\|\tilde h^+_\rho\|_{2}$ is bounded for $1/2 < \rho <1$. Consider a weak limit $\tilde h^+ \in \tE_{0}^+L_{2}$ of a subsequence  $\tilde h^+_{\rho_{n}}$ with $\rho_{n}\to 1$. 
 Reversing the roles of $\rho $ and $r$,   for almost every  $r<1$,  $u_{\rho_{n }r}$ converges in $L_{2}(S^n;\C^m)$ to $u_{r}$, so that $u_{r}=e^{\sigma t}( e^{-t\tilde\Lambda} \tilde h^+)_{\no}$. Extending to all $r$, the representation is proved.
  
In particular, this shows that the classes of $\mY^o$-solutions and of $\mD^o$-solutions  of $Lu=0$ coincide under our assumptions.  
\end{proof}

Note that the full force of $\|\tN^o(u)\|_{2}<\infty$ is not used and the   condition $$\sup\limits_{1/2<r<1} r^{-1} \int_{1-2r}^{1-r}\|u_{\rho}\|_{2}d\rho <\infty$$ suffices in the proof of Proposition~\ref{pro:repradial}.

\begin{rem}
If $A$ is not $r$-independent, we need to know that $A(\rho\cdot)$ satisfies the large Carleson condition for all $1/2<\rho<1$ to run the argument. This is not clear if we just assume this for $A$. However, if we assume that $A$ is continuous on $\overline{\bO^{1+n}}$ and satisfies the square Dini condition of Theorem~\ref{thm:Dinisquarerealintro}, then this can be checked.
\end{rem}

\begin{proof}[Proof of Theorem~\ref{thm:semigroup}] We consider $A_1\in L_\infty(S^n; \mL(\C^{(1+n)m}))$, radially independent coefficients  which are
  strictly accretive  in the sense of \eqref{eq:accrasgarding}. Assume that the Dirichlet problem with coefficients $A_{1}$ is well-posed.
By Corollary~\ref{cor:uniquenessdir},  we have $\mP_{r}u_{1}=r^{-\sigma}(e^{-t\tilde\Lambda}v_{0})_{\no}$ with 
$r=e^{-t}$ and $v_{0}$ given by the inverse of the well-posedness map \eqref{eq:Dirmap} from applied to $u_{1}$. The assumed uniqueness of the solution $u$ allows us to prove the product rule of $\mP_{r}$ by considering $\mP_{r}u_{1}$ as another boundary data.  The existence of the generator with domain contained in $W^1_{2}(S^n;\C^m)$ is as in \cite{Au} in the setting of the upper-half space. There, the if direction was deduced   using the duality
 principle between Dirichlet and regularity. An examination of the argument there reveals that  the only if direction was implicit. We can repeat  the same duality argument using Proposition~\ref{prop:reg=dir}.
 \end{proof}

\begin{proof} [Proof of Theorem~\ref{thm:radind}] By Proposition~\ref{pro:repradial} we know that the two classes of $\mD^o$- and $\mY^o$-solutions are the same. Thus the assumed well-posedness for $\mY^o$-solutions carries over to $\mD^o$-solutions. This completes the proof. \end{proof}

%
%
%
%
%
\section{New well-posedness results for real equations}\label{sec:realequations}

We now specialize to the case of equations ($m=1$) with real coefficients, and make this assumption for  the coefficients $A$ throughout this section unless mentioned otherwise. For such equations the theory of solvability for the Dirichlet problem using non-tangential maximal control is rather complete for real symmetric equations, but not so much for non symmetric equations. In \cite{KKPT}, the extensions of the tools for real non symmetric equations are discussed and we refer there for details.

We have developed a strategy using square functions  rather  than non-tangential maximal functions and our goal here is  to tie this up. It is convenient to introduce the square function
$$
S(u)(x)= \left( \int_{\by\in \Gamma_x} |\nabla u(\by)|^2 \frac{d\by}{(1-|\by|)^{n-1}} \right)^{1/2}, \qquad x\in S^n,
$$
($\Gamma_x$ denoting a truncated cone with vertex $x$ and  axis the line $(0,x)$)
and the divergence form operator $L:= -\divv_\bx A \nabla_\bx$.
We note that a weak solution to $Lu=0$ is in $\mY^o$ if and only if $S(u)\in L_{2}(S^n)$, the measure being the surface measure.  
We have so far studied $\mY^o$-solutions  and well-posedness in this class, which is convenient to denote here by {\em well-posedness in $\mY^o$}. (This was called ``in the sense of Definition~\ref{defn:wpbvp}'' in the introduction.) 

Recall that by  a {\em $\mD^o$-solution} of $Lu=0$, we mean a weak solution with $\|\tN^o(u)\|_{2} <\infty$.  As said in the introduction,  we may replace $\tN^o(u)$ by the usual  pointwise non-tangential maximal function. 
For the Dirichlet problem, we shorten   well-posedness in the sense of Dahlberg  in Definition~\ref{def:wpdahlberg} to  {\em well-posedness in $\mD^o$}.
  
On regular domains such as $\bO^{1+n}$, there is always  a  unique variational solution $u\in W^{1}_{2}(\bO^{1+n})$, which is  in addition continuous in $\overline{\bO^{1+n}}$, to the Dirichlet problem with data 
$\bphi\in C(S^n)$ by results of Littman, Stampacchia and Weinberger \cite{LSW} which extend to real non-symmetric equations (see \cite{KKPT}). 
  Thus, it is natural to ask whether this solution satisfies $\|\tN^o(u)\|_{2} \le C \|\bphi\|_{2}$ with $C$ depending on the Lipschitz character of $S^n$. By a density argument, it suffices to do this for smooth $\bphi$, say $\bphi \in C^1(S^n)$. If this is the case, then the Dirichlet problem $(D)_{2}$ is said to be {\em solvable}. 
 
 From the maximum principle and Harnack's inequalities, one can study the $L$-elliptic measure $\omega$, say at 0, which is the probability measure $C(S^n) \ni \bphi \mapsto u(0)$ with $u$ the above solution.  The question whether $\omega$ is absolutely continuous with respect to surface measure is central. 
 
The result, somehow folklore but we have not seen it stated explicitely in the literature, summarizing the state of the art is the following.
 
 \begin{thm}\label{thm:equiv1} 
 Let $L=-\divv_\bx A \nabla_\bx$ 
 be a real elliptic operator in $\bO^{1+n}$, $n\ge 1$.   
 Then the following statements are equivalent
 \begin{itemize}
  \item[{\rm (i)}] The Dirichlet problem is well-posed in $\mD^o$.
  \item[{\rm (ii)}]  $(D)_{2}$ is solvable.
  \item[{\rm (iii)}]  The $L$-elliptic measure $w$ is absolutely continuous with respect to surface measure and its Radon-Nikodym derivative $k$ satisfies the reverse H\"older $B_{2}$ condition, \textit{i.e.} there is a constant $C<\infty$ such that for all surface balls $B$ on $S^n$, 
  $$
  \left( |B|^{-1} \int_{B} k^2(x) \, dx \right)^{1/2} \le C |B|^{-1} \int_{B} k(x) \, dx.
  $$
\end{itemize}
\end{thm}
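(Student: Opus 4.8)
The plan is to prove Theorem~\ref{thm:equiv1} by assembling standard facts from the theory of elliptic measure for real (non-symmetric) divergence form operators on Lipschitz domains, transplanted to $\bO^{1+n}$, together with the results obtained in this paper. The overall structure is a cycle of implications (ii)$\Rightarrow$(iii)$\Rightarrow$(ii) and (i)$\Leftrightarrow$(ii), with the genuinely local analytic content (De~Giorgi--Nash--Moser estimates, Harnack, boundary Harnack, the maximum principle, and the Littman--Stampacchia--Weinberger construction of elliptic measure $\omega$) being quoted rather than reproved, in the spirit of \cite{KKPT} and the references therein.

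\textbf{Step 1: (ii)$\Leftrightarrow$(iii).} First I would recall the Littman--Stampacchia--Weinberger construction of the $L$-elliptic measure $\omega=\omega^0$ at the center, giving for $\bphi\in C(S^n)$ the continuous variational solution $u$ with $u(0)=\int_{S^n}\bphi\, d\omega$, and I would record the basic estimates: the doubling property of $\omega$ (from the boundary Harnack principle for real equations, which holds without symmetry), and comparability of $\omega$ on surface balls with the Green function, as in \cite{KKPT}. The equivalence between $L^2$-solvability of the Dirichlet problem (the estimate $\|\tN^o(u)\|_2\lesssim\|\bphi\|_2$) and the reverse H\"older $B_2$ condition on the density $k=d\omega/dx$ is then the classical Dahlberg--type argument: $(D)_2$-solvability means, by duality and the good-$\lambda$ inequalities comparing $\tN^o(u)$ with the square function and with the Hardy--Littlewood maximal function of $\bphi$ against $d\omega$, precisely that $\omega\in B_2(dx)$. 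Here I would cite the standard reference chain (Dahlberg, Jerison--Kenig, and for the non-symmetric case \cite{KKPT}); the only adaptation is the passage from $\tN^o$ to the pointwise non-tangential maximal function via the interior De~Giorgi--Nash--Moser estimates noted after Definition~\ref{def:wpdahlberg}, which is legitimate because $m=1$ and $A$ is real.

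\textbf{Step 2: (i)$\Leftrightarrow$(ii).} The implication (i)$\Rightarrow$(ii) is immediate: well-posedness in $\mD^o$ gives in particular existence of a solution with $\|\tN^o(u)\|_2<\infty$ for every $\bphi\in L_2(S^n)$ together with the a~priori bound (by the open mapping theorem, or directly from the construction), and applied to $C^1$ data this yields $(D)_2$-solvability. For the converse (ii)$\Rightarrow$(i) one needs \emph{uniqueness} of $\mD^o$-solutions in addition to existence. Existence for general $\bphi\in L_2$ follows from (ii) by density: one approximates $\bphi$ by $C^1$ functions, uses the a~priori estimate $\|\tN^o(u_j)\|_2\lesssim\|\bphi_j\|_2$ to pass to the limit, and checks convergence of Whitney averages to $\bphi$ almost everywhere using that $\omega\in B_2(dx)$, hence $A_\infty(dx)$, so that elliptic-measure averages control Lebesgue averages (this is where the absolute continuity is used). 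Uniqueness is the classical argument of Miller / Hunt--Wheeden: a $\mD^o$-solution with zero boundary data in the Whitney-average sense must vanish, which again uses the maximum principle, the representation $u(\bx)=\int\bphi_\bx\, d\omega^\bx$ for the boundary trace, and the $B_2$/$A_\infty$ property to identify the trace. I would cite \cite{KKPT} and the references there for all of this.

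\textbf{Main obstacle.} The principal difficulty is not any single deep theorem but organizing the quoted machinery cleanly: the equivalence genuinely rests on the full Dahlberg--Jerison--Kenig / Kenig--Koch--Pipher--Toro circle (doubling and $A_\infty$ theory of elliptic measure, good-$\lambda$ inequalities relating $\tN^o$, $S$, and elliptic-measure maximal functions, and the Hunt--Wheeden uniqueness argument), none of which is reproved here. The cleanest route is therefore to state Step~1 and Step~2 as consequences of \cite{KKPT} and the classical literature, indicating precisely which result supplies each arrow, and to add only the small verification that on $\bO^{1+n}$ the geometry (bounded Lipschitz, hence NTA, domain) makes all those results applicable verbatim, and that the replacement of the pointwise non-tangential maximal operator by $\tN^o$ costs nothing because of interior H\"older regularity. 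I expect the write-up to be essentially a careful bookkeeping of references rather than new estimates, consistent with the paper's own remark that the statement is ``somehow folklore.''
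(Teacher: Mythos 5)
Your proposal follows essentially the same route as the paper: quote \cite{KKPT} for the equivalence (ii)$\Leftrightarrow$(iii), treat (i)$\Rightarrow$(ii) as the easy direction, and for (ii)$\Rightarrow$(i) obtain existence for general $L_2$ data by density from $(D)_2$ and uniqueness by the classical elliptic-measure argument using (iii) (the paper cites the argument of \cite[p.125--126]{FJK}, you cite the Hunt--Wheeden/Miller circle via \cite{KKPT}; these are the same ideas). So the verdict is: correct in structure and essentially identical in method.

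One point, however, is glossed over in your Step~2 and is in fact the only substantive content of the paper's own proof of (i)$\Rightarrow$(ii): $(D)_2$-solvability is, by definition, a statement about the \emph{variational} (Littman--Stampacchia--Weinberger) solution for continuous data, not merely about the existence of \emph{some} solution with the estimate. Saying ``well-posedness in $\mD^o$ gives existence with an a priori bound, applied to $C^1$ data this yields $(D)_2$'' is not literally enough; you must identify the $\mD^o$-solution with the variational solution. The paper does this in one line: for $\bphi\in C(S^n)$ the variational solution is continuous on $\overline{\bO^{1+n}}$, hence bounded, hence (the domain being bounded) satisfies $\|\tN^o(u)\|_2<\infty$ and its Whitney averages converge to $\bphi$ at the boundary, so it lies in the class where uniqueness from (i) applies; therefore it coincides with the well-posed solution and inherits the continuity estimate $\|\tN^o(u)\|_2\le C\|\bphi\|_2$. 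With that sentence added, your argument matches the paper's; without it, the claimed ``immediate'' implication does not reach the statement $(D)_2$ as defined.
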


\begin{proof}
The proof that (ii) is equivalent to (iii) is stated for real non-symmetric operators in \cite[p.241]{KKPT}. The proof that (i) implies (ii) is trivial. For $\bphi\in C(S^n)$, the variational solution is bounded, hence satisfies $\|\tN^o(u)\|_{2}<\infty$ since $\bO^{1+n}$ is bounded. By uniqueness in (i), it is the unique solution and the continuity estimate that follows from well-posedness shows $\|\tN^o(u)\|_{2} \le C \|\bphi\|_{2}$. So (ii) holds. It remains to see (ii) implies (i). Existence and continuity estimate are granted from $(D)_{2}$. Uniqueness follows the argument in \cite[p.125-126]{FJK}, using the equivalent assumption (iii) instead of (ii).
The extension to non-symmetric real operators is allowed from the details in \cite{KKPT}. 
\end{proof}

\begin{thm}\label{lem:DJK} 
Let $L$ be an elliptic operator with real coefficients. Then all weak solutions to $Lu=0$ satisfy $\|S(u) \|_{L_{2}( d\mu)}\lesssim \|\tN^o(u)\|_{L_{2}(d\mu)}$ for all $A_{\infty}$ measure $\mu$ with respect to $L^*$-elliptic measure. 
\end{thm}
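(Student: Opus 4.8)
The plan is to follow the area-integral argument of Dahlberg, Jerison and Kenig in its elliptic-measure formulation; since only scale-invariant local estimates are involved, transplanting it from the half-space to $\bO^{1+n}$ is routine, and the real structure of $L$ enters only through interior De Giorgi--Nash--Moser bounds, the Caffarelli--Fabes--Mortola--Salsa comparison estimates for $\omega_{L^*}$, and the nontangential Fatou theorem for solutions of $Lu=0$. We may assume $\|\tN^o(u)\|_{L_2(d\mu)}<\infty$. First I would reduce to a good-$\lambda$ inequality: for a fixed weak solution $u$ and $O_\lambda:=\{S(u)>\lambda\}$ (open, since $S(u)$ is lower semicontinuous) it suffices to prove $\mu(\{S(u)>2\lambda,\ \tN^o(u)\le\gamma\lambda\})\le C\gamma^{2\theta}\mu(O_\lambda)$ for some $\theta>0$ and all small $\gamma>0$; integrating in $\lambda$ and choosing $\gamma$ small then converts this into $\|S(u)\|_{L_2(d\mu)}\lesssim\|\tN^o(u)\|_{L_2(d\mu)}$ (the contribution of the range where $\tN^o(u)>\gamma\lambda$ being $\lesssim\gamma^{-2}\|\tN^o(u)\|_{L_2(d\mu)}^2$, and a standard truncation of the cones in $S$ supplies the a priori finiteness needed to absorb). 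Taking a Whitney decomposition $\{\Delta_j\}$ of $O_\lambda$ by surface balls $\Delta_j=\Delta(x_j,r_j)$ with $\{5\Delta_j\}$ of bounded overlap and $5\Delta_j\cap O_\lambda^c\ne\emptyset$, and observing that for $x\in\Delta_j$ the part of $\Gamma_x$ above height $r_j$ lies in a cone over a point of $O_\lambda^c$, the matter reduces to the local estimate
$$
\int_{\Delta_j\cap\{\tN^o(u)\le\gamma\lambda\}} S_{r_j}(u)(x)^2\,d\mu(x)\ \le\ C(\gamma\lambda)^2\,\mu(\Delta_j),
$$
where $S_{r_j}$ denotes the square function with cones truncated at height $r_j$.

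To prove this local estimate I would first treat $\mu=\omega_{L^*}$, the $L^*$-elliptic measure, and then bootstrap. Fixing a corkscrew point $A_j$ for $\Delta_j$ and using that $\tN^o(u)\le\gamma\lambda$ at some point of $\Delta_j$ forces $|u(A_j)|\lesssim\gamma\lambda$ (an $L_2$-average bound from $\tN^o$, upgraded to a pointwise bound by interior regularity for the real equation $Lu=0$), replace $u$ by $v:=u-u(A_j)$, so $\nabla v=\nabla u$ and $v(A_j)=0$. By Fubini the left side is $\lesssim\int_{T(\Delta_j)}|\nabla v(\by)|^2(1-|\by|)^{1-n}\omega_{L^*}(\Delta_\by)\,d\by$ over the Carleson box $T(\Delta_j)$ truncated at height $\sim r_j$, with $\Delta_\by$ the surface ball of radius $\sim 1-|\by|$ below $\by$. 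The change-of-pole formula and the CFMS estimates give $\omega_{L^*}^{X_0}(\Delta_\by)\approx\omega_{L^*}^{X_0}(\Delta_j)\,(1-|\by|)^{n-1}G_{L^*}(A_j,\by)$ on $T(\Delta_j)$, so the local estimate for $\omega_{L^*}$ follows once I show $\int_{\Omega_{F_j}}|\nabla v(\by)|^2\,G_{L^*}(A_j,\by)\,d\by\lesssim(\gamma\lambda)^2$, where $\Omega_{F_j}$ is the sawtooth region $\bigcup_{x\in F_j}\Gamma_x^{r_j}$ over $F_j:=\Delta_j\cap\{\tN^o(u)\le\gamma\lambda\}$.

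For this weighted Dirichlet integral I would use the identity $\tfrac12\divv_\bx(A\nabla_\bx v^2)=\nabla_\bx v\cdot A\nabla_\bx v$ (valid because $v$ is real and $Lv=0$), together with $\divv_\bx(A^*\nabla_\bx G_{L^*}(A_j,\cdot))=-\delta_{A_j}$ and $v(A_j)=0$; integrating by parts twice on $\Omega_{F_j}$ leaves only boundary terms on $\partial\Omega_{F_j}$. On the Whitney-scale portions of $\partial\Omega_{F_j}$ (at positive distance from $S^n$) these are controlled by Caccioppoli and interior estimates together with $\tN^o(u)\le\gamma\lambda$; on the portion lying in $F_j\subset S^n$ the factor $G_{L^*}(A_j,\cdot)$ vanishes and what remains is $\lesssim\int_{F_j}|v^\ast|^2\,d\omega_{L^*}^{A_j}$, where $v^\ast$ is the nontangential boundary trace of $v$ (which exists $\omega_{L^*}$-a.e.\ by Fatou theory for $L$) and satisfies $|v^\ast|\lesssim\gamma\lambda$ on $F_j$; since $\omega_{L^*}^{A_j}(\Delta_j)\approx1$ this is $\lesssim(\gamma\lambda)^2$, and ellipticity of $A$ closes the estimate. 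Finally, to pass from $\omega_{L^*}$ to a general $\mu\in A_\infty(\omega_{L^*})$, I would invoke the quantitative $A_\infty$ relation: there are $C,\theta>0$ with $\omega_{L^*}(E)\le\eta\,\omega_{L^*}(\Delta_j)\Rightarrow\mu(E)\le C\eta^\theta\,\mu(\Delta_j)$ for $E\subset\Delta_j$; applied with $E=\{S(u)>2\lambda,\,\tN^o(u)\le\gamma\lambda\}\cap\Delta_j$ and $\eta\sim\gamma^2$, and summed over $j$ using the bounded overlap of $\{5\Delta_j\}$ and $\Delta_j\subset O_\lambda$, this yields the good-$\lambda$ inequality with exponent $2\theta$.

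I expect the main obstacle to be the integration-by-parts step on the sawtooth $\Omega_{F_j}$: one must build the sawtooth so that its boundary decomposes into Whitney-scale pieces plus a piece inside $F_j\subset S^n$, and then verify that every boundary term — including those involving $\partial_\nu G_{L^*}$ against $v^2$ and $G_{L^*}$ against $\partial_\nu v^2$ — is dominated by $(\gamma\lambda)^2$ times the $\omega_{L^*}^{A_j}$-mass of $\Delta_j$, the delicate point being the legitimate use of the boundary trace $v^\ast$ on $F_j$ via the Fatou theorem for $Lu=0$. It is precisely here (and in the CFMS estimates) that the restriction to real coefficients is essential, and one must also check that all constants depend only on the ellipticity of $A$, the dimension, and the Lipschitz character.
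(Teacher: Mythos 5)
Your proposal is essentially the paper's proof: the paper simply invokes Dahlberg--Jerison--Kenig \cite{DJK} and observes that, because the integration by parts throws the operator onto the Green function, the argument must be run with the Green's function and elliptic measure of the adjoint $L^*$ on the sawtooth --- which is exactly the good-$\lambda$/sawtooth scheme with the $G_{L^*}$, $\omega_{L^*}$ modification that you reconstruct in detail (your use of the Green function of the ball with a corkscrew pole instead of the sawtooth's own Green function is only a minor technical variant of the same argument). So your plan is correct and matches the paper's route, including the reason the $A_\infty$ hypothesis is taken with respect to $L^*$-elliptic measure.
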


\begin{proof} 
This is the result of  \cite{DJK} where this is proved when $L=L^*$. Aside from properties of solutions that are valid for all real operators, the proof relies on the use of \cite[eq.~(7) p.101]{DJK}.  Since $Lu=0$ and there is an integration by parts,  it has to be modified with $g(X,Y)$  and $\nu$ being respectively the Green's function and  the elliptic measure of the adjoint  on the domain $\Omega$ there. This is why the $A_{\infty}$ property with respect to $L^*$-elliptic measure intervenes in the hypotheses. Further details are in \cite{DJK}.  
\end{proof}

\begin{cor}\label{cor:DJK} 
Let $L$ be an elliptic operator with real coefficients.  Assume that the Dirichlet problem is well-posed in $\mD^o$ for $L^*$. Then all weak solutions to $Lu=0$ satisfy $\|S( u) \|_{2}\lesssim \|\tN^o(u)\|_{2}$. In particular, $\mD^o$-solutions of $Lu=0$ are $\mY^o$-solutions of $Lu=0$ under this assumption. 
\end{cor}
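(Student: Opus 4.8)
The plan is to derive Corollary~\ref{cor:DJK} from Theorem~\ref{lem:DJK} by first upgrading the hypothesis on $L^*$ to an $A_\infty$ statement for the $L^*$-elliptic measure, and then verifying that the square function control places a $\mathcal D^o$-solution of $Lu=0$ inside the class $\mathcal Y^o$.

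First I would invoke Theorem~\ref{thm:equiv1} applied to the adjoint operator $L^*$: the assumed well-posedness of the Dirichlet problem in $\mathcal D^o$ for $L^*$ is statement (i) for $L^*$, hence (iii) holds for $L^*$, meaning the $L^*$-elliptic measure is absolutely continuous with respect to surface measure with Radon--Nikodym derivative in the reverse H\"older class $B_2$. A density $k\in B_2$ with respect to surface measure $d\sigma$ on $S^n$ is a classical characterization of $d\sigma$ being an $A_\infty$ weight with respect to $\omega_{L^*}= k\,d\sigma$; equivalently, surface measure $d\sigma$ is an $A_\infty$ measure with respect to $L^*$-elliptic measure. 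Thus I may take $\mu= d\sigma$ (surface measure) in Theorem~\ref{lem:DJK}, which yields $\|S(u)\|_{L_2(S^n)}\lesssim \|\tN^o(u)\|_{L_2(S^n)}$ for every weak solution $u$ of $Lu=0$, with implicit constant depending only on the ellipticity, $n$, the Lipschitz character, and the $B_2$ constant of $k$.

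Next I would translate the square-function bound into membership in $\mathcal Y^o$. By the remark preceding the statement, a weak solution $u$ to $Lu=0$ lies in $\mathcal Y^o$ precisely when $S(u)\in L_2(S^n)$, since by Fubini (integrating the cone integral over $x\in S^n$ against surface measure and using that the cones $\Gamma_x$ have bounded overlap and cover $\bO^{1+n}$ up to a compact set)
$$
  \int_{S^n} S(u)(x)^2\, dx \approx \int_{\bO^{1+n}} |\nabla_\bx u(\bx)|^2 (1-|\bx|)\, d\bx,
$$
modulo the usual harmless truncation near $\bx=0$, which is controlled by interior estimates. Hence, for a $\mathcal D^o$-solution $u$ of $Lu=0$ (so $\|\tN^o(u)\|_2<\infty$ by definition), the previous paragraph gives $\|S(u)\|_2\lesssim \|\tN^o(u)\|_2<\infty$, so $\int_{\bO^{1+n}} |\nabla_\bx u|^2(1-|\bx|)\,d\bx<\infty$, i.e.\ $u$ is a $\mathcal Y^o$-solution of $Lu=0$. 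This is the claimed conclusion.

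The main obstacle is not in the core argument but in making the two classical black boxes fit together cleanly: one must be careful that the $A_\infty$/reverse-H\"older hypothesis required by Theorem~\ref{lem:DJK} is exactly the one produced by Theorem~\ref{thm:equiv1}(iii) (both are stated for the \emph{adjoint} operator here, so the direction of duality must be checked: $\|S(u)\|\lesssim \|\tN^o(u)\|$ for $Lu=0$ requires an $A_\infty$ condition with respect to $L^*$-elliptic measure, and well-posedness in $\mathcal D^o$ is assumed for $L^*$, so the $B_2$ density is that of $\omega_{L^*}$ --- consistent). A secondary minor point is the equivalence of $\|S(u)\|_{L_2(S^n)}$ with the weighted gradient integral defining $\mathcal Y^o$, together with the truncation issue at the origin; both are standard Fubini-type computations and interior regularity, and need only be cited rather than carried out.
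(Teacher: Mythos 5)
Your proposal is correct and follows essentially the same route as the paper: Theorem~\ref{thm:equiv1} gives the $B_2$/$A_\infty$ property of $L^*$-elliptic measure, the symmetry of $A_\infty$ (the paper cites Coifman--Fefferman \cite{CF} for this) lets you take $\mu$ to be surface measure in Theorem~\ref{lem:DJK}, and the identification of $\|S(u)\|_2<\infty$ with membership in $\mY^o$ is exactly the remark preceding Theorem~\ref{thm:equiv1} in the text. No gaps; your extra care about the direction of duality and the Fubini/truncation step only makes explicit what the paper leaves as a remark.
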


\begin{proof}
By Theorem \ref{thm:equiv1},  $L^*$-elliptic measure is $A_{\infty}$ with respect to surface measure, and vice-versa by \cite{CF}. So  $\|S(u) \|_{\mY^o}\lesssim \|\tN^o(u)\|_{2}$ follows from Lemma~\ref{lem:DJK}. 
\end{proof}

Note that Corollary~\ref{cor:DJK} and Proposition~\ref{pro:repradial} are close but incomparable. First,
Proposition~\ref{pro:repradial} applies to systems of equations, whereas Corollary~\ref{cor:DJK} 
applies to radially dependent coefficient. Secondly, the well-posedness assumptions are different. The next results reconciles the two approaches.

 \begin{thm}\label{thm:equiv2} 
 Let $L=-\divv_\bx A \nabla_\bx$ 
 be a real elliptic operator in $\bO^{1+n}$, $n\ge 1$.  
  Assume further that $L$ has coefficients with $\lim_{\tau\to 0}\|\chi_{t<\tau}\E_t\|_{C\cap L_{\infty}}$ sufficiently small. 
Then the following are equivalent.
 \begin{itemize}
  \item[{\rm (i)}] 
  The Dirichlet problem is well-posed in $\mD^o$ for $L$ and $L^*$.
  \item[{\rm (ii)}]  
  The Dirichlet problem is well-posed in $\mY^o$ for $L$ and $L^*$.
\end{itemize}
Moreover, in this case the solutions for $L$ (resp. $L^*$) from a same datum are the same.
\end{thm}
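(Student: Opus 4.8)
The implication (ii)$\Rightarrow$(i) is essentially contained in the machinery already developed: if the Dirichlet problem is well-posed in $\mY^o$ for both $L$ and $L^*$, then by Theorem~\ref{thm:reg=dirintro} (using that the small Carleson condition holds) the regularity problems for $L$ and $L^*$ are well-posed in the sense of Definition~\ref{defn:wpbvp}. Hence for $L$ both the Dirichlet and regularity problems are well-posed, so Corollary~\ref{cor:DJK} applies after noting that well-posedness in $\mY^o$ for $L^*$ combined with the small Carleson condition forces, through the perturbation theory of Section~\ref{sec:BVPs} and Proposition~\ref{prop:sioequalvarsol}, that the $\mD^o$-class for $L$ is nonempty with the right estimates; more directly, we invoke the argument of Proposition~\ref{pro:repradial} adapted to radially dependent coefficients satisfying the small Carleson condition (the adaptation being legitimate here precisely because the small Carleson condition survives the dilations $A(\rho\cdot)$, as remarked after Proposition~\ref{pro:repradial} is not needed in full strength — only smallness near the boundary matters and that is scale-stable). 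This shows every $\mD^o$-solution of $Lu=0$ is a $\mY^o$-solution, so existence and uniqueness transfer from $\mY^o$ to $\mD^o$, and likewise for $L^*$; the continuity estimate $\|\tN^o(u)\|_2\lesssim\|\bphi\|_2$ comes from Theorem~\ref{thm:NTu} together with the $\mY^o$-well-posedness estimate.

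For the converse (i)$\Rightarrow$(ii), the first step is to observe that by Theorem~\ref{thm:equiv1}, well-posedness in $\mD^o$ for $L^*$ is equivalent to the $L^*$-elliptic measure being $A_\infty$ with respect to surface measure; by \cite{CF} this is symmetric in $L$ and $L^*$, so we also get $A_\infty$ for the $L$-elliptic measure. Then Corollary~\ref{cor:DJK}, applied once with the roles of $L$ and $L^*$ as stated and once with $L$ and $L^*$ interchanged (legitimate since both Dirichlet problems are assumed well-posed in $\mD^o$), yields $\|S(u)\|_2\lesssim\|\tN^o(u)\|_2$ for all weak solutions of $Lu=0$ and of $L^*u=0$. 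Combined with Theorem~\ref{thm:NTu} (which gives the reverse inequality $\|\tN^o(u)\|_2\lesssim\|S(u)\|_2 + |\int_{S^n}u_1|$ for $\mY^o$-solutions, and the small Carleson hypothesis guarantees $\|\E\|_{C\cap L_\infty}<\infty$ so the theorem applies), this shows that on the common class the two norms are comparable, and in particular that $\mD^o$-solutions of $Lu=0$ coincide with $\mY^o$-solutions of $Lu=0$, and similarly for $L^*$.

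The core step is then to deduce well-posedness in $\mY^o$ from well-posedness in $\mD^o$ once the two solution classes are known to coincide. Existence in $\mY^o$: given $\bphi\in L_2(S^n;\C^m)$, the $\mD^o$-well-posedness produces a solution $u$ with $\|\tN^o(u)\|_2<\infty$ and Whitney-average trace $\bphi$; since this $u$ is then a $\mY^o$-solution by the class identification, and since $\|S(u)\|_2\lesssim\|\tN^o(u)\|_2\lesssim\|\bphi\|_2$, it solves the Dirichlet problem in the sense of Definition~\ref{defn:wpbvp} with the correct estimate (the $L_2$ trace agreeing with $\bphi$ by Theorem~\ref{thm:apriori}, which also applies under $\|\E\|_C<\infty$). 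Uniqueness in $\mY^o$: any two $\mY^o$-solutions with the same datum are $\mD^o$-solutions with the same Whitney-average trace, hence equal by $\mD^o$-uniqueness. The same reasoning applies verbatim to $L^*$. Finally, the last assertion — that the solutions for $L$ (resp. $L^*$) from a given datum are the same in both formulations — is now immediate, being exactly the class identification $\mD^o=\mY^o$ just established.

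\textbf{Main obstacle.} The delicate point is justifying that the passage from ``both $\mD^o$ and $\mY^o$ nonempty'' to ``$\mD^o=\mY^o$'' can be run for radially \emph{dependent} coefficients: Proposition~\ref{pro:repradial} is stated for radially independent coefficients, and its proof uses the dilations $A(\rho\cdot)$, which a priori need not satisfy the large Carleson condition. The resolution is that under the \emph{small} Carleson hypothesis near the boundary one does not need the full representation of Proposition~\ref{pro:repradial}; instead one argues directly from Corollary~\ref{cor:DJK} in one direction and Theorem~\ref{thm:NTu} in the other to get the norm equivalence $\|\tN^o(u)\|_2\approx\|S(u)\|_2+|\int_{S^n}u_1|$ on weak solutions, which already pins down the two classes without invoking the semigroup. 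Getting this norm equivalence to hold for \emph{all} weak solutions (not just those a priori known to be in one class) is where care is required, and is exactly what the combination of \cite{DJK}, \cite{CF}, Theorem~\ref{thm:equiv1} and Theorem~\ref{thm:NTu} is designed to deliver.
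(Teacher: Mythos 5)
Your first half — deducing well-posedness in $\mY^o$ from well-posedness in $\mD^o$ — is essentially the paper's argument: uniqueness via the inclusion $\mY^o\subset\mD^o$ (Theorem~\ref{thm:NTu} plus the almost everywhere convergence of Whitney averages), existence via Corollary~\ref{cor:DJK}, whose hypothesis ($\mD^o$ well-posedness for $L^*$) is available there. The minor appeal to \cite{CF} for ``symmetry in $L$ and $L^*$'' is a misattribution but harmless, since (i) assumes both $L$ and $L^*$ anyway.

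The other direction, (ii)$\Rightarrow$(i), has a genuine gap. Your whole argument rests on showing that every $\mD^o$-solution of $Lu=0$ is a $\mY^o$-solution, and the two justifications you offer do not work. First, Corollary~\ref{cor:DJK} cannot be invoked: its hypothesis is precisely that the Dirichlet problem is well-posed in $\mD^o$ for $L^*$ (equivalently, by Theorem~\ref{thm:equiv1}, the $A_\infty$ property of the $L^*$-elliptic measure), which is part of the conclusion (i) you are trying to reach; under (ii) alone you have no $A_\infty$ information, so this is circular. Second, the ``adaptation'' of Proposition~\ref{pro:repradial} to radially dependent coefficients is not legitimate as claimed: the Carleson hypothesis controls the discrepancy $A-A_1$ only near $|\bx|=1$, whereas the dilated coefficients $A(\rho\,\cdot)$ must be compared to \emph{their own} boundary trace $A(\rho\,\cdot)|_{S^n}$, which sits in the interior where $A$ is merely $L_\infty$; smallness near the boundary is not stable under these dilations, and the remark following Proposition~\ref{pro:repradial} says exactly that this point is unclear in general (it is only resolved there under continuity plus a Dini square condition). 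In particular the hard half of (i), namely \emph{uniqueness} of $\mD^o$-solutions, is never established in your proposal.

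The paper's route for (ii)$\Rightarrow$(i) avoids any direct class comparison: by Theorem~\ref{thm:equiv1} it suffices to prove solvability of $(D)_2$, i.e.\ the estimate $\|\tN^o(u)\|_2\lesssim\|\bphi\|_2$ for the variational solution with $\bphi\in C^1(S^n)$. The small Carleson hypothesis gives invertibility of $I-S_A$ on $\mX$ and $\mY$ (Theorem~\ref{thm:SAFredholm}); well-posedness in $\mY^o$ for $L^*$ gives, via Proposition~\ref{prop:reg=dir}, well-posedness of the regularity problem for $L$; then Lemma~\ref{prop:sioequalvarsol} identifies the variational solution with the Definition~\ref{defn:wpbvp} solution, and Theorem~\ref{thm:NTu} supplies the non-tangential estimate. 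Uniqueness in $\mD^o$ is then delivered by Theorem~\ref{thm:equiv1} itself (the classical $B_2$-weight argument of \cite{FJK}, \cite{KKPT}), not by showing $\mD^o\subset\mY^o$. To repair your proof you would need to replace your class-identification step by this reduction to $(D)_2$ solvability, or otherwise independently establish the $A_\infty$ property of the elliptic measures under hypothesis (ii).
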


\begin{proof} It suffices to prove the conclusion for $L$ in each case as the assumptions are invariant under taking adjoints.

Assume (i). 
Uniqueness in $\mY^0$ is immediate since the class of $\mD^o$-solutions  \textit{a priori} contains the class $\mY^o$-solutions when $\|\E\|_{C\cap L_{\infty}}<\infty$. Next,  for the existence,  there is  by assumption  a unique $\mD^o$-solution with given boundary datum $\bphi\in L_2(S^n)$.
Since   the Dirichlet problem is well-posed in $\mD^o$ for $L^*$, Corollary~\ref{cor:DJK} shows that this solution is in fact a $\mY^o$-solution. 

Conversely, assume (ii).
By Theorem~\ref{thm:equiv1}, it suffices to show that $(D)_2$ is solvable for $L$.
To this end, it suffices to consider $\bphi\in C^1(S^n)$ and the associated variational solution $u$.
By Proposition~\ref{prop:sioequalvarsol}, 
which applies because of  Theorem~\ref{thm:SAFredholm} ($I-S_{A}$ is invertible on $\mX$ and on $\mY$) and  Proposition~\ref{prop:reg=dir},
 $u$ coincides with the solution in the sense of Definition~\ref{defn:wpbvp}, that is, it is a $\mY^o$-solution. 
Now Theorem~\ref{thm:NTu} provides the non-tangential maximal estimate that shows
that $(D)_2$ is solvable for $L$.
\end{proof}

\begin{rem} In the case of radially independent coefficients (or more generally for continuous, Dini square coefficients)  Proposition~\ref{pro:repradial} (or the remark that follows it)  proves the converse  also for systems.
\end{rem}

We can generalize results of \cite{KP} to non-symmetric perturbations of $r$-independent real symmetric operators. 

\begin{cor} In $\bO^{1+n}$, the Dirichlet problem is well-posed in $\mD^o$  for all real  operators $L$ with coefficients $A$ such that $\lim_{\tau\to 0}\|\chi_{t<\tau}\E_t\|_{C\cap L_{\infty}}$ is small enough and its boundary trace $A_{1}$ real symmetric.
\end{cor}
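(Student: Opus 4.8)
The plan is to deduce this corollary from Theorem~\ref{thm:equiv2} by reducing to the known self-adjoint case. First I would invoke the classical results of Jerison and Kenig (as used in \cite{KP}): for radially independent real symmetric coefficients $A_1$ on $\bO^{1+n}$, the Dirichlet problem is well-posed in $\mD^o$, equivalently (by Theorem~\ref{thm:equiv1}) $(D)_2$ is solvable and the $L_1$-elliptic measure lies in $B_2$. Since $L_1=L_1^*$ when $A_1$ is symmetric, this same statement automatically holds for $A_1^*$ as well. Thus the boundary trace $A_1$ of $A$ gives well-posedness in $\mD^o$ for both $A_1$ and $A_1^*$.

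Next I would upgrade from the boundary trace $A_1$ to the radially dependent coefficients $A$. Here the hypothesis $\lim_{\tau\to 0}\|\chi_{t<\tau}\E_t\|_{C\cap L_\infty}$ small is exactly what is needed: by Theorem~\ref{thm:SAFredholm}, $I-S_A$ is invertible on $\mX$ and on $\mY$ (and likewise $I-S_{A^*}$, since $\E$ for $A^*$ also satisfies the small Carleson condition), and by the perturbation result Theorem~\ref{prop:rdeppert} the well-posedness in the sense of Definition~\ref{defn:wpbvp} (i.e., well-posedness in $\mY^o$) of the Dirichlet problem transfers from $A_1$ to $A$; applying the same argument to $A_1^*$ and $A^*$ gives well-posedness in $\mY^o$ for $A^*$ as well. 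To apply Theorem~\ref{prop:rdeppert} for the Dirichlet problem one uses that well-posedness of the Neumann problem for $A_1$ holds: this follows because real symmetric $A_1$ is Hermitean, so Proposition~\ref{prop:hermwp} applies and gives well-posedness of the Neumann, regularity and Dirichlet problems for $A_1$; alternatively one may simply note that Proposition~\ref{prop:rdeppert}'s conclusion for the Dirichlet problem is stated directly. This establishes statement (ii) of Theorem~\ref{thm:equiv2} for the operator $L$ with coefficients $A$ (and for $L^*$).

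Finally I would invoke Theorem~\ref{thm:equiv2} itself: its hypothesis — real coefficients with $\lim_{\tau\to 0}\|\chi_{t<\tau}\E_t\|_{C\cap L_\infty}$ sufficiently small — is precisely our assumption, and we have just verified condition (ii). Therefore condition (i) holds, i.e., the Dirichlet problem is well-posed in $\mD^o$ for $L$ (and for $L^*$). This is the desired conclusion. The only mild subtlety, which I would address in a sentence, is to make sure that the smallness threshold in the corollary is taken to be the minimum of the thresholds appearing in Theorem~\ref{thm:SAFredholm}, Theorem~\ref{prop:rdeppert} and Theorem~\ref{thm:equiv2}, so that all three may be applied simultaneously; there is no circularity since those thresholds depend only on ellipticity constants and dimension. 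I do not expect a genuine obstacle here — the work is entirely in assembling already-proved ingredients — but the point requiring the most care is checking that the small-Carleson hypothesis for $A$ indeed yields the small-Carleson hypothesis for $A^*$, which is immediate since $\widehat{A^*}$ and $\widehat{A_1^*}$ have the same difference structure as $\widehat{A}$ and $\widehat{A_1}$ up to the bounded conjugation by $N$ (cf. the identity $B_0^*=N\widehat{A_1^*}N$ from Lemma~\ref{lem:intertwduality}), so $\|\chi_{t<\tau}\widetilde\E\|_{C\cap L_\infty}\approx\|\chi_{t<\tau}\E\|_{C\cap L_\infty}$.
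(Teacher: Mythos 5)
Your argument is correct and is essentially the paper's own proof: Proposition~\ref{prop:hermwp} gives well-posedness in $\mY^o$ for the Hermitean boundary trace $A_1=A_1^*$, Theorem~\ref{prop:rdeppert} transfers this to $A$ and $A^*$ under the small Carleson hypothesis, and Theorem~\ref{thm:equiv2} then yields well-posedness in $\mD^o$. The opening appeal to the classical solvability results in $\mD^o$ for radially independent symmetric coefficients (via \cite{KP}) is never actually used and can be dropped; the remaining points you flag (the Carleson condition for $A^*$, the choice of a common smallness threshold) are handled correctly.
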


\begin{proof}  Let $L_{1}$ be the second order operator with $r$-independent coefficients $A_{1}$. By Proposition \ref{prop:hermwp}, we know that that the Dirichlet problem for $L_{1}=L_{1}^*$ is well-posed in $\mY^o$. Thus, by  Theorem \ref{prop:rdeppert}, it is well-posed in $\mY^o$ for $L$ and $L^*$. 
Thus, we conclude with Theorem \ref{thm:equiv2}. 
\end{proof}

We continue with generalizations of results in \cite{FJK}, where well-posedness for Dirichlet was obtained for real symmetric coefficients. Well-posedness for regularity (which we denote here by well-posedness in $\mX^o$) is new. 

\begin{thm}\label{thm:Dinisquarereal} Assume that $A$ are coefficients with $\lim_{\tau\to 0}\|\chi_{t<\tau}\E_t\|_{C\cap L_{\infty}}$ small enough and  boundary trace $A_{1}$ which is real and continuous. Then the Dirichlet problem is well-posed in $\mD^o$ and in $\mY^o$, and the regularity problem in $\mX^o$ is well-posed. In particular, this holds for real continuous coefficients in $\overline{\bO^{1+n}}$ satisfying the Dini square condition $\int_{0}w_{A}^2(t) \frac{dt}{t}<\infty$, where $w_{A}(t)=\sup\{|A(rx)-A(x)|  ;  x\in S^n, 1-r<t\}$.
\end{thm}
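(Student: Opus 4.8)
The plan is to dispose first of the ``in particular'' clause, then to reduce the main assertion to the case of radially independent coefficients using the perturbation machinery already developed, and finally to settle that case. \emph{Reduction of the ``in particular'' clause.} If $A$ is real, continuous on $\overline{\bO^{1+n}}$ and $\int_{0}w_{A}^2(t)\frac{dt}{t}<\infty$, then its boundary trace $A_{1}(y)=A(\bx/|\bx|)$ is real and continuous on $S^n$, and $\lim_{\tau\to 0}\|\chi_{t<\tau}\E\|_{C\cap L_\infty}=0$. The $L_\infty$ part is immediate from uniform continuity of $A$ on $\overline{\bO^{1+n}}$, since $\|\chi_{t<\tau}\E\|_\infty\le w_{A}(c\tau)$. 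For the Carleson part one uses that $\E^*(\bx)\lesssim w_{A}(c(1-|\bx|))$, so that after the change of variables $r=e^{-t}$ and Fubini the Carleson integral over $(0,\tau)\times Q$ is $\lesssim\int_0^{c\tau}w_{A}(s)^2\frac{ds}{s}$, which tends to $0$ as $\tau\to 0$. Hence the hypotheses of the main statement hold (with limit $0$), so it suffices to prove the latter.

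\emph{Reduction to radially independent coefficients.} Let $A_{1}$ be the boundary trace of $A$, which is real, continuous and radially independent. Since $\|\cdot\|_*\lesssim\|\cdot\|_{C\cap L_\infty}$, the standing smallness gives $\lim_{\tau\to 0}\|\chi_{t<\tau}\E\|_*<\epsilon$, so by Theorem~\ref{thm:SAFredholm} the operator $I-S_A$ is invertible on $\mX$ and on $\mY$, and the same holds for $A^*$, whose trace $A_1^*$ is again real, continuous and radially independent with the same discrepancy. Granting that the regularity and Dirichlet problems are well posed for $A_{1}$ and for $A_{1}^*$ in the sense of Definition~\ref{defn:wpbvp} — this is the heart of the matter, treated below — Theorem~\ref{prop:rdeppert} upgrades this to well-posedness in the sense of Definition~\ref{defn:wpbvp} of the regularity and Dirichlet problems for $A$ and for $A^*$. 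In particular the Dirichlet problem is well posed in $\mY^o$ for both $L=-\divv_{\bx}A\nabla_{\bx}$ and $L^*=-\divv_{\bx}A^*\nabla_{\bx}$, whence Theorem~\ref{thm:equiv2} yields that the Dirichlet problem for $A$ is also well posed in $\mD^o$ (Dahlberg's sense); the quantitative non-tangential bound entering these statements is the one of Theorem~\ref{thm:NTu}.

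\emph{The radially independent real continuous case.} It remains to prove that for real, continuous, radially independent $A_{1}$, the regularity and Dirichlet problems are well posed in the sense of Definition~\ref{defn:wpbvp}. Here $\E=0$ and $S_{A_{1}}=0$, so $I-S_{A_{1}}=I$ is invertible on $\mX$ and $\mY$, and by Proposition~\ref{prop:reg=dir} it suffices to show the Dirichlet problem is well posed in $\mY^o$ for $A_{1}$ and for $A_{1}^*$. By Theorem~\ref{thm:equiv2} (whose Carleson hypothesis is vacuous here) this is equivalent to well-posedness in $\mD^o$ for $A_{1}$ and $A_{1}^*$, which by Theorem~\ref{thm:equiv1} is equivalent to the $L_{1}$- (resp. $L_{1}^*$-)elliptic measure lying in the reverse H\"older class $B_2$ with respect to surface measure. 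For a real, radially independent divergence form operator with continuous coefficients this is a known fact of the real-variable theory of elliptic measure: it is the theorem of \cite{FJK} when $A_{1}$ is in addition symmetric, and in the non-symmetric case it follows along the lines of \cite{KKPT}, by freezing coefficients at small scales — where $A_{1}$ is uniformly close to a constant coefficient operator, for which, after a linear change of variables, the elliptic measure is the harmonic measure of the Laplacian — and combining the De Giorgi--Nash--Moser theory with the stability of the $B_2$/$A_\infty$ class under such small perturbations. With this input one concludes as in the proofs of Propositions~\ref{prop:hermwp} and \ref{prop:smoothwp}: for radially independent $B_0$ the auxiliary map $\mH\to L_2(\R_+;\mH):h\mapsto(e^{-t\Lambda}h)_{t>0}$ is compact, so the boundary maps \eqref{eq:Neumap}, \eqref{eq:Regmap}, \eqref{eq:Dirmap} are Fredholm, and Lemma~\ref{lem:automaticinj} promotes them to isomorphisms.

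\emph{Main obstacle.} The genuine difficulty is the last step: for merely continuous (not H\"older) coefficients the commutator/Rellich argument of Proposition~\ref{prop:smoothwp} is unavailable, so one must use the real-variable theory of elliptic measure — this is precisely where the hypothesis ``real'' enters, through the De Giorgi--Nash--Moser estimates — or else a Rellich identity tailored to the radially independent real setting; ensuring that the elliptic-measure estimates, the index computation and the $A_\infty$ self-improvement are all uniform along the relevant one-parameter families of coefficients is where the real work lies. A minor point to check is that passing to the adjoint preserves all standing hypotheses (realness, strict accretivity, the small Carleson discrepancy), so that Theorems~\ref{prop:rdeppert}, \ref{thm:SAFredholm} and \ref{thm:equiv2} apply equally to $A^*$.
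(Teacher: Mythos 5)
Your overall reduction is the same as the paper's: dispose of the Dini-square clause, reduce by Theorem~\ref{prop:rdeppert}, Proposition~\ref{prop:reg=dir} and Theorem~\ref{thm:equiv2} to the radially independent case, and there reduce via Theorem~\ref{thm:equiv1} to the $B_2$ property of the $L_1$-elliptic measure. The gap is in the step you label ``a known fact of the real-variable theory'': for real, continuous, \emph{non-symmetric}, radially independent coefficients the $B_2$/$A_\infty$ property of elliptic measure was precisely the open problem (Problem 3.3.13 in \cite{Kenig}) that this theorem settles, and the mechanism you invoke to get it does not work. Freezing coefficients at small scales produces a discrepancy $A_1-A_1(x_0)$ which is $t$-independent; such a discrepancy has \emph{infinite} Carleson norm, so none of the elliptic-measure perturbation theorems (Dahlberg, Fefferman--Kenig--Pipher, \cite{FJK}) applies, and $A_\infty$ of elliptic measure is simply not stable under small $L_\infty$ perturbations of the coefficients --- this is the content of the counterexamples in \cite{CaFK}. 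Nor does \cite{KKPT} supply the claim: its $A_\infty$ result for non-symmetric $t$-independent coefficients is two-dimensional. So the heart of the theorem is left unproved in your proposal. Your closing remark that one then ``concludes as in Propositions~\ref{prop:hermwp} and \ref{prop:smoothwp}'' is also not an argument: in those proofs the Fredholm property comes from a Rellich identity (Hermitean case) or a commutator bound requiring $C^{1/2+\varepsilon}$ regularity, neither of which is available for merely continuous coefficients, and compactness of $h\mapsto e^{-t\Lambda}h$ alone yields nothing.

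What the paper does instead at this point is to \emph{derive} the $B_2$ estimate from its own first-order machinery rather than quote it. Since $A_1$ is continuous, on a small boundary box $Q_{r}$ it is $L_\infty$-close to the constant matrix $A_1(x_0)$; by Proposition~\ref{prop:smoothwp} together with the radially independent perturbation result Theorem~\ref{cor:rindeppert} (perturbation of the \emph{square-function} well-posedness, which, unlike $A_\infty$ of elliptic measure, is stable under small $L_\infty$ perturbations of radially independent coefficients), the Dirichlet problem for $A_1$ is well-posed in $\mY^o$ there. For $C^1$ data $g\ge 0$ supported in the boundary face of $Q_{r/2}$, Lemma~\ref{prop:sioequalvarsol} identifies the $\mY^o$-solution $u$ with the variational solution; Stampacchia's minimum principle gives $u\ge 0$ and the maximum principle in $Q_{r/2}$ gives $v\le u$ for the local variational solution $v$. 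Running the comparison argument of \cite{FJK} then produces the pointwise bound $v(\rho y)\lesssim (1-\rho)^{-n/2}\|g\|_2$, hence the $L_2$ bound on the Poisson kernel, i.e.\ the local $B_2$ estimate; continuity of $A_1$ allows a finite covering of a boundary layer by such boxes, and Theorem~\ref{thm:equiv1} plus the symmetry of the hypotheses in $A_1\mapsto A_1^*$ completes the radially independent case. If you want to repair your proof, this chain (square-function well-posedness near constants $\Rightarrow$ comparison with variational solutions $\Rightarrow$ $B_2$) is the missing ingredient; it cannot be replaced by a citation to the classical elliptic-measure literature.
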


\begin{proof} Let $L_{1}$ be the operator with coefficients $A_{1}$. 
Recall that under smallness of $\lim_{\tau\to 0}\|\chi_{t<\tau}\E_t\|_{C\cap L_{\infty}}$, it suffices to prove the result for $L_{1}$ by Proposition \ref{prop:rdeppert}.  Next, by 
   Proposition \ref{prop:reg=dir}, the regularity problem (in $\mX^o$) for $L_{1}$ is well-posed if and only if the Dirichlet problem for $L^*_{1}$ is well-posed in $\mY^o$. On applying Theorem~\ref{thm:equiv2}, it suffices to prove that the Dirichlet problem with coefficients $A_{1}$ is well-posed in $\mD^o$, as the same would then hold for $A_{1}^*$ by symmetry of the assumptions. To do this, we prove that $L_{1}$-harmonic measure satisfies the property (iii) in Theorem \ref{thm:equiv1}. The argument is inspired by the one of \cite{FJK}, p.139-140. 
     
   Assume first we work on some boundary  region of $\bO^{1+n}$. For $r$ small, set  $Q_{r}= \{\rho y\in (0,1)\times S^n ; 1-r<\rho<1, y \in B(x_{0},r)\}$ where $B(x_{0},r)$ is a surface ball of radius $r$, with  real radially independent coefficients $A_{1}$ being the restriction of some matrix defined on $\bO^{1+n}$ that we still denote by $A_{1}$ and which is close in $L_{\infty}$ to the constant matrix $A_{1}(x_{0})$.  
  Let $g$ be a $C^1$ non-negative function supported on the part of the  boundary of $Q_{r/2}$ in $S^n$. Let $v$ be the variational solution to the Dirichlet problem $L_{1}v=0$ in $Q_{r/2}$ and $v=g$ on the  boundary of $Q_{r/2}$ in $S^n$ and $v=0$ on the part of the boundary that is contained in $\bO^{1+n}$. Recall that $v\in W^1_{2}( Q_{r/2}) \cap C(\overline{Q_{r/2}})$. By Theorem~\ref{cor:rindeppert}, because $A_{1}$ is $L_{\infty}$ close to a (constant) matrix for which one knows well-posedness by Proposition~\ref{prop:smoothwp}, one can construct  the unique solution $u$ in $\bO^{1+n}$ to the Dirichlet problem in $\mY^o$ with $u=g$ on $S^n$, that is  $L_{1}u=0$    with $\int_{\bO^{1+n}}|\nabla_{\bx} u|^2(1-|\bx|)\, d\bx \le C\|g\|_{2}^2$. As $g\in C^1(S^n)$, we know on applying Lemma~\ref{prop:sioequalvarsol}  that the solution $u$ is variational, \textit{i.e.} $ u \in W^1_{2}(\bO^{1+n})$. We can apply Stampacchia's minimum principle  to obtain first that $u\ge 0$ in $\bO^{1+n}$, and next the maximum principle in $Q_{r/2}$ to conclude that $v\le u$.    From there, it remains to repeat the argument in  \cite{FJK}, to obtain that $
  v(\rho y) \le C (1-\rho)^{-n/2}\|g\|_{2}$ for all $\rho y \in Q_{r/4}$, which in turn, yields an $L_{2}$ estimate on the Radon-Nikodym derivative of the $L_{1}$-elliptic measure. 
  
  The localisation argument as in \cite{FJK}, and using the continuity of $A_{1}$ to cover a layer of the boundary  with  a finite number of such small $Q_{r/2}$, allows us to conclude. 
 \end{proof}
 
 \begin{cor} With the same assumption as above and $n=1$, then the Neumann problem with coefficients $A$ is well-posed in $\mX^o$.
 \end{cor}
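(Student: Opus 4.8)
The plan is to deduce well-posedness of the Neumann problem for $A$ from well-posedness of the regularity problem for the conjugate coefficients $\tilde A=J^tA^{-1}J$ from Section~\ref{sec:disk}. Since $n=1$, real elliptic equations are pointwise strictly accretive (Lemma~\ref{lem:pointwiseaccr}), so every weak solution $u$ of $\divv_\bx A\nabla_\bx u=0$ has a conjugate $\tilde u$, unique modulo constants, solving $\divv_\bx\tilde A\nabla_\bx\tilde u=0$ with $A\nabla_\bx u=J\nabla_\bx\tilde u$; by Lemma~\ref{lem:conjugate} this is equivalent to $\tilde A\nabla_\bx\tilde u=J^t\nabla_\bx u$, and $\tilde{\tilde A}=A$, so $u\leftrightarrow\tilde u$ is an involution. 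As $J$ is orthogonal and $|A\nabla_\bx u|\approx|\nabla_\bx u|$, one has $|\nabla_\bx\tilde u|\approx|\nabla_\bx u|$ pointwise, so $u$ is an $\mX^o$-solution for $A$ if and only if $\tilde u$ is an $\mX^o$-solution for $\tilde A$, with comparable $\|\tN^o(\nabla_\bx\cdot)\|_2$. At the boundary, Theorem~\ref{thm:apriori}(i) together with $\|\E\|_C<\infty$ gives that $A\nabla_\bx u$ has trace $A_1g_1$; taking normal components in $A\nabla_\bx u=J\nabla_\bx\tilde u$ yields $(A_1g_1)_\no=-(\tilde g_1)_\ta$, where $g_1,\tilde g_1$ are the boundary traces of $\nabla_\bx u$, $\nabla_\bx\tilde u$. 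Thus a Neumann datum $\bphi$ for $u$ (scalar, mean zero) corresponds exactly to the regularity datum $-\bphi$ for $\tilde u$, and the mean-zero condition matches the requirement that the regularity datum lie in $\ran(\nabla_S)$ precisely because $n=1$.

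Next I would verify that $\tilde A$ satisfies the hypotheses of Theorem~\ref{thm:Dinisquarereal}. It is again a real equation, and pointwise strict ellipticity passes from $A$ to $A^{-1}$ (one has $\re(A^{-1}w,w)\ge\kappa_A\|A\|_\infty^{-2}|w|^2$) and then to $\tilde A=J^tA^{-1}J$ by orthogonal conjugation. Writing $\E=A-A_1$ one gets $A^{-1}-A_1^{-1}=-A^{-1}\E A_1^{-1}$, hence $\tilde A-J^tA_1^{-1}J=-J^tA^{-1}\E A_1^{-1}J$, so $(\tilde A-J^tA_1^{-1}J)^*\lesssim\E^*$ pointwise with constant depending only on the ellipticity; by uniqueness of the boundary trace this identifies $\tilde A_1=J^tA_1^{-1}J$ (radially independent, real, and continuous since $A_1$ is) and gives $\|\chi_{t<\tau}(\tilde A-\tilde A_1)\|_{C\cap L_\infty}\lesssim\|\chi_{t<\tau}\E\|_{C\cap L_\infty}$. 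Taking the threshold $\epsilon$ in the hypothesis small enough (depending on the ellipticity of $A$ and on the threshold required for $\tilde A$ in Theorem~\ref{thm:Dinisquarereal}), we conclude that $\tilde A$ has small Carleson norm near the boundary and real continuous boundary trace; the estimate $w_{\tilde A}(t)\lesssim w_A(t)$ covers the special Dini-square case. Hence the regularity problem with coefficients $\tilde A$ is well-posed in $\mX^o$.

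Finally I would transfer this back. For existence: given $\bphi\in L_2(S^1;\C)$ with $\int_{S^1}\bphi\,dx=0$, let $\tilde v$ be the $\mX^o$-solution of the regularity problem for $\tilde A$ with datum $-\bphi$. Since $\tilde A\nabla_\bx\tilde v$ is divergence-free, Lemma~\ref{lem:conjugate} applied to $\tilde A$ (using $\tilde{\tilde A}=A$) produces, after adjusting by a constant, a weak solution $u$ of $\divv_\bx A\nabla_\bx u=0$ with $A\nabla_\bx u=J\nabla_\bx\tilde v$, i.e.\ $\tilde v$ is a conjugate of $u$; then $|\nabla_\bx u|\approx|\nabla_\bx\tilde v|$ gives $\|\tN^o(\nabla_\bx u)\|_2<\infty$, and the boundary relation gives $(A_1g_1)_\no=-(\tilde g_1)_\ta=\bphi$. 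For uniqueness modulo constants: if $u$ is an $\mX^o$-solution with $(A_1g_1)_\no=0$, its conjugate $\tilde u$ is an $\mX^o$-solution of $\divv_\bx\tilde A\nabla_\bx\tilde u=0$ with $(\tilde g_1)_\ta=-(A_1g_1)_\no=0$, so uniqueness for the regularity problem for $\tilde A$ forces $\nabla_\bx\tilde u=0$, whence $A\nabla_\bx u=J\nabla_\bx\tilde u=0$ and $\nabla_\bx u=0$. The main point requiring care is the boundary bookkeeping — matching the mean-zero Neumann datum with the $\ran(\nabla_S)$ regularity datum (clean exactly because $n=1$) and invoking the Fatou/trace statements of Theorem~\ref{thm:apriori} to pass the pointwise identity $A\nabla_\bx u=J\nabla_\bx\tilde u$ to boundary traces; the rest is the soft algebra of the involution $A\mapsto\tilde A$.
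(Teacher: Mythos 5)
Your proposal is correct and follows the same route as the paper: the paper's own proof simply invokes the conjugate-coefficient equivalence of Section~\ref{sec:disk} (Neumann for $A$ $\Leftrightarrow$ regularity for $\tilde A=J^tA^{-1}J$) together with Theorem~\ref{thm:Dinisquarereal} applied to $\tilde A$, which "satisfies the same assumption as $A$". You merely spell out the details the paper leaves implicit (pointwise ellipticity and discrepancy/Carleson transfer to $\tilde A$, the trace identity $(A_1g_1)_\no=-(\tilde g_1)_\ta$, and the mean-zero versus $\ran(\nabla_S)$ bookkeeping), all of which are handled correctly up to harmless sign conventions.
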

 
 \begin{proof} By the results in Section \ref{sec:disk}, it follows that the Neumann problem for coefficients $A$ is well-posed in $\mX^o$ if and only if the regularity problem for conjugate coefficients $\tilde A$ is well-posed in $\mX^o$.  The latter follows from the previous result since $\tilde A$ satisfies the same assumption as $A$. 
 \end{proof}

\begin{rem}As in \cite{FJK}, the Dini square condition in the normal direction can be replaced by a Dini square condition in a $C^1$ transverse direction to the sphere. It suffices to perform locally  changes of variables that transform the transverse direction to normal ones. 
\end{rem}

\bibliographystyle{acm}

\end{document}